\newcommand{\lb}{\linebreak[1]}
\newcommand{\wh}[1]{\widehat{#1}}
\renewcommand{\t}{\mathfrak{t}}
\newcommand{\RR}{\mathcal{R}}
\renewcommand{\SS}{\mathcal{S}}
\newcommand{\C}{\mathcal{C}}
\newcommand{\M}{\mathcal{M}}
\newcommand{\IR}{\mathbb{R}}
\newcommand{\IH}{\mathbb{H}}
\newcommand{\IN}{\mathbb{N}}
\newcommand{\IZ}{\mathbb{Z}}
\newcommand{\eps}{\varepsilon}
\newcommand{\ov}[1]{\overline{#1}}
\newcommand{\td}[1]{\widetilde{#1}}
\DeclareMathOperator{\initial}{initial}
\DeclareMathOperator{\can}{can}
\DeclareMathOperator{\rot}{rot}
\DeclareMathOperator{\RF}{RF}
\DeclareMathOperator{\RD}{RD}
\DeclareMathOperator{\Met}{Met}
\DeclareMathOperator{\confexp}{confexp}
\DeclareMathOperator{\domain}{domain}
\DeclareMathOperator{\met}{Met}
\DeclareMathOperator{\Int}{Int}
\DeclareMathOperator{\Ric}{Ric}
\DeclareMathOperator{\Rm}{Rm}
\DeclareMathOperator{\tr}{tr}
\DeclareMathOperator{\id}{id}
\DeclareMathOperator{\Isom}{Isom}
\DeclareMathOperator{\diam}{diam}
\DeclareMathOperator{\eucl}{eucl}
\DeclareMathOperator{\supp}{supp}
\DeclareMathOperator{\Bry}{Bry}
\DeclareMathOperator{\spann}{span}
\DeclareMathOperator{\proj}{proj}
\DeclareMathOperator{\sh}{sh}
\DeclareMathOperator{\ch}{ch}
\DeclareMathOperator{\ext}{ext}
\DeclareMathOperator{\length}{length}
\renewcommand{\sinh}{\sh}
\renewcommand{\cosh}{\ch}
\newcommand{\EMPTY}[1]{}
\numberwithin{equation}{section}
\theoremstyle{plain}
\newtheorem{Claim}[equation]{Claim}
\newtheorem{theorem}[equation]{Theorem}
\newtheorem{proposition}[equation]{Proposition}
\newtheorem{lemma}[equation]{Lemma}
\newtheorem{corollary}[equation]{Corollary}
\theoremstyle{remark}
\newtheorem{remark}[equation]{Remark}
\theoremstyle{definition}
\newtheorem{definition}[equation]{Definition}
\newtheorem{question}[equation]{Question}
\newcommand{\acts}{\curvearrowright}
\newcommand{\al}{\alpha}
\newcommand{\be}{\beta}
\newcommand{\ben}{\begin{enumerate}}
\newcommand{\een}{\end{enumerate}}
\newcommand{\bit}{\begin{itemize}}
\newcommand{\conf}{\operatorname{Conf}}
\newcommand{\eit}{\end{itemize}}
\newcommand{\D}{\partial}
\newcommand{\de}{\delta}
\newcommand{\Diff}{\operatorname{Diff}}
\newcommand{\injrad}{\operatorname{injrad}}
\newcommand{\isom}{\operatorname{Isom}}
\newcommand{\loc}{\operatorname{loc}}
\newcommand{\ra}{\rightarrow}
\newcommand{\R}{\mathbb{R}}
\newcommand{\si}{\sigma}
\author{Richard H. Bamler}
\address{Department of Mathematics, UC Berkeley, CA 94720, USA}
\email{rbamler@berkeley.edu}
\author{Bruce Kleiner}
\address{Courant Institute of Mathematical Sciences, New York University,  251 M
ercer St., New York, NY 10012}
\email{bkleiner@cims.nyu.edu}
\date{\today}
\thanks{The first author was supported by NSF grants DMS-1611906 and DMS-1906500.  The second author was supported by NSF grant DMS-1711556 and a Simons Collaboration grant}
\begin{document}

 \begin{abstract}
We show that the space of metrics of positive scalar curvature on any 3-manifold is either empty or contractible.
Second, we show that the diffeomorphism group of every 3-dimensional spherical space form deformation retracts to its isometry group.
This proves the Generalized Smale Conjecture.
Our argument is independent of Hatcher's theorem in the $S^3$ case and in particular it gives a new proof of the $S^3$ case.
 \end{abstract}
\title{Ricci flow and contractibility of spaces of metrics}

 \maketitle
 \tableofcontents

\section{Introduction}
\label{sec_introduction}
This paper is a continuation of earlier work on Ricci flow through singularities \cite{Kleiner:2014le,bamler_kleiner_uniqueness_stability,gsc}.  
While the focus in \cite{Kleiner:2014le,bamler_kleiner_uniqueness_stability} was on analytical properties of singular Ricci flows, such as existence and uniqueness, the aim of \cite{gsc} and the present paper is to apply the flow to topological and geometric problems. 
One of the main contributions in this paper is a new geometric/topological method for using families of flows with singularities to produce families of nonsingular deformations.  
This method is new and may be applicable to other settings that are unrelated to Ricci flow.

We now present our main results.
Let $M$ be a connected, orientable, closed, smooth 3-manifold.
We denote by $\met(M)$ and $\Diff(M)$ the space of Riemannian metrics on $M$ and the diffeomorphism group of $M$, respectively; we equip both spaces with the $C^\infty$-topology, and let $\met_{PSC}(M)\subset\met(M)$ denote the subspace of metrics with positive scalar curvature.

Our first result settles a well-known conjecture about the topology of the space of metrics with positive scalar curvature:

\begin{theorem}
\label{thm_psc_contractible}
$\met_{PSC} (M)$ is either empty or contractible.
\end{theorem}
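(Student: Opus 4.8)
The plan is to show that $\met_{PSC}(M)$ is weakly contractible; since it is an open subset of the Fréchet space of smooth symmetric $2$-tensors on $M$, it is an absolute neighborhood retract with the homotopy type of a CW complex, so weak contractibility upgrades to contractibility. We may assume $\met_{PSC}(M)\neq\emptyset$, so by geometrization $M$ is a connected sum of spherical space forms and copies of $S^1\times S^2$. It then suffices to prove that every continuous family of PSC metrics over a finite simplicial complex $X$, equipped with a null-homotopy over a subcomplex, can be contracted rel that subcomplex. The mechanism for contracting a \emph{single} PSC metric $g$ is Ricci flow: by the maximum principle the minimum of the scalar curvature is nondecreasing, so the singular Ricci flow $\M_g$ emanating from $g$ — which exists and is unique by \cite{Kleiner:2014le,bamler_kleiner_uniqueness_stability} — maintains positive scalar curvature on each time-slice and becomes extinct at a finite time. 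Thus $g$ is joined, through PSC metrics, to time-slices of $\M_g$ arbitrarily close to extinction, where the canonical neighborhood theorem forces the geometry to be essentially explicit: disjoint unions of nearly round spherical space forms together with $\eps$-necks and standard $\eps$-caps.

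The next step is to convert this into an honest deformation inside $\met_{PSC}(M)$ and to carry it out over all of $X$. Running the flow forward through its singular times requires performing surgeries: when a neck pinches one excises a cylindrical region and caps it off with a fixed PSC model cap, changing the manifold in a way that can be undone by regluing; reading this backward produces, for each parameter, a path in $\met_{PSC}(M)$ from $g$ to a ``canonical'' metric assembled from round pieces according to the combinatorial pattern of the flow. To make everything depend continuously on the parameter I would invoke the stability (continuous dependence on initial data) of singular Ricci flow from \cite{bamler_kleiner_uniqueness_stability}, and then build the global deformation by a double induction: on the skeleta of $X$ and, simultaneously, downward along a sequence of surgery scales. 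One works with \emph{partial homotopies} — deformations defined over part of $X$ and down to some scale — and shows that a partial homotopy at one scale can be pushed to the next finer scale and extended across the next skeleton, inserting, wherever the pattern of necks changes, interpolations through standard capping and uncapping moves; the construction is arranged so that at the bottom of the induction the family has been deformed into a subspace of $\met_{PSC}(M)$ built from round pieces by fixed gluings, where contractibility is elementary. This is also where the structure theory for three-dimensional Ricci flow with surgery feeds in, largely as a black box.

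The main obstacle, and the genuinely new ingredient, is precisely this parametrized bookkeeping. As the parameter varies the extinction time, the number and location of the surgeries, and even the diffeomorphism types of the surgered manifolds all jump, so there is no uniform ``Ricci flow with surgery'' over $X$ that one could differentiate through; one is forced to set up the notion of a partial homotopy, together with an extension-and-compatibility scheme, flexible enough to absorb these transitions while never leaving $\met_{PSC}(M)$. I expect this to be by far the hardest part of the argument — it is the geometric/topological method advertised in the introduction — while the reduction to weak contractibility, the finite-time extinction, and the analysis of the near-extinction geometry are comparatively soft or already available in the literature.
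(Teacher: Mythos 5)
Your proposal follows essentially the same route as the paper: reduce to weak contractibility (using that $\met_{PSC}(M)$ has the homotopy type of a CW complex), extend a sphere family $\alpha:S^k\to\met_{PSC}(M)$ over the disk inside $\met(M)$, and deform the extended family into $\met_{PSC}(M)$ using the continuous dependence of singular Ricci flows together with the partial-homotopy machinery, with finite-time extinction, PSC preservation, and canonical-neighborhood geometry as the analytic inputs. The only deviations from the paper are cosmetic: the paper's deformation terminates at conformally flat PSC metrics (Theorem~\ref{Thm_main_general_case}) rather than at an explicit assembly of round pieces, and the formal reduction extends $\alpha$ to $D^{k+1}$ inside all of $\met(M)$ and then invokes Theorem~\ref{Thm_main_general_case}\ref{Thm_main_conf_flat_d} (which preserves PSC wherever it already holds) rather than starting from a family that is PSC over all of $D^{k+1}$ with a prescribed null-homotopy over $S^k$.
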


For our second main result consider the subset $\met_{CC}(M)\subset\met(M)$ of metrics that are locally isometric to either the round sphere $S^3$ or the round cylinder $S^2 \times \IR$.
We will show:

\begin{theorem}
\label{thm_cc_contractible}
$\met_{CC} (M)$ is either empty or contractible.
\end{theorem}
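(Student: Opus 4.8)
The plan is to separate $\met_{CC}(M)$ into its ``spherical'' and ``cylindrical'' parts and treat each by different means: the spherical part by combining Theorem~\ref{thm_psc_contractible} with a Ricci-flow deformation retraction, the cylindrical part by reducing to the homotopy type of a diffeomorphism group. For the reduction, note that if $g\in\met_{CC}(M)$ then each point of $M$ has a neighbourhood isometric to an open subset of a round $S^3$ or of a round cylinder $S^2\times\IR$; the two resulting subsets of $M$ are open and disjoint -- at a point of the first type all sectional curvatures agree and are positive, whereas at a point of the second type one vanishes -- so, $M$ being connected, $g$ is of a single type throughout. By Schur rigidity a metric of the first type has constant positive curvature, hence exists only when $M$ is a spherical space form $S^3/\Gamma$; a developing-map argument shows a metric of the second type exhibits $(M,g)$ as a closed quotient of $S^2(\rho)\times\IR$, which, $M$ being orientable, forces $M\cong S^2\times S^1$ or $M\cong\mathbb{RP}^3\#\mathbb{RP}^3$. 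Thus $\met_{CC}(M)=\emptyset$ unless $M$ is one of these manifolds, and it remains to prove: (i) if $M=S^3/\Gamma$, then $\met_{CC}(M)$ equals the space $\met_{\textup{round}}(M)$ of constant-positive-curvature metrics, which is homeomorphic to $\IR_{>0}\times\met^{(1)}_{\textup{round}}(M)$ (splitting off volume, with $\met^{(1)}_{\textup{round}}(M)$ the volume-one round metrics), so it suffices that $\met^{(1)}_{\textup{round}}(M)$ be contractible; (ii) if $M\cong S^2\times S^1$ or $\mathbb{RP}^3\#\mathbb{RP}^3$, then $\met_{CC}(M)=\met_{\textup{cyl}}(M)$, the space of locally cylindrical metrics, and this is contractible.

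\smallskip
\emph{Spherical case.} Since $M=S^3/\Gamma$ admits a round metric, $\met_{PSC}(M)\neq\emptyset$, so by Theorem~\ref{thm_psc_contractible} it is contractible. I would then construct a $\Diff(M)$-equivariant strong deformation retraction of $\met_{PSC}(M)$ onto $\met^{(1)}_{\textup{round}}(M)$; contractibility of the target follows at once. The retraction is the volume-normalised singular Ricci flow: a round metric is Einstein, so its normalised flow is stationary at its unit-volume rescaling and $\met^{(1)}_{\textup{round}}(M)$ is fixed pointwise throughout the homotopy; for general $g\in\met_{PSC}(M)$ the singular Ricci flow from $g$ becomes extinct in finite time and, by Hamilton--Perelman, is $\eps$-close to a shrinking round space form near extinction, so its normalised flow converges to a point of $\met^{(1)}_{\textup{round}}(M)$, yielding a canonical path from $g$ to this ``round limit.'' Equivariance is automatic from the diffeomorphism-invariance of Ricci flow; the substantive point is that the construction is continuous in $g$, including across the formation of singularities and the extinction time, which is where the uniqueness and stability theory for singular Ricci flows \cite{bamler_kleiner_uniqueness_stability} together with its family version enter. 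For $M=S^3$ this gives contractibility of $\met^{(1)}_{\textup{round}}(S^3)\cong\Diff(S^3)/O(4)$ with no appeal to Hatcher's theorem.

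\smallskip
\emph{Cylindrical case.} Here a metric $g\in\met_{\textup{cyl}}(M)$ determines a foliation of $M$ by totally geodesic round $2$-spheres (and $\mathbb{RP}^2$'s), a transverse parallel geodesic line field, and a leaf area; this ``cylindrical structure'' is preserved by the Ricci flow, which merely rescales the spherical leaves and drives $g$ to extinction in finite time. Because the leaves collapse, the normalised flow degenerates and the retraction of the previous step does not apply directly. Instead I would fibre $\met_{\textup{cyl}}(M)$ over the finite-dimensional moduli of cylindrical structures -- parametrised by the leaf area and the monodromy of the flat $S^2$-bundle structure, a contractible parameter space (the interval $[0,\pi]$ of rotation angles for $S^2\times S^1$, and similarly for $\mathbb{RP}^3\#\mathbb{RP}^3$) -- and show the total space is contractible. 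This reduces to the homotopy types of $\Diff(S^2\times S^1)$ and $\Diff(\mathbb{RP}^3\#\mathbb{RP}^3)$ and the inclusions of the pertinent isometry groups, which may be quoted from the literature or, for a uniform self-contained treatment, re-derived by the same singular-Ricci-flow method applied to these manifolds (the singularity model now being the round shrinking cylinder on $S^2\times\IR$).

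\smallskip
\emph{Main obstacle.} The hard part is the equivariant deformation retraction of the spherical case and its cylindrical counterpart: running the singular Ricci flow in continuous families over an arbitrary parameter space while controlling necks, surgeries, and the -- varying and potentially interfering -- extinction times, and then upgrading the ``$\eps$-close to round'' geometry near extinction to a genuinely round metric selected coherently across the family. It is this coherent-selection problem, rather than the behaviour of any single flow, that forces the use of the partial-homotopy technique highlighted in the introduction; the remaining steps are comparatively soft, relying only on the curvature dichotomy, Schur rigidity, and the elementary topology of $S^3/\Gamma$, $S^2\times S^1$, and $\mathbb{RP}^3\#\mathbb{RP}^3$.
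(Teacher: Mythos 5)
The proposal has a genuine gap in the spherical case. You propose to realise a deformation retraction of $\met_{PSC}(M)$ onto the unit-volume round metrics via the volume-normalised singular Ricci flow, arguing that for $g\in\met_{PSC}(M)$ the flow becomes $\eps$-close to a shrinking round space form near extinction, ``so its normalised flow converges to a point'' of the round metrics. This is not what happens: the singular Ricci flow from a generic $g\in\met_{PSC}(M)$ develops neck-pinches, and its time-slices $\M_t$ change topology --- for $t$ near the extinction time, $\M_t$ is typically a disjoint union of small, nearly-round components and is not diffeomorphic to $M$. Consequently the normalised flow does not define a path in $\met(M)$ at all, much less a retraction onto $\met_{CC}(M)$. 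Turning the spacetime $\M$ into a path in $\met(M)$ is, already for a single initial metric, the content of Marques' theorem; at the level of families it is exactly what Theorem~\ref{Thm_main_deform_to_CF} and the partial-homotopy machinery achieve. And even that machinery does \emph{not} produce a strong deformation retraction onto round metrics: the deformation $(h^s_t)$ it yields is held fixed only over the prescribed subcomplex $L$, and $h^s_1$ is guaranteed only to be conformally flat and PSC-conformal, not round. So the ``coherent selection'' difficulty you flag is real, but the mechanism you propose to resolve it (normalised flow as retraction) cannot be repaired without abandoning it in favour of the partial-homotopy construction.

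The paper therefore takes a different and uniform route, with no spherical/cylindrical dichotomy. Given $\alpha:S^k\to\met_{CC}(M)$, extend to $\wh\alpha:D^{k+1}\to\met(M)$ and apply Theorem~\ref{Thm_main_conf_flat} with $L=S^k$; this produces a homotopy of pairs $(D^{k+1},S^k)\to(\met(M),\met_{CC}(M))$ from $\wh\alpha_0=\wh\alpha$ to a map $\wh\alpha_1$ valued in conformally flat metrics, stationary over $S^k$. Lemma~\ref{lem_conf_flat_round} then gives a canonical, continuously varying conformal factor $\phi_s$ with $e^{2\phi_s}g_s\in\met_{CC}(M)$, and by uniqueness $\phi_s\equiv 0$ on $\partial D^{k+1}$; the result is a null-homotopy of $\alpha$ in $\met_{CC}(M)$. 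This kills $\pi_k(\met_{CC}(M))$ for all $k$, and contractibility follows since $\met_{CC}(M)$ has the homotopy type of a CW-complex. In particular, no appeal is made to the structure of $\Diff(S^2\times S^1)$ or $\Diff(\IR P^3\#\IR P^3)$: in the paper those (Theorems~\ref{thm_S2S1_diff} and~\ref{thm_RP3RP3}) are deduced \emph{from} the present theorem, so quoting them here, as your cylindrical case does, would be circular unless one falls back on Hatcher's external results.
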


By a well-known argument, Theorem~\ref{thm_cc_contractible} implies the following conjecture about the structure of diffeomorphism groups:

\begin{theorem}[Generalized Smale Conjecture] \label{thm_gen_smal}
If $(M,g)$ is an isometric quotient of the round sphere, then the inclusion map $\Isom (M,g) \hookrightarrow \Diff (M)$ is a homotopy equivalence.
\end{theorem}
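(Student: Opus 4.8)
The plan is to derive Theorem~\ref{thm_gen_smal} from Theorem~\ref{thm_cc_contractible} via the "well-known argument" alluded to in the text, which I now outline. Let $(M,g)$ be an isometric quotient of the round sphere, so $g\in\met_{CC}(M)$ (in fact $g$ is locally isometric to $S^3$). First I would set up the relevant fibration. The diffeomorphism group $\Diff(M)$ acts on $\met_{CC}(M)$ by pullback, and since $g$ is a round spherical metric, every metric in the orbit $\Diff(M)\cdot g$ is again such a metric; moreover, by the uniqueness of constant-curvature metrics in a conformal/isometry class together with the fact that $M$ admits a unique spherical space form structure up to isometry, the orbit map $\Diff(M)\to\Diff(M)\cdot g$, $\phi\mapsto\phi^*g$, realizes $\Diff(M)\cdot g$ as the homogeneous space $\Diff(M)/\Isom(M,g)$. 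The key analytic input is that this orbit map is a locally trivial fiber bundle — this follows from Ebin–Palais slice theory for the action of the diffeomorphism group on the space of metrics, which provides a local slice at $g$ and hence local sections. Thus we obtain a fiber bundle
\[
\Isom(M,g)\;\hookrightarrow\;\Diff(M)\;\lra\;\Diff(M)\cdot g,
\]
with associated long exact sequence of homotopy groups.

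Next I would identify the orbit $\Diff(M)\cdot g$ as a deformation retract of $\met_{CC}(M)$, or more directly argue that the inclusion $\Diff(M)\cdot g\hookrightarrow\met_{CC}(M)$ is a homotopy equivalence. The orbit $\Diff(M)\cdot g$ is the space of all metrics on $M$ isometric to $(M,g)$, i.e.\ all round spherical metrics realizing the given space form; this is itself homogeneous and, being $\Diff(M)/\Isom(M,g)$ with $\Isom(M,g)$ a compact Lie group, has the homotopy type of a CW complex. By Theorem~\ref{thm_cc_contractible}, $\met_{CC}(M)$ is contractible (it is nonempty since it contains $g$). One then shows $\Diff(M)\cdot g\hookrightarrow\met_{CC}(M)$ is a weak homotopy equivalence: for this I would use that $\met_{CC}(M)$ retracts onto the subspace of metrics locally isometric to $S^3$ (no cylindrical pieces can occur on a closed manifold covered by $S^3$, by a connectedness/rigidity argument), and that this latter subspace fibers over the orbit with contractible fibers — the fiber over $g$ being the space of metrics on $M$ that are globally isometric to $g$ but the identification of which isometric metric you pick, i.e.\ essentially a point up to the residual freedom, so each fiber is contractible. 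Combining, $\Diff(M)\cdot g$ is contractible, hence all its homotopy groups vanish.

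Feeding this into the long exact sequence of the bundle, the maps $\pi_k(\Isom(M,g))\to\pi_k(\Diff(M))$ are isomorphisms for all $k$, so the inclusion $\Isom(M,g)\hookrightarrow\Diff(M)$ is a weak homotopy equivalence. To upgrade this to a genuine homotopy equivalence I would invoke that both $\Isom(M,g)$ (a compact Lie group) and $\Diff(M)$ (by results of Palais and others, $\Diff(M)$ has the homotopy type of a CW complex) are CW homotopy types, so Whitehead's theorem applies.

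The main obstacle, and the step that does the real work, is establishing that $\Diff(M)\cdot g\hookrightarrow\met_{CC}(M)$ is a homotopy equivalence — equivalently, that the "extra" metrics in $\met_{CC}(M)$ (those locally but not globally round, or with cylindrical local models) do not change the homotopy type. On a closed $3$-manifold this should be handled by a rigidity argument: a closed manifold carrying a metric locally isometric to $S^3$ is a spherical space form and the metric is globally round, and cylindrical local structure is incompatible with closedness plus the local $S^3$ structure on connected $M$; so in fact $\met_{CC}(M)$ already equals $\Diff(M)\cdot g$ up to homotopy. Making this precise — and checking the Ebin–Palais slice machinery applies in the $C^\infty$-topology to give the bundle structure — is where the careful argument is needed, but both ingredients are standard.
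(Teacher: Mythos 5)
Your proposal is correct and follows essentially the same standard fibration argument that the paper invokes by citing \cite[Lemma 2.2]{gsc}: Ebin--Palais slice theory gives the fiber bundle $\Isom(M,g)\to\Diff(M)\to\Diff(M)\cdot g$, rigidity of spherical space forms plus normalizing curvature identifies the base with a deformation retract of $\met_{CC}(M)$ (where the cylindrical local model is excluded since $M$ is covered by $S^3$), and contractibility of $\met_{CC}(M)$ then forces the inclusion to be a weak equivalence, upgraded to a homotopy equivalence by Whitehead.
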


We now provide a brief historical overview, before discussing other results.

Theorem~\ref{thm_psc_contractible} was inspired by the work of Marques \cite{marques}, who  showed that $\met_{PSC}(M)$ is path connected.  
The analogous statement in dimension $2$ --- the contractibility of $\met_{PSC}(S^2)$ --- can be proven using the uniformization theorem, or by Ricci flow.   Starting with the famous paper of Hitchin \cite{hitchin}, there has been a long history of results based on index theory, which show that $\met_{PSC}(M)$ 
has nontrivial topology when $M$ is high dimensional; we refer the reader to the survey \cite{rosenberg_progress_report} for details.  Theorem~\ref{thm_psc_contractible} provides the first examples of manifolds of dimension $\geq 3$ for which the homotopy type of $\met_{PSC}(M)$ is completely understood.

Regarding Theorem~\ref{thm_gen_smal}, 
Smale made his original 1961 conjecture for the case $M=S^3$; this is the first step toward the larger project of understanding $\Diff(M)$ for other $3$-manifolds, which was already underway in the 70s \cite{hatcher_haken,ivanov_haken}.  
We recommend \cite[Section 1]{rubinstein_et_al} for a nice discussion of the history and other background on diffeomorphism groups. 
Theorem~\ref{thm_gen_smal}  completes the proof of the Generalized Smale Conjecture after prior work by many people.  Cerf proved that the inclusion $\isom(S^3,g)\ra \Diff(S^3)$ induces a bijection on path components \cite{cerf1,cerf2,cerf3,cerf4}, and the full conjecture for $S^3$ was proven by Hatcher \cite{hatcher_smale_conjecture}.  Hatcher used a blend of combinatorial and smooth techniques to show that the space of smoothly embedded $2$-spheres in $\R^3$ is contractible.  This  is equivalent to the assertion that $O(4)\simeq \isom(S^3,g) \ra \Diff(S^3)$ is a homotopy equivalence when $g$ has sectional curvature $1$ (see the appendix in \cite{hatcher_smale_conjecture}).  Other spherical space forms were studied starting in the late 1970s.  Through the work of a number of authors it was shown that the inclusion $\isom(M)\ra \Diff(M)$ induces a bijection on path components for any spherical space form $M$ \cite{asano,rubinstein_klein_bottles,cappell_shaneson,bonahon,rubinstein_birman,boileau_otal}.  The full conjecture was proven for  spherical space forms containing geometrically incompressible one-sided Klein bottles (prism and quaternionic manifolds), and lens spaces other than $\IR P^3$ \cite{ivanov_1,ivanov_2,rubinstein_et_al}. The methods used in these proofs were of topological nature.
In our previous paper \cite{gsc}, we used Ricci flow methods to give a unified proof of the conjecture for all spherical space forms, except for $\IR P^3$. This established the conjecture for the three remaining families of spherical space forms: tetrahedral, octahedral, and icosahedral manifolds.  Although the techniques in \cite{ivanov_1,ivanov_2,rubinstein_et_al} and \cite{gsc} were very different, these results all relied on Hatcher's resolution of the $S^3$ case.
    
It has been a longstanding question whether it is possible to use techniques from geometric analysis to give a new proof of Hatcher's theorem for $S^3$.   There are several well-known variational approaches to studying the topology of the space of $2$-spheres in $\R^3$ (or $S^3$); however, they all break down due to the absence of a Palais-Smale condition, because there are too many critical points, or because the natural gradient flow does not respect embeddedness.  Analogous issues plague other strategies based more directly on diffeomorphisms.  The argument for Theorem~\ref{thm_cc_contractible} is independent of Hatcher's work and applies uniformly to all spherical space forms; in particular it gives a new proof of the Smale Conjecture based on Ricci flow.

We believe that the methods used in this paper may be readily adapted to other situations where a geometric flow produces neck-pinch type singularities, for instance $2$-convex mean curvature flow and (conjecturally) mean curvature flow of $2$-spheres in $\R^3$  \cite{haslhofer_et_al,white_icm,brendle_mcf_genus_zero} or to study the space of metrics with positive isotropic curvature in higher dimensions \cite{Hamilton-PIC, Brendle-2019}.

\bigskip
We now present some further results.

Applying Theorem~\ref{thm_cc_contractible} in the case of manifolds covered by $S^2\times\R$, one obtains the following corollaries:

\begin{theorem} \label{thm_S2S1_diff}
$\Diff (S^2 \times S^1)$ is homotopy equivalent to $O(2) \times O(3) \times \Omega O(3)$, where $\Omega O(3)$ denotes the loop space of $O(3)$.
\end{theorem}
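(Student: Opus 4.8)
The plan is to derive this from Theorem~\ref{thm_cc_contractible} by comparing two fibrations over an auxiliary space of ``marked'' cylindrical metrics; the only input from Ricci flow will be the contractibility statement, and the rest of the argument is formal. Write $N = S^2 \times S^1$. A product metric on $N$ is locally isometric to the round cylinder, so $\met_{CC}(N) \neq \emptyset$, and Theorem~\ref{thm_cc_contractible} then gives that $\met_{CC}(N)$ is contractible.

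The first step is to record the canonical structure on a metric $g \in \met_{CC}(N)$: its Riemannian universal cover is isometric to $(S^2(\rho) \times \IR,\, \rho^2 g_{S^2} + dt^2)$ for a unique $\rho = \rho(g) > 0$, and the de Rham splitting (the unique parallel line field and its orthogonal parallel $2$-plane field) descends to $N$, producing a codimension-one foliation $\mathcal F_g$ whose leaves are the totally geodesic round $2$-spheres in $N$ and whose leaf space is a circle of some length $L(g) > 0$. Let $\mathcal A$ be the space of quadruples $(g, \nu, \Sigma, f)$ with $g \in \met_{CC}(N)$, $\nu$ a co-orientation of $\mathcal F_g$, $\Sigma$ a leaf, and $f \colon \Sigma \to S^2(\rho(g))$ an isometry. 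I would show that the forgetful map $q \colon \mathcal A \to \met_{CC}(N)$ is a fiber bundle: over $g$ the fiber is the product of a two-point set (co-orientations), the circle of leaves, and the space of frames along a leaf; the last is a principal $O(3)$-bundle over the circle of leaves whose clutching element is the holonomy of $\mathcal F_g$. Since $N$ is oriented and $\mathcal F_g$ co-oriented, this holonomy preserves leaf orientations, hence lies in $SO(3) \subset O(3)$, hence in the identity component, so the frame bundle is trivial. Thus the fiber of $q$ is homeomorphic to $\IZ/2 \times S^1 \times O(3)$, and since $\met_{CC}(N)$ is contractible, $\mathcal A \simeq \IZ/2 \times S^1 \times O(3) \simeq O(2) \times O(3)$.

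Next I would build the holonomy fibration. Cutting $(N, g)$ along a leaf $\Sigma$ gives an isometric copy of a round slab $S^2(\rho) \times [0, L]$ with a gluing isometry of its two ends; conjugating this isometry by the frame $f$ gives an element $\mathrm{hol}(g, \nu, \Sigma, f) \in \Isom(S^2(\rho))$, again in $SO(3)$. This defines $p \colon \mathcal A \to SO(3) \times (0,\infty)^2$, $(g, \nu, \Sigma, f) \mapsto (\mathrm{hol}, \rho(g), L(g))$. I would verify that $p$ is a fiber bundle by a Moser-type local triviality argument, using that every cylindrical metric on $N$ is of the form $(S^2(\rho) \times \IR)/\langle (A, \tau_L)\rangle$ with $A \in SO(3)$, $L > 0$. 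The fiber of $p$ over $(\mathrm{id}, \rho_0, L_0)$ consists of the marked metrics whose deck group is generated by a pure translation, i.e.\ the metrics isometric to the fixed product metric $g_0 = \rho_0^2 g_{S^2} + dt^2$ on $S^2(\rho_0) \times S^1_{L_0}$, together with their marking; flowing the marking along the parallel line field turns it into an isometry $(S^2(\rho_0) \times S^1_{L_0}, g_0) \to (N, g)$, and giving such an isometry is the same as giving a diffeomorphism of $N$ pulling $g$ back to $g_0$. Hence this fiber is homeomorphic to $\Diff(N)$, and since $SO(3) \times (0,\infty)^2$ deformation retracts to $SO(3)$, we obtain a fibration $\Diff(N) \to \mathcal A \to SO(3)$ with projection $p$.

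Finally I would observe that $p$ is null-homotopic: under the homotopy equivalence $\mathcal A \simeq q^{-1}(g_0)$ coming from contractibility of $\met_{CC}(N)$, the map $p$ restricts to the constant map with value $\mathrm{id} \in SO(3)$, since the product metric has trivial monodromy. As $p$ is a fibration, its fiber $\Diff(N)$ is its homotopy fiber, and homotopic maps have homotopy equivalent homotopy fibers; the homotopy fiber of a constant map $\mathcal A \to SO(3)$ is $\mathcal A \times \Omega SO(3)$. Therefore
\[
\Diff(N) \simeq \mathcal A \times \Omega SO(3) \simeq \bigl(\IZ/2 \times S^1 \times O(3)\bigr) \times \Omega SO(3) \cong O(2) \times O(3) \times \Omega O(3),
\]
using $O(2) \cong \IZ/2 \times S^1$ and $\Omega O(3) = \Omega SO(3)$. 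The main obstacle is the point-set and differential-topological bookkeeping underlying the two middle paragraphs: establishing that $q$ and $p$ are genuine fiber bundles with the claimed fibers (which rests on the elementary classification of cylindrical metrics on $S^2 \times S^1$ and a local triviality argument), and handling smoothness and paracompactness of the infinite-dimensional mapping spaces involved, so that bundles over contractible bases trivialize and actual fibers agree with homotopy fibers.
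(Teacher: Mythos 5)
Your argument is correct in outline (granting the local-triviality and point-set issues that you flag yourself, which the paper's proof also handles only by reference), but it is a genuinely different route from the one in the paper. The paper works with the space $S(M)$ of spherical structures: it observes that $\Diff(M)\to S(M)$, $\phi\mapsto\phi^*\SS_{S^2\times S^1}$, is a fibration whose fiber $\Diff(M,\SS_{S^2\times S^1})$ is the semidirect product of $\Diff(S^1)$ with the free loop group $L\,O(3)$, hence already has the homotopy type $O(2)\times O(3)\times\Omega O(3)$; it then deduces contractibility of $S(M)$ from Theorem~\ref{thm_cc_contractible} by choosing compatible CC-metrics over a sphere of spherical structures and filling them in over the disk. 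You instead avoid $S(M)$ and the loop-group computation entirely: you pass to marked cylindrical metrics $\mathcal A$, get $\mathcal A\simeq O(2)\times O(3)$ from contractibility of $\met_{CC}(M)$ and the triviality of the frame bundle (clutching in $SO(3)$), and then conjure the $\Omega O(3)$ factor from the null-homotopic holonomy fibration $p:\mathcal A\to SO(3)$, whose fiber is $\Diff(M)$. In effect, where the paper sees the $\Omega O(3)$ directly inside the structure-preserving diffeomorphism group (as $L\,O(3)\simeq O(3)\times\Omega O(3)$), you see it indirectly as the homotopy fiber of a constant map. Both routes use the same two inputs -- contractibility of $\met_{CC}(M)$ and the elementary classification of quotients of the round cylinder -- and both require, and largely defer, the same differential-topological bookkeeping; the paper's is a bit shorter to state because $\Diff(M,\SS)$ is a familiar object, while yours is more self-contained in that it never needs to identify the structure-preserving diffeomorphism group.

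A few minor points worth tightening if you write this out: (1) You need $M=S^2\times S^1$ to be oriented for the holonomy to land in $SO(3)$; this is consistent with the paper's standing conventions but should be stated, since it is what forces the frame bundle to be trivial and the target of $p$ to be connected. (2) When you pass from $p:\mathcal A\to SO(3)\times(0,\infty)^2$ to a ``fibration over $SO(3)$'', it is cleaner to simply compute the homotopy fiber of $p$ directly: since $p$ is null-homotopic it is $\mathcal A\times\Omega\bigl(SO(3)\times(0,\infty)^2\bigr)\simeq \mathcal A\times\Omega SO(3)$, and since $p$ is a fiber bundle this homotopy fiber agrees with the actual fiber $\Diff(M)$; there is no need to restrict over a slice. (3) In the description of the fiber of $q$, the ``product'' wording in the first clause is at odds with the principal-bundle description you correctly give in the second clause; only after you argue triviality of that bundle do you get a product $\IZ/2\times S^1\times O(3)$.
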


\begin{theorem} \label{thm_RP3RP3}
$\Diff (\IR P^3 \# \IR P^3)$ is homotopy equivalent to $O(1)\times O(3)$.  
\end{theorem}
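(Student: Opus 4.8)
The plan is to deduce this from Theorem~\ref{thm_cc_contractible} by the "well-known argument" alluded to after that theorem — namely, the standard reduction that converts contractibility of a space of locally homogeneous metrics into a homotopy equivalence of diffeomorphism groups with an appropriate automorphism space, followed by an identification of that automorphism space. Concretely, $M = \IR P^3 \# \IR P^3$ is the unique orientable manifold covered by $S^2\times\IR$ other than $S^2\times S^1$; it carries a locally homogeneous metric $g_0$ modeled on $S^2\times\IR$ (with two totally geodesic $\IR P^2$'s), so $\met_{CC}(M)\ne\emptyset$ and hence, by Theorem~\ref{thm_cc_contractible}, $\met_{CC}(M)$ is contractible. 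The group $\Diff(M)$ acts on $\met_{CC}(M)$ by pullback; because $\met_{CC}(M)$ is contractible, the homotopy fiber of $\Diff(M)\to\Diff(M)/\Diff(M,g_0\text{-loc})$ — more precisely, the stabilizer data — lets one identify $\Diff(M)$ up to homotopy with a space of "marked" locally $S^2\times\IR$ structures, which in turn (by Cerf–Palais, since all such metrics are mutually isotopic through $\met_{CC}$) is homotopy equivalent to $\Isom(M,g_0)$ times a contractible correction, provided the action of $\Diff(M)$ on $\met_{CC}(M)$ has the right transitivity and local triviality properties.

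The key steps, in order, are: (i) exhibit the round-cylinder-type metric $g_0$ on $\IR P^3\#\IR P^3$ and compute $\Isom(M,g_0)$ — this isometry group should be $O(1)\times O(3)$ (the $O(3)$ acting on the $S^2$ factor of the universal cover $S^2\times\IR$, commuting with the deck group, together with the residual $\IZ_2\cong O(1)$ coming from the reflection of the $\IR$-factor that swaps the two $\IR P^3$ summands, which survives in the quotient); (ii) invoke the general principle (the same one used to pass from Theorem~\ref{thm_cc_contractible} to Theorem~\ref{thm_gen_smal}, and to Theorem~\ref{thm_S2S1_diff}) that contractibility of $\met_{CC}(M)$ implies $\Diff(M)$ is homotopy equivalent to the space of locally-$S^2\times\IR$ geometric structures on $M$; (iii) identify the latter space with $\Isom(M,g_0)$, using that the moduli of such structures on this particular $M$ is a point (all are equivalent), so the structure space is a single orbit $\Diff(M)/\Isom(M,g_0)$ up to homotopy and hence fibers over it with contractible fiber; (iv) assemble (i)–(iii) to conclude $\Diff(M)\simeq \Isom(M,g_0) = O(1)\times O(3)$.

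The main obstacle is step~(iii): one must verify that the space $\met_{CC}(M)$ is not merely contractible but that the $\Diff(M)$-action on it is suitably nice — i.e. that the orbit map $\Diff(M)\to\met_{CC}(M)$, $\phi\mapsto\phi^*g_0$, is a fibration (or at least a quasifibration) with fiber $\Isom(M,g_0)$, and that the orbit is all of (a deformation retract of) $\met_{CC}(M)$. The transitivity up to isotopy is essentially the statement that any metric in $\met_{CC}(M)$ is isometric, after a diffeomorphism, to $g_0$ rescaled — which for this $M$ follows because a locally $S^2\times\IR$ metric on a manifold with this fundamental group is geometrically rigid up to the $O(3)$ and the cylinder-length parameter, and the length parameter can be normalized by a path in $\met_{CC}$. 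The fibration property is handled by the slice theorem for the $\Diff(M)$-action on $\met(M)$ (Ebin–Palais), restricted to the $\met_{CC}$ stratum. Once these standard—but technically essential—facts are in place, combined with the long exact sequence of the fibration $\Isom(M,g_0)\to\Diff(M)\to\met_{CC}(M)$ and the contractibility of the base from Theorem~\ref{thm_cc_contractible}, the claimed homotopy equivalence $\Diff(\IR P^3\#\IR P^3)\simeq O(1)\times O(3)$ follows immediately. A secondary point requiring care is confirming that there are no extra components or higher homotopy in $\Isom(M,g_0)$ beyond $O(1)\times O(3)$ — in particular that the reflection swapping the two summands together with the $O(3)$ genuinely generate the full isometry group and that these two factors commute and intersect trivially, giving the direct product rather than a nontrivial extension.
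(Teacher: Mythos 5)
Your proposal follows the same route as the paper: invoke Theorem~\ref{thm_cc_contractible} for contractibility of $\Met_{CC}(M)$, normalize to a single $\Diff(M)$-orbit of round-cylinder-type metrics, identify $\Isom\bigl((S^2\times S^1)/\IZ_2\bigr)\cong O(1)\times O(3)$, and conclude via the orbit fibration as in the proof of Theorem~\ref{thm_gen_smal}. The one ingredient you leave vague is the deformation retraction of $\Met_{CC}(M)$ onto a fixed isometry type: the paper realizes it by the canonical formula $g\mapsto a_g\, g+b_g\,\Ric_g$ with the unique $a_g,b_g$ (depending continuously on $g$) yielding the model $(S^2\times S^1(2\pi))/\IZ_2$, and accordingly the base of the fibration in your last paragraph should be that normalized slice $\Met_{CC1}(M)$ rather than all of $\Met_{CC}(M)$ --- a repair you essentially anticipate when you observe that the $\Diff(M)$-orbit of $g_0$ is only a deformation retract of $\Met_{CC}(M)$.
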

Theorem~\ref{thm_S2S1_diff} is due to Hatcher \cite{hatcher_s2xs1}.     While Theorem~\ref{thm_RP3RP3} can be deduced directly from Theorem~\ref{thm_cc_contractible}, there is also an alternate approach based on a result of \cite{hatcher_s2xs1}, which reduces it to Theorem~\ref{thm_gen_smal} in the $\IR P^3$ case.

Theorems~\ref{thm_gen_smal}, \ref{thm_S2S1_diff}, and \ref{thm_RP3RP3} describe the structure of $\Diff(M)$ when $M$ has a geometric structure modelled on $S^3$ or $S^2\times S^1$.  In \cite{bamler_kleiner_gsc_ii} we will use Ricci flow methods to study $\Diff(M)$ when $M$ is modelled the Thurston geometry Nil.  Combined with earlier work, this completes the classification of $\Diff(M)$ when $M$ is prime \cite{hatcher_haken,ivanov_haken,ivanov_1,ivanov_2,gabai_smale_conjecture_hyperbolic,rubinstein_et_al,mccullough_soma}.

\bigskip

Theorems~\ref{thm_psc_contractible} and \ref{thm_cc_contractible} are deduced from a single result, which involves fiberwise Riemannian metrics on fiber bundles.  A special case is:

\begin{theorem}[See Theorem~\ref{Thm_main_conf_flat}] \label{Thm_main_general_case} 
Let $M$ be a connected sum of spherical space forms and copies of $S^2 \times S^1$, and $K$ be the geometric realization of a simplicial complex.    Consider a fiber bundle $\pi : E \to K$ with fibers  homeomorphic to $M$ and  structure group $\Diff (M)$.

Suppose that $g^s$ is a Riemannian metric on the fiber $\pi^{-1}(s)$ for every $s\in K$, such that the family $(g^s)_{s \in K}$ varies continuously with respect the fiber bundle structure.   Then there is a  family of  Riemannian metrics $(h^s_t)_{s \in K, t \in [0,1]}$ such that:
\begin{enumerate}[label=(\alph*)]
\item $h^s_t$ is a Riemannian metric on $\pi^{-1}(s)$  for every $s\in K$, $t \in [0,1]$, and the family $(h^s_t)_{s\in K, t\in[0,1]}$ varies continuously with respect to the fiber bundle structure. 
\item $h^s_0 = g^s$ for all $s \in K$.
\item $h^s_1$ is conformally flat and has positive scalar curvature for all $s \in K$.
\item If for some $s\in K$ the manifold $(\pi^{-1} (s), g^s)$ has positive scalar curvature, then so does $(M^s, h^s_t)$ for all $t\in [0,1]$.
\end{enumerate}
\end{theorem}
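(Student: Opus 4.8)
The plan is to use the singular Ricci flow of \cite{Kleiner:2014le,bamler_kleiner_uniqueness_stability} as the engine of the deformation. For each $s\in K$, let $\M^s$ be the singular Ricci flow with initial condition $(\pi^{-1}(s),g^s)$; this flow is canonically determined by $g^s$, and by the uniqueness and stability theory it varies continuously with $g^s$, hence with $s$, in a suitable sense. Since $M$ is a connected sum of spherical space forms and copies of $S^2\times S^1$, its prime decomposition contains no aspherical summand, so each $\M^s$ becomes extinct at a finite time $T^s$. The geometric idea is that, as the flow time increases, $\M^s_t$ breaks up into high-curvature pieces joined by necks; rather than letting a neck pinch (which would change the topology), we retain a long standard cylindrical region in its place and continue flowing the remaining pieces. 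Each such piece eventually reaches a state in which, after rescaling, it is close to a round spherical space form or a round quotient of $S^2\times\IR$ (the possible closed extinction models), which in particular carries positive scalar curvature. Running this modified process to its conclusion thus produces a metric on all of $M$ built from nearly round pieces and standard necks; a final conformal adjustment deforms it, through PSC metrics, to a conformally flat one. Tracking the (suitably rescaled) metric on $M$ through this whole process traces out the family $(h^s_t)_{t\in[0,1]}$.

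Making this precise while keeping everything continuous over $K$ is the crux. The obstruction is that the combinatorial data of the flow --- the number of neck pinches, where and when they occur, the extinction time $T^s$ --- jumps as $s$ varies. To organize the argument I would introduce a notion of \emph{partial homotopy}: a deformation $(h^s_t)$ defined over a subcomplex of $K$ and for $t$ in an initial subinterval, together with bookkeeping recording which regions have already been ``standardized'' (put into a fixed round-plus-neck model) and which frozen necks have been inserted. One then proceeds by a double induction --- over the skeleta of $K$ and over successive ``stages'' of the flow --- alternating two operations: (i) \emph{extending} a partial homotopy over the next skeleton, after passing to a sufficiently fine subdivision of $K$, using the continuous dependence of $\M^s$ on $s$; and (ii) \emph{continuing} a partial homotopy in the $t$-direction past the next batch of neck pinches, by inserting standard frozen necks in place of the pinching regions. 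Finite-time extinction guarantees that after finitely many stages only round building blocks remain, and a concluding conformal modification lands exactly on a conformally flat PSC metric, giving $h^s_1$.

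The remaining ingredients are comparatively routine. Preservation of positive scalar curvature is automatic: Ricci flow preserves PSC by the maximum principle applied to $\partial_t R=\Delta R+2|\Ric|^2$, the frozen necks and transition regions can be chosen with positive scalar curvature, and the final conformal deformation can be kept within PSC metrics; hence whenever $g^s$ has PSC so does every $h^s_t$, which is condition (d), while unconditional positive scalar curvature of $h^s_1$ follows from closeness of the terminal metric to round models. Conformal flatness of $h^s_1$ holds because its constituents --- round $S^3/\Gamma$, round quotients of $S^2\times\IR$, and standard necks --- are locally conformally flat, and connected sums of conformally flat PSC $3$-manifolds admit conformally flat PSC metrics via an explicit gluing along round necks. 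Finally, since the singular Ricci flow is canonical and therefore equivariant under diffeomorphisms, the entire construction is natural with respect to the bundle structure, so the output family $(h^s_t)$ is again continuous with respect to $\pi$ and conditions (a)--(c) hold.

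The main obstacle is step (ii), and more precisely the interaction of steps (i) and (ii) across the locus in $K$ where the structure of the flow degenerates --- where a neck first appears, where two surgery scales coincide, or where the pattern of extinctions changes. Handling this requires: a quantitative, stable description of how the flow and its canonical-neighborhood decomposition depend on the initial metric, built on the stability theory for singular Ricci flows; a definition of partial homotopy flexible enough that it can always be extended after subdivision and always be continued past the next collection of pinches; and care that the frozen-neck insertions made over different simplices agree on overlaps, so that the pieces assemble into a single globally defined continuous family. Everything else --- finite-time extinction, the maximum principle, and conformal flatness of the model metrics --- is standard.
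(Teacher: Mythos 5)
Your proposal captures the overall architecture of the paper's proof: canonical singular Ricci flows, continuous dependence via the stability theory, a notion of partial homotopy, a double induction over simplices of $K$ and over flow stages, finite-time extinction, and a concluding conformal adjustment. That high-level map is accurate. But two specific technical claims you make are not merely details to be filled in --- they paper over points where the argument, as you describe it, would actually fail, and where the paper introduces substantial new machinery.

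First, you assert that preservation of positive scalar curvature is ``automatic'' and that the frozen necks, transition regions, and final conformal modification ``can be chosen with positive scalar curvature.'' The Ricci-flow part of this is indeed automatic by the maximum principle, but the modification moves --- enlarging a domain by a spherically symmetric piece, removing a Bryant-type disk, interpolating across a neck --- are exactly where an ordinary PSC condition breaks down. The obstruction is not local: spaces of warped-product PSC metrics on cylinders have poor contractibility properties, so one cannot carry out the interpolations in Proposition~\ref{prop_extending} while staying in PSC. The paper's fix is the relative, conformally invariant notion of being \emph{PSC-conformal} (Definition~\ref{Def_PSC_conformal}), which does behave well under these operations (Lemmas~\ref{Lem_PSC_conformal_enlarge}, \ref{lem_thick_annulus_psc_conformal}) and which is then converted back to genuine PSC only at the very end of the proof of Theorem~\ref{Thm_main_conf_flat}. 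Your sketch contains no substitute for this, and without it condition (d) of the theorem cannot be propagated through the moves.

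Second, your bookkeeping of ``frozen necks'' and ``standardized regions'' presupposes that the neck and cap regions of each $\M^s_T$ come equipped with a canonical, $s$-continuous fibration by round two-spheres. The canonical-neighborhood assumption gives you approximate symmetry, but not exact symmetry, and the approximate $O(3)$-orbits are not determined by the metric. This is why Section~\ref{sec_rounding_process} is needed: it constructs, continuously in $s$, a perturbed metric $g^{\prime,s}$ and an $\RR$-structure whose spherical structure $\SS^s$ is exactly preserved by the modified time vector field $\partial^{\prime,s}_\t$. The compatibility conditions in Definition~\ref{Def_partial_homotopy} (items (4) and (5)) are phrased entirely in terms of $\SS^s$, and Proposition~\ref{prop_move_part_hom_bckwrds} uses the $\partial^{\prime,s}_\t$-invariance of $\SS^s$ to move a partial homotopy backward in time. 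Your proposal skips directly to the partial homotopy construction without this preprocessing, leaving the key compatibility conditions undefined. Supplying the rounding step and replacing naive PSC by PSC-conformality would bring your sketch into line with the paper's proof; as written, these are genuine gaps, not routine omissions.
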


As a corollary we have:

\begin{corollary}
\label{cor_deform_to_psc_cf}
If $\pi:E\ra K$ is as in Theorem~\ref{Thm_main_general_case}, then there is a continuously varying family of fiberwise Riemannian metrics $(h^s)_{s \in K}$ such that $(\pi^{-1} (s), h^s)$ is  conformally flat and has positive scalar curvature.
\end{corollary}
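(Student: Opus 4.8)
The plan is to reduce Corollary~\ref{cor_deform_to_psc_cf} to Theorem~\ref{Thm_main_general_case} by exhibiting \emph{some} continuously varying initial family $(g^s)_{s\in K}$ of fiberwise Riemannian metrics on $\pi$ and then running the deformation produced by the theorem all the way to time $t=1$. So the first step is to construct $(g^s)_{s\in K}$. Since $K$ is the geometric realization of a simplicial complex it is paracompact, so I would choose a locally finite open cover $\{U_i\}_{i\in I}$ of $K$ over which $\pi$ trivializes, together with a subordinate partition of unity $\{\varphi_i\}_{i\in I}$. Fixing for each $i$ a trivialization $\Phi_i\colon \pi^{-1}(U_i)\xrightarrow{\ \cong\ }U_i\times M$ and an auxiliary Riemannian metric $\bar g$ on $M$, let $g_i^s$ be the pullback of $\bar g$ under $\Phi_i$ restricted to $\pi^{-1}(s)$, a Riemannian metric on $\pi^{-1}(s)$ for $s\in U_i$. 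Because the space of Riemannian metrics on a fixed manifold is a convex cone, $g^s:=\sum_{i\in I}\varphi_i(s)\,g_i^s$ is a Riemannian metric on $\pi^{-1}(s)$ for every $s\in K$.

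The second step is to feed $(g^s)_{s\in K}$ into Theorem~\ref{Thm_main_general_case}, obtaining a family $(h^s_t)_{s\in K,\,t\in[0,1]}$ with properties (a)--(d), and then to set $h^s:=h^s_1$. Restricting property (a) to the slice $t=1$ shows that $(h^s)_{s\in K}$ varies continuously with respect to the fiber bundle structure, and property (c) shows that each $(\pi^{-1}(s),h^s)$ is conformally flat with positive scalar curvature, which is exactly the conclusion of the corollary. Properties (b) and (d) play no role here.

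I do not expect a serious obstacle, since the corollary is an immediate consequence of the theorem; the one point needing a little care is in the first step, namely checking that the glued family $(g^s)_{s\in K}$ is continuous ``with respect to the fiber bundle structure'' in the precise sense required by the hypotheses of Theorem~\ref{Thm_main_general_case} --- that is, that in every local trivialization the associated family of metrics on $M$ depends continuously on the base point in the $C^\infty$ topology. This should follow routinely from local finiteness of the cover together with the continuity of the $\varphi_i$ and of the transition maps of $\pi$ (which take values in $\Diff(M)$), but it is the only place where the bundle structure, as opposed to a product, genuinely enters.
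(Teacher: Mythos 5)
Your proof is correct and matches the paper's evident intent. The paper leaves the argument implicit (it states the corollary immediately after the theorem without writing out a proof), but constructing a starting family $(g^s)_{s\in K}$ via a partition of unity subordinate to a trivializing cover and then taking the time-$1$ slice $h^s := h^s_1$ of the deformation produced by Theorem~\ref{Thm_main_general_case} is exactly the intended reasoning.
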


  Let $\met_{CF}(M)\subset \met(M)$ denote the subspace of conformally flat metrics.  Theorem~\ref{Thm_main_general_case} and Corollary~\ref{cor_deform_to_psc_cf} are indications that the space $\met_{PSC}(M)\cap\met_{CF}(M)$ has simple topology, and suggest the following question in conformal geometry:

\begin{question}
\label{conj_psc_cf_contractible}
Is $\met_{PSC}(M)\cap\met_{CF}(M)$ always empty or contractible?  Equivalently, is the space of conformally flat metrics with positive Yamabe constant always empty or contractible? 
\end{question}

The fundamental work of Schoen-Yau \cite{schoen_yau_conformally_flat} on the geometry of individual metrics  $g\in\met_{PSC}(M)\cap \met_{CF}(M)$ should be helpful in addressing this question.   To our knowledge, the current understanding of the corresponding Teichmuller space of conformally flat structures is rather limited.   The connection between Question~\ref{conj_psc_cf_contractible} and the preceding results is that the contractibility of $\met_{PSC}(M)\cap\met_{CF}(M)$ logically implies  Theorem~\ref{Thm_main_general_case} and Corollary~\ref{cor_deform_to_psc_cf}.

\subsection*{Discussion of the proof}
To give the reader an indication of some of the issues that must be addressed when attempting to apply Ricci flow to a deformation problem, we first recall the outline
of Marques' proof \cite{marques} that $\met_{PSC}(M)/\Diff(M)$ is path-connected, in the special case when $M$ is a spherical space form (see also \cite{haslhofer_et_al}, which was inspired by \cite{marques}).  Starting with a metric $h\in \met_{PSC}(M)$, one applies Perelman's Ricci flow with surgery to obtain a finite sequence $\{(M_j,(g_j(t))_{t\in [t_{j-1},t_j]})\}_{1\leq j\leq N}$ of ordinary Ricci flows where $g_1(0)=h$.
Since $h$ has positive scalar curvature, so does each of the Ricci flows $g_j(t)$.  Marques shows by backward induction on $j$ that the Riemannian metric $g_j(t_{j-1})$ on $M_j$ can be deformed through metrics of positive scalar curvature to a metric of constant sectional curvature.  In the induction step he carefully analyzes Perelman's surgery process, and shows that one may pass from $(M_{j+1},g_{j+1}(t_j))$ to $(M_j,g_j(t_j))$ by means of a geometric connected sum operation, which is compatible with deformations through metrics of positive scalar curvature.  In other words, the Ricci flow with surgery can be seen as a sequence of  of continuous curves in $\Met_{PSC} (M_j)$, whose endpoints are related by a surgery process.
Marques' work was to join these endpoints in order to produce a single continuous curve.

Let us now consider a family of metrics $(h_s)_{s\in K}$ depending continuously on a parameter $s$.  
If one attempted to use the above strategy for such a family, then one would immediately run into the problem that the resulting Ricci flows with surgery starting from the metrics $h_s$ may not be unique or may not depend continuously on the parameter $s$.
Moreover, the locations and times of the surgery operations may change as $s$ varies --- possibly in a discontinuous fashion.
As we vary $s$, the order in which these operations are performed may  change and some surgery operations may even appear or disappear.
In particular, this means that the underlying topology of the flow at some (or most) positive times may not be constant in $s$.
So in summary, every single metric $h_s$ defines a Ricci flow with surgery, which can be turned into a continuous metric deformation.
However, there is little hope of producing a useful topological object based on the collection of all such flows for different $s$.
In addition, since our second goal is to study the structure of diffeomorphism groups, we are faced with the complication that the argument from the previous paragraph only works modulo the diffeomorphism group.

We address these issues using a number of new techniques.
First, we employ the singular Ricci flow (or ``Ricci flow through singularities'') from \cite{Kleiner:2014le} in lieu of the Ricci flow with surgery.
In \cite{bamler_kleiner_uniqueness_stability} we showed that this flow is canonical.
Based on a stability result from the same paper, we show that any continuous family of metrics $(h_s)_{s \in K}$ can be evolved into a \emph{continuous} family of singular Ricci flows.
Here the word ``continuous'' has to be defined carefully, since the flows are not embedded in a larger space.

Our use of singular Ricci flows ameliorates some of the issues raised above, however, the underlying problem still remains.
More specifically, our notion of continuous dependence of singular Ricci flows still allows the possibility that singularities, which are the analogues of the surgery operations from before, may vary in space and in time and may appear or disappear as we vary the parameter $s$.
While these phenomena are now slightly more controlled due to our continuity property, we now have to deal with  singularities that may occur on some complicated set, possibly of fractal dimension.

One of the main conceptual novelties in our proof is a new topological notion called a \emph{partial homotopy}, which can be viewed as a hybrid between a continuous family of singular Ricci flows and a homotopy in the space of metrics.
On the one hand, this notion captures the phenomenon of non-constant topology as the parameter $s$ varies by neglecting the singular part, whose topological structure may be ``messy''.
On the other hand, if a partial homotopy is ``complete'' in a certain sense, then it constitutes a classical homotopy in a space of metrics.
The notion of a partial homotopy is not inherently restricted to to $3d$ Ricci flow. 
It may therefore also have applications in higher dimensions or to other geometric flows.

A large part of our paper will be devoted to the development of a theory that allows us to construct and modify partial homotopies through certain modification moves.  We then combine this with the theory of continuous families of singular Ricci flows. 
Roughly speaking, we will use a continuous family of singular Ricci flows as a blueprint to carry out the modification moves of a partial homotopy, with the goal of improving it towards a complete one.

In order to relate partial homotopies to continuous families of singular Ricci flows, we have to do some pre-processing.
More specifically, we will study the continuous dependence of the singular set of singular Ricci flows and equip a neighborhood with a continuous family of ``$\RR$-structures''.
These $\RR$-structures limit the dependence of the singular set on the parameter and are used to relate a partial homotopy to a family of singular Ricci flows.
In our pre-processing step we also produce a ``rounded'' family of metrics, which are part of these $\RR$-structures.
Taking a broader perspective, our partial homotopy machinery will ultimately enable us to ``weave'' these metrics together to produce the desired homotopy in the space of metrics.

Our theory of partial homotopies brings together and generalizes a number of technical ingredients that have existed in the fields of topology and geometric analysis.
Most notable among these are: a surgery technique generalizing connected sums of conformally flat structures to arbitrary metrics and a notion of positivity of the Yamabe constant relative boundary.

\subsection*{Organization of the paper}
In Section~\ref{sec_preliminaries}, we briefly recapitulate the most important definitions and results related to singular Ricci flows. 
In Section~\ref{sec_families_srfs} we formalize the idea of a continuous family of singular Ricci flows $(\M^s)_{s\in X}$, where the parameter lies in some topological space $X$;  using results from \cite{Kleiner:2014le,bamler_kleiner_uniqueness_stability}, we prove the existence of a unique continuous family of singular Ricci flows with a prescribed family of initial conditions.   
In Section~\ref{sec_rounding_process},  we implement a ``rounding'' procedure to construct $\RR$-structures, which characterize the geometry and topology of the singular part and provides a family of metrics, whose high curvature part is precisely symmetric.   
In Sections~\ref{sec_partial_homotopy} and \ref{sec_partial_homotopy},  we set up and develop the theory of partial homotopies.
In Section~\ref{sec_deforming_families_metrics}, we apply this theory to our families of $\RR$-structures from Section~\ref{sec_rounding_process}.
More specifically, given a continuous family $(g_{s,0})_{s\in K}$ of Riemannian metrics parametrized by a simplicial complex $K$, we will construct a continuous metric deformation $(g_{s,t})_{s\in K,t\in[0,1]}$, where $g_{s,1}$ is conformally flat for all $s$.
Based on this result, we prove the main theorems of this paper in Section~\ref{sec_proofs_main_theorems}.

\subsection*{Acknowledgements}
The first named author would like to thank Boris Botvinnik for bringing up the question on the contractibility of spaces of positive scalar curvature and for many inspiring discussions.

\section{Conventions}
Unless otherwise noted, all manifolds are assumed to be smooth, orientable and 3-dimensional.
Whenever we refer to a continuous family of functions or maps, we will always work in the smooth topology $C^\infty$, or $C^\infty_{\loc}$ in the non-compact case.
So continuity means that all partial derivatives vary continuously in the $C^0$-sense.
The same applies to the terminology of ``transverse continuity'' (compare with the discussion after Definition~\ref{def_continuity_maps_between_families}, Definitions~\ref{def_continuity_smooth_objects}, \ref{Def_transverse_O3_action}), which will always mean ``transverse continuity in the smooth topology''.

If $(g_t)_{t \in I}$ is a family of Riemannian metrics on a manifold $M$, then we denote by $B(x,t,r)$ the distance ball with respect to the metric $g_t$.

If $(M,g)$ is a Riemannian manifold and $X \subset M$ is a measurable subset, then we denote by $V(X, g)$ its volume with respect to the Riemannian measure $d \mu_g$.

We will denote by $B^n (r), D^n (r) \subset \IR^n$ the open and closed balls of radius $r$ around the origin.
Moreover, we will set $B^n := B^n (1)$, $D^n := D^n (1)$ and denote by $A^n (r_1, r_2) := B^n (r_2) \setminus D^n (r_1)$ the annulus of radii $r_1, r_2$.

We will say that a Riemannian metric $g$ is \emph{conformally flat} if $e^{2\phi} g$ is flat for some smooth function $\phi$.
In dimension 3 this condition is equivalent to the Cotton-York condition, see Remark~\ref{rmk_Cotton_York} and the discussion in Subsection~\ref{subsec_conf_exp}.
 
\bigskip\bigskip

\section{Preliminaries} \label{sec_preliminaries}
In this section we collect some preliminary material, including background required for Sections~\ref{sec_families_srfs} and \ref{sec_rounding_process}.
\subsection{$\kappa$-solutions}
The singularity formation in 3-dimensional Ricci flows is usually understood via singularity models called $\kappa$-solutions (see \cite[Sec. 11]{Perelman1}).
The definition of a $\kappa$-solution consists of a list of properties that are known to be true for $3$-dimensional singularity models.

\begin{definition}[$\kappa$-solution] \label{def_kappa_solution}
An ancient Ricci flow $(M, (g_t)_{t \leq 0} )$ on a $3$-dimensional manifold $M$ is called a \textbf{(3-dimensional) $\kappa$-solution}, for $\kappa > 0$, if the following holds:
\begin{enumerate}[label=(\arabic*)]
\item $(M, g_t)$ is complete for all $t \in (- \infty, 0]$,
\item $|{\Rm}|$ is bounded on $M \times I$ for all compact $I \subset ( - \infty, 0]$,
\item $\sec_{g_t} \geq 0$ on $M$ for all $t \in (- \infty, 0]$,
\item $R > 0$ on $M \times (- \infty, 0]$,
\item $(M, g_t)$ is $\kappa$-noncollapsed at all scales for all $t \in (- \infty, 0]$

(This means that for any $(x,t) \in M \times (- \infty, 0]$ and any $r > 0$ if $|{\Rm}| \leq r^{-2}$ on the time-$t$ ball $B(x,t,r)$, then we have $|B(x,t,r)| \geq \kappa r^n$ for its volume.)
\end{enumerate}
\end{definition}

Important examples of $\kappa$-solutions are the \textbf{round shrinking cylinder}
\[ \big( S^2 \times \IR, (g^{S^2 \times \IR}_t := (1-2t) g_{S^2} + g_{\IR} )_{t \leq 0} \big), \]
the \textbf{round shrinking sphere}
\[ \big( S^3, (g^{S^3}_t := (1- 4t) g_{S^3})_{t \leq 0} \big) \]
and the \textbf{Bryant soliton} \cite{Bryant2005}
\[ \big( M_{\Bry}, (g_{\Bry, t})_{t \leq 0} \big). \]
We recall that $M_{\Bry} \approx \IR^3$ and the flow $(g_{\Bry, t})_{t \leq 0}$ is invariant under the standard $O(3)$-action.
We will denote the fixed point of this action by $x_{\Bry} \in M_{\Bry}$.
By parabolic rescaling we may assume in this paper that $R(x_{\Bry}) = 1$.
The flow $(g_{\Bry, t})_{t \leq 0}$ is moreover a steady gradient soliton.
For the purpose of this paper, it is helpful to remember that the pointed $(M_{\Bry}, g_{\Bry,t}, x_{\Bry})$ are isometric for all $t \leq 0$.
We will set $g_{\Bry} := g_{\Bry, 0}$.
For more details see the discussion in \cite[Appendix B]{bamler_kleiner_uniqueness_stability}.

The following theorem states that these examples provide an almost complete list of $\kappa$-solutions.

\begin{theorem}[Classification of $\kappa$-solutions] \label{Thm_kappa_sol_classification}
There is a constant $\kappa_0 > 0$ such for any $\kappa$-solution $(M, (g_t)_{t \leq 0} )$ one of the following is true:
\begin{enumerate}[label=(\alph*)]
\item \label{ass_kappa_sol_classification_a} $(M, (g_t)_{t \leq 0} )$ is homothetic to the round shrinking cylinder
or its $\IZ_2$-quotient.
\item \label{ass_kappa_sol_classification_b} $(M, (g_t)_{t \leq 0} )$ is homothetic to an isometric quotient of the round shrinking sphere.
\item \label{ass_kappa_sol_classification_c} $(M, (g_t)_{t \leq 0]} )$ is homothetic to the Bryant soliton.
\item \label{ass_kappa_sol_classification_d} $M \approx S^3$ or $\IR P^3$ and $(M, (g_t)_{t \leq 0} )$ is rotationally symmetric, i.e. the flow is invariant under the standard $O(3)$-action whose principal orbits are 2-spheres.
Moreover, for every $x \in M$ the limit of $(M, R(x,t) g_t, x)$ as $t \searrow - \infty$ exists and is isometric to a pointed round cylinder or Bryant soliton. 
\end{enumerate}
Moreover, in cases (a)--(c) the solution is even a $\kappa_0$-solution.
\end{theorem}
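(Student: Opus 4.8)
The plan is to obtain the classification by feeding Perelman's structure theory for $\kappa$-solutions into the now-available rigidity theorems for three-dimensional ancient Ricci flows. The starting point is Perelman's asymptotic soliton \cite{Perelman1}: taking a pointed smooth Cheeger--Gromov limit of suitable parabolic rescalings of $(g_t)$ as $t\to-\infty$ (with basepoints selected by the reduced distance), one produces a complete, nonflat shrinking gradient Ricci soliton of dimension $3$ with bounded, nonnegative curvature operator --- the $\kappa$-noncollapsing and the curvature bounds pass to the limit. By the classification of such solitons (Perelman, together with Ni--Wallach, Naber, and Cao--Chen--Zhu), this asymptotic soliton is homothetic either to an isometric quotient of the round shrinking $S^3$, or to the round shrinking cylinder $S^2\times\IR$ or its $\IZ_2$-quotient, or to flat $\IR^3$; the last possibility is ruled out because, by Perelman's reduced-volume monotonicity, a flat asymptotic soliton would force $(M,(g_t))$ itself to be flat, contradicting $R>0$.

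Next I would separate the noncompact and compact cases. Suppose first that $M$ is noncompact. Then the asymptotic soliton cannot be a closed manifold (a connected noncompact manifold admits no rescaled pointed limit diffeomorphic to a closed one), so it is the round cylinder or its $\IZ_2$-quotient. If $\sec_{g_t}$ fails to be strictly positive at some point, then Hamilton's strong maximum principle for the evolving curvature operator (applied to the ancient solution) forces the universal cover of $(M,g_t)$ to split off a line; combined with the cylindrical asymptotic soliton, this forces $(M,(g_t))$ to be homothetic to the round shrinking cylinder or its $\IZ_2$-quotient, which is case \ref{ass_kappa_sol_classification_a}. If instead $\sec_{g_t}>0$ everywhere, then $(M,(g_t))$ is a noncompact three-dimensional $\kappa$-solution of strictly positive curvature, and Brendle's classification of such solutions identifies it, up to rescaling, with the Bryant soliton --- case \ref{ass_kappa_sol_classification_c}.

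Now suppose $M$ is compact. If its asymptotic soliton is an isometric quotient $S^3/\Gamma$, then a rigidity argument (Hamilton's curvature pinching, together with the fact that the backward-rescaled geometry approaches that of the asymptotic soliton) forces $(M,(g_t))$ to be homothetic to an isometric quotient of the round shrinking sphere --- case \ref{ass_kappa_sol_classification_b}. Otherwise the asymptotic soliton is the round cylinder, so at sufficiently negative times $M$ is the union of a long neck with two positively curved caps; the structure theory of caps identifies each cap, hence $M$, up to diffeomorphism with $S^3$ or $\IR P^3$, and the classification of compact ancient three-dimensional $\kappa$-solutions (the neck-improvement and rotational-symmetry arguments of Brendle--Daskalopoulos--Sesum, and of Angenent--Daskalopoulos--Sesum, confirming Perelman's picture) shows that the flow is rotationally symmetric and is the Perelman oval or its $\IZ_2$-quotient; in particular the rescaled pointed limit at each point as $t\searrow-\infty$ is a round cylinder (at points away from the tips) or the Bryant soliton (at a tip), which is case \ref{ass_kappa_sol_classification_d}. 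Finally, for the last assertion: in cases \ref{ass_kappa_sol_classification_a} and \ref{ass_kappa_sol_classification_c} the solution is, after rescaling, one of the two explicit noncompact models, each of which is $\kappa_0$-noncollapsed for a definite $\kappa_0$; more conceptually, Perelman's ``universal $\kappa$'' argument \cite{Perelman1} furnishes a single $\kappa_0$ that works for all noncompact three-dimensional $\kappa$-solutions. (The constant cannot be made uniform in case \ref{ass_kappa_sol_classification_b}, as small lens spaces collapse.)

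The ingredients coming from Perelman --- existence of the asymptotic soliton, its classification, the neck and cap structure, and the universal $\kappa$ --- are by now standard, and I would simply quote them. The genuine obstacle, were one to prove the theorem from scratch, is in the two rigidity inputs that close cases \ref{ass_kappa_sol_classification_c} and \ref{ass_kappa_sol_classification_d}: passing from ``asymptotically cylindrical and positively curved'' to ``Bryant soliton'' in the noncompact case, and from ``asymptotically cylindrical'' to ``rotationally symmetric'' in the compact case. These are exactly the theorems of Brendle and of Brendle--Daskalopoulos--Sesum (building on Angenent--Daskalopoulos--Sesum), whose proofs require a delicate spectral analysis of the linearized Ricci flow on a neck and careful uniqueness arguments for the soliton and oval profiles; I would cite these wholesale rather than reproduce them.
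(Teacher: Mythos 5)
The paper does not actually give a proof of this theorem: it simply writes ``See \cite{Brendle2018, BamlerKleiner2019, Brendle2019}.'' Your proposal is a reasonable outline of what those papers, together with Perelman's structure theory, actually establish, so in substance your route and the paper's route coincide.

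One thing worth flagging: your parenthetical observation that the constant ``cannot be made uniform in case (b), as small lens spaces collapse'' is correct --- Perelman's universal-$\kappa$ argument (Section 11.9 of \cite{Perelman1}) says that every $\kappa$-solution is either $\kappa_0$-noncollapsed for a universal $\kappa_0$ or is a metric quotient of the round sphere, and lens spaces $L(p,1)$ with $p$ large have volume $2\pi^2/p \to 0$ while their diameter stays bounded away from zero, so they cannot be $\kappa_0$-noncollapsed for a single $\kappa_0$. This contradicts the theorem's ``Moreover'' clause as printed, which asserts $\kappa_0$-noncollapsing for cases (a)--(c) including case (b). The clause appears to contain a typographical slip; the correct formulation (consistent with Perelman and with the way the theorem is actually used in the paper, e.g.\ in Theorem~\ref{Thm_kappa_compactness_theory}) is that cases (a), (c), and (d) are $\kappa_0$-solutions, while case (b) is exactly the exceptional one. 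Your outline is consistent with the correct statement, not with the printed one, and you should be aware of this mismatch when checking your argument against the text.
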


\begin{proof}
See \cite{Brendle2018,  BamlerKleiner2019, Brendle2019}.
\end{proof}

We also recall the following compactness result for $\kappa$-solutions, which is independent of Theorem~\ref{Thm_kappa_sol_classification}.

\begin{theorem} \label{Thm_kappa_compactness_theory}
If $(M^i, (g^i_t)_{t \leq 0}, x^i)$ is a sequence of pointed $\kappa_i$-solutions for some $\kappa_i$.
Suppose that $\lim_{i \to \infty} R(x^i) > 0$ exists.
Then, after passing to a subsequence, either all flows $(M^i, (g^i_t)_{t \leq 0})$ are homothetic to quotients of the round shrinking sphere or we have convergence to a pointed $\kappa_\infty$-solution $(M^\infty, (g^\infty_t)_{t \leq 0}, x^\infty)$ in the sense of Hamilton \cite{Hamilton1995}.
\end{theorem}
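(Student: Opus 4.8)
The plan is to prove the compactness result for $\kappa$-solutions by combining Hamilton's compactness theorem for Ricci flows with the built-in geometric control provided by the $\kappa$-solution axioms, dealing separately with the two possible behaviors of the curvature scale. First I would normalize the sequence by parabolically rescaling each flow $(M^i,(g^i_t)_{t\leq 0})$ so that $R(x^i,0)=1$; since $\lim_i R(x^i)>0$ exists and is finite, this rescaling changes the metrics by factors bounded away from $0$ and $\infty$, so it suffices to prove convergence of the rescaled sequence (and a quotient-of-round-sphere is preserved under rescaling). After this normalization each $(M^i,(g^i_t)_{t\leq 0})$ is still a $\kappa_i$-solution, possibly with a slightly different $\kappa_i$, but this is harmless.

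The key dichotomy is whether $\liminf_i \diam(M^i,g^i_0)$ is finite or infinite. If it is finite along a subsequence, then by Bonnet--Myers-type reasoning combined with axiom (3) ($\sec\geq 0$) and axiom (2) (bounded curvature on compact time intervals), the manifolds $M^i$ are closed with uniformly bounded diameter and, by Hamilton--Ivey pinching together with the noncollapsing axiom (5), uniformly bounded curvature on each compact time interval; one then shows the $M^i$ have bounded geometry and extracts a smooth limit, which has finite diameter, hence is a closed ancient flow with $\sec\geq 0$ and $R>0$ — at which point one invokes the known fact (Hamilton, using the strong maximum principle on the curvature operator, or Perelman's argument) that a closed ancient $3$-dimensional $\kappa$-solution with positive curvature must be a quotient of the round shrinking sphere. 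The more interesting case is when $\diam(M^i,g^i_0)\to\infty$; here I would use the noncollapsing axiom and $\sec\geq 0$ to get a uniform lower injectivity radius bound at $x^i$ at time $0$, and Hamilton--Ivey pinching plus Shi's derivative estimates to get uniform local curvature bounds on balls $B(x^i,0,r)$ for each fixed $r$ and backward in time. Then Hamilton's compactness theorem applies directly and yields subconvergence in the pointed smooth Cheeger--Gromov sense to a complete pointed ancient flow $(M^\infty,(g^\infty_t)_{t\leq 0},x^\infty)$; one finally checks that the defining properties of a $\kappa$-solution pass to the limit: completeness and bounded curvature on compact time intervals follow from the convergence and the uniform bounds, $\sec_{g^\infty_t}\geq 0$ and $R>0$ pass by the strong maximum principle (strict positivity of $R$ at the limit because $R(x^\infty,0)=1$), and $\kappa_\infty$-noncollapsing for some $\kappa_\infty>0$ follows since $\kappa_i$ is bounded below along the convergent subsequence, using the characterization of noncollapsing via the geometry on balls.

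The main obstacle I expect is obtaining the uniform local curvature bounds needed to apply Hamilton's compactness theorem without already invoking the classification Theorem~\ref{Thm_kappa_sol_classification} — this is the whole point of the statement being ``independent'' of the classification. The issue is that the $\kappa_i$ are a priori only positive, not bounded below, so a sequence could be collapsing in some weak sense, and one must argue that a lower bound on $R(x^i)$ together with $\kappa_i$-noncollapsing at $x^i$ actually forces a definite noncollapsing constant along the relevant subsequence. The resolution is that $\kappa$-noncollapsing at all scales, combined with $\sec\geq 0$, is a very rigid condition: one shows via a point-picking argument that if no uniform curvature bound held on some fixed-size ball, one could rescale to produce a complete limit flow that is $\kappa_i$-noncollapsed for $\kappa_i\to 0$, which by Perelman's no-local-collapsing considerations and the Gromoll--Meyer / Cheeger--Gromoll theory of nonnegatively curved manifolds leads to a contradiction (for instance the limit would split off a line or be flat, contradicting noncollapsing or the curvature normalization). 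Once this uniform control is in hand, everything else — the application of Hamilton's theorem and the verification that limits of $\kappa$-solutions are $\kappa$-solutions — is standard, and I would cite the relevant passages in \cite{Perelman1} and \cite{Hamilton1995} rather than reproving them.
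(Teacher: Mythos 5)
The paper's ``proof'' is a citation to Perelman's Section~11, so there is no argument here to compare with line by line; the question is whether your sketch would actually reproduce Perelman's compactness theorem. It would not, because it misuses one key tool and sidesteps the two genuinely hard steps.

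First, Hamilton--Ivey pinching gives a \emph{lower} bound on sectional curvatures in terms of the scalar curvature; it cannot supply the \emph{upper} curvature bounds needed as input to Shi's estimates and Hamilton's compactness theorem. (Nonnegative sectional curvature is already an axiom of a $\kappa$-solution, so invoking Hamilton--Ivey for that is also circular.) The upper bound you need is Perelman's ``bounded curvature at bounded distance'' estimate, which is the technical core of the compactness argument: one supposes $R(y_i)d^2(x^i,y_i)\to\infty$, rescales at carefully chosen intermediate points, and uses a Toponogov/Alexandrov splitting argument to extract a cylinder and reach a contradiction. Nothing in your proposal supplies this, and without it Hamilton's compactness theorem cannot be invoked.

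Second, the dichotomy ``bounded versus unbounded diameter'' is not the right one, and your Case~1 conclusion is false as stated. A bounded-diameter limit of compact $\kappa$-solutions need not be a round space form: the rotationally symmetric ancient solutions on $S^3$ and $\IR P^3$ (case~\ref{ass_kappa_sol_classification_d} of Theorem~\ref{Thm_kappa_sol_classification}) are compact $\kappa$-solutions with bounded normalized diameter that are not round. The correct dichotomy --- which is exactly the one appearing in the statement --- is: either the $M^i$ are eventually all round sphere quotients (these can collapse, have arbitrarily small noncollapsing constant, and need not converge), or they satisfy a \emph{universal} lower bound $\kappa_0>0$ on the noncollapsing constant, after which Perelman's compactness for fixed $\kappa$ applies. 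This universal-$\kappa_0$ theorem is what resolves the $\kappa_i\to 0$ issue you flag, and its proof goes through the asymptotic shrinking soliton and the classification of gradient shrinking solitons; the ``rescale and invoke Gromoll--Meyer / Cheeger--Gromoll'' gesture in your last paragraph does not constitute an argument for it. (Note also that ``the limit splits off a line'' is not, by itself, a contradiction --- the round cylinder \emph{is} a $\kappa$-solution; the contradiction in Perelman's argument comes from a more delicate comparison once the cylinder appears.) Until both the bounded-curvature-at-bounded-distance estimate and the universal $\kappa_0$ are in place, the appeal to Hamilton's compactness theorem is unjustified.
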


\begin{proof}
See \cite[Sec. 11]{Perelman1}.
\end{proof}

The following result will be used in Section~\ref{sec_rounding_process}.

\begin{lemma} \label{lem_kappa_identity_1}
For any $A < \infty$ there is a constant $D = D(A) < \infty$ such that the following holds.
If $(M,(g_t)_{\leq 0})$ is a compact, simply-connected $\kappa$-solution and
\[  \int_M R(\cdot, 0) d\mu_{g_0}  < A V^{1/3} ( M, g_0), \]
then $\diam (M, g_0) < D R^{-1/2} (x,0)$ for all $x \in M$.
\end{lemma}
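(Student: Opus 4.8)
The statement is a compactness-and-contradiction argument built on the $\kappa$-solution theory. The plan is to argue by contradiction: suppose that for some fixed $A<\infty$ no such $D$ exists, so there is a sequence $(M^i,(g^i_t)_{t\le 0})$ of compact, simply-connected $\kappa_i$-solutions and points $x^i \in M^i$ with
\[ \int_{M^i} R(\cdot,0)\, d\mu_{g^i_0} < A\, V^{1/3}(M^i,g^i_0), \qquad \diam(M^i,g^i_0) \ge i \cdot R^{-1/2}(x^i,0). \]
After parabolically rescaling each flow so that $R(x^i,0)=1$ (which changes neither the scale-invariant integral inequality nor the scale-invariant conclusion), we are left with a sequence of pointed $\kappa_i$-solutions with $R(x^i,0)=1$ and $\diam(M^i,g^i_0)\to\infty$.

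First I would invoke the compactness theorem (Theorem~\ref{Thm_kappa_compactness_theory}): since $\lim_i R(x^i,0)=1>0$, after passing to a subsequence either all $M^i$ are homothetic to quotients of the round shrinking sphere, or the pointed flows converge in Hamilton's sense to a pointed $\kappa_\infty$-solution $(M^\infty,(g^\infty_t)_{t\le 0},x^\infty)$. The first alternative is impossible: a round spherical quotient $M^i$ with $R(x^i,0)=1$ has diameter bounded by a universal constant (at most $\pi\sqrt{6}$ or so, from the fixed curvature), contradicting $\diam\to\infty$. So we are in the convergence case. Because the $M^i$ have diameters tending to infinity while the basepoint curvatures are pinned, the limit $M^\infty$ must be noncompact — otherwise the pointed Hamilton convergence would force $\diam(M^i,g^i_0)\to\diam(M^\infty,g^\infty_0)<\infty$. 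By the classification of $\kappa$-solutions (Theorem~\ref{Thm_kappa_sol_classification}), a \emph{noncompact} $\kappa$-solution is either the round shrinking cylinder, its $\IZ_2$-quotient, or the Bryant soliton; in all of these cases the scalar curvature is bounded below by a positive constant everywhere on the time-$0$ slice (for the cylinder and its quotient $R\equiv$ const $>0$; for the Bryant soliton $R$ attains its positive maximum at the tip and is bounded below away from zero on any region, in fact $R>0$ with $\inf R$ positive is false — but $R$ is bounded below on bounded regions, which is what we will use locally). The point is that in the limit, near $x^\infty$ there is a definite lower bound on $R$ and a definite lower bound on volume.

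The contradiction then comes from the integral hypothesis. The key estimate is that $\int_{M^i} R(\cdot,0)\,d\mu_{g^i_0}$ is \emph{large} while $V(M^i,g^i_0)$ cannot be too large relative to it. For a $\kappa$-solution one has the local estimate that on a ball $B(x,0,r)$ with $r\lesssim R^{-1/2}(x,0)$, the scalar curvature is comparable to $R(x,0)$ and the volume is $\gtrsim \kappa r^3$; summing over a maximal $R^{-1/2}$-separated net in $(M^i,g^i_0)$ and using $\diam\to\infty$ shows $\int_{M^i}R\,d\mu \to \infty$. Meanwhile I would bound $V(M^i,g^i_0)$ from above using the curvature lower bound: since $\sec\ge 0$ and (for the relevant geometries) $R$ is bounded below on the bulk by a constant times $R(x^i,0)=1$, Bishop--Gromov plus the bound $R\gtrsim 1$ on most of the manifold gives $V(M^i,g^i_0)^{1/3}\lesssim \int_{M^i}R\,d\mu$ up to a universal constant, contradicting the hypothesis $\int R\,d\mu < A\,V^{1/3}$ once $i$ is large. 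A cleaner way to package this: the hypothesis says the \emph{average} of $R$ over $M^i$ is at most $A\,V^{-2/3}$, i.e. $R$ is small on average, which combined with $R(x^i,0)=1$ and the near-constancy of $R$ on $\kappa$-solutions (canonical neighborhood behavior) forces the high-curvature region to be a small fraction of $M^i$ — but then $M^i$ contains a long, low-curvature region, which on a $\kappa$-solution with $\sec\ge 0$ and appropriate asymptotics is impossible, or forces $M^i$ itself to degenerate contrary to the normalization.

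\textbf{Main obstacle.} The delicate step is making the volume/integral bookkeeping precise — specifically, converting "$\diam$ is large" into "$\int R\,d\mu$ is large" with the right power, and simultaneously bounding $V^{1/3}$ from above in a scale-invariant way that survives the limit. One must be careful that the Bryant soliton has $R\to 0$ at spatial infinity, so a naive global lower bound on $R$ fails; the resolution is that the hypothesis $\int R\,d\mu < A V^{1/3}$ together with compactness already rules out long Bryant-type asymptotic regions, or else one works entirely with the local volume-comparable-to-$\kappa r^3$ estimate on a separated net and never needs a global $R$ lower bound. I would structure the final argument around such a net covering and a volume ratio comparison (Bishop--Gromov), so that all constants are universal and the scale-invariant hypotheses transfer cleanly through the rescaling and the Hamilton limit.
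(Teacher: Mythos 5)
Your setup (normalize $R(x^i,0)=1$, extract a noncompact pointed $\kappa$-solution limit, classify it) matches the paper's general scaffold, but the endgame has a genuine gap, and the bookkeeping is off in a way that the Bryant soliton exposes.

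The proposed contradiction mechanism does not close. You aim to show $V^{1/3}(M^i)\lesssim \int_{M^i} R\,d\mu$ and then declare this contradicts $\int_{M^i} R\,d\mu < A\, V^{1/3}(M^i)$. But those two inequalities are perfectly compatible for any fixed $A$ exceeding your implicit constant; you would obtain $V^{1/3}\lesssim A V^{1/3}$, which is not absurd. Moreover, the upper bound $V^{1/3}\lesssim\int R$ you propose requires a positive lower bound on $R$ on the bulk of $M^i$, and as you yourself note this fails if long approximate Bryant regions appear — a possibility not ruled out at that stage. The ``near-constancy of $R$ on $\kappa$-solutions'' you invoke in the alternative packaging is simply false on the Bryant soliton, where $R$ ranges from near $0$ to $1$. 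So neither version of your final step produces a contradiction.

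What actually works, and what the paper does, is the opposite use of the hypothesis together with Bishop--Gromov in the \emph{lower-bound} direction. The key quantitative lemma is a local statement, uniform in $y\in M^i$ (proved by a compactness argument that the limit would be compact with $\int_{M^\infty}R\le 1$, contradicting $\diam\to\infty$):
\[
R^{-1/2}(y,0)\;\le\;\int_{B(y,0,\,CR^{-1/2}(y,0))} R(\cdot,0)\,d\mu_{g^i_0}.
\]
Summing this over a Vitali cover yields $\diam(M^i,g^i_0)\le 6C\int_{M^i}R\,d\mu < 6CA\,V^{1/3}(M^i)$. Now the hypothesis enters with the correct sign: one gets $\diam^3\lesssim V(M^i)$, i.e.\ a \emph{lower} bound on the global volume ratio. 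Bishop--Gromov monotonicity for $\sec\ge 0$ propagates this to all smaller scales: $V(B(z,0,r))\ge c\,r^3$ for every $r<\diam$, with $c=c(A)$ universal. Passing to the noncompact pointed limit, one obtains a noncompact $\kappa$-solution with positive asymptotic volume ratio, contradicting the fact that noncompact $\kappa$-solutions have vanishing asymptotic volume ratio. This asymptotic-volume-ratio contradiction is the essential missing ingredient in your proposal; your ``volume ratio comparison'' remark gestures in the right direction but you never state the AVR conclusion, and your explicit contradiction chain is the one that does not work.
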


\begin{proof}
Assume that the lemma was wrong for some fixed $A$.
Then we can find a sequence of counterexamples $(M^i, (g^i_t)_{t \leq 0}, x^i)$ with 
\begin{equation} \label{eq_diam_to_infty}
\diam (M^i, g^i_0) R^{1/2} (x^i, 0) \to \infty.
\end{equation}
It follows that $(M^i, (g^i_t)_{t \leq 0})$ is not homothetic to a round shrinking sphere for large $i$.

\begin{Claim}
There are constants $C, I < \infty$ such that for all $i \geq I$ and $y \in M^i$ we have
\begin{equation} \label{eq_radius_int_R}
 R^{-1/2} (y,0) \leq  \int_{B(y,0,C R^{-1/2} (y,0))} R(\cdot, 0) d\mu_{g^i_0} .
\end{equation}
\end{Claim}

\begin{proof}
Assume that, after passing to a subsequence, (\ref{eq_radius_int_R}) was violated for some $y^i \in M^i$ and $C^i \to \infty$.
Since the lemma and (\ref{eq_radius_int_R}) are invariant under parabolic rescaling, we can assume without loss of generality that $R(y^i,0) = 1$.
Apply Theorem~\ref{Thm_kappa_compactness_theory} to extract a limit $(M^\infty, (g^\infty_t)_{t \leq 0}, y^\infty)$ with $\int_{M^\infty} R(\cdot, 0) d\mu_{g^\infty_0} \leq 1$, which would have to be compact, in contradiction to our assumption (\ref{eq_diam_to_infty}).
\end{proof}

Fix some $i \geq I$ for a moment.
By Vitali's covering theorem we can find points $y_1, \ldots, y_N \in M^i$ such that the balls $B(y_j, 0, C R^{-1/2} (y_j, 0))$ are pairwise disjoint and the balls $B(y_j, 0, 3C R^{-1/2} (y_j, 0))$ cover $M^i$.
It follows that
\[ \diam (M^i, g^i_0) \leq \sum_{j=1}^N 6C R^{-1/2} (y_j, 0) \leq 6C \int_{M^i} R(\cdot, 0) d\mu_{g^i} <6C A V^{1/3}( M^i, g^i_0). \]
By volume comparison this implies that there is a uniform constant $c > 0$ such that $V ( B(z,0,r) , g_0^i) \geq c r^3$ for all $r < \diam (M^i, g^i)$.
After parabolic rescaling to normalize $R(x^i, 0)$, the pointed solutions $(M^i, (g^i_t)_{t \leq 0}, x^i)$ subsequentially converge to a non-compact $\kappa$-solution satisfying the same volume bound.
This, however, contradicts the fact that non-compact $\kappa$-solutions have vanishing asymptotic volume ratio (see either \cite[Sec. 11]{Perelman1} or Theorem~\ref{Thm_kappa_sol_classification}).
\end{proof}

Lastly, we recall:

\begin{lemma} \label{Lem_Bry_R_Hessian_positive}
On $(M_{\Bry}, g_{\Bry})$ the scalar curvature $R$ attains a unique global maximum at $x_{\Bry}$ and the Hessian of $R$ is negative definite at $x_{\Bry}$.
\end{lemma}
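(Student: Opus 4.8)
The plan is to use the structure of the Bryant soliton as a rotationally symmetric steady gradient Ricci soliton. Recall that $(M_{\Bry}, g_{\Bry})$ is $O(3)$-invariant with a unique fixed point $x_{\Bry}$, and write the metric in geodesic polar coordinates as $g_{\Bry} = dr^2 + \varphi^2(r)\, g_{S^2}$ for $r \in [0,\infty)$, where $\varphi$ is a smooth positive function on $(0,\infty)$ with $\varphi(0)=0$, $\varphi'(0)=1$, and $\varphi''(0)=0$ (the usual smoothness conditions at the center). Since the flow is a steady gradient soliton, there is a potential function $f$ with $\Ric + \nabla^2 f = 0$; by $O(3)$-symmetry $f = f(r)$, and $\nabla f$ points radially. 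The soliton identity $R + |\nabla f|^2 = \const$ (the steady-soliton version of Perelman's identity), together with $\tr$ of the soliton equation giving $R + \Delta f = 0$, will be the main tools.

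First I would establish that $R$ is radial, $R = R(r)$, which is immediate from $O(3)$-invariance, so $\nabla R$ is radial and every critical point of $R$ lies on the fixed-point set of the $O(3)$-action, i.e.\ is either $x_{\Bry}$ or occurs where $\varphi'$ vanishes — but the latter does not happen on the Bryant soliton (the warping function is strictly monotone). Hence $x_{\Bry}$ is the only critical point of $R$. Next, to see it is the \emph{global maximum}, I would invoke the known asymptotics of the Bryant soliton: $R(r) \to 0$ as $r \to \infty$ (indeed $R \sim c/r$), while $R(x_{\Bry}) = 1 > 0$ by our normalization; combined with $x_{\Bry}$ being the unique critical point and $R > 0$ everywhere, a standard argument (if $R$ exceeded $1$ somewhere it would have an interior max distinct from $x_{\Bry}$, or $R$ would fail to decay) forces $R(x_{\Bry})$ to be the strict global maximum. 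Alternatively, one can differentiate the ODE for $R(r)$ coming from the soliton equations and check directly that $R'(r) < 0$ for all $r > 0$.

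For the Hessian statement, I would compute $\nabla^2 R$ at $x_{\Bry}$. Since $x_{\Bry}$ is a fixed point of the isometric $O(3)$-action, $\nabla^2 R|_{x_{\Bry}}$ is an $O(3)$-equivariant symmetric bilinear form on $T_{x_{\Bry}} M_{\Bry} \cong \IR^3$, hence a multiple $\lambda$ of the identity; so it suffices to show $\lambda < 0$, i.e.\ that $R''(0) < 0$ in the radial variable. This I would extract from the soliton ODE system: differentiating $R + \Delta f = 0$ and $R + |\nabla f|^2 = \const$ and evaluating at $r=0$ (using $f'(0)=0$, $\varphi(0)=0$, $\varphi'(0)=1$), together with the known value $R(0)=1$ and the structure of the Bryant soliton near the center, yields an explicit negative value for $R''(0)$ (equivalently, $\Ric(x_{\Bry}) = \tfrac13 g_{\Bry}$ forces $\nabla^2 f(x_{\Bry}) = -\tfrac13 g_{\Bry}$, and feeding this back into the differentiated identities pins down the sign).

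The main obstacle I anticipate is not the local Hessian computation, which is a finite ODE calculation at $r=0$, but rather justifying the global maximality of $R$ cleanly — ruling out the possibility that $R$ has no interior maximum away from $x_{\Bry}$ requires either citing the precise known monotonicity/asymptotics of the Bryant soliton warping function and curvature (available in \cite{Bryant2005} and summarized in \cite[Appendix B]{bamler_kleiner_uniqueness_stability}) or reproving strict monotonicity $R'(r)<0$ from the soliton ODEs. I would take the former route, since the needed properties of $g_{\Bry}$ are standard and already referenced in the excerpt.
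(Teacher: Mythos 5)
Your proposal reaches the same endpoint as the paper's proof and rests on the same two soliton identities ($\Ric + \nabla^2 f = 0$ and $R + |\nabla f|^2 = \const$), but packages the argument in polar coordinates where the paper works coordinate-free. The paper's Hessian argument is leaner: differentiating $R + |\nabla f|^2 = \const$ twice at $x_{\Bry}$ and using $\nabla f(x_{\Bry}) = 0$ gives $\nabla^2 R|_{x_{\Bry}} = -2(\nabla^2 f)^2 = -2\Ric^2$ directly; by $O(3)$-symmetry this is $\lambda\, g_{\Bry}$ with $\lambda\le 0$, and $\lambda = 0$ would force $\Ric = 0$ hence $R = 0$, contradiction. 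Your parenthetical ("$\Ric(x_{\Bry}) = \tfrac13 g_{\Bry}$ forces $\nabla^2 f(x_{\Bry}) = -\tfrac13 g_{\Bry}$\dots") is exactly this calculation, so at bottom the ideas coincide; the polar-coordinate framing just introduces overhead ($\varphi$, $\varphi'$, smoothness conditions at $r=0$) that the paper avoids. For the global-maximality claim, both you and the paper end up deferring to the known structure of the Bryant soliton via the cited appendix.

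One step in your write-up is incorrect as stated: from $\nabla R$ being radial you conclude that "every critical point of $R$ lies on the fixed-point set of the $O(3)$-action, i.e.\ is either $x_{\Bry}$ or occurs where $\varphi'$ vanishes." This does not follow. A radial gradient $\nabla R = R'(r)\partial_r$ vanishes on any sphere $\{r = r_0\}$ with $R'(r_0) = 0$; such a sphere is an $O(3)$-orbit, not a fixed point, and $\varphi'$ is unrelated to either criticality of $R$ or to fixed points. The correct route to uniqueness of the critical point is the one you offer as an "alternative": use the steady-soliton identity $\nabla R = 2\,\Ric(\nabla f)$ together with $\Ric > 0$ and $\nabla f \ne 0$ for $r>0$ to get $R'(r) < 0$ for all $r > 0$, or simply cite the monotonicity of $R$ along the Bryant soliton from \cite{Bryant2005} or \cite[Appendix~B]{bamler_kleiner_uniqueness_stability}. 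With that correction your proof is sound.
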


\begin{proof}
This follows from the soliton equations $\Ric + \nabla^2 f = 0$ and $R + |\nabla f|^2 = R(x_{\Bry})$ (see \cite[Appendix B]{bamler_kleiner_uniqueness_stability} for more details.).
If the $\nabla^2 R$ at $x_{\Bry}$ was not strictly negative definite, then it would vanish due to symmetry.
This would imply 
\[ 0 = \nabla^2 |\nabla f|^2 = 2 |\nabla^2 f|^2 + 2 \nabla^3 f \cdot \nabla f = 2 |\nabla^2 f|^2 = 2 |{\Ric}|^2, \]
in contradiction to the positivity of the scalar curvature on $(M_{\Bry}, g_{\Bry})$.
\end{proof}

\subsection{Singular Ricci flows --- Definition}
In the following we recall terminology related to singular Ricci flows.
In order to keep this subsection concise, our discussion has been simplified to fit the needs of this paper.
For more details, see \cite[Sec~5]{bamler_kleiner_uniqueness_stability}.

Singular Ricci flows were introduced by Lott and the second author in \cite{Kleiner:2014le}.
In the same paper the existence of a singular Ricci flow starting from compact initial condition was established.
Subsequently, uniqueness was shown by the authors in \cite{bamler_kleiner_uniqueness_stability}.
The definition of a singular Ricci flow provided in this paper differs slightly from the original definition in \cite{Kleiner:2014le}.
It is a priori more general, however, the uniqueness result in \cite{bamler_kleiner_uniqueness_stability} implies that both definitions are equivalent.

We first introduce a broader class of Ricci flow spacetimes.
A singular Ricci flow will be defined as a Ricci flow spacetime that satisfies certain conditions.

\begin{definition}[Ricci flow spacetimes] \label{def_RF_spacetime}
A {\bf Ricci flow spacetime}  is a tuple $(\M, \lb \mathfrak{t}, \lb \partial_{\mathfrak{t}}, \lb g)$ with the following properties:
\begin{enumerate}[label=(\arabic*)]
\item $\M$ is a smooth $4$-manifold with (smooth) boundary $\partial \M$.
\item $\mathfrak{t} : \M \to [0, \infty)$ is a smooth function without critical points (called {\bf time function}).
For any $t \geq 0$ we denote by $\M_t := \mathfrak{t}^{-1} (t) \subset \M$ the {\bf time-$t$-slice} of $\M$.
\item $\M_0 = \mathfrak{t}^{-1} (0) = \partial \M$, i.e. the initial time-slice is equal to the boundary of $\M$.
\item $\partial_{\mathfrak{t}}$ is a smooth vector field (the {\bf time vector field}) on $\M$ that satisfies $\partial_{\mathfrak{t}} \mathfrak{t} \equiv 1$.
\item $g$ is a smooth inner product on the spatial subbundle $\ker (d \mathfrak{t} ) \subset T \M$.
For any $t \geq 0$ we denote by $g_t$ the restriction of $g$ to the time-$t$-slice $\M_t$ (note that $g_t$ is a Riemannian metric on $\M_t$).
\item $g$ satisfies the Ricci flow equation: $\mathcal{L}_{\partial_\mathfrak{t}} g = - 2 \Ric (g)$.
Here $\Ric (g)$ denotes the symmetric $(0,2)$-tensor on $\ker (d \mathfrak{t} )$ that restricts to the Ricci tensor of $(\M_t, g_t)$ for all $t \geq 0$.
\end{enumerate}
Curvature quantities on $\M$, such as the Riemannian curvature tensor $\Rm$, the Ricci curvature $\Ric$, or the scalar curvature $R$ will refer to the corresponding quantities with respect to the metric $g_t$ on each time-slice $\M_t$.
Tensorial quantities will be imbedded using the splitting $T\M = \ker (d\mathfrak{t} ) \oplus \langle \partial_{\mathfrak{t}} \rangle$.

When there is no chance of confusion, we will sometimes abbreviate the tuple $(\M, \mathfrak{t}, \partial_{\mathfrak{t}}, g)$ by $\M$.
\end{definition}

We emphasize that, while a Ricci flow spacetime may have singularities --- in fact the sole purpose of our definition is to understand flows with singularities --- such singularities are not directly captured by a Ricci flow spacetime, because ``singular points'' are not contained in the spacetime manifold $\M$.
Instead, the idea behind the definition of a Ricci flow spacetime is to understand a possibly singular flow by analyzing its asymptotic behavior on its regular part. 
This will always be sufficient for our applications.

Any (classical) Ricci flow of the form $(g_t)_{t \in [0,T)}$, $0 < T \leq \infty$, on a $3$-manifold $M$ can be converted into a Ricci flow spacetime  by setting $\M = M \times [0,T)$, letting $\mathfrak{t}$ be the projection to the second factor and letting $\partial_{\mathfrak{t}}$ correspond to the unit vector field on $[0,T)$.
Vice versa, if $(\M, \mathfrak{t}, \partial_{\mathfrak{t}}, g)$ is a Ricci flow spacetime with $\mathfrak{t}(\M) = [0, T)$ for some $0 < T \leq \infty$ and the property that every trajectory of $\partial_{\mathfrak{t}}$ is defined on the entire time-interval $[0,T)$, then $\M$ comes from such a classical Ricci flow.

We now generalize some basic geometric notions to Ricci flow spacetimes.

\begin{definition}[Length, distance and metric balls in Ricci flow spacetimes]
Let $(\M, \mathfrak{t}, \partial_{\mathfrak{t}}, g)$ be a Ricci flow spacetime.
For any two points $x, y \in \M_t$ in the same time-slice of $\M$ we denote by $d(x,y)$ or $d_t (x,y)$ the {\bf distance} between $x, y$ within $(\M_t, g_t)$.
The distance between points in different time-slices is not defined.

Similarly, we define the {\bf length} $\length (\gamma)$ or $\length_t (\gamma)$ of a path $\gamma : [0,1] \to \M_t$ whose image lies in a single time-slice to be the length of this path when viewed as a path inside the Riemannian manifold $(\M_t, g_t)$.

For any $x \in \M_t$ and $r \geq 0$ we denote by $B(x,r) \subset \M_t$ the {\bf $r$-ball} around $x$ with respect to the Riemannian metric $g_t$.
\end{definition}

Our next goal is to characterize the (microscopic) geometry of a Ricci flow spacetime near a singularity or at an almost singular point.
For this purpose, we will introduce a \textbf{(curvature) scale function} $\rho : \M \to (0, \infty]$ with the property that
\begin{equation} \label{eq_rho_equivalent_Rm}
 C^{-1} \rho^{-2} \leq |{\Rm}| \leq C \rho^{-2} 
\end{equation}
for some universal constant $C < \infty$.
We will write $\rho (p) = \infty$ if $\Rm_p = 0$ at some point $p$.
Note that $\rho$ has the dimension of length.
In Subsection~\ref{subsec_curvature_scale} we will make a specific choice for $\rho$, which will turn out to be suitable for our needs.
The notions introduced in the remainder of this subsection will, however, be independent of the precise choice of $\rho$ or the constant $C$.

We now define what we mean by completeness for Ricci flow spacetimes.
Intuitively, a Ricci flow spacetime is called complete if its time-slices can be completed by adding countably many ``singular points'' and if no component ``appears'' or ``disappears'' suddenly without the formation of a singularity.

\begin{definition}[Completeness of Ricci flow spacetimes] \label{def_completeness}
We say that a Ricci flow spacetime $(\M,\mathfrak{t}, \partial_{\mathfrak{t}}, g)$ is {\bf complete}  if the following holds:
Consider a path $\gamma : [0, s_0) \to \M$ such that $\inf_{s \in [0,s_0)}  \rho (\gamma(s)) > 0$ and such that:
\begin{enumerate}
\item The image $\gamma ([0,s_0))$ lies in a time-slice $\M_t$ and the time-$t$ length of $\gamma$ is finite or
\item $\gamma$ is a trajectory of $\partial_{\mathfrak{t}}$ or of $- \partial_{\mathfrak{t}}$.
\end{enumerate}
Then the limit $\lim_{s \nearrow s_0} \gamma (s)$ exists.
\end{definition}

Lastly, we need to characterize the asymptotic geometry of a Ricci flow spacetime near its singularities.
The idea is to impose the same asymptotic behavior near singular points in Ricci flow spacetimes as is encountered in the singularity formation of a classical (smooth) 3-dimensional Ricci flow.
This is done by comparing the geometry to the geometry of $\kappa$-solutions using the following concept of pointed closeness.

\begin{definition}[Geometric closeness] \label{def_geometric_closeness_time_slice}
We say that a pointed Riemannian manifold $(M, g, x)$ is \textbf{$\eps$-close} to another pointed Riemannian manifold $(\ov{M}, \ov{g}, \ov{x})$ \textbf{at scale $\lambda > 0$} if there is a diffeomorphism onto its image
\[ \psi : B^{\ov{M}} (\ov{x}, \eps^{-1} ) \longrightarrow M \]
such that $\psi (\ov{x}) = x$ and
\[ \big\Vert \lambda^{-2} \psi^* g - \ov{g} \big\Vert_{C^{[\eps^{-1}]}(B^{\ov{M}} (\ov{x}, \eps^{-1} ))} < \eps. \]
Here the $C^{[\eps^{-1}]}$-norm  of a tensor $h$ is defined to be the sum of the $C^0$-norms of the tensors $h$, $\nabla^{\ov{g}} h$, $\nabla^{\ov{g},2} h$, \ldots, $\nabla^{\ov{g}, [\eps^{-1}]} h$ with respect to the metric $\ov{g}$.
\end{definition}

We can now define the canonical neighborhood assumption.
The main statement of this assumption is that regions of small scale (i.e. high curvature) are geometrically close to regions of $\kappa$-solutions.

\begin{definition}[Canonical neighborhood assumption] \label{def_canonical_nbhd_asspt}
Let $(M, g)$ be a (possibly incomplete) Riemannian manifold.
We say that $(M, g)$ satisfies the {\bf $\eps$-canonical neighborhood assumption} at some point $x \in M$ if there is a $\kappa > 0$, a $\kappa$-solution $(\overline{M}, \linebreak[1] (\overline{g}_t)_{t \leq 0})$ and a point $\ov{x} \in \ov{M}$ such that $\rho (\overline{x}, 0) = 1$ and such that $(M, g, x)$ is $\eps$-close to $(\ov{M}, \ov{g}_0, \ov{x})$ at some (unspecified) scale $\lambda > 0$.

We say that $(M,g)$ {\bf satisfies the $\eps$-canonical neighborhood assumption below scale $r_0$,} for some $r_0 > 0$, if every point $x \in M$ with $\rho(x) < r_0$ satisfies the $\eps$-canonical neighborhood assumption.

We say that a Ricci flow spacetime $(\M, \mathfrak{t}, \partial_{\mathfrak{t}}, g)$ satisfies the \textbf{$\eps$-ca\-non\-i\-cal neighborhood assumption} at a point $x \in \M$ if the same is true at $x$ in the time-slice $(\M_{\mathfrak{t}(x)}, g_{\mathfrak{t}(x)})$.
Moreover, we say that $(\M, \mathfrak{t}, \partial_{\mathfrak{t}}, g)$ satisfies the {\bf $\eps$-canonical neighborhood assumption below scale $r_0$ (on $[0,T]$)} if the same is true for each time-slice $(\M_t, g_t)$ (if $t \in [0,T]$)
\end{definition}

Using this terminology, we can finally define a singular Ricci flow.

\begin{definition}[Singular Ricci flow] \label{Def_sing_RF}
A {\bf singular Ricci flow} is a Ricci flow spacetime $(\M,\mathfrak{t}, \partial_{\mathfrak{t}}, g)$ that has the following properties:
\begin{enumerate}
\item \label{Prop_sing_RF_1} $\M_0$ is compact.
\item \label{Prop_sing_RF_2} $(\M,\mathfrak{t}, \partial_{\mathfrak{t}}, g)$ is complete.
\item \label{Prop_sing_RF_3} For every $\eps > 0$ and $0 \leq T < \infty$ there is a constant $r_{\eps, T} > 0$ such that $(\M,\mathfrak{t}, \partial_{\mathfrak{t}}, g)$ satisfies the $\eps$-canonical neighborhood assumption below scale $r_{\eps, T}$ on $[0,T]$.
\end{enumerate}
We say that $(\M,\mathfrak{t}, \partial_{\mathfrak{t}}, g)$ is {\bf extinct at time $t \geq 0$} if $\M_t = \emptyset$.
\end{definition}

We remark that we have added Property~\ref{Prop_sing_RF_1} in Definition~\ref{Def_sing_RF} in order to make it equivalent to the definition in \cite{Kleiner:2014le}.
All flows encountered in this paper will have compact and non-singular initial data.
The property could potentially be dropped or replaced by requiring $(\M_0, g_0)$ to be complete and possibly have bounded curvature.
In addition, it can be shown that there is a universal constant $\eps_{\can} > 0$ such that Property~\ref{Prop_sing_RF_3} can be replaced by one of the following properties (due to the results in \cite{bamler_kleiner_uniqueness_stability, Bamler-finite-surg-0}):
\begin{enumerate}[label=(\arabic*$'$), start=3]
\item[($3'$)] For every $0 \leq T < \infty$ there is a constant $r_{T} > 0$ such that $(\M,\mathfrak{t}, \partial_{\mathfrak{t}}, g)$ satisfies the $\eps_{\can}$-canonical neighborhood assumption below scale $r_{T}$ on $[0,T]$.
\item[($3''$)] $(\M,\mathfrak{t}, \partial_{\mathfrak{t}}, g)$ satisfies the $\eps_{\can}$-canonical neighborhood assumption below some scale $r_0 > 0$.
\end{enumerate}
This aspect is, however, inessential for this paper.

\subsection{Singular Ricci flows --- Existence and Uniqueness} \label{subsec_sing_RF_exist_unique}
The following results establish the existence of a unique (or canonical) singular Ricci flow starting from any compact Riemannian 3-manifold $(M,g)$.

The existence result is from \cite[Theorem~1.1]{Kleiner:2014le}.

\begin{theorem}[Existence] \label{Thm_sing_RF_existence}
For every compact, orientable, Riemannian 3-man\-i\-fold $(M,g)$ there is a singular Ricci flow $(\M,\mathfrak{t}, \partial_{\mathfrak{t}}, g)$ with the property that $(\M_0, g_0)$ is isometric to $(M,g)$.
\end{theorem}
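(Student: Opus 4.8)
The plan is to obtain $\mathcal{M}$ as a limit of Perelman's Ricci flows with surgery as the surgery parameter tends to zero. First I would fix a sequence $\delta_i \searrow 0$ and, applying Perelman's construction, produce for each $i$ a Ricci flow with surgery $(\mathcal{M}^i, (g^i_t))$ starting from $(M,g)$, with surgeries performed at a scale comparable to $\delta_i$ times the canonical-neighborhood scale. Each such flow with surgery can be organized into a Ricci flow spacetime in the sense of Definition~\ref{def_RF_spacetime}: concatenate the smooth time-slabs, and at each surgery time glue the post-surgery manifold to the pre-surgery one along the unmodified region, discarding the components that are entirely covered by canonical neighborhoods. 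After this procedure one has a smooth $4$-manifold $\mathcal{M}^i$ with boundary $\mathcal{M}^i_0$ isometric to $(M,g)$, a time function $\mathfrak{t}^i$ without critical points, a time vector field $\partial_{\mathfrak{t}^i}$ whose trajectories may terminate (either at a surgery cap or as the curvature blows up), and a spatial metric satisfying the Ricci flow equation.

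Next I would invoke Perelman's a priori estimates, which hold uniformly in $i$: $\kappa$-noncollapsing at bounded scales, and, crucially, the canonical neighborhood theorem --- for every $\varepsilon>0$ and $T<\infty$ there is $r_{\varepsilon,T}>0$, independent of $i$, such that $\mathcal{M}^i$ satisfies the $\varepsilon$-canonical neighborhood assumption below scale $r_{\varepsilon,T}$ on $[0,T]$. I would also use that the first surgery time is bounded below by a positive constant depending only on the initial curvature bound, so that for $t$ near $0$ the flows $\mathcal{M}^i$ are genuine smooth Ricci flows agreeing with the one starting from $(M,g)$.

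Then comes the compactness step. On the ``regular part'' --- points where the curvature scale $\rho$ is bounded below and time is bounded above --- the uniform noncollapsing together with local curvature derivative estimates give uniform geometric control, so one can run a pointed Cheeger--Gromov--Hamilton compactness argument for the spacetimes: exhaust $\mathcal{M}^i$ by the open sets $U^i_k := \{\rho > 1/k,\ \mathfrak{t} < k\}$, pass to a diagonal subsequence, and extract a limiting Ricci flow spacetime $(\mathcal{M}, \mathfrak{t}, \partial_{\mathfrak{t}}, g)$ together with smooth embeddings of the exhausting pieces into it. The initial slice $\mathcal{M}_0$ is isometric to $(M,g)$ by the previous paragraph, and $\mathcal{M}_0$ is compact. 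Completeness of $\mathcal{M}$ holds because a finite-length path in a time-slice, or a $\partial_{\mathfrak{t}}$-trajectory, along which $\rho$ stays bounded below eventually lies in one of the $U^i_k$, where the convergence is smooth, so the terminal limit of the path exists; the only way to reach a ``missing'' point is $\rho \to 0$, i.e.\ a genuine singularity. Finally, the $\varepsilon$-canonical neighborhood assumption for $\mathcal{M}$ below scale $r_{\varepsilon,T}$ on $[0,T]$ is inherited from the uniform version for the $\mathcal{M}^i$ by smooth convergence, since being $\varepsilon$-close to a time-slice of a $\kappa$-solution at some scale is a local, scale-invariant, $C^\infty$-open condition (cf.\ Definitions~\ref{def_geometric_closeness_time_slice} and~\ref{def_canonical_nbhd_asspt} and the compactness in Theorem~\ref{Thm_kappa_compactness_theory}).

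The main obstacle is the spacetime compactness/convergence argument and, intertwined with it, the verification of completeness: one must ensure that no component of a time-slice ``disappears'' in the limit without producing a singularity, and that no component ``appears'' --- this relies on the structure theory of Perelman's surgery, namely that every discarded component is covered by canonical neighborhoods and hence, as $\delta_i \to 0$, shrinks to a point of scale zero rather than vanishing abruptly, while surgery necks become ever thinner so the ``surgery locus'' is squeezed out of the regular part in the limit. Making the spacetime compactness statement precise, and checking that the limiting time function has no critical points and that $\partial_{\mathfrak{t}}$ is globally defined on $\mathcal{M}$, is the technical heart of the argument.
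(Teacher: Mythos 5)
The paper does not prove this theorem; it cites it directly from Kleiner--Lott \cite[Theorem~1.1]{Kleiner:2014le} with no argument given. Your sketch accurately recapitulates the strategy of that reference --- limits of Perelman's Ricci flows with surgery as the surgery scale tends to zero, packaged as Ricci flow spacetimes, with a pointed compactness argument on the regular part and completeness/canonical-neighborhood properties inherited from the uniform a priori estimates --- so your proposal is correct and matches the approach of the cited source, though of course the technical heart you identify (the spacetime compactness step and the verification that the limit is a well-defined Ricci flow spacetime without spurious appearance or disappearance of components) occupies most of that paper.
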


The uniqueness result is from \cite[Theorem~1.3]{bamler_kleiner_uniqueness_stability}.

\begin{theorem}[Uniqueness] \label{Thm_sing_RF_uniqueness}
Any singular Ricci flow $(\M,\mathfrak{t}, \partial_{\mathfrak{t}}, g)$ is uniquely determined by its initial time-slice $(\M_0, g_0)$ up to isometry in the following sense:
If $(\M,\mathfrak{t}, \partial_{\mathfrak{t}}, g)$ and $(\M',\mathfrak{t}', \partial'_{\mathfrak{t}}, g')$ are two singular Ricci flows and $\phi : (\M, g_0) \to (\M',g'_0)$ is an isometry, then there is a diffeomorphism $\td\phi : \M \to \M'$  that is an isometry of Ricci flow spacetimes:
\[ \td\phi |_{\M_0} = \phi, \quad  \t = \t' \circ \td\phi , \quad \partial_{\mathfrak{t}} = \td\phi^* \partial'_{\mathfrak{t}}, \quad g = \td\phi^* g'. \]
\end{theorem}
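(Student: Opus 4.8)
The plan is to prove uniqueness by a \emph{continuity/stability} argument rather than by constructing the isometry $\td\phi$ in one stroke. For each small $\eps>0$ I would build an ``almost-isometry'' $\phi^{(\eps)}$ between $\M$ and $\M'$, defined on larger and larger sub-spacetimes and quantitatively close to being an isometry of Ricci flow spacetimes, with estimates uniform enough to pass to a limit as $\eps\to 0$. The starting datum is the given isometry $\phi:(\M_0,g_0)\to(\M'_0,g'_0)$. Away from the locus where the curvature scale $\rho$ is small, both singular Ricci flows are honest smooth Ricci flows, so one expects to flow the comparison forward in time; the entire difficulty is concentrated near the singular set, i.e.\ where $\rho$ is small, which is precisely where the canonical neighborhood assumption (Property~\ref{Prop_sing_RF_3}) and the classification of $\kappa$-solutions (Theorem~\ref{Thm_kappa_sol_classification}) supply rigid geometric models.

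Concretely, I would run an induction over a partition $0=t_0<t_1<\cdots$ of the time axis into steps of controlled length. The inductive hypothesis is a ``comparison'' at time $t_j$: a diffeomorphism $\phi_j$ from the subset of $\M_{[0,t_j]}$ where $\rho$ stays above a threshold onto the corresponding subset of $\M'$, which nearly intertwines $(\t,\partial_\t,g)$ with $(\t',\partial'_\t,g')$, measured in a \emph{weighted} $C^{[\eps^{-1}]}$-norm with weight a large negative power of $\rho$ (the weight lets the estimate ``see through'' nearby high-curvature regions). To pass from $t_j$ to $t_{j+1}$:
\begin{enumerate}[label=(\arabic*)]
\item On the part of the slice where $\rho$ is bounded below (the ``thick part''), evolve the comparison by a harmonic-map-heat-flow/Ricci--DeTurck coupling and control the perturbation tensor $h:=\phi_j^* g'-g$ via a parabolic maximum principle / weighted energy estimate for the Ricci--DeTurck equation; this is where Shi-type derivative estimates and $\kappa$-noncollapsing for the individual flows enter.
\item On the high-curvature ``thin part'', use the canonical neighborhood assumption to see both flows as $\eps$-close to $\kappa$-solutions --- necks, Bryant-type caps, or closed quotients of the round sphere --- and propagate $\phi_j$ across this region using the rigidity of these models: a ``neck stability'' argument forces the map along necks, while caps and closed components are handled by the uniqueness of the corresponding model.
\item Glue the thick-part and thin-part extensions with a partition of unity supported in their overlap (a region of bounded, intermediate curvature where step~(1) already gives closeness), obtaining $\phi_{j+1}$, and verify --- this is the bootstrap --- that the a priori bounds assumed on $\phi_j$ are \emph{strictly} improved, so the induction closes.
\end{enumerate}

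Once the family $(\phi^{(\eps)})$ is constructed with $\eps$-uniform bounds on compact sub-spacetimes, I would apply Shi-type smoothing together with Arzela--Ascoli to extract a limit $\td\phi:\M\to\M'$ as $\eps\to 0$. By construction $\td\phi$ restricts to $\phi$ on $\M_0$, is smooth, and satisfies $\t=\t'\circ\td\phi$, $\partial_\t=\td\phi^*\partial'_\t$, $g=\td\phi^* g'$; completeness of both flows (Property~\ref{Prop_sing_RF_2}) guarantees that no region is ``lost at the singular set'' and, applied symmetrically, that $\td\phi$ is a diffeomorphism onto $\M'$.

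The main obstacle I expect is step~(2): propagating the comparison map across the singular/high-curvature part with the required quantitative control. One needs a careful scale-aware decomposition of the high-curvature region into overlapping necks and caps, the rigidity of $\kappa$-solutions must be upgraded to effective stability statements, and the arrangement of these pieces can be combinatorially intricate. Keeping every estimate uniform in $\eps$ --- so that the limit $\eps\to 0$ actually exists and is an isometry rather than merely a bi-Lipschitz map --- is the other delicate point, and is what dictates the use of the $\rho$-weighted norms throughout.
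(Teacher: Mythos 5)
The paper does not prove this theorem: it simply cites it as Theorem~1.3 of \cite{bamler_kleiner_uniqueness_stability}, so there is no internal argument to compare your proposal against. Your sketch does capture, at a high level, the architecture of the proof in that external reference: an inductive time-stepping construction of a comparison map, Ricci--DeTurck / harmonic-map-heat-flow coupling and weighted estimates on the ``thick'' part, $\kappa$-solution rigidity (neck stability, cap/Bryant extension) on the ``thin'' part, a priori assumptions that are propagated and strictly improved, and a limiting argument as the truncation threshold goes to zero. The one place where your description diverges from the actual mechanism is step~(3): the cited proof does not glue thick and thin extensions with a partition of unity; instead it works with a carefully chosen ``comparison domain'' whose boundary is kept in a region of controlled geometry, and extends the comparison over cap regions by a rigidity argument (the ``Bryant extension''), avoiding the gluing step entirely. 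In the context of the present paper, though, there is nothing to reprove --- the correct thing to do here is to cite the reference, as the authors do.
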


In this paper we will often identify the given initial condition $(M,g)$ with the initial time-slice $(\M_0,g_0)$ if there is no chance of confusion.
We will view $\M$ as the ``unique'' singular Ricci flow with initial time-slice $(\M_0, g_0) = (M,g)$.

\subsection{The curvature scale}
\label{subsec_curvature_scale}
We will now define a curvature scale function $\rho$ that satisfies (\ref{eq_rho_equivalent_Rm}).
This subsection can be skipped upon a first reading.
For most applications we could simply take $\rho := |{\Rm}|^{-1/2}$, however, it will turn out to be convenient at certain points in our proof to work with a slightly different definition.
More specifically, our main objective will be to ensure that $\rho = R^{-1/2}$ wherever $\eps$-canonical neighborhood assumption holds for a small enough $\eps$.

To achieve this, observe that there is a constant $c_0 > 0$ such that the following holds:
Whenever $\Rm$ is an algebraic curvature tensor with the property that its scalar curvature $R$ is positive and all its sectional curvatures are bounded from below by $-\frac1{10} R$, then $c_0 |{\Rm}| \leq  R$.
We will fix $c_0$ for the remainder of this paper. 

\begin{definition}[Curvature scale] \label{def_curvature_scale}
Let $(M, g)$ be a 3-dimensional Riemannian manifold and $x \in M$ a point.
We define the {\bf (curvature) scale} at $x$ to be 
\begin{equation} \label{eq_def_curvature_scale}
 \rho (x) = \min \big\{ R_+^{-1/2} (x), \big( c_0 |{\Rm}| (x) \big)^{-1/2}\big\}. 
\end{equation}
Here $R_+(x) := \max \{ R(x), 0 \}$ and we use the convention $0^{-1/2} = \infty$.

If $(\M, \mathfrak{t}, \partial_{\mathfrak{t}}, g)$ is a Ricci flow spacetime, then we define $\rho: \M \to \R$ such that it restricts to the corresponding scale functions on the time-slices. 
\end{definition}

The following lemma summarizes the important properties of the curvature scale.

\begin{lemma} \label{lem_rho_Rm_R}
There is a universal constant $C < \infty$ such that
\begin{equation} \label{eq_equivalence_bound_rho_Rm}
 C^{-1} \rho^{-2} (x) \leq |{\Rm}|(x) \leq C \rho^{-2} (x). 
\end{equation}
Moreover, there is a universal constant $\eps_0 > 0$ such that if $x$ satisfies the $\eps$-canonical neighborhood assumption for some $\eps \leq \eps_0$, then $R(x) = \rho^{-2} (x)$.
\end{lemma}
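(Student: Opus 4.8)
The plan is to prove Lemma~\ref{lem_rho_Rm_R} by treating the two curvature quantities in the definition of $\rho$ separately and then invoking the canonical neighborhood assumption for the sharper conclusion. First I would establish \eqref{eq_equivalence_bound_rho_Rm}. In dimension $3$ the full curvature tensor $\Rm$ is algebraically determined by the Ricci tensor, and the scalar curvature $R$ satisfies $|R| \leq C_1 |{\Rm}|$ for a universal $C_1$ (indeed $R = \tr \Ric$ and $|\Ric| \leq C_2 |{\Rm}|$ in any dimension). Hence $R_+^{-1/2}(x) \geq C_1^{-1/2} |{\Rm}|^{-1/2}(x)$ whenever the right side is finite, and trivially the term $(c_0|{\Rm}|(x))^{-1/2}$ compares to $|{\Rm}|^{-1/2}(x)$ up to the fixed constant $c_0^{-1/2}$. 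Taking the minimum in \eqref{eq_def_curvature_scale} and using that $0^{-1/2}=\infty$ by convention, one reads off $\rho(x) \geq c|{\Rm}|^{-1/2}(x)$ and $\rho(x) \leq c_0^{-1/2}|{\Rm}|^{-1/2}(x)$; squaring and inverting gives the two-sided bound with a universal $C$. (If $\Rm_x = 0$ both sides of \eqref{eq_equivalence_bound_rho_Rm} should be read with the conventions $\rho = \infty$, $|{\Rm}|^{-2} = \infty$, so the estimate is vacuously true there.)

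Next I would prove the second assertion. Fix the universal $\kappa_0$-solution compactness from Theorem~\ref{Thm_kappa_sol_classification}/Theorem~\ref{Thm_kappa_compactness_theory}; the key geometric input is that on any $\kappa$-solution one has $R > 0$ and $\sec \geq 0$, so in particular every sectional curvature is bounded below by $-\tfrac{1}{10}R$ — trivially, since it is bounded below by $0$. If $x$ satisfies the $\eps$-canonical neighborhood assumption with $\eps$ small, then there is a $\kappa$-solution $(\ov M,\ov g_0,\ov x)$ with $\rho(\ov x,0)=1$ and a diffeomorphism $\psi$ on $B^{\ov M}(\ov x,\eps^{-1})$ with $\Vert \lambda^{-2}\psi^* g - \ov g_0\Vert_{C^{[\eps^{-1}]}} < \eps$ at some scale $\lambda$. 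Since the $C^{[\eps^{-1}]}$-closeness bounds in particular the $C^2$-closeness once $\eps^{-1} \geq 2$, the curvature tensor of $\lambda^{-2}\psi^* g$ at $\ov x$ is within $C\eps$ of $\Rm_{\ov g_0}(\ov x)$. On the $\kappa$-solution side, $\rho(\ov x,0)=1$ together with \eqref{eq_equivalence_bound_rho_Rm} gives a uniform two-sided bound on $|{\Rm}_{\ov g_0}|(\ov x)$, and $R_{\ov g_0}(\ov x) > 0$ with sectional curvatures $\geq 0 > -\tfrac1{10}R_{\ov g_0}(\ov x)$. By continuity of these open algebraic conditions, for $\eps \leq \eps_0$ small enough the algebraic curvature tensor of $g$ at $x$ (which equals $\lambda^{2}$ times that of $\lambda^{-2}\psi^*g$ at $\ov x$ under the metric identification) still has positive scalar curvature and all sectional curvatures $\geq -\tfrac1{10}R(x)$. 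The defining property of $c_0$ then gives $c_0|{\Rm}|(x) \leq R(x)$, i.e. $(c_0|{\Rm}|(x))^{-1/2} \geq R_+^{-1/2}(x) = R^{-1/2}(x)$, so the minimum in \eqref{eq_def_curvature_scale} is attained by $R_+^{-1/2}(x)$, whence $\rho(x) = R^{-1/2}(x)$ as claimed.

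The main obstacle — really the only subtlety — is the dependence of $\eps_0$ on the universal constant $c_0$: one must check that the "margin" built into the definition of $c_0$ (sectional curvatures bounded below by $-\tfrac1{10}R$ rather than exactly $0$) is robust enough to survive the $O(\eps)$ perturbation coming from geometric closeness, uniformly over all $\kappa$-solutions. This is where one genuinely uses that the comparison models form a compact family after scaling (Theorem~\ref{Thm_kappa_compactness_theory}), so that $|{\Rm}_{\ov g_0}|(\ov x)$ and $R_{\ov g_0}(\ov x)$ are bounded above and below by positive universal constants; given that, choosing $\eps_0$ small so that $C\eps_0$ is below, say, $\tfrac{1}{100}$ of the smallest such curvature scale makes the perturbed sectional curvatures exceed $-\tfrac1{10}R(x)$ and keeps $R(x)>0$. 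I would remark that a cleaner alternative is to argue by contradiction: if no such $\eps_0$ existed, one could extract a sequence of pointed manifolds $\eps_i$-close to pointed $\kappa_i$-solutions with $\eps_i \to 0$ and $\rho \neq R^{-1/2}$ at the basepoint, pass to a limiting $\kappa$-solution using Theorem~\ref{Thm_kappa_compactness_theory}, and observe that on the limit $\rho = R^{-1/2}$ strictly (since $\sec \geq 0$ forces $c_0|{\Rm}| \leq R$), contradicting the closedness of the condition $\rho \neq R^{-1/2}$ under convergence.
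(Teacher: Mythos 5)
Your proof is correct and uses the same approach as the paper's: \eqref{eq_equivalence_bound_rho_Rm} is read off the definition of $\rho$ together with the comparison $R_+ \leq |R| \leq C|\Rm|$, and the second assertion follows by transferring the conditions $R>0$ and $\sec\geq-\tfrac{1}{10}R$ from the normalized $\kappa$-solution model at $\ov{x}$ to the point $x$ via the $\eps$-closeness, then invoking the defining property of $c_0$. The paper's proof is a one-line sketch of exactly this; your version also supplies the uniformity argument the paper leaves implicit (and, as you note in passing, $\rho(\ov{x},0)=1$ and $\sec\geq 0$ already force $|\Rm_{\ov g_0}|(\ov{x})\leq c_0^{-1}$ directly from the definition of $c_0$, so the appeal to compactness of $\kappa$-solutions is not actually needed here).

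One small caveat on the alternative contradiction argument you sketch at the end: the condition $\rho\neq R^{-1/2}$, i.e.\ $c_0|\Rm|>R_+$, is \emph{open}, not closed, under convergence of the curvature tensor at the basepoint. Passing to the limit therefore gives only $c_0|\Rm|(\ov{x})\geq R(\ov{x})$, which is not immediately incompatible with the $\kappa$-solution bound $c_0|\Rm|(\ov{x})\leq R(\ov{x})$; equality could hold. To make that route work you need the \emph{strict} inequality $c_0|\Rm|<R$ for algebraic curvature tensors with $\sec\geq 0$ and $R>0$, which is true but does not follow from the non-strict fact you cite in the parenthesis. The direct argument you give first avoids this entirely and is the cleaner of the two.
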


\begin{proof}
The bound (\ref{eq_equivalence_bound_rho_Rm}) is obvious.
For the second part of the lemma observe that for sufficiently small $\eps$ we have $R(x) > 0$ and $\sec \geq - \frac1{10} R(x)$ at $x$.
So $R_+^{-1/2}(x) \leq (c_0 |{\Rm}| (x) )^{-1/2}$.
\end{proof}

In all our future references to the $\eps$-canonical neighborhood assumption, we will assume that $\eps \leq \eps_0$, such that $R = \rho^{-2}$ is guaranteed.

Note that in \cite{bamler_kleiner_uniqueness_stability} a factor of $\frac13$ was used in front of $R_+$ in (\ref{eq_def_curvature_scale}).
We have omitted this factor for convenience, as it is inessential for the purpose of this paper.

\subsection{Singular Ricci flows --- further definitions and properties}
The following definitions and results, which are of a more technical nature,  will be used in this paper.

Let us first discuss the concept of parabolic rescaling for Ricci flow spacetimes.
For this purpose, recall that if $(g_t)_{t \in (t_1, t_2)}$ is a conventional Ricci flow and $a > 0$, then $(a^2 g_{a^{-2} t} )_{t \in (a^2 t_1, a^2 t_2)}$ satisfies the Ricci flow equation as well and we refer to this flow as the parabolically rescaled Ricci flow.
Similarly, if $(\M, \mathfrak{t}, \partial_{\mathfrak{t}}, g)$ is a Ricci flow spacetime, then so is  $(\M, a^2 \t, a^{-2}  \partial_{\mathfrak{t}},a^2 g)$, which we will refer to as the {\bf parabolically rescaled Ricci flow spacetime}.
If $(\M, \mathfrak{t}, \partial_{\mathfrak{t}}, g)$ is a singular Ricci flow, then so is $(\M, a^2 \t, a^{-2}  \partial_{\mathfrak{t}},a^2 g)$.
Moreover, if $(\M, \mathfrak{t}, \partial_{\mathfrak{t}}, g)$ (locally or globally) corresponds to a conventional Ricci flow $(g_t)_{t \in (t_1, t_2)}$, as discussed after Definition~\ref{def_RF_spacetime}, then both notions of parabolic rescaling are the same.

Next, we introduce some more useful terminology, which helps us characterize the local geometry of singular Ricci flows.
Let in the following $(\M, \mathfrak{t}, \partial_{\mathfrak{t}}, g)$, or simply $\M$, be a Ricci flow spacetime; for the purpose of this paper, we may also take $\M$ to be singular Ricci flow.

\begin{definition}[Points in Ricci flow spacetimes] \label{def_points_in_RF_spacetimes}
Let $x \in \M$ be a point and set $t := \mathfrak{t} (x)$.
Consider the maximal trajectory $\gamma_x : I \to \M$, $I \subset [0, \infty)$, of the time-vector field $\partial_{\mathfrak{t}}$ such that $\gamma_x (t) = x$.
Note that then $\mathfrak{t} (\gamma_x(t')) = t'$ for all $t' \in I$.
For any $t' \in I$ we say that $x$ \textbf{survives until time $t'$} and we write 
\[ x(t') := \gamma_x (t'). \]
Similarly, if $X \subset \M_t$ is a subset in the time-$t$ time-slice, then we say that $X$ \textbf{survives until time $t'$} if this is true for every $x \in X$ and we set $X(t') := \{ x(t') \;\; : \;\; x \in X \}$.
\end{definition}

\begin{definition}[Product domain]
\label{def_product_domain}
We call a subset $X \subset \M$ a \emph{product domain} if there is an interval $I \subset [0, \infty)$ such that for any $t \in I$ any point $x \in X$ survives until time $t$ and $x(t) \in X$.
\end{definition}

Note that a product domain $X$ can be identified with the product $(X \cap \M_{t_0}) \times I$ for an arbitrary $t_0 \in I$.
If $X \cap \M_{t_0}$ is sufficiently regular (e.g. open or a domain with smooth boundary in $\M_{t_0}$), then the metric $g$ induces a classical Ricci flow $(g_t)_{t \in I}$ on $X \cap \M_{t_0}$.
We will often use the metric $g$ and the Ricci flow $(g_t)_{t \in I}$ synonymously when our analysis is restricted to a product domain.

\begin{definition}[Parabolic neighborhood]
For any $y \in \M$ let $I_y \subset [0, \infty)$ be the set of all times until which $y$ survives.
Now consider a point $x \in \M$ and two numbers $a \geq 0$, $b \in \R$.
Set $t := \mathfrak{t} (x)$.
Then we define the \textbf{parabolic neighborhood} $P(x, a, b) \subset \M$ as follows:
\[ P(x,a,b) := \bigcup_{y \in B(x,a)} \bigcup_{t' \in [t, t+b] \cap I_y} y(t'). \]
If $b < 0$, then we replace $[t,t+b]$ by $[t+b, t]$.
We call $P(x,a,b)$ \textbf{unscathed} if $B(x,a)$ is relatively compact in $\M_t$ and if $I_y \supset [t, t+b]$ or $I_y\supset [t +b, t] \cap [0, \infty)$ for all $y \in B(x,a)$.
Lastly, for any $r > 0$ we introduce the simplified notation
\[ P(x,r) := P(x,r,-r^2) \]
for the \textbf{(backward) parabolic ball} with center $x$ and radius $r$.
\end{definition}

Note that if $P(x,a,b)$ is unscathed, then it is a product domain of the form $B(x,a)  \times I_y$ for any $y \in B(x,a)$.

Borrowing from Definition~\ref{def_geometric_closeness_time_slice}, we will introduce the notion of a $\delta$-neck. 
\begin{definition}[$\delta$-neck]
Let $(M,g)$ be a Riemannian manifold and $U \subset M$ an open subset.
We say that $U$ is a {\bf $\delta$-neck at scale $\lambda > 0$} if there is a diffeomorphism
\[ \psi : S^2 \times \big( {- \delta^{-1}, \delta^{-1} }\big) \longrightarrow U \]
such that
\[ \big\Vert \lambda^{-2} \psi^* g - \big( 2 g_{S^2} + g_{\R} \big) \big\Vert_{C^{[\delta^{-1}]}(S^2 \times (- \delta^{-1}, \delta^{-1}))} < \delta. \]
We call the image $\psi ( S^2 \times \{ 0 \})$ a {\bf central 2-sphere of $U$} and every point on a central $2$-sphere a {\bf center of $U$}.
\end{definition}

Note that by our convention (see Definition~\ref{def_curvature_scale}) we have $\rho \equiv 1$ on $(S^2 \times \R,  2 g_{S^2} + g_{\R})$.
So on a $\delta$-neck at scale $\lambda$ we have $\rho \approx \lambda$, where the accuracy depends on the smallness of $\delta$.
We also remark that a $\delta$-neck $U$ has infinitely many central $2$-spheres, as we may perturb $\psi$ slightly.
This is why we speak of \emph{a} central 2-sphere of $U$, as opposed to \emph{the} central 2-sphere of $U$.
Similarly, the centers of $U$ are not unique, but form an open subset of $U$. 

Lastly, we define the initial condition scale.

\begin{definition}[Initial condition scale] \label{Def_r_initial}
For any closed 3-manifold $(M,g)$ define the {\bf initial condition scale $r_{\initial}(M,g)$} as follows:
\[ r_{\initial} (M,g) := \min \big\{ \inf_M |{\Rm_g}|^{-1/2} , \inf_M |\nabla{\Rm_g}|^{-1/3} , \injrad (M,g) \big\} \]
\end{definition}

So $|{\Rm}| \leq r^{-2}_{\initial} (M,g)$, $|{\nabla\Rm}| \leq r^{-3}_{\initial} (M,g)$ and $\injrad (M,g) \geq r_{\initial} (M,g)$.
Moreover, the map $g \mapsto  r_{\injrad}(M,g)$ is continuous on $\Met (M)$.
\bigskip

Let us now state further results.
The first result offers more quantitative geometric control on any singular Ricci flow $\M$, depending only on the initial condition scale $r_{\initial} (\M_0, g_0)$ of the initial time-slice.

\begin{lemma} \label{lem_rcan_control}
For any $\eps > 0$ there is a smooth function $r_{\can, \eps} : \IR_+ \times [0, \infty) \to \IR_+$ such that the following holds for any $T \geq 0$ and any singular Ricci flow $\M$:
\begin{enumerate}[label=(\alph*)]
\item \label{ass_rcan_a} $\M$ satisfies the $\eps$-canonical neighborhood assumption below scale $r_{\can, \eps} \lb (r_{\initial} (\M_0, \lb g_0), \lb T)$
on $[0,T]$.
\item \label{ass_rcan_b} If $x \in \M$, $\t (x) \leq T$ and $\rho (x) \leq r_{\can, \eps} (r_{\initial} (\M_0, g_0), T)$, then the parabolic neighborhood $P := P(x, \eps^{-1} \rho(x))$ is unscathed and after parabolic rescaling by $\rho^{-2}(x)$ the flow on $P$ is  $\eps$-close to the flow on a $\kappa$-solution.
\item \label{ass_rcan_c} $\rho \geq \eps^{-1} r_{\can, \eps} (r_{\initial} (\M_0, g_0), T)$ on $\M_0$.
\item \label{ass_rcan_d} For any $a, r_0 > 0$ we have $r_{\can, \eps} (a  r_0, a^2 T) = a \cdot r_{\can, \eps} (r_0, T)$.
\item $|\partial_T^m r_{\can, \eps}| \leq \eps  r_{\can, \eps}^{1-2m}$ for $ m = 0, \ldots, [\eps^{-1}]$.
\item $r_{\can, \eps} (r_0, T)$ is decreasing in $T$ for any $r_0 > 0$.
\end{enumerate}
\end{lemma}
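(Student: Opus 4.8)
The plan is to derive Lemma~\ref{lem_rcan_control} from the existence part of the singularity structure theory for singular Ricci flows together with a compactness/contradiction argument, exploiting parabolic rescaling invariance to reduce to a statement depending only on the dimensionless quantity $r_{\initial}(\M_0,g_0)$ and $T$. First I would fix $\eps>0$ (taking $\eps\le\eps_0$ so that $R=\rho^{-2}$ holds by Lemma~\ref{lem_rho_Rm_R}) and observe that properties \ref{ass_rcan_a} and \ref{ass_rcan_b} are essentially the content of Perelman's canonical neighborhood theorem, transported to the setting of singular Ricci flows via the uniqueness Theorem~\ref{Thm_sing_RF_uniqueness}; what remains is to package the scale below which these hold as a function of $(r_{\initial},T)$ with the stated regularity and scaling. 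For a fixed normalization $r_{\initial}=1$ and fixed $T$, a standard point-picking argument shows that there is some scale $\bar r(1,T)>0$ below which the $\eps$-canonical neighborhood assumption holds on $[0,T]$: if not, one finds a sequence of singular Ricci flows with $r_{\initial}(\M^i_0,g^i_0)=1$ and points $x^i$ with $\rho(x^i)\to 0$, $\t(x^i)\le T$, at which the $\eps$-canonical neighborhood assumption fails; rescaling by $\rho^{-2}(x^i)$ and applying the compactness theory for singular Ricci flows (Hamilton--Cheeger--Gromov-type compactness, using the a priori curvature and noncollapsing estimates that accompany the singular flow) one extracts a limiting $\kappa$-solution, contradicting the failure of the assumption at $x^i$. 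The same rescaling limit gives \ref{ass_rcan_b}: once $\rho(x)$ is small, the backward parabolic neighborhood $P(x,\eps^{-1}\rho(x))$ is unscathed (no singular points are encountered at such scales in such a short time, by the canonical neighborhood structure and the fact that scales cannot drop to zero in finite backward time away from singularities) and, after rescaling, is $\eps$-close to a $\kappa$-solution.

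Next I would obtain the scaling property \ref{ass_rcan_d}. This is immediate from the fact that the whole setup — singular Ricci flows, the curvature scale $\rho$, the canonical neighborhood assumption, and $r_{\initial}$ — is equivariant under parabolic rescaling: if $\M$ has initial condition scale $r_0$ and satisfies the $\eps$-canonical neighborhood assumption below scale $\rho_0$ on $[0,T]$, then the parabolically rescaled flow $(\M,a^2\t,a^{-2}\partial_\t,a^2 g)$ has initial condition scale $a r_0$ and satisfies the assumption below scale $a\rho_0$ on $[0,a^2 T]$. Hence the optimal such scale, call it $\bar r_\eps(r_0,T)$, satisfies $\bar r_\eps(ar_0,a^2T)=a\,\bar r_\eps(r_0,T)$, which by setting $r_0=1$ reduces everything to a single function of one variable $T\mapsto\bar r_\eps(1,T)$. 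Property \ref{ass_rcan_c} follows from the definition of $r_{\initial}$ and the fact that on the initial time-slice $|{\Rm}|\le r_{\initial}^{-2}$, so $\rho\ge c\,r_{\initial}$ there for a universal $c$; choosing $r_{\can,\eps}$ small enough relative to $\bar r_\eps$ (say by an extra universal factor and then shrinking) makes the inequality $\rho\ge\eps^{-1}r_{\can,\eps}(r_{\initial},T)$ hold on $\M_0$, and is compatible with the scaling \ref{ass_rcan_d}. Property \ref{ass_rcan_f}, monotonicity in $T$, holds because the $\eps$-canonical neighborhood assumption on $[0,T']$ for $T'>T$ is a stronger requirement, so the admissible scale can only decrease; one simply defines $r_{\can,\eps}(r_0,T):=\inf_{T'\le T}(\ldots)$ if needed, and re-imposes scaling.

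The last point is to upgrade the function $T\mapsto\bar r_\eps(1,T)$, which a priori is merely a positive function obtained from a compactness argument (hence only, say, lower semicontinuous or bounded below on compacta), to a \emph{smooth} function $r_{\can,\eps}$ satisfying \ref{ass_rcan_a}--\ref{ass_rcan_c}, \ref{ass_rcan_f} and the derivative bounds \ref{ass_rcan_e} $|\partial_T^m r_{\can,\eps}|\le\eps\,r_{\can,\eps}^{1-2m}$. This is a soft smoothing step: having a positive lower bound $\bar r_\eps(1,T)$ on each compact time-interval, one chooses a smooth positive function $r_{\can,\eps}(1,\cdot)$ lying below it, decreasing in $T$, and with all derivatives controlled — concretely, one can take $r_{\can,\eps}(1,T)$ to decay like a slowly varying function (e.g. comparable to $\min(1,\inf_{T'\le T}\bar r_{\eps'}(1,T'))$ mollified and multiplied by a small constant, with $\eps'$ slightly smaller than $\eps$) so that the logarithmic derivative is as small as we like; the bound $|\partial_T^m r_{\can,\eps}|\le\eps r_{\can,\eps}^{1-2m}$ is exactly the statement that the function varies slowly on its own (parabolic) scale, which is arranged by construction. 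Finally one extends to all $r_0>0$ by $r_{\can,\eps}(r_0,T):=r_0\cdot r_{\can,\eps}(1,T/r_0^2)$, which by the chain rule preserves the derivative bounds and yields \ref{ass_rcan_d}.

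The main obstacle is the compactness/point-picking argument underlying \ref{ass_rcan_a}--\ref{ass_rcan_b}: one must ensure that the rescaled limits of a singular Ricci flow near a sequence of almost-singular points are genuine $\kappa$-solutions, which requires the a priori estimates (noncollapsing at bounded scales depending on $r_{\initial}$ and $T$, and curvature bounds on the high-scale region) that make the singular flow well-behaved. These are precisely the ingredients established in \cite{Perelman1,Kleiner:2014le,bamler_kleiner_uniqueness_stability}, so once they are invoked the remaining steps — scaling equivariance, the initial-slice bound, monotonicity in $T$, and smoothing with derivative control — are routine.
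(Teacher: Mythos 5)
The overall structure of your proposal — reduce to $r_{\initial} = 1$ via the parabolic-rescaling ansatz $r_{\can,\eps}(r_0,T) = r_0\cdot r'_{\can,\eps}(Tr_0^{-2})$, establish the canonical neighborhood scale as a function of $T$ alone, then mollify and shrink to obtain the smooth, decreasing function with the stated derivative bounds — matches the paper, and your treatment of \ref{ass_rcan_c}--\ref{ass_rcan_d} and of the smoothing step \ref{ass_rcan_e}--\ref{ass_rcan_f} is essentially the same as the paper's (convolution plus a small factor $a_\eps$).

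However, there is a genuine gap in your derivation of assertions \ref{ass_rcan_a}--\ref{ass_rcan_b}, which is the heart of the lemma. You propose a direct contradiction/compactness argument: take a sequence of singular Ricci flows $\M^i$ normalized to $r_{\initial}=1$ with points $x^i$, $\t(x^i)\le T$, $\rho(x^i)\to 0$, at which the $\eps$-canonical neighborhood assumption fails, rescale by $\rho^{-2}(x^i)$, apply ``compactness theory for singular Ricci flows'' and extract a $\kappa$-solution limit. But the compactness step requires a priori estimates that are uniform over the sequence $\{\M^i\}$ — specifically noncollapsing at controlled scales, curvature bounds on parabolic neighborhoods of $x^i$ at scale $\rho(x^i)$, curvature pinching, etc. — and in Perelman's framework these are precisely what the quantitative canonical neighborhood theorem furnishes, via a delicate induction on time intervals. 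Definition~\ref{Def_sing_RF} only guarantees a canonical neighborhood scale $r_{\eps,T}>0$ that may depend on the particular flow, so there is no obvious source of uniform control over the sequence; the argument as you have stated it is circular, or at best reproduces the hard core of Perelman's theory directly for singular Ricci flows. The paper sidesteps this entirely: it invokes the result that $\M$ is a limit of Ricci flows with surgery $\M^{(\delta_i)}$ with the same initial condition and $\delta_i\to 0$ (from \cite[Thm.~1.3]{Kleiner:2014le} combined with uniqueness), and then transports the uniform scale $r'_{\can,\eps}(T)$ provided by Perelman's construction \cite{Perelman2} for those flows to the limit. Your proposal would need to be amended to either go through this approximation, or to supply the uniform a priori estimates for singular Ricci flows from some other source; describing it as ``a standard point-picking argument'' understates what is actually required.
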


\begin{proof}
We will set 
\[ r_{\can,\eps} (r_0, T) := r_0 \cdot r'_{\can, \eps} (T r_0^{-2}) \]
for some smooth decreasing function $r'_{\can, \eps} : [0, \infty) \to \IR_+$, which we will determine in the following.
Then Assertion~\ref{ass_rcan_d} holds and due to invariance of all other assertions under parabolic rescaling, it suffices to assume in the following that $r_{\initial} (\M_0, g_0) = 1$.

By \cite[Thm. 1.3]{Kleiner:2014le}, \cite[Thm. 1.3]{bamler_kleiner_uniqueness_stability}, it follows that $\M$ is a limit of Ricci flows with surgery $\M^{(\delta_i)}$ with the same initial condition and performed at surgery scales $\delta_i \to 0$, .
By Perelman's construction \cite{Perelman2} of the flows $\M^{(\delta_i)}$, we know that there is a continuous, decreasing function $r'_{\can, \eps} : [0, \infty) \to \IR_+$ that is independent of $(\M_0, g_0)$ and $i$ such that for any $T > 0$ and $r < r'_{can, \eps} (T)$ and $i \geq \underline{i} (r, T)$ the flows $\M^{(\delta_i)}$ satisfy the hypothesis of Assertion~\ref{ass_rcan_b} with $\eps$ replaced by $\eps/2$ if $\rho(x) \in (r, 2r'_{\can, \eps} (T))$.
So Assertion~\ref{ass_rcan_b} also holds for $\M$ if $\rho(x) \leq r'_{\can, \eps} (T)$.
Assertion~\ref{ass_rcan_a} is a direct consequence of Assertion~\ref{ass_rcan_b}.

In order to prove the remaining assertions, we claim that there is a smooth decreasing function $r''_{\can, \eps} : [0, \infty) \to \IR_+$ such that $r''_{\can, \eps} (t) < r'_{\can, \eps}(t)$, $r''_{\can, \eps} (0) < 10^{-3}$ and $|\partial_t^m r''_{\can, \eps}| \leq \eps  (r''_{\can, \eps})^{1-2m}$ for $m = 0, \ldots, [\eps^{-1}]$.
By convolution with a smooth kernel, we can find a smooth, decreasing function $r'''_{\can, \eps} : [1, \infty) \to \IR_+$ such that 
\[ |\partial_t^m r'''_{\can, \eps} (t)| \leq C_m \sup_{[t-1,t+1]} r'_{\can, \eps} \leq C_m r'_{\can, \eps} (t-1).  \]
for some universal constants $C_m < \infty$.
So $r''_{\can, \eps} (t) := a_\eps r'''_{\can, \eps}(t+1)$, for sufficiently small $a_\eps > 0$, has the desired properties.
\end{proof}

The next result concerns the preservation of the positive scalar curvature condition.

\begin{theorem} \label{Thm_PSC_preservation}
If $\M$ is a singular Ricci flow such that $R > 0$ (resp. $R \geq 0$) on $\M_0$, then the same is true on all of $\M$.
\end{theorem}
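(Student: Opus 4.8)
The plan is to reduce the statement about singular Ricci flows to the classical maximum principle for the scalar curvature under Ricci flow, exploiting the structure of $\M$ near its singular part. Recall that under a smooth Ricci flow on a $3$-manifold one has the evolution inequality $\partial_t R = \Delta R + 2|{\Ric}|^2 \geq \Delta R + \tfrac{2}{3} R^2$, so $R_{\min}(t)$ is nondecreasing; in particular $R > 0$ (resp.\ $R \geq 0$) is preserved on any compact time-slice. The issue is that $\M$ is not a product, so we cannot directly run a global maximum principle: trajectories of $\partial_{\mathfrak t}$ may terminate at singular points, and new low-curvature regions could in principle appear. The key point is that the $\eps$-canonical neighborhood assumption (Property~\ref{Prop_sing_RF_3} of Definition~\ref{Def_sing_RF}) forces any such behavior to occur only in regions of very small scale, hence (by Lemma~\ref{lem_rho_Rm_R}, taking $\eps \leq \eps_0$) very large \emph{positive} scalar curvature.

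The main steps I would carry out are as follows. First, fix $T < \infty$ and work on $[0,T]$; it suffices to prove the claim there since $T$ is arbitrary. Let $r_0 := r_{\eps,T} > 0$ be the canonical neighborhood scale from Definition~\ref{Def_sing_RF}\ref{Prop_sing_RF_3}, chosen with $\eps \leq \eps_0$, so that every point $x \in \M$ with $\mathfrak t(x) \leq T$ and $\rho(x) < r_0$ satisfies $R(x) = \rho^{-2}(x) > r_0^{-2} > 0$. Thus on the region $\{\rho < r_0\} \cap \mathfrak{t}^{-1}([0,T])$ the scalar curvature is automatically bounded below by the positive constant $r_0^{-2}$, and we only need to worry about the complementary region $\U := \{x : \mathfrak t(x) \leq T,\ \rho(x) \geq r_0\}$, where $|{\Rm}|$ is bounded by $C r_0^{-2}$ (Lemma~\ref{lem_rho_Rm_R}). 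Second, I would argue that $\U$ together with its ``boundary behavior'' is controlled enough to run a maximum principle: any minimal-length path or any $\partial_{\mathfrak t}$-trajectory that tries to leave $\U$ must enter $\{\rho < r_0\}$, where $R$ is already positive, so the infimum of $R$ over $\M_t \cap \U$, as a function of $t$, can only decrease by ``flowing in'' from the good region, and otherwise obeys the ODE $\tfrac{d}{dt} R_{\min} \geq \tfrac{2}{3} R_{\min}^2 \geq 0$ of the smooth case. Concretely, set $\phi(t) := \min\{\,\inf_{\M_t} R,\ r_0^{-2}\,\}$; one shows $\phi(t) \geq \min\{\phi(0), 0\}$, which gives the conclusion since $\phi(0) > 0$ (resp.\ $\ge 0$) by hypothesis.

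To make the second step rigorous I would use the completeness of $\M$ (Definition~\ref{def_completeness}) together with a barrier/continuity argument: suppose for contradiction that $R$ drops to some value $\le 0$ (resp.\ $< 0$) somewhere in $\mathfrak t^{-1}([0,T])$, and let $t_* \in (0,T]$ be the first time at which $\inf_{\M_{t_*}} R$ reaches the offending value (a well-defined time by a standard lower-semicontinuity argument, using that $\M_0$ is compact and that trajectories are defined on relatively open time-intervals). Just before $t_*$, the infimum is attained (or nearly attained) in the compact region $\M_t \cap \{\rho \geq r_0/2\}$ — since $R \geq (r_0/2)^{-2} \cdot c > 0$ wherever $\rho < r_0/2$ for a suitable structural constant — and on a product domain around such a near-minimizer the ordinary Ricci flow maximum principle (e.g.\ via Hamilton's ODE comparison applied to $u = R$ with a spatial cutoff, or via Perelman's argument for flows with surgery passed to the limit) yields $\inf R$ nondecreasing, contradicting the choice of $t_*$. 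Alternatively, and perhaps more cleanly, I would invoke the fact from \cite{Kleiner:2014le, bamler_kleiner_uniqueness_stability} that $\M$ is a limit of Ricci flows with surgery $\M^{(\delta_i)}$ with the same initial condition and $\delta_i \to 0$; Perelman's surgery procedure is designed so that surgeries are performed in high-curvature (hence, after time $0$, positive scalar curvature) regions and the surgery caps have large positive scalar curvature, so $R > 0$ (resp.\ $R \geq 0$) is preserved along each $\M^{(\delta_i)}$, and this passes to the limit $\M$.

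The main obstacle is the second step: controlling how the infimum of $R$ can behave at the ``ends'' of time-slices near the singular set, i.e.\ ruling out that a region of small or negative scalar curvature sneaks in through a singularity or appears without the maximum-principle protection. The canonical neighborhood assumption handles this in principle — near singularities the geometry is modeled on $\kappa$-solutions, which have $R > 0$ — but turning this into a clean argument requires either a careful localization of the maximum principle on product domains (with cutoffs, using that outside a small-scale region curvature is bounded) or, more expediently, invoking the Ricci-flow-with-surgery approximation where Perelman already verified positivity preservation. I expect the cleanest write-up uses the latter: reduce to $\M^{(\delta_i)}$, cite preservation of $R>0$ (resp.\ $R \geq 0$) under Perelman's surgery, and take the limit, noting that $R$ converges smoothly on the regular part of $\M$.
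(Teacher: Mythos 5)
Your proposal is correct, and the route you flag at the end as ``the cleanest write-up'' --- reduce to the Ricci flows with surgery $\M^{(\delta_i)}$ approximating $\M$, cite preservation of $R>0$ (resp.\ $R\geq 0$) under Perelman's flow with surgery, and pass to the limit --- is exactly the paper's proof (a one-liner referring back to the discussion in Lemma~\ref{lem_rcan_control}). Your more elaborate first sketch via a localized maximum principle on $\{\rho \geq r_0\}$ using the canonical neighborhood assumption to control the complement is a sound alternative in spirit, but the paper does not carry it out and it would require more care at the boundary of the thick region than the surgery-approximation argument.
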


\begin{proof}
This follows from the corresponding fact for Ricci flows with surgery since $\M$ is a limit of Ricci flows with surgery, as discussed in the proof of Lemma~\ref{lem_rcan_control}.
\end{proof}

The next result concerns the extinction of singular Ricci flows.

\begin{theorem}
If a singular Ricci flow $\M$ is extinct at time $t$ (i.e. $\M_t = \emptyset$), then it is also extinct at all later times $t' \geq t$.
\end{theorem}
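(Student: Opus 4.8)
The plan is to show that if $\M_t=\emptyset$ then $\M_{t'}=\emptyset$ for all $t'\ge t$, by a short argument using the structure of a Ricci flow spacetime together with completeness. First I would argue by contraposition: suppose $\M_{t'}\ne\emptyset$ for some $t'>t$. Pick a point $x\in\M_{t'}$ and consider the maximal backward trajectory $\gamma_x$ of $-\partial_\t$ starting at $x$, defined on a maximal interval $(a,t']\subset[0,\infty)$ with $a\ge 0$ (using Definition~\ref{def_points_in_RF_spacetimes}). The key claim is that $a<t$, i.e. the trajectory survives backward past time $t$, which immediately gives $\gamma_x(t)\in\M_t$, contradicting $\M_t=\emptyset$.

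The heart of the matter is therefore to rule out $a\in[t,t')$, i.e. to show the backward trajectory cannot ``run off'' the spacetime at a time $\ge t$. Here I would invoke completeness (Definition~\ref{def_completeness}, Property~\ref{Prop_sing_RF_2} of Definition~\ref{Def_sing_RF}): the only way a trajectory of $-\partial_\t$ can fail to extend to its limiting time is if $\inf_s\rho(\gamma_x(s))=0$, i.e. the curvature blows up along $\gamma_x$ as $s\searrow a$. So the remaining task is to show that a curvature blowup along a backward time-vector-field trajectory forces extinction to \emph{continue}, not reappear — more precisely, I want: if the scale $\rho$ along $\gamma_x$ tends to $0$ as one approaches time $a$ from above, then the time-$a$ "slice" is already empty in the sense that nothing survives to positive measure near there. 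The clean way to package this is to use the canonical neighborhood assumption (Property~\ref{Prop_sing_RF_3}): for $\eps$ small and times in $[0,a+1]$, below scale $r_{\eps,a+1}$ every point lies in a canonical neighborhood, hence is modeled on a $\kappa$-solution; in particular, by Theorem~\ref{Thm_kappa_sol_classification}, near a high-curvature point the flow looks like a shrinking sphere, a neck/cap, or a Bryant soliton, and in all these models the component either is already a small round sphere about to disappear or is a neck along which one can pass to a lower-curvature region at the same time. Tracking the component of $\gamma_x$ backward, a curvature blowup at time $a$ means the whole component of $\M_{a+\delta}$ containing $\gamma_x(a+\delta)$ shrinks to a point as $\delta\searrow 0$ — but such a component, being a quotient of a shrinking round sphere, was already extinct just after time $a$, contradicting the assumption $\M_t=\emptyset$ with $t\le a$: going forward from just after $a$ the component persists only for a bounded extinction time, so it cannot have been empty at the earlier time $t$.

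The main obstacle I expect is making the last step rigorous: carefully connecting the microscopic picture (a component becoming a small shrinking spherical space form near time $a$) with the global statement that this component must then have been nonempty at all times between $t$ and $a$, and hence $\M_t\ne\emptyset$ — i.e., ruling out the scenario that a component "spontaneously appears" at a time $>t$ by a reverse-extinction. This is exactly the content built into the definition of completeness for Ricci flow spacetimes, so I anticipate that the cleanest proof simply cites completeness plus the canonical neighborhood assumption: a newly-appearing component at time $a>t$ would violate completeness (a trajectory of $\partial_\t$ would have to originate from a singularity forming "in reverse"), and the canonical neighborhood classification shows the only such behavior is a component shrinking to a round point, which by the forward direction of the flow cannot have been absent at an earlier time. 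If a fully self-contained argument is wanted, the real work is a short lemma: \emph{for each $T$, the number of connected components of $\M_t$ is finite and nonincreasing in $t\in[0,T]$, with components disappearing only via the round-sphere extinction model}; the present statement is then an immediate corollary. I would isolate and prove that lemma first, then deduce the theorem in one line.
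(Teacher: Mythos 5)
Your high-level strategy — contraposition, a maximal backward trajectory of $-\partial_\t$, completeness, and the canonical neighborhood assumption — draws on the right toolbox, and the paper's (two-line) proof does cite Lemma~\ref{lem_bryant_increasing_scale}, which is the Bryant-soliton dichotomy you invoke informally via Theorem~\ref{Thm_kappa_sol_classification}. But there is a genuine gap in the way you try to assemble these ingredients, and it sits precisely where you flag ``the main obstacle.''

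Your ``key claim'' — that the backward trajectory from a point $x\in\M_{t'}$ extends to a time $a<t$, so that $\gamma_x(t)$ itself witnesses $\M_t\neq\emptyset$ — is stronger than the theorem and is false in general. In any singular Ricci flow in which a (degenerate) neckpinch occurs at some time $a\in(t,t')$, the postsingular time slices near $a$ contain a Bryant-type cap whose curvature scale tends to $0$ as time decreases toward $a$. A backward trajectory of $-\partial_\t$ starting from a point $x$ in that cap terminates at $a$ (with $\inf\rho=0$ along it, exactly as completeness permits), even though $\M_a$, and $\M_t$ for $t<a$, are nonempty: the cap is joined through a neck to the rest of the manifold, which survives backward past $a$. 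So the termination of a single trajectory does \emph{not} force $\M_a=\emptyset$, and the intermediate claim that ``the whole component of $\M_{a+\delta}$ containing $\gamma_x(a+\delta)$ shrinks to a point as $\delta\searrow 0$'' does not follow from curvature blowup along $\gamma_x$: a cap blows up backward without taking its whole component with it. (Separately, completeness only gives $\inf_{(a,t']}\rho\circ\gamma_x=0$, hence $\liminf_{s\searrow a}\rho(\gamma_x(s))=0$, not a full limit; this is a smaller issue but your phrasing elides it.)

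The back-up lemma you propose — that the number of connected components of $\M_t$ is finite and nonincreasing in $t$, with components disappearing only via round-sphere extinction — is also false as stated: at a neckpinch time the component count strictly \emph{increases}, since a single component separates into several pieces in the resolved post-singular time slice.

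What actually closes the argument is a time-slice-level statement rather than a trajectory-level one. Letting $a$ be the supremum of times at which $\M$ is empty (below $t'$), one needs to exhibit, for every $s$ slightly greater than $a$, \emph{some} point $y_s\in\M_s$ with $\rho(y_s)\ge r$ for a fixed $r>0$ depending only on $t'$; properness of $\{\rho\ge r,\ \t\le t'\}$ (Theorem~\ref{Thm_rho_proper_sing_RF}) then yields a limit point in $\M_a$, the desired contradiction. Producing $y_s$ is where Lemma~\ref{lem_bryant_increasing_scale} enters (a near-supremum of $\rho$ over a time-slice cannot sit near a Bryant tip, since the scale increases away from the tip, so the scale at such a point strictly increases going backward), or alternatively one may use that a uniform upper bound on $\rho$ over $\M_s$ forces rapid forward extinction. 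Your proposal never makes the move from the single trajectory $\gamma_x$ to the whole time slice, and without that move the argument cannot close.
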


\begin{proof}
This is a direct consequence of \cite[Theorem 1.11]{Kleiner:2014le}.
It also follows using Lemma~\ref{lem_bryant_increasing_scale} below.
\end{proof}

The next result gives a uniform bound on the extinction time of a singular Ricci flow starting from a connected sum of spherical space forms and copies of $S^2 \times S^1$.

\begin{theorem} \label{Thm_extinction_time}
Let $M$ be a connected sum of spherical space forms and copies of $S^2 \times S^1$.
Consider a compact subset $K \subset  \Met (M)$ of Riemannian metrics on $M$.
Then there is a time $T_{\ext} < \infty$ such that any singular Ricci flow $(\M,\mathfrak{t}, \partial_{\mathfrak{t}}, g)$ with the property that $(\M_0, g_0)$ is isometric to $(M, h)$ for some $h \in K$ is extinct at time $T_{\ext}$.
\end{theorem}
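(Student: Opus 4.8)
The plan is to reduce the uniform finite-extinction statement to the known finite-extinction time for a \emph{single} singular Ricci flow starting from such an $M$, together with a compactness/continuity argument over the compact family $K \subset \Met(M)$. For a fixed metric $h \in K$, finite extinction is known: since $M$ is a connected sum of spherical space forms and copies of $S^2 \times S^1$, the manifold $M$ admits no aspherical summands, so by the Perelman-type finite-extinction result (as extended to singular Ricci flows in \cite{Kleiner:2014le}; see the argument behind \cite[Theorem 1.11]{Kleiner:2014le}), the singular Ricci flow $\M^{(h)}$ with $(\M^{(h)}_0,g_0) = (M,h)$ satisfies $\M^{(h)}_t = \emptyset$ for all $t$ larger than some $T(h) < \infty$. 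The content of the theorem is that $T(h)$ can be taken uniform over $h \in K$.

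First I would make the dependence on the initial condition quantitative. Fix $\eps \le \eps_0$ and let $r_0 := \inf_{h \in K} r_{\initial}(M,h)$, which is positive because $g \mapsto r_{\initial}(M,g)$ is continuous (as noted after Definition~\ref{Def_r_initial}) and $K$ is compact. By Lemma~\ref{lem_rcan_control}\ref{ass_rcan_a}, every such $\M = \M^{(h)}$ satisfies the $\eps$-canonical neighborhood assumption below the scale $r_{\can,\eps}(r_0,T)$ on $[0,T]$, with $r_{\can,\eps}$ independent of $h$. Next I would run Perelman's finite-extinction argument with these uniform constants: one considers the $W$-functional / width invariant (or, following \cite{Kleiner:2014le}, the corresponding min-max quantity for sweepouts by $2$-spheres adapted to the spacetime) $W(t)$ associated to the time-$t$ slice $\M_t$. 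The key differential inequality $\frac{d}{dt} W(t) \le -4\pi + \tfrac{3}{2(t+c)} W(t)$ (in the barrier/forward-difference sense, valid across singular times because the canonical neighborhood structure controls the geometry near singularities) forces $W(t)$ to become negative in finite time, which is impossible unless $\M_t = \emptyset$. All constants entering this argument — the initial value $W(0)$, the rate constants, and the scale below which canonical neighborhoods appear — depend only on an upper bound for $\diam(M,h)$, a lower bound for $\vol(M,h)$, and $r_0$; all three are controlled uniformly over the compact set $K$. This yields a single $T_{\ext} = T_{\ext}(M,K) < \infty$ with $\M^{(h)}_{T_{\ext}} = \emptyset$ for all $h \in K$. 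Finally, by the preceding theorem (extinction at time $t$ implies extinction at all $t' \ge t$), and by the uniqueness Theorem~\ref{Thm_sing_RF_uniqueness} (so that ``the'' singular Ricci flow with initial condition isometric to $(M,h)$ is well defined up to isometry), the conclusion holds for \emph{any} singular Ricci flow whose initial time-slice is isometric to some $(M,h)$, $h \in K$.

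The main obstacle is making the finite-extinction argument genuinely uniform and valid in the singular-spacetime setting simultaneously: one must check that Perelman's monotonicity-type estimate for the relevant min-max width passes through the singular times of $\M$ with constants depending only on $K$ and not on the individual flow. The mechanism for this is already in place — the canonical neighborhood assumption below a $K$-uniform scale (Lemma~\ref{lem_rcan_control}) means that near any singularity the geometry is modeled on a $\kappa$-solution, so the sweepout areas behave as in the smooth Ricci-flow-with-surgery case, and indeed $\M$ is a limit of Ricci flows with surgery $\M^{(\delta_i)}$ (as recalled in the proof of Lemma~\ref{lem_rcan_control}) to which Perelman's estimate applies with uniform constants; one then passes to the limit. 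A minor technical point is to verify that the initial value of the min-max width, and the lower volume bound needed to start the iteration, are bounded uniformly over $K$, which again follows from compactness of $K$ in $\Met(M)$.
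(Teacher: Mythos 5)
The paper does not prove this theorem --- it cites Theorem~2.16(b) of \cite{gsc} --- so there is no in-paper argument to compare against directly.

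Your proposal is to re-run Perelman's width-based finite-extinction argument with constants controlled uniformly over $K$. This is a plausible route, and you correctly identify the sources of uniformity: a positive lower bound $r_0 = \inf_{h\in K} r_{\initial}(M,h)$, hence a uniform canonical-neighborhood scale via Lemma~\ref{lem_rcan_control}, plus uniform bounds on the initial width and lower scalar curvature bound --- though these follow most directly from continuity of the relevant quantities in $h$ together with compactness of $K$, rather than by reducing to diameter/volume/$r_0$ bounds. The genuinely hard part, which you correctly flag, is verifying that the differential inequality for $W(t)$ holds uniformly across singular times; this essentially amounts to re-proving the Kleiner--Lott finite-extinction theorem for singular Ricci flows while tracking all constants. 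A more economical route, and the one I would expect \cite{gsc} takes given how heavily it leans on the stability machinery, is to establish upper semi-continuity of $h \mapsto T(h)$ and conclude by compactness, using finite extinction of a \emph{single} flow only as a black box. Concretely: fix $h_0 \in K$ and $T > T(h_0)$, and apply Lemma~\ref{lem_stability} with $\eps < \min\{T^{-1}, r_{\can,\eps_0}(r_0, T)\}$. Since $\M^{(h_0)}_T = \emptyset$, the domain $U$ of the resulting $\eps$-isometry contains no points at time $T$, so neither does $U' \subset \M^{(h)}$ (as $\t' \circ \wh\phi = \t$); Property~\ref{prop_eps_isometry_rf_spacetime_1} of Definition~\ref{def_eps_isometry_rf_spacetime} then forces $\rho \leq \eps$ on all of $\M^{(h)}_T$. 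By the choice of $\eps$ the canonical neighborhood assumption holds on $\M^{(h)}_T$, so $R = \rho^{-2} \geq \eps^{-2}$ there by Lemma~\ref{lem_rho_Rm_R}, and the evolution of $R_{\min}$ for singular Ricci flows (as established in \cite{Kleiner:2014le}) forces extinction of $\M^{(h)}$ by time $T + \tfrac{3}{2}\eps^2$. Finitely many such neighborhoods cover $K$. This avoids the min-max machinery entirely and only needs the quantitative ``positive scalar curvature implies controlled extinction'' step. Your approach is not wrong, but it is substantially heavier than this semi-continuity argument.
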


\begin{proof}
See \cite[Thm. 2.16(b)]{gsc}.
\end{proof}

The next result implies that the function $\t + \rho^{-2}$ on any singular Ricci flow is proper.

\begin{theorem} \label{Thm_rho_proper_sing_RF}
For any singular Ricci flow $\M$ and any $r, T > 0$ the subset $\{ \rho \geq r, \t \leq T \} \subset \M$ is compact.
\end{theorem}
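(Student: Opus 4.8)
The plan is to prove compactness of $\{\rho \ge r, \t \le T\}$ by showing it is sequentially compact: given a sequence $x_i$ in this set, we extract a convergent subsequence with limit still in the set. Since $\t$ is proper onto $[0,T]$ would be the naive hope, but the time-slices $\M_t$ are generally noncompact, so the real content is a curvature-based exhaustion argument. First I would recall from Lemma~\ref{lem_rcan_control} that on $[0,T]$ the flow satisfies the $\eps$-canonical neighborhood assumption below the scale $r_{\can,\eps}(r_{\initial}(\M_0,g_0),T)$, so there is a fixed scale $r_0 = r_0(\M,T) > 0$ below which every point lies in a canonical neighborhood, and in particular (by the structure of $\kappa$-solutions) the set $\{\rho < r_0, \t \le T\}$ has a well-understood tubular/cap geometry attached to the region $\{\rho \ge r_0\}$.

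The main step is to bound the geometry of the region $\Omega := \{\rho \ge \min(r, r_0)/2, \t \le T\}$: by the definition of $\rho$ (Definition~\ref{def_curvature_scale}) and Lemma~\ref{lem_rho_Rm_R}, $|{\Rm}| \le C \rho^{-2} \le C' $ on $\Omega$ for a constant depending on $r, r_0$. Together with local derivative estimates for Ricci flow (Shi's estimates, valid on unscathed parabolic neighborhoods, which exist here because $\Omega$ sits at bounded curvature and bounded distance from the initial slice) one gets uniform bounds on $|\nabla^k \Rm|$ throughout $\Omega$, and completeness of $\M$ (Definition~\ref{def_completeness}) guarantees that backward time-trajectories and bounded-length paths through $\Omega$ do not run off the manifold. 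Then I would argue that $\Omega$ is contained in a parabolic neighborhood of $\M_0$ of bounded size: every point of $\Omega$ survives backward to time $0$ or to the boundary of a canonical-neighborhood region, and since $\M_0$ is compact with $\rho$ bounded below there (Lemma~\ref{lem_rcan_control}\ref{ass_rcan_c}), one can run the time-vector field flow and a distance estimate to show $\Omega$ lies in $P(\M_0, L, T)$ for some finite $L$. This parabolic neighborhood, being the flow of a compact set under a vector field with controlled growth over a compact time interval, is relatively compact, and $\Omega$ is a closed subset of it (closedness of $\{\rho \ge r, \t \le T\}$ follows from continuity of $\rho$ and $\t$ and the completeness property), hence compact. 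Since $\{\rho \ge r, \t \le T\} \subset \Omega$ is closed, it is compact too.

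The hard part will be making precise the claim that $\Omega$ is contained in a bounded parabolic neighborhood of the initial slice — i.e., ruling out that high-$\rho$ regions "escape to infinity" within a fixed time while staying at bounded curvature. The potential danger is a long thin almost-cylindrical or Bryant-soliton-like region of bounded scale and huge diameter; one must use that such regions, by the canonical neighborhood structure and the classification Theorem~\ref{Thm_kappa_sol_classification}, are capped off or connect back to the (compact) positive-curvature bulk, so their diameter is controlled by the scale and the elapsed time. Concretely, I expect the cleanest route is: (i) cover $\Omega$ by the canonical-neighborhood parabolic balls $P(x,\eps^{-1}\rho(x))$ for $\rho(x)$ small and by finitely many fixed-size balls where $\rho$ is large; (ii) use a chain/volume argument (as in the proof of Lemma~\ref{lem_kappa_identity_1}) to bound the number of such balls needed, hence $\diam$; (iii) conclude boundedness of $\Omega$ and invoke Arzelà–Ascoli together with completeness to finish. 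An alternative, perhaps shorter, is to use the monotonicity of scale along the Bryant soliton direction (Lemma~\ref{lem_bryant_increasing_scale}, referenced later) to directly propagate control, but the self-contained argument above via Lemma~\ref{lem_rcan_control} and properness of the time-function on the compact-fibered part should suffice.
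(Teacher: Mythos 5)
Your approach is genuinely different from the paper's: the paper simply notes that the compactness of $\{\rho\geq r,\ \t\leq T\}$ is built in to the definition of a singular Ricci flow in \cite{Kleiner:2014le}, and invokes Theorem~\ref{Thm_sing_RF_uniqueness} to carry it over to Definition~\ref{Def_sing_RF} (offering only the one-line pointer to Lemma~\ref{lem_rcan_control}(b) as an alternative). You, by contrast, are attempting to build a self-contained analytic proof, which in principle is a more informative route. However, your argument has a concrete gap at exactly the step you flag as ``the hard part,'' and that gap is essentially the full content of the theorem.

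Specifically, your claim that $\Omega$ sits in a bounded parabolic neighborhood $P(\M_0, L, T)$ because ``every point survives backward to time $0$ or to the boundary of a canonical-neighborhood region'' does not hold as stated: trajectories of $\partial_\t$ emanating backward from a bounded-scale point at time $T$ can enter high-curvature regions and terminate there without reaching time $0$, and the number of components of $\{\rho\geq r\}\cap\M_T$ (e.g.\ produced by surgeries or neckpinches between $0$ and $T$) is not a priori finite. Your fallback covering argument (step (ii)) imitating the proof of Lemma~\ref{lem_kappa_identity_1} also stalls: a Vitali covering by $\varepsilon^{-1}\rho$-balls bounds the number of balls by (roughly) $V(\Omega)/r^3$, so you would need a uniform bound on the volume of $\{\t\leq T\}$, and no such bound is established or cited in your sketch. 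In \cite{Kleiner:2014le}, properness of $\t+\rho^{-2}$ is deduced precisely from a volume-monotonicity/control result for singular Ricci flows together with a finiteness statement for bad regions; without importing that machinery (or the finiteness of surgeries at bounded scale), the diameter and volume of $\Omega$ are unbounded for all you have shown. The correct move is either to cite the properness property from \cite{Kleiner:2014le} directly as the paper does, or to first establish the time-slice volume bound and only then run the covering argument; as written your proof reduces the hard step to an assertion.
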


\begin{proof}
This is one of the properties of a singular Ricci flow according to the definition in \cite{Kleiner:2014le}.
By Theorem~\ref{Thm_sing_RF_uniqueness} this definition is equivalent to Definition~\ref{Def_sing_RF}.
Alternatively, the theorem can be shown directly using Lemma~\ref{lem_rcan_control}(b).
\end{proof}

The last result essentially states that at points that satisfy the canonical neighborhood assumption we have $\partial_t \rho < 0$, unless the geometry is sufficiently closely modeled on the tip of a Bryant soliton.
See also \cite[Lemma~8.40]{bamler_kleiner_uniqueness_stability}, which is more general.

\begin{lemma}[Non-decreasing scale  implies Bryant-like geometry]
\label{lem_bryant_increasing_scale}
For any $\delta > 0$ the following holds if $\eps \leq \ov\eps (\delta)$.

Let $\M$ be a singular Ricci flow, $x \in \M_t$ a point and assume that $r:=\rho (x) < r_{\can, \eps} (r_{\initial} (\M_0, g_0), t)$.
Then $x$ survives until time $\max \{ t - r^2, 0 \}$.
Assume that $\rho (x(t')) \leq \rho (x)$ for some $t' \in [\max \{ t - r^2, 0 \}, t)$.
Then the pointed Riemannian manifold $(\M_t, g_t, x)$ is $\delta$-close to the pointed Bryant soliton $(M_{\Bry}, g_{\Bry}, x_{\Bry})$ at scale $\rho(x)$.
\end{lemma}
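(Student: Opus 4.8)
The statement to prove is Lemma~\ref{lem_bryant_increasing_scale}: if $\eps$ is small (depending on $\delta$), $x \in \M_t$ has $r := \rho(x) < r_{\can,\eps}(r_{\initial}(\M_0,g_0),t)$, then $x$ survives until time $\max\{t-r^2,0\}$, and if additionally $\rho(x(t')) \le \rho(x)$ for some $t' \in [\max\{t-r^2,0\},t)$, then $(\M_t,g_t,x)$ is $\delta$-close to the pointed Bryant soliton at scale $\rho(x)$. The natural approach is a proof by contradiction combined with the compactness theory for $\kappa$-solutions (Theorem~\ref{Thm_kappa_compactness_theory}) and the classification (Theorem~\ref{Thm_kappa_sol_classification}), exactly in the spirit of Lemma~\ref{lem_kappa_identity_1}. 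The survival statement itself is immediate from Lemma~\ref{lem_rcan_control}\ref{ass_rcan_b}: since $\rho(x) < r_{\can,\eps}(\ldots)$, the parabolic ball $P(x,\eps^{-1}\rho(x)) = P(x,\eps^{-1}\rho(x),-\eps^{-2}\rho^2(x))$ is unscathed, which forces $x$ to survive backward for time $\eps^{-2}r^2 \ge r^2$ (truncated at $0$), and moreover the rescaled flow on $P$ is $\eps$-close to a $\kappa$-solution.

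First I would reduce to a limiting argument: suppose the second assertion fails, so there is a fixed $\delta > 0$, a sequence $\eps_i \to 0$, singular Ricci flows $\M^i$, points $x^i \in \M^i_{t_i}$ with $r_i := \rho(x^i) < r_{\can,\eps_i}(r_{\initial}(\M^i_0,g^i_0),t_i)$, and times $t'_i \in [\max\{t_i - r_i^2,0\},t_i)$ with $\rho(x^i(t'_i)) \le r_i$, but such that $(\M^i_{t_i},g^i_{t_i},x^i)$ is \emph{not} $\delta$-close to the Bryant soliton at scale $r_i$. Parabolically rescale each $\M^i$ by $r_i^{-2}$ so that $\rho(x^i) = 1$ and apply Lemma~\ref{lem_rcan_control}\ref{ass_rcan_b}: on the unscathed parabolic neighborhood $P(x^i,\eps_i^{-1})$ the rescaled flow is $\eps_i$-close to (the corresponding piece of) some $\kappa^i$-solution $(\ov M^i,(\ov g^i_t))$ with $\rho = 1$ at the basepoint at time $0$. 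By Theorem~\ref{Thm_kappa_compactness_theory}, after passing to a subsequence, either all these $\kappa$-solutions are quotients of the round shrinking sphere, or they converge (in Hamilton's sense) to a pointed $\kappa_\infty$-solution $(\ov M^\infty,(\ov g^\infty_t)_{t\le 0},\ov x^\infty)$ with $R(\ov x^\infty,0) = 1$ (using $\rho = R^{-1/2}$ where the canonical neighborhood assumption holds, Lemma~\ref{lem_rho_Rm_R}).

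The key point is then to show the limiting $\kappa$-solution must be the Bryant soliton. The round-sphere-quotient alternative is excluded because on such a solution the diameter is comparable to the scale, so backward in time within one unit of rescaled time the scale changes by a definite amount — more precisely, on a round shrinking sphere quotient $\rho(x(t'))$ is strictly \emph{smaller} than $\rho(x(0))$ for $t' < 0$ only in the wrong direction; actually $\rho$ is strictly \emph{increasing} as $t$ decreases on a shrinking sphere, so $\rho(x^i(t'_i)) \le \rho(x^i) = 1$ with $t'_i < 0$ would be violated for small $\eps_i$. (One has to be slightly careful: if $t_i - r_i^2 < 0$, i.e. the flow started too recently, then $t'_i$ can only range over $[0,t_i)$ and $t_i < r_i^2$, but then after rescaling the flow exists only for rescaled time less than $1$; this is handled because Lemma~\ref{lem_rcan_control}\ref{ass_rcan_c} gives a lower bound $\rho \ge \eps^{-1}r_{\can,\eps}$ on $\M_0$, which after rescaling tends to $\infty$, contradicting $\rho(x^i) = 1$ — so for large $i$ we must have $t_i \ge r_i^2$ and $t'_i$ really ranges over a full unit interval back to $-1$ after rescaling.) For the convergent case, we obtain a $\kappa$-solution $(\ov M^\infty,(\ov g^\infty_t))$ and a point $\ov x^\infty$ with $\rho(\ov x^\infty(t')) \le \rho(\ov x^\infty(0)) = 1$ for some $t' \in [-1,0)$; that is, the scale is non-decreasing somewhere along the backward trajectory. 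Going through the classification (Theorem~\ref{Thm_kappa_sol_classification}): on the cylinder and its $\IZ_2$-quotient $\rho$ strictly increases backward in time; on sphere quotients likewise; so in cases (a), (b) we again get a contradiction. In case (c), the Bryant soliton, the pointed geometry $(M_{\Bry},g_{\Bry,t},x)$ depends on $t$, but at the tip $x = x_{\Bry}$ the soliton is steady and $\rho$ is constant in time — and more generally the non-decreasing-scale condition forces the basepoint to be at (or asymptotically at) the tip; this is where Lemma~\ref{Lem_Bry_R_Hessian_positive} enters, since the unique maximum of $R$ with negative-definite Hessian at $x_{\Bry}$ means that away from the tip the scale strictly decreases as $t$ decreases. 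In case (d), a rotationally symmetric ancient solution on $S^3$ or $\IR P^3$ that is not already covered, one uses the asymptotic soliton structure stated in (d): blowing down gives a cylinder or Bryant soliton, and the non-decreasing scale again forces Bryant-like geometry at the basepoint. Thus $(\ov M^\infty,\ov g^\infty_0,\ov x^\infty)$ is the pointed Bryant soliton, and hence for large $i$ the rescaled $(\M^i_{t_i},g^i_{t_i},x^i)$ is $\delta$-close to it, contradicting our assumption.

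The main obstacle, I expect, is the careful handling of the degenerate case where $t_i < r_i^2$ (the flow hasn't existed long enough for the full backward parabolic unit), and more substantively, the precise argument that the non-decreasing-scale condition along a backward trajectory in an arbitrary $\kappa$-solution forces convergence to the Bryant soliton with basepoint at the tip. This requires quantifying how $\rho$ (equivalently $R^{-1/2}$ where the canonical neighborhood assumption applies) evolves: on a shrinking soliton $\rho$ strictly increases going backward, on a steady soliton $\rho$ is constant precisely at the tip and the derivative $\partial_t \rho$ is controlled by $-\partial_t R = 2|\Ric|^2/R - \ldots$ type expressions together with the soliton identities from Lemma~\ref{Lem_Bry_R_Hessian_positive}. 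Making the "Bryant-like geometry at the basepoint" conclusion quantitative enough to yield $\delta$-closeness — rather than merely a qualitative limit — is the technical heart; the cleanest route is to keep everything as a contradiction/compactness argument so that $\delta$-closeness is automatic once the limit is identified, and to cite \cite[Lemma~8.40]{bamler_kleiner_uniqueness_stability} for the more general statement if a direct argument becomes unwieldy.
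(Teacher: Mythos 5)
Your overall framework --- contradiction, parabolic rescaling so that $\rho(x^i)=1$, invoking Lemma~\ref{lem_rcan_control}\ref{ass_rcan_b} to get closeness to a $\kappa$-solution, and handling the degenerate case $t_i<r_i^2$ via Lemma~\ref{lem_rcan_control}\ref{ass_rcan_c} --- matches the paper. But your strategy for identifying the limit is not the one the paper uses, and as written it has a real gap. The paper does \emph{not} go through the classification (Theorem~\ref{Thm_kappa_sol_classification}) at all; it cites a Harnack-rigidity statement, \cite[Proposition~C.3]{bamler_kleiner_uniqueness_stability}, which says that on a $\kappa$-solution $\partial_t R(x,t)\geq 0$ everywhere, with equality only when the pointed time-slice is isometric to the Bryant soliton with basepoint at the tip. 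Combined with $R=\rho^{-2}$ at canonical-neighborhood points, this gives the contradiction $R(x^\infty,t'')<R(x^\infty,0)=1$ directly, uniformly over all classification cases.

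Your case-by-case run through Theorem~\ref{Thm_kappa_sol_classification} handles (a), (b), and (c) adequately (on the Bryant soliton off the tip, $\partial_t R = \langle\nabla R,\nabla f\rangle>0$ by Lemma~\ref{Lem_Bry_R_Hessian_positive} plus the soliton identities), but case (d) is not filled in: for a compact rotationally symmetric $\kappa$-solution on $S^3$ or $\IR P^3$ that is not round, the statement $\partial_t R>0$ at every point is a genuine theorem (the strict form of Hamilton's trace Harnack estimate plus its rigidity) and does not follow from the asymptotic-soliton structure in (d) or from anything else quoted in the paper. Your proposed fallback of citing \cite[Lemma~8.40]{bamler_kleiner_uniqueness_stability} would be circular: the paper's remark immediately before the lemma statement notes that Lemma~8.40 is a more general form of \emph{this very lemma}, not an ingredient you can feed into its proof. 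The correct black box is Proposition~C.3 (or an equivalent Harnack rigidity statement). One more technical omission: in the compactness step you should pass to universal covers of the parabolic neighborhoods and lift the basepoints, as the paper does, since Theorem~\ref{Thm_kappa_compactness_theory} only gives Hamilton convergence after excluding the round-sphere-quotient alternative, and quotients with large fundamental group would otherwise obstruct extraction of a pointed limit.
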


Note that here our choice of $\rho$, such that $\rho^{-2} = R$ at points that satisfy a precise enough canonical neighborhood assumption, is important.
Had we chosen $\rho$ differently, then we would have had to use a more complicated wording of Lemma~\ref{lem_bryant_increasing_scale}.

\begin{proof}
Assume that the lemma was false for some fixed $\delta > 0$ and choose a sequence of counterexamples $\M^i, x^i, t^i, t^{\prime, i}$ for $\eps^i \to 0$.
By parabolic rescaling we may assume that $\rho (x^i) = 1$.
By Lemma~\ref{lem_rcan_control}\ref{ass_rcan_b}, \ref{ass_rcan_c} we may pass to a subsequence and assume that the flows restricted to the universal covers of the parabolic neighborhoods $P(x^i, (\eps^i)^{-1} )$ and pointed at lifts of $x^i$ converge to a pointed $\kappa$-solution $(M^\infty, (g^{\infty}_t)_{t \leq 0}, x^\infty)$ with $R(x^\infty, 0) = \rho^{-2} (x^\infty, 0) = 1$.
By assumption $(M^\infty, g^{\infty}_0, x^\infty)$ cannot be isometric to $(M_{\Bry}, g_{\Bry}, x_{\Bry})$.
Therefore, by \cite[Proposition C.3]{bamler_kleiner_uniqueness_stability} and Definition~\ref{def_kappa_solution} we have $\partial_t R (x^\infty, 0) > 0$ and $\partial_t R(x^\infty, t) \geq 0$ for all $t \leq 0$.
Since the functions $t'' \mapsto R(x^i ( t + t''))$ smoothly converge to $t'' \mapsto R(x^\infty, t'')$ on $[-1,0]$, we obtain a contradiction to the fact that $R(x^i (t^{\prime,i})) = \rho^{-2} (x^i(t^{\prime,i})) \geq 1$ and $t^{\prime,i} < t^i$.
\end{proof}

\subsection{Singular Ricci flows --- Stability} \label{subsec_sing_RF_stability}
Next we formalize the Stability Theorem \cite[Theorem~1.5]{bamler_kleiner_uniqueness_stability} in a way that will fit our needs.
In short, this theorem states that two singular Ricci flows are geometrically close on the set $\{ \rho \geq \eps \} \cap \{ \t \leq \eps^{-1} \}$ if their initial time-slices are close enough.
This fact will be key to our understanding of continuous dependence of singular Ricci flows on their initial data and the construction of continuous families of singular Ricci flows in Section~\ref{sec_families_srfs}.

Let first $(M, g)$, $(M',g')$ be two Riemannian manifolds.

\begin{definition}[$\eps$-isometry between Riemannian manifolds]
An {\bf $\eps$-isometry from $M$ to $M'$} is a diffeomorphism $\phi:M \to M'$  such that 
\[
\big\Vert \phi^*g'-g \Vert_{C^{[\eps^{-1}]} (M)} <\eps .
\]
\end{definition}

Next, let $\M, \M'$ be two Ricci flow spacetimes.
For our purposes, we may take $\M, \M'$ to be singular Ricci flows.

\begin{definition}[$\eps$-isometry between Ricci flow spacetimes]
\label{def_eps_isometry_rf_spacetime}
An {\bf $\eps$-isometry from $\M$ to $\M'$} is a diffeomorphism $\phi:\M\supset U\ra U'\subset \M'$ where:
\ben
\item \label{prop_eps_isometry_rf_spacetime_1} $U$, $U'$ are open subsets such that $\rho \leq \eps$ on the subsets
\[
 (\M\setminus U)\cap\{\t\leq \eps^{-1}\}\,,\quad  (\M'\setminus U')\cap\{\t'\leq \eps^{-1}\}\,.
\]
\item \label{prop_eps_isometry_rf_spacetime_2} $\t'\circ\phi=\t$.
\item \label{prop_eps_isometry_rf_spacetime_3} For every $m_1, m_2 = 0, \ldots, [\eps^{-1}]$  we have
\[ | \nabla^{m_1} \partial_{\t}^{m_2} (\phi^* g' - g) | \leq \eps, \qquad | \nabla^{m_1} \partial_{\t}^{m_2} (\phi^* \partial'_\t - \partial_\t) | \leq \eps. \]
on $U$.
Here $\nabla^{m_1}$ denotes the $m_1$-fold covariant derivative with respect to the Riemannian metrics $g_t$ on each time-slice $\M_t \cap U$ and $\partial^{m_2}$ denotes the $m_2$-fold Lie derivative $\mathcal{L}^{m_2}_{\partial_\t}$.
\een
\end{definition}

We can now state our main stability result.
For a more general result, which also holds for more general Ricci flow spacetimes, see \cite[Theorem~1.7]{bamler_kleiner_uniqueness_stability}.

\begin{lemma}[Stability of singular Ricci flows]
\label{lem_stability}
Let $\M$ be a singular Ricci flow.  Then for every $\eps>0$ there is a $\de = \delta (\M, \eps)>0$ such that if $\M'$ is a singular Ricci flow and $\phi:\M_0\ra\M'_0$ is a $\de$-isometry, then there is an $\eps$-isometry $\wh\phi: \M\supset U\ra U'\subset\M'$ extending $\phi$, meaning that $\M_0 \subset U$, $\M'_0 \subset U'$ and $\wh\phi |_{\M_0} = \phi$.
\end{lemma}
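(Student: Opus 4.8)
The plan is to deduce Lemma~\ref{lem_stability} from the more general stability statement \cite[Theorem~1.7]{bamler_kleiner_uniqueness_stability}, whose role is precisely to supply a quantitative comparison between two singular Ricci flows whose initial data are close. The key point is that \cite[Theorem~1.7]{bamler_kleiner_uniqueness_stability} is usually stated in a form that involves an a~priori scale $r_0$ and a time bound $T$, together with a modulus of closeness that is uniform over all singular Ricci flows whose initial condition scale is bounded below; here, by contrast, we want the $\delta$ to depend on the \emph{fixed} flow $\M$ (not just on $r_{\initial}(\M_0,g_0)$), and we want the conclusion to be an honest $\eps$-isometry in the sense of Definition~\ref{def_eps_isometry_rf_spacetime}, i.e.\ defined on an open set $U$ whose complement has $\rho\le\eps$ on $\{\t\le\eps^{-1}\}$. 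So the proof is really a matter of unwinding definitions and making the right choices of auxiliary parameters.

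First I would invoke Theorem~\ref{Thm_extinction_time} — more precisely, since $\M$ is a \emph{single} fixed singular Ricci flow, simply the fact that $\M$ is extinct at some finite time, together with Theorem~\ref{Thm_rho_proper_sing_RF} — to see that for the given $\eps$ the set $\{\rho\ge\eps\}\cap\{\t\le\eps^{-1}\}\subset\M$ is compact and that there is a finite time $T=T(\M,\eps)$ beyond which the relevant part of $\M$ is controlled. Next, I would apply \cite[Theorem~1.7]{bamler_kleiner_uniqueness_stability} with input parameters chosen in terms of $\M$ and $\eps$: one feeds in a lower bound for $r_{\initial}(\M_0,g_0)$ (which is a fixed positive number since $\M_0$ is compact), the time horizon $\eps^{-1}$, and a target accuracy; the theorem then produces a $\delta=\delta(\M,\eps)>0$ such that any singular Ricci flow $\M'$ whose initial time-slice is $\delta$-isometric to $\M_0$ admits a diffeomorphism $\wh\phi$ defined on the region of $\M$ where $\rho$ is not too small and $\t$ is not too large, which is $C^{[\eps^{-1}]}$-close to an isometry of Ricci flow spacetimes, respects the time functions, and extends the given $\delta$-isometry $\phi$ on $\M_0$. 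The final bookkeeping step is to verify that the domain $U$ of $\wh\phi$ and its image $U'$ satisfy Property~\ref{prop_eps_isometry_rf_spacetime_1}: one shrinks the accuracy in the cited theorem (or uses the canonical neighborhood assumption, Lemma~\ref{lem_rcan_control}, applied to both $\M$ and $\M'$ on $[0,\eps^{-1}]$) so that $U$ contains $\{\rho>\eps\}\cap\{\t\le\eps^{-1}\}$ and, by the near-isometry on the overlap, $U'$ contains $\{\rho>\eps\}\cap\{\t'\le\eps^{-1}\}$; here one must also use that $\M'_0\subset U'$ and $\M_0\subset U$, which follows from $r_{\initial}(\M_0,g_0)$ and hence $r_{\initial}(\M'_0,g'_0)$ being bounded away from $0$ via Lemma~\ref{lem_rcan_control}\ref{ass_rcan_c}.

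I expect the main obstacle to be the last point: ensuring that the image domain $U'$ of the constructed $\eps$-isometry is \emph{large enough} inside $\M'$, i.e.\ that $\rho\le\eps$ holds on $(\M'\setminus U')\cap\{\t'\le\eps^{-1}\}$ uniformly in $\M'$. The diffeomorphism $\wh\phi$ is constructed by flowing/comparing from $\M$, so one directly controls $U\subset\M$; transferring this to a statement about the \emph{complement} of $U'=\wh\phi(U)$ in $\M'$ requires knowing that $\wh\phi$ already captures \emph{all} of the non-singular-scale part of $\M'$ up to time $\eps^{-1}$, which is not automatic and is exactly the content one has to extract carefully from \cite[Theorem~1.7]{bamler_kleiner_uniqueness_stability} (it is there, since that theorem is symmetric in the two flows up to adjusting constants, but it needs to be stated that way). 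The remaining estimates — converting the $C^{[\eps^{-1}]}$ closeness of $\wh\phi^*g'-g$ and $\wh\phi^*\partial'_\t-\partial_\t$ into the mixed covariant/Lie-derivative bounds of Property~\ref{prop_eps_isometry_rf_spacetime_3} — are routine, since on a product domain the time vector field and the Lie derivative $\mathcal{L}_{\partial_\t}$ are controlled by the spacetime metric and its derivatives via the Ricci flow equation.
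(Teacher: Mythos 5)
Your approach is essentially the one taken in the paper: fix $T := \eps^{-1}$, use Lemma~\ref{lem_rcan_control} to arrange that both $\M$ and $\M'$ satisfy an $\eps_{\can}$-canonical neighborhood assumption below a common scale $r_0$ on $[0,T]$ (using that $r_{\initial}(\M'_0,g'_0)$ is close to $r_{\initial}(\M_0,g_0)$ when $\phi$ is a $\delta$-isometry), and then invoke the stability theorem of \cite{bamler_kleiner_uniqueness_stability}. The paper cites \cite[Theorem~1.5]{bamler_kleiner_uniqueness_stability} together with its Addendum (rather than the more general \cite[Theorem~1.7]{bamler_kleiner_uniqueness_stability} that you quote) to obtain Properties~\ref{prop_eps_isometry_rf_spacetime_1} and \ref{prop_eps_isometry_rf_spacetime_2} of Definition~\ref{def_eps_isometry_rf_spacetime}, and then converts closeness of the pullback metric into the time-vector-field bounds of Property~\ref{prop_eps_isometry_rf_spacetime_3} via the identity $\wh\phi^*\partial_{\t'}-\partial_\t = \sum_{i=1}^3 \nabla^g_{e_i}e_i - \nabla^{\wh\phi^*g'}_{e_i}e_i$ for a local orthonormal frame $\{e_i\}$; you call this step routine, which is fair, though the specific formula is worth having.

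One slip to flag: you appeal to Theorem~\ref{Thm_extinction_time}, but that theorem requires the underlying manifold to be a connected sum of spherical space forms and copies of $S^2\times S^1$. Here $\M_0$ is an arbitrary compact $3$-manifold, and the singular Ricci flow need not be extinct at any finite time (for instance if $\M_0$ is hyperbolic). This does not damage your argument: the compactness of $\{\rho\ge\eps\}\cap\{\t\le\eps^{-1}\}$ follows from Theorem~\ref{Thm_rho_proper_sing_RF} alone, and the paper simply works on $[0,T]$ with $T=\eps^{-1}$ without ever needing finite extinction. The other concern you raise --- that the stability theorem must deliver that $U'$, not merely $U$, captures everything with $\rho>\eps$ up to time $\eps^{-1}$ --- is correctly identified, and the cited theorem is indeed stated in a bilateral form that directly supplies Property~\ref{prop_eps_isometry_rf_spacetime_1}.
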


\begin{proof}
Fix $\M$ and $\eps > 0$, set $T := \eps^{-1}$ and let $\eps_{\can}, \delta > 0$ be some constants, which we will determine in the course of the proof.
By Lemma~\ref{lem_rcan_control}, and assuming that $\delta$ is sufficiently small, $\M$ and $\M'$ both satisfy the $\eps_{can}$-canonical neighborhood assumption below scales $r_0$ on $[0,T]$ for some scale $r_0 (\M, T, \eps_{\can}) > 0$.
By \cite[Theorem~1.5]{bamler_kleiner_uniqueness_stability}, and assuming $\eps_{\can}$ and $\delta$ to be sufficiently small depending on $\eps$, $T$, $r_0$ and $\M$, we can extend $\phi$ to a map $\wh\phi : \M \supset U \to U' \subset \M'$ such that Properties~\ref{prop_eps_isometry_rf_spacetime_1}, \ref{prop_eps_isometry_rf_spacetime_2} of Definition~\ref{def_eps_isometry_rf_spacetime} hold.
The bounds from Property~\ref{prop_eps_isometry_rf_spacetime_1} follow from the Addendum to \cite[Theorem~1.5]{bamler_kleiner_uniqueness_stability} and the fact that $\wh\phi^* \partial_{\t'} -\partial_\t = \sum_{i=1}^3 \nabla^g_{e_i} e_i - \nabla^{\wh\phi^* g'}_{e_i} e_i$ for any local orthonormal frame $\{ e_i \}_{i=1}^3$, after adjusting $\delta$.
\end{proof}

The following corollary illustrates the statement of Theorem~\ref{lem_stability}.

\begin{corollary}
Let $\{\M^i\}$, $\M^\infty$ be singular Ricci flows and suppose that we have convergence $(\M^i_0 , g^i_0) \to (\M^\infty, g^\infty_0)$ in the sense that there is a sequence of $\delta_i$-isometries $\phi_i : \M^\infty \to \M^i$ with $\delta_i \to 0$.
Then we have convergence $\M^i \to \M^\infty$ in the sense that there is a sequence of $\eps_i$-isometries $\wh\phi_i$ between $\M^\infty, \M^i$ with $\eps_i \to 0$, which extend $\phi_i$.
\end{corollary}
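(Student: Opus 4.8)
The plan is to derive the corollary directly from the Stability Lemma~\ref{lem_stability}, applied with the limit flow $\M^\infty$ in the role of $\M$, by a routine threshold argument. The feature of Lemma~\ref{lem_stability} that makes this work is that the constant $\delta=\delta(\M^\infty,\eps)$ depends only on the fixed flow $\M^\infty$ and on the target accuracy $\eps$, and not on the approximating flow $\M'=\M^i$; so once $\delta_i\to 0$, each prescribed accuracy $\eps$ is eventually achievable.

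First I would record a trivial monotonicity: if $\phi$ is a $\delta$-isometry (between Riemannian manifolds, or between Ricci flow spacetimes) and $\delta\le\delta'$, then $\phi$ is also a $\delta'$-isometry, since $[\delta^{-1}]\ge[\delta'^{-1}]$ forces the relevant $C^{[\delta^{-1}]}$-bounds to dominate the $C^{[\delta'^{-1}]}$-ones. Next I would fix any sequence $\eps_j\searrow 0$ and set $\delta^*_j:=\delta(\M^\infty,\eps_j)$; after replacing $\delta^*_j$ by $\min\{\delta^*_1,\ldots,\delta^*_j\}$ we may assume $\delta^*_1\ge\delta^*_2\ge\cdots>0$. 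Since $\delta_i\to 0$, I can then choose a strictly increasing sequence of indices $i_1<i_2<\cdots$ with $\delta_i<\delta^*_j$ for all $i\ge i_j$.

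Finally, for $i\ge i_1$ let $j(i)$ be the largest $j$ with $i_j\le i$ and put $\eps_i:=\eps_{j(i)}$; then $\eps_i\to 0$ because $j(i)\to\infty$. For such $i$ the $\delta_i$-isometry $\phi_i:\M^\infty_0\to\M^i_0$ is, by the monotonicity above, a $\delta^*_{j(i)}$-isometry, so Lemma~\ref{lem_stability} produces an $\eps_{j(i)}=\eps_i$-isometry $\wh\phi_i:\M^\infty\supset U_i\to U'_i\subset\M^i$ with $\M^\infty_0\subset U_i$, $\M^i_0\subset U'_i$ and $\wh\phi_i|_{\M^\infty_0}=\phi_i$. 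For the finitely many indices $i<i_1$ one simply discards them (convergence is a tail property), or assigns any available extension with a correspondingly larger $\eps_i$; either way $(\eps_i)$ still tends to $0$ and $(\wh\phi_i)$ is the asserted family. I do not expect any genuine obstacle here: the corollary is essentially a sequential reformulation of Lemma~\ref{lem_stability}, and the only point needing care is the bookkeeping that guarantees the hypotheses of that lemma are met at each index, which is exactly what the choice of the thresholds $i_j$ arranges.
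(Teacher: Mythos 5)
Your argument is correct, and it is essentially the only reasonable route: the paper states this corollary without proof, as an immediate sequential reformulation of Lemma~\ref{lem_stability}, and your threshold argument is precisely the bookkeeping one would supply. The monotonicity observation you record at the outset (a $\delta$-isometry is a $\delta'$-isometry for $\delta'\ge\delta$, both for Riemannian manifolds and for Ricci flow spacetimes) is correct and worth making explicit, since the definitions tie the order of differentiability to the accuracy via $[\eps^{-1}]$; the only thing I would flag is that in the corollary's statement the maps $\phi_i$ are between the initial time-slices $\M^\infty_0\to\M^i_0$, which is how you (correctly) use them when invoking Lemma~\ref{lem_stability}.
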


\section{Families of singular Ricci flows}
\label{sec_families_srfs}
The purpose of this section is to distill the results about existence, uniqueness, and continuous dependence of singular Ricci flows into an  object that efficiently encodes the properties needed in the remainder of the proof.  
To motivate this, we recall that by \cite{Kleiner:2014le,bamler_kleiner_uniqueness_stability} (see also Subsection~\ref{subsec_sing_RF_exist_unique}), for every Riemannian manifold $(M,g)$ there exists a singular Ricci flow $\M^{(M,g)}$ with initial condition isometric to $(M,g)$, which is unique up to isometry.  
Our main result will be to formalize the stability property from \cite{bamler_kleiner_uniqueness_stability} (see also Subsection~\ref{subsec_sing_RF_stability}) as a continuous dependence of $\M^{(M,g)}$ on $(M,g)$.
More specifically, we will state that any ``continuous family'' of Riemannian manifolds yields a ``continuous family of singular Ricci flows''.  

Our starting point is a family of Riemannian manifolds $(M^s, g^s)_{s \in X}$, which depends continuously on a parameter $s$ in a certain sense.
One special case is the case in which $M^s$ is constant and $g^s$ depends continuously on $s$ in the smooth topology.
More generally, we may also consider a fiber bundle over $X$ whose fibers are smooth 3-dimensional manifolds $M^s$ equipped with a continuous family of  Riemannian metrics $g^s$.
For each $s \in X$ we consider the singular Ricci flow $\M^s := \M^{(M^s, g^s)}$.
Our first step will be to define a topology on the disjoint union $\sqcup_{s\in X}\M^s$ such that the natural projection $\pi:\sqcup_{s\in X}\M^s\ra X$ given by  $\pi(\M^s)=s$ is a topological submersion.
Secondly, we will endow this space with a lamination structure\footnote{Laminations have arisen in various contexts, including foliation theory, dynamical systems, complex geometry, $3$-manifolds,  geometric group theory, and minimal surfaces; see for instance \cite{sullivan,candel,mosher_oertel,gabai,colding_minicozzi}.  
The laminations appearing in this paper are particularly tame due to the existence of the compatible topological submersion structure, which prohibits nontrivial leaf dynamics -- a central phenomenon in other contexts. 
} whose leaves are the singular Ricci flows $\M^s$ and that satisfies certain compatibility conditions with the submersion structure.
The lamination structure will determine whether a family of maps from or to $\sqcup_{s\in X}\M^s$ is ``transversely continuous'' in the smooth topology.
In fact, the objects $\t^s$, $\partial_\t^s$, $g^s$ associated to each singular Ricci flow $\M^s$ will be transversely continuous in the smooth topology.  A collection of singular Ricci flows $\{ \M^s \}_{x \in X}$ together with the topology on the disjoint union and the lamination structure, as described above, will be called a ``continuous family of singular Ricci flows'' (see Definition~\ref{def_family_RF_spacetime} for further details).
This notion is an instance of a general notion of a continuous family of differential geometric structures, which may be useful in other contexts, and appears to be new.

Let us now state our main results.
We refer to Subsection~\ref{subsec_laminations} for the precise definitions of the terminology used.
For now, we mention that a continuous family of Riemannian manifolds $(M^s, g^s)_{s \in X}$ is essentially given by a fiber bundle over $X$ with smooth fibers and equipped with a continuous family of Riemannian metrics $g^s$ (see Corollary~\ref{Cor_family_compact_fiber_bundle} below).
An important special case is given by a family of Riemannian metrics $g^s$ on a fixed manifold $M$, which depend continuously in $s$ in the smooth topology.

For the remainder of this section, $X$ will denote an arbitrary topological space.

We first address the existence of a ``continuous family of singular Ricci flows''.

\begin{theorem}[Existence]
\label{thm_existence_family_k}
For any continuous family of closed Riemannian 3-manifolds $(M^s, g^s)_{s \in X}$ there is a continuous family of singular Ricci flows $(\M^s)_{s\in X}$ whose continuous family of time-$0$-slices $(\M^s_0, g^s_0)_{s\in X}$ is isometric to $(M^s, g^s)_{s \in X}$.
\end{theorem}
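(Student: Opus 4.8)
The plan is to build the total space $\mathcal{E} := \sqcup_{s\in X}\M^s$ one coordinate patch at a time, using the Uniqueness Theorem (Theorem~\ref{Thm_sing_RF_uniqueness}) to glue patches and the Stability Lemma (Lemma~\ref{lem_stability}) to prove continuity of the transition data. Fix $s_0\in X$ and set $\M:=\M^{s_0}$. For each $T>0$ and $r>0$, Theorem~\ref{Thm_rho_proper_sing_RF} tells us $\M_{r,T}:=\{\rho\geq r,\ \t\leq T\}$ is compact, and by completeness $\M=\bigcup_{r,T}\Int(\M_{2r,2T})$ exhausts $\M$ by such compact pieces. The first step is local: using the fiber bundle structure near $s_0$, pick a chart $V\ni s_0$ over which the family $(M^s,g^s)$ is a product $M\times V$ with $g^s$ varying continuously in the smooth topology, and such that each $(M^s,g^s)$ is a $\delta(\M,\eps)$-isometry away from $(M^{s_0},g^{s_0})$ for $\delta$ as small as we like (shrinking $V$). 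Then Lemma~\ref{lem_stability} produces $\eps$-isometries $\wh\phi_s:\M\supset U\to U_s\subset\M^s$ extending the initial-data identifications, defined on the fixed open set $U\supset\M_{r,T}$ once $\eps$ is small enough. This lets us define a local trivialization $\Phi_V:\ U\times V\to \sqcup_{s\in V}\M^s$, $\Phi_V(x,s)=\wh\phi_s(x)$, which we declare to be a homeomorphism onto its image; this simultaneously defines the topology on $\mathcal{E}$ and the submersion $\pi$ near $\pi^{-1}(s_0)$.

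The second step is to check that these local trivializations are mutually compatible, so that the topology and the lamination structure (whose leaves are the $\M^s$, with smooth structure transported from $\M^s$ itself) are well-defined globally, and that $\t^s$, $\partial_\t^s$, $g^s$ are transversely continuous in the smooth topology. Compatibility on overlaps $V\cap V'$ reduces to the statement that the transition map $\Phi_{V'}^{-1}\circ\Phi_V$ has the form $(x,s)\mapsto(\theta_s(x),s)$ with $\theta_s$ a diffeomorphism of (open subsets of) $\M$ depending transversely-continuously on $s$; but $\theta_s=\wh\phi_s^{\prime\,-1}\circ\wh\phi_s$ is, by construction, an $\eps$-isometry of $\M$ fixing $\M_0$, hence by the Uniqueness Theorem it is the identity on the (open, $\partial_\t$-saturated) region where both are defined — once $\eps$ is small, $\eps$-isometries fixing $\M_0$ are forced to agree with the canonical one, which is $\id$. (More carefully: the identity map on $\M$ is a $0$-isometry fixing $\M_0$; an $\eps$-isometry $\eps$-close to it, fixing the initial slice, must coincide with it on the overlap of domains by rigidity of the uniqueness statement applied to $\M$ with itself.) Transverse continuity of $\t^s,\partial_\t^s,g^s$ in the smooth topology is then immediate: in the chart $\Phi_V$ these pull back from the $\eps_s$-isometries $\wh\phi_s$ with $\eps_s\to 0$ as $s\to s_0$, and Lemma~\ref{lem_stability} gives smooth convergence of $\wh\phi_s^*g^s$, $\wh\phi_s^*\partial_\t^s$ to $g,\partial_\t$ on compact subsets as $s\to s_0$; a standard refinement of the stability construction gives continuity (not just convergence) across nearby parameters.

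The third step is a patching/exhaustion argument: the local pieces above are defined only on the compact cores $\M_{r,T}$, so one must check that enlarging $r\to 0$ and $T\to\infty$ the trivializations can be chosen consistently (they can, by uniqueness again, since any two choices agree on their common domain), and that the resulting topology on $\mathcal E$ is Hausdorff and second countable and makes $\pi$ a topological submersion with the stated lamination structure. Finally one verifies the claimed conclusion: each leaf $\pi^{-1}(s)=\M^s$ is, by construction, isometric as a Ricci flow spacetime to $\M^{(M^s,g^s)}$, hence a singular Ricci flow, and $(\M^s_0,g^s_0)_{s\in X}$ is isometric to $(M^s,g^s)_{s\in X}$ by the way the $\wh\phi_s$ extend the initial identifications.

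I expect the main obstacle to be the second step: namely, upgrading Lemma~\ref{lem_stability} (a statement about a single pair of flows) into genuine \emph{transverse continuity in the $C^\infty_{\loc}$ topology} of the glued structure, including the compatibility of transition functions. The subtle point is that Lemma~\ref{lem_stability} only asserts \emph{existence} of \emph{some} $\eps$-isometry $\wh\phi_s$, with no canonical choice and no a priori continuity in $s$; pinning down a continuously varying family of such maps — and showing two overlapping choices differ by the identity rather than merely by something small — requires the rigidity embedded in the Uniqueness Theorem, together with care about the domains of definition shrinking near the singular set. All the differential-geometric estimates are supplied by the cited results; the real work is organizing them into a coherent global object, which is precisely what the notion of a continuous family of singular Ricci flows (Definition~\ref{def_family_RF_spacetime}) is designed to package.
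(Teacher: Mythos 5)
The central gap is in your compatibility argument.  You claim that the transition map $\theta_s=(\wh\phi'_s)^{-1}\circ\wh\phi_s$ is ``forced'' to coincide with the identity because it is an $\eps$-isometry that restricts to the identity on the initial time-slice, invoking Theorem~\ref{Thm_sing_RF_uniqueness}.  This is false.  The Uniqueness Theorem concerns \emph{exact} isometries of Ricci flow spacetimes extending exact isometries of initial time-slices; an $\eps$-isometry in the sense of Definition~\ref{def_eps_isometry_rf_spacetime} is permitted to distort the metric, the time vector field, and the domain of definition, and no rigidity forces a near-isometric self-map of $\M$ that fixes $\M_0$ to equal the identity elsewhere.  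Moreover, when the two charts are centered at distinct parameters $s_0\neq s_0'$, the map $\theta_s$ is not even a self-map of $\M^{s_0}$, and its restriction to time $0$ is the $\delta$-isometry $(\phi'_s)^{-1}\circ\phi_s$, which varies with $s$ and is generically not the identity.

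Since the compatibility condition (Definition~\ref{def_continuous_family_manifolds}, Property~\ref{prop_continuous_family_manifolds_4}) only asks that the transition data $\beta(\cdot,s)$ vary continuously in $s$ in $C^\infty_{\loc}$, that is what a proof must actually produce.  Your closing paragraph correctly identifies the obstacle --- Lemma~\ref{lem_stability} supplies only \emph{some} $\eps$-isometry, with no canonical choice and no a priori continuity in $s$ --- but then relegates its resolution to an unexplained ``standard refinement of the stability construction.''  That refinement is precisely the content of the paper's proof of Theorem~\ref{thm_existence_family_k}: the topology on the total space is defined indirectly in terms of an auxiliary notion of families of almost-isometries (Definitions~\ref{Def_fam_alm_isometries}, \ref{def_topology_on_Y}); family charts are then defined as those triples satisfying a compatibility requirement with \emph{every} such family (Definition~\ref{Def_time_pres_fam_chart}), which absorbs the ambiguity in the choice of $\wh\phi_s$; and charts covering the total space are obtained by propagating from the time-$0$ family, where continuity is part of the hypothesis, along trajectories of $\partial_\t$ and along geodesics of the time-slice metrics (Lemma~\ref{lem_propagating_charts}, Corollary~\ref{cor_chart_domains_cover}).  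No rigidity of $\eps$-isometries is used, because none is available; the stability maps serve only as a scaffold, and the hard work is exactly the piece you have left out.
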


Although not needed in the remainder of this paper, we will also show that this family is unique.

\begin{theorem}[Uniqueness]
\label{thm_uniqueness_family_k}
The continuous family of singular Ricci flows from Theorem~\ref{thm_existence_family_k} is unique in the following sense.
Consider two such families of singular Ricci flows $(\M^{i,s})_{s\in X}$, $i = 1,2$, and isometries $\phi_i : \cup_{s \in X} \M^{i,s}_0 \to \cup_{s \in X} M^s$.
Then there is an isometry $\Psi : \cup_{s \in X} \M^{1,s} \to \cup_{s \in X} \M^{2,s}$ of continuous families of singular Ricci flows with the property that $\phi_1  = \phi_2 \circ \Psi$ on $\cup_{s \in X} \M^{1,s}_0$
\end{theorem}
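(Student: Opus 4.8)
The plan is to prove uniqueness by combining the pointwise uniqueness of singular Ricci flows (Theorem~\ref{Thm_sing_RF_uniqueness}) with a patching argument that promotes the fiberwise isometries to a transversely continuous isometry of the total spaces. First I would observe that for each fixed $s \in X$, the two singular Ricci flows $\M^{1,s}$ and $\M^{2,s}$ have initial time-slices that are isometric via $\phi_{2,s}^{-1}\circ\phi_{1,s} : \M^{1,s}_0 \to \M^{2,s}_0$ (where $\phi_{i,s}$ denotes the restriction of $\phi_i$ to the fiber over $s$). By Theorem~\ref{Thm_sing_RF_uniqueness}, this extends to a \emph{unique} isometry of Ricci flow spacetimes $\Psi_s : \M^{1,s}\to\M^{2,s}$ with $\Psi_s|_{\M^{1,s}_0} = \phi_{2,s}^{-1}\circ\phi_{1,s}$. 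Setting $\Psi := \bigsqcup_{s\in X}\Psi_s$ gives a well-defined bijection $\cup_s\M^{1,s}\to\cup_s\M^{2,s}$ satisfying the required compatibility with $\phi_1,\phi_2$ on the time-$0$ slices and restricting to a spacetime isometry on each leaf; the only remaining issue is to show that $\Psi$ is an isometry of \emph{continuous families}, i.e. a homeomorphism that is transversely continuous in the smooth topology and compatible with the lamination structures.

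The heart of the argument is therefore the transverse continuity of $\Psi$, and this is where I would invoke the stability theory. Fix $s_0\in X$ and a compact subset of $\M^{1,s_0}$ of the form $\{\rho\geq r,\ \t\leq r^{-1}\}$; by Theorem~\ref{Thm_rho_proper_sing_RF} these exhaust $\M^{1,s_0}$. For $s$ near $s_0$, transverse continuity of the family $(\M^{1,s})_{s\in X}$ provides, on such a compact region, smooth charts identifying nearby leaves, and likewise for $(\M^{2,s})_{s\in X}$; in these charts the metrics $g^{1,s}, g^{2,s}$, the time functions, and the time vector fields vary continuously in the $C^\infty_{\loc}$ sense. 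The map $\Psi_s$ is then, in these charts, the unique spacetime isometry determined by its boundary value. I would argue that $\Psi_s$ depends continuously on $s$ by a compactness-and-uniqueness argument: if $s_i\to s_0$ but $\Psi_{s_i}$ failed to converge (in $C^\infty_{\loc}$, after passing to the charts) to $\Psi_{s_0}$, one could extract a subsequential limit of the $\Psi_{s_i}$ — using that isometries between fixed-geometry regions are precompact, via the standard Arzel\`a--Ascoli/harmonic-coordinate estimates for isometries together with the uniform geometric bounds from Lemma~\ref{lem_rcan_control} — and this limit would be a spacetime isometry $\M^{1,s_0}\supset U\to\M^{2,s_0}$ extending $\phi_{2,s_0}^{-1}\circ\phi_{1,s_0}$, hence equal to $\Psi_{s_0}$ by the uniqueness clause of Theorem~\ref{Thm_sing_RF_uniqueness}, contradicting the failure of convergence. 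One must check the extraction produces a map defined on all of $\M^{1,s_0}$ and not just a sub-region; here the properness (Theorem~\ref{Thm_rho_proper_sing_RF}) together with the fact that $\Psi_{s_i}$ preserves $\t$ and distorts $\rho$ by a factor close to $1$ on regions where the geometry is controlled lets one exhaust $\M^{1,s_0}$ region by region.

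An alternative and perhaps cleaner route — which I would actually prefer to write up — bypasses the direct compactness argument by feeding the stability lemma (Lemma~\ref{lem_stability}) to \emph{both} families simultaneously. For $s$ close to $s_0$, the initial slices $\M^{1,s}_0$ and $\M^{1,s_0}_0$ are $\delta(s)$-isometric with $\delta(s)\to 0$ (by transverse continuity of the first family), so Lemma~\ref{lem_stability} gives an $\eps(s)$-isometry $\wh\psi^1_s:\M^{1,s_0}\supset U^1_s\to\M^{1,s}$ extending that near-isometry, with $\eps(s)\to0$; similarly one gets $\wh\psi^2_s:\M^{2,s_0}\supset U^2_s\to\M^{2,s}$. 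Then $(\wh\psi^2_s)^{-1}\circ\Psi_s\circ\wh\psi^1_s$ and $\Psi_{s_0}$ are both spacetime isometries defined on large regions of $\M^{1,s_0}$ extending initial-slice isometries that are $C^{[\eps^{-1}]}$-close to $\phi_{2,s_0}^{-1}\circ\phi_{1,s_0}$; by the uniqueness in Theorem~\ref{Thm_sing_RF_uniqueness} applied after a further small perturbation, or rather by a direct quantitative uniqueness estimate, these two maps are $C^\infty_{\loc}$-close on compact sets, which is precisely the transverse continuity of $\Psi$ at $s_0$. Finally, the same $\wh\psi^i_s$ give the required local product charts, so $\Psi$ is automatically compatible with the lamination structures, and being a transversely continuous leafwise diffeomorphism with transversely continuous inverse it is a homeomorphism. \textbf{The main obstacle} I anticipate is organizing the quantitative uniqueness/stability bookkeeping so that the two stability maps $\wh\psi^1_s,\wh\psi^2_s$ and the leafwise isometry $\Psi_s$ can be composed and compared on a common region whose size grows to exhaust $\M^{1,s_0}$ as $s\to s_0$ — in other words, making sure the ``$\eps$'s and $\delta$'s'' from Lemma~\ref{lem_stability}, applied to three different flows, can be chosen consistently; this is essentially a matter of carefully threading the quantitative statements already available rather than a genuinely new difficulty.
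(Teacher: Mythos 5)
Your plan is correct in outline and shares the two main ingredients with the paper — leafwise exact uniqueness (Theorem~\ref{Thm_sing_RF_uniqueness}) and stability/continuity of singular Ricci flows — but the paper organizes the argument differently and, importantly, avoids the ``quantitative uniqueness for approximate isometries'' that your preferred second route depends on. The paper does not first build $\Psi=\sqcup_s\Psi_s$ and then argue continuity; instead, it proves (Lemma~\ref{lem_convergence_psi}) that for any continuous family of singular Ricci flows, \emph{every} family of almost-isometries near $s_0$ in the sense of Definition~\ref{Def_fam_alm_isometries} converges to $\id_{\M^{s_0}}$ in $C^\infty_{\loc}$. The proof of that lemma is not an Arzel\`a--Ascoli-plus-uniqueness extraction but the same elementary propagation used in the existence proof (Lemma~\ref{lem_propagating_charts}): once $\psi_{s'}^*g^{s'}\to g^{s_0}$ and $\psi_{s'}^*\partial_\t^{s'}\to\partial_\t^{s_0}$, convergence near a point propagates along trajectories of $\partial_\t$ and along the spatial exponential map, and connectedness of the leaves plus Theorem~\ref{Thm_rho_proper_sing_RF} covers everything. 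From this it follows that the topology and maximal family-chart atlas on $\cup_s\M^s$ are determined by the time-$0$ data, after which the assembly of the leafwise isometries into an isomorphism of continuous families is automatic within the family-chart framework.

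Your ``cleaner'' second route, as written, has a genuine gap: the step from ``$(\wh\psi^2_s)^{-1}\circ\Psi_s\circ\wh\psi^1_s$ and $\Psi_{s_0}$ both extend initial data close to $\phi_{2,s_0}^{-1}\circ\phi_{1,s_0}$'' to ``these two maps are $C^\infty_{\loc}$-close'' requires that two $\eps$-isometries of singular Ricci flows which nearly agree on the time-$0$ slice must be close on $\{\rho\geq\eps,\ \t\leq\eps^{-1}\}$. Neither Lemma~\ref{lem_stability} (which only asserts \emph{existence} of an extending $\eps$-isometry) nor Theorem~\ref{Thm_sing_RF_uniqueness} (which is qualitative) gives you this; you would have to import a stronger quantitative uniqueness estimate from \cite{bamler_kleiner_uniqueness_stability} and argue carefully that the composed map lies in its range of applicability — exactly the $\eps/\delta$ bookkeeping you flag as ``the main obstacle.'' Your first route (compactness plus exact uniqueness) is essentially a repackaging of Lemma~\ref{lem_convergence_psi} and is closer to what the paper does, though the paper's direct propagation via the exponential map and $\partial_\t$-flow avoids the subsequential extraction and the region-exhaustion argument you sketch.
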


We will also show the following properness property:

\begin{theorem} \label{Thm_properness_fam_sing_RF}
Let $(\M^s)_{s \in X}$ be a continuous family of singular Ricci flows and consider the projection $\pi : \cup_{s \in X} \M^s \to X$.
For any $s_0 \in X$, $r > 0$ and $t \geq 0$ there is a family chart $(U, \phi, V)$ with $s_0 \in \pi (U)$ and a compact subset $K \subset V$ such that
\[ \phi \big( \cup_{s \in \pi (U)} \{ \rho_{g^{s}} \geq r, \t^{s} \leq t \} \big) \subset K \times \pi (U). \]
In particular, the following projection is proper:
\[ \pi : \cup_{s \in X} \M^s \cap \{ \rho_{g^s} \geq r, \t^s \leq t \} \longrightarrow X \]
\end{theorem}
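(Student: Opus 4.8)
The plan is to reduce Theorem~\ref{Thm_properness_fam_sing_RF} to the single-flow properness statement (Theorem~\ref{Thm_rho_proper_sing_RF}) together with the stability result for singular Ricci flows (Lemma~\ref{lem_stability}) and the definition of a continuous family (which says, locally, the disjoint union $\cup_s \M^s$ is a topological submersion over $X$ carrying a lamination structure whose leafwise data $\t^s, \partial_\t^s, g^s$ vary transversely continuously). Fix $s_0 \in X$, $r > 0$, $t \geq 0$. First I would apply Theorem~\ref{Thm_rho_proper_sing_RF} to the leaf $\M^{s_0}$ to see that $K_0 := \{\rho_{g^{s_0}} \geq r/2,\ \t^{s_0} \leq t+1\} \subset \M^{s_0}$ is compact. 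Choose a family chart $(U_0,\phi_0,V_0)$ covering a neighborhood of $K_0$ in $\cup_s \M^s$, so that $\phi_0$ identifies $U_0$ with an open subset of $V_0 \times \pi(U_0)$ and the leaf $\M^{s_0}\cap U_0$ with (a subset of) $V_0 \times \{s_0\}$; after shrinking, arrange that $\phi_0(K_0) \subset \Int(K') \times \{s_0\}$ for some compact $K' \subset V_0$, and choose an intermediate compact set $K'' \subset V_0$ with $K' \subset \Int(K'')$.

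The core of the argument is a contradiction/limiting argument to show that, after shrinking $\pi(U)$ to a small enough neighborhood $W$ of $s_0$, the set $\cup_{s\in W}\{\rho_{g^s}\geq r,\ \t^s \leq t\}$ stays inside $\phi_0^{-1}(K'' \times W)$ — which is what we want, taking $K := K''$. Suppose not: then there is a sequence $s_i \to s_0$ and points $x_i \in \M^{s_i}$ with $\rho_{g^{s_i}}(x_i) \geq r$, $\t^{s_i}(x_i)\leq t$, but $x_i \notin \phi_0^{-1}(K'' \times \{s_i\})$, equivalently $x_i \notin U_0$ or $\phi_0(x_i) \notin K'' \times \{s_i\}$. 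By the stability lemma (Lemma~\ref{lem_stability}) applied to $\M = \M^{s_0}$: for every $\eps > 0$, once $i$ is large enough the continuity of the family provides a $\delta(\M^{s_0},\eps)$-isometry $\M^{s_0}_0 \to \M^{s_i}_0$, hence an $\eps$-isometry $\wh\phi_i : \M^{s_0} \supset U_i \to U_i' \subset \M^{s_i}$ with $\rho \leq \eps$ on $(\M^{s_i}\setminus U_i')\cap\{\t^{s_i}\leq \eps^{-1}\}$ and $\rho \leq \eps$ on the complementary region of $\M^{s_0}$. Taking $\eps < \min\{r/2,\, (t+1)^{-1}\}$, the point $x_i$ (with $\rho_{g^{s_i}}(x_i)\geq r > \eps$ and $\t^{s_i}(x_i) \leq t < \eps^{-1}$) must lie in $U_i'$, so $y_i := \wh\phi_i^{-1}(x_i) \in \M^{s_0}$ is well defined, with $\rho_{g^{s_0}}(y_i) \geq r/2$ (using the $C^{[\eps^{-1}]}$-closeness of $\wh\phi_i^*g^{s_i}$ to $g^{s_0}$ and $\eps$ small) and $\t^{s_0}(y_i) = \t^{s_i}(x_i) \leq t < t+1$. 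Hence $y_i \in K_0$, so $y_i$ lies in a fixed compact set; pass to a subsequence with $y_i \to y_\infty \in K_0 \subset U_0$.

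Now I would transfer this back to the family. Since $y_\infty$ lies in the interior of the region where the chart $\phi_0$ is defined and $\phi_0(y_\infty)\in \Int(K')\times\{s_0\}$, and since the $\eps$-isometries $\wh\phi_i$ are converging (as $\eps\to 0$) to the identity near $y_\infty$ in the sense that $\wh\phi_i(y_i)$ (i.e. $x_i$) converges in the laminated topology of $\cup_s\M^s$ to $y_\infty$ — this is precisely the content of transverse continuity together with the stability estimates, and is exactly the Corollary following Lemma~\ref{lem_stability} phrased fiberwise — we conclude that for large $i$, $x_i \in U_0$ and $\phi_0(x_i) \in K'' \times \{s_i\}$ (because $\phi_0(y_\infty) \in \Int(K')$ and $\phi_0$ is continuous on $U_0$, so a neighborhood of $y_\infty$ maps into $\Int(K'') \times \pi(U_0)$). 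This contradicts the choice of the $x_i$. Therefore such a neighborhood $W$ exists, and setting $(U,\phi,V) := (\phi_0^{-1}(V_0 \times W), \phi_0, V_0)$ and $K := K''$ finishes the proof; the ``in particular'' statement is then immediate since properness is local over the base and the chart exhibits $\pi$ as the projection $K \times W \to W$ restricted to a closed subset.

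The main obstacle I anticipate is the bookkeeping in the limiting step: namely, making rigorous the claim that $x_i = \wh\phi_i(y_i) \to y_\infty$ in the laminated topology on $\cup_s \M^s$, i.e. that the abstract stability isometries $\wh\phi_i$ are compatible with the chosen family chart $\phi_0$. This requires knowing that the continuous family structure is (by its very definition/construction in Theorem~\ref{thm_existence_family_k}) the one induced by these stability maps — so one must either cite the construction of the family in Section~\ref{sec_families_srfs} directly, or re-derive that on the compact set $K_0$ the two notions of ``nearby leaf'' agree. Everything else — the compactness from Theorem~\ref{Thm_rho_proper_sing_RF}, the curvature-scale comparison under an $\eps$-isometry, and the local nature of properness — is routine.
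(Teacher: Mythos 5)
Your proposal is correct and uses essentially the same ingredients as the paper's proof: compactness of $\{\rho_{g^{s_0}}\ge r', \t^{s_0}\le t'\}\subset\M^{s_0}$ from Theorem~\ref{Thm_rho_proper_sing_RF}, a family chart over a neighborhood of this compact set from Lemma~\ref{lem_chart_near_compact_subset}, almost-isometries $\psi_s$ from Lemma~\ref{lem_almost_isometries_existence} (i.e.\ Lemma~\ref{lem_stability}), and the scale comparison under an $\eps$-isometry to show nearby leaves' thick parts are images of a fixed compact set in $\M^{s_0}$. The paper runs the argument directly (exhibiting the neighborhood $W$ and the inclusion $\{\rho_{g^{s_0}}\geq r/2,\ \t^{s_0}\leq t\}\subset \psi_s^{-1}(\phi_0^{-1}(K\times\{s\}))$) rather than by contradiction, but that difference is cosmetic. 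The ``main obstacle'' you flag — that the abstract stability isometries $\wh\phi_i$ are compatible with the chart $\phi_0$, i.e.\ that $\wh\phi_i(y_i)\to y_\infty$ in the laminated topology — is precisely what Lemma~\ref{lem_psi_phi_K_converge} (building on Lemma~\ref{lem_convergence_psi}) supplies: on any compact $K\subset V$ the maps $(\psi_{s'}^{-1}\circ\phi^{-1})(\cdot,s')$ are defined near $K$ and converge in $C^\infty_{\loc}$ to $\phi^{-1}(\cdot,s_0)$. Citing that lemma, rather than the ``Corollary following Lemma~\ref{lem_stability}'' (which by itself does not give convergence in the topology of $\cup_s\M^s$), closes your argument exactly as the paper does.
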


We remark that Theorems~\ref{thm_existence_family_k} and \ref{Thm_properness_fam_sing_RF} imply Theorem~\ref{lem_stability}.

\subsection{Continuous families of smooth objects}
\label{subsec_laminations}
In this subsection we formalize concepts relating to the terminologies of ``continuous family of Riemannian manifolds'' and ``continuous family of singular Ricci flows''.

\begin{definition}[Continuous family of $n$-manifolds]
\label{def_continuous_family_manifolds}
A  {\bf continuous family of (smooth) $n$-manifolds (with boundary, over $X$)} is given by a topological space $Y$, a continuous map $\pi:Y\ra X$ and a maximal collection of tuples $\{(U_i,\phi_i,V_i)\}_{i\in I}$ (called {\bf family charts}) such that:
\begin{enumerate}[label=(\arabic*)]
\item \label{prop_continuous_family_manifolds_1} For all $i \in I$, $U_i$ is an open subset of $Y$, $V_i$ is a smooth $n$-manifold (with boundary), and $\phi_i:U_i\ra V_i \times \pi(U_i)$ is a homeomorphism.
\item \label{prop_continuous_family_manifolds_2} $\cup_{i \in I} U_i=Y$.
\item \label{prop_continuous_family_manifolds_3} (Local trivialization) For every $i\in I$ the map $\phi_i$ induces an equivalence of the restriction $\pi|_{U_i}:U_i\ra \pi(U_i)$ to the projection $V_i \times \pi(U_i) \ra \pi(U_i)$, i.e. the following diagram commutes:  
\begin{diagram}
U_i       & \rTo^{\phi_i}        &V_i \times \pi(U_i)\\
\dTo^\pi  & \ldTo_{\proj_{\pi(U_i)}} &\\
\pi(U_i)      &   & 
\end{diagram}

\item \label{prop_continuous_family_manifolds_4} (Compatibility)  
For any $i,j\in I$ the transition homeomorphism 
\[
\phi_{ij}:=\phi_j\circ\phi_i^{-1}: V_i \times \pi(U_i)\supset\phi_i(U_i\cap U_j) \longrightarrow \phi_j(U_i\cap U_j)\subset V_j \times \pi(U_j)
\] 
 has the form $\phi_{ij}(v,s)=(\be(v,s),s)$
where $\be : \phi_i(U_i\cap U_j)\ra V_j$  locally defines a family of smooth maps $s \mapsto \be(\cdot, s)$ that depend continuously on $s$ in the $C^\infty_{\loc}$-topology. 
\end{enumerate}
Note that Properties~\ref{prop_continuous_family_manifolds_3} and \ref{prop_continuous_family_manifolds_4} imply that the family charts induce the structure of a smooth $n$-manifold with boundary on each fiber $Y^s :=\pi^{-1}(s)\subset Y$, for every $s\in X$.  
In order to use a more suggestive notation, we will sometimes  denote a continuous family of smooth manifolds by $( Y^s )_{s\in X}$, or by the map $\pi:Y\ra X$,  suppressing the collection of family charts. 
\end{definition}

\begin{remark} \label{rmk_set_theory_issue}
For convenience we have suppressed a set theoretic issue relating to the maximality property in Definition~\ref{def_continuous_family_manifolds}.
This issue can be remedied easily by requiring the following weaker version of maximality: If $\{ (U_i, \phi_i, V_i) \}_{i \in I}$ can be enlarged by a tuple $(U, \phi, V)$, while maintaining the validity of Properties~\ref{prop_continuous_family_manifolds_1}--\ref{prop_continuous_family_manifolds_4}, then $(U, \phi, V)$ is conjugate to some $(U_i, \phi_i, V_i)$, $i \in I$, in the sense that $U = U_i$ and $\phi_i = ( \psi, \id_{\pi(U)})  \circ \phi$ for some diffeomorphism $\psi : V \to V_i$.
In this case, we say that $(U, \phi, V)$ is a ``family chart'' if it is conjugate to some $(U_i, \phi_i, V_i)$.
\end{remark}

Relating to this we have the following:

\begin{lemma} \label{lem_cont_fam_no_max}
If the maximality property in Definition~\ref{def_continuous_family_manifolds} is dropped, then there is a unique maximal extension of $\{ (U_i, \phi_i, V_i ) \}_{i \in I}$ in the sense of Remark~\ref{rmk_set_theory_issue}, whose elements are unique up to conjugation.
\end{lemma}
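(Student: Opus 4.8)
The plan is to show that the collection of all tuples $(U,\phi,V)$ satisfying Properties~\ref{prop_continuous_family_manifolds_1}--\ref{prop_continuous_family_manifolds_4} jointly with a given initial collection $\{(U_i,\phi_i,V_i)\}_{i\in I}$ forms a collection that is again compatible with itself, and then quotient by the conjugation relation of Remark~\ref{rmk_set_theory_issue} to resolve the set-theoretic concern. First I would fix $\{(U_i,\phi_i,V_i)\}_{i\in I}$ as in the statement and define $\mathcal{A}$ to be the set of all tuples $(U,\phi,V)$ such that $U$ is open in $Y$, $V$ is a smooth $n$-manifold, $\phi:U\to V\times\pi(U)$ is a homeomorphism, Property~\ref{prop_continuous_family_manifolds_3} holds, and the transition maps $\phi\circ\phi_i^{-1}$ and $\phi_i\circ\phi^{-1}$ have the smooth-transversely-continuous form required by Property~\ref{prop_continuous_family_manifolds_4} for every $i\in I$. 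The first claim to prove is that $\{(U_i,\phi_i,V_i)\}_{i\in I}\subset\mathcal{A}$, which is immediate from the hypothesis.

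The heart of the argument is the verification that $\mathcal{A}$ itself satisfies Property~\ref{prop_continuous_family_manifolds_4}: given $(U,\phi,V),(U',\phi',V')\in\mathcal{A}$, the transition map $\phi'\circ\phi^{-1}$ is a smoothly-transversely-continuous family of diffeomorphisms over $\pi(U\cap U')$. I would prove this locally: fix a point $y\in U\cap U'$ with $s=\pi(y)$, pick $i\in I$ with $y\in U_i$ (using Property~\ref{prop_continuous_family_manifolds_2} for the initial collection), and shrink to an open neighborhood contained in $U\cap U'\cap U_i$. On this neighborhood $\phi'\circ\phi^{-1}=(\phi'\circ\phi_i^{-1})\circ(\phi_i\circ\phi^{-1})$, and each factor is of the required form by membership in $\mathcal{A}$; the form "$(v,s)\mapsto(\be(v,s),s)$ with $s\mapsto\be(\cdot,s)$ continuous in $C^\infty_{\loc}$" is preserved under composition, because composition of smooth maps depending continuously on a parameter in $C^\infty_{\loc}$ is again continuous in $C^\infty_{\loc}$ (this is the one routine analytic lemma — chain rule estimates plus local compactness). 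One must also check that $\cup_{(U,\phi,V)\in\mathcal{A}}U=Y$, which follows since $\mathcal{A}\supset\{(U_i,\phi_i,V_i)\}_{i\in I}$ and the latter already covers $Y$. Hence $\mathcal{A}$ is a collection satisfying \ref{prop_continuous_family_manifolds_1}--\ref{prop_continuous_family_manifolds_4} extending the given one, and by construction it contains every such extension, so it is the largest; any other maximal extension $\mathcal{A}'$ satisfies $\mathcal{A}'\subset\mathcal{A}$ and, being maximal among collections with these properties, must equal the quotient of $\mathcal{A}$ by conjugation once we pass to conjugation classes.

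The remaining point is uniqueness up to conjugation, which I would handle exactly as in Remark~\ref{rmk_set_theory_issue}: if $(U,\phi,V)$ and $(U,\phi',V')$ both lie in $\mathcal{A}$ with the same domain $U$, then $\phi'\circ\phi^{-1}:V\times\pi(U)\to V'\times\pi(U)$ has the form $(v,s)\mapsto(\be(v,s),s)$; the $s$-independence of $\pi$ forces $\be(\cdot,s):V\to V'$ to be a diffeomorphism for each $s$, and a short argument using that $U$ is connected over each fiber and that the transition is fiber-preserving shows $\be(\cdot,s)$ is independent of $s$ when $V,V'$ are connected (in general one argues componentwise), giving the diffeomorphism $\psi:V\to V'$ with $\phi'=(\psi,\id)\circ\phi$. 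Packaging: define the maximal extension to be $\mathcal{A}$ modulo conjugation, note it is well-defined and maximal, and observe any two maximal extensions have conjugate members because each is contained in (the conjugation-saturation of) the other.

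The main obstacle I anticipate is purely bookkeeping rather than conceptual: making the statement "$\mathcal{A}$ is maximal" precise in the face of the suppressed set-theoretic issue, so that one genuinely gets a \emph{set} and not a proper class. The clean way around this — and the way I would write it — is to never form $\mathcal{A}$ as a literal set of all tuples, but instead to observe that every family chart $(U,\phi,V)$ is conjugate to one of the form $(U,\phi,V_0)$ where $V_0$ ranges over a fixed small set of model manifolds (e.g. open submanifolds of $\R^n$ or a Whitney-embedding representative), so the collection of conjugacy classes of admissible tuples is a set; then the existence and uniqueness of the maximal extension is just the statement that this set, equipped with the order by inclusion of domains together with the compatibility relation, has a top element obtained by taking \emph{all} compatible conjugacy classes. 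The verification that "all compatible conjugacy classes" is still compatible with itself is exactly the composition lemma above, so once that analytic fact is in hand the rest is a short formal argument.
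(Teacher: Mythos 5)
Your proposal follows the same approach as the paper: show that any two triples that are each compatible with the initial collection are compatible with one another (via factoring the transition map through an initial chart and using the fact that the ``continuous family of smooth maps'' property is closed under composition), add all such triples, and resolve the set-theoretic issue by restricting the model manifolds $V$ to lie in a fixed set; the paper does exactly this, choosing $V$ to be a manifold structure on a fixed set of cardinality $\aleph_1$. The core composition argument and the set-theoretic fix are both correct.

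However, your uniqueness discussion contains a genuine error. You claim that if $(U,\phi,V)$ and $(U,\phi',V')$ are both in $\mathcal A$ with the same domain $U$, then a short argument using connectedness of the fibers forces $\be(\cdot,s)$ to be independent of $s$, so the two charts are conjugate via a single diffeomorphism $\psi$. This is false: take $Y=\IR\times\IR$, $X=\IR$, $\pi(v,s)=s$, and let $\phi_1=\id$, $\phi_2(v,s)=(v+s,s)$. Both are legitimate family charts compatible with one another (the transition $\be(v,s)=v+s$ is smooth jointly and continuous in $C^\infty_{\loc}$ as a family in $s$), yet no fixed diffeomorphism $\psi:\IR\to\IR$ satisfies $\phi_2=(\psi,\id)\circ\phi_1$. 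So the maximal collection $\mathcal{A}$ really does contain non-conjugate charts with the same domain, and nothing about connectedness or fiber-preservation prevents this. Fortunately the erroneous claim is also unnecessary for the lemma: it is not asserting that charts over a common $U$ are conjugate, only that the maximal extension is determined up to conjugation of its members. Your final sentence — any two maximal extensions have conjugate members because each is contained in the conjugation-saturation of the other — is precisely the correct argument, and it only uses that every compatible triple is conjugate to one whose $V$ lies in the fixed model set, together with the maximality condition from Remark~\ref{rmk_set_theory_issue}. Delete the paragraph about $\be(\cdot,s)$ being $s$-independent and rest the uniqueness entirely on that last observation.
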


\begin{proof}
It can be checked easily that if $(U, \phi, V)$ and $(U', \phi', V')$ can each be added to $\{ (U_i, \phi_i, V_i) \}_{i \in I}$ while maintaining the validity of Properties~\ref{prop_continuous_family_manifolds_1}--\ref{prop_continuous_family_manifolds_4} of Definition~\ref{def_continuous_family_manifolds}, then both triples can be added at the same time.
We can therefore add all such triples $(U, \phi, V)$, with the extra assumption that $V$ is a smooth manifold structure defined on a fixed set of cardinality $\aleph_1$.
\end{proof}

\begin{remark}
Definition~\ref{def_continuous_family_manifolds} combines two standard notions --- smooth laminations, and topological submersions: Property~\ref{prop_continuous_family_manifolds_3}  asserts that $\pi$ is a topological submersion, while Property~\ref{prop_continuous_family_manifolds_4} implies that the collection of family charts defines the structure of a lamination of smooth $n$-manifolds.
\end{remark}

\begin{remark} \label{rmk_fiber_bundle_construction}
If $M$ is a smooth manifold with boundary and $\pi : Y \to X$ is a fiber bundle with fiber $M$ and structure group $\Diff (M)$, then $\pi : Y \to X$ can also be viewed as continuous family of smooth manifolds with boundary.
To see this, consider all local trivializations $\phi_i : \pi_i^{-1} (W_i) \to M \times W_i$, where $W_i \subset X$ are open, and form the triples $(U_i := \pi^{-1} (W_i), \phi_i, V_i := M)$.
The set of these triples satisfy all properties of Definition~\ref{def_continuous_family_manifolds} except for the maximality property.
Due to Lemma~\ref{lem_cont_fam_no_max} these triples define a unique structure of a continuous family of manifolds.

A special case of this construction is the case in which $\pi : Y \to X$ is a trivial fiber bundle $Y = M \times X$.
In this case the associated continuous family of manifolds can be denoted by $(Y^s = M \times \{ s \})_{s \in X}$.

Conversely, every continuous family $(Y^s)_{s \in X}$ of compact manifolds over a connected space $X$ is given by a fiber bundle.
This fact will follow from Corollary~\ref{Cor_family_compact_fiber_bundle} below.
\end{remark}

\begin{remark}
If $( Y^s )_{s \in X}$ is a continuous family of $n$-manifolds (with boundary) and $W \subset \cup_{s \in X} Y^s$ is open, then $( Y^s \cap W )_{s \in X}$ carries a natural structure of a continuous family of $n$-manifolds (with boundary).
The family charts of this family is the subset of all family charts $(U_i, \phi_i, V_i)$ of $( Y^s )_{s \in X}$ with the property that $U_i \subset W$.

So for example, if $W \subset \IR^2$ is open, then the projection onto the second factor restricted to $W$, $\pi : W \to \IR$, defines a continuous family of 1-manifolds $(Y^s := W \cap \IR \times \{ s \})_{s \in \IR}$.
This is example shows that the topology of the fibers $Y^s$ is not necessarily constant.
\end{remark}

Next, we characterize maps between continuous families of manifolds.
\begin{definition}[Continuity of maps between continuous families]
\label{def_continuity_maps_between_families}
If $\pi_i:Y_i\ra X_i$, $i = 1,2$, are continuous families of $n$-manifolds with boundary, then a {\bf (continuous) family of smooth maps} is a pair $(F, f)$ where $F:Y_1 \ra Y_2$, $ f:X_1\ra X_2$ are continuous maps such that $\pi_2 \circ F= f\circ\pi_1$, and for every $y \in Y_1$ there are family charts $(U_i, \phi_i, V_i)$ of $\pi_i :Y_i \ra X_i$ such that $y \in U_1$, $F (U_1) \subset U_2$ and such that
\[ V_1 \times \pi_1 (U_1) \xrightarrow{\phi_1^{-1}} U_1 \xrightarrow{\;\; F \;\;} U_2 \xrightarrow{\; \phi_2 \;} V_2 \times \pi_2 (U_2) \longrightarrow V_2 \]
describes a family $(\beta_s : V_1 \to V_2)_{s \in \pi_1 (U_1)}$ of smooth maps  that is continuous in the $C^\infty_{\loc}$-topology.
\end{definition}

If we express the two continuous families $\pi_i : Y_i \to X_i$ as $( Y^s_i )_{s \in X_i}$, then we will sometimes also express $(F, f)$ as $( F_s : Y^s_1 \to Y^{f(s)}_2)_{s \in X_1}$ and we will say that this family of smooth maps is {\bf transversely continuous (in the smooth topology)}.

Two special cases of this terminology will be particularly important for us.
First, consider the case in which $X_2$ consists of a single point and $(Y^s_2 = M)_{s \in X_2}$ consists of a single smooth manifold with boundary $M$.
Then Definition~\ref{def_continuity_maps_between_families} expresses transverse continuity of a family of maps $(F^s : Y^s_1 \to M)_{s \in X_1}$, or equivalently, of a map $F : \cup_{s \in X_1} Y^s_1 \to M$; in the case in which $M = \IR$ this yields a definition of transverse continuity for scalar maps $F : \cup_{s \in X_1} Y^s_1 \to \IR$.
Second, consider the case in which $X_1 = X_2$, $f = \id_{X_1}$ and $(Y_1^s := M \times \{ s \})_{s \in X_1}$ is the trivial family.
In this case Definition~\ref{def_continuity_maps_between_families} introduces the notion of transverse continuity for families of smooth maps $(F^s : M \to Y_2^s)$. 

\begin{definition}[Isomorphism of continuous families]
A continuous family of smooth maps $(F, f)$ is called an {\bf isomorphism} if $F, f$ are invertible and if $(F^{-1}, f^{-1})$ constitutes a continuous family of smooth maps as well.
If $X_1 = X_2$, then we also call map $F : Y_1 \to Y_2$ an isomorphism if $(F, \id_{X_1})$ is an isomorphism.
\end{definition}

Due to the implicit function theorem we have:

\begin{lemma}
A continuous family $(F,f)$ is an isomorphism if and only if $f$ is a homeomorphism and all maps $F^s : Y_1^s \to Y_2^{f(s)}$, $s \in X_1$, are diffeomorphisms.
In particular, if $(Y_i^s)_{s \in X}$, $i=1,2$, are continuous families of smooth manifolds over the same space $X$, then a continuous family of maps $(F^s : Y_1^s \to Y_2^s)_{s \in X}$ constitutes an isomorphism if and only if all maps are diffeomorphisms.
\end{lemma}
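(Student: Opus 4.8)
The plan is to reduce the statement to the inverse function theorem applied leaf by leaf, while checking that the inverse map is transversely continuous. One direction is immediate: if $(F,f)$ is an isomorphism then by definition $(F^{-1},f^{-1})$ is a continuous family of smooth maps, so $f$ is a homeomorphism and each $F^s$ has a smooth inverse, hence is a diffeomorphism. The substance is the converse, so assume $f$ is a homeomorphism and every $F^s : Y_1^s \to Y_2^{f(s)}$ is a diffeomorphism. We must produce an inverse map $G : Y_2 \to Y_1$, show it is continuous, and verify the local $C^\infty_{\loc}$-continuity condition of Definition~\ref{def_continuity_maps_between_families}.

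First I would define $G$ set-theoretically as the fiberwise inverse: for $z \in Y_2^s$ set $G(z) := (F^{f^{-1}(s)})^{-1}(z) \in Y_1^{f^{-1}(s)}$. Since $f$ is a homeomorphism and $\pi_1 \circ G = f^{-1} \circ \pi_2$, it remains to establish continuity of $G$ and the chart condition; continuity of $G$ as a map of topological spaces will in fact follow from the local statement. So fix $z_0 \in Y_2$, let $s_0 = \pi_2(z_0)$, $s_0' = f^{-1}(s_0)$, and let $y_0 = G(z_0)$. Choose family charts $(U_1,\phi_1,V_1)$ of $\pi_1$ and $(U_2,\phi_2,V_2)$ of $\pi_2$ with $y_0 \in U_1$, $F(U_1) \subset U_2$, realizing $F$ near $y_0$ as a family $(\beta_s : V_1 \to V_2)_{s \in \pi_1(U_1)}$ continuous in $C^\infty_{\loc}$, as granted by the definition of a family of smooth maps. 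In these coordinates $F$ is $(v,s) \mapsto (\beta_s(v),s)$, and for each fixed $s$ the map $\beta_s$ is a local diffeomorphism near $v_0 := \phi_1(y_0)$ (restricting the first coordinate), because $F^s$ is a diffeomorphism.

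The heart of the argument is to invert $(v,s) \mapsto (\beta_s(v),s)$ as a family. Since $\beta_{s_0'}$ is a diffeomorphism onto its image near $v_0$, by the inverse function theorem it has a smooth local inverse on a neighborhood of $w_0 := \beta_{s_0'}(v_0)$; the issue is to do this uniformly in $s$ and to control all derivatives. Here I would invoke the quantitative/parametrized inverse function theorem: because $D\beta_s$ at points near $v_0$ depends continuously on $s$ in $C^\infty_{\loc}$ and $D\beta_{s_0'}(v_0)$ is invertible, there are neighborhoods $V_1' \ni v_0$ in $V_1$, $V_2' \ni w_0$ in $V_2$, and $W \ni s_0$ in $\pi_2(U_2)$, such that for $s' = f^{-1}(s)$ with $s \in W$, $\beta_{s'}$ restricts to a diffeomorphism $V_1' \to \beta_{s'}(V_1') \supset V_2'$, and the inverses $\gamma_{s'} := (\beta_{s'})^{-1} : V_2' \to V_1'$ are well-defined and depend continuously on $s'$ in $C^\infty_{\loc}$ — the derivative bounds propagate to the inverses by the usual chain-rule/implicit-function estimates, and continuity of $\gamma_{s'}$ in $C^k$ follows from continuity of $\beta_{s'}$ in $C^{k+1}$. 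Shrinking the relevant open sets and pulling back through $\phi_1,\phi_2$, this produces family charts in which $G$ is given by $(w,s) \mapsto (\gamma_{f^{-1}(s)}(w),s)$, a $C^\infty_{\loc}$-continuous family — which is exactly the chart condition for $(G,f^{-1})$ to be a continuous family of smooth maps. The main obstacle, and the one point requiring care rather than bookkeeping, is precisely this parametrized inversion: guaranteeing that the domains $V_2'$ and the transverse neighborhood $W$ can be chosen uniformly (independent of $s$ near $s_0$) and that \emph{all} derivatives of $\gamma_{s'}$, not just the zeroth order, vary continuously with $s$; everything else — the set-theoretic definition of $G$, globalizing via the maximal atlas, and the final assertion for families over a common base $X$ with $f = \id_X$ — is routine once this local lemma is in hand.
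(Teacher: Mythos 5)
Your proof is correct and takes essentially the same approach as the paper, which states the lemma with the single-line justification ``Due to the implicit function theorem we have:'' and gives no further argument. You have simply written out the leaf-by-leaf, parametrized inverse function theorem argument that the paper leaves implicit, correctly identifying the transverse $C^\infty_{\loc}$-continuity of the family of inverses $\gamma_{s'}$ as the only point requiring real care.
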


Next, we define the notion of transverse continuity for tensor fields on a continuous family of smooth manifolds.

\begin{definition}[Transverse continuity of tensor fields]
\label{def_continuity_smooth_objects}
Let $(Y^s)_{s \in X}$ be a continuous family of smooth $n$-manifolds with boundary and for every $s\in X$, let $\xi^s$ be a tensor field on $Y^s$.  
We say that the family $(\xi^s)_{s\in X}$ is {\bf transversely continuous (in the smooth topology)} if for every family chart $\phi:U\ra  V \times \pi(U)$ of $(Y^s)_{s \in X}$ and all $s \in \pi (U)$ the push forwards of $\xi^s$ under \[
U \cap Y^s \xrightarrow{\quad \phi \quad} V \times  \{s\} \longrightarrow V
\]
are smooth tensor fields on $V$ that depend continuously on $s \in \pi(U)$ in the $C^\infty_{\loc}$-topology.  
\end{definition}

\begin{remark}
In the case of $(0,0)$-tensor fields, this notion offers another definition of transverse continuity of scalar functions, which is equivalent with the one derived from Definition~\ref{def_continuity_maps_between_families}.
\end{remark}

Adapting elementary results to families we have:
\begin{lemma}
\label{lem_generalizations_to_families}
\begin{enumerate}[label=(\alph*)]
\item  Let $(F_s : Y_1^s \to Y_2^{f(s)})_{s \in X_1}$ be a continuous family of diffeomorphisms between two continuous families of smooth manifolds with boundary $(Y_i^s)_{s \in X_i}$, $i = 1,2$ and let $(\xi_s)_{s \in X_2}$ be a transversely continuous family of tensor fields on $(Y_2^s)_{s \in X_2}$.
Then the pullbacks $((F^s)^* \xi_{f(s)})_{s \in X_1}$ are also transversely continuous.
\item Consider  is a continuous family  $(Y^s)_{s \in X}$ of smooth $n$-manifolds with boundary and a transversely continuous family of smooth maps $(F^s : Y^s \to Z)$ into a $k$-dimensional manifold with boundary.
Assume that $F^s (\partial Y^s) \subset \partial Z$ for all $s \in X$.
Let $z \in Z$ be a regular value of $F_s$ for all $s \in X$.
Then the collection  $( \td{Y}^s := (F^s)^{-1}(z)\subset Y^s)_{s\in X}$ of submanifolds inherits the structure of a continuous family of smooth $(n-k)$-manifolds with boundary.
\end{enumerate}
\end{lemma}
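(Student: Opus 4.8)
The plan is to verify both assertions by unwinding the definitions of transverse continuity in terms of family charts and reducing each statement to its classical counterpart applied uniformly in the parameter $s$.

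For part (a), I would fix a point $y \in Y_1^{s_0}$ and use the definition of a continuous family of diffeomorphisms to produce family charts $(U_1, \phi_1, V_1)$ of $(Y_1^s)_{s \in X_1}$ and $(U_2, \phi_2, V_2)$ of $(Y_2^s)_{s \in X_2}$ with $y \in U_1$, $F(U_1) \subset U_2$, such that in these charts the maps $F^s$ are represented by a family $(\beta_s : V_1 \to V_2)$ depending continuously on $s$ in $C^\infty_{\loc}$. Since $(F^s, f)$ is an isomorphism onto its image, after shrinking we may take each $\beta_s$ to be a diffeomorphism onto an open subset of $V_2$; by the implicit function theorem (applied with parameters) the inverses $\beta_s^{-1}$ also depend continuously on $s$ in $C^\infty_{\loc}$. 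Now the transverse continuity of $(\xi_s)_{s\in X_2}$ means that, in the chart $\phi_2$, the pushforward of $\xi_{f(s)}$ to $V_2$ is a smooth tensor field $\eta_{f(s)}$ on $V_2$ depending continuously on $s$. The pushforward of $(F^s)^*\xi_{f(s)}$ to $V_1$ under $\phi_1$ is then precisely $\beta_s^* \eta_{f(s)}$, and the pullback of a tensor field by a diffeomorphism is a polynomial expression in the components of the tensor field, the Jacobian of the diffeomorphism, and the inverse Jacobian; since each of these depends continuously on $s$ in $C^\infty_{\loc}$ and $f$ is continuous, the composition $\beta_s^*\eta_{f(s)}$ does too. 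As $y$ was arbitrary this establishes transverse continuity of $((F^s)^*\xi_{f(s)})_{s\in X_1}$.

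For part (b), I would again work locally: fix $\td y \in \td Y^{s_0} = (F^{s_0})^{-1}(z)$ and choose a family chart $(U, \phi, V)$ of $(Y^s)_{s\in X}$ around $\td y$ together with a coordinate chart on $Z$ around $z$ in which $z$ corresponds to the origin of $\IR^k$ (using $F^s(\partial Y^s)\subset \partial Z$ only to arrange that the submanifolds $\td Y^s$ are manifolds with boundary rather than manifolds with corners). In these charts the maps $F^s$ are represented by a continuously varying family of smooth maps $h_s : V \to \IR^k$ with $0$ a regular value of each $h_s$. The classical submersion/implicit function theorem, applied with the parameter $s$, then yields — after possibly shrinking $V$ and reparametrizing — a family of diffeomorphisms straightening $h_s^{-1}(0)$ to a fixed coordinate subspace $\IR^{n-k}\subset\IR^n$, depending continuously on $s$ in $C^\infty_{\loc}$. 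Composing with $\phi$ produces the required family charts for $(\td Y^s)_{s\in X}$; Properties~\ref{prop_continuous_family_manifolds_1}--\ref{prop_continuous_family_manifolds_4} of Definition~\ref{def_continuous_family_manifolds} are then inherited from those of $(Y^s)_{s\in X}$ because the straightening maps and their transitions are built from $h_s$ and the ambient transition maps, all of which vary continuously in $s$.

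The only genuine subtlety — and the step I expect to require the most care — is the parametrized implicit function theorem in part (b): one must check that the coordinate change that straightens $h_s^{-1}(0)$ can be chosen to depend continuously on $s$ in the $C^\infty_{\loc}$-topology, not merely pointwise. This follows from the standard proof of the implicit function theorem (a contraction-mapping argument or an application of the inverse function theorem to $(x',x'')\mapsto (x', h_s(x',x''))$) once one observes that every construction in that proof depends continuously on the data, uniformly on compact sets; a minor bookkeeping point is the shrinking of $V$, which is harmless since transverse continuity and the continuous-family structure are both local in the fiber direction.
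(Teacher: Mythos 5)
The paper states this lemma without proof, treating it as an elementary consequence of the definitions. Your argument is exactly the intended one: unwind transverse continuity in family charts, observe that both pullback of tensor fields (including, where relevant, inversion of the family of diffeomorphisms) and the straightening of level sets via the implicit function theorem are continuous operations in the $C^\infty_{\loc}$-topology when carried out with a parameter, and check that the resulting local trivializations satisfy Properties~(1)--(4) of Definition~\ref{def_continuous_family_manifolds}. Your reduction is correct and complete; the one small caveat you already flag — that the parametrized implicit function theorem must deliver $C^\infty_{\loc}$-continuity in $s$ and not just pointwise continuity — is handled exactly as you say, and the boundary hypothesis $F^s(\partial Y^s)\subset\partial Z$ plays precisely the role you assign it, namely keeping the preimages from developing corners.
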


Using Definition~\ref{def_continuity_smooth_objects}, we can make the following definition:

\begin{definition}[Continuous family of Riemannian manifolds] \label{Def_cont_fam_RM}
A {\bf (continuous) family of Riemannian manifolds $( M^s, g^s)_{s \in X}$} consists of a continuous family $( M^s)_{s \in X}$ of smooth manifolds with boundary and a transversely continuous family of Riemannian metrics $(g^s)_{s \in X}$.

An isometry between two continuous families of Riemannian manifolds $(M_i^s, \lb g_i^s)_{s \in X_1}$, $i=1,2$, is an isomorphism $(F, f)$ between the associated continuous families of manifolds with boundary $(M_i^s)_{s \in X_1}$ with the property that $(F^s)^* g_2^{f(s)} = g_1^s$ for all $s \in X_1$.
\end{definition}

\begin{remark}
If $(M^s)_{s \in X}$ is given by a fiber bundle $\pi : Y \to X$ with fiber $M \approx M^s$ and structure group $\Diff (M)$ (see Remark~\ref{rmk_fiber_bundle_construction}), then any continuous family of Riemannian metrics $(g^s)_{s \in X}$, turning $(M^s, g^s)_{s \in X}$ into a continuous family of Riemannian manifolds in the sense of Definition~\ref{Def_cont_fam_RM}, is given by a fiberwise family of Riemannian metrics $(g^s)_{s \in X}$.
\end{remark}

\begin{remark} \label{rmk_universal_family_RM}
For any smooth manifold $M$ with boundary consider the space $\Met (M)$ of Riemannian metrics equipped with the $C^\infty_{\loc}$-topology.
Then $(M \times \{ g \}, g )_{g \in \Met (M)}$ is a continuous family of Riemannian manifolds.
If $M$ is closed and 3-dimensional, then Theorem~\ref{thm_existence_family_k} applied to this family asserts the existence of a family of singular Ricci flows $(\M^g)_{g \in \Met (M)}$ such that  $\M^g_0 = (M,g)$ for all $g \in \Met (M)$.
\end{remark}

Lastly, we define continuous families of Ricci flow spacetimes and continuous families of singular Ricci flows.

\begin{definition} \label{def_family_RF_spacetime}
A  {\bf continuous family of Ricci flow spacetimes} 
\[ ( \M^s, \t^s,  \partial_\t^s, g^s )_{s \in X}, \]
or in short $(\M^s)_{s \in X}$, consists of:
\begin{itemize}
\item a continuous family $(\M^s)_{s \in X}$ of smooth $4$-manifolds with boundary,
\item a transversely continuous family of smooth scalar functions $\t^s$ on $\M^s$; we will often write $\t : \cup_{s \in X} \M^s \to [0, \infty)$,
\item a transversely continuous family of smooth vector fields $\partial_\t^s$ on $\M^s$ such that $\partial^s_\t \t^s \equiv 1$,
\item a transversely continuous family of smooth inner products $g^s$ on the subbundle $\ker d\t^s \subset T\M^s$;
here we use the splitting $T\M^s = \ker d\t^s \oplus \spann \{ \partial_\t^s \}$ to view $g^s$ as a $(0,2)$-tensor field on $\M^s$ and define transverse continuity as in Definition~\ref{def_continuity_smooth_objects}.
\end{itemize}
We assume that for all $s \in X$ the tuple $( \M^s, \t^s,  \partial_\t^s, g^s )$ is a Ricci flow spacetime in the sense of Definition~\ref{def_RF_spacetime}.
If $\M^s$ is even a singular Ricci flow, in the sense of Definition~\ref{Def_sing_RF}, for all $s \in X$, then we call $(\M^s)_{s \in X}$ a {\bf continuous family of singular Ricci flows}.
 \end{definition} 
 
By Lemma~\ref{lem_generalizations_to_families} we have:
 
\begin{lemma} \label{lem_cont_fam_time_slice_inherit}
If $(\M^s)_{s \in X}$ is a continuous family of Ricci flow spacetimes, then for any $t \geq 0$, the set of time-$t$-slices $(\M^s_T = (\t^s)^{-1} (t), g^s_t)_{s \in X}$ inherits a structure of a continuous family of Riemannian manifolds.
\end{lemma}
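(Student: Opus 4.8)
The plan is to exhibit the time-$t$-slices as a submanifold family obtained by a regular-value construction, and then invoke Lemma~\ref{lem_generalizations_to_families}(b). Concretely, the function $\t : \cup_{s \in X} \M^s \to [0,\infty)$ is, by hypothesis in Definition~\ref{def_family_RF_spacetime}, a transversely continuous family of smooth scalar functions $\t^s$ on the continuous family $(\M^s)_{s \in X}$ of smooth $4$-manifolds with boundary. Viewing $\IR$ (or $[0,\infty)$) as a $1$-dimensional manifold with boundary, and noting that $\t^s$ maps $\partial \M^s = \M^s_0$ to the boundary point $0$ when $t=0$ and into the interior when $t>0$, the target condition $F^s(\partial Y^s) \subset \partial Z$ of Lemma~\ref{lem_generalizations_to_families}(b) is either automatic ($t>0$, where we just restrict to the interior $(0,\infty)$) or satisfied ($t=0$). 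The key point is that $t$ is a regular value of every $\t^s$: this is exactly Definition~\ref{def_RF_spacetime}(2), which asserts that $\t^s$ has no critical points. Hence Lemma~\ref{lem_generalizations_to_families}(b) applies and gives $(\M^s_t = (\t^s)^{-1}(t))_{s \in X}$ the structure of a continuous family of smooth $3$-manifolds with boundary.

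It then remains to check that the fiberwise Riemannian metrics $g^s_t$, defined as the restriction of $g^s$ to the time-$t$-slice $\M^s_t$, form a transversely continuous family. For this, recall that $g^s$ is given as a transversely continuous $(0,2)$-tensor field on $\M^s$ via the splitting $T\M^s = \ker d\t^s \oplus \spann\{\partial_\t^s\}$. Along the submanifold $\M^s_t = (\t^s)^{-1}(t)$ we have $T\M^s_t = \ker d\t^s|_{\M^s_t}$, so $g^s_t$ is simply the pullback of $g^s$ under the inclusion $\M^s_t \hookrightarrow \M^s$. Since this inclusion is a transversely continuous family of smooth maps (by the construction in Lemma~\ref{lem_generalizations_to_families}(b), the family charts of $(\M^s_t)_{s\in X}$ are compatible with those of $(\M^s)_{s\in X}$), transverse continuity of $(g^s_t)_{s \in X}$ follows from Lemma~\ref{lem_generalizations_to_families}(a), or directly from Definition~\ref{def_continuity_smooth_objects} by expressing everything in adapted family charts. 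This establishes that $(\M^s_t, g^s_t)_{s \in X}$ is a continuous family of Riemannian manifolds in the sense of Definition~\ref{Def_cont_fam_RM}.

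The only mild subtlety — and the one step that deserves care rather than being truly an obstacle — is bookkeeping the boundary behavior at $t=0$, where $\M^s_0 = \partial \M^s$, versus $t>0$, where $\M^s_t$ is an interior slice; in the latter case one first restricts attention to the open subfamily $(\t^s)^{-1}((0,\infty))$ before applying Lemma~\ref{lem_generalizations_to_families}(b) with target $(0,\infty)$, so that the resulting slices are honest closed $3$-manifolds (closed because $\M^s_0$ is compact and, for singular Ricci flows, the time-slices are treated as complete after adding singular points — though compactness of slices is not needed for the present statement). Beyond this, the argument is a direct citation of Lemma~\ref{lem_generalizations_to_families} together with Definition~\ref{def_RF_spacetime}(2), exactly as the parenthetical "By Lemma~\ref{lem_generalizations_to_families}" preceding the statement indicates.
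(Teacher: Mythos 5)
Your proposal is correct and follows exactly the route the paper indicates: the lemma is stated as an immediate consequence of Lemma~\ref{lem_generalizations_to_families}, and you apply part (b) to the transversely continuous family $\t$ (using that $\t^s$ has no critical points, so every $t$ is a regular value, and that $\t^s(\partial\M^s) = \{0\} \subset \partial[0,\infty)$) and then check transverse continuity of the restricted metrics directly in adapted family charts. The only substantive addition you make is spelling out the boundary bookkeeping at $t=0$ versus $t>0$ and noting that the inclusion $\M^s_t \hookrightarrow \M^s$ is not a diffeomorphism so Lemma~\ref{lem_generalizations_to_families}(a) cannot be cited verbatim for the metric step; both points are handled correctly.
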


\begin{remark}
In Definition~\ref{def_family_RF_spacetime} we don't require that $(\M^s)_{s \in X}$ comes from a fiber bundle, as explained in Remark~\ref{rmk_fiber_bundle_construction}.
In fact, even if $X$ is connected --- such as in the case $X = \Met (M)$ of Remark~\ref{rmk_universal_family_RM} --- the topology of the spacetimes $\M^s$ may depend on $s$.
Similarly, in the context of Lemma~\ref{lem_cont_fam_time_slice_inherit}, the time $t$ may be a singular time for some but not all parameters $s \in X$.
In this case, the topology of the time-$t$-slices may depend on $s$, even if the topology of $\M^s$ doesn't.
\end{remark}

\subsection{Existence of family charts}
The following result will be used frequently throughout this paper.

\begin{lemma}
\label{lem_chart_near_compact_subset}
If $(M^s)_{s \in X}$ is a continuous family of smooth manifolds with boundary, $s_0 \in X$ and $K \subset M^{s_0}$ is a compact subset, then there is a family chart $(U,\phi, V)$ such that $K \subset U$.
\end{lemma}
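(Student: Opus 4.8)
The plan is to produce the chart by a standard compactness argument: cover the compact set $K \subset M^{s_0}$ by finitely many family charts, and then glue their fiber manifolds together over a common neighborhood of $s_0$ in $X$. Concretely, for each point $x \in K$ Definition~\ref{def_continuous_family_manifolds} provides a family chart $(U_x, \phi_x, V_x)$ with $x \in U_x$; since $\pi|_{U_x}$ is a topological submersion, $U_x \cap M^{s_0}$ is open in $M^{s_0}$. By compactness we extract finitely many charts $(U_k, \phi_k, V_k)$, $k = 1, \dots, N$, whose restrictions $U_k \cap M^{s_0}$ cover $K$. The issue is that the $V_k$ are genuinely different smooth manifolds, so one cannot simply take $U := \bigcup_k U_k$ and map it into a single $V \times \pi(U)$; the transition maps $\phi_{kl}$ are only defined on overlaps and are not simply inclusions.

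The key step is therefore to build the single target $V$ and the single trivialization $\phi$ by patching. First I would shrink each $U_k$ to a smaller chart $U_k' \subset U_k$ with $\overline{U_k' \cap M^{s_0}}$ compact and still covering $K$, and choose a single connected open neighborhood $W \ni s_0$ in $X$ small enough that all the shrunk charts are defined over $W$ (i.e. $\pi(U_k') \supset W$ after further shrinking) and so that the transition functions $\beta_{kl}(\cdot, s)$ are $C^\infty_{\loc}$-close to $\beta_{kl}(\cdot, s_0)$ for $s \in W$. Then over $s_0$ the manifold $M^{s_0}$ restricted to $\bigcup_k (U_k' \cap M^{s_0})$ is obtained by gluing the open pieces $V_k' := \phi_k(U_k' \cap M^{s_0})$ via the diffeomorphisms $\beta_{kl}(\cdot, s_0)$; call this glued manifold $V$ (an open neighborhood of the image of $K$). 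For $s$ near $s_0$, the maps $\beta_{kl}(\cdot, s)$ define a new gluing of the same pieces $V_k'$, but since these are $C^0$-close to the $\beta_{kl}(\cdot, s_0)$, the resulting manifold is diffeomorphic to $V$ by a diffeomorphism depending continuously on $s$; one obtains such a diffeomorphism concretely by choosing a partition of unity on $V$ subordinate to the $V_k'$ and interpolating, or by an isotopy/implicit-function-theorem argument using that the identification of overlaps is a small perturbation. This produces the homeomorphism $\phi : U \to V \times \pi(U)$ on $U := \pi^{-1}(W) \cap \bigcup_k U_k'$, and compatibility with the ambient family charts is automatic because $\phi$ is built from the $\phi_k$ and smooth transition maps. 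Finally one checks $K \subset U$ and, by maximality (Remark~\ref{rmk_set_theory_issue}), that $(U, \phi, V)$ is itself a family chart.

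The main obstacle is precisely the gluing/interpolation step: producing, for $s$ in a neighborhood of $s_0$, a transversely continuous family of diffeomorphisms from the "$s$-glued" manifold to the fixed model $V$, in a way that is compatible with all the local charts simultaneously. This is where one must be careful that the perturbation of the transition cocycle is small in $C^\infty_{\loc}$ (hence the need to shrink to relatively compact pieces and restrict to a small $W$), and that the interpolation can be performed consistently on all overlaps at once — this is a finite-stage induction on the number of charts, at each stage extending the identification over one more piece using a cutoff function and the fact that the competing trivializations differ by a small diffeomorphism. The remaining verifications — that $\pi \circ \phi^{-1}$ is projection onto $\pi(U)$, that $V$ is a smooth $n$-manifold with boundary, and that the new chart is compatible with the given atlas — are routine once this gluing is in place.
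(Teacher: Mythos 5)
Your proposal takes essentially the same route as the paper: cover $K$ by finitely many family charts and patch the local trivializations into a single chart over a neighborhood of $s_0$. What you flag as the ``main obstacle'' --- making a consistent, transversely continuous interpolation of the trivializations over all overlaps simultaneously --- is exactly the point where the paper uses a concrete device that your sketch leaves open. Instead of a finite-stage induction with cutoffs, the paper embeds $Z := \bigcup_i M^{s_0} \cap U_i$ into $\IR^N_{\geq 0}$ so that $Z$ meets $\partial \IR^N_{\geq 0}$ orthogonally, defines for each chart the local retraction $\pi_i : U_i \to M^{s_0}\cap U_i$ (project to $V_i$ via $\phi_i$, then re-insert at parameter $s_0$), forms the weighted sum $s_{Z'} = \sum_i (\alpha_i\circ\pi_i)\,\pi_i$ using a partition of unity $\{\alpha_i\}$ subordinate to $\{M^{s_0}\cap U_i\}$ --- a convex combination that lands in the ambient $\IR^N_{\geq 0}$, not in $Z$ --- and then composes with the nearest-point retraction onto a relatively compact $Z'\supset K$. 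This produces, in a single step, a transversely continuous retraction $\pi_V : U \to V := U\cap M^{s_0}$. The map $\phi := (\pi_V,\pi)$ is then a local homeomorphism by the fiberwise inverse function theorem (after shrinking so that $\pi_V$ is a fiberwise local diffeomorphism), and compactness of $K$ together with $\pi_V|_V = \id_V$ upgrades it to a homeomorphism on a smaller neighborhood.

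The $\IR^N$-embedding is what makes your ``partition-of-unity interpolation'' idea actually go through: convex combinations of maps into a manifold have no intrinsic meaning, but they do in $\IR^N$, and the local-diffeomorphism property does not need to be maintained stage by stage --- it is recovered at the end via the inverse function theorem. Your alternative finite-stage induction would likely also work, but it requires extra bookkeeping to keep the interpolated identification a diffeomorphism on all previously treated overlaps at each stage; the embedding approach sidesteps this. One minor imprecision in your sketch: the images $\phi_k(U_k' \cap M^s)$ vary with $s$, so the pieces being ``reglued'' are not literally the same open subsets $V_k'$ for all $s$; the retraction formulation handles this automatically because $\pi_V$ is defined on the total space $U$ rather than fiber by fiber.
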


\begin{remark}
We note that Lemma~\ref{lem_chart_near_compact_subset} holds more generally for laminations, when one has a compact subset $K$ of a leaf with trivial holonomy.
\end{remark}

A consequence of Lemma~\ref{lem_chart_near_compact_subset} is:

\begin{corollary} \label{Cor_family_compact_fiber_bundle}
If $(M^s)_{s \in X}$ is a continuous family of smooth and compact manifolds with boundary and if $X$ is connected, then $(M^s)_{s \in X}$ comes from a fiber bundle with fiber $M \approx M^s$ and structure group $\Diff (M)$, in the sense of Remark~\ref{rmk_fiber_bundle_construction}.
\end{corollary}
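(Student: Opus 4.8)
The plan is to derive Corollary~\ref{Cor_family_compact_fiber_bundle} from Lemma~\ref{lem_chart_near_compact_subset} by a standard open-cover-and-glue argument, using compactness of the fibers together with connectedness of the base. First I would fix a basepoint $s_0\in X$ and set $M:=M^{s_0}$. Since $M$ is compact, Lemma~\ref{lem_chart_near_compact_subset} (applied with $K=M$) produces a family chart $(U,\phi,V)$ with $M\subset U$; the restriction of $\phi$ to $M=M^{s_0}$ is then a diffeomorphism $M\to V\times\{s_0\}\cong V$, which we use to identify $V$ with $M$ once and for all. After shrinking, I obtain for every $s_0\in X$ an open neighborhood $W_{s_0}:=\pi(U_{s_0})\subset X$ together with a homeomorphism $\phi_{s_0}:\pi^{-1}(W_{s_0})\xrightarrow{\ \sim\} M\times W_{s_0}$ commuting with the projections (Property~\ref{prop_continuous_family_manifolds_3}), i.e.\ a local trivialization of $\pi$ over $W_{s_0}$ with fiber $M$. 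The collection $\{(W_{s_0},\phi_{s_0})\}_{s_0\in X}$ is then an open cover of $X$ by trivializing neighborhoods.

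Next I would check that the transition functions take values in $\Diff(M)$ and define a continuous cocycle. For overlapping charts $(U_i,\phi_i,V_i)$, $(U_j,\phi_j,V_j)$ with $V_i=V_j=M$, Property~\ref{prop_continuous_family_manifolds_4} of Definition~\ref{def_continuous_family_manifolds} says the transition homeomorphism has the form $\phi_{ij}(x,s)=(\be(x,s),s)$, where $s\mapsto\be(\cdot,s)$ is a family of smooth maps $M\to M$ depending continuously on $s$ in the $C^\infty_{\loc}$-topology; applying the same to $\phi_{ji}=\phi_{ij}^{-1}$ shows each $\be(\cdot,s)$ is a diffeomorphism of $M$ and that $s\mapsto\be(\cdot,s)^{-1}$ is also continuous, so the transition map $W_i\cap W_j\to\Diff(M)$, $s\mapsto\be(\cdot,s)$, is continuous for the $C^\infty$-topology on $\Diff(M)$ (here compactness of $M$ makes $C^\infty_{\loc}=C^\infty$). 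The cocycle condition on triple overlaps is immediate from $\phi_{jk}\circ\phi_{ij}=\phi_{ik}$. Thus $\pi:Y\to X$ carries the structure of a fiber bundle with fiber $M$ and structure group $\Diff(M)$ in the sense of Remark~\ref{rmk_fiber_bundle_construction}.

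Finally, to see that this fiber-bundle structure \emph{induces back} the given continuous family structure $(M^s)_{s\in X}$, one observes that the family charts $(U_{s_0},\phi_{s_0},M)$ constructed above already belong to the maximal atlas of $(M^s)_{s\in X}$, they cover $Y$ (Property~\ref{prop_continuous_family_manifolds_2}), and by Remark~\ref{rmk_fiber_bundle_construction} together with Lemma~\ref{lem_cont_fam_no_max} a set of triples satisfying Properties~\ref{prop_continuous_family_manifolds_1}--\ref{prop_continuous_family_manifolds_4} determines a unique continuous family structure; since both the original structure and the one coming from the bundle contain these charts, they coincide. Connectedness of $X$ is used only to guarantee that a single fiber diffeomorphism type $M$ works over all of $X$ (the trivializing neighborhoods produced above are initially only local, and the abstract fiber $M$ could a priori vary between components).

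The main obstacle I anticipate is not logical but bookkeeping: one must be careful that Lemma~\ref{lem_chart_near_compact_subset} gives a chart whose \emph{model} $V$ is an abstract manifold and only its restriction to the single fiber $M^{s_0}$ is canonically identified with $M^{s_0}$; propagating this identification consistently across overlaps — so that all transition maps really land in $\Diff(M)$ for one fixed $M$ rather than in varying diffeomorphism spaces — is where the connectedness hypothesis and a mild argument with the equivalence relation "conjugate family chart" of Remark~\ref{rmk_set_theory_issue} enter. Everything else (continuity of transitions, the cocycle identity, and the matching-of-structures step) is routine once the charts are normalized.
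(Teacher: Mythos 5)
The paper offers no explicit argument for the corollary beyond pointing to Lemma~\ref{lem_chart_near_compact_subset}, and your overall route matches the intended one; but there is a genuine gap at your very first step. You obtain a family chart $(U_{s_0},\phi_{s_0},V)$ with $M^{s_0}\subset U_{s_0}$ and then assert, ``after shrinking,'' a homeomorphism $\phi_{s_0}:\pi^{-1}(W_{s_0})\to M\times W_{s_0}$. But a family chart only gives $\phi_{s_0}:U_{s_0}\to V\times\pi(U_{s_0})$; for this to be a trivialization over $W_{s_0}:=\pi(U_{s_0})$ you need $\pi^{-1}(W_{s_0})\subset U_{s_0}$, i.e.\ that $U_{s_0}$ is saturated, and shrinking the base does not produce this. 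The argument you should make explicit is: for $s\in\pi(U_{s_0})$, the intersection $U_{s_0}\cap M^s$ is open in $M^s$ (as $U_{s_0}$ is open in the total space) and is carried by $\phi_{s_0}$ homeomorphically onto $V\times\{s\}$, which is compact since $V\cong M^{s_0}$; hence $U_{s_0}\cap M^s$ is compact, therefore closed in the Hausdorff manifold $M^s$, and therefore a nonempty clopen subset of $M^s$. If $M^s$ is connected, this forces $U_{s_0}\cap M^s=M^s$, so $U_{s_0}=\pi^{-1}(\pi(U_{s_0}))$ without any shrinking and the chart is already a local trivialization.

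Some such hypothesis is indispensable: for disconnected compact fibers the statement can fail. Take $X=\IR$ and $Y=(S^1\times\{0\}\times\IR)\cup(S^1\times\{1\}\times(0,\infty))$ with $\pi$ the projection onto the last coordinate. This is a continuous family of compact $1$-manifolds over a connected base (the two pieces are each clopen in $Y$ and carry obvious family charts), yet the fiber is $S^1$ for $s\le 0$ and $S^1\sqcup S^1$ for $s>0$, so no fiber bundle structure exists. The corollary thus implicitly assumes the fibers are connected (as they are everywhere in the paper) or, more generally, requires a separate argument that the number of components is locally constant. Once the clopen-set argument above is in place, the rest of your proof --- continuity of the transition maps into $\Diff(M)$ via Property~\ref{prop_continuous_family_manifolds_4} of Definition~\ref{def_continuous_family_manifolds}, the cocycle identity, and the reconciliation with the given maximal atlas via Lemma~\ref{lem_cont_fam_no_max} --- is correct and routine.
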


\begin{proof}[Proof of Lemma~\ref{lem_chart_near_compact_subset}]
Recall the standard projection $\pi : \cup_{s \in X} M^s \to X$.

\begin{Claim}  There is an open subset $K \subset U\subset \cup_{s \in X} M^s$ and a retraction $\pi_V :U\ra  U \cap M^{s_0} =: V$ that is transversely continuous in the smooth topology.
\end{Claim}

\begin{proof}
Let $\{ (U_i, \phi_i, V_i) \}_{i\in I' \subset I}$ be a finite collection of family charts  such that $K\subset \cup_{i \in I'} U_i$.   
For each $i\in I'$ define $\pi_i:U_i\ra U_i\cap M^{s_0}$ to be the composition 
\[
U_i \xrightarrow{ \; \phi_i \;} V_i \times \pi (U_i) \longrightarrow V_i \longrightarrow V_i \times \{s_0 \} \xrightarrow{\phi_i^{-1}} M^{s_0} \cap U_i.
\]
We will obtain the desired map $\pi_V$ by gluing the retractions $\pi_i$, $i\in I'$, with a partition of unity.  

Let $Z:=\cup_{i \in I'} M^{s_0} \cap U_i$ and fix a partition of unity $\{\al_i:Z\ra [0,1]\}_{i\in I'}$ subordinate to the cover $\{ M^{s_0} \cap U_i \}_{i\in I}$ of $Z$.
We may assume without loss of generality that $Z$ is an embedded submanifold of $\R^N_{\geq 0} = \IR^{N-1} \times [0, \infty)$, for some large $N$, such that $Z \cap \partial \IR^N_{\geq 0} = \partial Z$ and $Z$ meets the boundary of $\R^N_{\geq 0}$ orthogonally.
Let $Z' \Subset Z$ be a relatively compact neighborhood of $K$.
Choose $\de>0$ such that the nearest point retraction $r_{Z}: N_{\de}(Z')\ra Z'$ is well-defined and smooth, where $N_{\de}(Z')$ is the exponential image of the vectors of length $< \delta$ in the normal bundle $\nu Z'$ of $Z'$ in $\IR^N_{\geq 0}$.  
Consider the convex combination $s_{Z'} := \sum_{i \in I'} (\al_i \circ \pi_i) \pi_i:\cup_{i \in I'} U_i\ra \R^N_{\geq 0}$.
Note that $s_{Z'} |_{Z'} = \id_{Z'}$.
Then $U := s_{Z'}^{-1} ( N_\delta (Z')) \subset \cup_{i \in I'} U_i$ and $\pi_V := r_{Z'} \circ s_{Z'}$ have the desired properties.
\end{proof}

Let $\pi_V:U\ra   V$ be as in the claim.  
After shrinking $U$ and $V$ we may assume without loss of generality that $\pi_V |_{U\cap M^s}:  M^s \cap U \to V$ is a local diffeomorphism for every $s \in X$.
Consider the map $\phi:=(\pi_V, \pi): U\ra  V \times X$.  
Since $\pi_V$ is a fiberwise local diffeomorphism, it follows from the inverse function theorem (for families of maps) that $\phi$ is a local homeomorphism. 
Therefore, by compactness of $K$ and the fact that $\pi_V |_V=\id_V$, we find an open neighborhood $U' \subset U$ of $K$ such that $\pi |_{U'}$ is injective.
Hence, after shrinking $U$ and $V$ we may assume that $\phi$ is a homeomorphism onto its image $\phi(V)\subset V \times X$.   Shrinking $U$ and $V$ further we can arrange for this image to be a product region in $V \times X$.
Finally, we replace the target with $\phi(U)$, so that $\phi$ is a homeomorphism.
\end{proof}

\subsection{Proof of Theorem~\ref{thm_existence_family_k}}
\label{subsec_construction_um}
By Theorem~\ref{subsec_sing_RF_exist_unique}, for every $s \in X$ we may fix a singular Ricci flow $(\M^s, \t^s, \partial^s_\t, g^s)$ such that $\M^s_0$ is isometric to $(M^s,g^s)$.  
In the following, we will identify $(\M^s_0, g^s_0)$ with $(M^s, g^s)$.
Let $Y := \sqcup_{s \in X} \M^s$ be the disjoint union and $\pi : Y \to X$ the natural projection.

Before proceeding further, we first indicate the idea of the proof.
Our goal will be to construct a suitable topology, as well as family charts on $Y$.
In order to achieve this, we will first introduce the notion of ``families of almost-isometries'' (see Definition~\ref{Def_fam_alm_isometries} below), which will serve as a guide to decide whether a subset of $Y$ is open or a map defined on a subset of $Y$ is a family chart.
Roughly speaking, a family of almost-isometries near some parameter $s \in X$ consists of maps $\psi_{s'}$ between a large subset of $\M^s$ and a large subset $\M^{s'}$, for $s'$ close to $s$, which arise from the stability theory of singular Ricci flows in Lemma~\ref{lem_stability}.
These maps are almost isometric up to arbitrarily high precision as $s' \to s$.
In our construction, the topology and the family charts on $Y$ will be chosen in such a way that a posteriori we have $\psi_{s'} \to \id_{\M^s}$ in the smooth topology as $s' \to s$.

After defining the topology and family charts on $Y$, we need to verify all required properties, such as the property that the domains of all family charts cover $Y$.
We will do this by propagating family charts from the family of time-$0$ slices to all of $Y$ using the flow of the time vector fields $\partial^s_{\t}$ and the exponential map with respect to the metrics $g^s_t$.

Let us now continue with the proof.

\begin{definition}[Families of almost-isometries] \label{Def_fam_alm_isometries}
Consider a neighborhood $S \subset X$ of some $s \in X$.
A family of diffeomorphisms 
\[
\{ \psi_{s'}: \M^s \supset Z_{s'} \longrightarrow Z_{s'}'\subset\M^{s'} \}_{s' \in S},
\]
where $Z_{s'}, Z'_{s'}$ are open, is called {\bf family of almost-isometries near $s$} if the following is true:
\begin{enumerate}[label=(\arabic*)]
\item \label{prop_fam_alm_isometries_1} For every $\eps > 0$ there is a neighborhood $S_\eps \subset S$ of $s$ such that $\psi_{s'}$ is an $\eps$-isometry for all $s' \in S_\eps$, in the sense of Definition~\ref{def_eps_isometry_rf_spacetime}.
\item \label{prop_fam_alm_isometries_2} $\psi_{s'} |_{\M^s_0} \to \id_{\M^s_0}$ in $C^\infty_{\loc}$ as $s' \to s$, where we use the identification $\M^s_0 = M^s$ and interpret $C^\infty_{\loc}$-convergence within the family of 3-manifolds $( M^s )_{s \in X}$.
\end{enumerate}
\end{definition}

Due to Lemmas~\ref{lem_stability} and \ref{lem_chart_near_compact_subset} we have:

\begin{lemma} \label{lem_almost_isometries_existence}
For every $s \in X$ there is a family of almost-isometries near $s$.
\end{lemma}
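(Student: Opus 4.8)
\textbf{Proof plan for Lemma~\ref{lem_almost_isometries_existence}.}

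The plan is to fix $s \in X$ and produce the desired family of diffeomorphisms $\{\psi_{s'}\}_{s'\in S}$ by combining the stability result of Lemma~\ref{lem_stability} with the existence-of-charts result of Lemma~\ref{lem_chart_near_compact_subset}. First I would use the fact that $(M^{s'})_{s'\in X}$ is a continuous family of smooth manifolds, together with the transverse continuity of the metrics $(g^{s'})_{s'\in X}$, to obtain a family chart $(U,\phi,V)$ with $s\in\pi(U)$ so that the initial time-slices $(\M^{s'}_0,g^{s'}_0)=(M^{s'},g^{s'})$ are all identified via $\phi$ with Riemannian metrics on the fixed manifold $V$, depending continuously on $s'$ in the $C^\infty_{\loc}$-topology (if $M^s$ is closed this is automatic from Lemma~\ref{lem_chart_near_compact_subset} applied to $K=M^s$; the charts provide comparison maps $\phi_{s'}^{-1}\circ\phi_s: M^s\to M^{s'}$ which converge to $\id_{M^s}$ in $C^\infty$ as $s'\to s$, and which are $\delta(s')$-isometries from $(M^s,g^s)$ to $(M^{s'},g^{s'})$ with $\delta(s')\to 0$).

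Next, given $\eps>0$, let $\delta=\delta(\M^s,\eps)>0$ be the constant from Lemma~\ref{lem_stability}. Using the previous step, choose a neighborhood $S_\eps\subset\pi(U)$ of $s$ small enough that for all $s'\in S_\eps$ the comparison map $\phi_{s'}^{-1}\circ\phi_s$ is a $\delta$-isometry from $\M^s_0$ to $\M^{s'}_0$. By Lemma~\ref{lem_stability} this extends to an $\eps$-isometry $\wh\phi_{s'}:\M^s\supset U_{s'}\to U'_{s'}\subset\M^{s'}$. Set $\psi_{s'}:=\wh\phi_{s'}$, $Z_{s'}:=U_{s'}$, $Z'_{s'}:=U'_{s'}$; property~\ref{prop_fam_alm_isometries_1} of Definition~\ref{Def_fam_alm_isometries} holds by construction. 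One subtlety is that Lemma~\ref{lem_stability} only gives, for \emph{each} $\eps$, a neighborhood on which an $\eps$-isometry exists, whereas Definition~\ref{Def_fam_alm_isometries} asks for a \emph{single} family $\{\psi_{s'}\}_{s'\in S}$ defined on a fixed neighborhood $S$ and satisfying the $\eps$-closeness on shrinking subneighborhoods $S_\eps$. I would handle this by a diagonal/exhaustion argument: pick a sequence $\eps_k\searrow 0$, the corresponding nested neighborhoods $S_{\eps_1}\supset S_{\eps_2}\supset\cdots$, and on the ``annular'' region $S_{\eps_k}\setminus S_{\eps_{k+1}}$ use the $\eps_k$-isometry; take $S:=S_{\eps_1}$. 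This defines $\psi_{s'}$ for all $s'\in S$, and for any $\eps>0$ choosing $k$ with $\eps_k<\eps$ shows $\psi_{s'}$ is an $\eps$-isometry on $S_{\eps_k}\subset S$.

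Finally, property~\ref{prop_fam_alm_isometries_2} requires $\psi_{s'}|_{\M^s_0}\to\id_{\M^s_0}$ in $C^\infty_{\loc}$ within the family $(M^s)_{s\in X}$. By the extension clause of Lemma~\ref{lem_stability}, $\wh\phi_{s'}|_{\M^s_0}$ equals the chosen $\delta(s')$-isometry $\phi_{s'}^{-1}\circ\phi_s$ from the first step, and these converge to $\id_{M^s}$ in $C^\infty$ by transverse continuity of the family charts; unwinding the definition of convergence within a continuous family of manifolds (Definition~\ref{def_continuity_maps_between_families}) gives exactly~\ref{prop_fam_alm_isometries_2}. The main obstacle I anticipate is the bookkeeping in the diagonal argument — ensuring the piecewise-defined $\psi_{s'}$ is genuinely a single well-defined family of diffeomorphisms with the stated domains, rather than just a collection of locally valid $\eps$-isometries — but this is a routine patching issue given that the $\eps$-isometries from Lemma~\ref{lem_stability} all restrict to the \emph{same} map $\phi_{s'}^{-1}\circ\phi_s$ on $\M^s_0$, so there is a canonical choice on each $s'$.
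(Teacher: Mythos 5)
Your proof is correct and takes the same route the paper does: the paper simply states that the lemma follows from Lemma~\ref{lem_stability} and Lemma~\ref{lem_chart_near_compact_subset}, which are exactly the two ingredients you combine, and the diagonal/patching argument you spell out (together with the observation that the extended $\eps$-isometries all restrict to the same chart-comparison map on $\M^s_0$) is the routine detail the paper leaves implicit.
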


Next we use families of almost-isometries to define a topology on the total space $Y$.

\begin{definition}[Topology on $Y$] \label{def_topology_on_Y}
We define a subset $U \subset Y$ to be {\bf open} if for every $p \in U$, $s := \pi (p) \in X$ and every family of almost-isometries
\[
\{ \psi_{s'}: \M^s \supset Z_{s'} \longrightarrow Z_{s'}'\subset\M^{s'} \}_{s' \in S}
\]
near $s$ there are neighborhoods $W^* \subset S$ of $s$ and $U^* \subset \M^s$ of $p$ such that $U^* \subset  \psi_{s'}^{-1} (U)$ for all $s' \in W^*$.
\end{definition}

\begin{lemma}
Definition~\ref{def_topology_on_Y} defines a topology on $Y$ and the projection $\pi : Y \to X$ is continuous and open.
\end{lemma}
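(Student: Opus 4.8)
The claim has two parts: (i) the collection $\mathcal{O}$ of subsets declared "open" in Definition~\ref{def_topology_on_Y} is a topology on $Y$, and (ii) $\pi : Y \to X$ is continuous and open. For part (i), I would verify the three axioms directly. The empty set and $Y$ obviously satisfy the condition vacuously and trivially. Closure under arbitrary unions: if $U = \bigcup_\alpha U_\alpha$ with each $U_\alpha \in \mathcal{O}$, take $p \in U$, so $p \in U_{\alpha_0}$ for some $\alpha_0$; given a family of almost-isometries $\{\psi_{s'}\}_{s' \in S}$ near $s = \pi(p)$, apply openness of $U_{\alpha_0}$ to get neighborhoods $W^*$, $U^*$ with $U^* \subset \psi_{s'}^{-1}(U_{\alpha_0}) \subset \psi_{s'}^{-1}(U)$; done. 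Closure under finite intersections: if $U = U_1 \cap U_2$, take $p \in U$ and a family of almost-isometries near $s = \pi(p)$; applying openness of $U_1$ and of $U_2$ to the \emph{same} family yields $W^*_1, U^*_1$ and $W^*_2, U^*_2$; set $W^* = W^*_1 \cap W^*_2$ and $U^* = U^*_1 \cap U^*_2$, and check $U^* \subset \psi_{s'}^{-1}(U_1) \cap \psi_{s'}^{-1}(U_2) = \psi_{s'}^{-1}(U)$ for $s' \in W^*$. The only subtlety here is that there must \emph{exist} a family of almost-isometries near each $s$ to make the condition nonvacuous, but this is exactly Lemma~\ref{lem_almost_isometries_existence}, so there is no issue.

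\textbf{Continuity of $\pi$.} It suffices to show $\pi^{-1}(V) \in \mathcal{O}$ for every open $V \subset X$. Let $p \in \pi^{-1}(V)$, $s = \pi(p) \in V$, and let $\{\psi_{s'} : \M^s \supset Z_{s'} \to Z'_{s'} \subset \M^{s'}\}_{s' \in S}$ be a family of almost-isometries near $s$. Since $\psi_{s'}$ maps into $\M^{s'}$, we have $\psi_{s'}^{-1}(\pi^{-1}(V)) = Z_{s'}$ whenever $s' \in V$ (and $= \emptyset$ otherwise). So take $W^* = S \cap V$, which is a neighborhood of $s$, and for $U^*$ take any open neighborhood of $p$ in $\M^s$ contained in $\bigcap_{s' \in W^*} Z_{s'}$ — we need such a $U^*$ to exist, which follows because the domains $Z_{s'}$ exhaust $\M^s$ as $s' \to s$ in the sense built into Definition~\ref{Def_fam_alm_isometries}: more precisely, by property~\ref{prop_fam_alm_isometries_1}, for $\eps$ small enough with $R^{-1/2}$-type scale of $p$ exceeding $\eps$ and $\t(p) < \eps^{-1}$, the point $p$ lies in $Z_{s'}$ for all $s'$ in a neighborhood of $s$; shrinking $W^*$ accordingly and taking $U^*$ a small ball around $p$ in that common domain works. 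Hence $\pi^{-1}(V) \in \mathcal{O}$.

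\textbf{Openness of $\pi$.} Let $U \in \mathcal{O}$; I must show $\pi(U)$ is open in $X$. Let $s \in \pi(U)$, so there is $p \in U$ with $\pi(p) = s$. Fix a family of almost-isometries $\{\psi_{s'}\}_{s' \in S}$ near $s$ (Lemma~\ref{lem_almost_isometries_existence}). Applying the defining property of $U \in \mathcal{O}$ at the point $p$, we obtain neighborhoods $W^* \subset S$ of $s$ and $U^* \subset \M^s$ of $p$ with $U^* \subset \psi_{s'}^{-1}(U)$ for all $s' \in W^*$. In particular, for each $s' \in W^*$, $\psi_{s'}(U^* \cap Z_{s'}) \subset U$, and — again using that $U^* \cap Z_{s'} \neq \emptyset$ for $s'$ near $s$ since the domains $Z_{s'}$ cover a neighborhood of $p$ — this set is a nonempty subset of $\M^{s'}$, so $s' \in \pi(U)$. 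After shrinking $W^*$ so that $U^* \cap Z_{s'} \ni p$ for all $s' \in W^*$, we conclude $W^* \subset \pi(U)$, so $\pi(U)$ is open.

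\textbf{Main obstacle.} The genuinely nonvacuous content — and the step requiring care — is showing that the neighborhoods $U^*$ and $W^*$ can be chosen so that $U^* \cap Z_{s'}$ is a neighborhood of $p$ for all $s'$ in $W^*$, i.e.\ that the domains $Z_{s'}$ of a family of almost-isometries "open up" around any fixed point as $s' \to s$. This is a consequence of Definition~\ref{Def_fam_alm_isometries}\ref{prop_fam_alm_isometries_1} together with the properness statement that $\{\rho \geq \eps, \ \t \leq \eps^{-1}\}$ exhausts $\M^s$: given $p$, choose $\eps < \min\{\rho(p), \t(p)^{-1}\}$ (with the usual convention if $\t(p) = 0$); then $p \in \{\rho > \eps\} \cap \{\t < \eps^{-1}\}$, and by the $\eps$-isometry property, for $s' \in S_\eps$ the domain $Z_{s'}$ contains $\{\rho > \eps\} \cap \{\t < \eps^{-1}\} \supset p$; so $U^* \cap Z_{s'}$ contains a fixed small neighborhood of $p$ for all $s' \in W^* := S_\eps$. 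Everything else is a routine unwinding of definitions.
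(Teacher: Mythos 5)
Your proof is correct and follows the same approach as the paper's, which is quite terse: the paper only details the intersection axiom and asserts continuity and openness of $\pi$ as direct consequences of Definition~\ref{def_topology_on_Y} and Lemma~\ref{lem_almost_isometries_existence}. You additionally articulate and justify the key point the paper leaves implicit — namely that by Definitions~\ref{Def_fam_alm_isometries}\ref{prop_fam_alm_isometries_1} and \ref{def_eps_isometry_rf_spacetime}\ref{prop_eps_isometry_rf_spacetime_1}, for $\eps$ small enough the domains $Z_{s'}$ contain the open set $\{\rho > \eps\} \cap \{\t < \eps^{-1}\} \ni p$ for all $s'$ near $s$ — which is exactly the input needed to produce a valid $U^*$ in both the continuity and openness arguments.
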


\begin{proof}
For the first statement the only non-trivial part is showing that the intersection of two open subsets $U_1, U_2 \subset Y$ is open.
Assume that $p \in U_1 \cap U_2$, $s := \pi (p)$ and consider a family of almost-isometries $\{ \psi_{s'} \}_{s' \in S}$ near $s$.
There are neighborhoods $W^*_i \subset W$ of $s$ and $U^*_i \subset \M^s$, of $p$, $i = 1,2$, such that $U^*_i \subset \psi_{s'}^{-1} (U_i )$ for all $s' \in W^*_i$.
It follows that $U^*_1 \cap U^*_2 \subset  \psi_{s'}^{-1} (U_1 \cap U_2)$ for all $s' \in W^*_1 \cap W^*_2$.
This shows that $U_1 \cap U_2$ is open.

The continuity of $\pi$ is a direct consequence of Definition~\ref{def_topology_on_Y} and the openness of $\pi$ follows using Lemma~\ref{lem_almost_isometries_existence}.
\end{proof}

Our next goal will be to find family charts on $Y$, which turn $\{ \M^s \}_{s \in X}$ into a continuous family in the sense of Definition~\ref{def_continuous_family_manifolds}.
Due to technical reasons, which will become apparent later, we will only construct a certain subclass of such family charts.
We will also define a variant of a family chart whose domain is contained in the union of all time-$t$-slices $\M^s_t$ for a fixed $t$.
This notion will be helpful in the statement and proof of Lemma~\ref{lem_propagating_charts}.

\begin{definition} \label{Def_time_pres_fam_chart}
A triple $(U, \phi, V, \t_V)$ is called a {\bf time-preserving family chart} if
\begin{enumerate}[label=(\arabic*)]
\item \label{prop_time_pres_fam_chart_1} $V$ is a smooth 4-manifold with boundary equipped with a smooth map $\t_V : V \to [0, \infty)$ such that $\partial V = \t^{-1} (0)$.
\item \label{prop_time_pres_fam_chart_2} $\phi : U \to V \times \pi (U)$ is a map.
\item \label{prop_time_pres_fam_chart_3} $\t_V \circ \proj_V \circ \phi = \t^s$ for all $s \in \pi (U)$.
\item \label{prop_time_pres_fam_chart_4} $\proj_{\pi (U)} \circ \phi = \pi|_U$, as in Property~\ref{prop_continuous_family_manifolds_3} of Definition~\ref{def_continuous_family_manifolds}.
\item \label{prop_time_pres_fam_chart_5} $\proj_V \circ \phi |_{U \cap \M^s} : U \cap \M^s \to V$ is a diffeomorphism for all $s \in \pi (U)$.
\item \label{prop_time_pres_fam_chart_6} For every $(v,s) \in V \times \pi (U)$ and every family of almost-isometries $\{ \psi_{s'} \}_{s' \in S}$ near $s$ there is a neighborhood $V' \times W' \subset V \times \pi (U)$ such that for all $s' \in W'$ the maps $(\psi_{s'}^{-1} \circ \phi^{-1}) (\cdot, s') : V' \to \M^s$ are well defined on $V'$ and converge to $\phi^{-1} (\cdot, s)$ in $C^\infty_{\loc}$ as $s' \to s$.
\end{enumerate}
We say that $(U, \phi, V)$ is a {\bf family chart at time $t$} if $U \subset \cup_{s \in X} \M^s_t$, $V$ is a smooth 3-manifold and Properties~\ref{prop_time_pres_fam_chart_2}, \ref{prop_time_pres_fam_chart_4}--\ref{prop_time_pres_fam_chart_6} hold with $\M^s$ replaced by $\M^s_t$.
\end{definition}

\begin{lemma}
Assume that $(U, \phi, V, \t_V)$ is a time-preserving family chart.
Then:
\begin{enumerate}[label=(\alph*)]
\item $U \subset Y$ is open and $\phi$ is a homeomorphism.
\item The push forwards of the objects $\partial_\t^s, g^s$ onto $V$ via $(\proj_V \circ \phi )(\cdot, s)$ vary continuously in $s$ in the $C^\infty_{\loc}$-sense.
\item For any $t \geq 0$ the restriction $\phi |_{U \cap \cup_{s \in X} \M^s_t} : U \cap \cup_{s \in X} \M^s_t \to \t_{V}^{-1} (t) \times \pi (U)$ is a family chart at time $t$.
\end{enumerate}
\end{lemma}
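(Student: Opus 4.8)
The plan is to establish parts (a), (b), (c) in turn; essentially all of the work is in (a), after which (b) and (c) are bookkeeping.

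\emph{Part (a).}
The mechanism used throughout is Property~\ref{prop_time_pres_fam_chart_6} of Definition~\ref{Def_time_pres_fam_chart}: having fixed a family of almost-isometries $\{\psi_{s'}\}_{s'\in S}$ near a parameter $s$, it provides, near any $(v_0,s)$, comparison maps $f_{s'}:=(\psi_{s'}^{-1}\circ\phi^{-1})(\cdot,s')$ defined on a fixed neighborhood $V'\ni v_0$ for $s'$ near $s$ and converging to $f_s=\phi^{-1}(\cdot,s)$ in $C^\infty_{\loc}$. To show $U$ is open I would fix $p\in U$, put $s=\pi(p)$ and $v_0=\proj_V(\phi(p))$, take any family of almost-isometries near $s$, and choose a small closed ball $\ov{B}\ni v_0$ with $\ov{B}\subset V'$; since $f_s$ is a diffeomorphism onto an open subset of $\M^s$ with $f_s(v_0)=p$ (Property~\ref{prop_time_pres_fam_chart_5}) and $f_{s'}\to f_s$ in $C^1_{\loc}$, invariance of domain (using a collar of $\partial\M^s$ if $p$ is a boundary point) yields a \emph{fixed} neighborhood $U^*\ni p$ in $\M^s$ and a neighborhood $W^*\ni s$ with $U^*\subset f_{s'}(\ov{B})$ for all $s'\in W^*$. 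As $\psi_{s'}(f_{s'}(v))=\phi^{-1}(v,s')\in U$, this gives $U^*\subset\psi_{s'}^{-1}(U)$, which is exactly the condition in Definition~\ref{def_topology_on_Y}; hence $U$ is open. Bijectivity of $\phi$ onto $\phi(U)$ is immediate from Properties~\ref{prop_time_pres_fam_chart_4} and \ref{prop_time_pres_fam_chart_5}. Continuity of $\phi$ reduces to that of $\proj_V\circ\phi$, and since for every open $\Omega\subset V$ the restriction of $(U,\phi,V,\t_V)$ to $\Omega$ (with total space $\phi^{-1}(\Omega\times\pi(U))$) is again a time-preserving family chart, the set $(\proj_V\circ\phi)^{-1}(\Omega)=\phi^{-1}(\Omega\times\pi(U))$ is open by the step just carried out. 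Openness of $\phi$ is the mirror-image argument: given an open $U''\subset U$ and $(v_0,s_0)\in\phi(U'')$ with $p_0:=\phi^{-1}(v_0,s_0)$, openness of $U''$ supplies neighborhoods $W^\sharp\ni s_0$, $U^\sharp\ni p_0$ with $\psi_{s'}(U^\sharp)\subset U''$, and feeding $f_{s'}\to f_{s_0}$ in $C^0_{\loc}$ through Property~\ref{prop_time_pres_fam_chart_6} produces $\Omega_0\ni v_0$, $W_0\ni s_0$ with $f_{s'}(\Omega_0)\subset U^\sharp$ for $s'\in W_0$, whence $\phi^{-1}(v,s')=\psi_{s'}(f_{s'}(v))\in U''$ for all $(v,s')\in\Omega_0\times(W_0\cap W^\sharp\cap\pi(U))$; so this product neighborhood lies in $\phi(U'')$. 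Thus $\phi$ is a homeomorphism.

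\emph{Part (b).}
Write $\Psi_s:=(\proj_V\circ\phi)(\cdot,s):\M^s\cap U\to V$; the goal is $C^\infty_{\loc}$-continuity of $s\mapsto(\Psi_s)_*\partial_\t^s$ and $s\mapsto(\Psi_s)_*g^s$ near any fixed $(v_0,s_0)$. Choose a family of almost-isometries near $s_0$ (Lemma~\ref{lem_almost_isometries_existence}) and the maps $f_{s'}$ of Property~\ref{prop_time_pres_fam_chart_6} (defined on $V'\ni v_0$); on $\phi^{-1}(V'\times\{s'\})$ one has $\Psi_{s'}=f_{s'}^{-1}\circ\psi_{s'}^{-1}$, so $(\Psi_{s'})_*\partial_\t^{s'}=(f_{s'}^{-1})_*(\psi_{s'}^*\partial_\t^{s'})$ and likewise for $g^{s'}$. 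By Definitions~\ref{Def_fam_alm_isometries}\ref{prop_fam_alm_isometries_1} and \ref{def_eps_isometry_rf_spacetime}\ref{prop_eps_isometry_rf_spacetime_3} the pullbacks $\psi_{s'}^*\partial_\t^{s'},\psi_{s'}^*g^{s'}$ converge to $\partial_\t^{s_0},g^{s_0}$ in $C^\infty_{\loc}(\M^{s_0})$ as $s'\to s_0$ (any compact subset of $\M^{s_0}$ lies in the domain of $\psi_{s'}$ for $s'$ close to $s_0$ by Definition~\ref{def_eps_isometry_rf_spacetime}\ref{prop_eps_isometry_rf_spacetime_1}, since $\rho$ is bounded below and $\t^{s_0}$ bounded above there), while $f_{s'}^{-1}\to f_{s_0}^{-1}$ in $C^\infty_{\loc}$ because the inverse of a diffeomorphism depends $C^\infty_{\loc}$-continuously on it. Since push-forward of a tensor field by a diffeomorphism is jointly $C^\infty_{\loc}$-continuous, the claim follows.

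\emph{Part (c).}
Fix $t\geq 0$. From $\t^s=\t_V\circ\proj_V\circ\phi$ on $\M^s\cap U$ (Property~\ref{prop_time_pres_fam_chart_3}), the absence of critical points of $\t^s$ (Definition~\ref{def_RF_spacetime}), and Property~\ref{prop_time_pres_fam_chart_5}, the function $\t_V$ has no critical points, so $V':=\t_V^{-1}(t)$ is a smooth $3$-manifold and $U':=U\cap\cup_{s\in X}\M^s_t=\phi^{-1}(V'\times\pi(U))$. I would then verify Properties~\ref{prop_time_pres_fam_chart_2}, \ref{prop_time_pres_fam_chart_4}--\ref{prop_time_pres_fam_chart_6} of Definition~\ref{Def_time_pres_fam_chart} (with $\M^s$ replaced by $\M^s_t$) for $(U',\phi|_{U'},V')$: \ref{prop_time_pres_fam_chart_2} and \ref{prop_time_pres_fam_chart_4} are immediate; \ref{prop_time_pres_fam_chart_5} holds because $\proj_V\circ\phi|_{\M^s\cap U}$ restricts to a diffeomorphism $\M^s_t\cap U\to V'$; and \ref{prop_time_pres_fam_chart_6} follows from the corresponding property of the ambient chart by intersecting the neighborhood it produces with $V'\times\pi(U)$ and using that every $\psi_{s'}$ is time-preserving (Definition~\ref{def_eps_isometry_rf_spacetime}\ref{prop_eps_isometry_rf_spacetime_2}), so the maps $f_{s'}$ restrict to maps into $\M^s_t$ still converging in $C^\infty_{\loc}$, now within the family $(\M^s_t)_{s\in X}$. (If one also wants $U'$ to be open in $\cup_{s\in X}\M^s_t$ with $\phi|_{U'}$ a homeomorphism, this follows from part (a) and the subspace topology, or by running the argument of part (a) at fixed time.)

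\emph{Main obstacle.}
The one genuinely nontrivial step is part (a): the topology on $Y$ is defined only abstractly in Definition~\ref{def_topology_on_Y}, and the sole bridge to the concrete maps $\phi$ is Property~\ref{prop_time_pres_fam_chart_6}, which converts every statement into $C^0_{\loc}$-convergence of the comparison maps $f_{s'}$; producing from this the \emph{fixed} neighborhoods demanded by Definition~\ref{def_topology_on_Y} — via invariance of domain — is the crux. Once $U$ is open, the openness and continuity of $\phi$ are variations on the same theme, and parts (b) and (c) are routine.
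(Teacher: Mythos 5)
Your proof is correct and uses essentially the same mechanism as the paper: Property~(6) of Definition~\ref{Def_time_pres_fam_chart} is the bridge between the abstract topology of Definition~\ref{def_topology_on_Y} and the concrete maps $\phi$, and the key step is upgrading $C^\infty_{\loc}$-convergence of the comparison maps $f_{s'}=(\psi_{s'}^{-1}\circ\phi^{-1})(\cdot,s')$ into containments of fixed neighborhoods via invariance of domain (the paper phrases this as an application of the implicit function theorem, but it is the same point). The only cosmetic difference is packaging: the paper proves in one pass the biconditional ``$U_0\subset U$ is open iff $\phi(U_0)$ is open in $V\times\pi(U)$,'' from which openness of $U$, continuity of $\phi$, and openness of $\phi$ all drop out, whereas you establish the three facts separately; your arguments for (b) and (c) then match the paper's (with $(f_{s'}^{-1})_*\circ(\psi_{s'}^{-1})_*$ in place of the paper's post-composition by $(f_{s'})_*$).
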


\begin{proof}
(a) \quad 
Let $W := \pi (U)$ and consider a non-empty subset $U_0 \subset U$.
We will show that $U_0$ is open in the sense of Definition~\ref{def_topology_on_Y} if and only if $\phi (U_0) \subset V \times \pi (U)$ is open in the product topology.
For this purpose let $p \in U_0$ and set $(v,s) := \phi (p) \in V \times W$.
Choose a family of almost-isometries $\{ \psi_{s'}  \}_{s' \in S}$ near $s$ according to Lemma~\ref{lem_almost_isometries_existence} and choose $V' \subset V$ and $W' \subset W$ as in Property~\ref{prop_time_pres_fam_chart_6} of Definition~\ref{Def_time_pres_fam_chart}.
Set $U' := \phi^{-1} (V' \times W')$.

Recall that 
\begin{equation} \label{eq_psi_phi_C_infty_convergence}
(\psi^{-1}_{s'} \circ \phi^{-1} )(\cdot, s') \xrightarrow{\quad C^\infty_{\loc} \quad} \phi^{-1} (\cdot, s) \qquad \text{as} \quad s' \to s.
\end{equation}

If $U_0$ is open in the sense of Definition~\ref{def_topology_on_Y}, then there are neighborhoods $W^* \subset S$ of $s$ and $U^* \subset U \cap \M^s$ of $p$ such that $U^* \subset \psi_{s'}^{-1} (U_0)$ for all $s' \in W^*$.
Let $V^* \times \{ s \} := \phi (U^*)$.
By (\ref{eq_psi_phi_C_infty_convergence}) there are open neighborhoods $V'' \subset V'$ of $v$ and $W'' \subset W'$ of $s$ such that for all $s' \in W''$
\[ (\psi^{-1}_{s'} \circ \phi^{-1} )(V'' \times \{ s' \}) \subset \phi^{-1} ( V^* \times \{ s \} ) = U^*. \]
It follows that $\phi^{-1} (V'' \times \{ s' \}) \subset \psi_{s'} (U^*) \subset U_0$ and therefore $\phi^{-1} ( V'' \times W'' ) \subset U_0$.
This proves that $\phi (U_0)$ is open.

Conversely assume that $\phi (U_0)$ is open.
Without loss of generality, we may assume that $\phi (U_0 ) = V_0 \times W_0$ for some open subsets $V_0 \subset V$, $W_0 \subset W$, so $(v,s) \in V_0 \times W_0$.
By (\ref{eq_psi_phi_C_infty_convergence}) and the implicit function theorem we can find neighborhoods $V'' \subset  V_0 \cap V'$ of $v$ and $W'' \subset  W_0 \cap W'$ of $s$ such that for all $s' \in W''$ we have
\[  U^* := \phi^{-1} ( V'' \times \{ s \} ) \subset ( \psi^{-1}_{s'} \circ \phi^{-1} )( V_0 \times \{ s' \} ) \subset \psi_{s'}^{-1} ( \phi^{-1} (V_0 \times W_0)) = \psi_{s'}^{-1} (U_0). \]
This proves that $U_0$ is open.

(b) \quad Fix $s \in \pi (U)$ and choose a family of almost-isometries $\{ \psi_{s'}  \}_{s' \in S}$ near $s$.
For any $s' \in \pi (U)$ we have
\[ \big( (\psi_{s'}^{-1} \circ \phi^{-1} ) (\cdot, s') \big)_* (\proj_V \circ \phi  )_* \partial^{s'}_\t = (\psi_{s'}^{-1})_*  \partial^{s'}_\t \xrightarrow{\quad C^\infty_{\loc} \quad} \partial^s_\t \]
as $s' \to s$.
Since $ (\psi_{s'}^{-1} \circ \phi^{-1} ) (\cdot, s') \to \phi^{-1} (\cdot, s)$ in $C^\infty_{\loc}$, we obtain that
\[  (\proj_V \circ \phi  )_* \partial^{s'}_\t  \xrightarrow{\quad C^\infty_{\loc} \quad} (\proj_V \circ \phi  )_* \partial^s_\t, \]
as desired.
The continuity of the push forwards of $g^s$ follows analoguously.

Assertion (c) is a direct consequence of Definition~\ref{Def_time_pres_fam_chart}.
\end{proof}

\begin{lemma} \label{lem_fam_chart_is_fam_chart_time_0}
The family charts at time $0$ and subspace topology induced from Definition~\ref{def_topology_on_Y} define a structure of a continuous family of manifolds on $\{ \M^s_0 \}_{s \in X}$ that agrees with the given structure on $(M^s)_{s \in X}$ if we use the identification $M^s = \M^s_0$.
\end{lemma}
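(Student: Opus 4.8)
The plan is to show that the family charts at time $0$ constructed via Definition~\ref{Def_time_pres_fam_chart} (restricted to $\cup_{s\in X}\M^s_0$) agree, after the identification $M^s=\M^s_0$, with the family charts of the given continuous family $(M^s)_{s\in X}$. Since both collections of charts satisfy Properties~\ref{prop_continuous_family_manifolds_1}--\ref{prop_continuous_family_manifolds_4} of Definition~\ref{def_continuous_family_manifolds} and the first collection is by construction compatible with the topology on $Y$ from Definition~\ref{def_topology_on_Y}, it suffices to show that the transition maps between a chart from one collection and a chart from the other are of the required form, i.e. of the shape $(v,s)\mapsto(\beta(v,s),s)$ with $s\mapsto\beta(\cdot,s)$ continuous in $C^\infty_{\loc}$. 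By maximality (see Remark~\ref{rmk_set_theory_issue} and Lemma~\ref{lem_cont_fam_no_max}), this identifies the two family structures on $\{\M^s_0\}_{s\in X}$.

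The key step is therefore local: fix $s_0\in X$ and a point $p\in M^{s_0}=\M^{s_0}_0$, let $(U_1,\phi_1,V_1)$ be a chart of the given family $(M^s)_{s\in X}$ with $p\in U_1$, and let $(U,\phi,V,\t_V)$ be a time-preserving family chart with $p$ in the time-$0$ part of $U$, restricted to $U\cap\cup_{s\in X}\M^s_0$ — by Property~\ref{prop_time_pres_fam_chart_1} this has $\t_V^{-1}(0)=\partial V$, a smooth $3$-manifold, and by Property~\ref{prop_time_pres_fam_chart_6} it has the crucial convergence $(\psi_{s'}^{-1}\circ\phi^{-1})(\cdot,s')\to\phi^{-1}(\cdot,s)$ in $C^\infty_{\loc}$. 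First I would choose a family of almost-isometries $\{\psi_{s'}\}_{s'\in S}$ near $s_0$ as in Lemma~\ref{lem_almost_isometries_existence}; by Property~\ref{prop_fam_alm_isometries_2} of Definition~\ref{Def_fam_alm_isometries} these satisfy $\psi_{s'}|_{\M^s_0}\to\id_{\M^s_0}$ in $C^\infty_{\loc}$ interpreted within the given family $(M^s)_{s\in X}$. Then I would compose: the transition map $\phi_1\circ\phi^{-1}$ restricted to time $0$ can be written as $(\phi_1\circ\psi_{s'})\circ(\psi_{s'}^{-1}\circ\phi^{-1})$. The second factor converges to $\phi^{-1}(\cdot,s_0)$ in $C^\infty_{\loc}$ by Property~\ref{prop_time_pres_fam_chart_6}, and the composition of $\phi_1$ (a chart of the given family) with $\psi_{s'}$ (which tends to the identity in $C^\infty_{\loc}$ within $(M^s)_{s\in X}$) is, by Definition~\ref{def_continuity_maps_between_families} and Lemma~\ref{lem_generalizations_to_families}(a), transversely continuous. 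Composing two transversely continuous families of smooth maps yields a transversely continuous family, so $s\mapsto(\phi_1\circ\phi^{-1})(\cdot,s)$ is continuous in $C^\infty_{\loc}$, which is exactly the compatibility Property~\ref{prop_continuous_family_manifolds_4}. The reverse transition $\phi\circ\phi_1^{-1}$ is handled symmetrically, writing it as $(\psi_{s'}\circ\phi_1^{-1})$ composed with a map converging to $\phi(\cdot,s_0)|_{\text{time }0}$.

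Having established that an arbitrary chart of the given family and an arbitrary time-$0$ family chart (of the kind produced by Definition~\ref{Def_time_pres_fam_chart}) are mutually compatible, I would conclude as follows. The charts of the given family $(M^s)_{s\in X}$ form a maximal atlas making $\{M^s\}$ a continuous family of $3$-manifolds with boundary; the time-$0$ family charts, equipped with the subspace topology induced from Definition~\ref{def_topology_on_Y}, also form (a sub-collection of) such an atlas, and the previous paragraph shows the union of the two collections still satisfies Properties~\ref{prop_continuous_family_manifolds_1}--\ref{prop_continuous_family_manifolds_4}. By the uniqueness of the maximal extension (Lemma~\ref{lem_cont_fam_no_max}) the two collections generate the same maximal atlas, so they define the same continuous-family structure; in particular the topology on $\cup_{s\in X}\M^s_0$ coming from Definition~\ref{def_topology_on_Y} coincides with the one on $\cup_{s\in X}M^s$ and the identification $M^s=\M^s_0$ is an isomorphism of continuous families. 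The main obstacle is purely bookkeeping rather than conceptual: one must make sure that the time-$0$ family charts already constructed genuinely cover $\cup_{s\in X}\M^s_0$ (so that they form an atlas and not merely a partial collection) — this uses Lemma~\ref{lem_chart_near_compact_subset} applied to $(M^s)_{s\in X}$ together with the construction of time-preserving family charts whose time-$0$ restriction one may take to contain a prescribed compact subset of $M^{s_0}$ — and that the convergence statements in Property~\ref{prop_time_pres_fam_chart_6} and in Definition~\ref{Def_fam_alm_isometries}\ref{prop_fam_alm_isometries_2} are being interpreted in the \emph{same} ambient family $(M^s)_{s\in X}$, so that the composition of convergences is legitimate.
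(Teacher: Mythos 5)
Your proof is correct and follows the same route as the paper: the paper simply declares the lemma a ``direct consequence'' of Definitions~\ref{def_continuous_family_manifolds}, \ref{def_topology_on_Y}, \ref{Def_time_pres_fam_chart} and Property~\ref{prop_fam_alm_isometries_2} of Definition~\ref{Def_fam_alm_isometries} without elaboration, and your write-up supplies precisely the missing transition-map verification (decomposing $\phi_1\circ\phi^{-1}$ through an almost-isometry $\psi_{s'}$ and applying Property~\ref{prop_time_pres_fam_chart_6} of Definition~\ref{Def_time_pres_fam_chart} together with Property~\ref{prop_fam_alm_isometries_2}). One small slip: the citation of Lemma~\ref{lem_generalizations_to_families}(a) is not quite the right reference, since that statement is about pullbacks of tensor fields rather than about composing transversely continuous families of smooth maps, but the fact you actually need is the obvious analogue and the argument is unaffected.
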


\begin{proof}
This is a direct consequence of Definitions~\ref{def_continuous_family_manifolds}, \ref{def_topology_on_Y} and \ref{Def_time_pres_fam_chart} and Property~\ref{prop_fam_alm_isometries_2} of \ref{Def_fam_alm_isometries}.
\end{proof}

\begin{lemma} \label{lem_compatible_fam_chart}
If $(U_i, \phi_i, V_i, \t_{V_i})$, $i=1,2$, are two time-preserving family charts according to Definition~\ref{Def_time_pres_fam_chart}, then both charts are compatible in the sense of Property~\ref{prop_continuous_family_manifolds_4} of Definition~\ref{def_continuous_family_manifolds}.
In other words, the transition map
\[
\phi_{12}:=\phi_2 \circ\phi_1^{-1}: V_1 \times \pi(U_1)\supset\phi_1(U_1\cap U_2)\ra \phi_2(U_1\cap U_2)\subset V_2 \times \pi(U_2)
\] 
 has the form $\phi_{12}(v,s)=(\be(v,s),s)$, where $\be : \phi_1(U_1\cap U_2)\ra V_2$  locally defines a family of smooth maps $s \mapsto \be(\cdot, s)$ that depend continuously on $s$ in the $C^\infty_{\loc}$-topology. 
\end{lemma}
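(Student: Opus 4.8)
The plan is to unwind the defining properties of a time-preserving family chart, observing that the only substantive content is the transverse continuity of $\beta$. First, since $\proj_{\pi(U_i)}\circ\phi_i=\pi|_{U_i}$ by Property~\ref{prop_time_pres_fam_chart_4} of Definition~\ref{Def_time_pres_fam_chart}, we get $\proj_{\pi(U_2)}\circ\phi_{12}=\pi\circ\phi_1^{-1}=\proj_{\pi(U_1)}$ on $\phi_1(U_1\cap U_2)$, so $\phi_{12}(v,s)=(\beta(v,s),s)$ for a well-defined map $\beta$; moreover $U_1\cap U_2$ is open in $Y$ by part~(a) of the preceding lemma, hence $\phi_1(U_1\cap U_2)$ is open in $V_1\times\pi(U_1)$. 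For a fixed $s$, the slice map $\beta(\cdot,s)$ equals the composition $(\proj_{V_2}\circ\phi_2|_{U_2\cap\M^s})\circ(\proj_{V_1}\circ\phi_1|_{U_1\cap\M^s})^{-1}$ on its domain, which is a composition of diffeomorphisms by Property~\ref{prop_time_pres_fam_chart_5}, hence smooth. It remains to prove transverse continuity: $\beta(\cdot,s')\to\beta(\cdot,s)$ in $C^\infty_{\loc}$ as $s'\to s$, near an arbitrary point $(v_0,s_0)\in\phi_1(U_1\cap U_2)$.

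\textbf{Factoring through a common reference fiber.} Next I would fix such $(v_0,s_0)$, set $p:=\phi_1^{-1}(v_0,s_0)\in U_1\cap U_2\cap\M^{s_0}$ and $w_0:=\beta(v_0,s_0)$, and invoke Lemma~\ref{lem_almost_isometries_existence} to choose a family of almost-isometries $\{\psi_{s'}:\M^{s_0}\supset Z_{s'}\to Z'_{s'}\subset\M^{s'}\}_{s'\in S}$ near $s_0$. Applying Property~\ref{prop_time_pres_fam_chart_6} of Definition~\ref{Def_time_pres_fam_chart} to the chart $(U_1,\phi_1,V_1,\t_{V_1})$ at $(v_0,s_0)$ and to $(U_2,\phi_2,V_2,\t_{V_2})$ at $(w_0,s_0)$ --- using the \emph{same} family $\{\psi_{s'}\}$ in both cases --- yields neighborhoods $V'_i\times W'_i$ and families of smooth maps
\[
 F_{i,s'}:=(\psi_{s'}^{-1}\circ\phi_i^{-1})(\cdot,s'):V'_i\longrightarrow\M^{s_0},\qquad i=1,2,
\]
with $F_{1,s'}\to\phi_1^{-1}(\cdot,s_0)$ and $F_{2,s'}\to\phi_2^{-1}(\cdot,s_0)$ in $C^\infty_{\loc}$ as $s'\to s_0$.

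\textbf{The key identity and conclusion.} Whenever $v'\in V'_1$, $s'\in W'_1\cap W'_2$, $\phi_1^{-1}(v',s')\in U_2$, and $\beta(v',s')\in V'_2$, applying $\psi_{s'}^{-1}$ to the identity $\phi_2^{-1}(\beta(v',s'),s')=\phi_1^{-1}(v',s')$ (both sides in $\M^{s'}$, and the point lies in $Z'_{s'}$ by well-definedness of $F_{1,s'}$) gives $F_{2,s'}(\beta(v',s'))=F_{1,s'}(v')$, i.e. $\beta(v',s')=F_{2,s'}^{-1}(F_{1,s'}(v'))$. Since $F_{2,s'}$ is a family of diffeomorphisms onto open subsets of $\M^{s_0}$ converging in $C^\infty_{\loc}$ to the diffeomorphism $\phi_2^{-1}(\cdot,s_0)$, whose image contains $p=\phi_2^{-1}(w_0,s_0)$, and since $F_{1,s'}(v_0)\to p$, the inverse function theorem for continuous families of maps provides neighborhoods $V''\ni v_0$ and $W''\ni s_0$ (with $V''\subset V'_1$, $W''\subset W'_1\cap W'_2$) such that for $s'\in W''$ and $v'\in V''$ the point $F_{1,s'}(v')$ lies in the image of $F_{2,s'}$, the inverse $F_{2,s'}^{-1}$ is smooth near it, and $F_{2,s'}^{-1}\circ F_{1,s'}|_{V''}\to\beta(\cdot,s_0)|_{V''}$ in $C^\infty_{\loc}$. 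Combined with the key identity this gives $\beta(\cdot,s')|_{V''}\to\beta(\cdot,s_0)|_{V''}$, and since $(v_0,s_0)$ was arbitrary and the claim is local, the lemma follows.

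\textbf{Main obstacle.} I expect the only real difficulty to be bookkeeping: arranging the nested shrinking neighborhoods so that all compositions, restrictions, and inverses above are simultaneously defined, and invoking the family version of the inverse function theorem (already used repeatedly elsewhere in the paper) to control the convergence of $F_{2,s'}^{-1}$. There is no genuine analytic difficulty; the one conceptual point is to factor $\beta(\cdot,s')$ through the fixed fiber $\M^{s_0}$ via the almost-isometries, which is precisely what Property~\ref{prop_time_pres_fam_chart_6} was engineered to permit.
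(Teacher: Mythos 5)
Your proposal is correct and follows essentially the same route as the paper's proof: fix a base point, pull both charts back to the reference fiber $\M^{s_0}$ through a common family of almost-isometries via Property~(6) of Definition~\ref{Def_time_pres_fam_chart}, obtain the factorization $\beta(\cdot,s') = F_{2,s'}^{-1}\circ F_{1,s'}$, and conclude by $C^\infty_{\loc}$-convergence. You are slightly more explicit than the paper about the inverse-function-theorem step needed to control $F_{2,s'}^{-1}$ (the paper instead shrinks $V'_1 \times W'_1$ so that $\phi_1^{-1}(V'_1\times W'_1)\subset\phi_2^{-1}(V'_2\times W'_2)$ and then takes the convergence of inverses for granted), but this is a bookkeeping detail, not a different argument.
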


\begin{proof}
Let $p \in U_1 \cap U_2$ and set $(v_i, s) := \phi_i (p)$.
Fix a family of almost-isometries $\{ \psi_{s'}  \}_{s' \in S}$ near $s$.
Choose neighborhoods $V'_i \times W'_i \subset V_i \times \pi (U_i)$ of $(v_i, s)$ according to Property~\ref{prop_time_pres_fam_chart_6} of Definition~\ref{Def_time_pres_fam_chart}.
After shrinking $V'_1, W'_1$, we may assume without loss of generality that $\phi_1^{-1} ( V'_1 \times W'_1 ) \subset \phi_2^{-1} (V'_2 \times W'_2)$.
So for all $s' \in W'_1 \cap W'_2$
\[ (\psi_{s'}^{-1} \circ \phi_1^{-1}) (V'_1 \times \{ s' \}) \subset (\psi_{s'}^{-1} \circ \phi_2^{-1}) (V'_2 \times \{ s' \}) \]
and therefore for all $v' \in V'_1$
\[ (\beta (v', s')  ,s') = \big( ( \psi_{s'}^{-1} \circ \phi_2^{-1} )^{-1} \circ (\psi_{s'}^{-1} \circ \phi_1^{-1}) \big) (v', s') \]
Since $(\psi_{s'}^{-1} \circ \phi_i^{-1}) (\cdot, s') \to \phi_i^{-1} (\cdot, s)$  in $C^\infty_{\loc}$ as $s' \to s$, this implies that $\beta (\cdot, s') \to \beta (\cdot, s)$ in $C^\infty_{\loc}$ on $V_1$ as $s' \to s$.
\end{proof}

Next we show that the domains of all time-preserving family charts cover $Y$.
For this purpose, we first prove:

\begin{lemma}
\label{lem_propagating_charts}
Suppose that for some $s \in X$, $t\geq 0$ the point $p\in \M^s_t$ lies in the domain $U$ of a family chart $(U, \phi, V)$ at time $t$, in the sense of Definition~\ref{Def_time_pres_fam_chart}.
Consider another point $p'\in \M^s$.
\begin{enumerate}[label=(\alph*)]
\item \label{ass_propagating_charts_a} If $p'=p(t')$ for some $t'  \geq 0$, then $p'$ lies in the domain $U'$ of a time-preserving family chart $(U', \phi', V', \t_{V'})$. See Definition~\ref{def_points_in_RF_spacetimes} for the notation $p(t')$.
\item \label{ass_propagating_charts_b} If $p'\in B(p,r)$ for  $r<\injrad(\M^s_t,p)$, then $p'$ lies in the domain $U'$ of a family chart $(U', \phi', V')$ at time $t$. 
\end{enumerate}
\end{lemma}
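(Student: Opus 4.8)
The plan is to construct the desired family charts near $p'$ by ``pushing forward'' the given family chart at $p$ along a canonical geometric construction---the flow of the time vector field in part~\ref{ass_propagating_charts_a}, and the time-$t$ exponential map in part~\ref{ass_propagating_charts_b}---and then checking that the six properties of Definition~\ref{Def_time_pres_fam_chart} are inherited. The key point is that both of these constructions depend transversely continuously on $s$ (by Lemma~\ref{lem_generalizations_to_families} and the fact that $\partial_\t^s$, $g^s$ are transversely continuous), and, more delicately, that they intertwine correctly with any family of almost-isometries $\{\psi_{s'}\}_{s'\in S}$, since $\psi_{s'}$ is (up to error $\eps\to 0$) an isometry of Ricci flow spacetimes and hence asymptotically commutes with the time vector field and the exponential map.

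For part~\ref{ass_propagating_charts_b}: shrink the family chart $(U,\phi,V)$ at time $t$ around $p$ so that $V$ embeds in the fiber, and for each $s'\in\pi(U)$ and $q\in U\cap\M^{s'}_t$ define $F_{s'}(q,w):=\exp_{q}^{g^{s'}_t}(w)$ for $w$ in a small ball of the tangent space; using $\phi$ to trivialize $TV$ over $\pi(U)$, this gives a transversely continuous family of maps from $V\times B^3(r')$ into $\cup_{s'}\M^{s'}_t$ which is a fiberwise diffeomorphism onto an open neighborhood of the orbit of $p'$, for $r'<\injrad(\M^s_t,p)$ chosen so that $p'$ lies in the image. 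Composing with $\phi$ produces the candidate $(U',\phi',V')$, with $V'$ an open subset of $V\times B^3(r')$. Properties~\ref{prop_time_pres_fam_chart_2}, \ref{prop_time_pres_fam_chart_4}, \ref{prop_time_pres_fam_chart_5} of Definition~\ref{Def_time_pres_fam_chart} are immediate. For Property~\ref{prop_time_pres_fam_chart_6} one uses that $\psi_{s'}$ is an $\eps$-isometry with $\eps\to0$: since $\psi_{s'}^*g^{s'}\to g^s$ in $C^\infty_\loc$, the pushed-forward exponential maps $\exp^{\psi_{s'}^*g^{s'}}$ converge to $\exp^{g^s}$, and combined with $(\psi_{s'}^{-1}\circ\phi^{-1})(\cdot,s')\to\phi^{-1}(\cdot,s)$ coming from the family chart at $p$, one concludes $(\psi_{s'}^{-1}\circ(\phi')^{-1})(\cdot,s')\to(\phi')^{-1}(\cdot,s)$ in $C^\infty_\loc$.

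For part~\ref{ass_propagating_charts_a}: assume first $t'>t$ (the case $t'<t$ is symmetric, and the general case follows by concatenation). Let $\Phi^{s'}$ denote the flow of $\partial_\t^{s'}$; since $\partial_\t$ has no zeros and $p$ survives until time $t'$, by openness of the survival condition and transverse continuity of $\partial_\t^s$ there is a neighborhood of $p$ in $\cup_{s'}\M^{s'}_t$ whose points survive until $t'$, and the time-$(t'-t)$ flow maps it diffeomorphically onto a neighborhood of $p'$ in $\cup_{s'}\M^{s'}_{t'}$, transversely continuously. Applying this to the family chart at time $t$ (first shrunk so its domain survives) yields a family chart at time $t'$ around $p'$; Property~\ref{prop_time_pres_fam_chart_6} again follows because $\psi_{s'}^*\partial_\t^{s'}\to\partial_\t^s$, so the pushed flows converge. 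Finally, to upgrade a family chart at a single time $t'$ to a genuine \emph{time-preserving} family chart, one thickens in the time direction: over a short interval $[t'-\tau,t'+\tau]$ (or $[0,\tau]$ if $t'=0$) use the flow $\Phi^{s'}$ to identify nearby time-slices with $\M^{s'}_{t'}$, producing $V':=(\text{3-manifold})\times[t'-\tau,t'+\tau]$ with $\t_{V'}$ the projection to the interval; Properties~\ref{prop_time_pres_fam_chart_1}, \ref{prop_time_pres_fam_chart_3} hold by construction and the remaining properties are inherited from the time-$t'$ chart together with transverse continuity of $\partial_\t^s$.

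The main obstacle is Property~\ref{prop_time_pres_fam_chart_6}---verifying that the newly built charts are compatible with \emph{every} family of almost-isometries near $s$, not just with convergence of metrics. This requires knowing that an $\eps$-isometry of Ricci flow spacetimes, in the limit $\eps\to0$, genuinely intertwines the geometric constructions (exponential map, flow of $\partial_\t$) used to propagate the chart; this is exactly what the $C^{[\eps^{-1}]}$-control on $\phi^*g'-g$ \emph{and} $\phi^*\partial'_\t-\partial_\t$ in Definition~\ref{def_eps_isometry_rf_spacetime} is designed to give, but assembling the estimate---propagating the smallness of the perturbation through the ODEs defining geodesics and integral curves over a fixed time interval---is the technical heart of the argument. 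Everything else is bookkeeping with the inverse/implicit function theorem for families, as in the proofs of Lemmas~\ref{lem_compatible_fam_chart} and \ref{lem_fam_chart_is_fam_chart_time_0}.
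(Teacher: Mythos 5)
Your proposal is correct and follows essentially the same approach as the paper: part \ref{ass_propagating_charts_b} via the normal exponential map of $g^{s'}_t$, part \ref{ass_propagating_charts_a} via the flow of $\partial_\t^{s'}$, with Property~\ref{prop_time_pres_fam_chart_6} verified in both cases by the convergences $\psi_{s'}^* g^{s'} \to g^s$ and $\psi_{s'}^* \partial_\t^{s'} \to \partial_\t^s$. The only cosmetic difference is in part \ref{ass_propagating_charts_a}: the paper builds the time-preserving chart in a single step by flowing the time-$t$ chart over an interval $I$ containing both $t$ and $t'$ (yielding $V' = V'' \times I$ with $\t_{V'}=\proj_I$), whereas you first produce a family chart at time $t'$ and then thicken it in the time direction; these produce equivalent results.
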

\begin{proof}
(a) \quad  Our strategy will be to extend $\phi$ via the flow of $\partial^s_\t$. 
Choose a bounded interval $I \subset [0, \infty)$ that is open in $[0, \infty)$, contains $t, t'$ and for which $p ( \ov{I})$ is well-defined, where $\ov{I}$ denotes the closure of $I$.
Let $V'' \subset V$ be a subset such that:
\begin{enumerate}
\item $V''$ is open and has compact closure $\ov{V}''$ in $V$.
\item $\phi (p) \in V'' \times \{ s \}$.
\item $(\phi^{-1} (\ov{V}'',s))(t'')$ is well-defined for all $t'' \in \ov{I}$.
\end{enumerate}
Let $W' \subset \pi(U)$ be the set of parameters $s' \in \pi (U)$ for which $(\phi^{-1} (\ov{V}'',s'))(t'')$ is well-defined for all $t'' \in \ov{I}$.
Consider the map
\[ \alpha : (V'' \times I) \times W' \longrightarrow Y, \qquad (v',t'',s') \longmapsto (\phi^{-1} (v',s')) (t''). \]
Note that $\alpha$ is injective.
Let $V' := V'' \times I$, $U' := \alpha (V' \times W')$ and $\phi' := \alpha^{-1} : U' \to V' \times W'$.
We claim that $(U', \phi', V', \proj_I)$ is a time-preserving family chart in the sense of Definition~\ref{Def_time_pres_fam_chart}.

We need to verify Property~\ref{prop_time_pres_fam_chart_6} of Definition~\ref{Def_time_pres_fam_chart} and that $W'$ is open; the remaining properties  of Definition~\ref{Def_time_pres_fam_chart} follow directly by construction.
For this purpose consider some $s_0  \in W'$, and let $\{ \psi_{s'}  \}_{s' \in S}$ be a family of almost-isometries near $s_0$.
Since $\ov{V}''$ is compact, we can find neighborhoods $W_0 \subset W \cap S$ of $s_0$ and $V_0 \subset V$ of $\ov{V}''$ such that for all $s' \in W_0$ the maps $(\psi_{s'}^{-1} \circ \phi^{-1}) (\cdot, s')$ are well defined on $V_0$ and converge to $\phi^{-1} (\cdot, s_0)$ in $C^\infty_{\loc}$ as $s' \to s_0$.

For any $s' \in W_0$ and $v' \in V_0$ consider the trajectory of $\partial^{s'}_\t$ through $\phi^{-1} (v', s')$.
The image of this trajectory under the map $\psi^{-1}_{s'}$ is a trajectory of $\psi_{s'}^* \partial^{s'}_\t$ through $(\psi^{-1}_{s'} \circ \phi^{-1} )(v', s')$, wherever defined.
Since $(\psi_{s'}^{-1} \circ \phi^{-1}) (\cdot, s') \to \phi^{-1} (\cdot, s_0)$ and $\psi_{s'}^* \partial^{s'}_\t \to \partial^{s_0}_\t$ in $C^\infty_{\loc}$ as $s' \to s_0$, we can find a neighborhood $W_1 \subset W_0$ of $s_0$ with the property that for any $s' \in W_1$ and $v' \in \ov{V}''$ the trajectory of $\psi_{s'}^* \partial^{s'}_\t$ through $(\psi^{-1}_{s'} \circ \phi^{-1}) (v', s')$ exists for all times of the interval $\ov{I}$.
It follows that $W_1 \subset W'$ and for any $s' \in W_1$ the map $(\psi_{s'}^{-1} \circ \alpha) (\cdot, s')$ restricted to $V''$ is given by the flow of $\psi_{s'}^* \partial^{s'}_\t$ starting from $(\psi^{-1}_{s'} \circ \phi^{-1}) (\cdot, s')$.
Due to the smooth convergence discussed before we have
\begin{equation*} \label{eq_psi_alpha_convergence}
 (\psi_{s'}^{-1} \circ \alpha) (\cdot, s') \xrightarrow{\quad C^\infty_{\loc} \quad} \alpha (\cdot, s_0) \qquad \text{as} \quad s' \to s_0. 
\end{equation*}
This verifies Property~\ref{prop_time_pres_fam_chart_6} of Definition~\ref{Def_time_pres_fam_chart} and shows that $W'$ is open.

\medskip
(b) \quad  After shrinking $V$ and $W := \pi (U)$ if necessary, we may assume that for some $r'>r$ and every $s' \in W$ and $v' \in V$, we have 
\begin{equation}
\label{eqn_r_prime_injrad}
r<r'<\injrad(\M^{s'}_t, \phi^{-1} (v',s'))\,;
\end{equation} 
this follows from a straightforward convergence argument.   
Let $(v,s) := \phi (p)$.
Using Gram-Schmidt orthogonalization we can find a continuous family of linear maps $( \varphi_{s'} : \IR^3 \to T_v V )_{s' \in W}$ that are isometries with respect to the push forward of $g^{s'}_t$ via $\proj_V \circ \phi$.
Denote by $B (0,r) \subset \IR^3$ the $r$-distance ball and define
\[ \alpha : B (0,r) \times W \longrightarrow \cup_{s' \in X} \M^{s'}_t, \quad \alpha (\cdot ,s') := \exp^{g^{s'}_t}_{\phi^{-1} (v, s')} \circ d \big(\phi^{-1} (\cdot, s') \big)_v \circ \varphi_{s'} . \]
Due to (\ref{eqn_r_prime_injrad}) this map is injective.
Let $V' := B (0,r)$, $U' := \alpha (V' \times W )$,  and $\phi := \alpha^{-1} : U' \to V' \times W$.

As in the previous case it remains to show that Property~\ref{prop_time_pres_fam_chart_6} of Definition~\ref{Def_time_pres_fam_chart} holds.
Let $s_0 \in W$ and let $\{ \psi_{s'}  \}_{s' \in S}$ be a family of almost-isometries near $s_0$.
For $s'$ sufficiently close to $s_0$ we have
\[ (\psi^{-1}_{s'} \circ \alpha) (\cdot, s') = \exp^{\psi^*_{s'} g^{s'}_t}_{(\psi^{-1}_{s'} \circ \phi^{-1}) (v,s')} \circ d \big( ( \psi_{s'}^{-1} \circ \phi^{-1}) (\cdot, s') \big)_v \circ \varphi_{s'}. \]
Since
\[ \psi^*_{s'} g^{s'}_t \xrightarrow{\quad C^\infty_{\loc} \quad}  g^{s_0}_t, \qquad  (\psi^{-1}_{s'} \circ \phi^{-1} )(\cdot, s') \xrightarrow{\quad C^\infty_{\loc} \quad}  \phi^{-1} (\cdot, s_0) \]
as $s' \to s_0$, we therefore obtain that
\[ (\psi^{-1}_{s'} \circ \alpha )(\cdot, s') \xrightarrow{\quad C^\infty_{\loc} \quad}  \alpha (\cdot, s_0) \]
as $s' \to s_0$.
This establishes Property~\ref{prop_time_pres_fam_chart_6} of Definition~\ref{Def_time_pres_fam_chart}.
\end{proof}

\begin{corollary}
\label{cor_chart_domains_cover}
Every point $p\in Y$ lies in the domain of a time-preserving family chart.
\end{corollary}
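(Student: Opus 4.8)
\emph{The plan.} The idea is to propagate the time-$0$ family charts furnished by Lemma~\ref{lem_fam_chart_is_fam_chart_time_0} to all of $Y$ using the two moves of Lemma~\ref{lem_propagating_charts}, via a connectedness argument carried out on each leaf $\M^s$ separately. Fix $p\in Y$ and set $s:=\pi(p)$, $t:=\t^s(p)$. By Lemma~\ref{lem_propagating_charts}\ref{ass_propagating_charts_a} applied with $t'=t$ and $p'=p$, it suffices to show that $p$ lies in the domain of a family chart \emph{at time $t$}: such a chart is then automatically upgraded to a time-preserving one. So let $\mathcal{B}^s\subseteq\M^s$ be the set of points $q$ lying in the domain of a family chart at time $\t^s(q)$; the goal is $\mathcal{B}^s=\M^s$.

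\emph{Structure of $\mathcal{B}^s$.} I would record three facts. (i) $\M^s_0\subseteq\mathcal{B}^s$: by Lemma~\ref{lem_fam_chart_is_fam_chart_time_0} the family charts at time $0$ cover $\cup_{s}\M^s_0$. (ii) $\mathcal{B}^s$ is invariant under the flow of $\pm\partial^s_\t$: if $q\in\mathcal{B}^s$ survives to $t'$, then Lemma~\ref{lem_propagating_charts}\ref{ass_propagating_charts_a} places $q(t')$ in a time-preserving family chart, whose restriction to the time-$t'$ slice is a family chart at time $t'$, so $q(t')\in\mathcal{B}^s$. (iii) $\mathcal{B}^s$ is open and closed in $\M^s$. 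Openness: for $q\in\mathcal{B}^s$, combine its family chart at time $\t^s(q)$ with Lemma~\ref{lem_propagating_charts}\ref{ass_propagating_charts_a} to obtain a time-preserving family chart $(U',\phi',V',\t_{V'})$ with $q\in U'$; then $U'$ is open in $\M^s$ and every point of $U'$ lies in a family chart at its own time (restrict the chart to the appropriate slice), so $U'\subseteq\mathcal{B}^s$. Closedness: first, $\mathcal{B}^s\cap\M^s_t$ is closed in each slice $\M^s_t$ — if $q_n\to q$ in $\M^s_t$ with $q_n\in\mathcal{B}^s$, then for large $n$ one has $q\in B(q_n,r)$ with $r<\injrad(\M^s_t,q_n)$ (the injectivity radius is positive and lower semicontinuous at the interior point $q$), so Lemma~\ref{lem_propagating_charts}\ref{ass_propagating_charts_b} puts $q$ in a family chart at time $t$; second, for a general convergent sequence $q_n\to q$ in $\M^s$, use that $\partial^s_\t$ generates a local flow near $q$ to flow $q_n$ to time $t:=\t^s(q)$ for $n$ large (keeping it in $\mathcal{B}^s$ by (ii)), and invoke the slice-closedness just shown.

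\emph{Conclusion.} By (i) and (iii), $\mathcal{B}^s$ is a nonempty clopen subset of $\M^s$ containing $\M^s_0$, hence is exactly the union of those connected components of $\M^s$ that meet $\M^s_0$. Thus the corollary reduces to the assertion that \emph{every connected component of $\M^s$ meets the initial time-slice $\M^s_0$}. I expect this last point to be the main obstacle. It follows from the structure theory of singular Ricci flows: since $\M^s$ arises as a limit of Perelman's Ricci flows with surgery with the same initial data (as in the proof of Lemma~\ref{lem_rcan_control}), and surgery caps off or discards pieces but never creates a new connected component, no component of $\M^s$ can be ``born'' at a positive time. Alternatively, one can run $-\partial^s_\t$ backward from an arbitrary point: either the trajectory reaches time $0$ — and then Definition~\ref{def_completeness}(2) produces a limit in $\M^s_0$ — or its scale $\rho$ tends to $0$, which by Lemma~\ref{lem_bryant_increasing_scale} forces the trajectory into Bryant-soliton tips, and one then rules out a component born at $t_0>0$ by noting that for $t$ slightly above $t_0$ it would be a \emph{closed} manifold of arbitrarily small scale, hence $\eps$-close to a compact $\kappa$-solution — impossible, since compact $\kappa$-solutions shrink and become extinct rather than emerging from a point. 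Granting this, $\mathcal{B}^s=\M^s$, and every $p\in Y$ lies in the domain of a time-preserving family chart.
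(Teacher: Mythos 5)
Your overall strategy is precisely the paper's: define the set of points covered by time-preserving family charts, show it contains the time-$0$ slices, show it is open and closed in each leaf $\M^s$ via the two propagation moves of Lemma~\ref{lem_propagating_charts}, and conclude by a connectivity property of $\M^s$. Your set $\mathcal{B}^s$ and the paper's $C\cap\M^s$ coincide: a point lies in a chart at its own time iff it lies in a time-preserving chart (one direction is Lemma~\ref{lem_propagating_charts}\ref{ass_propagating_charts_a} with $t'=t$; the other is the observation, recorded just after Definition~\ref{Def_time_pres_fam_chart}, that the time-$t$ slice of a time-preserving chart is a chart at time $t$). Your open-closed argument is a correct and somewhat more detailed version of what the paper asserts.

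The genuine issue is the final step, which you rightly flag as the main obstacle: that a nonempty clopen subset of $\M^s$ containing $\M^s_0$ is all of $\M^s$. The paper handles this by citing \cite[Prop.~5.38]{Kleiner:2014le}, a nontrivial structure result about singular Ricci flows, and neither of your two proposed substitutes closes the gap. The surgery-limit argument asserts that surgery ``never creates a new connected component,'' but even granting this for each approximating Ricci flow with surgery, it does not follow that the limit spacetime inherits the property without further uniform control on the convergence. The backward-flow argument has a more concrete defect: flowing a single point $p$ backward until $\t=0$ or $\rho\to 0$, and in the latter case invoking Lemma~\ref{lem_bryant_increasing_scale} to place $p$ near a Bryant tip, tells you only about a neighborhood of $p$; it says nothing about whether the component of $\M^s_{\t(p)}$ containing $p$ is compact, let alone of small scale. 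The component could be a large manifold capped off near $p$, in which case nothing was ``born'' and the real task is to connect $p$, through the cap-and-neck geometry, to older parts of spacetime. Establishing this — for instance by showing that $\{\rho\geq r\}\cap\M^s$ is path-connected to $\M^s_0$ for each $r>0$ — is exactly the content of the cited proposition, and your sketch does not replace it. The rest of the proof would be fine once this fact is either proved carefully or taken as a quoted external input.
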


\begin{proof}
Let $C\subset Y$ be the set of points lying in the domain of a time-preserving family chart.
By Lemma~\ref{lem_fam_chart_is_fam_chart_time_0} and Lemma~\ref{lem_propagating_charts}\ref{ass_propagating_charts_a} we have $\M^s_0 \subset C$ for all $s \in X$.
By Lemma~\ref{lem_propagating_charts}, for every $s \in X$, the intersection $C\cap\M^s$ is an open and closed subset of $\M^s$; since it is nonempty,  it follows from \cite[Prop. 5.38]{Kleiner:2014le} that $C\cap\M^s=\M^s$.  
Hence $C=Y$.
\end{proof}

Corollary~\ref{cor_chart_domains_cover} verifies that the collection of all the set of time-preserving family charts satisfy Property~\ref{prop_continuous_family_manifolds_2} of Definition~\ref{def_continuous_family_manifolds} if we drop the fourth entry ``$\t_V$''.
By Lemma~\ref{lem_cont_fam_no_max} this set can be extended to a maximal  collection of family charts.
All other properties of Definition~\ref{def_continuous_family_manifolds} and the fact that induced continuous structure on the set of time-$0$-slices $(\M^s_0)_{s \in X}$ coincides with that on $(M^s)_{s \in X}$ follow directly from our construction and from Lemmas~\ref{lem_fam_chart_is_fam_chart_time_0} and \ref{lem_compatible_fam_chart}.

\subsection{Proof of Theorem~\ref{thm_uniqueness_family_k}}
Consider a continuous family of singular Ricci flows $(\M^s)_{s \in X}$ and the associated continuous family of time-$0$-slices $(M^s, g^s)_{s \in X} := (\M^s_0, g^s_0)_{s \in X}$.
It suffices to show that in the proof of Theorem~\ref{thm_existence_family_k} the topology and the family charts on $Y = \sqcup_{s \in X} \M^s$ are uniquely determined by $(M^s, g^s)_{s \in X}$.
For this purpose we first show:

\begin{lemma} \label{lem_convergence_psi}
Let $\{ \psi_{s'}  \}_{s' \in S}$ be a family of almost-isometries near some $s \in X$.
Then $\psi_{s'} \to \id_{\M^s}$ in $C^\infty_{\loc}$ as $s' \to s$, in the sense that for any family chart $(U, \phi, V)$ we have
\begin{equation} \label{eq_psi_s_prime_to_id}
\proj_V \circ \phi \circ \psi_{s'} \xrightarrow{\quad C^\infty_{\loc} \quad} \proj_V \circ \phi \qquad  \text{as} \quad s' \to s.
\end{equation}
\end{lemma}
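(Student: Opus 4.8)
The plan is to reduce the statement to a local computation using the stability theory and the construction of the topology on $Y$. Fix a family chart $(U,\phi,V)$ and a point $p \in \M^s$ lying in $U$. Set $q := \phi(p) \in V \times \{s\}$, and write $\phi_s := \proj_V \circ \phi(\cdot, s) : U \cap \M^s \to V$. Since $\psi_{s'}(p) \to p$ in the topology of $Y$ (this is essentially the content of Property~\ref{prop_fam_alm_isometries_2} of Definition~\ref{Def_fam_alm_isometries} propagated through $\M^s$ via Lemma~\ref{lem_propagating_charts}), for $s'$ close to $s$ the point $\psi_{s'}(p)$ lies in $U$, so the left-hand side of (\ref{eq_psi_s_prime_to_id}) is at least defined near $p$. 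The goal is then to upgrade pointwise convergence to $C^\infty_{\loc}$-convergence.

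**Key steps.** First I would observe that it suffices to prove (\ref{eq_psi_s_prime_to_id}) for a single conveniently chosen family chart around each point $p \in \M^s$, since the transition maps between family charts are transversely continuous (Property~\ref{prop_continuous_family_manifolds_4} of Definition~\ref{def_continuous_family_manifolds}), and composing a $C^\infty_{\loc}$-convergent family of maps with such transition maps preserves $C^\infty_{\loc}$-convergence. Second, I would use the charts constructed in the proof of Theorem~\ref{thm_existence_family_k} — namely the time-preserving family charts built by propagating a time-$0$ chart along the flow of $\partial_\t$ (Lemma~\ref{lem_propagating_charts}\ref{ass_propagating_charts_a}) and along geodesics (Lemma~\ref{lem_propagating_charts}\ref{ass_propagating_charts_b}). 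For such a chart, Property~\ref{prop_time_pres_fam_chart_6} of Definition~\ref{Def_time_pres_fam_chart} asserts precisely that $(\psi_{s'}^{-1} \circ \phi^{-1})(\cdot, s') \to \phi^{-1}(\cdot, s)$ in $C^\infty_{\loc}$ for \emph{any} family of almost-isometries near $s$. The point is that the family $\{\psi_{s'}\}$ in the statement of Lemma~\ref{lem_convergence_psi} is one such family, so this convergence applies to it directly. Third, I would invert: since $(\psi_{s'}^{-1} \circ \phi^{-1})(\cdot, s') \to \phi^{-1}(\cdot, s)$ in $C^\infty_{\loc}$, and $\phi^{-1}(\cdot, s)$ is a diffeomorphism onto its image, the implicit function theorem (for families, as used repeatedly in the proof of Theorem~\ref{thm_existence_family_k}) gives that the inverse maps converge as well, i.e. $\phi(\cdot) \circ \psi_{s'}$, restricted suitably, converges to $\phi(\cdot)$ in $C^\infty_{\loc}$; composing with $\proj_V$ yields (\ref{eq_psi_s_prime_to_id}).

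**Main obstacle.** The one point requiring care is that in the inversion step one is inverting a $C^\infty_{\loc}$-convergent family of diffeomorphisms $F_{s'} := (\psi_{s'}^{-1} \circ \phi^{-1})(\cdot, s')$ whose domains may shrink as $s' \to s$; one must check that on any fixed relatively compact open $V' \Subset V$, for $s'$ sufficiently close to $s$ the maps $F_{s'}$ are defined on a fixed neighborhood of $F_s^{-1}(\ov{V}')$ and are embeddings there, so that $F_{s'}^{-1}$ is defined on $V'$ and converges to $F_s^{-1}$ in $C^\infty$ on $V'$. This is a standard consequence of the inverse function theorem together with the fact that $C^\infty_{\loc}$-convergence controls the first derivatives uniformly on compact sets, hence keeps the Jacobians uniformly nondegenerate; it is exactly the kind of argument already invoked (via ``the implicit function theorem'') throughout Section~\ref{sec_families_srfs}, so it does not introduce anything genuinely new. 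Everything else is bookkeeping: patching over a cover of $\M^s$ by such charts and using transverse continuity of transition maps to pass to an arbitrary family chart.
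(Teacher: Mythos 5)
Your proposal has a genuine circularity.  Lemma~\ref{lem_convergence_psi} appears inside the proof of Theorem~\ref{thm_uniqueness_family_k}, where $(\M^s)_{s\in X}$ is an \emph{arbitrary given} continuous family of singular Ricci flows; the family charts $(U,\phi,V)$ in the lemma statement are the family charts of this given structure.  The time-preserving charts of Definition~\ref{Def_time_pres_fam_chart} and Lemma~\ref{lem_propagating_charts} live, a priori, in the \emph{different} continuous family structure (and the different topology from Definition~\ref{def_topology_on_Y}) constructed in the proof of Theorem~\ref{thm_existence_family_k}.  Your step~1 --- reducing to ``a single conveniently chosen family chart'' via Property~\ref{prop_continuous_family_manifolds_4} --- requires that the time-preserving chart be compatible with (i.e.\ belong to the maximal atlas of) the given family $(\M^s)_{s\in X}$.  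But establishing that compatibility is precisely the content of the paragraph in the paper \emph{after} Lemma~\ref{lem_convergence_psi}, and that paragraph invokes Lemma~\ref{lem_convergence_psi} to do it.  Concretely: to check the compatibility transition map $\phi_i\circ\phi^{-1}$ is transversely continuous, one writes $\phi_i\circ\phi^{-1}(\cdot,s') = \phi_i\circ\psi_{s'}\circ\bigl(\psi_{s'}^{-1}\circ\phi^{-1}(\cdot,s')\bigr)$, and Property~\ref{prop_time_pres_fam_chart_6} controls the right factor --- but one still needs $\phi_i\circ\psi_{s'}\to\phi_i$, which is exactly (\ref{eq_psi_s_prime_to_id}).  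So you cannot borrow Property~\ref{prop_time_pres_fam_chart_6} as an input to prove Lemma~\ref{lem_convergence_psi}.

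The paper avoids this by never leaving the given family-chart atlas of $(\M^s)_{s\in X}$.  It defines ``$\psi_{s'}\to\id$ near $p$ (at time $t$)'' using the given charts of $(\M^s)_{s\in X}$ and of the induced family of time-$t$-slices (Lemma~\ref{lem_cont_fam_time_slice_inherit}), seeds the convergence on $\M^s_0$ from Property~\ref{prop_fam_alm_isometries_2} of Definition~\ref{Def_fam_alm_isometries}, and propagates it to all of $\M^s$ via a claim modelled on Lemma~\ref{lem_propagating_charts} but re-run inside the given structure.  The inputs to that propagation are only the $\eps$-isometry bounds, i.e.\ $\psi_{s'}^*\partial_\t^{s'}\to\partial_\t^s$ and $\psi_{s'}^*g^{s'}\to g^s$ in $C^\infty_{\loc}$, together with the transverse continuity of $\partial_\t^s$ and $g^s$ in the given family.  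If you instead want to keep your architecture, the fix is to \emph{re-derive}, not cite, a Property~\ref{prop_time_pres_fam_chart_6}-type statement for charts obtained by flowing a time-$0$ family chart of $(\M^s)_{s\in X}$ along $\partial_\t^s$ and along geodesics of $g^s_t$, verifying at each step that the resulting chart lies in the given atlas (which, unlike in Theorem~\ref{thm_existence_family_k}, is available from the outset) --- but that re-derivation is exactly the paper's Claim and its appeal to the argument of Lemma~\ref{lem_propagating_charts}.  Your step~3 (the inversion/implicit-function-theorem argument) is sound and standard, but it does not touch the circularity.
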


\begin{proof}
We say that $\psi_{s'} \to \id_{\M^s}$ near some point $p \in \M^s$ if  (\ref{eq_psi_s_prime_to_id}) holds near $p$ for some (and therefore every) family chart $(U, \phi, V)$ with $p \in U$.
Furthermore, we say that $\psi_{s'} \to \id_{\M^s}$ near some point $p \in \M^s_t$ at time $t$ if (\ref{eq_psi_s_prime_to_id}) holds near $p$ in $\M^s_t$ for any family chart $(U,\phi,V)$, with $p \in U$, of the continuous family of time-$t$-slices $(\M^s_t)_{s \in X}$; compare with Lemma~\ref{lem_cont_fam_time_slice_inherit}.

\begin{Claim}
Assume that $p \in \M^s_t$ has the property that $\psi_{s'} \to \id_{\M^s}$ near $p$ at time $t$ and let $p' \in \M^s$ be another point.
\begin{enumerate}[label=(\alph*)]
\item If $p' = p (t')$ for some $t' \geq 0$, then $\psi_{s'} \to \id_{\M^s}$ near $p'$.
\item If $p' \in B(p,r)$ for $r < \injrad (\M^s_t, p)$, then $\psi_{s'} \to \id_{\M^s}$ near $p'$ at time $t$.
\end{enumerate}
\end{Claim}

\begin{proof}
This is a direct consequence of the fact that $\psi_{s'}^* \partial^{s'}_\t \to \partial^s_\t$ and $\psi_{s'}^* g^{s'} \to g^s$ in $C^\infty_{\loc}$ as $s' \to s$.
Compare also with the proof of Lemma~\ref{lem_propagating_charts}.
\end{proof}

By combining the claim with the proof of Corollary~\ref{cor_chart_domains_cover}, we obtain that $\psi_{s'} \to \id_{\M^s}$ everywhere.
\end{proof}

By Lemma~\ref{lem_convergence_psi} Definition~\ref{def_topology_on_Y} offers the correct description for the topology on $\cup_{s \in X} \M^s$.
Next, if $(U, \phi, V, \t_V)$ is a time-preserving family chart, in the sense of Definition~\ref{Def_time_pres_fam_chart}, then $(U, \phi, V)$ satisfies Properties~\ref{prop_continuous_family_manifolds_1} and \ref{prop_continuous_family_manifolds_3} of Definition~\ref{def_continuous_family_manifolds}.
By Lemma~\ref{lem_convergence_psi} and the proof of Lemma~\ref{lem_compatible_fam_chart}, $(U, \phi, V)$ is moreover compatible with all  family charts of $(\M^s)_{s \in X}$ in the sense of Property~\ref{prop_continuous_family_manifolds_4} of Definition~\ref{def_continuous_family_manifolds}.
Therefore, by maximality $(U, \phi, V)$ is a family chart of $(\M^s)_{s \in X}$.
This shows that the the topology and the family charts on $(\M^s)_{s \in X}$ are uniquely determined by $(M^s, g^s)_{s \in X}$, concluding the proof.

\subsection{Proof of Theorem~\ref{Thm_properness_fam_sing_RF}}
We first show:

\begin{lemma} \label{lem_psi_phi_K_converge}
If $(U, \phi, V)$ is a family chart of $(\M^s)_{s \in X}$, then for every $s \in \pi (U)$, every compact subset $K \subset V$ and every family of almost-isometries $\{ \psi_{s'} \}_{s' \in S}$ near $s$ there is a neighborhood $V' \times W' \subset V \times \pi (U)$ of $K \times \{ s \}$ such that for all $s' \in W'$ the maps $(\psi_{s'}^{-1} \circ \phi^{-1}) (\cdot, s')$ are well defined on $V'$ and converge to $\phi^{-1} (\cdot, s)$ in $C^\infty_{\loc}$ as $s' \to s$.
\end{lemma}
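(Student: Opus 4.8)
The plan is to read the maps $(\psi_{s'}^{-1}\circ\phi^{-1})(\cdot,s')$ off in the chart $\phi$ and recognize them as the \emph{inverses} of a family of self-maps of a neighborhood of $K$ in $V$ that converges to the identity by Lemma~\ref{lem_convergence_psi}; then the assertion follows by inverting, together with a finite amount of domain bookkeeping. The only geometric input is Lemma~\ref{lem_convergence_psi}; everything else is the inverse function theorem and the properness of $\t+\rho^{-2}$.

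Set $\phi_V:=\proj_V\circ\phi:U\to V$, and fix $s\in\pi(U)$, a compact $K\subset V$, and a family of almost-isometries $\{\psi_{s'}\}_{s'\in S}$ near $s$. First I would check that all composites below are defined near $K$ for $s'$ near $s$: the compact set $\phi^{-1}(K\times\{s\})\subset\M^s$ lies in some $\{\rho\geq r,\ \t\leq T\}$, which is compact by Theorem~\ref{Thm_rho_proper_sing_RF}; hence by Property~\ref{prop_eps_isometry_rf_spacetime_1} of Definition~\ref{def_eps_isometry_rf_spacetime} together with Property~\ref{prop_fam_alm_isometries_1} of Definition~\ref{Def_fam_alm_isometries} the domains $Z_{s'}$ of the $\psi_{s'}$ eventually contain a neighborhood of $\phi^{-1}(K\times\{s\})$, and since $\psi_{s'}\to\id_{\M^s}$ by Lemma~\ref{lem_convergence_psi}, the maps
\[
F_{s'}:=\phi_V\circ\psi_{s'}\circ\phi^{-1}(\cdot,s):V_0\longrightarrow V
\]
are well defined on a fixed open neighborhood $V_0\subset V$ of $K$, for all $s'$ in a neighborhood of $s$ in $S$. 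By Lemma~\ref{lem_convergence_psi} we then have $F_{s'}\to\phi_V\circ\phi^{-1}(\cdot,s)=\id_{V_0}$ in $C^\infty_{\loc}$ as $s'\to s$.

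Since $dF_{s'}\to\id$ locally uniformly, the inverse function theorem, applied to this continuous family, lets me shrink $V_0$ to an open neighborhood $V'$ of $K$ and $S$ to a neighborhood $W'\subset S\cap\pi(U)$ of $s$ so that for $s'\in W'$ the inverse $F_{s'}^{-1}$ is defined on $V'$ and $F_{s'}^{-1}\to\id$ in $C^\infty_{\loc}$ on $V'$ as $s'\to s$. A direct check then shows that for $v\in V'$ (and $W'$ shrunk once more) the point $q:=\psi_{s'}^{-1}(\phi^{-1}(v,s'))$ lies in $\M^s\cap U$ and satisfies $F_{s'}(\phi_V(q))=\phi_V(\psi_{s'}(q))=\phi_V(\phi^{-1}(v,s'))=v$, i.e. $(\psi_{s'}^{-1}\circ\phi^{-1})(v,s')=\phi^{-1}(F_{s'}^{-1}(v),s)$. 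Hence $(\psi_{s'}^{-1}\circ\phi^{-1})(\cdot,s')$ is well defined on $V'$ and equals the composition of $F_{s'}^{-1}$ with the fixed smooth embedding $\phi^{-1}(\cdot,s):V\to\M^s$; as $F_{s'}^{-1}\to\id$ in $C^\infty_{\loc}$, this composition converges to $\phi^{-1}(\cdot,s)$ in $C^\infty_{\loc}$ as $s'\to s$, which is the claim, with $V'\times W'$ the required neighborhood of $K\times\{s\}$. I do not anticipate a genuine obstacle: the only delicate point is the domain bookkeeping in the first two steps, and it is controlled by Theorem~\ref{Thm_rho_proper_sing_RF} and the fact that a family of almost-isometries exhausts every compact subset of $\M^s$. (An alternative, slightly longer route avoids Lemma~\ref{lem_convergence_psi}: reduce, via the compatibility axiom Property~\ref{prop_continuous_family_manifolds_4} of Definition~\ref{def_continuous_family_manifolds}, to a time-preserving family chart, apply Property~\ref{prop_time_pres_fam_chart_6} of Definition~\ref{Def_time_pres_fam_chart} at each point of $K$, and pass to a finite subcover.)
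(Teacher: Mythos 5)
Your main argument is correct, and it takes a genuinely different route from the paper's proof: you invoke Lemma~\ref{lem_convergence_psi} to read the maps off in the single chart $\phi$ as a family $F_{s'}=\phi_V\circ\psi_{s'}\circ\phi^{-1}(\cdot,s)$ converging to $\id$ near $K$, and then invert. The paper instead reduces, via a covering argument, to $K=\{v\}$ a single point, passes to a time-preserving family chart $(U'',\phi'',V'',\t_{V''})$ through $\phi^{-1}(v,s)$, quotes Property~\ref{prop_time_pres_fam_chart_6} of Definition~\ref{Def_time_pres_fam_chart} there, and transfers back to $(U,\phi,V)$ by the compatibility of family charts --- which is exactly the ``alternative, slightly longer route'' you sketch at the end. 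Both are sound. What your route buys is uniformity: you work directly on an open neighborhood $V_0\supset K$ rather than at a point plus a finite subcover, and the only inputs are Lemma~\ref{lem_convergence_psi}, Theorem~\ref{Thm_rho_proper_sing_RF} (for the domain bookkeeping on $Z_{s'}$), and the inverse function theorem for families. What the paper's route buys is brevity once the time-preserving chart machinery is set up, since Property~\ref{prop_time_pres_fam_chart_6} already states the pointwise conclusion and the extension from a single point to compact $K$ is routine. One small remark on your Step 4: the identity $(\psi_{s'}^{-1}\circ\phi^{-1})(v,s')=\phi^{-1}(F_{s'}^{-1}(v),s)$ is cleanest to establish by \emph{defining} $p:=\phi^{-1}(F_{s'}^{-1}(v),s)$, noting $p\in\phi^{-1}(V_0\times\{s\})\subset Z_{s'}$ (by the domain check for $F_{s'}$), computing $\phi_V(\psi_{s'}(p))=F_{s'}(\phi_V(p))=v$ with $\psi_{s'}(p)\in U\cap\M^{s'}$, hence $\psi_{s'}(p)=\phi^{-1}(v,s')$; this shows in one stroke that the left-hand side is well defined and equals $p$, avoiding the minor circularity of first assuming $\psi_{s'}^{-1}(\phi^{-1}(v,s'))$ exists.
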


\begin{proof}
Via a covering argument, we can reduce the lemma to the case in which $K = \{ v \}$ consists of a single point.
Choose a time-preserving family chart $(U'', \phi'', V'', \t_{V''})$ with $(\phi'')^{-1} (v'', s) := \phi^{-1} (v,s) \in U''$.
Then by Property~\ref{prop_time_pres_fam_chart_6} of Definition~\ref{Def_time_pres_fam_chart} the assertion of the lemma holds if $(U,\phi,V)$ and $v$ are replaced by $(U'', \phi'', V'')$ and $v''$.
The lemma now follows due to the compatibility of the family charts $(U,\phi,V)$ and $(U'', \phi'', V'')$.
\end{proof}

By Theorem~\ref{Thm_rho_proper_sing_RF} the subset $K_0 := \{ \rho_{g^{s_0}} \geq r/10, \t^{s_0} \leq t + 1 \}   \subset \M^{s_0}$ is compact.
Therefore, by Lemma~\ref{lem_chart_near_compact_subset} there is a family chart $(U_0, \phi_0, V_0)$ of $(\M^s)_{s \in X}$ with $K_0 \subset U_0$.
Set $K \times \{ s_0 \} := \phi_0 ( K_0 )$.
By Lemma~\ref{lem_almost_isometries_existence} there is a family $\{ \psi_{s} : \M^{s_0} \supset Z_{s} \to Z'_{s} \subset \M^{s} \}_{s \in S}$ of almost isometries near $s_0$.
Let $\eps > 0$ be a constant whose value we will determine later.
By shrinking $U_0$, we may assume without loss of generality that $\pi (U_0) \subset S$ and that all maps $\psi_s$, $s \in \pi (U_0)$, are $\eps$-isometries.
If $\eps$ is chosen small enough, then $\rho_{g^s} < r$ on $(\{ \t^s \leq t+1 \} \cap \M^s) \setminus Z'_s$ and $\rho_{g^s} (\psi_s (x)) \leq 2\rho_{g^{s_0}} (x)$  for all $s \in S$ and $x \in Z_s$ with $\t (x) \leq t$.
We will now show that there is a neighborhood $W \subset \pi (U)$ of $s_0$ such that for all $s \in W$ we have
\begin{equation} \label{eq_rho_r2_psi_phi}
 \{ \rho_{g^{s_0}}  \geq r/2, \t^{s_0} \leq t \} \subset \psi_s^{-1} (  \phi_0^{-1} (K \times \{ s \} )). 
\end{equation}
Then $U := \pi^{-1} (W) \cap U_0$, $\phi :=  \phi_0 |_{U}$ and $V$ have the desired properties, since (\ref{eq_rho_r2_psi_phi}) implies that for all $s \in W$ we have
\[ \phi_0 ( \{ \rho_{g^s} \geq r, \t^s \leq t \} ) \subset \phi_0 \big( \psi_s ( \{ \rho_{g^{s_0}} \geq r/2, \t^{s_0} \leq t \}  ) \big)  \subset K \times \{ s \}. \]

To see that (\ref{eq_rho_r2_psi_phi}) is true let $K_1 \times \{ s_0 \} := \phi ( \{ \rho_{g^{s_0}}  \geq r/2, \t^{s_0} \leq t \} )$ and apply Lemma~\ref{lem_psi_phi_K_converge} for $(U_0, \phi_0, V_0)$.
We obtain a neighborhood $V' \times W' \subset V_0 \times \pi (U_0)$ of $K \times \{ s_0 \}$ such that for all $s \in W'$ the maps $(\psi^{-1}_s \circ \phi_0^{-1})(\cdot, s)$ are well defined on $V$ and converge to $\phi_0^{-1} (\cdot, s_0)$ in $C^\infty_{\loc}$ as $s \to s_0$.
So $\phi_0^{-1} (K \times \{s \}) \subset Z'_s$ for all $s \in W'$ and  for $s$ near $s_0$ the set $\phi_0^{-1} (K_1 \times \{ s_0 \})$ lies in the image of the map $(\psi^{-1}_s \circ \phi_0^{-1} )(\cdot, s)$.
This implies (\ref{eq_rho_r2_psi_phi}) for $s$ near $s_0$.

\section{Rounding process}
\label{sec_rounding_process}
\subsection{Introduction}
Consider a continuous family $(\M^s)_{s \in X}$ of singular Ricci flows over some topological space $X$.
By the canonical neighborhood assumption (see Definition~\ref{def_canonical_nbhd_asspt}) we know that regions of every $\M^s$ where the curvature scale $\rho$ is small are modeled on a $\kappa$-solutions, which are rotationally symmetric or have constant curvature.
The goal of this section is to perturb the metric $g^s$ on each $\M^s$ to a \emph{rounded} metric $g^{\prime,s}$, which is locally rotationally symmetric or has constant curvature wherever $\rho$ is small.
Our process will be carried out in such a way that the rounded metrics still depend continuously on $s$.

In addition to the rounded metrics $g^{\prime,s}$, we will also record the spherical fibrations consisting of the orbits of local isometric $O(3)$-actions wherever the metric is rotationally symmetric.
The precise structure that we will attach to each flow $\M^s$ will be called an \emph{$\RR$-structure} and will be defined in Subsections~\ref{subsec_spherical_struct} and \ref{subsec_RR_structure}.
In Subsection~\ref{subsec_main_rounding_statement} we will then state the main result of this section, followed by a proof in the remaining subsections.

\subsection{Spherical structures} \label{subsec_spherical_struct}
We first formalize a structure that is induced by a locally rotationally symmetric metric.
Let $M$ be a smooth manifold of dimension $n \geq 3$; in the sequel we will have $n\in \{3,4\}$.

\begin{definition}[Spherical structure] \label{Def_spherical_structure}
A {\bf spherical structure}  $\mathcal{S}$ on a subset $U \subset M$ of a smooth manifold with boundary $M$ is a smooth fiber bundle structure on an open dense subset $U' \subset U$ whose fibers are diffeomorphic to $S^2$ and equipped with a smooth fiberwise metric of constant curvature $1$ such that the following holds.
For every point $x \in U$ there is a neighborhood $V \subset U$ of $x$ and an $O(3)$-action $\zeta : O(3) \times V \to V$ such that $\zeta |_{V \cap U'}$ preserves all $S^2$-fibers and acts effectively and isometrically on them.
Moreover, all orbits on $V \setminus U'$ are not diffeomorphic to spheres.
Any such local action $\zeta$ is called a {\bf local $O(3)$-action compatible with $\mathcal{S}$}.
We call $U = \domain (\SS)$ the {\bf domain} of $\SS$.
\end{definition}

Consider an action $\zeta$ as in Definition~\ref{Def_spherical_structure}.
For any sequence $x_i \to x_\infty \in \mathcal{O}$ the corresponding sequence of orbits $\mathcal{O}_i$ converges to $\mathcal{O}$ in the Hausdorff sense.
As $U'$ is dense in $U$, this implies that $\mathcal{O}$ is independent of the choice of $\zeta$. 
So $\mathcal{O}$ is determined uniquely by a point $x \in \mathcal{O}$ and the spherical structure $\mathcal{S}$.
We call any such orbit $\mathcal{O} \subset U \setminus U'$ a {\bf singular (spherical) fiber} and any fiber in $U'$ a {\bf regular (spherical) fiber} of $\mathcal{S}$.

By analyzing the quotients of $O(3)$ we get:

\begin{lemma}
Any singular spherical fiber is either a point or is diffeomorphic to $\IR P^2$.
\end{lemma}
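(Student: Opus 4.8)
The claim is a purely local statement about a singular spherical fiber $\mathcal{O} \subset U \setminus U'$. By Definition~\ref{Def_spherical_structure}, there is a neighborhood $V$ of a point $x \in \mathcal{O}$ together with a local $O(3)$-action $\zeta : O(3) \times V \to V$ such that $\zeta$ preserves the $S^2$-fibers on $V \cap U'$, acts effectively and isometrically on them, and all orbits on $V \setminus U'$ fail to be diffeomorphic to spheres. Since $\mathcal{O}$ is the $\zeta$-orbit of $x$ (as noted just before the lemma, the orbit is intrinsic to $\mathcal{S}$), it suffices to identify the possible diffeomorphism types of an orbit $O(3)\cdot x$ that is \emph{not} diffeomorphic to $S^2$. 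The plan is therefore to run through the classification of homogeneous spaces $O(3)/H$ for closed subgroups $H \leq O(3)$, discard those giving $S^2$, and show the remaining ones that can actually occur as a non-regular orbit here are a point or $\mathbb{R}P^2$.

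First I would recall the subgroup structure of $O(3)$: up to conjugacy the closed subgroups are the finite subgroups, $SO(2)$, $O(2)$ (in its two embeddings — as a point stabilizer $O(2)$ and as the ``dihedral'' $O(2)$ generated by rotations about an axis together with $-\,\mathrm{id}$ composed with a reflection), $SO(3)$, and $O(3)$ itself. Correspondingly the orbit $O(3)/H$ is: a point (for $H = O(3)$ or $H=SO(3)$, the latter giving $O(3)/SO(3)\cong \mathbb{Z}_2$ which is two points, hence still locally a point), $S^2 \cong O(3)/O(2)$ (the standard sphere), $\mathbb{R}P^2 \cong O(3)/O(2)'$ where $O(2)'$ is the index-two-in-its-normalizer subgroup containing $-\,\mathrm{id}$, the real projective space $\mathbb{R}P^3 \cong SO(3)\cong O(3)/\mathbb{Z}_2$ for $H=\mathbb{Z}_2=\{\pm\,\mathrm{id}\}$ (and various lens-space or larger quotients for other finite or one-dimensional $H$). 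The key point is to rule out the three- and two-dimensional non-$S^2$, non-$\mathbb{R}P^2$ cases.

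The main obstacle — and the step I would spend the most care on — is showing that an orbit $\mathcal{O}$ lying in the \emph{closure} of the sphere-fibered region $U'$ cannot be three-dimensional (nor a circle), so that only the zero-dimensional case (a point or a pair of points) and the two-dimensional case $\mathbb{R}P^2$ survive. For this I would use the hypothesis that $U'$ is open and dense in $U$ and that $\mathcal{O}$ is a limit of regular $S^2$-fibers in the Hausdorff sense (the convergence statement established just before the lemma): a sequence $x_i \to x_\infty \in \mathcal{O}$ with $x_i \in U'$ has orbits $\mathcal{O}_i \cong S^2$ Hausdorff-converging to $\mathcal{O}$. By the lower semicontinuity of stabilizers (the stabilizer $H$ of $x_\infty$ contains a conjugate of a limit of the stabilizers $H_i \cong O(2)$, hence $\dim H \geq 2$ so $\dim \mathcal{O} \leq 1$), and in fact $H$ must contain an $O(2)$; examining which closed subgroups of $O(3)$ contain a conjugate of the standard $O(2)$ we get only $O(2)$ (orbit $S^2$, excluded), $O(2)'$ (orbit $\mathbb{R}P^2$), $SO(3)$ and $O(3)$ (orbit a point or two points). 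I would then note that the circle case and any higher finite-index phenomena are ruled out because $H \supseteq O(2)$ forces $\dim \mathcal{O} \in \{0, 2\}$. Finally, since the two-point orbit $O(3)/SO(3)$ is a discrete set, each component is a single point, so $\mathcal{O}$ (being connected, as an $O(3)$-orbit with $O(3)$ connected... — here one must be slightly careful: $O(3)$ is disconnected, so I would instead simply observe the two-point case contributes isolated points, which is the ``point'' alternative). This completes the dichotomy: $\mathcal{O}$ is a point or $\mathbb{R}P^2$.
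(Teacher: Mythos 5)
The paper offers no proof beyond the remark ``By analyzing the quotients of $O(3)$ we get:'', so your proposal is a legitimate expansion of that implicit argument, and the overall structure — use Hausdorff convergence of regular $S^2$-orbits and upper semicontinuity of stabilizers to force a conjugate of $O(2)$ into the stabilizer of a singular point, then classify the closed subgroups of $O(3)$ containing $O(2)$ — is the right one and is surely what the authors have in mind.

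However, a few computational slips should be corrected. First, $\dim O(2) = 1$, not $2$; the parenthetical ``hence $\dim H\geq 2$ so $\dim\mathcal{O}\leq 1$'' is false and would contradict the very conclusion you are aiming for, since $\mathbb{R}P^2$ is $2$-dimensional. The correct statement is $\dim H\geq 1$, $\dim\mathcal{O}\leq 2$, and one-dimensional orbits are already impossible because $\mathfrak{so}(3)$ is simple and has no $2$-dimensional subalgebras. Second, $SO(3)$ does \emph{not} contain a conjugate of $O(2)$: the point stabilizer $O(2)\subset O(3)$ consists of matrices $\operatorname{diag}(A,1)$ with $A\in O(2)$, and the reflections $\det A=-1$ give total determinant $-1$. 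So $SO(3)$ should be struck from the list. Third, your descriptions of the ``two embeddings of $O(2)$'' and of ``$O(2)'$'' are internally inconsistent; the subgroup yielding $\mathbb{R}P^2$ is the stabilizer of a line, namely $O(2)\times\{\pm I\}$, which has four connected components and is \emph{not} abstractly isomorphic to $O(2)$. With these corrections, the subgroups of $O(3)$ containing a conjugate of $O(2)$ are exactly $O(2)$, $O(2)\times\{\pm I\}$, and $O(3)$, with orbits $S^2$ (excluded, as this is the regular case), $\mathbb{R}P^2$, and a point. The conclusion and approach are therefore correct; the slips above are cosmetic but visible and worth fixing, particularly the first one, since as written it contradicts the statement being proved.
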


\begin{lemma} \label{lem_local_spherical_struct}
If $n = 3$, then $\zeta$ from Definition~\ref{Def_spherical_structure} is locally conjugate to one of the following models equipped with the standard $O(3)$-action:
\[ S^2 \times (-1,1), \quad \big( S^2 \times (-1,1) \big) / \IZ_2, \quad B^3, \quad S^2 \times [0, 1). \]
In the last case $S^2 \times \{ 0 \}$ corresponds to a boundary component of $M$.
In the second case there is a unique spherical structure on the local two-fold cover consisting only of regular fibers that extends the pullback via the covering map of the original spherical structure restricted to the union of the regular fibers.
\end{lemma}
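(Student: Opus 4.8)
The plan is to apply the differentiable slice theorem to the smooth $O(3)$-action $\zeta$ on $V$ (a genuine action, by Definition~\ref{Def_spherical_structure}) and to read off the local model at a point $x$ from the isotropy group $G_x := \{g \in O(3) : \zeta(g,x) = x\}$ together with the induced orthogonal representation on a slice. First I would extract what the regular part tells us a priori: after shrinking $V$ to a $\zeta$-invariant neighborhood of $x$ and averaging a background metric, we may assume $\zeta$ is isometric. On $U' \cap V$ the action $\zeta$ preserves each $S^2$-fiber and acts on it effectively and isometrically for the constant-curvature-$1$ metric, hence on each such fiber it realizes an isomorphism $O(3) \xrightarrow{\sim} \Isom(S^2, g_{\mathrm{round}})$; therefore the $\zeta$-orbits inside $U'$ are exactly the spherical fibers, the principal isotropy group is conjugate in $O(3)$ to the point-stabilizer $O(2) \subset O(3)$, and the principal orbit type $O(3)/O(2) = S^2$ has codimension one. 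There are no one-dimensional orbits ($\mathfrak{so}(3)$ has no two-dimensional subalgebra) and no three-dimensional orbits (these would be open, while the $S^2$-orbits are dense, $U' \cap V$ being dense in $V$), so every orbit is a point, an $S^2$, or an $\IR P^2$, the singular ones lying in $U \setminus U'$.

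Next I would fix $x$ and invoke the slice theorem: a $G_x$-invariant neighborhood of $O(3)\cdot x$ is $O(3)$-equivariantly diffeomorphic to $O(3) \times_{G_x} S_x$, where the slice $S_x$ is a ball, or a half-ball if $x \in \partial M$, of dimension $3 - \dim(O(3)\cdot x) \in \{1,3\}$, carrying a linear orthogonal $G_x$-action. Since some conjugate of the principal isotropy $O(2)$ occurs as an isotropy group inside this tube, we may after conjugating assume $O(2) \subseteq G_x$; the closed subgroups of $O(3)$ containing $O(2) = \Stab(p)$ are exactly $O(2)$, $O(2)\times\IZ_2$ and $O(3)$ (such a subgroup either fixes $p$, or stabilizes $\{p,-p\}$ without fixing $p$, or moves $p$ to some $q \neq \pm p$ and then contains $\Stab(p)$ and $\Stab(q)$, hence all of $O(3)$). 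So $G_x \in \{O(2), O(2)\times\IZ_2, O(3)\}$, and I would treat these in turn.

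If $G_x = O(3)$, the orbit is a fixed point and $S_x$ is a (half-)ball with a linear orthogonal $O(3)$-action; density of $S^2$-orbits rules out reducible three-dimensional representations (which factor through $O(3)/SO(3) = \IZ_2$, so have finite orbits) and the representation $g \mapsto \det(g)\,g$ (in which $-I$ acts trivially, hence not effectively on fibers), leaving the standard representation, which preserves no hyperplane; so $x$ is interior and the model is $B^3$. If $G_x = O(2)$, then either $x \in \partial M$, where $O(3)\cdot x \subseteq \partial M$ forces a two-dimensional orbit, $S_x = [0,1)$, and $O(2)$ acts trivially on $S_x$ (a reflection could not preserve $[0,1)$), giving the model $S^2\times[0,1)$ with $S^2\times\{0\} = \partial V \subseteq \partial M$; or $x$ is interior and the model is the product $S^2\times(-1,1)$ with $\zeta$ fiberwise. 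If $G_x = O(2)\times\IZ_2$, a boundary point is impossible (the action on the half-slice would be trivial, making nearby orbits $\IR P^2$'s and contradicting density of $U'$), so $S_x = (-1,1)$ and $G_x$ acts through a homomorphism $\chi : G_x \to O(1) = \IZ_2$, which is surjective (else nearby orbits are $\IR P^2$'s); then $K := \ker\chi$ is the isotropy group of the nearby regular points, hence conjugate to the standard $O(2) \subset O(3)$, and choosing a central order-two $\sigma \in G_x$ with $\chi(\sigma) = -1$ gives $O(3)\times_{G_x}(-1,1)_\chi \cong \big( (O(3)/K)\times(-1,1) \big)/\IZ_2$ with $\IZ_2$ acting by $(gK,t)\mapsto(g\sigma K,-t)$; since $O(3)$ acts transitively on $O(3)/K$ with stabilizer $K \cong$ the standard $O(2)$, this is the standard $S^2$, and right multiplication by $\sigma$ is the antipodal map, so the model is $(S^2\times(-1,1))/\IZ_2$ with the standard action.

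Finally, in the last case the double cover of $(S^2\times(-1,1))/\IZ_2$ is $S^2\times(-1,1)$ with its evident everywhere-regular spherical structure (the trivial $S^2$-bundle over $(-1,1)$ with the round fiber metric), which restricts over $S^2\times((-1,1)\setminus\{0\})$ to exactly the pullback of $\SS$ on the regular part (existence); uniqueness of the extension over $S^2\times\{0\}$ holds because a smooth fiber-bundle structure with compact fibers, together with a transversely smooth fiberwise metric, is determined on a dense open set — the fiber through $(p,0)$ must be the Hausdorff limit $S^2\times\{0\}$ of the nearby fibers, and the round metric on it the smooth limit of the round metrics on the nearby fibers. The main obstacle I expect is organizing the $G_x = O(2)\times\IZ_2$ case cleanly: identifying the associated bundle $O(3)\times_{G_x}(-1,1)_\chi$ with $(S^2\times(-1,1))/\IZ_2$, and in particular using density of the regular part to pin down that $K = \ker\chi$ is the \emph{standard} $O(2)$ (there is a competing conjugacy class of $O(2)$-subgroups of $O(3)$, lying inside $SO(3)$, for which $O(3)/K$ would be disconnected) so that the deck involution is genuinely the antipodal map; making the slice-theorem reduction rigorous at boundary points of $M$ is the only other point that needs care.
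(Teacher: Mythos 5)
The paper does not actually prove this lemma: it is prefaced only by the remark ``By analyzing the quotients of $O(3)$ we get,'' so there is no argument in the paper to compare against. Your slice-theorem proof is the standard way to make that remark precise, and it is correct. The skeleton is right: effectiveness of the fiberwise isometric action pins the principal isotropy to $\Stab_{O(3)}(p)\cong O(2)$ (the copy containing reflections, not the $O(2)\subset SO(3)$); ruling out $0$- and $3$-dimensional orbits gives $G_x\in\{O(2),\,O(2)\times\IZ_2,\,O(3)\}$; the slice representations then yield exactly the four models; and the lift in the $\IZ_2$-quotient case is determined on the dense regular part by Hausdorff/smooth limits of fibers and fiberwise round metrics. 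The two concerns you flag yourself are genuine but handled: boundary fixed points are excluded because the standard $3$-dimensional $O(3)$-representation is irreducible hence preserves no hyperplane, and $\ker\chi$ is identified with the nearby regular isotropy $\Stab(p)$ rather than the competing $O(2)\subset SO(3)$, for which $O(3)/K$ would be disconnected. One small omission worth filling in: in the interior $G_x=O(2)$ case you assert the model is $S^2\times(-1,1)$ without saying why the slice character $O(2)\to O(1)$ is trivial. Either observe that such an $x$ lies on a principal $S^2$-orbit, hence in $U'$ where the fiber bundle structure already provides the product chart, or repeat the argument you give in the $O(2)\times\IZ_2$ case: a nontrivial character would force nearby isotropy $SO(2)$, which is not conjugate in $O(3)$ to the principal isotropy $\Stab(p)$.
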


Next we formalize the notion of a rotationally symmetric metric compatible with a given spherical structure.

\begin{definition}[Compatible metric] \label{Def_compatible_metric}
Let $U \subset M$ be an open subset.
A smooth metric $g$ on a subbundle of $TU$ or $TM$ is said to be {\bf compatible with a spherical structure $\mathcal{S}$} if near every point $x \in U$ there is a local $O(3)$-action that is compatible with $\mathcal{S}$ and isometric with respect to $g$.
\end{definition}

If $M = \M$ is a Ricci flow spacetime and $g$ is a smooth metric on the subbundle $\ker d\t$, then Definition~\ref{Def_compatible_metric} still makes sense.

In dimension 3 a metric $g$ compatible with a spherical structure $\SS$ can  locally be written as a warped product of the form $g = a^2(r) g_{S^2} + b^2(r) dr^2$ near the regular fibers.
However, note that a spherical structure does not record the splitting of the tangent space into directions that are tangential and orthogonal to the spherical fibers.
So the metrics $g$ compatible with $\SS$ depend on more data than the  warping functions $a(r), b(r)$.
Consider for example the quotient of the round cylinder $S^2 \times \IR$ by an isometry of the form $(x,r) \mapsto (A x, r+a)$, where $A \in O(3)$, $a > 0$.
The induced spherical structures are equivalent for all choices of $A, a$.

The following lemma classifies the geometry of metrics that are compatible with a spherical structure near regular fibers.

\begin{lemma} \label{lem_compatible_metric_general_form}
Let $u < v$ and consider the standard spherical structure $\SS$ on $M = S^2 \times (u, v)$, i.e. the structure whose fibers are of the form $S^2 \times \{ r \}$, endowed with the standard round metric $g_{S^2}$.
A Riemannian metric $g$ on $M$ is compatible with $\SS$ if and only if it is of the form
\begin{equation} \label{eq_compatible_form_g}
 g = a^2(r) g_{S^2} + b^2(r) dr^2 + \sum_{i=1}^3 c_i (r)  (dr \, \xi_i + \xi_i \, dr), 
\end{equation}
for some functions $a, b, c_1, c_2, c_3 \in C^\infty ((u,v))$ with $a,b > 0$.
Here $\xi_i := * dx^i$ denote the 1-forms that are dual to the standard Killing fields on $S^2 \subset \IR^3$.
\end{lemma}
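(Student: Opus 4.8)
The plan is to treat this as a local normal-form statement and prove the two implications separately, in both cases exploiting a compatible local $O(3)$-action together with the local classification in Lemma~\ref{lem_local_spherical_struct}. The structural input is threefold: on a regular fiber the fiberwise metric of $\SS$ is $g_{S^2}$, so any $O(3)$-action that is isometric on the fibers acts on each fiber through $O(3)$; near a regular fiber such an action is conjugate to the standard one, and since the conjugation may be performed fiberwise it is encoded by a smoothly $r$-varying family $\Phi_r\in SO(3)$; finally, the only $O(3)$-invariant symmetric $2$-tensor on $S^2$ up to scale is $g_{S^2}$, there is no nonzero invariant $1$-form, and the relevant ``equivariant'' $1$-forms along the fibers are spanned by $\xi_1,\xi_2,\xi_3$, the $g_{S^2}$-duals of the rotation Killing fields $K_1,K_2,K_3$.

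For the ``only if'' direction, suppose $g$ is compatible with $\SS$ and fix a compatible local $O(3)$-action $\zeta$ that is $g$-isometric on a neighborhood of a regular fiber over $r=r_0$. Restricting $g$ to the fibers and using that the only $\zeta$-invariant fiber metrics are round, one gets $g|_{S^2\times\{r\}}=a^2(r)\,g_{S^2}$ with $a$ smooth and positive. The $g$-orthogonal complement of the fiber tangent distribution $\ker dr$ is a line field transverse to the fibers, spanned by a vector field of the form $N=\partial_r-Z$ with $Z$ tangent to the fibers; $Z$ is determined by $g$ and is $\zeta$-equivariant, which forces it to restrict on each fiber to a Killing field of $g_{S^2}$, hence $Z=a^{-2}(r)\sum_i c_i(r)K_i$ for smooth functions $c_i$. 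Writing $g$ in the frame $\{\partial_\psi,\partial_\phi,\partial_r\}$ with $\partial_r=N+Z$, the fiber block is $a^2(r)g_{S^2}$, the mixed block produces exactly $\sum_i c_i(r)(dr\,\xi_i+\xi_i\,dr)$ since $\xi_i=g_{S^2}(K_i,\cdot)=*dx^i$, and the $dr^2$-entry reassembles into a function $b^2(r)$, which is positive because $g$ is a metric; this is the asserted form (\ref{eq_compatible_form_g}).

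For the ``if'' direction, given $g$ as in (\ref{eq_compatible_form_g}), set $K(r):=\sum_i c_i(r)K_i$ and let $\mathfrak{k}(r)\in\mathfrak{so}(3)$ be its generator. On a neighborhood $J$ of any $r_0\in(u,v)$ I would solve the linear ODE $\dot\Phi_r=-a^{-2}(r)\,\mathfrak{k}(r)\,\Phi_r$ with $\Phi_{r_0}=\id$, obtaining a smooth family $\Phi_r\in SO(3)$, and define $\zeta_A(\theta,r):=(\Phi_rA\Phi_r^{-1}\theta,\,r)$ on $S^2\times J$. This is a smooth $O(3)$-action, it preserves every fiber and acts on each fiber through $O(3)$, hence effectively and isometrically on it, and no singular fibers occur. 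Conjugating by the $\SS$-automorphism $(\theta,r)\mapsto(\Phi_r^{-1}\theta,r)$ turns $\zeta$ into the standard $O(3)$-action, and a direct computation --- the ODE for $\Phi_r$ is chosen precisely so that the twisting absorbs the $\xi_i$-terms --- shows the pushed-forward metric is standard-$O(3)$-invariant, so $\zeta$ is $g$-isometric; thus $g$ is compatible with $\SS$.

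The step I expect to be the main obstacle is the bookkeeping caused by the fact that $\SS$ does not record the fiber-tangent/radial splitting of $TM$: the ``radial'' direction has to be reconstructed from $g$ through $g$-orthogonality, and one then has to control how the $r$-dependence of $Z$ (equivalently of $\Phi_r$) interacts with the Lie derivatives $\mathcal{L}_{K_i}\xi_j=\pm\epsilon_{ijk}\xi_k$ and with the extra $dr$-terms generated by differentiating $r$-dependent coefficients. It is exactly this interaction that pins down the shape of the mixed terms and shows the $dr^2$-coefficient depends only on $r$, and, in the converse direction, that makes the displayed ODE for $\Phi_r$ the right one.
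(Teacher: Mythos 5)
Your overall mechanism is close to the paper's: the paper also untwists by $r$-dependent rotations, except that it obtains the untwisting diffeomorphism $\phi(v,r)=(A(r)v,r)$ from the normal exponential map to a fiber, whereas you obtain it by classifying the compatible action as a conjugated standard action and solving an ODE. That difference is cosmetic; the problems lie in the steps you leave unproved, and in the one structural claim you do make in the ``only if'' direction, which is false as stated. For a twisted compatible action $\zeta_A(v,r)=(\Phi_r A\Phi_r^{-1}v,r)$ the field $N$ is indeed $\zeta$-invariant (it is characterized by $dr(N)=1$ and $g$-orthogonality to the fibers, and $\zeta$ preserves $g$, the fibers and $dr$), but $\partial_r$ is \emph{not} $\zeta$-invariant, so $\zeta_{A*}Z=Z+W_A$, where $W_A$ is the vertical variation field of the action (a fiberwise Killing field). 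Hence $Z$ is not $\zeta$-equivariant; and had it been invariant, the conclusion would be $Z\equiv 0$ on each fiber (there is no nonzero $O(3)$-invariant vector field on $S^2$), not ``$Z|_{\text{fiber}}$ is Killing''. What is true is invariance of $Z$ modulo Killing fields, and extracting ``Killing'' from that requires an additional argument (conjugate $\zeta$ to the standard action first, or average the resulting cocycle); this is missing.

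More seriously, the point you yourself flag as the main obstacle --- the behavior of the $dr^2$-entry under the (un)twisting --- is exactly where the content of the lemma sits, and the computation you defer does not close as written. If $\Psi(v,r)=(\Phi_r v,r)$ and $h=a^2(r)g_{S^2}+b^2(r)dr^2$, then
\[ \Psi^*h \;=\; a^2(r)\,g_{S^2} \;+\; \sum_{i=1}^3 c_i(r)\,(dr\,\xi_i+\xi_i\,dr) \;+\; \Big(b^2(r)+a^{-2}(r)\,\big|\textstyle\sum_i c_i(r)K_i\big|^2_{g_{S^2}}\Big)\,dr^2, \]
where $K_i$ are the Killing fields dual to $\xi_i$ and the $c_i$ are determined by $a^2\,\Phi_r^{-1}\Phi_r'$; the $dr^2$-coefficient acquires a term that is \emph{not} constant along the fibers. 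Correspondingly, in your ``if'' direction, once your ODE kills the mixed block of a tensor of the form (\ref{eq_compatible_form_g}), the conjugated tensor has $dr^2$-entry $b^2-a^{-2}\big|\sum_i c_iK_i\big|^2_{g_{S^2}}$, which depends on the point of the fiber, so the pushed-forward metric is not invariant under the standard action and the asserted ``direct computation'' does not yield $g$-isometry of $\zeta$. In your ``only if'' direction, the assertion that the $dr^2$-entry ``reassembles into a function $b^2(r)$'' is precisely what has to be proved, and the same computation shows $g(\partial_r,\partial_r)=g(N,N)+a^2|Z|^2_{g_{S^2}}$ with the second term non-constant on the fiber whenever $Z\neq 0$. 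So as it stands the proposal does not establish the stated normal form: you must confront this fiber-dependent term explicitly --- e.g.\ by characterizing compatible metrics as the pullbacks $\phi^*(a^2g_{S^2}+b^2dr^2)$ under twistings $\phi(v,r)=(A(r)v,r)$, which is how the paper's proof is organized --- and then explain how that term is reconciled with the $dr^2$-coefficient in (\ref{eq_compatible_form_g}).
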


\begin{proof}
Consider diffeomorphisms $\phi : M \to M$ of the form $\phi (v,r) = (A(r) v, r)$, where $A : (u,v) \to O(3)$ is smooth.
These diffeomorphisms leave $\SS$ invariant and if $g$ is of the form (\ref{eq_compatible_form_g}), then so is $\phi^* g$.
It follows that every metric $g$ of the form (\ref{eq_compatible_form_g}) is compatible with $\SS$.
On the other hand, assume that $g$ is compatible with $\SS$.
Fix some $r_0 \in (u,v)$ and consider the normal exponential map to $S^2 \times \{ r_0 \}$.
Using this map we can construct a diffeomorphism of the form $\phi (v,r) = (A(r) v, r)$ such that $\phi^* g = a^2 (r) g_{S^2} + b^2 (r) dr^2$.
Thus $g$ is of the form (\ref{eq_compatible_form_g}).
\end{proof}

Next we define what we mean by the preservation of a spherical structure by a vector field.

\begin{definition}[Preservation by vector field]
Let $U_1 \subset U_2 \subset M$ be open subsets and consider a vector field $X$ on $U_2$.
A spherical structure $\mathcal{S}$ on $U_1$ is said to be {\bf preserved} by  $X$ if the flow of $X$ preserves the regular and singular fibers as well as the fiberwise metric on the regular fibers.
\end{definition}

Lastly, we consider a continuous family of manifolds $(M^s)_{s \in X}$ of arbitrary dimension, which may for example be taken to be a continuous family of Ricci flow space times $(\M^s)_{s \in X}$.
We will define the notion of transverse continuity for spherical structures.
For this purpose, we need:

\begin{definition}[Transversely continuity for families of local $O(3)$-actions] \label{Def_transverse_O3_action}
Let $(V^s \subset M^s)_{s \in X}$ be a family of open subsets such that $V := \cup_{s \in X} V^s \subset \cup_{s \in X} M^s$ is open and consider a family of $O(3)$-actions $(\zeta^s : O(3) \times V^s \to V^s)_{s \in X}$, which we may also express as $\zeta : O(3) \times V \to V$.
We say that $\zeta^s$ is {\bf transversely continuous in the smooth topology} if for any $A_0 \in O(3)$, $s_0 \in X$, $x_0 \in V^{s_0}$ there are family charts $(U_0, \phi_0, V_0)$, $(U'_0, \phi'_0, V'_0)$ (in the sense of Definition~\ref{def_continuous_family_manifolds}) with $x_0 \in U_0$ and $\zeta (A_0, x_0) \in U'_0$ such that the map $(A, v, s) \mapsto (\proj_{V'_0} \circ \phi'_0 \circ \zeta) (A, \phi_0^{-1} (v,s)) )$ can be viewed as a family of maps in the first two arguments that depend continuously on $s$ in $C^\infty_{\loc}$ near $(A_0, \phi (x_0))$.
\end{definition}

Let now $(\SS^s)_{s \in X}$ be a family of spherical structures defined on a family of open subsets $(U^s := \domain (\SS^s) \subset M^s)_{s \in X}$.

\begin{definition}[Transverse continuity for spherical structures] \label{Def_spherical_struct_transverse_cont}
We say that $(\SS^s)_{s \in X}$ is {\bf transversely continuous} if:
\begin{enumerate}
\item $U := \cup_{s \in X} U^s$ is an open subset in the total space $\cup_{s \in X} M^s$.
\item For every point $x \in U$ there is an open neighborhood $V = \cup_{s \in X} V^s \subset U$ and a transversely continuous family of local $O(3)$-actions $(\zeta^s : O(3) \times V^s \to V^s)_{s \in X}$ that are each compatible with $\SS^s$.
\end{enumerate}
A family of spherical structures $(\SS^s)_{s \in X}$ on a fixed manifold $M$ is called transversely continuous, if it is transversely continuous on the associated continuous family of manifolds $(M \times \{ s \} )_{s \in X}$.
\end{definition}

\subsection{$\RR$-structures} \label{subsec_RR_structure}
Consider a singular Ricci flow $\M$.
We now define the structure that we will construct in this section.

\begin{definition}[$\mathcal{R}$-structure] \label{Def_R_structure}
An {\bf $\mathcal{R}$-structure} on a singular Ricci flow $\M$ is a tuple $\mathcal{R} = ( g', \partial'_\t, U_{S2}, U_{S3}, \mathcal{S})$ consisting of a smooth metric $g'$ on $\ker d\t$, a vector field $\partial'_\t$ on $\M$ with $\partial'_\t \, \t = 1$, open subsets $U_{S2}, U_{S3} \subset \M$ and a spherical structure $\mathcal{S}$ on $U_{S2}$ such that for all $t \geq 0$:
\begin{enumerate}[label=(\arabic*)]
\item \label{prop_def_RR_1} $U_{S3} \setminus U_{S2}$ is open.
\item \label{prop_def_RR_2} $U_{S2} \cap \M_t$ is a union of regular and singular fibers of $\mathcal{S}$.
\item \label{prop_def_RR_3} $\partial'_{\t}$ preserves $\mathcal{S}$.
\item \label{prop_def_RR_4} $g'_t$ is compatible with $\mathcal{S}$.
\item \label{prop_def_RR_5} $U_{S3} \cap \M_t$ is a union of compact components of $\M_t$ on which $g'_t$ has constant curvature.
\item \label{prop_def_RR_6} $U_{S3}$ is invariant under the forward flow of the vector field $\partial'_\t$, i.e. any trajectory of $\partial'_\t$ whose initial condition is located in $U_{S3}$ remains in $U_{S3}$ for all future times.
\item \label{prop_def_RR_7} The flow of $\partial'_\t$ restricted to every component of $U_{S3} \cap \M_t$ is a homothety with respect to $g'$, whenever defined.
\end{enumerate}
We say that the $\RR$-structure is {\bf supported on $U_{S2} \cup U_{S3}$.}
\end{definition}

Note that Property~\ref{prop_def_RR_1} is equivalent to the statement that any component of $U_{S3}$ is either contained in $U_{S2}$ or disjoint from it.

At this point we reiterate the importance of the fact that the spherical structure $\SS$ does not record the splitting of the tangent space into directions that are tangential and orthogonal to the spherical fibers.
Therefore the preservation of $\SS$ by $\partial'_{\t}$ does not guarantee a preservation of this splitting under the flow of $\partial'_{\t}$.
So the metrics $g'_t$ could be isometric to quotients of the round cylinder  $S^2 \times \IR$ by isometries of the form $(x,r) \mapsto (A_t x, r+a_t)$, where $A_t \in O(3)$, $a_t > 0$ may vary smoothly in $t$, and $\SS$ could consist of all cross-sectional 2-spheres.
In this case the flow of $\partial'_{\t}$ may leave the metric tangential to the fibers of $\SS$ invariant while distorting the metric in the orthogonal and mixed directions.

Consider now a continuous family $(\M^s)_{s \in X}$ of singular Ricci flows.
For every $s \in X$ choose an $\RR$-structure
\[ \mathcal{R}^s = ( g^{\prime, s}, \lb \partial^{\prime, s}_\t,  \lb U^s_{S2}, \lb U^s_{S3}, \lb \mathcal{S}^s)  \]
on $\M^s$.
We define the following notion of transverse continuity:

\begin{definition}[Transverse continuity for $\RR$-structures] \label{Def_RR_structure_transverse_cont}
The family of $\RR$-structures $( \mathcal{R}^s  )_{s \in X}$ is called {\bf transversely continuous} if:
\begin{enumerate}
\item $(g^{\prime, s})_{s \in X}, (\partial^{\prime, s}_\t)_{s \in X}$ are transversely continuous in the smooth topology.
\item $U_{S2} := \cup_{s \in X} U_{S2}^s$ and $U_{S3} := \cup_{s \in X} U_{S3}^s$ are open subsets of the total space $\cup_{s \in X} \M^s$.
\item $(\mathcal{S}^s)_{s \in X}$ is transversely continuous in the sense of Definition~\ref{Def_spherical_struct_transverse_cont}.
\end{enumerate}
\end{definition}

\subsection{Statement of the main result} \label{subsec_main_rounding_statement}
The main result of this section, Theorem~\ref{Thm_rounding}, states that for every continuous family of singular Ricci flows there is a transversely continuous family of $\RR$-structures supported in regions where $\rho$ is small.
Moreover the metrics $g^{\prime,s}$ and $g^s$ will be close in some scaling invariant $C^{[\delta]}$-sense and equal where $\rho$ is bounded from below.
The same applies to the vector fields $\partial^{\prime,s}_\t$ and $\partial^s_\t$.

Recall the scale $r_{\initial}$ from Definition~\ref{Def_r_initial}.

\begin{theorem}[Existence of family of $\RR$-structures] \label{Thm_rounding}
For any $\delta > 0$ there is a constant $C = C(\delta) < \infty$ and a continuous, decreasing function $r_{\rot, \delta} : \R_+ \times [0, \infty) \to \IR_+$ such that the following holds.

Consider a continuous family $(\M^s)_{s \in X}$ of singular Ricci flows.
Then there is a transversely continuous family of $\RR$-structures $(\mathcal{R}^s = ( g^{\prime, s}, \lb \partial^{\prime, s}_\t,  \lb U^s_{S2}, \lb U^s_{S3}, \lb \mathcal{S}^s))_{s \in X}$ such that for any $s \in X$:
\begin{enumerate}[label=(\alph*)]
\item \label{ass_thm_rounding_a} $\RR^s$ is supported on \[ \big\{ x \in \M^s \;\; : \;\; \rho_{g^{\prime,s}} (x)< r_{\rot, \delta} (r_{\initial} (\M_0^s,g^s_0), \t(x)) \big\}. \]
\item \label{ass_thm_rounding_b} $g^{\prime, s} = g^s$ and $\partial^{\prime,s}_\t = \partial^s_\t$ on 
\[ \big\{ x \in \M^s \;\; : \;\; \rho_{g^{\prime,s}} (x) > C r_{\rot, \delta} (r_{\initial} (\M_0^s, g^s_0), \t(x)) \big\} \supset \M^s_0. \]
\item \label{ass_thm_rounding_c}  For $m_1, m_2 = 0, \ldots,  [\delta^{-1}]$ we have
\[ | \nabla^{m_1} \partial_{\t}^{m_2} (g^{\prime, s} - g^s) | \leq \delta \rho^{-m_1-2m_2}, \qquad | \nabla^{m_1} \partial_{\t}^{m_2} (\partial^{\prime, s}_\t - \partial^s_\t) | \leq \delta \rho^{1-m_1-2m_2}. \]
\item \label{ass_thm_rounding_d} If $(\M^s_0, g^s_0)$ is homothetic to a quotient of the round sphere or the round cylinder, then $g^{\prime, s} = g^s$ and $\partial^{\prime,s}_\t = \partial^s_\t$ on all of $\M^s$.
\item \label{ass_thm_rounding_e} $r_{\rot, \delta} (a \cdot r_0, a^2 \cdot t) = a \cdot r_{\rot, \delta} (r_0,t)$ for all $a, r_0 > 0$ and $t \geq 0$.
\end{enumerate}
\end{theorem}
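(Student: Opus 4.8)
The plan is to localize the problem using the canonical neighborhood assumption and then to ``symmetrize'' the metric region by region. Following the pattern of Lemma~\ref{lem_rcan_control}, I would look for $r_{\rot,\delta}$ of the form $r_{\rot,\delta}(r_0,t) := r_0\cdot r'_{\rot,\delta}(t r_0^{-2})$ for a decreasing function $r'_{\rot,\delta}$ to be determined; this makes Assertion~\ref{ass_thm_rounding_e} automatic and, by parabolic rescaling invariance of the remaining assertions, reduces everything to the case $r_{\initial}(\M^s_0,g^s_0)=1$. Fix $\eps$ small (in particular $\eps\le\eps_0$). By Lemma~\ref{lem_rcan_control}, at every point $x\in\M^s$ with $\t(x)\le T$ and $\rho(x)$ below $r_{\can,\eps}(1,T)$ the parabolic ball $P(x,\eps^{-1}\rho(x))$ is unscathed and, after rescaling, $\eps$-close to a piece of a $\kappa$-solution. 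By Theorem~\ref{Thm_kappa_sol_classification} the possible models are round shrinking cylinders and their $\IZ_2$-quotients, round spherical space forms, the Bryant soliton, and rotationally symmetric flows on $S^3$ or $\IR P^3$; in every case each time-slice of the model is either (A) compact with metric of constant curvature, or (B) a union of ``neck'' regions $S^2\times(u,v)$ carrying a round $S^2$-fibration together with ``cap'' regions where this fibration degenerates to a point or an $\IR P^2$. The goal is then: on the compact components of the small-$\rho$ part, replace $g^s$ by a constant-curvature metric (these components will make up $U^s_{S3}$); on the remaining small-$\rho$ part, replace $g^s$ by a metric compatible with a spherical structure $\SS^s$ (this region will make up $U^s_{S2}$); interpolate back to $g^s$ where $\rho$ is no longer small; and modify $\partial^s_\t$ to $\partial^{\prime,s}_\t$ so that it preserves the new structure.

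The technical heart is a \emph{canonical local symmetrization}: given a metric on a region that is $C^{[\eps^{-1}]}$-close, after rescaling, to a time-slice of one of the rotationally symmetric models, I would produce a nearby metric genuinely compatible with a spherical structure by a contraction-mapping argument. The approximate $O(3)$-symmetry of the model induces an approximate isometric $O(3)$-action on the region; averaging over this approximate action defines a map on the space of metrics near the model that is a contraction in the directions transverse to the genuinely symmetric ones, hence has a unique genuinely $O(3)$-symmetric fixed point $C^{[\eps^{-1}]}$-close to $g^s$, depending smoothly on $g^s$ — and therefore, in families, transversely continuously. For the compact near-round components I would instead round to constant curvature, either by the same fixed-point scheme with a round space form as model or by passing to the unique round metric in the conformal class near $g^s$; here Lemma~\ref{lem_kappa_identity_1} guarantees that such a component has diameter comparable to its curvature scale, so it lies inside a region of controlled geometry and the rounding is a single operation on it. Since the time-slices of each model evolve by an explicit rotationally symmetric (resp. shrinking-round) flow, the fixed point varies smoothly in $t$ and the distinguished $S^2$-fibration can be arranged to lie in the time-slices; I would then set $\partial^{\prime,s}_\t := \partial^s_\t$ corrected by a fiber-tangent vector field so that its flow preserves $\SS^s$, and on $U^s_{S3}$-components corrected further so that it acts as a homothety, using that the rounded flow on such a component is homothetic up to reparametrization.

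To globalize, cover the small-$\rho$ part of each $\M^s$ by countably many such regions, run the local construction on each, and patch: metrics by convex combination with a partition of unity built from a smooth cutoff of $\rho$, and spherical structures by choosing the distinguished fibrations consistently. The latter is possible because on an overlap of two regions the two symmetrized metrics, hence the two $S^2$-fibrations, are $C^{[\eps^{-1}]}$-close, and by Lemma~\ref{lem_compatible_metric_general_form} two compatible metrics on $S^2\times(u,v)$ differ only by a diffeomorphism of the form $(v,r)\mapsto(A(r)v,r)$, so the fibrations agree after a small isotopy and can be matched canonically. Setting $U^s_{S2}$, $U^s_{S3}$ to be the resulting open regions, I would verify Properties~\ref{prop_def_RR_1}--\ref{prop_def_RR_7} of Definition~\ref{Def_R_structure} and transverse continuity in the sense of Definition~\ref{Def_RR_structure_transverse_cont}; the latter holds because every step above is canonical and equivariant and the input family $(\M^s)_{s\in X}$ is transversely continuous, so family charts transport through the construction. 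Assertions~\ref{ass_thm_rounding_b} and \ref{ass_thm_rounding_c} follow from performing modifications only where $\rho$ is small, together with the $C^{[\eps^{-1}]}$-closeness of $g^s$ to the model and of the symmetrization to the identity (choosing $\eps$, hence $r'_{\rot,\delta}$, small compared to $\delta$); Assertion~\ref{ass_thm_rounding_d} follows because for a flow starting from a round-sphere or cylinder quotient the whole flow is already of the relevant symmetric type, so the canonical symmetrization returns $g^s$ unchanged; Assertions~\ref{ass_thm_rounding_a} and \ref{ass_thm_rounding_e} are built in.

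I expect the main obstacle to be the \emph{simultaneous consistency and continuity} of all the local choices. The $S^2$-fibration (equivalently the local $O(3)$-action) must be pinned down canonically enough that the patched object is a genuine spherical structure with a well-defined regular/singular fiber decomposition through the caps and $\IR P^2$-fibers, and that it varies transversely continuously in $s$ and smoothly in $t$; this forces the averaging construction to be genuinely canonical, to respect each model's symmetries, and to be compatible with the time evolution, which is the delicate part. A secondary difficulty is the interface between $U^s_{S2}$ and $U^s_{S3}$ and between the modified and unmodified regions: the rounding and the symmetrization must be compatible on the transition annuli (a round space form restricted to a neck-like collar is compatible with the corresponding spherical structure), and the cutoffs must be arranged so that none of Properties~\ref{prop_def_RR_1}--\ref{prop_def_RR_7} — in particular the homothety property~\ref{prop_def_RR_7} and the forward-invariance~\ref{prop_def_RR_6} of $U_{S3}$ — is spoiled. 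Finally, the remaining case of Theorem~\ref{Thm_kappa_sol_classification} (rotationally symmetric but neither round nor a cylinder or Bryant soliton) requires recognizing its time-slices as two caps joined by necks and symmetrizing accordingly, using e.g.\ Lemma~\ref{Lem_Bry_R_Hessian_positive} to locate cap tips.
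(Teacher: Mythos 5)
Your high-level strategy — localize via the canonical neighborhood assumption, classify local models by $\kappa$-solution classification, symmetrize region by region, and patch — matches the paper's in outline, but the two technical pivots on which you rest the argument are not worked out and would cause real trouble. First, the ``contraction-mapping'' symmetrization is not a contraction in any obvious sense: to average $g^s$ over an approximate $O(3)$-action you must first choose an approximate conjugation of the region with the model; the averaged metric depends on that choice, and after a single averaging the result is already invariant under the chosen action, so there is nothing to iterate. What you actually face is a coupled fixed-point problem for the pair (action, metric), whose well-posedness is not automatic. The paper sidesteps this entirely by producing canonical, choice-free objects: near a Bryant-type cap, the nondegenerate critical point of $R$ (via Lemma~\ref{Lem_Bry_R_Hessian_positive}) combined with the exponential-chart $O(3)$-action over which one averages once (Lemma~\ref{lem_Bryant_rounding}); in neck regions, the unique CMC $2$-sphere through each point together with the rounding operator $\RD^2$ of Subsection~\ref{subsec_RD} (Lemma~\ref{lem_round_cyl}); and on near-round components the operator $\RD^3$ and an $L^2$-minimal admissible vector field (Lemma~\ref{lem_dtprime}). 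These objects are globally canonical, so the fibrations produced by different local models agree \emph{exactly} on overlaps; no partition of unity or ``small isotopy to match fibrations'' is needed for $\SS^s$, which is precisely where your patching sketch is hand-waving and where transverse continuity in $s$ would be hardest to maintain.

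Second, you never address the structure needed to make $U_{S3}$ forward-invariant under the flow of $\partial'_\t$ and to make the flow on $U_{S3}$-components a homothety (Definition~\ref{Def_R_structure}, properties (6), (7)), nor the transition in time from a rotationally symmetric compact component to a near-round one. The paper builds the cutoff $\eta_1$ (Lemma~\ref{lem_cutoff_almost_extinct}) not as a cutoff of $\rho$ or normalized diameter alone but as a time-integral of the product $\eta_1^*\eta_1^{**}$, which forces $\partial_\t\eta_1\ge0$; this monotonicity is what makes the $U_{S2}/U_{S3}$ dichotomy consistent with the required forward invariance. It then pushes the spherical structure forward along the flow on almost-extinct components (Lemma~\ref{lem_exten_almost_round}) by evolving the already-symmetric metric $g'_4$ with the volume-normalized Ricci flow, so the $\SS$-compatible region reaches all the way to the regime where the universal cover is $\delta$-close to round and $\RD^3$ takes over. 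Without this flow-forward step your spherically-symmetric and constant-curvature regimes do not join up, and property (6) fails. One smaller inaccuracy: the correction $\partial'_\t-\partial_\t$ in the neck region cannot be taken fiber-tangent, since the ambiguity of an admissible vector field along a regular fiber is the sum of a Killing field and a parallel normal field; the paper's canonical choice is the $L^2$-minimizer in that affine space, not a tangential correction.
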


The proof of this theorem, which will occupy the remainder of this section, is carried out in several steps:
\begin{enumerate}[label=\arabic*.]
\item In Subsection~\ref{subsec_cutoff}, we first define a cutoff function $\eta : \cup_{s \in X} \M^s \to [0,1]$ whose support is contained in the union of components of time-slices with positive sectional curvature and bounded normalized diameter.
By Hamilton's result \cite{hamilton_positive_ricci}, these components become extinct in finite time and the metric becomes asymptotically round modulo rescaling as we approach the extinction time.
On the other hand, time-slices on which $\eta < 1$ have sufficiently large normalized diameter such that any point that satisfies the canonical neighborhood assumption is either contained in an $\eps$-neck or has a neighborhood that is sufficiently close to a Bryant soliton.
\item In Subsection~\ref{subsec_modify_Bryant}, we modify the metrics $(g^s)_{s \in X}$ at bounded distance to points where the geometry is close to the tip of a Bryant soliton.
The resulting metric will be compatible with a spherical structure $\SS_2$ near these points.
So any point $x \in \{ \eta < 1 \} \setminus \domain (\SS_2)$ of small curvature scale must be a center of an $\eps$-neck.
Our rounding procedure will employ the exponential map based at critical points of the scalar curvature $R$.
\item In Subsection~\ref{subsec_modify_necks}, we modify the metrics from the previous step near centers of $\eps$-necks, using the (canonical) constant mean curvature (CMC) foliation by 2-spheres.
The resulting metrics will be compatible with a spherical structure $\SS_3$ extending $\SS_2$, whose domain contains all points $x \in \{ \eta < 1 \}$ of small curvature scale.
\item In Subsection~\ref{subsec_modify_dt}, we  use an averaging procedure to modify the time vector fields $\partial^s_\t$ so that they preserve the spherical structure $\SS_3$.
\item In Subsection~\ref{subsec_extend_to_almost_round}, we modify the metric on the support of $\eta$ such that it is compatible with an extension of $\SS_3$.
We obtain this new metric by evolving the metric from Step 3 forward in time by the Ricci flow.
Therefore the new metric will remain compatible with a spherical structure up to some time that is close to the extinction time.
After this time the metric is $\delta$-close to a round spherical space form modulo rescaling, for some small $\delta > 0$.
\item In Subsection~\ref{subsec_proof_rounding}, we replace the almost round metrics near the extinction time by canonical metrics of constant curvature.
We also modify the time-vector fields to respect these new metrics.
This concludes the proof of Theorem~\ref{Thm_rounding}.
\end{enumerate}
Throughout this section we will attach indices 1--6 to the objects that are constructed in each step.
For example, the cutoff function from Step 1 will be called $\eta_1$.
The main result in Step 2 will repeat the claim on the existence of this cutoff function, which is then called $\eta_2$, and so on.
In order to avoid confusion we will increase the index of each object in each step, even if it remained unchanged during the construction.

\subsection{The rounding operators} \label{subsec_RD}
We will define operators $\RD^n$ that assign a constant curvature metric to every metric of almost constant curvature in a canonical way.

Consider first an $n$-dimensional Riemannian manifold $(M,g)$ that is $\delta$-close to the standard $n$-dimensional round sphere $(S^n, g_{S^n})$ in the smooth Cheeger-Gromov topology for some small $\delta > 0$, which we will determine later.
Consider the eigenvalues $0 = \lambda_0 \leq \lambda_1 \leq \ldots$ of the Laplacian on $(M,g)$, counted with multiplicity.
Recall that on the standard round $n$-sphere $S^n$ we have $\lambda_0 = 0$ and $\lambda_1 = \ldots = \lambda_{n+1} = n < \lambda_{n+2}$, where the coordinate functions of the standard embedding $S^n \subset \IR^{n+1}$ form an orthogonal basis of the eigenspace corresponding to the eigenvalue $n$.
So if $\delta$ is chosen sufficiently small, then $\lambda_{n+1} < \lambda_{n+2}$ and there is an $L^2$-orthonormal system $x^0_g, x^1_g, \ldots, x^{n+1}_g$ of eigenfunctions such that $x^0 \equiv const$, $\beta_g : (x^1_g, \ldots, x^{n+1}_g) : M \to \IR^{n+1} \setminus \{ 0 \}$ and $\alpha_g := \beta_g / |\beta_g| : M \to S^n$ is a diffeomorphism.
Define 
\[ \td\RD^n ( g ) := \alpha_g^* g_{S^n}. \]
Note that $(M, \td\RD^n ( g ))$ is isometric to $(S^n, g_{S^n})$, in particular $V (M,\td\RD^n ( g )) = V(S^n, g_{S^n})$.

Next, assume that $(M,g)$ is $\delta$-close to $(S^n, g_{S^n})$ modulo rescaling, in the sense that $(M, a^2 \cdot g)$ is $\delta$-close to $(S^n, g_{S^n})$ for some $a > 0$.
If $\delta$ is sufficiently small, then $(M, (V (M,g) / V(S^n, g_{S^n} ))^{-2/n}  g)$ is sufficiently close to $(S^n, g_{S^n})$, such that we can define
\[ \RD^n (g) :=\bigg( \frac{V (M,g)}{V(S^n, g)} \bigg)^{2/n}  \td\RD^n \bigg(  \bigg( \frac{V (M,g)}{V(S^n, g)} \bigg)^{-2/n}  g \bigg). \]
Then
\[ V (M, \RD^n(g)) = V(M,g) \]
and for any diffeomorphism $\phi : M \to M$ we have
\[ \RD^n (\phi^* g) = \phi^* \RD^n (g). \]
So if $\phi$ is an isometry with respect to $g$, then it is also an isometry with respect to $\RD^n (g)$.
Hence if $(M,g)$ is a Riemannian manifold whose universal cover $\pi : (\td{M}, \td{g}) \to (M,g)$ is $\delta$-close to $(S^n, g_{S^n})$ modulo rescaling, then we can define $\RD^n (g)$ as the unique metric with the property that $\pi^* \RD^n (g) = \RD^n (\td{g})$.

We record:

\begin{lemma}
If $\delta \leq \ov\delta$ and if the universal cover of $(M,g)$ is $\delta$-close to $(S^n, g_{S^n})$ modulo rescaling, then:
\begin{enumerate}[label=(\alph*)]
\item $\RD^n (g)$ has constant curvature.
\item $V (M, \RD^n(g)) = V(M,g)$.
\item If $g$ has constant curvature, then $\RD^n (g) = g$.
\item $\RD^n (\phi^* g) = \phi^* \RD^n (g)$ for any diffeomorphism $\phi : M \to M$.
\item If $g_i \to g$ in the smooth topology, then $\RD^n (g_i) \to \RD^n (g)$ in the smooth topology.
\item If $Z$ is a smooth manifold and $(g_z)_{z \in Z}$ is a smooth family of metrics on $M$ such that $(M,g_z)$ is $\delta$-close to $(S^n, g_{S^n})$ modulo rescaling for each $z$, then $(\RD^n (g_z))_{z \in Z}$ is smooth.
Moreover, if $(g^i_z)_{z \in Z} \to (g_z)_{z \in Z}$ locally smoothly, then also $((\RD^n (g^i_z))_{z \in Z} \to ((\RD^n (g_z))_{z \in Z}$.
\end{enumerate}

\end{lemma}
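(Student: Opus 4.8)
The plan is to extract parts (a), (b) and (d) directly from the construction of $\RD^n$ given above, to deduce (c) from (d), and to establish the continuity assertions (e) and (f) using perturbation theory for the relevant eigenvalue cluster of the Laplacian. I first treat the case in which $(M,g)$ itself is $\delta$-close to $(S^n,g_{S^n})$ modulo rescaling, and then reduce the general case to this one by passing to the universal cover. Set $\omega_n:=V(S^n,g_{S^n})$. Part (a) is clear: $\td\RD^n(g)=\alpha_g^*g_{S^n}$ is the pullback of a constant-curvature metric under a diffeomorphism, and $\RD^n(g)$ is a constant multiple of such a metric, so it has constant curvature. Part (b) is the defining property of the volume factor: since $\td\RD^n(g)$ always has volume $\omega_n$ and rescaling a metric by a constant $c$ multiplies its $n$-dimensional volume by $c^{n/2}$, one computes directly that $V(M,\RD^n(g))=V(M,g)$. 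For (d), if $x^0_g,\dots,x^{n+1}_g$ is an admissible system of eigenfunctions for $g$ then $x^j_g\circ\phi$ is one for $\phi^*g$ (same eigenvalues, and $\phi$ identifies the two $L^2$-inner products via the Riemannian measure), so $\beta_{\phi^*g}=A\cdot(\beta_g\circ\phi)$ for some $A\in O(n+1)$; using that $g_{S^n}$ is $O(n+1)$-invariant, $\alpha_{\phi^*g}^*g_{S^n}=\phi^*\alpha_g^*A^*g_{S^n}=\phi^*\alpha_g^*g_{S^n}$, i.e.\ $\td\RD^n(\phi^*g)=\phi^*\td\RD^n(g)$, and combining with $V(M,\phi^*g)=V(M,g)$ gives $\RD^n(\phi^*g)=\phi^*\RD^n(g)$. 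The same computation, taking $\phi=\id$ but varying the eigenbasis, shows that $\alpha_g^*g_{S^n}$ does not depend on the choice of admissible system, so $\RD^n(g)$ is well defined; applying (d) to a homothety $\phi$ yields the scaling identity $\RD^n(c^2g)=c^2\RD^n(g)$.

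For (c) in the simply-connected case, suppose $g$ has constant curvature. Rescaling $g$ to volume $\omega_n$ yields a constant-curvature metric $\bar g$ on $M$, still $C^\infty$-close to $(S^n,g_{S^n})$; as a simply-connected constant-curvature space of volume $\omega_n$ it has curvature $1$, hence is isometric to $(S^n,g_{S^n})$, say $\bar g=\iota^*g_{S^n}$. Since the first nonconstant eigenfunctions of $g_{S^n}$ are the restrictions of the linear coordinate functions, every admissible $\beta$ for $g_{S^n}$ is a nonzero constant times an element of $O(n+1)$ applied to the standard embedding $S^n\hookrightarrow\IR^{n+1}$, so the associated $\alpha$ lies in $O(n+1)$ and $\td\RD^n(g_{S^n})=g_{S^n}$. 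By the equivariance established above, $\td\RD^n(\bar g)=\iota^*\td\RD^n(g_{S^n})=\iota^*g_{S^n}=\bar g$, hence $\RD^n(\bar g)=\bar g$, and the scaling identity then gives $\RD^n(g)=g$. The general (quotient) case of (a)--(d) now follows by passing to the universal cover $\pi:(\td M,\td g)\to(M,g)$, for which $\pi^*\RD^n(g)=\RD^n(\td g)$ by definition: (a), (b), (c) follow from the simply-connected statements for $\td g$ together with the facts that $\pi$ is a finite covering and that $\pi^*$ is injective on tensor fields; for (d) one lifts $\phi$ to a diffeomorphism $\td\phi$ of $\td M$ with $\pi\circ\td\phi=\phi\circ\pi$, notes $\widetilde{\phi^*g}=\td\phi^*\td g$, and again uses injectivity of $\pi^*$.

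It remains to prove (e) and (f), and here lies the only real subtlety: the individual eigenfunctions $x^j_g$ are not canonically attached to $g$, because for metrics near $g_{S^n}$ the eigenvalue $n$ has multiplicity $n+1$, so one must track the $(n+1)$-dimensional eigenspace rather than a basis of it. For $\delta$ small there is a spectral gap $\lambda_{n+1}(g)<\lambda_{n+2}(g)$, and the $L^2(g)$-orthogonal projection $P_g$ onto the span $E_g\subset C^\infty(M)$ of the eigenfunctions with eigenvalues in this cluster, a rank-$(n+1)$ subspace, depends smoothly on $g$ by standard perturbation theory for an isolated eigenvalue cluster of the (self-adjoint, compact-resolvent) Laplacian — realizing $P_g$ as a contour integral of the resolvent $(\lambda-\Delta_g)^{-1}$ around the cluster makes this transparent — and this is the source of all the smoothness and continuity. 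Fixing a reference $L^2$-orthonormal basis of $E_{g_0}$, applying $P_g$ to it, and Gram--Schmidt-ing the result with respect to the $L^2(g)$-inner product produces, locally in $g$, a smoothly varying admissible system of eigenfunctions, hence a smoothly varying $\alpha_g$, hence $\td\RD^n(g)=\alpha_g^*g_{S^n}$ and (after the manifestly smooth volume normalization) $\RD^n(g)$ depend smoothly on $g$; since $\td\RD^n(g)$ is independent of the choice of admissible system, these local smooth assignments patch. Running the same argument along a convergent sequence or family gives the $C^\infty_{\loc}$-continuity statements in (e) and in the ``moreover'' part of (f), and for the quotient case the cover $\td M$ and its deck group are fixed, so $g\mapsto\td g$ and the descent $\RD^n(\td g)\mapsto\RD^n(g)$ are continuous/smooth, reducing (e), (f) to their simply-connected versions. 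The main obstacle is precisely this passage from the spectral projection to the honest map $\alpha_g$: it works only because $g_{S^n}$ is $O(n+1)$-invariant, so that the ambiguity in choosing an orthonormal basis of $E_g$ disappears upon forming $\alpha_g^*g_{S^n}$.
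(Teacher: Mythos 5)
The paper states this lemma with ``We record:'' and provides no proof, treating it as a routine consequence of the construction of $\RD^n$; your write-up correctly fills in those details along exactly the expected lines (equivariance under $O(n+1)$-change of eigenbasis, the volume normalization being built into the definition, and spectral perturbation theory for the eigenvalue cluster near $n$ to handle continuity). One small misattribution: the scaling identity $\RD^n(c^2g)=c^2\RD^n(g)$ that you invoke in the proof of (c) is not really an instance of (d) applied to a ``homothety''; it follows directly from the definition, since $V(M,c^2g)=c^nV(M,g)$ makes the inner rescaled argument $\bigl(V(M,c^2g)/\omega_n\bigr)^{-2/n}c^2g$ identical to $\bigl(V(M,g)/\omega_n\bigr)^{-2/n}g$ --- but this does not affect the correctness of the argument.
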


\subsection{Conventions and terminology} \label{subsec_rounding_conventions}
For the remainder of this section let us fix a continuous family $(\M^s)_{s \in X}$ or singular Ricci flows.
It will be clear from our construction that all constants will only depend on the data indicated in each lemma and not on the specific choice of the family $(\M^s)_{s \in X}$.

Consider the initial condition scale $r_{\initial}$ from Definition~\ref{Def_r_initial} and the canonical neighborhood scale $r_{\can, \eps} : \IR_+ \times [0, \infty) \to \IR_+$ from Lemma~\ref{lem_rcan_control}.
Then for any $x \in \M^s_t \subset \cup_{s' \in X} \M^{s'}$ the assertions of Lemma~\ref{lem_rcan_control} hold below scale $\td{r}_{\can, \eps} (x) := r_{\can, \eps} (r_{\initial} (\M^s_0, g^s_0), t)$.
For ease of notation, we will drop the tilde from $\td{r}_{\can, \eps}$ and view $r_{\can, \eps}$ as a function of the form
\[ r_{\can, \eps} :  \cup_{s \in X} \M^{s} \longrightarrow \IR_+. \]
Note that $r_{\can, \eps}$ is smooth on each fiber $\M^s$ and transversely continuous in the smooth topology.

Next, we define a scale function $\wh\rho$ on each $\M^s$, which is comparable to $\rho$ and constant on time-slices.
For this purpose, let $x \in \M^s_t$ and consider the component $\C \subset \M^s_t$ containing $x$.
Let $(\td{\C}, \td{g}^s_t)$ be the universal cover
of $(\C, g^s_t |_\C )$ and set
\[ \wh\rho (x) := V^{1/n} (\td{\C}, \td{g}^s_t) \in (0, \infty]. \]
We will frequently use the following fact, which is a direct consequence of the compactness of $\kappa$-solutions (see Theorem~\ref{Thm_kappa_compactness_theory}):

\begin{lemma}
Assume that $\eps \leq \ov\eps (D)$.
If $\rho (x) < r_{\can, \eps} (x)$ and if the component $\C \subset \M^s_t$ containing $x$ has $\diam \C \leq D \rho (x)$, then
\[ C^{-1} (D) \rho (x) \leq \wh\rho (x) \leq C(D) \rho (x). \]
\end{lemma}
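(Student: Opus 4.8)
The plan is to argue by contradiction and compactness. Suppose the claim fails for some fixed $D$; then there is a sequence of counterexamples $(\M^{i,s_i}, x^i)$ — writing simply $x^i \in \M^i_{t^i}$ with component $\C^i$ containing $x^i$ — such that $\rho(x^i) < r_{\can,\eps^i}(x^i)$ (for $\eps^i \to 0$, since we are free to send $\eps$ to zero along the sequence), $\diam \C^i \leq D\rho(x^i)$, yet the double inequality $C^{-1}(D)\rho(x^i) \leq \wh\rho(x^i) \leq C(D)\rho(x^i)$ fails for every choice of constant, i.e.\ the ratio $\wh\rho(x^i)/\rho(x^i)$ tends to $0$ or to $\infty$. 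By parabolic rescaling (which scales $\rho$, $\wh\rho$, and $\diam$ all by the same factor and leaves the hypotheses invariant — here one uses Lemma~\ref{lem_rcan_control}\ref{ass_rcan_d}) we may normalize $\rho(x^i) = 1$.

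First I would extract a limit. By Lemma~\ref{lem_rcan_control}\ref{ass_rcan_b}, after the normalization the parabolic neighborhood $P(x^i, (\eps^i)^{-1})$ is unscathed and $(\eps^i)^{-1}$-close to the corresponding neighborhood in a $\kappa$-solution. Passing to the universal cover $\td\C^i$ of $\C^i$ and pulling back, the diameter bound $\diam \C^i \leq D$ (hence $\diam \td\C^i$ controlled, since $\C^i$ is compact with bounded diameter and the covering is finite-sheeted once one notes $\td\C^i$ has bounded diameter too — one should verify the cover is finite, which follows because a complete manifold of diameter $\leq D\rho$ with the curvature bounds of a $\kappa$-solution has finite fundamental group, e.g.\ because $\kappa$-solutions with compact time-slices are quotients of $S^3$ or have the topology forced by Theorem~\ref{Thm_kappa_sol_classification}) lets us apply Theorem~\ref{Thm_kappa_compactness_theory} to $(\td\C^i, \td g^i_{t^i}, \td x^i)$ with $R(\td x^i, 0) \approx 1$. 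We obtain convergence to a compact pointed $\kappa$-solution time-slice $(\td\C^\infty, \td g^\infty_0, \td x^\infty)$ (compact because diameters are uniformly bounded), which is simply connected by construction. On this limit, $\rho(\td x^\infty) = 1$ (scale is continuous under the convergence, using that $R = \rho^{-2}$ where the canonical neighborhood assumption holds, Lemma~\ref{lem_rho_Rm_R}) and $V(\td\C^\infty, \td g^\infty_0) \in (0,\infty)$ is a finite positive number. Hence $\wh\rho(\td x^\infty) = V^{1/n}(\td\C^\infty, \td g^\infty_0)$ is a finite positive number, so $\wh\rho(x^i) = V^{1/n}(\td\C^i, \td g^i_{t^i}) \to \wh\rho(\td x^\infty) \in (0,\infty)$, while $\rho(x^i) = 1$. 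This contradicts $\wh\rho(x^i)/\rho(x^i) \to 0$ or $\to \infty$.

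The main obstacle is the compactness step: one must ensure that the limit of $(\td\C^i, \td g^i, \td x^i)$ is genuinely a compact $\kappa$-solution time-slice rather than a non-compact one or a collapsed limit. Non-compactness is ruled out by the uniform diameter bound $\diam \td\C^i \leq D$ (once finiteness of the covering is established), and collapse is ruled out by $\kappa$-noncollapsing together with $\rho(x^i) = 1$. The finiteness of the number of sheets of $\td\C^i \to \C^i$ deserves care: since $\C^i$ is a compact time-slice component of a $\kappa$-solution with $\rho$ of order $1$ and diameter $\leq D$, volume comparison gives $V(\C^i) \geq c > 0$, and $V(\td\C^i) \leq$ (number of sheets) $\cdot\, V(\C^i)$ can be bounded above by volume comparison on $\td\C^i$ from the diameter bound, so the number of sheets is uniformly bounded; alternatively one invokes that compact $\kappa$-solutions are, by Theorem~\ref{Thm_kappa_sol_classification}, quotients of $S^3$ or rotationally symmetric on $S^3$ or $\IR P^3$, all with finite $\pi_1$. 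Once the limit is a compact pointed $\kappa$-solution time-slice with normalized scale at the basepoint, the rest is the continuity of $\rho$ and of the volume functional under smooth Cheeger–Gromov convergence, which is routine. This also explains why the hypothesis ``simply-connected'' is not assumed here (unlike Lemma~\ref{lem_kappa_identity_1}) — the universal cover is taken explicitly in the definition of $\wh\rho$.
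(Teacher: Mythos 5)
Your proof is correct and follows exactly the compactness-of-$\kappa$-solutions argument that the paper invokes (without writing out) in its one-line justification, including the key observations that the bounded normalized diameter together with the canonical neighborhood assumption forces the relevant component to be close to a \emph{compact} $\kappa$-solution time-slice, and that the classification of compact $\kappa$-solutions guarantees the universal cover is finite-sheeted so that $\wh\rho$ is finite. Your phrasing ``apply Theorem~\ref{Thm_kappa_compactness_theory} to $(\td\C^i,\dots)$'' is slightly loose since those covers are not themselves $\kappa$-solutions but only $\eps^i$-close to them; the diagonal argument you implicitly use is, however, standard and is the same device the paper deploys in its own compactness limits (e.g.\ in the proofs of Lemma~\ref{lem_kappa_identity_1} and Claim~\ref{cl_UD_prime}).
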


In the following we will successively construct metrics $g^s_{1}, \ldots, g^s_5$ on $\M^s$.
Unless otherwise noted, we will compute the quantities $\rho$ and $\wh\rho$ using the original metric $g^s$.
Otherwise, we indicated the use of another metric by a subscript, such as ``$\rho_{g^{\prime}_i}$'' or ``$\wh\rho_{g^{\prime}_i}$''.

Lastly, let us fix a smooth, non-decreasing cutoff function $\nu : \IR \to [0,1]$ such that
\[ \nu \equiv 0 \quad \text{on} \quad (-\infty, .1], \qquad \nu \equiv 1 \quad \text{on} \quad [.9, \infty). \]

 \subsection{Construction of a cutoff function near almost extinct components} \label{subsec_cutoff}
 Our first goal will be to introduce a cutoff function $\eta_1$ that is supported on components on which the curvature scale is small and the renormalized diameter is bounded.
On these components we have $\sec > 0$ and therefore these components become extinct in finite time.
On the other hand, regions where the curvature scale is small and where $\eta_1 < 1$ are modeled on a Bryant soliton or a cylinder.

\begin{lemma}
 \label{lem_cutoff_almost_extinct}
For every $\delta^*> 0$, $N \in \IN$, and assuming $\alpha \leq \ov\alpha(\delta^*)$, $D \geq \underline{D} (\delta^*), D_0 \geq \underline{D}_0 (\delta^*)$, $C_m \geq \underline{C}_m(\delta^*)$  and $\eps \leq \ov\eps (\delta^*, N)$, there is:
\begin{itemize}
\item a continuous function $\eta_1 : \cup_{s \in X} \M^s \to [0,1]$ that is smooth on each fiber $\M^s$ and transversely continuous in the smooth topology,
\end{itemize}
such that for any $s \in X$ and $t \geq 0$:
\begin{enumerate}[label=(\alph*)]
\item  \label{ass_cutoff_a} $\partial^s_\t \eta_1 \geq 0$.
\item \label{ass_cutoff_b} $\eta_1$ is constant on every connected component of $\M^{s}_t$.
\item  \label{ass_cutoff_c} Any connected component $\C \in \M^{s}_t$ on which $\eta_1  > 0$ is compact there is a time $t_\C > t$ such that:
\begin{enumerate}[label=(c\arabic*)]
\item  \label{ass_cutoff_c1} $\sec > 0$ on $\C(t')$ for all $t' \in [t, t_\C)$.
\item \label{ass_cutoff_c2} $\C$ survives until time $t'$ for all $t' \in [t, t_\C)$ and no point in $\C$ survives until or past time $t_\C$.
\item \label{ass_cutoff_c3}  $\diam \C(t') < D \rho (x)$ for all $x \in \C(t')$ and $t' \in [t, t_\C)$.
\item \label{ass_cutoff_c4}  $\rho < 10^{-1} r_{\can, \eps}$ on $\C (t')$ for all $t' \in [t, t_\C)$.
\item \label{ass_cutoff_c5}  The time-slices $(\C(t'), g^s_t)$ converge, modulo rescaling, to an isometric quotient of the round sphere as $t' \nearrow t_\C$.
\end{enumerate}
\item  \label{ass_cutoff_d} For every point $x \in \M^s_t$ with $\eta_1 (x) < 1$ (at least) one of the following is true:
\begin{enumerate}[label=(d\arabic*)]
\item \label{ass_cutoff_d1} $\rho (x) > \alpha r_{\can, \eps} (x)$.
\item \label{ass_cutoff_d2} $x$ is a center of a $\delta^*$-neck.
\item \label{ass_cutoff_d3} There is an open neighborhood of $x$ in $\M^s_t$ that admits a two-fold cover in which a lift of $x$ is a center of a $\delta^*$-neck.
\item \label{ass_cutoff_d4} There is a point $x' \in \M^s_t$ with $d (x, x') < D_0 \rho (x')$ such that $\rho(x') < 10^{-1} r_{\can, \eps} (x')$, $\nabla R (x') = 0$ and $(\M^s_t, g^s_t, x')$ is $\delta^*$-close to $(M_{\Bry},\lb g_{\Bry},\lb x_{\Bry})$ at some scale.
Moreover, the component $\C \subset \M^s_t$ containing $x$ has diameter $> 100 D_0 \rho (x')$.
\end{enumerate}
\item  \label{ass_cutoff_e} $| \partial_\t^m \eta_1 | \leq C_m \rho^{-2m}$ for  $m = 0, \ldots, N$.
\end{enumerate}
\end{lemma}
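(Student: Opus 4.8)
The plan is to build $\eta_1$ from two ingredients: Hamilton's theorem \cite{hamilton_positive_ricci}, which says that a compact Ricci flow with positive Ricci curvature (equivalently, in dimension three, $\sec>0$) shrinks to a round point in finite time while becoming asymptotically round modulo rescaling; and the canonical neighborhood structure of Lemma~\ref{lem_rcan_control} together with the classification of $\kappa$-solutions in Theorem~\ref{Thm_kappa_sol_classification}. Given $\delta^*$ and $N$, I would first introduce thresholds $0<\theta_0<\theta_1\ll 1$, a small $\alpha$, and large constants $D_0<D_1\le D$, to be fixed at the very end in the order: first $D_0$, then $D_1$, then $D,\theta_0,\theta_1,\alpha$, and finally $\eps$ (also small in terms of $N$, so that the canonical neighborhood assumption controls enough derivatives of the metric). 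Next I would single out the \emph{almost-extinct round region} $\mathcal A$: the set of components $\C\subset\M^s_t$ that are compact, have $\sec>0$, satisfy $\rho<10^{-1}r_{\can,\eps}$ on $\C$, and satisfy $\diam\C<D_1\rho(x)$ for every $x\in\C$. For such $\C$, Hamilton's theorem provides a finite extinction time $t_\C$, preservation of $\sec>0$, and convergence of $(\C(t'),g^s_{t'})$ modulo rescaling to a round spherical space form as $t'\nearrow t_\C$; an additional argument --- using that in the $\kappa$-solution regime bounded normalized diameter forces $\C$ to be $\eps$-close to a compact $\kappa$-solution, whose shape is then controlled along the flow, together with the comparability $\rho\simeq\wh\rho$ from the lemma of Subsection~\ref{subsec_rounding_conventions} --- shows that $\mathcal A$ is forward invariant. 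This already yields (c1)--(c5) for every component we place in $\supp\eta_1$, once $D\ge\underline D(\delta^*)$ is large.

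On $\bigcup\{\,\C\in\mathcal A\,\}$ I would set $\eta_1:=\nu(q_1)\cdot\chi$, where $q_1$ is an affine decreasing function of $\wh\rho(\C)/r_{\can,\eps}|_\C$ equal to $1$ for this ratio $\le\theta_0$ and to $0$ for it $\ge\theta_1$ (this is constant on each time-slice component, giving (b)), and $\chi\in[0,1]$ is a cutoff detecting the normalized-diameter bound and $\sec>0$ on $\C$, vanishing as $\C$ approaches $\partial\mathcal A$, so that $\eta_1$ extends continuously by $0$ to the complement; moreover $\chi$ is taken to be the running supremum over past times along the flow of $\partial_\t^s$ of such a cutoff, then smoothed from above at parabolic scale $\rho^2$, which makes it nondecreasing in $t$ and costs only a worsening of constants. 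Property (a), $\partial_\t^s\eta_1\ge0$, then follows: on the transition window $\theta_0\le\wh\rho/r_{\can,\eps}\le\theta_1$ one has $\ov R\simeq\wh\rho^{-2}$ (with $\ov R$ the average of $R$ over $\C$) while $|\partial_t\log r_{\can,\eps}|\le\eps r_{\can,\eps}^{-2}$ by part (e) of Lemma~\ref{lem_rcan_control}, so $\tfrac{d}{dt}\log(\wh\rho/r_{\can,\eps})<0$ once $\eps$ is small in terms of $D_1$; hence $\nu(q_1)$ is nondecreasing in $t$ on $\mathcal A$, and so is $\eta_1=\nu(q_1)\cdot\chi$. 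The bounds (e), $|\partial_\t^m\eta_1|\le C_m\rho^{-2m}$ for $m\le N$, follow from scale invariance of $\wh\rho$, $r_{\can,\eps}$ and of the smoothing scale, with $C_m=C_m(\delta^*)$ depending only on $\nu$ and the fixed thresholds. Transverse continuity of $\eta_1$ is inherited from that of $\rho$, $\wh\rho$, $r_{\can,\eps}$ and of the extinction data, which follows from Lemma~\ref{lem_stability} and Theorem~\ref{Thm_properness_fam_sing_RF}.

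For (d), take $x\in\M^s_t$ with $\eta_1(x)<1$; if $\rho(x)>\alpha r_{\can,\eps}(x)$ this is (d1), so assume $\rho(x)\le\alpha r_{\can,\eps}(x)$. Then $x$ satisfies the $\eps$-canonical neighborhood assumption, so by Lemma~\ref{lem_rcan_control}(b) a backward parabolic ball at $x$ is, after rescaling by $\rho^{-2}(x)$, $\eps$-close to a $\kappa$-solution $(\ov M,\ov g_0)$ via a diffeomorphism defined on the $\eps^{-1}$-ball; I would run through Theorem~\ref{Thm_kappa_sol_classification} with $\eps\le\ov\eps(\delta^*)$:
\begin{itemize}
\item If $\diam\C<D_1\rho(x)$: the rescaled $\eps^{-1}$-ball has $\ov g_0$-diameter bounded in terms of $D_1$, hence must be all of $\ov M$, so $\ov M$ is a \emph{compact} $\kappa$-solution of $\ov g_0$-diameter $<2D_1$ (a quotient of the round sphere, or a short rotationally symmetric model on $S^3$ or $\IR P^3$), and $\C$ is, modulo rescaling, $\eps$-close to it. Then $\C$ is compact with $\sec>0$ and, since $\rho<10^{-1}r_{\can,\eps}$ on $\C$, lies in $\mathcal A$, with $\wh\rho(\C)/r_{\can,\eps}$ at most a universal multiple of $\alpha$, hence $<\theta_0$, so $\eta_1\equiv1$ on $\C$ --- contradicting $\eta_1(x)<1$. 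This case does not occur.
\item If $\diam\C\ge D_1\rho(x)$: then $\C\notin\mathcal A$. If $\ov M$ is a round cylinder, $x$ is a center of a $\delta^*$-neck --- (d2); if it is the $\IZ_2$-quotient, a lift of $x$ in the local two-fold cover is a $\delta^*$-neck center --- (d3). If $\ov M$ is a Bryant soliton or a (necessarily long) rotationally symmetric compact model, then either $x$ lies at normalized distance $>D_0$ from every cap tip --- in particular in a $\delta^*$-cylindrical region, giving (d2) --- or a cap tip lies within normalized distance $D_0$ of $x$; in the latter situation the Bryant cap structure together with Lemma~\ref{Lem_Bry_R_Hessian_positive} and the implicit function theorem produces a unique nondegenerate critical point $x'$ of $R$ with $d(x,x')<D_0\rho(x')$, $\rho(x')<10^{-1}r_{\can,\eps}(x')$, and $(\M^s_t,g^s_t,x')$ $\delta^*$-close to $(M_{\Bry},g_{\Bry},x_{\Bry})$ at scale $\rho(x')$, while $\diam\C\ge D_1\rho(x)>100D_0\rho(x')$ because $\rho(x)\simeq\rho(x')$ near a cap and $D_1$ is large relative to $D_0$ --- this is (d4). (Here $D_0=D_0(\delta^*)$ is taken large enough that a Bryant soliton is $\delta^*$-cylindrical outside its $D_0$-ball.)
\end{itemize}
Properties (a), (b), (c) were established above, so this completes the proof.

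I expect the main obstacle to be arranging the constants consistently: $D_0$ must be large, yet small relative to $D_1$; the thresholds $\theta_0,\theta_1$ must be small relative to the universal constant comparing $\rho$ and $\wh\rho$ on bounded-normalized-diameter components, both so that $\mathcal A$-components near extinction actually attain $\eta_1=1$ and so that $\eta_1$ extends continuously across $\partial\mathcal A$; and $\eps$ must be small in terms of all of these and of $N$. Interwoven with this is the bookkeeping needed to keep $\eta_1$ simultaneously fiberwise smooth, transversely continuous, constant on time-slice components, nondecreasing in $t$, and identically $1$ on the ``annoying'' compact positively curved components --- in particular the running-supremum-and-smoothing step for the $\chi$-factor must be executed without destroying any of these properties.
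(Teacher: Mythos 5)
Your plan has the right ingredients and the same conceptual core as the paper's proof --- detect small normalized scale and bounded normalized diameter, use Hamilton to control the future of such components, and use the $\kappa$-solution classification for the dichotomy in (d) --- but the central technical step, how to make $\eta_1$ simultaneously fiberwise \emph{smooth}, transversely continuous, constant on time-slice components, and \emph{monotone in $t$} with controlled derivative bounds, is where your proposal is underspecified, and this is exactly where the paper has a cleaner idea.

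\textbf{The monotonicity mechanism.}  You propose to take the running supremum of a cutoff along past trajectories and then ``smooth from above at parabolic scale $\rho^2$.''  The running supremum of a smooth function is only Lipschitz, so this smoothing is essential; but a variable-scale convolution in time (the scale $\rho^2$ changes along the trajectory and with $s$) that simultaneously preserves monotonicity, preserves the value $1$ on a tail interval, preserves constancy on time-slice components, yields the scale-invariant bounds $|\partial^m_\t\eta_1|\le C_m\rho^{-2m}$ with $C_m=C_m(\delta^*)$, and is transversely continuous across the family $(\M^s)_{s\in X}$ is not a black box --- it is nontrivial construction, and as written it is not justified.  The paper avoids it entirely: it defines
$\eta'_1(x) = \int_{t^*_\C}^{t}\frac{\eta^*_1(x(t'))\,\eta^{**}_1(x(t'))}{\wh\rho^{\,2}(x(t'))}\,dt'$
along the backward trajectory and sets $\eta_1 = \nu(A\cdot\eta'_1)$.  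Since the integrand is non-negative and smooth, $\eta'_1$ is automatically smooth in the fiber, automatically non-decreasing in $t$ (Assertion (a) is free), automatically constant on time-slice components, and the derivative bounds and transverse continuity follow by differentiating under the integral and a compactness argument.  The integral also records ``how long'' the component has been in the good regime, which is what the proof of (d) needs --- it gives a quantitative lower bound to contradict in the blow-up argument.  Your running sup records only ``whether'' the component has ever been good, so you would have to recover the accumulation effect from the width of the smoothing window, which is workable but more delicate.

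\textbf{The diameter cutoff $\chi$.}  You describe $\chi$ as ``a cutoff detecting the normalized-diameter bound and $\sec>0$.''  The diameter is not a smooth function of the metric, so this needs a smooth surrogate.  The paper uses $\eta^{**}_1(x)=\nu\bigl(A\cdot\wh\rho(x)/\mathcal R(\td\C)\bigr)$ where $\mathcal R(\td\C)=\int_{\td\C}R\,d\mu$; the comparison between $\wh\rho=V^{1/3}(\td\C)$ and $\int R$ detects bounded normalized diameter (Lemma~\ref{lem_kappa_identity_1}), and both quantities are smooth in $g$ and transversely continuous.  Without some such device, $\chi$ as you describe it is not a well-defined smooth function, and the transverse continuity you invoke at the end does not follow ``from that of $\rho,\wh\rho,r_{\can,\eps}$'' --- it would have to be built into the construction.

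\textbf{Forward invariance.}  You define $\mathcal A$ by compactness, $\sec>0$, $\rho<10^{-1}r_{\can,\eps}$ and diameter $< D_1\rho$, and assert that $\mathcal A$ is forward-invariant ``by an additional argument.''  Preservation of $\sec>0$ and of $\rho<10^{-1}r_{\can,\eps}$ is straightforward (using Lemma~\ref{lem_rcan_control}(e) to see that $r_{\can,\eps}$ decreases slowly), but preservation of the diameter bound is not immediate for a component barely inside the regime --- the paper's analogue, Claim~\ref{cl_UD_prime}(b), \emph{requires} the extra hypothesis $\rho(x)\le\alpha' r_{\can,\eps}(x)$ (much deeper in the canonical-neighborhood regime than $10^{-1}$) precisely so the limiting compactness argument to a $\kappa$-solution applies.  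You need this strengthened scale hypothesis, or you need the protection of the $\chi$-memory factor --- but then you have to prove the future diameter bound from the past one, which circles back to the Hamilton / $\kappa$-solution argument the paper runs.

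The remainder of your outline --- the case analysis for (d) based on Theorem~\ref{Thm_kappa_sol_classification}, using the nondegenerate critical point of $R$ on the Bryant soliton (Lemma~\ref{Lem_Bry_R_Hessian_positive}) to produce $x'$, and the order in which the constants are chosen --- matches the paper and is sound.  The verdict is that your approach could be pushed through, but the running-supremum-and-smoothing construction plus the vague $\chi$ cutoff represent a real gap where the paper's integral-along-the-trajectory device is both conceptually slicker and much less work.
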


Note that only $\eps$ depends on $N$.

The strategy in the following proof is to define $\eta_1$ using two auxiliary functions $\eta^*_1$, $\eta^{**}_1$, which are essentially supported in regions where $\rho$ is small and where the normalized diameter is bounded, respectively.
In order to achieve the monotonicity property \ref{ass_cutoff_a}, we define $\eta_1$ by integrating the product $\eta^*_1 \eta^{**}_1$ in time against a weight.

\begin{proof}
Let $a, \alpha', A, D, D'$ be constants, which will be chosen depending on $\delta^*$ in the course of the proof.

We say that  a component $\C \subset \M^s_t$ is \emph{$D'$-small} if 
\[ \diam \C < D' \rho \quad \text{and} \quad  \rho < 10^{-1} r_{\can, \eps} \qquad \text{on} \quad \C. \]
Let $U_{D'} \subset \cup_{s \in X} \M^s$ be the union of all $D'$-small components.

\begin{Claim} \label{cl_UD_prime}
\begin{enumerate}[label=(\alph*)]
\item \label{ass_UD_prime_a} $U_{D'}$ is open in $\cup_{s \in X} \M^s$.
\item \label{ass_UD_prime_b} Assuming $\alpha' \leq \ov\alpha' (D')$, $D \geq \underline{D} (D')$ and $\eps \leq \ov\eps (D')$, the following is true.
If $\C \subset \M^s_t$ is a $D'$-small component and $\rho (x) \leq \alpha' r_{\can, \eps} (x)$ for some $x \in \C$, then there is a time $t_\C > t$ such that Assertions~\ref{ass_cutoff_c1}--\ref{ass_cutoff_c5} of this lemma hold.
\end{enumerate}
\end{Claim}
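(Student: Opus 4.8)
\textbf{Proof plan for Claim~\ref{cl_UD_prime}.}

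The plan is to prove the two assertions essentially separately, with part~\ref{ass_UD_prime_a} being a soft openness statement and part~\ref{ass_UD_prime_b} being the substantive one, where one extracts a Ricci flow on a closed manifold of positive sectional curvature and invokes Hamilton's theorem \cite{hamilton_positive_ricci}.

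For \ref{ass_UD_prime_a}: fix a point $x \in U_{D'}$, lying in a $D'$-small component $\C \subset \M^s_t$. Since $\C$ is compact (being a component of a time-slice with $\rho$ bounded below on it, together with Theorem~\ref{Thm_rho_proper_sing_RF}), I would first find a family chart $(U, \phi, V)$ of $(\M^s)_{s \in X}$ containing a slightly larger compact region, using Lemma~\ref{lem_chart_near_compact_subset}. On a neighborhood of $\C$ in $\M^s_t$ I can use the chart coordinates plus the flow of $\partial_\t^{s'}$; transverse continuity of $g^{s'}$, of $r_{\can,\eps}$, of $\rho$, and of the time function $\t^{s'}$ then shows that for $(s',t')$ close to $(s,t)$ the nearby time-slice $\M^{s'}_{t'}$ has a component $\C'$, close to $\C$ in the chart, on which $\diam \C' < D' \rho$ and $\rho < \tfrac1{10} r_{\can,\eps}$ persist (both are open conditions, and the diameter varies continuously because $\C$ is compact and has no singular points nearby). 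Hence a whole neighborhood of $x$ in $\cup_{s'\in X}\M^{s'}$ lies in $U_{D'}$.

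For \ref{ass_UD_prime_b}: let $\C \subset \M^s_t$ be $D'$-small with $\rho(x) \le \alpha' r_{\can,\eps}(x)$ for some $x \in \C$. First, by Lemma~\ref{lem_rcan_control}\ref{ass_rcan_b}, after choosing $\eps$ small enough the flow near $x$ on a parabolic neighborhood of size $\sim \eps^{-1}\rho(x)$ is close to a $\kappa$-solution; combined with the diameter bound $\diam \C < D' \rho$, the whole component $\C$ is (after rescaling by $\rho^{-2}(x)$) geometrically close to a time-slice of a \emph{compact} $\kappa$-solution of bounded normalized diameter. By Lemma~\ref{lem_kappa_identity_1} together with the classification of $\kappa$-solutions (Theorem~\ref{Thm_kappa_sol_classification}), a compact $\kappa$-solution of uniformly bounded normalized diameter must (for $D'$ fixed) be either a quotient of the round sphere or close, on the relevant time-scale, to one --- in any case it has strictly positive sectional curvature. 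Transferring this via the geometric closeness, and choosing $\alpha' \le \ov\alpha'(D')$, $\eps \le \ov\eps(D')$, $D \ge \underline D(D')$ appropriately, we get $\sec > 0$ on all of $\C$ and $\diam \C < D\rho$, with $\rho < \tfrac1{10} r_{\can,\eps}$. Now $\C$ bounds a product domain: since $\sec > 0$ is an open condition preserved for a short time and the curvature scale stays comparable, $\C$ survives on a maximal interval $[t, t_\C)$ and, restricted to this product domain, $g^s$ is a classical Ricci flow $(g_{t'})_{t'\in[t,t_\C)}$ on the closed $3$-manifold $\C$. Positivity of sectional curvature is preserved under Ricci flow in dimension $3$, so $\sec > 0$ holds throughout $[t,t_\C)$ (giving \ref{ass_cutoff_c1}); by Hamilton's theorem \cite{hamilton_positive_ricci} the flow shrinks to a round point, which forces $t_\C < \infty$, no point of $\C$ surviving to time $t_\C$ (giving \ref{ass_cutoff_c2}), and the rescaled metrics converging to a quotient of the round sphere as $t' \nearrow t_\C$ (giving \ref{ass_cutoff_c5}). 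The normalized diameter bound \ref{ass_cutoff_c3} and the canonical-neighborhood-scale bound \ref{ass_cutoff_c4} then propagate forward in time: along a Ricci flow converging to a round point, the normalized diameter is controlled, and by Lemma~\ref{lem_rcan_control} and monotonicity of $r_{\can,\eps}$ in $t$ the inequality $\rho < \tfrac1{10} r_{\can,\eps}$ is maintained --- possibly after enlarging $D$ once more to absorb the transient behavior near the initial time.

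The main obstacle I expect is the forward-in-time control in \ref{ass_UD_prime_b}, i.e.\ verifying \ref{ass_cutoff_c3} and \ref{ass_cutoff_c4} on the \emph{entire} interval $[t,t_\C)$ rather than just near $t$ or near $t_\C$. Near $t_\C$ one has the round-point asymptotics from Hamilton, and near $t$ one has the $\kappa$-solution comparison, but patching these on the intermediate range requires either a compactness argument on the space of ancient/immortal-until-extinction flows of positive curvature, or a direct estimate using that $\diam^2 / \rho^2$ and $\rho^2 R$ are controlled along such flows; this is where the quantitative dependence of the constants $D, \underline D(D')$ on $D'$ really gets used, and it is the step that must be written out with care.
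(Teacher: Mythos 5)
Your outline is essentially the paper's proof; the ingredients are identical (canonical neighborhood hypothesis $\Rightarrow$ closeness to a compact $\kappa$-solution, from which positive curvature, then Hamilton \cite{hamilton_positive_ricci}). The difference is mostly presentational: the paper argues by contradiction, taking a sequence $\alpha^{\prime,i},\eps^i \to 0$, $D^i \to\infty$ of counterexamples and passing to a limiting compact $\kappa$-solution, whereas you argue directly; the contradiction form is tidier because the diameter and pinching bounds then only need to be read off the fixed limit.

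The one place where your proposal stops short of a proof, and which you yourself flag as the ``main obstacle'', is the forward-in-time propagation of \ref{ass_cutoff_c3} and \ref{ass_cutoff_c4}. The paper closes this gap as follows, and it is worth knowing the precise mechanism. For \ref{ass_cutoff_c4}: the $\kappa$-solution limit is compact with strictly positive sectional curvature (by the strong maximum principle / splitting, a compact $\kappa$-solution cannot have a zero sectional curvature), so one extracts a \emph{quantitative} pinching $\sec > cR$ and $R > c\rho^{-2}(x^i)$ on $\C^i$ for large $i$, with $c$ depending only on the limit. Hamilton then gives extinction at a time $t_{\C^i} < t^i + C(c)\rho^2(x^i)$. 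Crucially, Lemma~\ref{lem_rcan_control} supplies the derivative bound $|\partial_\t r_{\can,\eps^i}| \le \eps^i r^{-1}_{\can,\eps^i}$, so over the short interval of length $\le C\rho^2(x^i)$ one has $r_{\can,\eps^i}(x^i(t')) \le 2\,r_{\can,\eps^i}(x^i)$ for large $i$; combined with $\rho \to 0$ along the flow, this yields \ref{ass_cutoff_c4} throughout $[t^i,t_{\C^i})$. For \ref{ass_cutoff_c3}: the pinching $\sec > cR$ is preserved under Ricci flow, and once \ref{ass_cutoff_c4} is in hand the canonical neighborhood assumption holds on $\C^i(t')$ for all $t' \in [t^i,t_{\C^i})$; so the same limit argument (comparison to a compact $\kappa$-solution with bounded normalized diameter) can be rerun at all later times, giving the uniform bound $\diam \C^i(t') < D\rho$. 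Your suggested ``direct estimate using that $\diam^2/\rho^2$ and $\rho^2 R$ are controlled'' is morally the same, but the clean way to write it is: (i) get \ref{ass_cutoff_c4} first from the extinction-time bound and the $r_{\can,\eps}$-derivative bound, then (ii) use preserved pinching plus \ref{ass_cutoff_c4} to rerun the compactness argument at every later time for \ref{ass_cutoff_c3}.
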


\begin{proof}
Assertion~\ref{ass_UD_prime_a} is clear by definition.
If Assertion~\ref{ass_UD_prime_b} was false for some fixed $D'$, then we can find singular Ricci flows $\M^i$, $D'$-small components $\C^i \subset \M^i_{t^i}$ and points $x^i \in \C^i$ contradicting the assertion for sequences $\alpha^{\prime, i}, \eps^i \to 0$, $D^i \to \infty$.
Since the $\eps^i$-canonical neighborhood assumption holds at $x^i$ and $\eps^i \to 0$, we may assume, after passing to a subsequence, that the universal covers of $(\C^i, \rho^{-2} (x^i) g^i_{t^i})$ converge to a compact smooth limit $(\ov{M}, \ov{g})$, which is the final time-slice of a $\kappa$-solution.
Therefore, there is a $c > 0$ such that $\sec > c R$ and $R > c \rho^{-2} (x^i)$ on $\C^i$ for large $i$.
By Hamilton's result \cite{hamilton_positive_ricci}, for large $i$ the flow past $\C^i$ becomes asymptotically round and goes extinct at a finite time $t_{\C^i} \in ( t^i, t^i + C \rho^2 (x^i))$ for some universal constant $C = C(c) < \infty$.
 Since $|\partial_\t r_{\can, \eps^i}| \leq \eps^i r^{-1}_{can, \eps^i}$ by Lemma~\ref{lem_rcan_control} we obtain that $r_{\can, \eps^i} (x^i(t')) \leq 2 r_{\can, \eps^i} (x^i)$ for all $t' \in [t^i, t_{\C^i})$ and large $i$.
 So Assertions~\ref{ass_cutoff_c1}, \ref{ass_cutoff_c2}, \ref{ass_cutoff_c4}, \ref{ass_cutoff_c5} hold for large $i$.
 In particular, for large $i$ the $\eps^i$-canonical neighborhood assumption holds on $\C^i (t')$ for all $t' \in [t^i, t_{\C^i})$.
 Since the pinching $\sec > c R$ is preserved by the flow, another limit argument implies Assertion~\ref{ass_cutoff_c3}.
 \end{proof}

Define functions $\eta^*_1, \eta^*_2 : \cup_{s \in X} \M^s \to [0,1]$ as follows.
For any $x \in \M^{s}_t$ let $\C \subset \M^s_t$ be the component containing $x$ and set
\[ \eta_1^* (x) = \nu \bigg(a \cdot \frac{r_{\can, \eps}(x)}{\wh\rho(x)}  \bigg), \qquad \eta_1^{**} (x) = \nu \bigg( A \cdot \frac{\wh\rho(x)}{ \mathcal{R} (\td\C)}  \bigg), \]
where $\mathcal{R} (\td\C) := \int_{\td\C} R_{g_t} d\mu_{g_t}$ and $\nu$ is the cutoff function from Subsection~\ref{subsec_rounding_conventions}.
Near any component $\C \subset \M^s_t$ with compact universal cover $\td\C$ the functions $\eta^*_1, \eta^*_2$ are smooth on $\M^s_t$ and transversely continuous in the smooth topology.

We now define functions $\eta'_1 : \cup_{s \in X} \M^s \to [0,\infty)$  and $\eta_1 : \cup_{s \in X} \M^s \to [0,1]$ as follows.
Set $\eta'_1 :\equiv 0$ on $\cup_{s \in X} \M^s \setminus U_{D'}$.
For every component $\C \subset \M^s_t$ with $\C \subset U_{D'}$ and for every $x \in \C \subset \M^s_t$  choose $t^*_\C < t$ minimal such that $\C$ survives until all times $t' \in (t^*_\C, t]$ and let
\[ \eta'_1(x) :=  \int_{t^*_\C}^t \frac{\eta^*_1 (x(t')) \eta^{**}_1 (x(t'))}{\wh\rho^2 (x(t'))} dt'. \]
Lastly, set
\[ \eta_1 (x) := \nu \big(A \cdot \eta'_1 (x) \big). \]
Note that the definitions of $\eta'_1, \eta_1$ are invariant under parabolic rescaling, but the definition of $\eta^*_1$ is not invariant under time shifts.
Assertions~\ref{ass_cutoff_a} and \ref{ass_cutoff_b} of this lemma hold wherever $\eta_1$ is differentiable.
Moreover, $\partial_\t \eta'_1$ restricted to $U_{D'}$ is smooth on each fiber $\M^s$ and transversely continuous in the smooth topology, because the same is true for $\eta^*_1, \eta^{**}_1, \wh\rho$.

\begin{Claim} \label{cl_eta1_regularity}
If $D' \geq \underline{D}' (A)$, $a \leq \ov{a} (\alpha', A, D')$, $\eps \leq \ov\eps (\alpha', A,a,D', N)$, then 
\begin{enumerate}[label=(\alph*)]
\item \label{ass_eta1_regularity_a} The closure of the support of $\eta^*_1 \eta^{**}_2 |_{U_{D'}}$ in $\cup_{s \in X} \M^s$ is contained in $U_{D'} \cap \{ \rho \leq \alpha' r_{\can, \eps} \}$.
\item \label{ass_eta1_regularity_b} $\eta_1$, $\eta'_1$ are smooth on every fiber $\M^s$ and transversely continuous in the smooth topology.
\item \label{ass_eta1_regularity_bb} Assertion~\ref{ass_cutoff_c} of this lemma holds.
\item \label{ass_eta1_regularity_d} Assertion~\ref{ass_cutoff_e} of this lemma holds for some constants $C_m = C_m (\alpha', \lb A, \lb a, \lb D') \lb < \infty$.
\end{enumerate}
\end{Claim}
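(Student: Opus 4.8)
The plan is to establish the four assertions in order, feeding the choices of constants downward as indicated. First I would prove \ref{ass_eta1_regularity_a}: the function $\eta^{**}_1$ is nonzero only where $\wh\rho(x) \geq \frac{1}{10} A^{-1} \RR(\td\C)$, i.e. where the renormalized diameter of $\C$ is bounded in terms of $A$ (using $\RR(\td\C) \geq c\, \diam(\td\C) \cdot \wh\rho^{-1}(x) \cdot \wh\rho^{n}(x)$-type estimates together with volume comparison on a $\kappa$-solution, or more directly Lemma~\ref{lem_kappa_identity_1}). Meanwhile $\eta^*_1$ is nonzero only where $\wh\rho(x) \geq \frac{1}{10} a\, r_{\can,\eps}(x)$, i.e.\ where $\rho \lesssim a\, r_{\can,\eps}$. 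Combining these two constraints and invoking the $\kappa$-solution compactness (Theorem~\ref{Thm_kappa_compactness_theory}) to see that a component carrying a point of small curvature scale \emph{and} bounded renormalized diameter is automatically $D'$-small for $D'$ large depending on $A$, one gets that on the support of $\eta^*_1\eta^{**}_1$ we have $\C\subset U_{D'}$ and $\rho\le\alpha' r_{\can,\eps}$, provided $D'\ge\underline D'(A)$ and $a$ is small enough (so that the resulting bound on $\rho/r_{\can,\eps}$ falls below $\alpha'$). The closure statement follows because $U_{D'}$ and $\{\rho\le\alpha' r_{\can,\eps}\}$ are both closed-in-the-relevant-region once the inequalities are strict on the support.

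Next, \ref{ass_eta1_regularity_b}: away from $U_{D'}$ one has $\eta'_1\equiv 0$, so smoothness/transverse continuity is trivial there; on $U_{D'}$ the integrand $\eta^*_1(x(t'))\eta^{**}_1(x(t'))\wh\rho^{-2}(x(t'))$ is smooth on each fiber and transversely continuous by the paragraph preceding the Claim, and it vanishes in a neighborhood of $\partial U_{D'}$ by part~\ref{ass_eta1_regularity_a}; the lower limit $t^*_\C$ of integration is either $0$ or the time at which $\C$ ceases to survive backward, but in the latter case the integrand is supported away from that time (a component about to emerge from a singularity backward in time has small $\rho$ near the singular time but \emph{large} renormalized diameter is not guaranteed — here one uses that in $U_{D'}$, as $t'$ decreases toward $t^*_\C$, either the flow stays $D'$-small and becomes extinct backward, contradicting survival, or it exits $U_{D'}$, where $\eta^{**}_1$ or $\eta^*_1$ vanishes). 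Hence $\eta'_1$, and therefore $\eta_1=\nu(A\eta'_1)$, is smooth on each fiber and transversely continuous in the smooth topology; the partial derivatives $\partial^m_\t$ are controlled by the derivatives of the integrand and of $\wh\rho$, which are scale-invariantly bounded on the support by Lemma~\ref{lem_rcan_control}(e) and $\kappa$-solution compactness, giving the bounds needed for \ref{ass_eta1_regularity_d} with $C_m=C_m(\alpha',A,a,D')$.

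Then \ref{ass_eta1_regularity_bb}: a component $\C\subset\M^s_t$ with $\eta_1>0$ has $\eta'_1(x)>0$ for $x\in\C$, so the backward integral is positive, meaning the integrand was positive at some earlier time $t'$, i.e.\ there is a point $x(t')$ with $\rho(x(t'))\le\alpha' r_{\can,\eps}$ in a $D'$-small component. Applying Claim~\ref{cl_UD_prime}\ref{ass_UD_prime_b} (with the constants chosen so that $\alpha'\le\ov\alpha'(D')$, $D\ge\underline D(D')$, $\eps\le\ov\eps(D')$) to that time gives an extinction time $t_\C>t'$ with Assertions~\ref{ass_cutoff_c1}--\ref{ass_cutoff_c5} holding from $t'$ on; restricting to $[t,t_\C)$ and noting $\C$ is compact (being $D'$-small with $\sec>0$) yields Assertion~\ref{ass_cutoff_c} of the lemma. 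The main obstacle I anticipate is \ref{ass_eta1_regularity_b}, specifically verifying that the time-integral defining $\eta'_1$ is genuinely smooth and transversely continuous \emph{across} $\partial U_{D'}$ and near the backward endpoints $t^*_\C$: this requires knowing that the integrand's support stays uniformly inside $U_{D'}\cap\{\rho\le\alpha' r_{\can,\eps}\}$, which is exactly part~\ref{ass_eta1_regularity_a} and is itself the most delicate point, since it hinges on a compactness argument showing that ``small curvature scale plus bounded renormalized volume $\RR(\td\C)$'' forces $D'$-smallness — the subtlety being the interaction between the (possibly non-compact before taking universal cover) component and the rescaled-limit $\kappa$-solution.
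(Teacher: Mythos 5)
Your proposal takes essentially the same route as the paper's proof: assertion~\ref{ass_eta1_regularity_a} via a rescaling/compactness argument to a $\kappa$-solution limit, using Lemma~\ref{lem_kappa_identity_1} for the diameter bound and the smallness of $a$ to push $\rho/r_{\can,\eps}$ below $\alpha'$, with \ref{ass_eta1_regularity_b}--\ref{ass_eta1_regularity_d} then deduced in the same spirit (the paper merely reverses the order, first noting that \ref{ass_eta1_regularity_a} implies \ref{ass_eta1_regularity_b} and \ref{ass_eta1_regularity_bb} before establishing \ref{ass_eta1_regularity_a} and \ref{ass_eta1_regularity_d} by contradiction and $\kappa$-compactness). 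One slip worth noting: your stated support condition for $\eta^*_1$ has the inequality reversed---$\eta^*_1(x)>0$ forces $\wh\rho(x)<10a\,r_{\can,\eps}(x)$, not $\wh\rho(x)\geq\tfrac1{10}a\,r_{\can,\eps}(x)$---though your next clause ``i.e.\ where $\rho\lesssim a\,r_{\can,\eps}$'' shows you had the correct direction in mind, so this is a typo rather than a conceptual error.
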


\begin{proof}
Let us first show that Assertions~\ref{ass_eta1_regularity_b} and \ref{ass_eta1_regularity_bb} of this claim follow from Assertion~\ref{ass_eta1_regularity_a}.
For Assertions~\ref{ass_eta1_regularity_b} observe that Assertion~\ref{ass_eta1_regularity_a} implies smoothness and transverse continuity of $\partial_\t \eta'_1$.
For any $D'$-small component $\C$ we have $\C(t') \not\subset U_{D'}$ for $t' \in (t^*_\C, t)$ close enough to $t^*_\C$, because otherwise $\C$ would survive until time $t^*_\C$.
So every neighborhood of a point in $U_{D'}$ can be evolved backwards by the flow of $\partial_\t$ into $\{ \eta'_1 = 0 \}$.
Assertion~\ref{ass_eta1_regularity_b} now follows by integrating  $\partial_\t \eta'_1$ in time.
For Assertion~\ref{ass_eta1_regularity_bb} observe that whenever $\eta_1 (x) > 0$, then there is a $t' \leq t$ such that $x(t') \in U_{D'}$ and $\eta^*_1 (x(t')) \eta^{**}_1 (x(t')) > 0$.
By Assertion~\ref{ass_eta1_regularity_a} we have $\rho (x(t')) \leq \alpha' r_{\can, \eps} (x(t'))$, which implies Assertion~\ref{ass_eta1_regularity_bb} via  Claim~\ref{cl_UD_prime}\ref{ass_UD_prime_b}.

It remains to prove Assertions~\ref{ass_eta1_regularity_a} and \ref{ass_eta1_regularity_d} of this claim.
For this purpose fix $A > 0$ and assume that $D' \geq \underline{D}' (A)$, such that if $(\ov{M}, \ov{g})$ is the final time-slice of a compact, simply-connected $\kappa$-solution and for some $\ov{x} \in \ov{M}$
\[ A \cdot \frac{\wh\rho(\ov{x}, 0)}{\mathcal{R} (\ov{M}, \ov{g}_{0})} \geq \frac1{10}, \]
then $\diam ( \ov{M}, g_0) <  D' \rho (\ov{x})$.
This is possible due to Lemma~\ref{lem_kappa_identity_1}.

Assume now that Assertion~\ref{ass_eta1_regularity_a} was false for fixed $\alpha', A, D'$ .
Choose sequences $a^i,\eps^i \to 0$.
Then we can find singular Ricci flows $\M^i$ and points $x^i \in \M^i_{t^i}$ that are contained in the closure of $U_{D'}$ and the support of $\eta^*_1 \eta^{**}_1$, but $x^i \not\in U_{D'}$ or $\rho (x^i) > \alpha' r_{\can, \eps^i} (x^i)$.
Let $\C^i \subset \M^i_{t^i}$ be the component containing $x^i$.
Since $x^i$ is contained in the closure of $U_{D'}$ we have
\[ \diam \C^i \leq D \rho (x^i), \qquad \rho (x^i) \leq 10^{-1} r_{\can, \eps^i} (x^i). \]
This implies that $x^i$ satisfies the $\eps^i$-canonical neighborhood assumption and therefore, after passing to a subsequence, the universal covers $(\td\C^i, \rho^{-2} (x^i) g^i_{t^i}, x^i)$ converge to a compact smooth pointed limit $(\ov{M}, \ov{g}, \ov{x})$, which is the final time-slice of a $\kappa$-solution.

Next, since $x^i$ is contained in the support of $\eta^*_1 \eta^{**}_1$ we have
\begin{equation} \label{eq_eta_limits}
 \liminf_{i \to \infty} a^i \cdot \frac{r_{\can, \eps^i}(x^i)}{\wh\rho(x^i)} \geq \frac1{10}, \qquad
 \liminf_{i \to \infty} A \cdot \frac{\wh\rho(x^i)}{ \mathcal{R} (\td\C^i)}  \geq \frac1{10}. 
\end{equation}
By our choice of $D'$, the second bound implies $\diam (\ov{M} , \ov{g}_0) < D' \rho (\ov{x})$.
So for large $i$ we have $\diam \C^i < D' \rho (x^i)$.
Since $\lim_{i \to \infty} \wh\rho (x^i) / \rho (x^i) = V^{1/3} (\ov{M}, \ov{g}_0) > 0$, the first bound of (\ref{eq_eta_limits}) implies that $\liminf_{i \to \infty} r_{\can, \eps^i} (x^i) / \rho (x^i) = \infty$.
This implies that $\C^i \subset U^i_{D'}$ and $\rho (x^i) \leq \alpha' r_{\can, \eps^i} (x^i)$  for large $i$, which contradicts our assumptions.

Lastly assume that Assertion~\ref{ass_eta1_regularity_d} of this claim was false.
Then we can find a sequence of counterexamples as before, but this time for fixed $\alpha', A, a, D'$ and $\eps^i \to 0$ such that $\partial^{m_0}_\t \eta'_1 (x^i) \rho^{2m_0} (x^i) \to \infty$ for some fixed $m_0 \geq 1$.
So for large $i$ the point $x^i$ must lie in the support of $\eta^*_1 \eta^{**}_2 |_{U_{D'}}$, because otherwise $\eta'_1$ would be constant near $x^i$.
As before, we can pass to a pointed, compact, simply-connected $\kappa$-solution $(\ov{M}, \ov{g}, \ov{x})$ such that for any $m \geq 0$ we have 
\[ \partial^m_\t \eta^{**}_1 (x^i) \cdot \rho^{2m} (x^i) \longrightarrow \partial^m_t \eta^{**}_1 (\ov{x}), \qquad  \partial^m_\t \wh\rho (x^i) \cdot \rho^{-1+2m} (x^i) \longrightarrow \partial^m_t \wh\rho (\ov{x}). \]
Moreover, by Lemma~\ref{lem_rcan_control} we have for $m \geq 1$
\[ \limsup_{i \to \infty} \big|\partial^m_\t    r_{\can, \eps^i} (x^i) \big| \cdot \rho^{-1+2m} (x^i)  \leq 
\limsup_{i \to \infty}   \eps^i r_{\can, \eps^i}^{1-2m} (x^i) \cdot \rho^{-1+2m} (x^i) = 0, \]
 which implies that for $m \geq 1$
 \[ \partial^m_\t \eta^{*}_1 (x^i) \cdot \rho^{2m} (x^i) \longrightarrow  0. \]
 By combining these estimates, we obtain a contradiction to our assumption that $\partial^{m_0}_\t \eta'_1 (x^i) \rho^{2m_0} (x^i) \to \infty$.
 \end{proof}
 
 \begin{Claim} \label{cl_choice_of_A}
If $\alpha \leq c(\delta^*)  a$, $A \geq \underline{A} (\delta^*)$, $D' \geq \underline{D}' (\delta^*)$, $D_0 \geq \underline{D}_0 (\delta^*)$, $\eps \leq \eps (\delta^*)$, then Assertion~\ref{ass_cutoff_d} of this lemma holds.
\end{Claim}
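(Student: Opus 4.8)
The plan is to argue by contradiction, via the compactness mechanism used repeatedly in this section. If the claim fails for some fixed $\delta^*$, then no matter how the threshold functions are chosen a counterexample exists, so taking ever more restrictive thresholds produces singular Ricci flows $\M^i$, points $x^i \in \M^i_{t^i}$ with $\eta_1(x^i) < 1$, and constants satisfying $\alpha^i/a^i \to 0$, $A^i, D'^i, D_0^i \to \infty$, $\eps^i \to 0$, such that none of \ref{ass_cutoff_d1}--\ref{ass_cutoff_d4} holds at $x^i$. Since \ref{ass_cutoff_d1} fails, $\rho(x^i) \leq \alpha^i r_{\can, \eps^i}(x^i) < r_{\can, \eps^i}(x^i)$; rescale parabolically so $\rho(x^i) = 1$, hence $r_{\can,\eps^i}(x^i) \geq 1/\alpha^i \to \infty$. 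By Lemma~\ref{lem_rcan_control}\ref{ass_rcan_b} the parabolic neighborhood $P(x^i, (\eps^i)^{-1})$ is unscathed and $\eps^i$-close to one in a $\kappa$-solution. Let $\C^i \subset \M^i_{t^i}$ be the component containing $x^i$; I would split according to whether $\diam \C^i$ stays bounded along the sequence.

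If $\diam \C^i$ is bounded, then for $D'$ large $\C^i$ is $D'$-small (a limit argument, using that $\rho(x^i) < 10^{-1}r_{\can,\eps^i}(x^i)$, gives $\rho < 10^{-1}r_{\can,\eps^i}$ on all of $\C^i$), so by Claim~\ref{cl_UD_prime}\ref{ass_UD_prime_b} the assertions \ref{ass_cutoff_c1}--\ref{ass_cutoff_c5} hold for $\C^i$: it has $\sec > 0$, becomes asymptotically round, and goes extinct. I would then show $\eta_1(x^i) = 1$ for large $i$, contradicting $\eta_1(x^i) < 1$. Since $\diam \C^i \lesssim 1 \ll (\eps^i)^{-1}$, Lemma~\ref{lem_rcan_control}\ref{ass_rcan_b} gives that $x^i$ survives backwards for time $\gtrsim (\eps^i)^{-2}$, with the flow near $x^i$ staying uniformly close to a compact ancient $\kappa$-solution that, backwards in time, is an ever larger almost round metric; this yields scale-invariant bounds $|\partial_{\t}\wh\rho^2| \leq C$ and $\mathcal{R}(\td{\C}^i(t'))/\wh\rho(x^i(t')) \leq C$ along the backwards trajectory. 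Hence $\eta^{**}_1(x^i(t')) = 1$ throughout (once $A \geq C$), while $\eta^*_1(x^i(t')) = 1$ as long as $\wh\rho(x^i(t')) \leq \tfrac12 a\, r_{\can,\eps^i}(x^i(t'))$, which — using $\wh\rho^2(x^i(t')) \leq 1 + C(t^i - t')$ and $r_{\can,\eps^i}(x^i) \geq 1/\alpha^i$ — holds on a backwards interval of length $\gtrsim \min\big((\eps^i)^{-2}, (a^i/\alpha^i)^2\big)$. Integrating the defining formula for $\eta'_1$ gives $\eta'_1(x^i) \gtrsim \log\min\big((\eps^i)^{-2},(a^i/\alpha^i)^2\big) \to \infty$, so $\eta_1(x^i) = \nu(A^i\eta'_1(x^i)) = 1$. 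Because only a fixed ratio $a/\alpha \geq 1/c(\delta^*)$ is available at a single $\delta^*$, this is the step that forces $c(\delta^*)$ to be small relative to $\underline A(\delta^*)$ and the universal constants.

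If $\diam\C^i$ is unbounded, pass to a subsequence converging (Theorem~\ref{Thm_kappa_compactness_theory}) to a $\kappa$-solution which, by Theorem~\ref{Thm_kappa_sol_classification} together with unboundedness of the diameter, is noncompact: a round cylinder, the $\IZ_2$-quotient of a round cylinder, or the Bryant soliton. If it is a cylinder, then for $\eps$ small relative to $\delta^*$ the point $x^i$ is a center of a $\delta^*$-neck, i.e.\ \ref{ass_cutoff_d2}. If it is the $\IZ_2$-quotient of a cylinder, then either $x^i$ lies at distance $\geq \delta^{*-1}\rho(x^i)$ from the exceptional $\IR P^2$, hence is a center of a $\delta^*$-neck (\ref{ass_cutoff_d2}), or a neighborhood of $x^i$ containing the $\IR P^2$ admits a two-fold cover isometric to a long piece of a cylinder in which a lift of $x^i$ is a neck center (\ref{ass_cutoff_d3}); the exceptional orbit and the cover are identified by the Hausdorff-limit argument given after Definition~\ref{Def_spherical_structure}. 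If it is the Bryant soliton, then when the point $\overline{x}^i \in M_{\Bry}$ corresponding to $x^i$ is far from $x_{\Bry}$, $x^i$ lies in the cylindrical part and is a $\delta^*$-neck center (\ref{ass_cutoff_d2}); when $\overline x^i$ is within bounded distance of $x_{\Bry}$, Lemma~\ref{Lem_Bry_R_Hessian_positive} gives that the zero of $\nabla R$ at the tip is nondegenerate, so for $\eps$ small the implicit function theorem yields a point $x'^i$ near the tip with $\nabla R(x'^i) = 0$, $\rho(x'^i) \approx 1$, $\rho(x'^i) < 10^{-1}r_{\can,\eps^i}(x'^i)$, $d(x^i,x'^i) \leq C\rho(x'^i) < D_0\rho(x'^i)$ (for $D_0$ large), and $(\M^i_{t^i}, g^i_{t^i}, x'^i)$ is $\delta^*$-close to $(M_{\Bry}, g_{\Bry}, x_{\Bry})$ at scale $\rho(x'^i)$; moreover, chaining the canonical neighborhood assumption down the cylindrical end (along which $\rho$ increases to $\approx r_{\can,\eps^i}(x'^i) \geq \alpha^{-1}\rho(x'^i)$) forces $\C^i$ to contain that end over a length $\gtrsim \alpha^{-2}\rho(x'^i)$, so $\diam \C^i > 100 D_0 \rho(x'^i)$ once $\alpha$ is small relative to $D_0$, giving \ref{ass_cutoff_d4}. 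In every subcase we contradict the choice of $x^i$.

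The main obstacle is the bounded-diameter case: one must quantitatively track the backwards trajectory $t' \mapsto x^i(t')$ through the interval on which the flow stays uniformly close to a compact ancient $\kappa$-solution, and balance the (linear) growth of $\wh\rho^2$ against the length of the subinterval on which $\eta^*_1\eta^{**}_1 \equiv 1$, so that $\eta'_1(x^i)$ diverges. Closely tied to this is the bookkeeping of the constant hierarchy — $\eps$ chosen first and small, then $\alpha/a$ small, with $A$, $D'$, $D_0$ large — arranged so that simultaneously the neck detection works in the cylinder and Bryant-end cases, both the distance and the diameter conditions in \ref{ass_cutoff_d4} hold, and $\eta'_1(x^i)\to\infty$ in the bounded-diameter case. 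The remaining steps — the uniform implicit function theorem locating $x'^i$, and the elementary geometry of the $\IZ_2$-quotient of the cylinder — are routine.
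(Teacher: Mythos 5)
Your overall strategy — contradiction plus compactness of $\kappa$-solutions, with a dichotomy according to whether the limit model is compact — is the same as the paper's, and your treatment of the noncompact case (cylinder, $\IZ_2$-quotient, Bryant tip) is essentially the paper's argument. However, your bounded-diameter (compact limit) case diverges from the paper and has a genuine gap.

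The paper's argument in the compact case is much shorter than yours: it integrates $\eta^*_1\eta^{**}_1\wh\rho^{-2}$ only over the \emph{short} backward interval $[t^i - \rho^2(x^i),\, t^i]$ (length $1$ after rescaling to $\rho(x^i)=1$). On this fixed interval the flow is uniformly close to a fixed compact time-slice, so both $\eta^*_1$ and $\eta^{**}_1$ equal $1$ for large $i$ (using $c^i\to 0$ resp.\ $A^i\to\infty$), and the integral converges to $\int_{-1}^0\wh\rho^{-2}(\ov x(t'))\,dt'$, a positive constant. That already contradicts $\eta'_1(x^i) < (A^i)^{-1}\to 0$. Note that showing $\eta'_1(x^i)\to\infty$ is never needed; a uniform positive lower bound suffices because $A^i\to\infty$.

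Your version instead integrates over a long backward interval of length $\min((\eps^i)^{-2}, (a^i/\alpha^i)^2)$ and aims for $\eta'_1(x^i)\gtrsim\log(\cdot)\to\infty$. The step that fails is the assertion that $\mathcal R(\td\C^i(t'))/\wh\rho(x^i(t'))$ remains uniformly bounded along that whole trajectory. You describe the compact limit $\kappa$-solution as ``backwards in time ... an ever larger almost round metric,'' which is only true for the round shrinking sphere. A compact $\kappa$-solution from case (d) of Theorem~\ref{Thm_kappa_sol_classification} is rotationally symmetric on $S^3$ or $\IR P^3$ and converges backwards in time to a \emph{cylinder or Bryant soliton}; its normalized diameter diverges as $t\to-\infty$. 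For a region that looks like a cylinder of (normalized) length $L$ with caps one has $\int R\,d\mu \sim L$ and $V^{1/3}\sim L^{1/3}$, so $\mathcal R/\wh\rho\sim L^{2/3}\to\infty$. Consequently $\eta^{**}_1$ drops below $1$ for $t'$ far enough back, and the integral cannot be estimated as you claim. Your long-interval approach therefore cannot be made to work without further restrictions; the paper's short-interval argument is the correct one.

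Two smaller points: (i) your ``$\eps$ chosen first and small'' inverts the paper's hierarchy — in Lemma~\ref{lem_cutoff_almost_extinct} the threshold $\eps$ is chosen last, after $A$, $D_0$, $D'$, $a$, $\alpha$, which is essential because the canonical-neighborhood and compactness arguments need $\eps$ small relative to all other choices; (ii) in the Bryant subcase the base point of the pointed limit must be $x_{\Bry}$ (since $\rho(\ov x)=1$ and $R$ attains its unique maximum there by Lemma~\ref{Lem_Bry_R_Hessian_positive}), so your ``when $\ov x^i$ is far from $x_{\Bry}$'' subcase does not arise; the paper goes directly to the IFT step at the tip.
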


\begin{proof}
Assume that the claim was false for some fixed $\delta^*$.
Then we can find singular Ricci flows $\M^i$ and points $x^i \in \M^i_{t^i}$ such that $\eta_1 (x^i) < 1$ and all Assertions~\ref{ass_cutoff_d1}--\ref{ass_cutoff_d4} are false, for parameters $a^i, A^i, \alpha^i \leq c^i  a^i$, $D^i, D^{\prime, i}, D^i_0$, $\eps^i$ that satisfy the bounds of Claims~\ref{cl_UD_prime} and \ref{cl_eta1_regularity} and $\alpha^i, c^i, \eps^i \to 0$ and $A^i, D^{\prime, i}, D^i_0 \to \infty$.
Let $\C^i \subset \M^i_{t^i}$ be the component containing $x^i$ and observe that $\eta_1(x^i) < 1$ implies $\eta'_1 (x^i) <  (A^i)^{-1}$.

Since Assertion~\ref{ass_cutoff_d1} is violated we have
\[ \rho (x^i) \leq \alpha^i \cdot r_{\can, \eps^i} (x^i) \leq c^i a^i \cdot r_{\can, \eps^i} (x^i) < r_{\can, \eps^i} (x^i) \]
for large $i$, which implies that the points $x^i$ satisfy the $\eps^i$-canonical neighborhood assumption.
So after passing to a subsequence, we may assume that after parabolic rescaling by $\rho^{-2} (x^i)$ the universal covers of the flows restricted to larger and larger backwards parabolic neighborhoods of $x^i$ converge to a smooth, pointed $\kappa$-solution $(\ov{M}, (\ov{g})_{t \leq 0}, \ov{x})$.

We claim that $\ov{M}$ must be compact.
Assume not.
Then by Theorem~\ref{Thm_kappa_sol_classification}, after passing to another subsequence, $(\C^i, \rho^{-2} (x^i) g^i_{t^i}, x^i)$ would converge to a pointed round cylinder, its $\IZ_2$-quotient or a pointed Bryant soliton, in contradiction to our assumption that Assertions~\ref{ass_cutoff_d3} and \ref{ass_cutoff_d4} are violated.
Note here that the Hessian of $R$ on $(M_{\Bry}, g_{\Bry})$ at $x_{\Bry}$ is non-degenerate (see Lemma~\ref{Lem_Bry_R_Hessian_positive}), so if $(\C^i, x'')$ is sufficiently close to $(M_{\Bry}, g_{\Bry}, x_{\Bry})$, then we can find an $x' \in \C^i$ near $x''$ with $\nabla R (x') = 0$.

So $\ov{M}$ must be compact.
Since Assertion~\ref{ass_cutoff_d1} is violated by assumption, we have for all $t' \in [t^i - \rho^2 (x^i), t^i]$
\[ a^i \cdot \frac{r_{\can, \eps^i} (x^i(t'))}{\wh\rho (x^i(t'))} \geq a^i \cdot \frac{r_{\can, \eps^i} (x^i)}{\wh\rho (x^i(t'))} 
\geq \frac{a^i}{\alpha^i} \cdot \frac{\rho(x^i)}{\wh\rho (x^i(t'))} 
\geq \frac{1}{c^i} \cdot \frac{\rho(x^i)}{\wh\rho (x^i(t'))} 
\]
Due to smooth convergence to a compact $\kappa$-solution, which also holds on larger and larger parabolic neighborhoods, the right-hand side must go to infinity.
Therefore, for large $i$ we have $\eta^*_1 (x^i(t')) = 1$ for all $t' \in [t^i - \rho^2 (x^i), t^i]$.
Similarly, by smooth convergence  to a compact $\kappa$-solution, $A^i \to \infty$ and the fact that $\wh{\rho} / \RR$ is scaling invariant, we obtain that for large $i$ we have $\eta^{**}_1 (x^i(t')) = 1$ for all $t' \in [t^i - \rho^2 (x^i), t^i]$.
So since $D^{\prime, i} \to \infty$ we obtain that for large $i$
\[ (A^i)^{-1} > \eta'_1(x^i) \geq  \int_{t^i - \rho^2 (x^i)}^{t^i} \frac{1}{\wh\rho^2 (x^i(t'))} dt'. \]
This, again, contradicts smooth convergence to a compact $\kappa$-solution.\end{proof}

Lastly, let us summarize the choice of constants.
Given $\delta^*$, we can determine $A$ and $D_0$ using Claim~\ref{cl_choice_of_A}.
Next, we can choose $D'$ using Claims~\ref{cl_eta1_regularity}, \ref{cl_choice_of_A}  and then $\alpha'$, $a$ using Claim~\ref{cl_UD_prime}.
Once $a$ is fixed, we can choose $\alpha$ using Claim~\ref{cl_choice_of_A}.
These constants can be used to determine $D, D_0, C_m$.
Lastly, we can choose $\eps$ depending on all previous constants and $N$.
\end{proof}

 \subsection{Modification in regions that are geometrically close to Bryant solitons} \label{subsec_modify_Bryant}
Our next goal will be to round the metrics $g^s$ in regions that are close to the tip of a Bryant soliton at an appropriately small scale.
The resulting metrics will be called $g^{\prime,s}_2$.

\begin{lemma}\label{lem_Bryant_rounding}
For every $\delta, \delta^* > 0$, and assuming $\alpha \leq \ov\alpha(\delta^*)$, $D \geq \underline{D} (\delta^*)$, $C_m \geq \underline{C}_m$ and $\eps \leq \ov\eps (\delta^*, \delta)$, there are:
\begin{itemize}
\item a  transversely continuous family of smooth metrics $(g^{\prime, s}_{2})_{s \in X}$ on $\ker d\t$, 
\item a continuous function $\eta_2 : \cup_{s \in X} \M^s \to [0,1]$ that is smooth on each fiber $\M^s$ and transversely continuous in the smooth topology,
\item a transversely continuous family of spherical structures $(\SS^s_2)_{s \in X}$ on open subsets of $\M^s$,
\end{itemize}
such that for any $s \in X$ and $t \geq 0$:
\begin{enumerate}[label=(\alph*)]
\item  \label{ass_round_Bry_a} The fibers of $\SS^s_2$ are contained in time-slices of $\M^s$.
\item  \label{ass_round_Bry_b} $g^{\prime, s}_{2}$ is compatible with $\SS^s_2$.
\item  \label{ass_round_Bry_e} $\eta_2$ satisfies all assertions of Lemma~\ref{lem_cutoff_almost_extinct} for the new constants $\alpha$, $D$, $C_m$, $\eps$ and $N := [\delta^{-1}]$ and with Assertion~\ref{ass_cutoff_d} replaced by:
For every point $x \in \M^s_t$ with $\eta_2 (x) < 1$ (at least) one of the following is true:
\begin{enumerate}[label=(c\arabic*)]
\item \label{ass_round_Bry_e1} $\rho (x) > \alpha r_{\can, \eps} (x)$.
\item \label{ass_round_Bry_e2} $x$ is a center of a $\delta^*$-neck with respect to $g^{\prime,s}_2$,
\item \label{ass_round_Bry_e3} There is an open neighborhood of $x$ that admits a two-fold cover in which a lift of $x$ is a center of a $\delta^*$-neck with respect to $g^{\prime,s}_2$.
\item \label{ass_round_Bry_e4} $x \in \domain (\SS^s_2)$.
\end{enumerate}
\item  \label{ass_round_Bry_f} $g^{\prime,s}_2 = g^s$ on $\{ \rho > 10^{-1} r_{\can, \eps} \}$.
\item  \label{ass_round_Bry_g} $|\nabla^{m_1} \partial_\t^{m_2} ( g^{\prime, s}_2 - g^s )| < \delta \rho^{-m_1 - 2m_2}$ for $m_1, m_2 = 0, \ldots, [\delta^{-1}]$.
\item \label{ass_round_Bry_i} If $(\M^s_t, g^s_t)$ is homothetic to the round sphere or a quotient of the round cylinder, then $g^{\prime,s}_{2,t} = g^s_t$.
\item \label{ass_round_Bry_j} For every spherical fiber $\mathcal{O}$ of $\SS^s_2$ there is a family of local spatial vector fields $(Y_\mathcal{O}^{s'})_{s' \in X}$ defined in a neighborhood of $\mathcal{O}$ in $\cup_{s \in X} \M^s$ such that  for all $s' \in X$ the vector field $\partial^{s'}_\t + Y^{s'}_\mathcal{O}$ preserves $\SS^{s'}_2$ and $|\nabla^{m_1} \partial_\t^{m_2} Y^{s'}_\mathcal{O}| < \delta \rho^{1-m_1 - 2m_2}$ for $m_1, m_2 = 0, \ldots, [\delta^{-1}]$.
\end{enumerate}
\end{lemma}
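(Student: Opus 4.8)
The plan is to run a compactness/contradiction scheme in parallel with an explicit interpolation construction, using the cutoff function $\eta_1$ from Lemma~\ref{lem_cutoff_almost_extinct} as a starting point. First I would localize: by Lemma~\ref{lem_cutoff_almost_extinct}\ref{ass_cutoff_d}, every point $x$ with $\eta_1(x)<1$ and $\rho(x)\le\alpha r_{\can,\eps}(x)$ that is not (essentially) a neck center admits a nearby point $x'$ with $\nabla R(x')=0$, $\rho(x')<10^{-1}r_{\can,\eps}(x')$, and $(\M^s_t,g^s_t,x')$ $\delta^*$-close to the pointed Bryant soliton at some scale $\lambda\approx\rho(x')$; moreover by Lemma~\ref{Lem_Bry_R_Hessian_positive} the Hessian of $R$ at $x_{\Bry}$ is negative definite, so for $\delta^*$ small the critical point $x'$ is unique in a definite-size ball, depends smoothly on the metric, and hence is transversely continuous in $s$. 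This gives a canonically defined, transversely continuous collection of ``Bryant centers'' $\{x'\}$, together with radii $\lambda(x')$ comparable to $\rho(x')$.

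Next I would define the modified metric. Around each Bryant center $x'$, use the exponential map $\exp^{g^s_t}_{x'}$ to pull the metric back to a ball in $\IR^3$; since $g^s_t$ is $\delta^*$-close to $g_{\Bry}$ there, its pullback is $O(\delta^*)$-close to the rotationally symmetric model. Average the pulled-back metric over the $O(3)$-action fixing the origin to produce a rotationally symmetric metric $\bar g$, and then interpolate radially: set $g^{\prime,s}_2 := \chi\,g^s_t + (1-\chi)\,\bar g$ where $\chi=\chi(\dist_{g^s_t}(\cdot,x')/\rho(x'))$ transitions from $0$ near $x'$ to $1$ at radius comparable to the neck-boundary of the Bryant region. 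Because the $O(3)$-average of a metric already $\delta^*$-close to $g_{\Bry}$ differs from $g^s_t$ by $O(\delta^*)$ in every $C^{[\delta^{-1}]}$-norm (at scale $\rho$), the interpolated metric satisfies \ref{ass_round_Bry_g} for $\eps$ (hence $\delta^*$) small. The spherical structure $\SS^s_2$ is then the orbit foliation of this averaged $O(3)$-action on the region where $\chi\equiv 0$, with domain the union of these rotationally-symmetric cores; the singular fiber is the orbit of $x'$ (a point). Transverse continuity of $(\SS^s_2)_{s\in X}$ follows from transverse continuity of $x'$, $\rho$, and $\exp^{g^s_t}$, and $g^{\prime,s}_2=g^s$ on $\{\rho>10^{-1}r_{\can,\eps}\}$ holds because all modifications occur well inside $\{\rho<10^{-1}r_{\can,\eps}\}$, giving \ref{ass_round_Bry_f}; assertion \ref{ass_round_Bry_i} holds because on an exactly round sphere or cylinder the metric is already rotationally symmetric, so the average and interpolation change nothing. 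For \ref{ass_round_Bry_e}, the new cutoff $\eta_2$ is defined essentially as $\eta_1$ (re-run with the updated constants and $N=[\delta^{-1}]$); one checks that a point with $\eta_2(x)<1$ not satisfying \ref{ass_round_Bry_e1}, \ref{ass_round_Bry_e4} must, by the dichotomy of Lemma~\ref{lem_cutoff_almost_extinct}\ref{ass_cutoff_d}, be a $\delta^*$-neck center — and since $g^{\prime,s}_2$ differs from $g^s$ by $O(\delta^*)$, it is a neck center for $g^{\prime,s}_2$ as well (possibly after passing to a two-fold cover), which gives \ref{ass_round_Bry_e2}--\ref{ass_round_Bry_e3}.

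For \ref{ass_round_Bry_j}, near a fiber $\mathcal O$ (the orbit of a Bryant center $x'(t)$ in the time-$t$ slice) I would build the correcting vector field $Y^s_{\mathcal O}$ by comparing the center $x'(t)$ to $x'(t')$ along the flow of $\partial^s_\t$: since both are canonically-defined critical points of $R$ on nearby time-slices depending smoothly on $t$, the map $t\mapsto x'(t)$ traces a smooth path, and $Y^s_{\mathcal O}$ is chosen so that $\partial^s_\t + Y^s_{\mathcal O}$ moves $x'(t)$ to $x'(t')$ and, more generally, carries the $O(3)$-orbit structure forward; concretely one uses the $O(3)$-equivariant structure of the exponential-coordinate picture to produce $Y^s_{\mathcal O}$ by averaging the discrepancy $\partial^s_\t$ minus its ``symmetric part'' over the $O(3)$-action. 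The bound $|\nabla^{m_1}\partial_\t^{m_2}Y^s_{\mathcal O}|<\delta\rho^{1-m_1-2m_2}$ again follows because the discrepancy is $O(\delta^*)$ by closeness to the (exactly symmetric) Bryant soliton flow, and transverse continuity is inherited from that of all the ingredients.

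The main obstacle, I expect, is \emph{patching}: different Bryant centers $x'$ may be close to one another (e.g. two tips connected by a short neck, or a center near the boundary of $\{\eta_2<1\}$), so the radially-interpolated cores must be shown either to be disjoint or to agree on overlaps. Controlling this requires a Vitali-type selection of Bryant centers at definite mutual distance (using that between two near-tips the scalar curvature must dip, forcing a $\delta^*$-neck in between, so one can discard redundant centers), together with a careful bookkeeping of the transition regions so that the resulting spherical structure $\SS^s_2$ is globally well-defined and transversely continuous — in particular so that Property~\ref{prop_def_RR_1}-type disjointness (a core is either contained in a neck region or disjoint from overlapping cores) holds. The closeness bounds in \ref{ass_round_Bry_g} and the neck-center conclusion in \ref{ass_round_Bry_e} both degrade if overlaps are not handled, so this is where most of the technical work lies; everything else is a routine consequence of the quantitative closeness to $\kappa$-solutions supplied by Lemma~\ref{lem_rcan_control} and the classification in Theorem~\ref{Thm_kappa_sol_classification}.
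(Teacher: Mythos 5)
Your construction tracks the paper's closely: you define the Bryant centers as nondegenerate critical points of $R$ in a $\delta^*$-Bryant region (using Lemma~\ref{Lem_Bry_R_Hessian_positive} for transverse continuity), build a local $O(3)$-action from exponential coordinates at each center, average $g^s$ over it, interpolate radially back to $g^s$, take $\SS^s_2$ to be the orbit foliation, set $\eta_2$ to be a truncated $\eta_1$, and produce $Y^{s'}_{\mathcal{O}}$ by averaging $\partial_\t$ over the same action. This is the approach of the paper.

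Where you deviate is in your treatment of the ``main obstacle'' of overlapping Bryant cores, and here your proposal has a gap. A Vitali-type selection of which centers to keep is not needed, and in fact cannot be used as described: the set of centers you discard may change discontinuously as $s$ varies, which would break transverse continuity of $(\SS^s_2)_{s\in X}$ and of the resulting metrics. The paper avoids this by making the disjointness automatic rather than selected. The crucial point you did not invoke is that the characterization of Bryant centers inherited from Lemma~\ref{lem_cutoff_almost_extinct}\ref{ass_cutoff_d4} already requires the component $\C$ containing $x'$ to have diameter $> 100 D_0 \rho(x')$. With this built in, a compactness/contradiction argument shows that for small enough $\delta^*, \eps$, the balls $B(x', 10 D_0\rho(x'))$ about distinct centers $x' \in E^s$ are pairwise disjoint and the injectivity radius at each $x'$ exceeds $10 D_0\rho(x')$: a counterexample sequence would converge (after rescaling) to a rotationally symmetric compact $\kappa$-solution with $\diam \geq 100 D_0$ and a critical point of $R$, and Lemma~\ref{Lem_Bry_R_Hessian_positive} together with the symmetry forces this critical point to be unique in any ball of radius $\ll \diam$. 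Your intuition that a $\delta^*$-neck must separate nearby tips is the same geometric picture, but the diameter condition is what turns this into automatic disjointness without a choice, and that choicelessness is essential for the family version.
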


Note that we may choose $\delta \ll \delta^*$.
This will be important for us later when we analyze components where $\eta_2 \in (0,1)$ in the proof of Lemma~\ref{lem_exten_almost_round}.
More specifically, the diameter of these components is bounded by a constant of the form $\underline{D} (\delta^*) \rho (x)$, while $g'_2$ is $\delta$-close to $g$.
So by choosing $\delta \ll \delta^*$, we can guarantee that the Ricci flows starting from both metrics on these components remain arbitrarily close on an arbitrarily large time-interval.

\begin{proof}
In the following we will assume that $\delta^*$ is smaller than some universal constant, which we will determine in the course of the proof.
Apply Lemma~\ref{lem_cutoff_almost_extinct} with $\delta^*$ replaced by $\delta^*/2$ and $N = [\delta^{-1}]$, set 
\[ \eta_2 (x) := \nu ( 2 \eta_1 (x) ) \]
and consider the constants $\alpha \leq \ov\alpha(\delta^*)$, $D \geq \underline{D} (\delta^*), D_0 \geq \underline{D}_0 (\delta^*)$ and $C_m \geq \underline{C}_m (\delta^*)$.
Note that $\{ \eta_2 > 0 \} \subset \{ \eta_1 > 0 \}$ and $\{ \eta_2 < 1 \} \subset \{ \eta_1 < \frac12 \} \subset \{ \eta_1 < 1\}$.
So all assertions of Lemma~\ref{lem_cutoff_almost_extinct} remain true for $\eta_2$ after modifying the constants $C_m \geq \underline{C}_m (\delta^*)$.

In the following we will construct $g^{\prime,s}_2$ and $\SS^s_2$ for all $s \in X$.
The fact that these objects are transversely continuous, as well as Assertion~\ref{ass_round_Bry_j}, will mostly be clear due to our construction.

Let $E^s \subset \M^s$ be the set of points $x' \in \M^s_t$ such that:
\begin{enumerate}[label=(\arabic*)]
\item \label{prop_E_1} $\eta_1 (x') < 1$.
\item \label{prop_E_2} $\rho (x') < 10^{-1}  r_{\can, \eps} (x')$.
\item \label{prop_E_3} $\nabla R (x') = 0$.
\item \label{prop_E_4} $(\M^s_t, g^s_t, x')$ is $\delta^*$-close to $(M_{\Bry}, g_{\Bry}, x_{\Bry})$ at some scale.
\item \label{prop_E_5} The diameter of the component of $\M^s_t$ containing $x'$ is $> 100 D_0 \rho(x')$.
\end{enumerate}

\begin{Claim} \label{cl_def_E}
Assuming $\delta^* \leq \ov\delta^*$, $\eps \leq \ov\eps ( D_0)$ the following is true for any $x' \in E^s \cap \M^s_t$:
\begin{enumerate}[label=(\alph*)]
\item \label{ass_Es_a} The Hessian of $R$ at $x'$ is strictly negative.
\item \label{ass_Es_b} $E^s \subset \M^s$ is an 1-dimensional submanifold.
\item \label{ass_Es_c} $(E^s)_{s \in X}$ is transversely continuous in the following sense: There are neighborhoods $U \subset X$, $I \subset [0, \infty)$ of $x'$ and $t$ and a transversely continuous family of smooth maps $(\wh{x}'_{s'} : I \to \M^{s'})_{s' \in U}$ with $\wh{x}'_{s'} (t') \in E^{s'} \cap \M^{s'}_{t'}$, $\wh{x}'_s (t) = x'$ and $\cup_{s' \in X} \wh{x}'_{s'} (I) \cap V = \cup_{s' \in X} E^{s'} \cap V$ for some neighborhood $V \subset \cup_{s \in X} \M^s$ of $x'$.
Moreover, $(\wh{x}'_{s'})_{s' \in U}$ is locally uniquely determined by $x'$.
\item \label{ass_Es_d} The balls $B(x', 10 D_0 \rho (x'))$, $x' \in E^s \cap \M^s_t$ are pairwise disjoint.
\item \label{ass_Es_e} If $x \in \M^s_t$ with $\eta_1 (x) < 1$, then
\[ x \in \cup_{x' \in E^s \cap \M^s_t} B(x', D_0 \rho(x')) \]
or one of the Assertions~\ref{ass_round_Bry_e1}--\ref{ass_round_Bry_e3} of this lemma hold with $\delta^*$ replaced by $\delta^* / 2$ and $g^{\prime,s}_2$ replaced by $g^s$.
\item \label{ass_Es_f} The injectivity radius at $x'$ is $> 10 D_0 \rho (x')$.
\end{enumerate}
\end{Claim}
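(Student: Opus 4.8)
We prove the six assertions in the order \ref{ass_Es_a}, then \ref{ass_Es_b}--\ref{ass_Es_c}, then \ref{ass_Es_d}--\ref{ass_Es_f}, and finally \ref{ass_Es_e}. The overall strategy is: \ref{ass_Es_a}--\ref{ass_Es_c} follow from an implicit-function-theorem argument applied to the spatial gradient of $R$; \ref{ass_Es_d} and \ref{ass_Es_f} follow from the quantitative geometry of the Bryant soliton near its tip; and \ref{ass_Es_e} is essentially a restatement of Lemma~\ref{lem_cutoff_almost_extinct}\ref{ass_cutoff_d}. Throughout we use that, by condition \ref{prop_E_2} together with Lemma~\ref{lem_rcan_control} and Lemma~\ref{lem_rho_Rm_R}, the $\eps$-canonical neighborhood assumption holds at every point of $E^s$ and there $R=\rho^{-2}$; so the scale at which $(\M^s_t,g^s_t,x')$ is $\delta^*$-close to $(M_{\Bry},g_{\Bry},x_{\Bry})$ in condition \ref{prop_E_4} equals $\rho(x')$ up to a factor $1+O(\delta^*)$. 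For \ref{ass_Es_a}: pulling back $R$ under this $\delta^*$-isometry and using that the $C^{[(\delta^*)^{-1}]}$-closeness of the rescaled metrics controls $\nabla^2 R$, the rescaled Hessian $\rho^2(x')\nabla^2 R(x')$ is $O(\delta^*)$-close to $\nabla^2 R_{\Bry}(x_{\Bry})$, which is strictly negative by Lemma~\ref{Lem_Bry_R_Hessian_positive}; hence for $\delta^*\le\ov\delta^*$ the Hessian of $R$ at $x'$ is strictly negative.

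For \ref{ass_Es_b} and \ref{ass_Es_c}: conditions \ref{prop_E_1}, \ref{prop_E_2}, \ref{prop_E_4}, \ref{prop_E_5} define an open subset $\Omega^s\subset\M^s$ — here \ref{prop_E_1} is open and constant on components by Lemma~\ref{lem_cutoff_almost_extinct}\ref{ass_cutoff_b}, $\rho$ and $r_{\can,\eps}$ are transversely continuous, $\delta^*$-closeness is an open condition, and since $\eta_1<1$ rules out the almost-extinct components the relevant component and its diameter vary lower-semicontinuously — and on $\Omega^s$ we have $E^s=\Omega^s\cap\{\nabla^{\M^s_t}R=0\}$. View $\nabla^{\M^s_t}R$ as a section of $\ker d\t^s$ over $\Omega^s$; by \ref{ass_Es_a} its spatial derivative at each point of $E^s$ is the non-degenerate Hessian of $R|_{\M^s_t}$, so the implicit function theorem presents $E^s$ near each of its points as a graph over the time axis, i.e.\ $E^s$ is a $1$-dimensional submanifold (with boundary contained in $\M^s_0$), unique locally by non-degeneracy. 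Running the same argument for the transversely continuous family of sections $(\nabla^{\M^s_t}R)_{s\in X}$ and applying the implicit function theorem in families gives the transversely continuous family $(\wh x'_{s'}:I\to\M^{s'})_{s'\in U}$ of \ref{ass_Es_c}, with the asserted local uniqueness.

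For \ref{ass_Es_d}: suppose $x'_1\ne x'_2$ in $E^s\cap\M^s_t$ and the balls $B(x'_i,10D_0\rho(x'_i))$ meet; taking $\rho(x'_1)\le\rho(x'_2)$, the triangle inequality gives $d(x'_1,x'_2)<20D_0\rho(x'_2)$. Let $\psi:B^{M_{\Bry}}(x_{\Bry},(\delta^*)^{-1})\to\M^s_t$ be the $\delta^*$-isometry of condition \ref{prop_E_4} at $x'_2$, at scale $\lambda=(1+O(\delta^*))\rho(x'_2)$, with $\psi(x_{\Bry})=x'_2$. Choosing the constants so that $20D_0<(\delta^*)^{-1}$ — compatible with $D_0\ge\underline D_0(\delta^*)$ since $\underline D_0(\delta^*)\ll(\delta^*)^{-1}$ — the point $x'_1$ lies in the image of $\psi$. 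On $(M_{\Bry},g_{\Bry})$ the function $R_{\Bry}$ has $x_{\Bry}$ as its only critical point, with negative definite Hessian there, and $|\nabla R_{\Bry}|$ is bounded below on every annulus $B(x_{\Bry},r)\setminus B(x_{\Bry},\sigma)$; hence for $\delta^*$ small the pullback $\psi^*R$ has a \emph{unique} critical point in $B(x_{\Bry},(\delta^*)^{-1})$. But $\nabla(\psi^*R)$ vanishes at both $x_{\Bry}=\psi^{-1}(x'_2)$ and $\psi^{-1}(x'_1)$, forcing $x'_1=x'_2$, a contradiction. This uniqueness-of-the-Bryant-critical-point step is the main point of the claim. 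For \ref{ass_Es_f}: $(M_{\Bry},g_{\Bry})$ is a complete rotationally symmetric metric on $\IR^3$ that opens outward, so $\injrad(M_{\Bry},x_{\Bry})=\infty$ (see \cite[Appendix B]{bamler_kleiner_uniqueness_stability}); transferring this through the $\delta^*$-isometry of \ref{prop_E_4} gives $\injrad(\M^s_t,x')>10D_0\rho(x')$ for $\delta^*$ small, again using $D_0\ll(\delta^*)^{-1}$.

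For \ref{ass_Es_e}: apply Lemma~\ref{lem_cutoff_almost_extinct}\ref{ass_cutoff_d}, which was invoked here with $\delta^*$ replaced by $\delta^*/2$. In cases \ref{ass_cutoff_d1}--\ref{ass_cutoff_d3} we obtain exactly \ref{ass_round_Bry_e1}--\ref{ass_round_Bry_e3} with $\delta^*$ replaced by $\delta^*/2$ and $g^{\prime,s}_2$ replaced by $g^s$. In case \ref{ass_cutoff_d4} the point $x'$ produced there satisfies conditions \ref{prop_E_2}, \ref{prop_E_3}, \ref{prop_E_5} by the statement of \ref{ass_cutoff_d4}, condition \ref{prop_E_4} because $\delta^*/2\le\delta^*$, and condition \ref{prop_E_1} because $d(x,x')<D_0\rho(x')$ puts $x'$ in the same component of $\M^s_t$ as $x$, on which $\eta_1$ is constant by Lemma~\ref{lem_cutoff_almost_extinct}\ref{ass_cutoff_b}; hence $x'\in E^s\cap\M^s_t$, and $d(x,x')<D_0\rho(x')$ gives $x\in\cup_{x''\in E^s\cap\M^s_t}B(x'',D_0\rho(x''))$, as required.
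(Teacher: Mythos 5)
Your treatment of assertions \ref{ass_Es_a}, \ref{ass_Es_b}, \ref{ass_Es_c}, \ref{ass_Es_e} is correct and matches the paper's proof (the paper is terser, but the route is the same: Hessian negativity from Lemma~\ref{Lem_Bry_R_Hessian_positive} and the Bryant closeness, then the implicit function theorem applied to $\nabla R$ for \ref{ass_Es_b}--\ref{ass_Es_c}, and a case-by-case comparison with Lemma~\ref{lem_cutoff_almost_extinct}\ref{ass_cutoff_d} for \ref{ass_Es_e}).

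However, your argument for \ref{ass_Es_d} and \ref{ass_Es_f} has a genuine gap. You rely entirely on the $\delta^*$-isometry to the Bryant soliton supplied by condition \ref{prop_E_4}, which only controls the geometry out to Riemannian radius $\approx (\delta^*)^{-1}\rho(x')$, and you handle the required scale $10D_0\rho(x')$ (or $20D_0\rho(x')$) by asserting $20 D_0 < (\delta^*)^{-1}$, justifying this with ``$\underline D_0(\delta^*)\ll(\delta^*)^{-1}$''. That inequality is not available: the constraint from Lemma~\ref{lem_cutoff_almost_extinct} is only a \emph{lower} bound $D_0\ge\underline D_0(\delta^*)$, with no a priori upper bound, and in fact the proof of Claim~\ref{cl_choice_of_A} lets $D_0^i\to\infty$ for fixed $\delta^*$, so $\underline D_0(\delta^*)$ can perfectly well exceed $(\delta^*)^{-1}$. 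Moreover, the claim's hypothesis is $\eps\le\ov\eps(D_0)$ with $\ov\delta^*$ a universal constant; if the statement were provable from $\delta^*$ alone, the $D_0$-dependent smallness of $\eps$ would be superfluous. The fact that your argument for \ref{ass_Es_d} and \ref{ass_Es_f} never invokes $\eps$ is itself the symptom.

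The paper closes this gap with a compactness/contradiction argument: take a sequence of counterexamples with $\eps^i\to 0$, rescale by $\rho^{-2}(x^{\prime,i})$, and pass to a pointed limit $(\ov M,\ov g,\ov x')$. By the $\eps^i$-canonical neighborhood assumption at $x^{\prime,i}$ (with $\eps^i\to 0$, not $\delta^*$), the limit is the final time-slice of a complete $\kappa$-solution. Condition \ref{prop_E_4} rules out the round sphere and cylinder in the limit, so by Theorem~\ref{Thm_kappa_sol_classification} the limit is rotationally symmetric; condition \ref{prop_E_3} forces $\ov x'$ to be the center of rotation; and condition \ref{prop_E_5} yields $\diam(\ov M,\ov g)\ge 100 D_0$. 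Because one now has a \emph{complete} rotationally symmetric model with diameter $\ge 100D_0$ about $\ov x'$ --- not merely a $(\delta^*)^{-1}$-ball of Bryant --- the conclusions ``$E^s\cap B(x',20D_0\rho(x'))=\{x'\}$'' and ``$\injrad(x')>10D_0\rho(x')$'' hold at scale $D_0$ regardless of how large $D_0$ is relative to $(\delta^*)^{-1}$. This is the step your proposal is missing; your uniqueness-of-the-Bryant-critical-point idea is the right ingredient, but it must be deployed on the $\kappa$-solution limit obtained via $\eps\to 0$, not on the $\delta^*$-scale Bryant neighborhood.
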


\begin{proof}
Assertion~\ref{ass_Es_a} is a consequence of Lemma~\ref{Lem_Bry_R_Hessian_positive} and Property~\ref{prop_E_4} for $\delta^* \leq \ov\delta^*$ and Assertions~\ref{ass_Es_b} and \ref{ass_Es_c} are an immediate consequence of Assertion~\ref{ass_Es_a} due to the implicit function theorem applied to $\nabla R$.
Assertion~\ref{ass_Es_e} is a direct consequence of Assertion~\ref{ass_cutoff_d} from Lemma~\ref{lem_cutoff_almost_extinct}.

For Assertion~\ref{ass_Es_d} it suffices to show that 
\begin{equation} \label{eq_Es_cap_x_prime}
E^s \cap B(x',20 D_0 \rho (x')) = \{ x' \}.
\end{equation}

If Assertions~\ref{ass_Es_d} or \ref{ass_Es_f}  were false, then we could find a sequence of singular Ricci flows $\M^i$ and points $x^{\prime,i} \in \M^i_{t^i}$ that satisfy Properties~\ref{prop_E_1}--\ref{prop_E_5} for $\eps^i \to 0$, but violate (\ref{eq_Es_cap_x_prime}) or Assertion~\ref{ass_Es_f}.
After passing to a subsequence, we may assume that $(\M^i_{t^i}, \rho^{-2} (x^{\prime,i}) g^i_{t^i}, x^{\prime,i})$ either converge to a pointed final time-slice $(\ov{M}, \ov{g}, \ov{x}')$ of a $\kappa$-solution, or their universal covers converge to a round sphere.
The second case can be excluded by Property~\ref{prop_E_4}, assuming $\delta^* \leq \ov\delta^*$, which also implies that $(\ov{M}, \ov{g})$ is not a quotient of the round sphere or the round cylinder.
It follows that $(\ov{M}, \ov{g})$ is rotationally symmetric due to Theorem~\ref{Thm_kappa_sol_classification} and by Property~\ref{prop_E_3} the point $\ov{x}'$ is a center of rotation.
By Property~\ref{prop_E_5} we obtain that $\diam (\ov{M}, \ov{g}) \geq 100 D_0$.
This implies (\ref{eq_Es_cap_x_prime}) for large $i$.
Assertion~\ref{ass_Es_f} follows for large $i$ since the injectivity radius at $\ov{x}'$ is $\geq 100 D_0$.
\end{proof}

Fix some $x' \in E^s \cap \M^s_t$ for the moment and choose a continuous family $(\wh{x}'_{s'})$ near $x'$ as in Assertion~\ref{ass_Es_c} of Claim~\ref{cl_def_E}.
Choose a family of linear isometries $\varphi_{s',t'} : \IR^3 \to T_{\wh{x}'_{s'}(t')} \M^{s'}_{t'}$ such that for every $s'$ the family $t' \mapsto \varphi_{s',t'}$ is parallel along $t' \mapsto x'_{s',t'}$.
Then 
\[ \chi : (s',t', v) \longmapsto \exp_{g^{s'}_{t'}, \wh{x}'_{s'}(t')} (\varphi_{s',t'}(v)) \]
defines a family of exponential coordinates near $x'$, which induce a family  of $O(3)$-actions on $B(\wh{x}'_{s'}(t'), 10 D_0 \rho (\wh{x}'_{s'} (t')))$ that are transversely continuous in the smooth topology.
Let $g''$ be average of $g$ under these local actions.
Note that $g''$ does not depend on the choice of the family $(\varphi_{s',t'})$, so it extends to a smooth and transversely continuous family of metrics on $\cup_{x' \in E^s} B(x', 10 D_0 \rho (x'))$, which is compatible with a unique transversely continuous family of spherical structure $(\SS^{\prime,s})_{s \in X}$.
Next, define $\partial^{s'}_\t + Y^{s'}_{\chi}$ to be the average of $\partial^{s'}_\t$ under the same action on the image of $\chi$.
Then $\partial^{s'}_\t + Y^{s'}_{\chi}$ preserves $\SS^{\prime,s'}$ for $s'$ near $s$.
A standard limit argument as in the proof of Claim~\ref{cl_def_E} (the limit again being a rotationally symmetric $\kappa$-solution) shows that if $\eps \leq \ov\eps (\delta', D_0 (\delta^*))$, then
\begin{equation}\label{eq_gppYchi}
|\nabla^{m_1} \partial_\t^{m_2} ( g^{\prime\prime, s} - g^s )| < \delta' \rho^{-m_1 - 2m_2}, \qquad
|\nabla^{m_1} \partial_\t^{m_2} Y^{s'}_\chi | < \delta' \rho^{1-m_1 - 2m_2}
\end{equation}
for $m_1, m_2 = 0, \ldots, [(\delta')^{-1}]$.

For $x \in B (x', \lb 10 D_0 \rho (x'))$ set
\[ (g^{\prime\prime\prime, s}_t)_x := (g^{\prime\prime,s}_t)_x + \nu \bigg( \frac{d(x, x')}{D_0 \rho (x')} - 2 \bigg) \cdot \big( (g^s_{t})_x - (g^{\prime\prime,s}_t)_x \big) \]
and $(g^{\prime\prime\prime, s}_t)_x := (g^s_{t})_x$ otherwise.
Then $g^{\prime\prime\prime, s}_t$ is smooth on $\M^s_t$ and transversely continuous and $g^{\prime\prime\prime, s}_t = g^{\prime\prime, s}_t$ on $B (x', \lb 2 D_0 \rho (x'))$.
We can now define $g^{\prime,s}_2$ as follows:
\[ (g^{\prime,s}_2)_x := (g^s)_x + \nu \bigg( \frac{ r_{\can, \eps} (x)}{10^2 \rho (x)} \bigg) \cdot \nu \big( 2 - 2 \eta_1 (x) \big) \cdot \big(  (g^{\prime\prime\prime,s})_x - (g^s)_x   \big). \]
Then $(g^{\prime,s}_2)_x = (g^{\prime\prime\prime,s})_x$ whenever $\rho (x) < 10^{-2}  r_{\can, \eps} (x)$ and $\eta_1(x) < \frac12$.
On the other hand,  $(g^{\prime,s}_2)_x = (g^{s})_x$ whenever $\rho (x) >10^{-1} r_{\can, \eps} (x)$ or $\eta_1 (x) = 1$.
This implies Assertion~\ref{ass_round_Bry_f}.
If $\delta' \leq \ov\delta' (\delta, D_0(\delta^*), (C_m (\delta^*)))$ and $\eps \leq \ov\eps (\delta)$, then Assertion~\ref{ass_round_Bry_g} holds due to (\ref{eq_gppYchi}) and Lemma~\ref{lem_rcan_control}; moreover we can assume that $\frac12 \rho_g < \rho_{g'_2} < 2 \rho_g$.

Next let $\SS_2$ be the restriction of $\SS'$ to $\{ \rho_{g'_2}  < \frac12 10^{-2} r_{\can, \eps} \} \cap \{ \eta_1 < \frac12 \}$.
Then Assertion~\ref{ass_round_Bry_e} holds due to Claim~\ref{cl_def_E}\ref{ass_Es_e}, assuming that we have chosen $\delta' \leq \ov\delta' (\delta^*, D_0)$ and $\alpha \leq 10^{-3}$; note we need to ensure that  a center of a $\delta^*/2$-neck with respect to $g^s$ is automatically a center of a $\delta^*$-neck with respect to $g^{\prime,s}_2$.
Assertions~\ref{ass_round_Bry_a}, \ref{ass_round_Bry_b},\ref{ass_round_Bry_i} hold by construction.
Assertion~\ref{ass_round_Bry_j} holds due to (\ref{eq_gppYchi}), assuming that $\delta' \leq \delta$.
\end{proof}

\subsection{Modification in cylindrical regions} \label{subsec_modify_necks}
In the next lemma we construct metrics $g^{\prime,s}_3$ by rounding the metrics $g^{\prime,s}_2$ in the neck-like regions.
This new metric will be rotationally symmetric everywhere, except at points of large scale or on components of bounded normalized diameter.

\begin{lemma}\label{lem_round_cyl}
For every $\delta > 0$, and assuming that $\alpha \leq \ov\alpha, D \geq \underline{D}$, $C_m \geq \underline{C}_m$ and $\eps \leq \ov\eps(\delta)$, there are:
\begin{itemize}
\item a transversely continuous family of smooth metrics $(g^{\prime, s}_{3})_{s \in X}$ on $\ker d\t$, 
\item a continuous function $\eta_3 : \cup_{s \in X} \M^s \to [0,1]$ that is smooth on each fiber $\M^s$ and transversely continuous in the smooth topology,
\item a transversely continuous family of spherical structures $(\SS^s_3)_{s \in X}$ on open subsets of $\M^s$,
\end{itemize}
such that for all $s \in X$:
\begin{enumerate}[label=(\alph*)]
\item  \label{ass_round_Cyl_a} The fibers of $\SS^s_3$ are contained in time-slices of $\M^s$.
\item  \label{ass_round_Cyl_b} $g^{\prime, s}_{3}$ is compatible with $\SS^s_3$.
\item  \label{ass_round_Cyl_c}  $\domain (\SS^s_3) \supset \{ \rho < \alpha r_{\can, \eps} \} \cap \{ \eta_3 < 1 \} \cap \M^s$.
\item  \label{ass_round_Cyl_d} $\eta_3$ satisfies Assertions~\ref{ass_cutoff_a}--\ref{ass_cutoff_c}, \ref{ass_cutoff_e} of Lemma~\ref{lem_cutoff_almost_extinct} with respect to the new constants $D, C_m$ and for $N = [\delta^{-1}]$.
\item  \label{ass_round_Cyl_e} $g'_3 = g$ on $\{ \rho > 10^{-1} r_{\can,\eps}\}$.
\item \label{ass_round_Cyl_f} $|\nabla^{m_1} \partial_\t^{m_2} ( g^{\prime,s}_3-g^s )| \leq \delta \rho^{-m_1-2m_2}$ for $m_1, m_2 = 0, \ldots, [\delta^{-1}]$.
\item \label{ass_round_Cyl_g} If $(\M^s_t, g^s_t)$ is homothetic to the round sphere or a quotient of the round cylinder for some $t \geq 0$, then $g^{\prime,s}_{3,t} = g^s_t$.
\item  \label{ass_round_Cyl_h} For every spherical fiber $\mathcal{O}$ of $\SS^s_3$ there is a family of local spatial vector fields $(Y_\mathcal{O}^{s'})_{s' \in X}$ defined in a neighborhood of $\mathcal{O}$ in $\cup_{s \in X} \M^s$ such that  for all $s' \in X$ the vector field $\partial^{s'}_\t + Y^{s'}_\mathcal{O}$ preserves $\SS^{s'}_3$ and $|\nabla^{m_1} \partial_\t^{m_2} Y^{s'}_\mathcal{O}| < \delta \rho^{1-m_1 - 2m_2}$ for $m_1, m_2 = 0, \ldots, [\delta^{-1}]$.
\end{enumerate}
\end{lemma}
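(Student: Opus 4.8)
The plan is to start from the triple $(g^{\prime,s}_2,\eta_2,\SS^s_2)$ produced by Lemma~\ref{lem_Bryant_rounding}, applied with the given $\delta$ and with $\delta^*$ equal to a fixed small universal constant, and to round $g^{\prime,s}_2$ on the cylindrical part of the flow that has not yet been made rotationally symmetric. By Assertion~\ref{ass_round_Bry_e} of Lemma~\ref{lem_Bryant_rounding}, every point $x$ with $\eta_2(x)<1$, $\rho_{g^{\prime,s}_2}(x)<\alpha\,r_{\can,\eps}(x)$ and $x\notin\domain(\SS^s_2)$ is, possibly after passing to a local two-fold cover, a center of a $\delta^*$-neck with respect to $g^{\prime,s}_2$. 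I would set $\eta_3:=\nu(2\eta_2)$, so that $\{\eta_3<1\}\subset\{\eta_2<\tfrac{1}{2}\}$ and $\{\eta_3>0\}\subset\{\eta_2>0\}$; then Assertion~\ref{ass_round_Cyl_d}, namely that $\eta_3$ satisfies Assertions~\ref{ass_cutoff_a}--\ref{ass_cutoff_c} and~\ref{ass_cutoff_e} of Lemma~\ref{lem_cutoff_almost_extinct}, is inherited from $\eta_2$ after enlarging the constants $C_m$, exactly as in the proof of Lemma~\ref{lem_Bryant_rounding}.

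The core of the argument is a \emph{canonical} foliation of every $\delta^*$-neck by constant mean curvature (CMC) $2$-spheres. On the exact round cylinder the cross-sectional spheres form a degenerate one-parameter family of CMC spheres, so for $\delta^*$ small one solves the CMC equation by the implicit function theorem for normal graphs over the round cross-sections, subject to a normalization that removes the translational and rotational kernel of the linearized operator; sliding the reference cross-section produces the foliation. The essential properties are that this foliation is independent of the neck chart, invariant under any local isometry, consistent on overlaps of two $\delta^*$-necks, descends under the two-fold covers of Assertion~\ref{ass_round_Bry_e3} (producing the $\IR P^2$-type singular fibers of Lemma~\ref{lem_local_spherical_struct}), and --- crucially --- wherever $g^{\prime,s}_2$ is already rotationally symmetric, in particular on $\domain(\SS^s_2)$ by Assertion~\ref{ass_round_Bry_b}, the CMC leaves coincide with the regular spherical fibers of $\SS^s_2$. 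Transverse continuity in $s$, and smoothness in $t$ and in the metric, follows from the implicit function theorem for families, as in Section~\ref{sec_families_srfs}. One then lets $\SS^s_3$ be the transversely continuous spherical structure whose regular fibers are the CMC leaves of the neck regions together with the old regular fibers of $\SS^s_2$, and whose singular fibers are those of $\SS^s_2$, restricted to $\{\rho_{g^{\prime,s}_3}<\alpha\,r_{\can,\eps}\}\cap\{\eta_3<1\}$; Assertions~\ref{ass_round_Cyl_a} and~\ref{ass_round_Cyl_c} follow, the latter because any $x$ in that set falls under one of the alternatives~\ref{ass_round_Bry_e1}--\ref{ass_round_Bry_e4} of Assertion~\ref{ass_round_Bry_e} once $\alpha$ is chosen small enough, using that $\rho_{g^{\prime,s}_3}$, $\rho_{g^{\prime,s}_2}$ and $\rho$ are comparable.

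To construct $g^{\prime,s}_3$, in a chart adapted to the CMC foliation in which each leaf is identified $O(3)$-equivariantly with the round sphere via the eigenfunction map underlying $\RD^2$ (Subsection~\ref{subsec_RD}), I would replace $g^{\prime,s}_2$ by its fiberwise $O(3)$-average $g^{\prime\prime,s}_3$: as in the proof of Lemma~\ref{lem_Bryant_rounding} this average is independent of the identification, has the form~(\ref{eq_compatible_form_g}) --- hence is compatible with $\SS^s_3$ --- and equals $g^{\prime,s}_2$ wherever $g^{\prime,s}_2$ is already compatible, so it extends smoothly and transversely continuously by $g^{\prime,s}_2$ across a collar. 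Setting $g^{\prime,s}_3:=g^{\prime,s}_2+\chi\cdot(g^{\prime\prime,s}_3-g^{\prime,s}_2)$ with a cutoff $\chi$ built from $\rho/r_{\can,\eps}$ and $\eta_1$ of the same shape as in Lemma~\ref{lem_Bryant_rounding}, supported in $\{\rho<10^{-1}r_{\can,\eps}\}\cap\{\eta_1<\tfrac{1}{2}\}$, yields Assertions~\ref{ass_round_Cyl_b},~\ref{ass_round_Cyl_e} and~\ref{ass_round_Cyl_g}: compatibility survives the interpolation because a convex combination of metrics invariant under a fixed local $O(3)$-action is again invariant, outside the neck region nothing changes, and on a time-slice homothetic to a round sphere or a quotient of the round cylinder the $O(3)$-average of the already round metric is itself. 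The estimate~\ref{ass_round_Cyl_f} follows from Assertion~\ref{ass_round_Bry_g} of Lemma~\ref{lem_Bryant_rounding} and a compactness argument: as $\eps\to0$ the $\delta^*$-necks subconverge to (rotationally symmetric) $\kappa$-solution necks, so the CMC leaves, the eigenfunction identifications and the $O(3)$-averages all converge to the round model and the rounding error tends to $0$ in $C^{[\delta^{-1}]}$, while the derivatives of $\chi$ are controlled via the bounds on $\partial_\t^m r_{\can,\eps}$ from Lemma~\ref{lem_rcan_control} as in Lemma~\ref{lem_cutoff_almost_extinct}. Finally, for Assertion~\ref{ass_round_Cyl_h} I would, given a CMC leaf $\mathcal{O}$, take $Y^{s'}_{\mathcal{O}}$ to be the correction making $\partial^{s'}_\t+Y^{s'}_{\mathcal{O}}$ carry CMC leaves to CMC leaves and preserve the rounded fiber metric --- for leaves in $\domain(\SS^s_2)$ one may instead invoke Assertion~\ref{ass_round_Bry_j} of Lemma~\ref{lem_Bryant_rounding} --- with the required smallness coming again from the compactness argument, since on the exact round cylinder $\partial_\t$ already preserves the cross-sectional structure.

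The main obstacle is the CMC foliation. The linearized CMC operator on a near-round cross-section of a near-round cylinder is invertible only after one quotients out the translation and rotation directions coming from the degeneracy of the exact cylinder, so the implicit function theorem has to be set up relative to a carefully chosen normalization, and one must then verify that the resulting foliation is genuinely canonical --- isometry invariant, consistent on overlaps, descending to the two-fold quotients, and matching $\SS^s_2$ --- and that it, together with the eigenfunction identifications and the $O(3)$-averages, depends on $(s,t)$ and on the metric with uniform $C^{[\delta^{-1}]}$-estimates strong enough to beat the prescribed $\delta$ once $\eps$ is taken small. Bookkeeping this hierarchy of smallness and the cutoff-derivative bounds uniformly over the family is the delicate part; the remainder is a routine adaptation of the averaging and gluing already carried out for the Bryant regions in Lemma~\ref{lem_Bryant_rounding}.
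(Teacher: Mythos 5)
Your overall architecture matches the paper's: build the spherical structure in the neck regions from the canonical CMC foliation, round each leaf via $\RD^2$, blend the rounded metric into $g'_2$ with a cutoff of the same shape as in Lemma~\ref{lem_Bryant_rounding}, and get the $C^{[\delta^{-1}]}$-estimates by a contradiction/compactness argument against rotationally symmetric $\kappa$-solutions.  The place where you genuinely diverge from the paper --- and where there is a gap --- is in the construction of the $\SS'$-compatible metric $g''$.  You propose to take a \emph{fiberwise $O(3)$-average} of $g'_2$, ``as in the proof of Lemma~\ref{lem_Bryant_rounding}.''  But that averaging step in the Bryant case relies on a \emph{canonical basepoint} (the critical point of $R$), whose exponential chart furnishes an ambient local $O(3)$-action that is unique up to a single global conjugation; this is exactly what makes the average well-defined there.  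In a neck there is no canonical basepoint, and the $\RD^2$ eigenfunction map only identifies each CMC leaf with $S^2$ up to an $O(3)$-ambiguity.  To average the ambient metric you must coherently align these leafwise identifications into an ambient $O(3)$-action on a tubular neighborhood.  Such alignments exist but are \emph{not} unique, and the average genuinely depends on the choice: e.g.\ on $(S^2\times\IR,\,2g_{S^2}+dr^2)$ the twisted action $\zeta(A,(x,r))=(R(r)AR(r)^{-1}x,r)$ is compatible with the standard spherical structure, but averaging over it does not return the round cylinder.  So the claim ``this average is independent of the identification'' does not carry over from the Bryant case.  The paper avoids this entirely by constructing $g''$ intrinsically: on each leaf it takes the area-normalized round metric $g_\Sigma$, and it declares the unit normal direction to be the $L^2$-minimal vector field $Z_{\Sigma,N}$ among those preserving $\SS'$ with normalized flux.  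No ambient action is needed for the metric at all; the chart $\chi$ only enters later, for the vector field $Y_\chi$.  If you want to keep the averaging route, you would have to first construct the chart $\chi$ (as the paper does for Assertion~\ref{ass_round_Cyl_h}), then average in that chart, and then verify that the result has the transverse continuity and $C^{[\delta^{-1}]}$-closeness properties uniformly in the family; this is more work, not less.

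Two smaller points.  For Assertion~\ref{ass_round_Cyl_h} you say ``take $Y^{s'}_{\mathcal{O}}$ to be the correction making $\partial_\t+Y$ carry CMC leaves to CMC leaves and preserve the rounded fiber metric,'' but that condition underdetermines $Y$ by a Killing field plus a parallel normal field on each leaf --- precisely the ambiguity the paper pins down by averaging $\partial_\t$ over the $O(3)$-action in the chart $\chi$.  Finally, you apply Lemma~\ref{lem_Bryant_rounding} ``with the given $\delta$,'' so $|g'_2-g|\lesssim\delta\rho^{-\cdots}$ already, and the additional error from the CMC rounding then pushes $|g'_3-g|$ past $\delta$; the paper applies Lemma~\ref{lem_Bryant_rounding} with a much smaller $\delta^{\#}$ reserved precisely for this reason.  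Both are fixable, but as written they are gaps.
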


Note that $\alpha$, $D$ and $C_m$ are independent of $\delta$.

\begin{proof}
Let $\delta^*, \delta^\#, \delta' > 0$ be constants that we will determine in the course of the proof.
Apply Lemma~\ref{lem_Bryant_rounding} for $\delta^*, \delta = \delta^\#$ and consider the family of metrics $(g^{\prime, s}_2)_{s \in X}$, the continuous function $\eta_2$ and the family of spherical structures $(\SS^s_2)_{s \in X}$.

If $\delta^* \leq \ov\delta^*$, then for every $x \in \M^s_t$ that satisfies Assertion~\ref{ass_round_Bry_e2} or \ref{ass_round_Bry_e3} of Lemma~\ref{lem_Bryant_rounding} we can find a unique constant mean curvature (CMC) sphere or CMC-projective space $x \in \Sigma_x \subset \M^s_t$ with respect to $g^{\prime,s}_{2,t}$ whose diameter is $< 100 \rho (x)$.
Moreover, the induced metric $g^{\prime,s}_{2,t} |_{\Sigma_x}$ can, after rescaling, be assumed to be arbitrarily close to the round sphere or projective space if $\delta^*$ is chosen appropriately small.
So if $\delta^* \leq \ov\delta^*$, then $g'_x := \RD^2 (g^{\prime,s}_{2,t} |_{\Sigma_x})$ defines a round metric on $\Sigma_x$; here $\RD^2$ denotes the rounding operator from Subsection~\ref{subsec_RD}.
If $x \in \domain (\SS^s_2)$, then $\Sigma_x$ is a regular fiber of $\SS^s_2$ and the induced metric is round, so $g'_x = g^{\prime,s}_{2,t} |_{\Sigma_x}$.
It follows that for sufficiently small $\delta^*$ the spheres $\Sigma_x \subset \M^s_t$ and volume-normalizations of the metrics $g'_x$, for all $x$ satisfying Assertion~\ref{ass_round_Bry_e2} or \ref{ass_round_Bry_e3} of Lemma~\ref{lem_Bryant_rounding}, can be used to define a spherical structure $\SS^{\prime,s}$ on an open subset of $\M^s$ that extends the spherical structure $\SS^s_2$.
By construction, the family $(\SS^{\prime,s})_{s \in X}$ is transversely continuous and
\begin{equation} \label{eq_domain_SS_prime_contains}
 \{ \rho_{g} < \alpha r_{\can, \eps} \} \cap \{ \eta_2 < 1 \} \cap \M^s \subset \domain(\SS^{\prime,s}). 
\end{equation}
The family $(\SS^s_2)_{s \in X}$ will arise by restricting $(\SS^{\prime,s})_{s \in X}$ to a smaller domain.

From now on we fix $\delta^* > 0$ such that the construction in the previous paragraph can be carried out.
We can then also fix $D \geq \underline{D} (\delta^*)$, $\alpha \leq \ov\alpha (\delta^*)$ and $C_m \geq \underline{C}_m (\delta^*)$ according to Lemma~\ref{lem_Bryant_rounding}.

We will now construct a family of metrics on $\domain(\SS^{\prime,s})$ that are compatible with $\SS^{\prime,s}$.
For this purpose fix $s \in X$ and consider a regular spherical fiber $\Sigma \subset \M^s_t$ of $\SS^{\prime,s}$.
Let $g_\Sigma$ be a multiple of the standard round metric induced by $\SS^{\prime,s}$ with the property that:
\begin{enumerate}
\item \label{prop_CMC_rounding_1} The areas of $\Sigma$ with respect to $g_\Sigma$ and the induced metric $g^{\prime,s}_{2,t} |_{\Sigma}$ agree.
\end{enumerate}
Note that $g_\Sigma = g'_x$ from before if $\Sigma = \Sigma_x$.
If $\Sigma$ is a fiber of $\SS^s_2$, then we have $g_\Sigma = g^{\prime,s}_{2,t} |_{\Sigma}$.
By passing to a local two-fold cover (see Lemma~\ref{lem_local_spherical_struct}), we can also define $g_\Sigma$ for any singular fibers $\Sigma \approx \IR P^2$.

Next, fix a unit normal vector field $N$ along $\Sigma$ (with respect to $g^{\prime,s}_2$) and consider all spatial vector fields $Z$ defined in a neighborhood of $\Sigma$ such that:
\begin{enumerate}[start=2]
\item \label{prop_CMC_rounding_2} $Z$ preserves $\SS^{\prime,s}$
\item \label{prop_CMC_rounding_3} $\int_\Sigma \langle Z, N \rangle_{g^{\prime,s}_2} d\mu_{g_\Sigma}=\int_\Sigma  d\mu_{g_\Sigma}$
\end{enumerate}
Any two such vector fields $Z_1, Z_2$ differ along $\Sigma$ by a Killing field on $(\Sigma,g_\Sigma)$.
So there is a unique vector field $Z_{\Sigma, N}$ along $\Sigma$ with minimal $L^2$-norm (with respect to $g^{\prime,s}_2$) that arises as a restriction of a vector field $Z$ satisfying Properties~\ref{prop_CMC_rounding_2} and \ref{prop_CMC_rounding_3} to $\Sigma$.
Note that $Z_{\Sigma, - N} = - Z_{\Sigma, N}$.
If $\Sigma$ is a fiber of $\SS^s_2$, then $Z_{\Sigma, N} = N$.

We can now define a metric $g^{\prime\prime, s}$ for $\ker d\t$ on $\domain(\SS^{\prime,s})$ such that for every regular fiber $\Sigma$ of $\SS^{\prime,s}$:
\begin{enumerate}[start=4]
\item $g^{\prime\prime,s} |_{\Sigma} = g_\Sigma$
\item $Z_{\Sigma, N}$ are unit normal vector fields with respect to $g^{\prime\prime,s}$.
\end{enumerate}
Then $g^{\prime\prime,s}$ is smooth and transversely continuous near regular fibers.
The regularity near singular fibers $\approx \IR P^2$ can be seen by passing to a local two-fold cover.
On $\domain (\SS^s_2)$ we have $g^{\prime\prime, s} = g^{\prime, s}_2$, so $g^{\prime\prime, s}$ is also regular near singular fibers that are points.
This shows that $g^{\prime\prime,s}$ is smooth and transversely continuous everywhere.

Let us now discuss the closeness of $g^{\prime\prime, s}$ to $g^{\prime,s}_2$ and the existence of a local vector field as in Assertion~\ref{ass_round_Cyl_h}.
Fix a fiber $\Sigma \subset \M^s_t$ of $\SS^{\prime, s}$ that is not a fiber of $\SS^s_2$ and pick a point $x \in \Sigma$ and two orthonormal vectors $v_1, v_2 \in T_x \M^s_t$ (with respect to $g^{\prime\prime, s}_{t}$) that are tangent to $\Sigma$.
We can uniquely extend $v_i$ to a family of vectors $v_{i,t'}$ along the curve $t' \mapsto x(t') \in \M^s_{t'}$, for $t'$ close to $t$, such that $v_{1,t'}, v_{2,t'}$ remain orthonormal and tangent to the spherical fiber $\Sigma_{x(t')}$ through $x(t')$ and such that $\frac{d}{dt'} v_{1,t'}$ is normal to $v_{2,t'}$.
Using the exponential map, we can find a unique family of homothetic embeddings $\beta_{t'} : S^2 \to \M^s_{t'}$ such that $\beta_{t'} (S^2) = \Sigma_{x(t')}$ and such that for some fixed orthonormal tangent vectors $u_1, u_2$ of $S^2$ the images $d\beta_{t'} (u_i)$ are positive multiples of $v_{i,t'}$.
Using the normal exponential map to $\Sigma_{x(t')}$, we can extend these maps to charts of the form
\[ \chi^s_{t'} : S^2 \times (-a^s, a^s) \longrightarrow \M^s_{t'} \]
By repeating the same procedure for $s'$ near $s$, starting with points $x^{s'}$ and vectors $v_i^{s'}$ that depend continuously on $s'$, we can extend $(\chi^s_{t'})$ to a transversely continuous family of charts $\chi = (\chi^{s'}_{t'})$.
These charts induce a transversely continuous family of local $O(3)$-actions that are isometric with respect to $g^{\prime\prime,s}$ and compatible with $\SS^{\prime,s}$.
As in the proof of Lemma~\ref{lem_Bryant_rounding} we can define $\partial^{s'}_\t + Y^{s'}_{\chi}$ to be the average of $\partial^{s'}_\t$ under this action near $\Sigma$.
Then $\partial^{s'}_\t + Y^{s'}_{\chi}$ preserves $\SS^{\prime,s'}$ for $s'$ near $s$.
A limit argument yields that if $\eps \leq \ov\eps (\delta'), \delta^\# \leq \ov\delta^\# (\delta')$, then near $\Sigma$
\begin{equation}\label{eq_gppYchi_again}
|\nabla^{m_1} \partial_\t^{m_2} ( g^{\prime\prime, s} - g_2^{\prime,s} )| \leq \delta' \rho^{-m_1 - 2m_2}, \qquad
|\nabla^{m_1} \partial_\t^{m_2} Y^{s'}_\chi | \leq \delta' \rho^{1-m_1 - 2m_2}
\end{equation}
for $m_1, m_2 = 0, \ldots, [(\delta')^{-1}]$.

Lastly, we construct the metrics $g^{\prime, s}_3$ by interpolating between $ g_2^{\prime,s}$ and $g^{\prime\prime, s}$ using the cutoff function $\nu$ from Subsection~\ref{subsec_rounding_conventions}.
If $x \in \domain (\SS^{\prime, s})$, then set
\[ (g^{\prime,s}_3)_x := (g^{\prime, s}_2)_x + \nu \bigg( \frac{\alpha \cdot r_{\can, \eps} (x)}{10^2 \rho_{g} (x)} \bigg) \cdot \nu \big( 2-2 \eta_2 (x) \big) \cdot \big( (g^{\prime\prime,s})_x - (g^{\prime, s}_2)_x \big), \]
otherwise let $(g^{\prime,s}_2)_x := (g^{\prime, s}_1)_x$.
This defines a smooth and transversely continuous family of metrics due to (\ref{eq_domain_SS_prime_contains}).
On $\{ \rho > 10^{-1} r_{\can, \eps} \}$ we have $g^{\prime}_3 = g^{\prime}_2 = g$, assuming $\alpha \leq 1$, which implies Assertion~\ref{ass_round_Cyl_e}.
As in the proof of Lemma~\ref{lem_Bryant_rounding} the bound (\ref{eq_gppYchi_again}) implies Assertion~\ref{ass_round_Cyl_f} if $\delta' \leq \ov\delta' ( \delta, (C_m (\delta^*)))$ and $\delta^\# \leq \ov\delta^\# (\delta)$ and we can again assume that $\frac12 \rho_{g'_2} < \rho_{g'_3} < 2 \rho_{g'_2}$ and $\frac12 \rho_{g} < \rho_{g'_2} < 2 \rho_{g}$.

Next note that $g'_3 = g''$ on 
\[ \{ \rho_{g} <   10^{-2} \alpha r_{\can, \eps} \} \cap \{ \eta_2 < \tfrac12 \} \supset \{ \rho_{g'_3} < \tfrac14 10^{-2} \alpha r_{\can, \eps} \} \cap \{ \eta_2 < \tfrac12 \}. \]
So if $\SS^s_3$ denotes the restriction of $\SS^{\prime, s}$ to the subset $\{ \rho_{g'_3} < \frac14 10^{-2} \alpha r_{\can, \eps} \} \cap \{ \eta_2 > \frac12 \}$,
then Assertions~\ref{ass_round_Cyl_a}, \ref{ass_round_Cyl_b} hold by construction and Assertion~\ref{ass_round_Cyl_c} holds if we replace $\alpha$ by $\frac1{16} 10^{-2} \alpha$ and set $\eta_3 (x) := \nu (2 \eta_2 (x) )$.

Finally, Assertion~\ref{ass_round_Cyl_h} of this lemma holds due to (\ref{eq_gppYchi_again}) and Lemma~\ref{lem_Bryant_rounding}\ref{ass_round_Bry_j}, assuming $\delta' \leq \delta$, and Assertions~\ref{ass_round_Cyl_d} and \ref{ass_round_Cyl_g} hold after adjusting the constants $C_m$ appropriately. 
\end{proof}

\subsection{Modification of the time vector field} \label{subsec_modify_dt}
Next, we will modify the time-vector fields $\partial^s_\t$ on $\{ \eta_3 > 0 \} \cap \{ \rho < \alpha r_{\can, \eps} \}$ so that they preserve the spherical structures $\SS^s_3$.

\begin{lemma} \label{lem_dtprime}
For every $\delta > 0$ and assuming that $\alpha \leq \ov\alpha$, $D \geq \underline{D}$, $C_m \geq \underline{C}_m$ and $\eps \leq \ov\eps(\delta)$  there are:
\begin{itemize}
\item a transversely continuous family of smooth metrics $(g^{\prime, s}_{4})_{s \in X}$ on $\ker d\t$, 
\item a continuous function $\eta_4 : \cup_{s \in X} \M^s \to [0,1]$ that is smooth on each fiber $\M^s$ and transversely continuous in the smooth topology,
\item a transversely continuous family of spherical structures $(\SS^s_4)_{s \in X}$ on open subsets of $\M^s$,
\item  a transversely continuous family of smooth vector fields $(\partial^{\prime,s}_{\t,4})_{s \in X}$ on $\M^s$ that satisfy $\partial^{\prime,s}_{\t,4}  \t = 1$,
\end{itemize}
such that the assertions of Lemma~\ref{lem_round_cyl} still hold for $(g^{\prime,s}_4)_{s \in X}$, $\eta_4$ and $(\SS^s_4)_{s \in X}$ and such that in addition for all $s \in X$:
\begin{enumerate}[label=(\alph*), start=9]
\item \label{ass_round_VF_i} $\partial^{\prime,s}_{\t,4}$ preserves $\SS^s_4$.
\item \label{ass_round_VF_j} $\partial^{\prime,s}_{\t,4} = \partial^s_\t$ on $\{ \rho > 10^{-1}  r_{\can, \eps}  \}$.
\item \label{ass_round_VF_k} $|\nabla^{m_1} \partial_\t^{m_2} (\partial^{\prime,s}_{\t,4} - \partial^s_\t)| \leq \delta \rho^{1-m_1 - 2m_2}$ for $m_1, m_2 = 0, \ldots, [\delta^{-1}]$.
\item \label{ass_round_VF_l} If $(\M^s_0,g^s_0)$ is homothetic to the round sphere or a quotient of the round cylinder, then $\partial^{\prime,s}_{\t,4} = \partial^s_\t$.
\end{enumerate}
\end{lemma}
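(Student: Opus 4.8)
The plan is to obtain $\partial^{\prime,s}_{\t,4}$ from $\partial^s_\t$ by a spatial correction that is supported where $\rho$ is small and that is assembled, by a partition of unity, from the local vector fields $\partial^{s'}_\t + Y^{s'}_{\mathcal{O}}$ produced in Lemma~\ref{lem_round_cyl}\ref{ass_round_Cyl_h}. The metric and cutoff are kept essentially unchanged: set $g^{\prime,s}_4 := g^{\prime,s}_3$ and $\eta_4 := \nu(2\eta_3)$, and let $\SS^s_4$ be the restriction of $\SS^s_3$ to a slightly smaller open set, so that all assertions of Lemma~\ref{lem_round_cyl} are inherited after the usual adjustment of the constants $\alpha$, $D$, $C_m$ (using $\{\eta_4>0\}\subset\{\eta_3>0\}$, $\{\eta_4<1\}\subset\{\eta_3<\tfrac12\}$, and $\domain(\SS^s_4)\subset\domain(\SS^s_3)$).

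The key structural point is that, for a fixed spherical structure $\SS$, the collection of vector fields $W$ with $W\t=1$ that preserve $\SS$ on a given open set forms, in each of the local models of Lemma~\ref{lem_local_spherical_struct}, an affine subspace: locally $W=\partial_\t + a(r,t)\partial_r+\omega(r,t)$ with $\omega$ valued in the Killing fields of the round fiber. Consequently, if $(\psi_\alpha)$ are nonnegative functions that are \emph{constant along the fibers of $\SS$} with $\sum_\alpha\psi_\alpha\equiv 1$, and each $W_\alpha=\partial_\t+Y_{\mathcal{O}_\alpha}$ preserves $\SS$ near its fiber, then $\partial_\t+\sum_\alpha\psi_\alpha Y_{\mathcal{O}_\alpha}$ preserves $\SS$ as well — multiplying a fiber‑preserving correction by a function that is not fiber‑constant would destroy this. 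So I would first cover $\domain(\SS_3)=\cup_{s\in X}\domain(\SS^s_3)$ by the fiber‑saturated neighborhoods of Lemma~\ref{lem_round_cyl}\ref{ass_round_Cyl_h}, pass to a locally finite refinement of bounded multiplicity, and build a transversely continuous subordinate partition of unity $(\psi_\alpha)$ whose members are smooth on fibers, constant along the fibers of $\SS^s_3$, and adapted to the scale $\rho$ in the sense that $|\nabla^{m_1}\partial_\t^{m_2}\psi_\alpha|\le C\rho^{-m_1-2m_2}$, obtained from a partition of unity on the leaf space of $\SS_3$ by a standard rescaled‑bump construction. With a fiber‑constant cutoff $\psi$ that is $1$ on a slightly smaller region and supported in $\domain(\SS_3)$, one sets
\[ \partial^{\prime,s}_{\t,4} := \partial^s_\t + \psi\sum_\alpha \psi_\alpha\, Y^s_{\mathcal{O}_\alpha} \]
and $\SS^s_4 := \SS^s_3|_{\Int\{\psi=1\}}$ (with $\eta_4$ in place of $\eta_3$ and $\alpha$ shrunk). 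Since each $Y^s_{\mathcal{O}}$ is spatial, $\partial^{\prime,s}_{\t,4}\t=1$; on $\Int\{\psi=1\}$ it is a convex combination over fiber‑constant weights of $\SS_3$‑preserving vector fields, yielding \ref{ass_round_VF_i}; it equals $\partial^s_\t$ off $\domain(\SS_3)$, hence on $\{\rho>10^{-1}r_{\can,\eps}\}$, yielding \ref{ass_round_VF_j}; the bound \ref{ass_round_VF_k} follows from the product rule together with the estimates of Lemma~\ref{lem_round_cyl}\ref{ass_round_Cyl_h} after applying that lemma with $\delta$ replaced by a sufficiently small $\delta'\le\delta$ depending on $\delta$ and the overlap/derivative constants; and \ref{ass_round_VF_l} holds because if $(\M^s_0,g^s_0)$ is homothetic to a round sphere or a quotient of the round cylinder then $g^s$ is rotationally symmetric throughout $\M^s$, so the averaging in Lemma~\ref{lem_round_cyl}\ref{ass_round_Cyl_h} returns $\partial^s_\t$ and every $Y^s_{\mathcal{O}}$ vanishes, whence $\partial^{\prime,s}_{\t,4}=\partial^s_\t$.

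I expect the main obstacle to be the construction of this partition of unity with all of the required features simultaneously — transverse continuity over the arbitrary parameter space $X$, fiber‑constancy with respect to the varying structures $\SS^s_3$, and $\rho$‑scale‑invariant derivative bounds with bounded overlap — together with the verification that the resulting convex combination genuinely preserves $\SS_4$ near its singular fibers; the latter is handled by passing to the local two‑fold covers of Lemma~\ref{lem_local_spherical_struct} near $\IR P^2$‑fibers and to the $B^3$‑model near point‑fibers, where "fiber‑constant" becomes "invariant under the local $O(3)$‑action". Everything else, including the inheritance of the assertions of Lemma~\ref{lem_round_cyl} under the passage from $\eta_3,\SS_3$ to $\eta_4,\SS_4$, is routine bookkeeping of constants.
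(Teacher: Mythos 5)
Your approach shares the starting point with the paper — using the family of local corrections $(Y^{s'}_{\mathcal{O}})$ from Lemma~\ref{lem_round_cyl}\ref{ass_round_Cyl_h} and the observation that the set of vector fields $W$ with $W\t=1$ preserving a fixed $\SS$ is affine — but then diverges at the crucial point. The paper does not glue the local $Y^{s'}_{\mathcal{O}}$ by a partition of unity. Instead, it makes a \emph{canonical, pointwise} choice: for each fiber $\mathcal{O}$, the space of admissible vector fields $Z'$ (those for which $\partial^s_\t+Z'$ extends to an $\SS^s_3$-preserving field near $\mathcal{O}$) is a finite-dimensional affine space, and the paper selects the unique $Z'_{\mathcal{O}}$ of minimal $L^2$-norm. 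This eliminates all choices: $Z^s$ is automatically well-defined globally, and smoothness, transverse continuity, and the $\rho$-weighted estimates are read off from the explicit representation-theoretic formula
\[
Y^{s'}_{\mathcal{O}} - Z^{s'} = \frac1{|O(3)|}\int_{O(3)} (1+3\tr A)\,(\zeta^{s'}_A)_* Y^{s'}_{\mathcal{O}}\, dA,
\]
which expresses the difference as an average of the known, controlled $Y^{s'}_{\mathcal{O}}$ under the local $O(3)$-action from Lemma~\ref{lem_round_cyl}. The cutoff $\eta^*$ in the paper then simply interpolates between $Z^s$ and $0$ near the boundary of $\domain(\SS^s_3)$; no cover of $\domain(\SS^s_3)$ by neighborhoods of individual fibers is needed.

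The step you flag as "the main obstacle" is, in my view, a genuine gap rather than routine work to be carried out. You need a partition of unity $(\psi_\alpha)$ subordinate to a locally finite refinement (of bounded multiplicity, adapted to the scale $\rho$) of the cover by the neighborhoods of Lemma~\ref{lem_round_cyl}\ref{ass_round_Cyl_h}, that is simultaneously (i) transversely continuous over an arbitrary topological space $X$, (ii) constant along the fibers of the \emph{varying} spherical structures $\SS^s_3$, and (iii) subject to $\rho$-scale-invariant derivative bounds uniformly in $s$. Because the leaf space of $\SS^s_3$ is not a fixed manifold and has nontrivial dependence on $s$, and because the "standard rescaled-bump construction" you invoke would have to be carried out compatibly with the lamination structure and with the singular fibers, none of this is standard; the paper's $L^2$-minimization device exists precisely to sidestep all of it. Absent such a construction your argument is incomplete. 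I would also note that your justification of Assertion~\ref{ass_round_VF_l} ("every $Y^s_{\mathcal{O}}$ vanishes" in the round case) is a statement about how $Y^s_{\mathcal{O}}$ was produced inside the \emph{proof} of Lemma~\ref{lem_round_cyl}, not something that follows from its statement; the paper's version sidesteps this too, since $0$ is admissible whenever $\partial^s_\t$ already preserves $\SS^s_3$, so the $L^2$-minimizer is automatically $Z^s\equiv 0$.
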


\begin{proof}
Apply Lemma~\ref{lem_round_cyl} for $\delta$ replaced by some $\delta^\# > 0$, which we will determine in the course of the proof depending on $\delta$, and fix the constants $\alpha$, $D$, $C_m$.
Assume in the following that $\frac12 g < g'_3 < 2 g$.
Set $(g^{\prime,4}_s )_{s \in X} := (g^{\prime,3}_s )_{s \in X}$.
Fix some $s \in X$ for now and set $U^s := \{ \eta_3 < 1 \} \cap \{ \rho_{g'_3} < \frac12 \alpha r_{\can, \eps} \} \cap \M^s \subset \domain (\SS^s_3)$.
We will define 
\begin{equation} \label{eq_def_partialtprime}
 \partial^{\prime,s}_{\t,4} := \partial^s_\t + \eta^* Z^s, 
\end{equation}
where $Z^s$ is a spatial vector field on $U^s$ and $\eta^* : \cup_{s \in X} \M^s \to [0,1]$ is a smooth and transversely continuous cutoff function with support on $\cup_{s \in X} U^s$.

Let us describe the construction of $Z^s$.
Consider a spherical fiber $\mathcal{O} \subset U^s \cap \M^s_t$.
Call a vector field $Z'$ in $\M^s_t$ along $\mathcal{O}$ \emph{admissible} if $\partial^s_\t + Z'$ can be extended to a vector field on neighborhood of $\mathcal{O}$ in $\M^s$ that preserves $\SS^s_3$.
Using a local chart near $\mathcal{O}$ in $\M^s$, one can see that the space of admissible vector fields along $\mathcal{O}$ is affine and finite dimensional.
More specifically, if $\mathcal{O}$ is a point, then there is only one admissible vector field along $\mathcal{O}$ and otherwise the difference of any two admissible vector fields is equal to the sum of a Killing field on $\mathcal{O}$ and a parallel normal vector field to $\mathcal{O}$.
Let now $Z'_{\mathcal{O}}$ be the admissible vector field along $\mathcal{O}$ whose $L^2$-norm is minimal and define $Z^s$ on $U^s$ such that $Z^s|_{\mathcal{O}} := Z'_{\mathcal{O}}$ for every spherical fiber $\mathcal{O} \subset U^s$.
Then $Z^s$ is well defined.

We will now show that $Z^s$ is smooth, transversely continuous in the smooth topology and small in the sense of Assertion~\ref{ass_round_VF_k}.
For this purpose, consider a family of defined vector field $(Y^{s'}_\mathcal{O})_{s' \in X}$ from Lemma~\ref{lem_round_cyl}\ref{ass_round_Cyl_h} near a spherical fiber $\mathcal{O}$.
As in the proofs of Lemmas~\ref{lem_Bryant_rounding} and \ref{lem_round_cyl}, we can construct a transversely continuous family of local $O(3)$-actions $(\zeta^{s'})$ that are compatible with $\SS^{s'}_3$ and isometric with respect to $g^{\prime, s'}_3$.
By definition $Y^{s'}_\mathcal{O} - Z^{s'}$ restricted to every spherical fiber $\mathcal{O}' \subset \M^{s'}$ near $\mathcal{O}$ that is not a point equals the $L^2$-projection of $Y_\mathcal{O}|_{\mathcal{O}'}$ onto the subspace spanned by Killing fields and parallel normal vector fields along $\mathcal{O}'$.
If $\mathcal{O}'$ is a point, then $Y^{s'}_\mathcal{O} - Z^{s'}$ restricted to every spherical fiber $\mathcal{O}'$ vanishes.
So a representation theory argument implies that
\[ Y^{s'}_\mathcal{O} - Z^{s'} |_U = \frac1{|O(3)|} \int_{O(3)} (1 + 3 \tr A) (\zeta^{s'}_A)_* Y^{s'}_{\mathcal{O}} \, dA, \]
where $\zeta^{s'}_A := \zeta^{s'} (A, \cdot)$.
This implies the desired regularity properties of $Z^s$ and the bound from Assertion~\ref{ass_round_VF_k} for $\delta^\# \leq \ov\delta^\# (\delta)$ if we had $\partial^{\prime,s}_{\t,4} = \partial^s_\t +  Z^s$.

It remains to construct the cutoff function $\eta^*$.
Let $\nu$ be the cutoff function from Subsection~\ref{subsec_rounding_conventions} and set
\[ \eta^* (x) := \nu \bigg( \frac{\alpha \cdot r_{\can, \eps} (x)}{10^2 \rho_{g'_3} (x)} \bigg) \cdot \nu \big( 2 \eta_2 (x) \big) . \]
Then $\supp \eta^* \subset \cup_{s \in X} U^s$, so if we define $\partial^{\prime,s}_{\t,4}$ as in (\ref{eq_def_partialtprime}), then Assertion~\ref{ass_round_VF_k} holds for $\delta^\# \leq \ov\delta^\# (\delta, (C_m))$ and $\eps \leq \ov\eps (\delta, (C_m))$.
Next, note that $\eta^* \equiv 1$ on $U' := \{ \rho_{g'_3} < 10^{-2} \alpha r_{\can, \eps} \} \cap \{ \eta_3 > \frac12 \}$.
So if we define $\SS^{s}_4$ to be the restriction of $\SS^{s}_3$ to $U' \cap \M^s$, then Assertion~\ref{ass_round_VF_i} holds.
Assertions~\ref{ass_round_Cyl_a}--\ref{ass_round_Cyl_h} of Lemma~\ref{lem_round_cyl} continue to hold if we replace $\alpha$ by $\frac12 10^{-2} \alpha$, set $\eta_4 (x) := \nu (2 \eta_3 (x) )$ and adjust $C_m$ appropriately.
Assertion~\ref{ass_round_VF_j} of this lemma holds assuming $\alpha \leq \frac12$ and Assertion~\ref{ass_round_VF_l} is true due to construction.
\end{proof}

\subsection{Extension of the structure until the metric is almost round} \label{subsec_extend_to_almost_round}
Next we modify each metric $g^{\prime,s}_4$ on the support of $\eta_4$ such that it remains compatible with a spherical structure $\SS^s_5$ until it has almost constant curvature.
We will also choose a new cutoff function $\eta_5$ whose support is contained in the support of $\eta_4$ and which measures the closeness of $g^{\prime,s}_4$ to a constant curvature metric.

Recall in the following that $\eta_4$ is only non-zero in components that have positive sectional curvature, bounded normalized diameter and will become extinct in finite time.
Therefore our construction will only take place in product domains of $\M^s$, which can be described by conventional Ricci flows.
Our strategy will be to construct new metrics $g^{\prime,s}_5$ by evolving the metrics $g^{\prime,s}_4$ on the support of $\eta_4$ forward by a certain amount of time under the Ricci flow.
By the continuous dependence  of the Ricci flow on its the initial data, this flow remains close to $g^s$ for some time.
Moreover, any symmetry of $g^{\prime,s}_4$ will be preserved by the flow and therefore the new metrics $g^{\prime,s}_5$ will be compatible with a spherical structure for a longer time.
By choosing our constants appropriately, we can ensure that $g^{\prime,s}_5$ is compatible with a spherical structure until a time close enough to the corresponding extinction time.
After this time the remaining flow is sufficiently close to a quotient of the round sphere.

\begin{lemma} \label{lem_exten_almost_round}
For every $\delta > 0$ and assuming that $\alpha \leq \ov\alpha (\delta)$,  $C^*_m \geq \underline{C}^*_m$ and $\eps \leq \ov\eps(\delta)$  there are:
\begin{itemize}
\item a transversely continuous family of smooth metrics $(g^{\prime, s}_{5})_{s \in X}$ on $\ker d\t$, 
\item a continuous function $\eta_5 : \cup_{s \in X} \M^s \to [0,1]$ that is smooth on each fiber $\M^s$ and transversely continuous in the smooth topology,
\item a transversely continuous family of spherical structures $(\SS^s_5)_{s \in X}$ on open subsets of $\M^s$,
\item  a transversely continuous family of smooth vector fields $(\partial^{\prime,s}_{\t,5})_{s \in X}$ on $\M^s$ that  satisfy $\partial^{\prime,s}_{\t,5}  \t = 1$,
\end{itemize}
such that for any $s \in X$ and $t \geq 0$:
\begin{enumerate}[label=(\alph*)]
\item \label{ass_extend_alm_round_aa} $\partial^s_\t \eta_5 \geq 0$.
\item \label{ass_extend_alm_round_a} $\eta_5$ is constant on every connected component of $\M^{s}_t$.
\item \label{ass_extend_alm_round_b} Any connected component $\C \in \M^{s}_t$ on which $\eta_5  > 0$ is compact and there is a time $t_\C > t$ such that:
\begin{enumerate}[label=(c\arabic*)]
\item  $\C$ survives until time $t'$ for all $t' \in [t, t_\C)$ and no point in $\C$ survives until or past time $t_\C$.
\item The universal cover $(\td\C (t'), g^{\prime,s}_{5,t'})$ is $\delta$-close to the round sphere modulo rescaling for all $t' \in [t, t_\C)$.
\item  $\rho < 10^{-1} r_{\can, \eps}$ on $\C (t')$ for all $t' \geq [t, t_\C)$.
\end{enumerate}
\item \label{ass_extend_alm_round_c} $| \partial_\t^m \eta_5 | \leq C^*_m \rho^{-2m}$ for  $m = 0, \ldots, [\delta^{-1}]$.
\item \label{ass_extend_alm_round_d} $\eta_5$, $g'_5$, $\SS_5$, $\partial'_{\t, 5}$ satisfy Assertions~\ref{ass_round_Cyl_a}--\ref{ass_round_Cyl_c}, \ref{ass_round_Cyl_e}--\ref{ass_round_Cyl_g} of Lemma~\ref{lem_round_cyl} and all assertions of Lemma~\ref{lem_dtprime}.
\end{enumerate}
\end{lemma}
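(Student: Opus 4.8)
The plan is to carry out the Ricci-flow evolution sketched above the statement. First I apply Lemma~\ref{lem_round_cyl} and Lemma~\ref{lem_dtprime} with $\delta$ replaced by a small $\delta^\# \ll \delta$, obtaining $g^{\prime,s}_4$, $\eta_4$, $\SS^s_4$, $\partial^{\prime,s}_{\t,4}$ together with the constants $\alpha$, $C_m$. I keep $g^{\prime,s}_5 := g^{\prime,s}_4$, $\SS^s_5 := \SS^s_4$, $\partial^{\prime,s}_{\t,5} := \partial^{\prime,s}_{\t,4}$, $\eta_5 := \eta_4$ outside the open set $\{\eta_4>0\}$ and modify them only there. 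Recall that every connected component $\C\subset\M^s_t$ with $\eta_4>0$ is compact, has $\sec>0$, normalized diameter $<D$, scale $\rho<10^{-1}r_{\can,\eps}$, survives on $[t,t_\C)$ for a finite $t_\C$ with $t_\C-t\le C(D)\rho^2$, and converges modulo rescaling to an isometric quotient of the round sphere as $t'\nearrow t_\C$; in particular $g^s$ restricts to a conventional Ricci flow on the product domain swept out by $\C$, transversely continuously in nearby parameters.

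The heart of the argument is the evolution on each such component. Fix $\C$; using the construction of $\eta_1,\dots,\eta_4$ and the compactness of $\kappa$-solutions (Theorem~\ref{Thm_kappa_compactness_theory}), I choose a time $t^\dagger(\C)$, depending transversely continuously on the data, such that $\C(t^\dagger)\subset\{\rho<\alpha r_{\can,\eps}\}\cap\{\eta_4<1\}\subset\domain(\SS^s_4)$ --- so $g^{\prime,s}_{4,t^\dagger}$ is compatible with $\SS^s_4$, i.e.\ fully rotationally symmetric, on all of $\C(t^\dagger)$ --- and such that $t_\C-t^\dagger\ge c(D)\rho^2$ and $\eta_4<1$ on $\C(t')$ for $t'\in[t^\dagger,t^\dagger+c(D)\rho^2]$ (the last holding because the nested cutoffs force a definite amount of time, in units of $\rho^2$, to elapse between the component becoming small and $\eta_4$ reaching $1$). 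Let $(\RF^\C_{t'})_{t'\ge t^\dagger}$ be the maximal Ricci flow with $\RF^\C_{t^\dagger}=g^{\prime,s}_{4,t^\dagger}$ on $\C(t^\dagger)$. By uniqueness of the Ricci flow it preserves every isometry of $g^{\prime,s}_{4,t^\dagger}$, hence stays compatible with the spherical structure $\SS^\C$ obtained by transporting $\SS^s_4|_{\C(t^\dagger)}$ by the time vector field of this flow. Since $g^{\prime,s}_{4,t^\dagger}$ is $\delta^\#$-close to $g^s_{t^\dagger}$ in the scale-invariant $C^{[1/\delta^\#]}$-sense, whose Ricci flow is $g^s$ on $\C$ --- with geometry controlled by $\rho^{-2}$ away from $t_\C$, preserved pinching $\sec>cR$, and becoming round and extinct at $t_\C$ --- continuous dependence of the Ricci flow on its initial data together with Hamilton's theorem \cite{hamilton_positive_ricci} gives: $\RF^\C$ exists on $[t^\dagger,t_\C)$, it stays $\delta'$-close to $g^s$ in the scale-invariant sense there with $\delta'=\delta'(\delta^\#)\to 0$, and there is a time $T_\delta(\C)\in(t^\dagger+c(D,\delta^*)\rho^2,\,t_\C)$ such that the universal cover of $(\C(t^\dagger),\RF^\C_{t'})$ is $\tfrac12\delta$-close to the round sphere modulo rescaling for all $t'\in[T_\delta(\C),t_\C)$.

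Now I assemble $g^{\prime,s}_5$ on $\C$. Pick a cutoff $\chi$, depending transversely continuously and scale-invariantly on the data, which vanishes to infinite order at $t^\dagger$, equals $1$ on a middle range $[t^\ddagger(\C),\,t_\C-c\rho^2]$ with $t^\ddagger<\min(T_\delta(\C),\,t^\dagger+c(D)\rho^2)$, and reverts to $0$ as $t'\nearrow t_\C$ (where everything is already $\delta$-round), and set $g^{\prime,s}_{5,t'}:=g^{\prime,s}_{4,t'}$ for $t'\le t^\dagger$ and
\[
g^{\prime,s}_{5,t'}:=\big(1-\chi(t')\big)\,A_{t'}\big(g^{\prime,s}_{4,t'}\big)+\chi(t')\,\RF^\C_{t'},\qquad t'\in[t^\dagger,t_\C),
\]
where $A_{t'}$ is the average over the local $O(3)$-action of $\SS^\C|_{\C(t')}$; as $\SS^s_4|_{\C(t')}$ and $\SS^\C|_{\C(t')}$ are $\delta'$-close and $g^{\prime,s}_{4,t'}$ is compatible with the former where it is defined, $A_{t'}(g^{\prime,s}_{4,t'})$ is $C(\delta')$-close to $g^{\prime,s}_{4,t'}$, is compatible with $\SS^\C$, and coincides with $g^{\prime,s}_{4,t^\dagger}$ at $t'=t^\dagger$, so the infinite-order vanishing of $\chi$ makes $g^{\prime,s}_5$ smooth in spacetime and equal to $g^{\prime,s}_4$ for $t'\le t^\dagger$. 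I let $\SS^s_5$ be $\SS^s_4$ together with $\SS^\C$ on each $\C$, define $\partial^{\prime,s}_{\t,5}$ to equal $\partial^{\prime,s}_{\t,4}$ for $t'\le t^\dagger$ and interpolate (with the same $\chi$) to the time vector field of $\RF^\C$, corrected by a small $O(3)$-average as in Lemma~\ref{lem_dtprime} so that it preserves $\SS^s_5$, and define $\eta_5$ by time-integrating a scale-invariant cutoff detecting $t'\ge T_\delta(\C)$, exactly as in Lemma~\ref{lem_cutoff_almost_extinct}, so that $\partial^s_\t\eta_5\ge 0$, $\eta_5$ is constant on components, $\{\eta_5>0\}\subset\bigcup_\C\{t'\ge T_\delta(\C)\}$ where $g^{\prime,s}_5=\RF^\C_{t'}$ is $\delta$-close to round, and $|\partial^m_\t\eta_5|\le C^*_m\rho^{-2m}$. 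With this in hand Assertions~\ref{ass_extend_alm_round_aa}--\ref{ass_extend_alm_round_c} are immediate; for \ref{ass_extend_alm_round_d}, equality with $g^s$ resp.\ $\partial^s_\t$ off $\{\rho<10^{-1}r_{\can,\eps}\}$ and on round quotients holds since there $g^{\prime,s}_4=g^s$ and the Ricci flow of a round metric stays round, the scale-invariant smallness bounds (Lemma~\ref{lem_round_cyl}\ref{ass_round_Cyl_f}) follow from $\delta'\le\delta'(\delta)$ and Lemma~\ref{lem_rcan_control}, and $\domain(\SS^s_5)\supset\{\rho<\alpha r_{\can,\eps}\}\cap\{\eta_5<1\}$ (Lemma~\ref{lem_round_cyl}\ref{ass_round_Cyl_c}) because this set is covered by $\SS^s_4$ for $t'\le t^\dagger$ and by $\SS^\C$ for $t^\dagger\le t'<T_\delta(\C)$, with $g^{\prime,s}_5$ compatible with $\SS^\C$ there by construction; transverse continuity is inherited from that of all ingredients.

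The main obstacle is the quantitative Ricci-flow input of the second paragraph: continuous dependence of the flow on its initial data, with scale-invariant bounds, must hold up to (a definite fraction of $\rho^2$ before) the extinction time and deliver uniform roundness after a definite time. This rests on the a priori geometry of these almost-extinct components --- the rescaled flows stay uniformly close to final time-slices of $\kappa$-solutions of bounded normalized diameter by Theorem~\ref{Thm_kappa_compactness_theory}, the pinching $\sec>cR$ is preserved, and Hamilton's theorem then yields uniform convergence to round --- but it is the step demanding the most care, as is the bookkeeping of the scale-separated constants $\delta^\#\ll\delta\ll\delta^*$ and the verification that $t^\dagger$, $t^\ddagger$, $T_\delta$ leave enough room before $\eta_4$ reaches $1$ and before extinction, and that the reversion of $\chi$ near $t_\C$ keeps $g^{\prime,s}_5$ scale-invariantly close to $g^s$.
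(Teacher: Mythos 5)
Your strategy --- run the rounded metric forward under the Ricci flow to prolong its spherical symmetry, and splice the result back in --- is the same idea that drives the paper's proof. The implementation, however, has a genuine gap, concentrated exactly at the step you flag as delicate.

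The gap is your use of the \emph{unnormalized} Ricci flow $\RF^\C$ in the original time coordinate, together with the assertion that ``$\RF^\C$ exists on $[t^\dagger,t_\C)$ and stays $\delta'$-close to $g^s$ in the scale-invariant sense there.'' This is false in general. Since $\RF^\C_{t^\dagger}$ is only $\delta^\#$-close to $g^s_{t^\dagger}$, its extinction time $t'_\C$ differs from $t_\C$; if $t'_\C<t_\C$ then $\RF^\C$ simply does not exist on $(t'_\C,t_\C)$, and even when $t'_\C>t_\C$ the scale ratio $\rho_{\RF^\C}/\rho_{g^s}\sim\sqrt{(t'_\C-t')/(t_\C-t')}$ degenerates as $t'\nearrow t_\C$. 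Scale-invariant continuous dependence of unnormalized Ricci flows fails near extinction, so your uniform $\delta'$-closeness on $[t^\dagger,t_\C)$ is unavailable. Your remedy --- having $\chi$ revert to $0$ near $t_\C$ and replacing $\RF^\C$ by the $O(3)$-average $A_{t'}(g^{\prime,s}_{4,t'})$ --- does not repair this: near $t_\C$ the flow $g^s$ is close to round only modulo its full isometry group, not relative to the fixed $O(3)$-structure $\SS^\C$ transported from time $t^\dagger$, so $A_{t'}(g^{\prime,s}_{4,t'})$ need not be close to $g^{\prime,s}_{4,t'}$ and the scale-invariant estimate of Lemma~\ref{lem_round_cyl}\ref{ass_round_Cyl_f} is not recovered there.

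The paper avoids both problems by making two choices that are worth internalizing. First, it works with the \emph{volume-normalized} Ricci flow in the reparameterized time $\td\partial_\t:=\wh\rho_g^2\,\partial^{\prime,s}_{\t,4}$, which sends $[t^\dagger,t_\C)$ to $[T_0,\infty)$. In this normalized, stretched picture the flow is a bounded perturbation of the constant round metric, it is defined for all time, and continuous dependence holds uniformly for any fixed evolution length $A$ (Claim~\ref{cl_translation_by_A}). This is exactly what eliminates the extinction-time mismatch. Second, instead of a fixed starting slice $t^\dagger$, a cutoff $\chi$, and an averaging operator, the paper lets the \emph{amount} of forward evolution be the variable $\td\eta_4(t)A$: setting $\td g_{5,t}:=\RF_{\td\eta_4(t)A}\big(\td g_{4,\,t-\td\eta_4(t)A}\big)$ ramps the evolution up smoothly from zero exactly where $\td\eta_4$ does, agrees with $\td g_{4,t}$ on $\{\td\eta_4=0\}$, and is automatically compatible with the pushforward of $\SS^s_4$ from time $t-\td\eta_4(t)A$ --- no interpolation and no $A_{t'}$. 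This also makes the $\delta$-independence of $C^*_m$ transparent: $\td\eta_5=\td\eta_4(\cdot-A)$ is a time-translate, so its derivative bounds are those of $\td\eta_4$ and do not see $A$ (hence not $\delta$). In your version $\eta_5$ is built from a cutoff keyed to $T_\delta(\C)$, and while this is probably salvageable, you would still have to verify that the derivative bounds are uniform in $\delta$.

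In short, the blueprint is the right one, but the unnormalized-flow-plus-interpolation implementation does not yield the scale-invariant estimates needed near $t_\C$; you need the volume-normalization and the variable evolution time $\td\eta_4(t)A$ (or an equivalent device) to close the argument.
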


It will be important later that the constants $C^*_m$ are independent of $\delta$.

\begin{proof}
Let $ \delta^\#, \delta',\delta'' > 0$, $A < \infty$ be constants whose values will be determined depending on $\delta$ in the course of the proof.
Apply Lemma~\ref{lem_dtprime} with $\delta$ replaced by $\delta^\#$ and consider the families $(g^{\prime,s}_4)_{s \in X}, (\partial^{\prime,s}_{\t,4})_{s \in X}$, $(\SS^s_4)_{s \in X}$, the cutoff function $\eta_4$ and the constants $\alpha, D, C_m$.
Assume that $\delta^\#$ is chosen small enough such that $\frac12 \rho_{g} < \rho_{g'_4} < 2 \rho_{g}$.

Set $\partial'_{\t, 5} := \partial'_{\t, 4}$ everywhere and $g'_5 := g'_4$, $\SS_5 := \SS_4$ and $\eta_5 := \eta_4 = 0$ on $\{ \eta_4 = 0 \}$.
Therefore, it suffices to consider only components $\C \subset \M^s_t$ where $\eta_4 > 0$.
Recall that by Lemma~\ref{lem_cutoff_almost_extinct}\ref{ass_cutoff_c}, the union of these components consists of pairwise disjoint product domains.
For any such component $\C \subset \M^s_t$ choose $t^{\min}_\C < t < t^{\max}_\C$ minimal/maximal such that $\C$ survives until all $t' \in (t^{\min}_\C, t^{\max}_\C)$ and set
\[ U_\C := \cup_{t' \in (t^{\min}_\C, t^{\max}_\C)} \C (t'). \]
Then $U_\C$ is a product domain (with respect to $\partial^s_\t$ and $\partial^{\prime,s}_{\t, 5}$) and for any two components $\C, \C' \subset \M^s_t$ on which $\eta_4 > 0$ the product domains $U_{\C}, U_{\C'}$ are either equal or disjoint.
For the remainder of the proof let us fix some $s \in X$ and a product domain of the form $U_\C$.
Recall that $\eta_4 = 0$ on $\C(t') \subset U_\C$ for $t'$ close to $t^{\min}_\C$.
We will describe how to define $g^{\prime,s}_5$, $\eta_5$ and $\SS^s_5$ on $U_\C$ such that all assertions of this lemma hold and $g^{\prime,s}_5 = g^{\prime,s}_4$, $\eta_5 = \eta_4 = 0$ and $\SS^s_5 = \SS^s_4$ on $\{ \eta_4 = 0 \} \cap U_\C$.
It will be clear that the same construction can be performed for every $U_\C$ and that the resulting objects have the desired regularity properties.

We first represent the flows $g^s$ and $g^{\prime,s}_4$ restricted to $U_\C$ by a smooth family of metrics on $\C$, which satisfy an equation that is similar to the volume normalized Ricci flow equation.
For this purpose consider the flow $\Phi : \C \times (T_1, T_2) \to U_\C$ of the vector field $\td\partial^s_\t := \wh\rho_g^2 \cdot \partial^{\prime,s}_{\t, 4}$.
By Lemma~\ref{lem_cutoff_almost_extinct}\ref{ass_cutoff_c5} we have $|\td\partial^s_\t  \t| = \wh\rho_g^2 \leq C (t^{\max}_\C - \t)$.
So $T_2 = \infty$.
Express $g^s$ and $g^{\prime,s}_4$ as families of metrics  $(\td{g}_t)_{t \in (T_1, \infty)}$, $(\td{g}_{4,t})_{t \in (T_1, \infty)}$ and define $\td\eta_4 : (T_1, \infty) \to [0,1]$ as follows:
\[ \td{g}_t :=\Phi^*_t  \big( \wh\rho^{-2}_g  g^{s} \big), \qquad \td{g}_{4,t} := \Phi^*_t  \big( \wh\rho_g^{-2}  g^{\prime,s}_4 \big), \qquad \td\eta_4 (t) := \eta_4 (\Phi_t (\C)) . \]
Note that if $\partial^{\prime,\t}_{\t, 4} = \partial^s_{\t}$, then $(\td{g}_t)$ is the volume normalization of the flow $g^s$ restricted to $U_\C$.
In the general case, consider the family of vector fields $(Z_t)_{t \in (T_1, \infty)}$
\[ Z_t := \Phi^*_t \big( \wh\rho_g^2 (\partial^s_{\t}-\partial^{\prime,s}_{\t, 4}) \big). \]
Then $(\td{g}_t)$ satisfies the following normalized flow equation with a correctional Lie derivative:
\[ \partial_t \td{g}_t + \mathcal{L}_{Z_t} \td{g}_t = - 2 \Ric_{\td{g}_t} + \frac{2}{3  V(\C, \td{g}_t)} \int_\C R_{\td{g}_t} d\mu_{\td{g}_t} \cdot \td{g}_t . \]
We have $\partial_t \td\eta_4 \geq 0$ by Lemma~\ref{lem_cutoff_almost_extinct}\ref{ass_cutoff_a}.
By our previous discussion we have $\td\eta_4 (t) \equiv 0$ for $t$ near $T_1$ and $\td\eta_4 (t) > 0$ for large $t$.
Let $T_0 \in (T_1, \infty)$ be maximal such that $\td\eta_4 |_{(T_1, T_0]} \equiv 0$.

\begin{Claim} \label{cl_bounds_vol_normalized_RF}
There are constants $C'_m < \infty$ such that if $\delta^\# \leq \ov\delta^\# (\delta')$, $\eps \leq \ov\eps (\delta')$, then $T_0 - (\delta')^{-2} > T_1$ and for all $t \in (T_0 - (\delta')^{-2}, \infty)$ and $m_1, m_2 = 0, \ldots, [(\delta')^{-1}]$ we have
\begin{multline*}
 | \partial_t^{m_2} \td\eta_4 | \leq C'_{m_2} \rho_{\td{g}}^{-2m_2}, \qquad |\nabla^{m_1} \partial_t^{m_2} ( \td{g}_4 -\td{g} )| \leq \delta' \rho_{\td{g}}^{-m_1-2m_2}, \\
  \qquad |\nabla^{m_1} \partial_t^{m_2} Z|  \leq \delta' \rho_{\td{g}}^{1-m_1-2m_2}  . 
\end{multline*}
\end{Claim}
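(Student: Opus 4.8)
\textbf{Proof proposal for Claim~\ref{cl_bounds_vol_normalized_RF}.}

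The plan is to argue by contradiction via a compactness/limit argument, exactly in the spirit of the earlier claims in this section (e.g.\ Claims~\ref{cl_eta1_regularity} and \ref{cl_choice_of_A}). Suppose the claim fails: then for fixed $\delta'$ we obtain a sequence of singular Ricci flows $\M^i$, product domains $U_{\C^i}$, and either (i)~$T_0^i - (\delta')^{-2} \le T_1^i$ for $\eps^i \to 0$ (and correspondingly $\delta^{\#,i} \to 0$), or (ii)~one of the three stated derivative bounds is violated at some time $t^i \in (T_0^i - (\delta')^{-2}, \infty)$ and some point, again with $\eps^i, \delta^{\#,i} \to 0$. First I would normalize: by Lemma~\ref{lem_cutoff_almost_extinct}\ref{ass_cutoff_c}, each $\C^i$ has $\sec > 0$, bounded normalized diameter $\diam \C^i < D\rho$, and $\rho < 10^{-1}r_{\can,\eps^i}$, so by Hamilton's theorem \cite{hamilton_positive_ricci} the (volume-normalized) flow $(\td g^i_t)$ converges exponentially to a round spherical space form as $t \to t^{\max}_{\C^i}$ (in the rescaled time this is $t \to \infty$). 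Moreover, since $\eps^i \to 0$, the canonical neighborhood assumption forces $(\C^i, \rho^{-2} g^i, \cdot)$ at the relevant times to be close to time-slices of $\kappa$-solutions; combined with the diameter bound and simple-connectedness after passing to the universal cover, the rescaled flows subsequentially converge to a compact, simply-connected ancient solution, hence to a shrinking round sphere by Theorem~\ref{Thm_kappa_sol_classification} (case~(b)), after passing to a subsequence.

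The key structural input is that the volume-normalized flow $(\td g_t)$, being a reparametrization of the Ricci flow near an extinct round component, has uniformly bounded geometry on the time-interval $(T_0 - (\delta')^{-2}, \infty)$ once we know $T_0$ is bounded away from $T_1$ by $(\delta')^{-2}$: indeed on such components the flow is asymptotically round, $\rho_{\td g}$ is comparable to $1$, and standard parabolic estimates (Shi's estimates, applied to the normalized equation) give $C^\infty$-bounds depending only on the elapsed time. To see that $T_0 - (\delta')^{-2} > T_1$ for $\delta^\#$ small: $T_0$ is the last time at which $\td\eta_4 \equiv 0$, i.e.\ the component has not yet entered the region where $\eta_4$ is switched on; by the choice of $\eta_4$ (Lemma~\ref{lem_cutoff_almost_extinct}), $\eta_4$ becomes positive only deep into the almost-round regime, so $T_0$ occurs at a definite normalized-time distance before the extinction time, which in turn is at a definite distance from $T_1$ because $\wh\rho_g^2 \le C(t^{\max}_\C - \t)$ forces the reparametrized interval $(T_1,\infty)$ to have infinite length only by stretching time near extinction — so the ``early'' part $(T_1, T_0)$ has length bounded below uniformly. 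This is where the contradiction in case~(i) comes from: in the limit, $T_0 - T_1 = \infty$ trivially (infinite normalized time), contradicting $T_0 - (\delta')^{-2} \le T_1$ for large $i$.

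For the three derivative bounds in case~(ii): the estimate $|\partial_t^{m_2}\td\eta_4| \le C'_{m_2}\rho_{\td g}^{-2m_2}$ follows from Lemma~\ref{lem_cutoff_almost_extinct}\ref{ass_cutoff_e} (the bound $|\partial_\t^m\eta_1| \le C_m\rho^{-2m}$, hence the same for $\eta_4$ after adjusting constants) together with the reparametrization $\td\partial_\t = \wh\rho_g^2\partial'_{\t,4}$ and $\wh\rho_g \sim \rho$; here the constants $C'_m$ genuinely depend only on the $C_m$ and universal data, \emph{not} on $\delta'$, which is the point emphasized after the lemma statement. The bounds $|\nabla^{m_1}\partial_t^{m_2}(\td g_4 - \td g)| \le \delta'\rho_{\td g}^{-m_1-2m_2}$ and $|\nabla^{m_1}\partial_t^{m_2}Z| \le \delta'\rho_{\td g}^{1-m_1-2m_2}$ follow because $\td g_4 - \td g$ and $Z$ are, up to rescaling by $\wh\rho_g$, precisely $g'_4 - g$ and $\wh\rho_g^2(\partial_\t - \partial'_{\t,4})$, which are controlled by $\delta^\#$ via Lemma~\ref{lem_dtprime}\ref{ass_round_VF_k} (and the metric bound from Lemma~\ref{lem_round_cyl}\ref{ass_round_Cyl_f}); choosing $\delta^\# \le \ov\delta^\#(\delta')$ small and using $\eps \le \ov\eps(\delta')$ so that $r_{\can,\eps}$ is large on the relevant region, the scale-invariant $C^{[(\delta')^{-1}]}$-norms transfer directly. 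The main obstacle — and the step requiring the most care — is the bookkeeping in the reparametrization: verifying that covariant derivatives and $\partial_t$-derivatives in the normalized time $t$ (with metric $\td g_t$ and the correctional Lie derivative $\mathcal{L}_{Z_t}$) are comparable, up to universal factors of $\wh\rho_g$ and its derivatives, to the corresponding $\nabla$ and $\partial_\t$ quantities on $U_\C$; this is where one must use that $\wh\rho_g$ is essentially constant on time-slices and that its time-derivatives are controlled (as the component becomes round), so that the chain-rule terms generated by pulling back along $\Phi_t$ are harmless. Once this dictionary is set up, the estimates are immediate consequences of the inputs from the previous lemmas.
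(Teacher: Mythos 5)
Your compactness-by-contradiction framework for the three derivative bounds (case (ii)) is essentially what the paper does — the paper calls it "a standard limit argument using Lemma~\ref{lem_cutoff_almost_extinct}\ref{ass_cutoff_e}, Lemma~\ref{lem_round_cyl}\ref{ass_round_Cyl_f} and Lemma~\ref{lem_dtprime}\ref{ass_round_VF_k}", and your identification that $\td g_4-\td g$ and $Z$ are rescalings of $g'_4-g$ and $\wh\rho_g^2(\partial_\t-\partial'_{\t,4})$ is the right dictionary. Your remark that the $C'_{m_2}$ depend only on the $C_m$ (not on $\delta'$) is also the correct reading.

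However, your argument for the first assertion, $T_0-(\delta')^{-2}>T_1$, has a genuine gap. First, you attribute it to ``$\delta^\#$ small,'' but the paper explicitly derives it from Lemma~\ref{lem_rcan_control}, ``assuming that $\eps\le\ov\eps(\delta')$'' — the hypothesis on $\eps$, not the one on $\delta^\#$, is what matters here, because the relevant quantity is $r_{\can,\eps}$. Second, your claimed mechanism — that $\wh\rho_g^2\le C(t^{\max}_\C-\t)$ forces the reparametrized interval to ``stretch near extinction'' — only explains why $T_2=\infty$, i.e.\ why the \emph{late} part of the interval is long; it says nothing about $T_0-T_1$. Third, and most seriously, the conclusion you draw — ``the `early' part $(T_1,T_0)$ has length bounded below uniformly'' — is unquantified, and the claim requires a bound by $(\delta')^{-2}$, which diverges as $\delta'\to 0$. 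No uniform constant suffices. The mechanism you need is that $\eta_4>0$ can only occur after $\eta'_1$ has accumulated past a fixed threshold, which requires the component to have spent a definite amount of normalized time in the regime $\wh\rho\lesssim r_{\can,\eps}$; and by Lemma~\ref{lem_rcan_control}, the ratio $\wh\rho/r_{\can,\eps}$ varies at normalized rate $O(\eps)$ (since $|\partial_\t r_{\can,\eps}|\le\eps r_{\can,\eps}^{-1}$ and $\wh\rho_g\lesssim r_{\can,\eps}$ there, so $|\partial_t\log r_{\can,\eps}|\lesssim\eps$), while Lemma~\ref{lem_rcan_control}\ref{ass_rcan_c} pins $\rho\ge\eps^{-1}r_{\can,\eps}$ at the initial slice. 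It is this $\eps$-scaling that makes $T_0-T_1$ grow without bound as $\eps\to 0$, giving the $(\delta')^{-2}$ lower bound once $\eps\le\ov\eps(\delta')$. Your proposal never invokes this and would not yield the $\delta'$-dependence in the bound.
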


\begin{proof}
The first statement is a consequence of Lemma~\ref{lem_rcan_control}, assuming that $\eps \leq \ov\eps (\delta')$.
The other bounds follow via a standard limit argument using Lemma~\ref{lem_cutoff_almost_extinct}\ref{ass_cutoff_e}, Lemma~\ref{lem_round_cyl}\ref{ass_round_Cyl_f} and Lemma~\ref{lem_dtprime}\ref{ass_round_VF_k}
\end{proof}

For any metric $g'$ on $\C$ and any $\Delta T \geq 0$ denote by $\RF_{\Delta T} (g')$ the result of evolving $g'$ by the volume normalized Ricci flow equation
\begin{equation} \label{eq_vol_norm_RF}
 \partial_t g'_t = - 2 \Ric_{g'_t} + \frac{2}{3  V(\C, g'_t)} \int_\C R_{g'_t} d\mu_{g'_t} \cdot g'_t, \qquad g'_0 = g' 
\end{equation}
for time $\Delta T$, if possible.
Note that if $g'$ is compatible with some spherical structure $\SS'$, then so is $RF_{\Delta T} (g')$.
We now define $(\td{g}_{5,t})_{t \in (T_1, \infty)}$ and $\td\eta_5 : (T_1, \infty) \to [0,1]$ as follows:
\[ \td{g}_{5,t} := \RF_{\td\eta_4(t) A} \td{g}_{4, t-\td\eta_4(t) A}, \qquad \td\eta_5 (t) := \begin{cases} 0 & \text{if $t \leq T_0$} \\
 \td\eta_4(t -A) &\text{if $t > T_0$} \end{cases}. \]

\begin{Claim} \label{cl_translation_by_A}
If $\delta' \leq \ov\delta' (A, \delta'')$ and $\eps \leq \ov\eps (A, \delta'')$, then $(\td{g}_{5,t})_{t \in (T_1, \infty)}$ and $\td\eta_5 : (T_1, \infty) \to [0,1]$ are well defined and smooth and for all $m_1, m_2 = 0, \ldots, [(\delta'')^{-1}]$ we have
\[ | \partial_t^{m_2} \td\eta_5 | \leq C'_{m_2} \rho_{\td{g}}^{-2m_2}, \qquad |\nabla^{m_1} \partial_t^{m_2} ( \td{g}_5 -\td{g} )|  \leq \delta'' \rho_{\td{g}}^{-m_1-2m_2}  . \]
Moreover, $\td\eta_5 = \td\eta_4$ and $\td{g}_5 = \td{g}_4$ on $(T_1, T_0)$ and if $\Phi_{t - \td\eta_4(t) A} (\C) \subset \domain (\SS^s_4)$, then $\td{g}_{5,t}$ is compatible with the pullback of $\SS_4$ via $\Phi_{t - \td\eta_4(t) A}$.
\end{Claim}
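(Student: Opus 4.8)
The plan is to treat each of the pairwise disjoint product domains $U_\C$ separately, and on $U_\C$ to first replace the corrected volume-normalized flow $(\td{g}_t)$ by an \emph{honest} solution of (\ref{eq_vol_norm_RF}) via a diffeomorphism correction, after which everything reduces to continuous dependence of the volume-normalized Ricci flow on its time parameter and on its initial data over the \emph{bounded} time-interval $[0,A]$.

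First one would remove the correctional Lie derivative. Fix $t_0>T_1$ and let $\psi_\tau$, $\tau\ge t_0$, be the flow on the compact manifold $\C$ of the time-dependent vector field $Z_\tau$ with $\psi_{t_0}=\id_\C$; this exists for all $\tau$ because $\C$ is compact and $Z$ is $\delta'$-small by Claim~\ref{cl_bounds_vol_normalized_RF}. The equation satisfied by $(\td{g}_\tau)$ (the displayed flow with correctional Lie derivative preceding Claim~\ref{cl_bounds_vol_normalized_RF}) differs from (\ref{eq_vol_norm_RF}) exactly by $\mathcal{L}_{Z_\tau}\td{g}_\tau$, while the terms $\Ric$, $\int_\C R\,d\mu$ and $V(\C,\cdot)$ are diffeomorphism-equivariant; hence $\ov{g}_\tau:=\psi_\tau^*\td{g}_\tau$ solves (\ref{eq_vol_norm_RF}) with $\ov{g}_{t_0}=\td{g}_{t_0}$, so $\RF_{\tau-t_0}(\td{g}_{t_0})=\ov{g}_\tau$ is defined for all $\tau\ge t_0$. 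Since $Z$ is $\delta'$-small and $\tau-t_0\le A$, the map $\psi_\tau$ is $C^{[(\delta'')^{-1}]}$-close to $\id_\C$ with closeness of order $A\delta'$, so $\ov{g}_\tau$ is $C^{[(\delta'')^{-1}]}$-close to $\td{g}_\tau$, with constants depending only on $A$.

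Next, choosing $\delta'$ so small that $(\delta')^{-2}>A$, Claim~\ref{cl_bounds_vol_normalized_RF} gives $T_0-A>T_0-(\delta')^{-2}>T_1$, and since $\td\eta_4\equiv 0$ on $(T_1,T_0]$ this shows $t':=t-\td\eta_4(t)A\in(T_1,\infty)$ for every $t$, so that $\td{g}_{4,t'}$ is defined. By the previous paragraph $\RF_\tau(\td{g}_{t'})$ exists for $\tau\in[0,A]$ and $(\td{g}_\tau)$ has bounded geometry there; since $\td{g}_{4,t'}$ is $\delta'$-close to $\td{g}_{t'}$ (Claim~\ref{cl_bounds_vol_normalized_RF}), continuous dependence of (\ref{eq_vol_norm_RF}) on initial data over the bounded time $A$ (together with lower semicontinuity of the existence time) shows that $\RF_\tau(\td{g}_{4,t'})$ exists for $\tau\in[0,A]$ and stays $C^{[(\delta'')^{-1}]}$-close to $\RF_\tau(\td{g}_{t'})$, of order $A\delta'$. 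Thus $\td{g}_{5,t}=\RF_{\td\eta_4(t)A}(\td{g}_{4,t'})$ is defined, and since $t'+\td\eta_4(t)A=t$ the two estimates combine to give $|\nabla^{m_1}\partial_t^{m_2}(\td{g}_5-\td{g})|\le C(A)\delta'\rho_{\td{g}}^{-m_1-2m_2}$; the $t$-derivatives are handled identically, after differentiating $t\mapsto\RF_{\td\eta_4(t)A}(\td{g}_{4,t-\td\eta_4(t)A})$ and using joint smoothness of $\RF$ in its time and initial-metric arguments (including at time zero), the bound $|1-A\partial_t\td\eta_4|\le C$, and the derivative bounds of Claim~\ref{cl_bounds_vol_normalized_RF} for $\td{g}_4$ and $\td\eta_4$, all constants being independent of $\delta'$. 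Taking $\delta'\le\ov\delta'(A,\delta'')$ then yields the asserted bounds, the bounds on $\partial_t^{m_2}\td\eta_5$ being immediate from those on $\td\eta_4$ once one observes that on $\{\td\eta_5>0\}$ the metric $\td{g}$ is volume-normalized with bounded geometry ($\sec>0$, bounded normalized diameter, fixed volume), so $\rho_{\td{g}}$ is comparable to a universal constant and the shift by $A$ only enlarges $C'_{m_2}$. Smoothness in $t$ and transverse continuity in $s$ follow from joint smoothness of $\RF$ and $\psi$ together with the analogous properties of $\td{g}_4,\td\eta_4,\Phi,Z$; at $T_0$ nothing extra is needed, since $\td\eta_4$ (smooth, $\equiv 0$ on $(T_1,T_0]$) vanishes to infinite order there, so $\td{g}_{5,t}$ agrees to infinite order with $\td{g}_{4,t}$ at $T_0$ and $\td\eta_5\equiv 0$ on $(T_1,T_0+A]$, giving $\td{g}_5=\td{g}_4$ and $\td\eta_5=\td\eta_4$ on $(T_1,T_0)$. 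For the compatibility statement, if $\Phi_{t'}(\C)\subset\domain(\SS^s_4)$ then, using that $\wh\rho_g$ is constant on components of time-slices, that $\Phi_{t'}$ maps $\C$ into a single time-slice, and that the fibers of $\SS^s_4$ lie in time-slices (Lemma~\ref{lem_round_cyl}\ref{ass_round_Cyl_a}), the rescaled metric $\wh\rho_g^{-2}g^{\prime,s}_4$ is still compatible with $\SS^s_4$, hence $\td{g}_{4,t'}=\Phi_{t'}^*(\wh\rho_g^{-2}g^{\prime,s}_4)$ is compatible with $\Phi_{t'}^*\SS^s_4$; since the volume-normalized Ricci flow is diffeomorphism-equivariant and hence preserves compatibility with a spherical structure, so is $\td{g}_{5,t}=\RF_{\td\eta_4(t)A}(\td{g}_{4,t'})$.

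The main obstacle is the third paragraph: propagating the closeness $\td{g}_4\approx\td{g}$ forward through the volume-normalized flow over $[0,A]$ with constants depending only on $A$ (and not on $\delta'$), while \emph{simultaneously} establishing existence of $\RF_\tau(\td{g}_{4,\cdot})$ on all of $[0,A]$. This forces one to combine the diffeomorphism reduction of the second paragraph, the bounded-geometry control of $(\td{g}_\tau)$ from Claim~\ref{cl_bounds_vol_normalized_RF}, and the standard continuous-dependence (and openness-of-existence-time) estimates for Ricci flow; the remaining assertions are essentially bookkeeping.
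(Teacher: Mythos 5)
Your proof is correct and gives an explicit, constructive route to a claim that the paper dispatches in three sentences: the paper proves the second bound by a compactness ("standard limit") argument — assume the bound fails along sequences $\delta'_i,\eps_i\to 0$, pass to a pointed limit where Claim~\ref{cl_bounds_vol_normalized_RF} forces $\td g_4=\td g$ and $Z\equiv 0$, observe the limit is a genuine volume-normalized Ricci flow so that $\td g_5=\td g$ there, and derive a contradiction — whereas you replace this with a direct two-step estimate (kill the correctional Lie term by pulling back along the flow of $Z$, then invoke quantitative continuous dependence of the volume-normalized flow on initial data over $[0,A]$). Both are standard; your version pays the cost of carrying bounded-geometry and continuous-dependence constants explicitly, but it yields the explicit $A$-dependence $|\nabla^{m_1}\partial_t^{m_2}(\td g_5-\td g)|\le C(A)\delta'\rho_{\td g}^{-m_1-2m_2}$ which the compactness argument hides. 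Two small remarks: the bounded geometry of $(\td g_\tau)$ on the relevant time interval does not literally come from Claim~\ref{cl_bounds_vol_normalized_RF} (which controls $\td\eta_4$, $\td g_4-\td g$ and $Z$) but from the $\eps$-canonical neighborhood assumption together with Lemma~\ref{lem_cutoff_almost_extinct}\ref{ass_cutoff_c} on the components where $\eta_4>0$; and the "vanishes to infinite order at $T_0$" remark is a red herring — what makes $\td\eta_5$ smooth across $T_0$ is simply that $\td\eta_4\equiv 0$ in a whole neighborhood of $T_0-A$ (since $T_0-A>T_1$), so both branches of the defining formula for $\td\eta_5$ are identically zero near $T_0$.
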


\begin{proof}
The first bound is a direct consequence of Claim~\ref{cl_bounds_vol_normalized_RF}.
The second bound follows via a standard limit argument (the limit being a volume normalized Ricci flow).
The last two statements follow by construction.
\end{proof}

We can now choose $g^{\prime,s}_5$ and $\eta_5$ on $U_{\C}$ such that for all $t \in (T_1, \infty)$
\[  \td{g}_{5,t} := \Phi^*_t  \big( \wh\rho_g^{-2}  g^{\prime,s}_5 \big), \qquad \td\eta_5 (t) := \eta_5 (\Phi_t (\C)). \]
Note that $g^{\prime,s}_5 = g^{\prime,s}_4$ and $\eta_5 = \eta_4$ on $ \{ \eta_4 = 0 \} \cap U_{\C}$.
For any $t \in (T_1, \infty)$ for which $\Phi_{t - \td\eta_4(t) A} (\C) \subset \domain (\SS^s_4)$ consider the push-forward of $\SS^s_4 |_{\Phi_{t - \td\eta_4(t) A} (\C)}$ onto $\Phi_t (\C)$ via the diffeomorphism $\Phi_t \circ \Phi^{-1}_{t - \td\eta_4(t) A}$.
The union of these spherical structures, for all $t \in (T_1,\infty)$ defines a new spherical structure $\SS^s_5$ on $U_\C$, which is compatible with $g^{\prime,s}_5$ on $U_\C$ and equal to $\SS^s_4$ on $U_\C \cap \{ \eta_4 = 0 \}$.

By Lemma~\ref{lem_cutoff_almost_extinct}\ref{ass_cutoff_c} we have $\{ \rho > 10^{-1} r_{\can, \eps} \}  \cap U_\C  \subset \{ \eta_4 = 0 \} \cap  U_\C$.
It follows that Assertions~\ref{ass_round_Cyl_a}, \ref{ass_round_Cyl_b},  \ref{ass_round_Cyl_e},  \ref{ass_round_Cyl_g} of Lemma~\ref{lem_round_cyl} and all Assertions of Lemma~\ref{lem_dtprime} hold on $U_\C$ for $g^{\prime,s}_5$, $\eta_5$ and $\SS^s_5$.
Assertions~\ref{ass_extend_alm_round_aa} and \ref{ass_extend_alm_round_a} of this lemma hold by construction.
By Claim~\ref{cl_translation_by_A} and another limit argument we can choose constants $C^*_m = C^*_m ((C'_m)) < \infty$ such that Assertion~\ref{ass_extend_alm_round_c} of this lemma and Assertion~\ref{ass_round_Cyl_f} of Lemma~\ref{lem_round_cyl} hold on $U_\C$ if $\delta'' \leq \ov\delta'' (\delta)$, $\eps \leq \ov\eps (\delta)$.
Note here that the constants $C^*_m$ can be chosen independently of $A$.
The following claim implies that the remaining assertions of this lemma hold on $U_\C$.

\begin{Claim}
If $A \geq \underline{A} (D,\delta)$, $c \leq \ov{c}(A)$ and $\eps \leq \ov\eps(D,\delta)$, then Assertion~\ref{ass_extend_alm_round_b} of this lemma holds on $U_\C$ and Assertion~\ref{ass_round_Cyl_c} of Lemma~\ref{lem_round_cyl} holds on $U_\C$ for $g^{\prime,s}_5$, $\eta_5$ and $\SS^s_5$ if we replace $\alpha$  by $c \alpha$.
\end{Claim}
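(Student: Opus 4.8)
The plan is to separate the claim into its two parts and to reduce the first to the roundness statement (c2) of Assertion~\ref{ass_extend_alm_round_b}. Since $\eta_{i+1}=\nu(2\eta_i)$ for $i=1,2,3$ and $\td\eta_5(\tau)\le\td\eta_4(\tau)$, we have $\{\eta_5>0\}\subseteq\{\eta_4>0\}\subseteq\{\eta_1>0\}$ on $U_\C$; hence the compactness of the components of $\M^s_t$ on which $\eta_5>0$, the survival and extinction statement, and the scale bound $\rho<10^{-1}r_{\can,\eps}$ in Assertion~\ref{ass_extend_alm_round_b} are inherited verbatim from Lemma~\ref{lem_cutoff_almost_extinct}\ref{ass_cutoff_c}, with the same extinction time $t_\C$; here one uses that $\rho$ and $r_{\can,\eps}$ refer to the unchanged metric $g^s$. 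So the real content is (c2) and Assertion~\ref{ass_round_Cyl_c} of Lemma~\ref{lem_round_cyl} on $U_\C$.

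For (c2) I would work in the reparametrized, volume-normalized picture $\td{g}_t$ on $\C$ introduced before Claim~\ref{cl_bounds_vol_normalized_RF}, in which $\wh\rho_{\td{g}}\equiv1$, $\sec>0$, $\diam<D\rho$, and $\td{g}_t$ solves the volume-normalized Ricci flow up to the small correctional Lie derivative $\mathcal L_{Z_t}$ of that claim. Combining Hamilton's theorem \cite{hamilton_positive_ricci}, the compactness of the admissible initial data (they are $\delta^\#$-perturbations of final time-slices of compact $\kappa$-solutions of normalized diameter $<D$, hence lie in a compact family by Theorem~\ref{Thm_kappa_compactness_theory} and Lemma~\ref{lem_kappa_identity_1}), and the stability of the round metric under the volume-normalized flow, one obtains a constant $A_0=A_0(D,\delta)$ such that $\RF_{\Delta T}(\td{g}_{4,\sigma})$ is $\delta$-close to a round space form modulo rescaling whenever $\eta_1>0$ at rescaled time $\sigma$ and $\Delta T\ge A_0$. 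Since $\td{g}_{5,\tau}=\RF_{\td\eta_4(\tau)A}\,\td{g}_{4,\tau-\td\eta_4(\tau)A}$ and $\td\eta_5(\tau)=\td\eta_4(\tau-A)>0$ forces $\tau>T_0+A$, where $T_0=T_0^\C$ is the first rescaled time with $\td\eta_4>0$, we get $\tau-A>T_0$, so $\td{g}_{4,\tau-A}$ lies in that compact family; if moreover $\td\eta_4(\tau)=1$, then $\td{g}_{5,\tau}=\RF_A\td{g}_{4,\tau-A}$ is $\delta$-close to round for $A\ge A_0$, which is (c2) wherever $\eta_5>0$ — and hence, by monotonicity of $\td\eta_5$, on all of $[t,t_\C)$. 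It therefore suffices to establish a uniform transition bound: $\td\eta_4$ rises from $0$ to $1$ within a rescaled time $A_1=A_1(D,\delta)$ of $T_0$ that does not depend on $\C$. Granting this, $\underline A(D,\delta):=\max\{A_0,A_1\}$ works. I would prove the transition bound by contradiction: a sequence of components whose transition times tend to infinity would yield, in the limit, a positively curved volume-normalized flow of bounded normalized diameter for which the recipe produces a function $\td\eta_4$ that never reaches $1$; but by Hamilton's theorem this limit flow becomes round, near which $\eta^*_1\to1$ (because $r_{\can,\eps}/\wh\rho_g$ increases to $\infty$ in forward time) and $\eta^{**}_1$ stays bounded below (because $\int_{\td{\C}}R_{\td{g}}\,d\mu_{\td{g}}$ is uniformly bounded by Lemma~\ref{lem_kappa_identity_1}), so $\eta'_1=\int\eta^*_1\eta^{**}_1\,\wh\rho_g^{-2}$ blows up in finite rescaled time and $\td\eta_4\to1$ — a contradiction.

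For Assertion~\ref{ass_round_Cyl_c} of Lemma~\ref{lem_round_cyl} on $U_\C$ with $\alpha$ replaced by $c\alpha$: on $\{\eta_4=0\}\cap U_\C$ nothing is new since $g'_5=g'_4$ and $\SS^s_5=\SS^s_4$ there; on a component with $\eta_4>0$, $\SS^s_5$ is defined on $\Phi_\tau(\C)$ as soon as $\Phi_{\tau'}(\C)\subseteq\domain(\SS^s_4)$, where $\tau'=\tau-\td\eta_4(\tau)A$. Given $x=\Phi_\tau(y)$ with $\rho(x)<c\alpha\,r_{\can,\eps}(x)$ and $\eta_5(x)<1$, I would verify (i) $\eta_4<1$ on $\C(\tau')$ — because $\tau'\in[\tau-A,\tau]$, $\eta_4$ is nondecreasing, $\eta_4(\tau-A)=\eta_5(\tau)<1$, and $\eta_4(\tau)=1$ forces $\tau'=\tau-A$ — and (ii) $\rho<\alpha'\,r_{\can,\eps}$ on $\C(\tau')$, where $\alpha'$ is the constant for which $\domain(\SS^s_4)\supseteq\{\rho<\alpha'r_{\can,\eps}\}\cap\{\eta_4<1\}$ (Lemma~\ref{lem_round_cyl}\ref{ass_round_Cyl_c}, via Lemma~\ref{lem_dtprime}): on such a component $\rho$ is comparable to $\wh\rho_g$ with constants depending only on $D$, the near-extinction asymptotics (the lifetime past $\C$ is comparable to $\wh\rho_g^2$) together with $\tau-\tau'\le A$ give $\wh\rho_g(\tau')\le e^{C'A}\wh\rho_g(\tau)$, and $r_{\can,\eps}$ changes only by a bounded factor over such a short time interval (Lemma~\ref{lem_rcan_control}, for $\eps$ small); so choosing $c\le\ov c(A)$ exponentially small in $A$ makes $\rho<\alpha'r_{\can,\eps}$ on $\C(\tau')$. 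Then (i) and (ii) give $\C(\tau')\subseteq\domain(\SS^s_4)$, hence $x\in\domain(\SS^s_5)$.

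The hardest step is the uniform transition bound for $\td\eta_4$: one must rule out that $\eta_1$ stalls for a long rescaled time strictly between its thresholds — which would happen if $\eta^*_1\eta^{**}_1$ were small over a long interval — and it is this bound, through the near-extinction asymptotics and the uniform total-scalar-curvature bound of Lemma~\ref{lem_kappa_identity_1}, that dictates the required size of $A$.
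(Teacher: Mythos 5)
For Assertion~\ref{ass_round_Cyl_c} of Lemma~\ref{lem_round_cyl} (the second half of the claim), your argument is essentially the paper's: the two-case verification that $\eta_4<1$ at the shifted time $\tau'=\tau-\td\eta_4(\tau)A$ is the same, and your Gr\"onwall-type bound for $\rho$ under the reparametrized flow, giving $c\le e^{-C'A}$, matches the paper's $|\td\partial^s_\t\rho_g|\le C(D)\rho_g$ together with the monotonicity of $r_{\can,\eps}$.

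For Assertion~\ref{ass_extend_alm_round_b}(c2), however, you have taken a different and substantially heavier route, and the new ingredient you need --- the ``uniform transition bound'' asserting that $\td\eta_4$ rises from $0$ to $1$ within rescaled time $A_1(D,\delta)$ of $T_0$ --- is not in the paper and is, as you acknowledge, the crux of your argument. The paper avoids it entirely. Claim~\ref{cl_translation_by_A}, established just before the present claim, gives $|\nabla^{m_1}\partial_t^{m_2}(\td g_5-\td g)|\le\delta''\rho_{\td g}^{-m_1-2m_2}$, so it suffices to prove that the \emph{unmodified} volume-normalized flow $\td g_\tau$ on $\C$ is $\delta$-close to round whenever $\td\eta_5(\tau)>0$. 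The condition $\td\eta_5(\tau)>0$ forces $\tau-A>T_0$ and hence $\td\eta_4>0$ on all of $[\tau-A,\tau]$; by Assertion~\ref{ass_cutoff_c} of Lemma~\ref{lem_cutoff_almost_extinct} this gives $\sec>0$, normalized diameter $<D$ and the $\eps$-CNA on $\C\times[\tau-A,\tau]$. Running a contradiction argument with $A\to\infty$, $\eps\to0$ produces (via Theorem~\ref{Thm_kappa_compactness_theory}) an ancient solution of the volume-normalized Ricci flow with $\sec\ge0$ and uniformly bounded normalized diameter, every time slice a $\kappa$-solution slice, and not round; reparametrizing by scalar curvature gives a $\kappa$-solution, which the diameter bound forces (via Theorem~\ref{Thm_kappa_sol_classification}\ref{ass_kappa_sol_classification_d} and the noncompactness of the cylinder and Bryant cases) to be a round shrinking sphere --- contradiction. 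No statement about how fast $\td\eta_4$ reaches $1$ is required, and neither is Hamilton's quantitative convergence.

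Your sketch of the transition bound also has concrete problems in its own right. The cutoffs $\eta_1^*$ and $\eta_1^{**}$ depend on $r_{\can,\eps}$ and on absolute scales, not only on the rescaled geometry, so it is not meaningful to say that ``the recipe produces a function $\td\eta_4$'' on a limiting volume-normalized flow. Moreover, your justification that $\eta^{**}_1$ stays bounded below cites Lemma~\ref{lem_kappa_identity_1}, which goes the wrong way: it shows that a bound on $\int R$ relative to volume implies a normalized-diameter bound, whereas you need the reverse, together with a comparison against the already-fixed constant $A$ appearing inside $\eta^{**}_1=\nu(A\wh\rho/\RR(\td\C))$; that comparison is not established. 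The correct move is to drop the $\RF$-based bookkeeping and transition bound, reduce to $\td g$ via Claim~\ref{cl_translation_by_A}, and run the ancient-limit/classification argument.
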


\begin{proof}
Consider first Assertion~\ref{ass_extend_alm_round_b} of this lemma.
Due to the fact that $\{ \eta_5 > 0 \} \cap U_\C \subset \{ \eta_4 > 0 \} \cap U_\C$ and Assertion~\ref{ass_cutoff_c} of Lemma~\ref{lem_cutoff_almost_extinct}, it suffices to show that if $\td\eta_5(t) > 0$, then the universal cover of $(\C,\td{g}_t)$  is $\delta$-close to the round sphere modulo rescaling if $A \geq \underline{A} (D,\delta)$ and $\eps \leq \ov\eps (D,\delta)$.
Fix such a $t \in (T_1, \infty)$ with $\td\eta_5(t) > 0$ and assume that the universal cover of $(\C,\td{g}_t)$  was not $\delta$-close to the round sphere modulo rescaling.
Then $t-A > T_0$ and $\td\eta_4(t') > 0$ for all $t' \in [t - A, t]$.
So Assertions~\ref{ass_cutoff_c1}, \ref{ass_cutoff_c3}, \ref{ass_cutoff_c4} of Lemma~\ref{lem_cutoff_almost_extinct} hold on $\Phi_{t'} (\C)$ for all $t' \in [t - A, t]$.
We obtain that $\sec_{\td{g}_{t'}} > 0$ on $\C \times [t-A,t]$ and  $\diam_{\td{g}_{t'}} \C < D \rho$ on $\C$ for all $t' \in [t-A,t]$.
In addition, the $\eps$-canonical neighborhood assumption holds on $\C \times [t-A,t]$.
So if for no choice of $A$ and $\eps$ the universal cover of $(\C,\td{g}_t)$ was  $\delta$-close to the round sphere modulo rescaling, then we could apply a limit argument using Theorem~\ref{Thm_kappa_compactness_theory} and obtain an ancient solution $(M^\infty, (\td{g}^\infty_{t'})_{t' \leq 0})$ to the volume normalized Ricci flow equation (\ref{eq_vol_norm_RF}) that satisfies 
\begin{equation} \label{eq_sec_diam_tdg_infty}
\sec_{\td{g}^\infty} \geq 0, \qquad  \diam_{\td{g}^\infty} \C \leq D \rho
\end{equation}
everywhere.
Moreover every time-slice of $(\td{g}^\infty_{t'})_{t' \leq 0}$ is isometric to a time-slice of a $\kappa$-solution and $(\td{g}^\infty_{t'})_{t' \leq 0}$ cannot homothetic to a shrinking round sphere.
Due to the positivity of the scalar curvature, it follows that reparameterizing $(\td{g}^\infty_{t'})_{t' \leq 0}$ yields an ancient Ricci flow, and therefore a $\kappa$-solution.
The second bound in (\ref{eq_sec_diam_tdg_infty}) implies via Theorem~\ref{Thm_kappa_sol_classification}\ref{ass_kappa_sol_classification_d} that this $\kappa$-solution is homothetic to the round sphere, in contradiction to our assumptions.

For Assertion~\ref{ass_round_Cyl_c} of Lemma~\ref{lem_round_cyl} note that by the canonical neighborhood assumption we have
\[ | \td\partial^s_\t \rho_{g} | = \wh\rho_g^2 \cdot |\partial^{\prime,s}_{\t, 4} \rho_g | \leq C(D) \rho_g. \]
It follows that for any $(x,t) \in \C \times (T_1, \infty)$
\[ \rho (\Phi_{t - \td\eta_4(t)A} (x)) ) \leq e^{C(D) A} \rho(\Phi_t( x) ). \]
Set $c := e^{-C(D) A}$ and assume that $\rho_{g^s} (\Phi_t(x)) < c  \alpha r_{\can, \eps} (\Phi_t (x))$ and $\td\eta_5 (t) = \eta_5 (\Phi_t (x)) < 1$.
Then 
\[ \rho (\Phi_{t - \td\eta_4(t)A} (x)) ) < \alpha r_{\can, \eps} (\Phi_t (x)) \leq \alpha r_{\can, \eps} (\Phi_{t - \td\eta_4(t)A} (x)). \]
If $\td\eta_4 (t) < 1$, then $\td\eta_4 ( t - \td\eta_4(t)A ) \leq \td\eta_4(t) < 1$ and if $\td\eta_4(t) = 1$, then $\td\eta_4 ( t - \td\eta_4(t)A ) = \td\eta_4 ( t - A ) = \td\eta_5(t) < 1$.
Therefore
\[ \eta_4 (\Phi_{t - \td\eta_4(t)A} (x)) < 1. \]
By Lemma~\ref{lem_round_cyl}\ref{ass_round_Cyl_c} it follows that $\Phi_{t - \td\eta_4(t)A} (x) \in \domain (\SS^s_4)$ and therefore by construction $\Phi_t (x) \in \domain (\SS^s_5)$.
\end{proof}
By repeating the construction above for all product domains $U_\C$, we obtain that all assertions of this lemma hold on the union of all $U_\C$.
On the complement of these product domain we have $g'_5 = g'_4$, $\SS_5 := \SS_4$ and $\eta_5 = \eta_4 = 0$.
So Assertions~\ref{ass_extend_alm_round_aa}--\ref{ass_extend_alm_round_c} hold trivially on this complement and Assertion~\ref{ass_extend_alm_round_d} holds due to Lemmas~\ref{lem_round_cyl} and \ref{lem_dtprime} assuming $\delta^\# \leq \delta$.
\end{proof}

\subsection{Modification in almost round components and proof of the main theorem} \label{subsec_proof_rounding}
Lastly, we will construct $(g^{\prime,s})_{s \in X}$ and $(\partial^{\prime,s}_\t)_{s \in X}$ by modifying $(g^{\prime,s}_5)_{s \in X}$ and $(\partial^{\prime,s}_{\t,5})_{s \in X}$ on $\{ \eta_5 > 0 \}$.
We will also construct a family of spherical structures $(\SS^s)_{s \in X}$ by restricting and extending the family $(\SS^s_5)_{s \in X}$.
These objects will form the family of $\RR$-structures whose existence is asserted in Theorem~\ref{Thm_rounding}.

\begin{proof}[Proof of Theorem~\ref{Thm_rounding}.]
Let $ \delta^\#, \delta'> 0$, $A < \infty$ be constants, whose values will be determined depending on $\delta$ in the course of the proof.
Apply Lemma~\ref{lem_exten_almost_round} with $\delta$ replaced by $\delta^\#$ and consider the families $(g^{\prime,s}_5)_{s \in X}, (\partial^{\prime,s}_{\t,4})_{s \in X}$, $(\SS^s_5)_{s \in X}$, the cutoff function $\eta_5$ and constants $\alpha (\delta^\#)$ and $C^*_m$.
Assume that $\delta^\#$ is chosen small enough such that $\frac12 \rho_{g} < \rho_{g'_5} < 2 \rho_{g}$.

Let us first define the family of metrics $g^{\prime,s}$ and vector fields $\partial^{\prime,s}_\t$.
Fix $s \in X$ and consider a component $\C \subset \M^s_t$ on which $\eta_5 > 0$.
By Lemma~\ref{lem_exten_almost_round}\ref{ass_extend_alm_round_b} the universal cover of $(\C, g^{\prime,s}_{5,t})$ is $\delta^\#$-close to the round sphere modulo rescaling.
So if $\delta^\# \leq \ov\delta^\#$, then we can define a smooth family of metrics $g^{\prime\prime,s}$ on $\{ \eta_5 > 0 \} \cap \M^s$ such that
\[ g^{\prime\prime,s}_t |_\C = \RD^3 (g^{\prime,s}_{5,t} |_\C) \]
for any such component $\C$; here $\RD^3$ is the rounding operator from Subsection~\ref{subsec_RD}.
Similarly as in the proof of Lemma~\ref{lem_dtprime}, we can consider the space of vector fields $Z'$ on $\C$ that can be extended to spatial vector fields $Z''$ near $\C$ in $\M^s$ such that the flow of $\partial^{\prime,s}_{\t, 5} + Z''$ consists of homotheties with respect to $g^{\prime\prime,s}$.
The difference of any two such vector fields is a Killing field on $(\C, g^{\prime\prime,s}_t |_{\C})$, therefore this space is affine linear and finite dimensional.
Let $Z_\C$ be the vector field of this space whose $L^2$-norm with respect to $g^{\prime\prime,s}_t$ is minimal.
Define the spatial vector field $Z^s$ on $\{ \eta_5 > 0 \} \cap \M^s$ such that $Z^s |_\C = Z_\C$ for any component $\C \subset \M^s_t \cap \{ \eta_5 > 0\}$.
Note that since our construction is invariant under isometries, $g^{\prime\prime,s}$ is still compatible with $\SS^s_5$ and $\partial^{\prime\prime,s}_{\t}$ still preserves $\SS^s_5$.
Lemma~\ref{lem_round_cyl}\ref{ass_round_Cyl_f}, Lemma~\ref{lem_dtprime}\ref{ass_round_VF_k} and a standard limit argument imply that if $\delta^\# \leq \ov\delta^\# (\delta')$, $\eps \leq \ov\eps (\delta')$, then
\begin{equation}\label{eq_gpp_Z_proof_thm}
|\nabla^{m_1} \partial_\t^{m_2} ( g^{\prime\prime, s} - g_5^{\prime,s} )| \leq \delta' \rho^{-m_1 - 2m_2}, \qquad
|\nabla^{m_1} \partial_\t^{m_2} Z^{s} | \leq \delta' \rho^{1-m_1 - 2m_2}
\end{equation}
for $m_1, m_2 = 0, \ldots, [(\delta')^{-1}]$.

Define $g^{\prime,s}:= g^{\prime,s}_5$ and $\partial^{\prime,s}_\t := \partial^{\prime,s}_{\t, 5}$ on $\{ \eta_5 = 0 \}$ and for all $x \in \{ \eta_5 > 0 \} \cap \M^s_t$ set
\begin{align*}
  \big( g^{\prime,s}_t  \big)_x &:= \big( g^{\prime,s}_{5,t} \big)_x + \nu (2\eta_5(x)) \cdot \big( \big( g^{\prime\prime,s}_t \big)_x - \big( g^{\prime,s}_{5,t} \big)_x \big), \\
  \big( \partial^{\prime,s}_{\t}  \big)_x &:= \big( \partial^{\prime,s}_{\t,5} \big)_x + \nu (2\eta_5(x)) \cdot \big( \big( \partial^{\prime\prime,s}_\t \big)_x - \big( \partial^{\prime,s}_{\t,5} \big)_x \big).
\end{align*}
Then $g^{\prime,s} = g^{\prime\prime,s}$ and $\partial^{\prime,s}_\t = \partial^{\prime\prime,s}_{t}$ on $\{ \eta_5 > \frac12 \} \cap \M^s$.
Assertion~\ref{ass_thm_rounding_c} of this theorem follows using (\ref{eq_gpp_Z_proof_thm}), Lemma~\ref{lem_exten_almost_round}\ref{ass_extend_alm_round_c}, Lemma~\ref{lem_round_cyl}\ref{ass_round_Cyl_f}, Lemma~\ref{lem_dtprime}\ref{ass_round_VF_k}, assuming $\delta' \leq \ov\delta' (\delta, (C^*_m))$, $\delta^\# \leq \delta$ and $\eps \leq \ov\eps (\delta, (C^*_m))$.
By our discussion of the previous paragraph, $g^{\prime,s}$ is still compatible with $\SS^s_5$ and $\partial^{\prime,s}_{\t}$ still preserves $\SS^s_5$.

Next let us construct $\SS^s$, $U^s_{S2}$ and $U^s_{S3}$.
By Lemma~\ref{lem_round_cyl}\ref{ass_round_Cyl_c} and Lemma~\ref{lem_exten_almost_round}\ref{ass_extend_alm_round_b}, we can find a universal constant $c > 0$ such that if  $\delta^\# \leq \ov\delta^\#$, then 
\[ \domain (\SS^s_5) \supset \{ \wh\rho_g < c \alpha r_{\can, \eps} \} \cap \{0 < \eta_5 < 1 \} \cap \M^s. \]
Set
\[ U^s_{S3} := \{ \wh\rho_g < c \alpha r_{\can, \eps} \} \cap \{ \tfrac12 < \eta_5  \} \cap \M^s. \]
Then $g^{\prime,s}$ restricted to every time-slice of $U^s_{S3}$ has constant curvature and the flow of $\partial^{\prime,s}_\t$ restricted to $U^s_{S3}$ consists of homotheties with respect to $g^{\prime,s}$.

Before constructing $\SS^s$ and $U^s_{S2}$, we need to improve the family of spherical structures $\SS^s_5$.
By restricting $\SS^s_5$, we obtain a transversely continuous family of spherical structures $(\SS^s_6)_{s \in X}$ such that
\begin{multline*}
 \domain (\SS^s_5) \supset \domain (\SS^s_6)  \\
 = \big( \domain (\SS^s_5) \cap \{ \eta_5 < \tfrac14 \} \big) \cup \big( \{ \wh\rho_g < c \alpha r_{\can, \eps} \} \cap \{ 0 < \eta_5 < 1 \} \cap \M^s \big). 
\end{multline*}
So there is a universal constant $c' > 0$ such that if $\delta^\# \leq \ov\delta^\#$, then
\begin{align*}
 \domain (\SS^s_6) &\supset \{ \rho_g < c' \alpha r_{\can, \eps} \} \cap \{  \eta_5 < 1 \} \cap \M^s, \\
 \domain (\SS^s_6) \cap \{ \tfrac12 < \eta_5  \} &= U^s_{S3} \cap \{ \eta_5 < 1 \}  .
\end{align*}

Next we construct a family of spherical structures $\SS^s$ by extending the domains of the spherical structures $\SS^s_6$.
Fix $s \in X$.
Assume that $\delta^\# \leq \ov\delta^\#$, $\eps \leq \ov\eps$ such that by Lemma~\ref{lem_exten_almost_round}\ref{ass_extend_alm_round_b}
\[ \partial^{\prime}_{\t} \bigg( \frac{\wh\rho_{g}}{r_{\can, \eps}} \bigg) < 0 \qquad \text{on} \quad \{ \eta_5 > 0 \}.  \]
Therefore, for any component $\C \subset \M^s_t \cap U^s_{S3}$ we have $\C (t') \subset U^s_{S3}$ for all $t' \in [t, t_\C)$.
So every component $W \subset U^s_{S3}$ is either contained in $\{ \eta_5 = 1 \}$, and therefore disjoint from $\domain (\SS^s_6)$, or it intersects $\domain (\SS^s_6)$ in a connected product domain.
Let $U^s_{S2}$ be the union of $\domain (\SS^s_6)$ with all components of the second type.
Then $U^s_{S3} \setminus U^s_{S2}$ is open.
Using the flow of $\partial^{\prime,s}_\t$ we can extend $\SS^s_5$ to a spherical structure $\SS^s$ on $U^s_{S2}$ that is compatible with $g^{\prime,s}$ and preserved by $\partial^{\prime,s}_\t$.
Recall here that the flow of $\partial^{\prime,s}_\t$ restricted to every  component $W \subset U^s_{S3}$ consists of homotheties with respect to $g^{\prime,s}$.
By construction, $\cup_{s \in X} U^s_{S2}$ is open and the family of spherical structures $(\SS^s)_{s \in X}$ is transversely continuous.

We have shown so far that $(\mathcal{R}^s := ( g^{\prime, s}, \lb \partial^{\prime, s}_\t,  \lb U^s_{S2}, \lb U^s_{S3}, \lb \mathcal{S}^s))_{s \in X}$ is a transversely continuous family of $\RR$-structures.
Assertions~\ref{ass_thm_rounding_a}, \ref{ass_thm_rounding_b} and \ref{ass_thm_rounding_e} of this theorem hold by setting
\[ r_{\rot, \delta} (r, t) := \tfrac12 c' \alpha r_{\can, \eps (\delta)} (r,t), \qquad C := 4 (c' \alpha)^{-1}. \]
Assertion~\ref{ass_thm_rounding_d} is a consequence of Lemma~\ref{lem_round_cyl}\ref{ass_round_Cyl_g} and Lemma~\ref{lem_dtprime}\ref{ass_round_VF_l}.
\end{proof}

\section{Preparatory results}
In this section we collect several results that will be useful in Sections~\ref{sec_partial_homotopy} and \ref{sec_deforming_families_metrics}.

\subsection{Spherical structures}
We will need the following two lemmas on spherical structures.

\begin{lemma} \label{lem_spherical_struct_classification}
Consider a spherical structure $\SS$ on a connected 3-manifold with boundary $M$ such that $\domain (\SS) = M$.
Then one of the following cases holds:
\begin{enumerate}[label=(\alph*)]
\item $\SS$ only consists of regular fibers and $M$ is diffeomorphic to one of the following models:
\[  S^2 \times (0,1), \; S^2 \times [0,1),\;  S^2 \times [0,1], \; S^2 \times S^1 \]
\item $\SS$ has exactly one singular fiber and this fiber is a point and $M$ is diffeomorphic to one of the following models:
\[ B^3, \; D^3 \]
\item $\SS$ has exactly one singular fiber and this fiber is $\approx \IR P^3$ and $M$ is diffeomorphic to one of the following models:
\[  \big( S^2 \times (-1,1) \big) / \IZ_2, \; \big( S^2 \times [-1,1] \big) / \IZ_2 \]
\item $\SS$ has exactly two singular fibers, both of which are points and $M \approx S^3$.
\item $\SS$ has exactly two singular fibers, both of which are $\approx \IR P^3$ and $M \approx (S^2 \times S^1) / \IZ_2$.
Here $\IZ_2$ acts as the antipodal map on $S^2$ and as a reflection with two fixed points on $S^1$.
\item $\SS$ has exactly two singular fibers, one of which is a point and the other which is $\approx \IR P^3$ and $M \approx \IR P^3$.
\end{enumerate}
\end{lemma}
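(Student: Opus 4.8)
The plan is to classify the possible configurations of singular fibers, then reconstruct $M$ from this data. By Lemma~\ref{lem_local_spherical_struct}, near any point $x\in M$ the compatible local $O(3)$-action is conjugate to one of the four models $S^2\times(-1,1)$, $(S^2\times(-1,1))/\IZ_2$, $B^3$, or $S^2\times[0,1)$; accordingly every singular fiber is a point (coming from the $B^3$ model) or an $\IR P^2$ (coming from the $(S^2\times(-1,1))/\IZ_2$ model), and these are the only two possibilities by the preceding lemma on quotients of $O(3)$. First I would show that the set of singular fibers is closed and discrete in $M$: closedness follows because the regular fibers $U'$ are open in $U=M$ by definition of a spherical structure, and discreteness follows from the local models, each of which contains at most one singular fiber. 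Since $M$ is connected, the base of the $S^2$-bundle over $U'$ is a connected $1$-manifold with boundary, so the number of singular fibers is $0$, $1$, or $2$ — any more would force the regular locus to disconnect into more than two ends, contradicting connectedness once one glues back in the (locally model) singular neighborhoods.

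Next I would analyze each case by building $M$ as a union of the local models glued along $S^2\times(\text{interval})$ pieces. If there are no singular fibers, then $U'=M$ fibers over a connected $1$-manifold with boundary, i.e. over $(0,1)$, $[0,1)$, $[0,1]$, or $S^1$; since the structure group of an $S^2$-bundle with a fiberwise round metric reduces to $\Isom(S^2,g_{S^2})=O(3)$, which is connected up to the action of $\pi_0(O(3))=\IZ_2$, and since $O(3)$ acts on $S^2$ through $SO(3)$-isotopic pieces, every such bundle is trivial; this yields case~(a). If there is exactly one singular fiber, it is either a point, in which case a neighborhood is $B^3$ and the complement is an $S^2$-bundle over $[0,1)$ or $[0,1]$, giving $B^3$ or $D^3$ (case~(b)); or it is an $\IR P^2$, in which case a neighborhood is $(S^2\times(-1,1))/\IZ_2$ and the complement is again a trivial $S^2$-bundle over a half-open or closed interval, giving $(S^2\times(-1,1))/\IZ_2$ or $(S^2\times[-1,1])/\IZ_2$ (case~(c)). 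If there are exactly two singular fibers, then $M$ is obtained by gluing two model caps to the two ends of $S^2\times[0,1]$; the diffeomorphism type depends only on whether each cap is a point-cap ($B^3$) or an $\IR P^2$-cap. Two point-caps give $S^3$ (case~(d)); two $\IR P^2$-caps give the mapping torus description $(S^2\times S^1)/\IZ_2$ with the indicated action (case~(e)); one of each gives $\IR P^3$ (case~(f)), since $\IR P^3$ is the union of a ball and a twisted $D^2$-bundle over $\IR P^1$, which matches $B^3\cup_{S^2}(S^2\times[-1,1])/\IZ_2$.

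The one subtlety to treat carefully is the claim that the relevant $S^2$-bundles are all trivial and that the two-cap gluings produce exactly the listed manifolds and not more; here I would invoke that the isotopy class of the gluing diffeomorphism of $S^2$ (or of $\IR P^2$) is determined by its class in $\pi_0(\Diff(S^2))=\IZ_2$, and that an orientation-reversing reflection of $S^2$ extends over $B^3$, so only the parity of caps matters, not the choice of gluing. For the $\IR P^2$-cap case one passes to the local two-fold cover supplied by Lemma~\ref{lem_local_spherical_struct} to identify the cap with $(S^2\times[0,1))/\IZ_2$ and to track how the two covering involutions interact when both ends are capped this way; this is what produces the specific $\IZ_2$-action (antipodal on $S^2$, reflection with two fixed points on $S^1$) in case~(e).

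\emph{Main obstacle.} The bookkeeping in the two-singular-fiber cases — in particular verifying that $B^3\cup(S^2\times[-1,1])/\IZ_2\approx \IR P^3$ and that the double-$\IR P^2$-cap gluing is independent of the identification and yields the stated $\IZ_2$-quotient of $S^2\times S^1$ — is where the real content lies; everything else is an assembly of the four local models of Lemma~\ref{lem_local_spherical_struct} along trivial $S^2$-bundles.
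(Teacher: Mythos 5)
Your approach is essentially the same as the paper's: pass to the quotient $X$ of $M$ by the spherical fibers, observe via Lemma~\ref{lem_local_spherical_struct} that $X$ is a connected $1$-manifold with boundary whose boundary points correspond to the local models $S^2\times[0,1)$, $D^3$, or $(S^2\times[-1,1])/\IZ_2$, and enumerate. The paper records only this two-sentence skeleton; you fill in the bookkeeping, which is fine and correct in substance.

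One justification in your write-up is wrong as stated: you claim that every $S^2$-bundle over the quotient $1$-manifold with structure group $O(3)$ is trivial, "since $O(3)$ acts on $S^2$ through $SO(3)$-isotopic pieces." This is false over $S^1$: an orientation-reversing element of $O(3)$ is not isotopic to the identity, and the corresponding mapping torus is the nonorientable $S^2$-bundle $S^2\widetilde\times S^1$. What actually rules this out is the standing orientability convention in the paper's Section~2 (all manifolds are assumed orientable); under that hypothesis the nontrivial bundle is excluded and $S^2\times S^1$ is the only closed-base case. Over the intervals $(0,1)$, $[0,1)$, $[0,1]$ triviality follows simply from contractibility of the base, which is a cleaner statement than the one you give. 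With that repair the argument is complete and matches the paper's.
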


\begin{proof}
Let $X$ be the quotient of $M$ by the spherical fibers.
By Lemma~\ref{lem_local_spherical_struct}, $X$ is homeomorphic to a 1-manifold with boundary and every boundary point corresponds to a model of the form $S^2 \times [0,1)$, $D^3$ or $(S^2 \times [-1,1]) / \IZ_2$.
\end{proof}

\begin{lemma} \label{Lem_cont_fam_sph_struct}
Consider a 3-manifold with boundary $M$ and a compact, connected, 3-di\-men\-sion\-al submanifold with boundary $Y \subset M$.
Let $(\SS^s)_{s \in D^n}$, $n \geq 0$, be a transversely continuous family of spherical structures defined on open subsets $Y \subset U^s \subset M$ such that $\partial Y$ is a union of regular fibers of $\SS^s$ for all $s \in D^n$.
Assume that there is a transversely continuous family of Riemannian metrics $(g^s)_{s \in D^n}$ on $M$ that is compatible with $(\SS^s)_{s \in X}$.

Then there is an open subset $Y \subset V \subset M$ and a transversely continuous family of embeddings $(\omega_s : V \to M)_{s \in D^n}$ such that $\omega_s (Y) = Y$ for all $s \in D^n$ and such that the pullbacks of $\SS^s$ via $\omega_s$ are constant in $s$.
\end{lemma}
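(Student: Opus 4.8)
The plan is to trivialize the family of spherical structures over the parameter disk $D^n$ by straightening the base $1$-manifold direction and simultaneously straightening the $S^2$-fibers, in two stages. First I would pass to the quotient: for each $s$ the projection $\pi^s : U^s \to X^s$ onto the space of spherical fibers is, by Lemma~\ref{lem_local_spherical_struct}, a submersion onto a $1$-manifold with boundary near regular fibers, with the local models over singular fibers being $B^3$, $D^3$, or $(S^2\times[-1,1])/\IZ_2$. The compact connected submanifold $Y$ is a union of fibers, so $\pi^s(Y)$ is a compact connected $1$-submanifold of $X^s$, and by Lemma~\ref{lem_spherical_struct_classification} applied to a neighborhood of $Y$ there are only finitely many topological types. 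I would first construct, by a straightforward argument using that $X^s$ varies transversely continuously (the metrics $g^s$ induce a transversely continuous family of metrics on the regular locus which descends to the quotient, giving a continuous family of $1$-dimensional length structures), a transversely continuous family of ``base straightening'' maps — essentially reparametrizing the arc-length coordinate on $X^s$ relative to the endpoints/singular fibers — that are the identity on $\pi^s(Y)$ and conjugate the family $(\pi^s)$ to a constant projection over a fixed base. This reduces us to the case where the fiber-space $X^s = X$ and the partition into fibers is, set-theoretically in the base, independent of $s$.

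The second stage is to straighten the fibers themselves. Over each regular fiber, all the data is a round metric on $S^2$ of some radius together with the $2$-plane field tangent to the fiber; the radius and the identification of the fiber with the standard $S^2$ vary transversely continuously, and after normalizing by $V^{1/2}$ of the fiber we may use the exponential map / normal bundle construction exactly as in Lemma~\ref{lem_compatible_metric_general_form} and in the rounding constructions of Section~\ref{sec_rounding_process} (e.g.\ the family-of-charts argument in the proofs of Lemma~\ref{lem_Bryant_rounding} and Lemma~\ref{lem_round_cyl}) to build, over the regular locus, a transversely continuous family of fiber-preserving diffeomorphisms carrying $\SS^s$ to $\SS^{s_0}$ for a fixed basepoint $s_0$. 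Near a singular fiber that is a point, the local model is $B^3$ with the standard $O(3)$-action; I would use exponential coordinates at that point (which depend transversely continuously on $s$ once we pick a transversely continuous family of frames, as in the construction of $\chi$ in the proof of Lemma~\ref{lem_Bryant_rounding}) to get the straightening there. Near a singular fiber $\approx \IR P^2$, I would pass to the canonical local two-fold cover furnished by Lemma~\ref{lem_local_spherical_struct}, apply the point-case/regular-case argument upstairs equivariantly under the deck $\IZ_2$, and push down. The maps $\omega_s$ are then obtained by patching these local straightening diffeomorphisms with a partition of unity on the base $X$ subordinate to the (finitely many) model pieces, arranging along the way that they restrict to the identity on $Y$ — which we may do because $\partial Y$ consists of regular fibers, so $Y$ sits in the regular-locus part of the patching and we can normalize the straightening to fix it.

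The main obstacle is the patching/gluing step: one must interpolate between the local straightening diffeomorphisms on overlaps of the model pieces in a way that (i) remains fiber-preserving for the spherical structure, (ii) stays a diffeomorphism, (iii) is transversely continuous, and (iv) keeps $Y$ fixed. The fiber-preserving constraint is the delicate point — an arbitrary convex combination of diffeomorphisms need not preserve a spherical structure — so the interpolation should be performed within the group of fiber-preserving diffeomorphisms, i.e.\ one interpolates in the ``radius $\times$ $O(3)$-twist $\times$ base-reparametrization'' parameters rather than in the ambient diffeomorphisms directly; over regular pieces this is essentially interpolating maps into $\R_{>0}\times O(3)$-bundles, and the Gram--Schmidt/averaging devices used repeatedly in Section~\ref{sec_rounding_process} handle the smooth and transversely-continuous dependence. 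A secondary technical point is ensuring that the domain $V$ can be taken uniform in $s$: since $Y$ is compact and the straightening is the identity on $Y$, the implicit-function-theorem argument of Lemma~\ref{lem_chart_near_compact_subset} (or of the proof of Lemma~\ref{lem_propagating_charts}) gives an open $V\supset Y$ and a neighborhood of $s_0$ on which everything is defined; by compactness of $D^n$ and a connectedness argument one then propagates over the whole disk, shrinking $V$ finitely many times.
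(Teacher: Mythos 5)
Your proposal takes a genuinely different route from the paper. You straighten the base $1$-manifold first, then straighten the fibers, then patch the local straightenings. The paper instead builds a \emph{single} global chart for a neighborhood of $Y$, anchored at a distinguished fiber: a boundary sphere if $\partial Y\neq\emptyset$, a singular fiber otherwise, and if $Y\approx S^2\times S^1$ with no singular fibers, an arbitrary regular fiber. In each case the normal exponential map (plus rescaling) parametrizes the regular locus outright, and one extends to singular fibers by density. This avoids the patching problem entirely.

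The gap in your plan is exactly the patching step, and more specifically the case $Y\approx S^2\times S^1$ with only regular fibers. There the spherical structure $\SS^s$ is a flat $S^2$-bundle over the base circle with structure group $O(3)$, and the transition data carried by your local straightenings over the circle has a monodromy $A^s\in O(3)$ (more precisely, a family of maps $A^s:\IR\to O(3)$ in the paper's notation). Identifying $\SS^s$ with $\SS^{s_0}$ continuously in $s$ amounts to writing $A^s(r)B^s(r+1)=B^s(r)$ for a continuous family $B^s:\IR\to O(3)$; producing such $B^s$ uses the contractibility of $D^n$ (it is a section-existence problem over a contractible base, via the path fibration of $SO(3)$), and the paper spells this out. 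Your proposal suppresses this: ``interpolating in the radius $\times$ $O(3)$-twist $\times$ base-reparametrization parameters'' via the Gram--Schmidt/averaging devices does not resolve it, because those devices produce $O(3)$-\emph{invariant} objects, not paths between group elements; $O(3)$ is not convex and averages of orthogonal matrices are not orthogonal, so one cannot glue by a na\"ive partition of unity. A secondary issue is the base-straightening stage: you invoke a ``transversely continuous family of quotient $1$-manifolds $X^s$'' but this structure is never set up, and the paper deliberately avoids passing to the quotient. These omissions are what make the single-chart approach of the paper cleaner.
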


We remark that the existence of the family of metrics $(g^s)_{s \in D^n}$ could be dropped from the assumptions of the lemma, as it follows easily from the other assumptions.

\begin{proof}
By Lemma~\ref{lem_spherical_struct_classification} the number and types of singular fibers of $\SS^s$ restricted to $Y$ is constant in $s$.
\medskip

\textit{Case 1: $\partial Y \neq \emptyset$. \quad}
Pick a component $\Sigma \subset \partial Y$.
Using the exponential map on $(\Sigma, g^s |_{\Sigma})$ we can find a continuous family of diffeomorphisms $(\varphi^s : S^2 \to \Sigma)_{s \in X}$ that are homotheties as maps from $(S^2, g_{S^2})$ to $(\Sigma, g^s |_{\Sigma})$.
For every $s \in D^n$ let $\nu^s$ be the unit normal vector field to $\Sigma$ pointing towards the interior of $Y$ and consider the normal exponential map: 
\[ \psi^s : (z,r) \longmapsto \exp_{\varphi^s(z)}^{g^s} ( r \, \nu^s (\varphi^s(z))). \]
Choose $r_{\max}^s > 0$ maximal such that $\psi^s$ is injective on $S^2 \times [0, r^s_{\max})$ and such that $\psi^s ( S^2 \times [0, r^s_{\max}) ) \subset Y$.
After replacing $(g^s)_{s \in X}$ with $( (r^s_{\max})^{-2} g^s )_{s \in X}$, we may assume that $r^s_{\max} \equiv 1$.

If $\Sigma \not\subset \partial M$, then we can find a uniform $\eps_1 > 0$ such that $\psi^s$ is defined and injective on $S^2 \times [-\eps_1, 1)$.
If $\Sigma \subset \partial M$, then set $\eps_1 := 0$.
If $Y$ has another boundary component $\Sigma_2$, then we can similarly find a constant $\eps_2 \geq 0$ such that $\psi^s$ is defined and injective on $S^2 \times [-\eps_1, 1+\eps_2]$ and $\eps_2 > 0$ if $\Sigma_2 \not\subset \partial M$.

Let $Y_0^s \subset Y$ be the union of regular spherical fibers of $\SS^s$ and let $s_0 \in D^n$ be an arbitrary point.
Using the maps $(\psi^s)_{s \in D^n}$ we can construct an open subset $Y^{s_0}_0 \subset \td{V} \subset M$ and a continuous family of embeddings $(\td\omega_s : \td{V} \to M )_{s \in D^n}$ such that $\td\omega_s (Y^s_0) = Y^{s_0}_0$ and such that the pullbacks of $\SS^s$ restricted to $Y^s_0$ are constant in $s$.
Due to the construction of the maps $(\td\omega_s)_{s \in D^n}$ via the exponential map and since $Y_0^s \subset Y$ is dense, we can extend these maps to maps $(\omega_s)_{s \in D^n}$ with the desired properties.

\medskip
\textit{Case 2:  $\partial Y = \emptyset$. \quad}

\medskip
\textit{Case 2a: $\SS^s$ contains a singular fiber $\approx \IR P^2$. \quad}
Similarly as in Case~1, we can use the exponential map to construct a continuous family of homothetic embeddings $( \varphi^s : \IR P^2 \to Y )_{s \in D^n}$ such that for every $s \in D^n$ the image $\varphi^s ( \IR P^2 )$ is a singular fiber of $\SS^s$.
We can now construct $(\omega_s)$ similarly as in Case~1 using the normal exponential map.

\medskip
\textit{Case 2b: $\SS^s$ contains a singular fiber that is a point. \quad}
We can find a continuous family of points $(p^s)_{s \in D^n}$ such that $\{ p^s \}$ is a singular fiber for $\SS^s$ for all $s \in D^n$.
Choose a continuous family of isometric maps $(\varphi^s : \IR^3 \to T_{p^s} Y)_{s \in X}$.
After rescaling the metric $g^s$, we may assume that $\injrad (M, g^s, p^s) = 1$ for all $s \in D^n$.
The remainder of the proof is similar as in Case 1.

\medskip
\textit{Case 2c: $\SS^s$ only consists of regular fibers. \quad}
By Lemma~\ref{lem_spherical_struct_classification} $Y \approx S^2 \times S^1$.
Pick a point $p \in Y$.
As in the proof of Case~1 we can find a continuous family of diffeomorphisms $(\varphi^s : S^2 \to \Sigma)_{s \in X}$ that are homotheties between $(S^2, g_{S^2})$ and fibers $p \in \Sigma^s \subset Y$ of $\SS^s$.
Using the normal exponential map and after rescaling the metric $g^s$, as in Case~1, we may further construct a continuous family of  covering maps $(\td\psi^s : S^2 \times \IR \to Y)_{s \in D^n}$ such that the pullback of $\SS^s$ via each $\td\psi^s$ agrees with the standard spherical structure on $S^2 \times \IR$ and such that $S^2 \times [0,1)$ is a fundamental domain.
Then $\td\psi^s (z,r+1) = \td\psi^s ( A^s(r) z, r)$ for some continuous family of smooth maps $(A^s : \IR \to O(3))_{s \in D^n}$.
Since $D^n$ is contractible, we can find a continuous family of smooth maps $(\td{B}^s : [0,1] \to O(3))_{s \in D^n}$ such that $A^s (0) \td{B}^s(1) = B^s(0)$ for all $s \in D^n$.
These maps can be extended to a continuous family of smooth maps $(\td{B}^s : [0,1] \to O(3))_{s \in D^n}$ such that
\[ A^s(r) B^s (r+1) = B^s (r) \]
for all $s \in D^n$ and $r \in \IR$.
Set $\ov\psi^s (z,r) := \td\psi^s (B^s (r) z , r)$.
Since
\begin{multline*}
 \ov\psi^s (z,r+1) = \td\psi^s (B^s (r+1) z , r+1) = \td\psi^s (A^s(r) B^s (r+1) z , r) \\ = \td\psi^s (B^s (r) z , r) = \ov\psi^s (z,r),
\end{multline*}
the family $(\ov\psi^s)_{s \in D^n}$ descends to a continuous family of diffeomorphisms $(\psi^s : S^2 \times S^1 \to Y)_{s \in D^n}$ such that $(\omega^s := \psi^s \circ (\psi^{s_0})^{-1})_{s \in D^n}$ has the desired properties for any fixed $s_0 \in D^n$.
\end{proof}

\subsection{PSC-conformal metrics}
In this subsection we introduce a conformally invariant condition on the class of compact Riemannian $3$-manifolds with round boundary components.  This property behaves well with respect to the geometric operations that arise in our main construction;  it may be a viewed as a relative version of the property of being conformally equivalent to a PSC metric.  

\begin{definition}[PSC-conformality] \label{Def_PSC_conformal}
A compact Riemannian 3-manifold with boundary $(M,g)$ is called {\bf PSC-conformal} if there is a smooth positive function $w \in C^\infty (M)$ such that:
\begin{enumerate}
\item $w^4 g$ has positive scalar curvature.
\item $w$ restricted to each boundary component of $M$ is constant.
\item Every boundary component of $(M, w^4 g)$ is totally geodesic and isometric to the standard round 2-sphere.
\end{enumerate}
\end{definition}

Note that in the case $\partial M = \emptyset$, the manifold $(M,g)$ is PSC-conformal if and only if its Yamabe constant is positive.

By expressing the conditions above in terms of $w$, we obtain:

\begin{lemma} \label{Lem_PSC_conformal_analytic}
A compact Riemannian 3-manifold with boundary $(M,g)$ is PSC-conformal if and only if all its boundary components are homothetic to the round sphere and there is a function $w \in C^\infty (M)$ such that:
\begin{enumerate}[label=(\arabic*)]
\item \label{prop_PSC_conformal_analytic_1} $w^4 g$ has positive scalar curvature, or equivalently, $8 \triangle w - R w < 0$ on $M$.
\item \label{prop_PSC_conformal_analytic_2} $w^4 |_{\partial M}$ is equal to the sectional curvature of the induced metric on $\partial M$.
\item \label{prop_PSC_conformal_analytic_3} $A_{\partial M} =( \nu_{\partial M} w^4 )g$, where $A_{\partial M}$ denotes the second fundamental form and $\nu_{\partial M}$ the inward pointing unit vector field to $\partial M$ (note that this implies that $\partial M$ is umbilic).
\end{enumerate}
\end{lemma}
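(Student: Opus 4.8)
The plan is to translate each clause of Definition~\ref{Def_PSC_conformal} into an equivalent analytic statement about the conformal factor, using the standard transformation law for scalar curvature under a conformal change in dimension $3$. Recall that if $\tilde g = w^4 g$ on a $3$-manifold, then the scalar curvatures are related by
\[
R_{\tilde g} = w^{-5}\big( {-8}\,\triangle_g w + R_g\, w \big).
\]
Hence $R_{\tilde g} > 0$ holds everywhere if and only if $8\triangle_g w - R_g w < 0$ on $M$, which is exactly condition~\ref{prop_PSC_conformal_analytic_1}. This handles clause~(1) and is purely a restatement.

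Next I would deal with the boundary conditions. Condition~(2) of Definition~\ref{Def_PSC_conformal} says $w$ is constant on each component of $\partial M$; together with the requirement in condition~(3) that each boundary component of $(M,w^4 g)$ be isometric to a \emph{standard} round sphere (i.e.\ of curvature exactly $1$), this pins down the value of $w$ on the boundary. Indeed, on a boundary component $\Sigma$ the induced metric of $\tilde g$ is $w^4|_\Sigma\, g|_\Sigma$; since $w|_\Sigma$ is a constant, this is a rescaling of $g|_\Sigma$, so $(\Sigma, g|_\Sigma)$ must already be homothetic to the round sphere, and the rescaling factor $w^4|_\Sigma$ must be precisely the (constant) sectional curvature $\kappa_\Sigma$ of $(\Sigma, g|_\Sigma)$ in order to make $\tilde g|_\Sigma$ have curvature $1$. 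This is condition~\ref{prop_PSC_conformal_analytic_2}; conversely, any $w$ with $w^4|_\Sigma = \kappa_\Sigma$ is automatically constant on $\Sigma$, so the ``$w$ constant on $\partial M$'' clause is subsumed. The remaining clause, that each boundary component be totally geodesic in $(M,\tilde g)$, is converted using the transformation law for the second fundamental form under a conformal change: if $\tilde g = e^{2\varphi} g$ with $\varphi = 2\log w$, then $\tilde A_{\Sigma} = e^{\varphi}\big(A_\Sigma - (\nu_\Sigma \varphi)\, g|_\Sigma\big)$, where $\nu_\Sigma$ is the inward unit normal of $g$. Setting $\tilde A_\Sigma = 0$ and rewriting $\nu_\Sigma\varphi = 2\,\nu_\Sigma(\log w) = \tfrac12 w^{-4}\nu_\Sigma(w^4)$ and using $w^4|_\Sigma = \kappa_\Sigma$, one obtains (after clearing constants along $\Sigma$) the stated relation $A_{\partial M} = (\nu_{\partial M} w^4)\, g$; in particular $A_{\partial M}$ is pure trace, so $\partial M$ is umbilic. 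This direction is reversible.

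So the proof is essentially: (i) apply the conformal transformation law for $R$ to get the equivalence of clause~(1) with~\ref{prop_PSC_conformal_analytic_1}; (ii) observe that clauses~(2)+(3) force $\partial M$ to be homothetic to the round sphere and fix $w^4|_{\partial M} = \kappa_{\partial M}$, giving~\ref{prop_PSC_conformal_analytic_2} and making the constancy clause automatic; (iii) apply the conformal transformation law for the second fundamental form to convert ``totally geodesic boundary in $\tilde g$'' into~\ref{prop_PSC_conformal_analytic_3}; then note each step is an equivalence. I expect the only mildly delicate point to be bookkeeping the normalization constants in step~(iii) — making sure the powers of $w$ and the factor relating $\varphi$ and $w$ are tracked correctly so that the final identity comes out in exactly the form $A_{\partial M} = (\nu_{\partial M} w^4) g$ rather than with an extra scalar factor — but this is routine once one fixes the convention $\tilde g = e^{2\varphi} g$, $\varphi = 2\log w$, and uses that $w$ is constant along each boundary component so that all tangential derivatives of $w$ along $\Sigma$ vanish.
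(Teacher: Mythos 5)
Your approach is the right one and is exactly what the paper intends (the paper gives no explicit proof, treating the lemma as a routine translation of Definition~\ref{Def_PSC_conformal} via the conformal transformation laws). Steps~(i) and~(ii) are clean: the scalar-curvature transformation $R_{w^4 g} = w^{-5}(-8\triangle w + Rw)$ gives the equivalence with~\ref{prop_PSC_conformal_analytic_1}, and pinning down $w^4|_{\partial M} = \kappa_{\partial M}$ both forces $w$ to be constant on each boundary component and makes the induced metric isometric (not merely homothetic) to the unit round $S^2$.

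The ``delicate bookkeeping'' you flag in step~(iii) is a real issue, and you should not have dismissed it. Carrying your own formulas through: setting $\tilde A = 0$ gives $A_{\partial M} = (\nu\varphi)\,g|_{\partial M}$ with $\varphi = 2\log w$, and $\nu\varphi = \tfrac{1}{2}w^{-4}\,\nu(w^4)$, so what actually emerges is $A_{\partial M} = \tfrac{1}{2w^4}(\nu w^4)\,g|_{\partial M} = \tfrac{1}{2\kappa_{\partial M}}(\nu w^4)\,g|_{\partial M}$ — which differs from the stated~\ref{prop_PSC_conformal_analytic_3} by the nonconstant factor $\tfrac{1}{2w^4}$. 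This is not a sign-convention artifact (a convention change flips the sign of $A$ but does not remove the factor), so either the paper uses an unusual normalization of $A_{\partial M}$ or condition~\ref{prop_PSC_conformal_analytic_3} as printed carries a harmless normalization slip. You should work this out explicitly and state which is the case rather than assert that ``clearing constants'' produces the stated identity, since on its face it does not. None of this affects the method or the downstream use of the lemma (only the umbilicity and the fact that the proportionality factor is determined by $\nu w^4$ ever get used), but a proof claiming an exact identity must actually produce it.
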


The next lemma shows that the PSC-conformal property is open ---  as is the standard PSC-condition --- if we restrict to  variations with a specific behavior on the boundary.

\begin{lemma} \label{Lem_PSC_conformal_open}
Let $M$ be a compact 3-manifold with boundary and $(g_s)_{s \in X}$ a continuous family of Riemannian metrics.
Assume that for all $s \in X$ all boundary components of $(M,g_s)$ are umbilic and homothetic to the round sphere.
If $(M,g_{s_0})$ is PSC-conformal for some $s_0 \in X$, then so is $(M,g_s)$ for $s$ near $s_0$.
Moreover we may choose conformal factors $w_s$ satisfying Lemma~\ref{Lem_PSC_conformal_analytic} which varying continuously with $s$.
\end{lemma}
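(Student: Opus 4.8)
The plan is to reduce the statement to a standard openness argument for a linear elliptic boundary value problem. By Lemma~\ref{Lem_PSC_conformal_analytic}, being PSC-conformal is equivalent to the existence of a positive $w \in C^\infty(M)$ satisfying the strict inequality $8\triangle_{g_s} w - R_{g_s} w < 0$ together with the two boundary conditions \ref{prop_PSC_conformal_analytic_2} and \ref{prop_PSC_conformal_analytic_3} of that lemma. Note that conditions \ref{prop_PSC_conformal_analytic_2} and \ref{prop_PSC_conformal_analytic_3} only constrain $w$ and $\nu_{\partial M} w$ on $\partial M$ in terms of the (prescribed) geometry of the boundary, which by hypothesis varies continuously with $s$ and is umbilic and round; so the boundary conditions are essentially inhomogeneous Cauchy-type data on $\partial M$, not a genuine elliptic boundary condition. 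The first step is therefore to recast the problem: fix a boundary collar and choose, for each $s$ near $s_0$, a function $w^{\partial}_s$ defined near $\partial M$ which satisfies \ref{prop_PSC_conformal_analytic_2} and \ref{prop_PSC_conformal_analytic_3} exactly, depends continuously on $s$, and equals $w_{s_0}$ (restricted near the boundary) at $s=s_0$; this is possible because the prescribed boundary values of $w$ and its normal derivative vary continuously in $s$.

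Next I would set up the perturbation. Write $w_s = w^{\partial}_s + v_s$ where $v_s$ is supported in the interior (vanishing to second order at $\partial M$, so that the boundary conditions remain exactly satisfied). At $s = s_0$ we have $v_{s_0} = w_{s_0} - w^{\partial}_{s_0}$, a fixed smooth interior-supported function, and the strict inequality $8\triangle_{g_{s_0}} w_{s_0} - R_{g_{s_0}} w_{s_0} < 0$ holds on the compact manifold $M$, hence with a uniform negative upper bound $-c_0 < 0$. The map
\[
(s, v) \longmapsto 8\triangle_{g_s}(w^{\partial}_s + v) - R_{g_s}(w^{\partial}_s + v)
\]
is continuous from (a neighborhood of $s_0$) $\times$ ($C^2$-small interior perturbations) into $C^0(M)$, since $g_s$ varies continuously in $C^\infty$ and hence the coefficients of $\triangle_{g_s}$ and the function $R_{g_s}$ vary continuously in $C^0$, and $w^\partial_s$ varies continuously as well. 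Keeping $v = v_{s_0}$ fixed, continuity in $s$ alone gives that for $s$ near $s_0$ the expression stays below $-c_0/2 < 0$ on all of $M$, and positivity of $w_s = w^{\partial}_s + v_{s_0}$ persists for $s$ near $s_0$ by continuity and compactness. This produces conformal factors $w_s$ with all required properties, depending continuously on $s$, so $(M,g_s)$ is PSC-conformal for $s$ near $s_0$.

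The only subtlety — and the main point requiring care rather than a genuine obstacle — is the bookkeeping near the boundary: one must verify that a continuously varying family $w^{\partial}_s$ satisfying \ref{prop_PSC_conformal_analytic_2} and \ref{prop_PSC_conformal_analytic_3} exactly can be chosen, and that adding an interior-supported correction vanishing to high enough order at $\partial M$ does not disturb these two conditions. Both are elementary once one works in Fermi coordinates around $\partial M$: condition \ref{prop_PSC_conformal_analytic_2} prescribes $w^4$ on $\partial M$ (the boundary being round, its curvature is a positive constant on each component, varying continuously in $s$), and condition \ref{prop_PSC_conformal_analytic_3} prescribes $\nu_{\partial M}(w^4)$ on $\partial M$ (the boundary being umbilic, $A_{\partial M}$ is a function times $g$, and that function varies continuously in $s$); a Taylor polynomial of degree one in the normal direction, cut off away from the collar, realizes these data and varies continuously. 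Since the correction $v_s$ is required to vanish together with its first normal derivative along $\partial M$, it does not affect $w^4$ or $\nu_{\partial M}(w^4)$ on $\partial M$, so the boundary conditions are preserved exactly. With this in place the argument above goes through verbatim, and the lemma follows.
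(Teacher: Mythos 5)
Your proof is correct and follows essentially the same route as the paper: build a continuous family $w_s$ satisfying the two boundary conditions of Lemma~\ref{Lem_PSC_conformal_analytic} exactly (using that the boundary is umbilic and homothetic to a round sphere, so the prescribed boundary data for $w$ and its normal derivative are constants on each component varying continuously in $s$), and then observe that the strict interior inequality $8\triangle_{g_s} w_s - R_{g_s} w_s < 0$ persists for $s$ near $s_0$ by compactness and the $C^0$-continuity of the coefficients. The paper's proof is terser but is the same argument; your $w_s = w^\partial_s + v_{s_0}$ decomposition is just a concrete way of carrying out the extension the paper asserts.
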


\begin{proof}
This is a consequence of Lemma~\ref{Lem_PSC_conformal_analytic}.
Choose $w_{s_0}$ such that Properties~\ref{prop_PSC_conformal_analytic_1}--\ref{prop_PSC_conformal_analytic_3} of Lemma~\ref{Lem_PSC_conformal_analytic} hold for $(M, g_{s_0})$.
We can extend $w_{s_0}$ to a continuous family of functions $w_s \in C^\infty(M)$ such that Properties~\ref{prop_PSC_conformal_analytic_1} and \ref{prop_PSC_conformal_analytic_3} of Lemma~\ref{Lem_PSC_conformal_analytic} hold for all $s \in X$.
Then Property~\ref{prop_PSC_conformal_analytic_2} holds for $s$ near $s_0$.
\end{proof}

Next we show that the PSC-conformal property remains preserved if we enlarge a given Riemannian manifold by domains that allow a spherical structure.
This fact will be important in the proof of Proposition~\ref{prop_extending}.

\begin{lemma} \label{Lem_PSC_conformal_enlarge}
Consider a compact Riemannian 3-manifold with boundary $(M,g)$, let $Z \subset M$ be a compact 3-dimensional submanifold with boundary and let $\SS$ be a spherical structure on $M$.
Suppose that:
\begin{enumerate}[label=(\roman*)]
\item $g$ is compatible with $\SS$.
\item $(Z,g)$ is PSC-conformal.
\item $\ov{M \setminus Z}$ is a union of spherical fibers of $\SS$.
\end{enumerate}
Then $(M,g)$ is also PSC-conformal.
\end{lemma}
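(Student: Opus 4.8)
The plan is to extend the conformal factor $w$ from $Z$ across $\ov{M\setminus Z}$, exploiting the spherical symmetry of $g$ there. The key structural input is Lemma~\ref{lem_spherical_struct_classification}: each component of $\ov{M\setminus Z}$, together with its spherical structure, is one of the finitely many models listed there (since by hypothesis (iii) its boundary lies among the regular fibers of $\SS$ and it is a union of fibers, it is one of the models with boundary, i.e.\ $S^2\times[0,1]$, $D^3$, $(S^2\times[-1,1])/\IZ_2$, or a component meeting $\partial M$ of type $S^2\times[0,1)$, etc.). Since being PSC-conformal is a property that only involves a single conformal factor, it suffices to work component by component, and to arrange that the new $w$ on each added piece matches $w|_Z$ to infinite order along the common boundary spheres, so that the glued function is smooth and the strict inequality $8\triangle w - Rw<0$ of Lemma~\ref{Lem_PSC_conformal_analytic}\ref{prop_PSC_conformal_analytic_1} is preserved.

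\textbf{Key steps.} First I would reduce to the case where $M\setminus Z$ is connected, handling each component separately, and record that along $\partial Z$ the function $w$ is a positive constant on each boundary sphere (Definition~\ref{Def_PSC_conformal}(2)) and that $\nu_{\partial Z}w$ is determined by the umbilic second fundamental form via Lemma~\ref{Lem_PSC_conformal_analytic}\ref{prop_PSC_conformal_analytic_3}. Second, using compatibility of $g$ with $\SS$ and Lemma~\ref{lem_compatible_metric_general_form}, on a collar $S^2\times(u,v)$ of regular fibers $g$ has the warped-product-plus-mixed-terms form $a^2(r)g_{S^2}+b^2(r)\,dr^2+\sum_i c_i(r)(dr\,\xi_i+\xi_i\,dr)$; in particular the scalar curvature $R_g$ and the Laplacian acting on $O(3)$-invariant functions $w=w(r)$ are controlled by explicit ODEs in $r$. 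Third, I would build $w$ on the added piece as an $\SS$-invariant function of $r$ alone (on $B^3$, $D^3$, or the $\IR P^2$-model, demanding the appropriate smoothness/Neumann condition at the singular fiber): choose $w$ to equal $w|_{\partial Z}$ to infinite order at the gluing sphere, to be very large (or to decay appropriately) in the interior so that $8w'' \cdot (\text{metric coefficient}) - R_g w$ stays negative — this is possible because on the compact model one has uniform control of $R_g$ and because multiplying $w$ by a large constant does not help directly, so instead one uses that we may choose $w$ to have large, suitably-signed second derivative in the collar direction while keeping the boundary jet fixed; for the closed-up ends ($D^3$, $B^3$) one simply takes $w$ radially monotone with a large derivative of the right sign. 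Fourth, for components meeting $\partial M$ (models $S^2\times[0,1)$ with $S^2\times\{0\}\subset\partial M$) I would additionally arrange, as in the proof of Lemma~\ref{Lem_PSC_conformal_open}, that $w^4$ on the new boundary sphere equals its sectional curvature and that the umbilicity relation \ref{prop_PSC_conformal_analytic_3} holds, which is achievable by prescribing the $0$- and $1$-jets of $w$ at that end.

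\textbf{Putting it together.} Having defined $w$ piecewise — equal to the given conformal factor on $Z$, $\SS$-invariant on each component of $\ov{M\setminus Z}$, matching to infinite order along the interface spheres — the resulting function is smooth and positive on $M$, restricts to a constant on each boundary component of $M$ (either inherited from the $Z$-side or arranged on the new ends), and satisfies $8\triangle w - Rw<0$ everywhere (on $Z$ by hypothesis, on the added pieces by construction). The boundary conditions \ref{prop_PSC_conformal_analytic_2}--\ref{prop_PSC_conformal_analytic_3} of Lemma~\ref{Lem_PSC_conformal_analytic} hold on each boundary component of $M$, so by Lemma~\ref{Lem_PSC_conformal_analytic} the metric $w^4g$ has positive scalar curvature, totally geodesic round-sphere boundary, and $w$ constant on $\partial M$; hence $(M,g)$ is PSC-conformal.

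\textbf{Main obstacle.} The delicate point is the interior estimate: one must produce an $\SS$-invariant $w(r)$ on each finite model that (i) has a prescribed infinite jet at one (or two) ends forced by the $Z$-side data, yet (ii) keeps $8\triangle w - R_g w<0$ throughout, where $R_g$ may be negative somewhere and is not under our control. The resolution is that the differential inequality is an \emph{open}, pointwise condition and we have total freedom in the interior of each compact model: after fixing the boundary jet, we can interpolate to a function that is as large as we like in the collar direction with second derivative of the correct sign, making the $\triangle w$ term dominate; the only genuine care is near singular fibers (smoothness of $\SS$-invariant functions at a point or along $\IR P^2$) and at $\partial M$-ends (matching the round-boundary conditions), both of which are handled by the standard jet-prescription arguments already used in Lemmas~\ref{Lem_PSC_conformal_analytic} and \ref{Lem_PSC_conformal_open}.
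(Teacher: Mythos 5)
The overall strategy---extend the conformal factor $\SS$-invariantly across $\ov{M\setminus Z}$, match jets at $\partial Z$, and verify the differential inequality of Lemma~\ref{Lem_PSC_conformal_analytic}\ref{prop_PSC_conformal_analytic_1} on the added pieces---is right in outline, but the mechanism you offer for the interior estimate is not available, and this is a genuine gap. You propose to choose $w$ ``with large, suitably-signed second derivative'' to make the $\triangle w$ term dominate, and for caps to take $w$ ``radially monotone with a large derivative of the right sign.'' Neither works. On a neck whose two ends both lie in $\partial Z$, Properties~\ref{prop_PSC_conformal_analytic_2}--\ref{prop_PSC_conformal_analytic_3} of Lemma~\ref{Lem_PSC_conformal_analytic} already pin down the $0$- and $1$-jets of $w$ at both ends, so there is no freedom to make $w$ ``as large as we like.'' The structural fact you do not use---and which the paper's proof exploits---is that an $\SS$-compatible metric on a union of regular fibers is conformal to the round cylinder: after reparametrizing $r$ one may write $g=f^4(g_{S^2}+dr^2)$. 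In this gauge the PSC inequality becomes $8(wf)''-2(wf)<0$ with $(wf)(\pm L)=1$, $(wf)'(\pm L)=0$; this bounds $(wf)''$ from above, so making the second derivative ``large of the right sign'' is forbidden by the very inequality you need, not helpful to it. Once the conformal cylinder is visible the solution is trivial: take $wf\equiv 1$ in the middle (the round cylinder itself, with $8\cdot 0-2<0$) and smooth to the $Z$-side jet near $r=\pm L$. Similarly, at a singular fiber that is a point, $O(3)$-invariance forces $\nabla w=0$ there, so ``radially monotone with a large derivative'' is not an option at the singular fiber either.

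The paper also sidesteps the cap, $\IR P^2$-fiber, and $\partial M$ cases by a reduction your proposal does not attempt: a compact $\SS$-saturated neighborhood $V_j\subset M\setminus Z$ of each singular fiber or $\partial M$-component is already PSC-conformal on its own (being conformal to a piece of the round cylinder, hence conformally flat), so one replaces $Z$ by $Z\cup\bigcup_j V_j$ and, after an induction on components, reduces to the single case $\ov{M\setminus Z}\approx S^2\times[0,1]$ with only regular fibers. Without that reduction and without the conformal-to-cylinder observation, your proposal asserts rather than verifies the strict inequality on the added piece, and the asserted mechanism does not hold.
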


Note that in the case $Z = \emptyset$ Lemma~\ref{Lem_PSC_conformal_enlarge} implies that $(M,g)$ is PSC-conformal if $M$ admits a spherical structure that is compatible with $g$.

\begin{proof}
We first argue that we may assume without loss of generality that $\ov{M \setminus Z}$ is connected, is a union of regular fibers and is disjoint from $\partial M$.
To see this let $\{ \Sigma_1, \ldots, \Sigma_m \}$ be the set of all singular fibers in $M \setminus Z$, boundary components of $\partial M$ that are contained in $M \setminus Z$ and at least one regular fiber in $M \setminus Z$.
Choose pairwise disjoint closed neighborhoods $V_j \subset M \setminus Z$ of each $\Sigma_j$ that are each unions of spherical fibers and diffeomorphic to $D^3$ or $(S^2 \times [-1,1])/\IZ_2$ or $S^2 \times [0,1]$.
Note that $(V_j, g)$ is PSC-conformal for all $j = 1, \ldots, m$, because in the first case $g |_{V_j}$ is conformally equivalent to a metric with positive scalar curvature that is cylindrical near the boundary and in the second and third case $g |_{V_j}$ is conformally equivalent to (a quotient) of the round cylinder.
So all assumptions of the lemma are still satisfied if we replace $Z$ by $Z' := Z \cup_{j=1}^m V_j$.
Therefore, we may assume that $\ov{M \setminus Z}$ is disjoint from all singular fibers and $\partial M$ and that $Z \neq \emptyset$.
Furthermore, by induction it suffices to consider the case in which $M \setminus Z$ is connected.
So by Lemma~\ref{lem_spherical_struct_classification} we have $\ov{M \setminus Z} \approx S^2 \times [0,1]$.

Choose an embedding $\phi : S^2 \times (-L-\eps, L+\eps) \to M$, $L, \eps > 0$ such that $\ov{M \setminus Z} = \phi (S^2 \times [0,L])$ and such that $\phi^* g = f^4 (g_{S^2} + dr^2)$ for some smooth function $f : (-L-\eps,L+\eps) \to \IR_+$.
Let $w \in C^\infty (Z)$ such that all properties of Lemma~\ref{Lem_PSC_conformal_analytic} hold.
Then $\phi^* (w^4 g) = (\td{w} f)^4 (g_{S^2} + dr^2)$ for some smooth function $\td{w} : (-L-\eps , -L] \cup [L,L+\eps) \to \IR_+$ and we have
\[ (\td{w} f)(\pm L) = 1, \qquad (\td{w} f)' (\pm L) = 0. \]
By smoothing the function
\[ r \mapsto \frac1{f(r)} \begin{cases} ( \td{w}f)( r)  &\text{if $r \in (-L-\eps , -L] \cup [L,L+\eps)$} \\ 1 & \text{if $r \in (-L, L)$} \end{cases}, \]
we can find a smooth function $\td{w}^*: (-L-\eps,L+\eps) \to \IR_+$ such that $\td{w}^* = \td{w}$ near the ends of the domain and such that $(\ov{w} f)^4 (g_{S^2} + dr^2)$ has positive scalar curvature.
We can then choose $w^* \in C^\infty (M)$ such that $w^* = w$ outside the image of $\phi$ and $w^* \circ \phi = \td{w}^*$.
This function satisfies all properties of Lemma~\ref{Lem_PSC_conformal_analytic}, showing that $(M,g)$ is PSC-conformal.
\end{proof}

Next, we discuss a criterion that will help us identify PSC-conformal manifolds in Section~\ref{sec_deforming_families_metrics}.

\begin{lemma} \label{lem_CNA_SS_implies_PSC_conformal}
There are constants $\eps, c > 0$ such that the following holds.
Suppose that $(M,g)$ is a (not necessarily) complete Riemannian 3-manifold, $\SS$ is a spherical structure on $M$ that is compatible with $g$ and $Z \subset M$ is a compact 3-dimensional submanifold with the property that for some $r > 0$:
\begin{enumerate}[label=(\roman*)]
\item $\partial Z$ is a union of regular spherical fibers of $\SS$.
\item $(Z,g)$ has positive scalar curvature.
\item Every point of $Z \cap \{ \rho <  r \}$ satisfies the $\eps$-canonical neighborhood assumption.
\item $\rho \leq c r$ on $\partial Z$.
\item $\{ \rho <  r \} \subset \domain \SS$.
\end{enumerate}
Then $(Z,g)$ is PSC-conformal.
\end{lemma}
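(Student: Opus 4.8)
The plan is to reduce to Lemma~\ref{Lem_PSC_conformal_enlarge}. To apply that lemma I need to produce a compact submanifold $Z' \supset Z$ with boundary such that: (a) $g$ is compatible with $\SS$ on a neighborhood of $Z'$; (b) $(Z', g)$ is PSC-conformal; (c) $\ov{Z'' \setminus Z'}$ is a union of spherical fibers of $\SS$ for the ambient manifold $Z''$ that I ultimately want to show is PSC-conformal. But actually the statement only asks that $(Z,g)$ itself is PSC-conformal, so the real task is different: I must take $Z$ --- which has positive scalar curvature but whose boundary spheres need not be round and totally geodesic --- and find a conformal factor $w$ fixing up the boundary. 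The key is hypothesis (iii)--(v): near $\partial Z$ the geometry is that of a $\kappa$-solution region inside $\domain(\SS)$, and since $\rho$ is small there (hypothesis (iv)) and the metric is rotationally symmetric, $\partial Z$ together with a collar is essentially a piece of a round cylinder up to rescaling.

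First I would show that, after choosing $\eps$ small and $c$ small, each boundary component $\Sigma \subset \partial Z$ has a collar neighborhood $N_\Sigma \subset \{\rho < r\} \cap \domain(\SS)$ on which $g$ is, in spherical-structure coordinates (Lemma~\ref{lem_compatible_metric_general_form}), a warped product $a^2(\tau)g_{S^2} + b^2(\tau)d\tau^2 + (\text{cross terms})$ that is $\eps'$-close modulo rescaling to the standard round cylinder $2g_{S^2} + g_{\mathbb R}$, where $\eps' \to 0$ as $\eps \to 0$; this uses the canonical neighborhood assumption, the classification of $\kappa$-solutions (Theorem~\ref{Thm_kappa_sol_classification}), and the fact that $\{\rho < r\} \subset \domain(\SS)$ forces the local model to be a neck or a rotationally symmetric cap rather than a Bryant tip (a Bryant tip would contain a point where $\nabla R = 0$ but the fibration through it would be singular of dimension zero, which is fine, but the point is $\rho$ on $\partial Z$ being $\leq cr$ with $c$ small, together with $\partial Z$ being a \emph{regular} fiber, rules out the immediate vicinity of a cap point). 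The upshot is that on $N_\Sigma$ the metric $g$ is conformal to a metric that is exactly a round half-cylinder near $\Sigma$, via a conformal factor depending only on the $\SS$-coordinate.

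Next I would glue: pick for each $\Sigma$ a conformal factor $w_\Sigma$ supported in $N_\Sigma$, constant on each spherical fiber (hence compatible with $\SS$), equal to $1$ near the inner end of the collar, and chosen so that $w_\Sigma^4 g$ near $\Sigma$ is an exact round cylinder $2g_{S^2} + d\tau^2$ — in particular $\Sigma$ becomes totally geodesic and round of curvature... one must also rescale the $S^2$ factor to curvature $1$, which is another fiberwise-constant conformal adjustment. Since warped-product metrics of the form $f^4(g_{S^2} + d\tau^2)$ on $S^2 \times [0,L]$ have positive scalar curvature exactly when $f$ satisfies a concavity-type ODE inequality, and since near $\Sigma$ we are prescribing $f \equiv \mathrm{const}$ (which has zero scalar curvature contribution from the $d\tau^2$ direction but positive from the $S^2$ direction), I can smooth the transition between the interior (where $w \equiv 1$ and $R > 0$ by hypothesis (ii)) and the collar, keeping $8\triangle w - Rw < 0$ throughout, exactly as in the smoothing step in the proof of Lemma~\ref{Lem_PSC_conformal_enlarge}. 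This produces the global $w \in C^\infty(Z)$ verifying the conditions of Definition~\ref{Def_PSC_conformal} via Lemma~\ref{Lem_PSC_conformal_analytic}.

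\textbf{Main obstacle.} The delicate point is controlling the collar: I need that the canonical-neighborhood geometry near $\partial Z$, which a priori is only \emph{$\eps$-close} to some $\kappa$-solution at \emph{some} scale, is close enough to a genuine round cylinder (not a Bryant soliton neck, which is not a cylinder but is still rotationally symmetric and would still work, but has a definite non-cylindrical shape) that the conformal straightening keeps scalar curvature positive with room to spare. Quantitatively one must choose $\eps$ small enough that the $C^{[\eps^{-1}]}$-closeness to a $\kappa$-solution, combined with $\rho \leq cr$ on $\partial Z$ (which via the canonical neighborhood structure and the fact that the fiber $\partial Z$ is \emph{regular} and $2$-spherical, forces the local model to be neck-like with long cylindrical regions on both sides of $\Sigma$ within $Z$), yields a collar on which $g$ is $\tfrac1{10}$-close modulo scaling to the round cylinder in $C^4$, say. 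I expect the bookkeeping here --- propagating "$\rho$ small on a regular $2$-sphere fiber inside $\domain(\SS)$" to "there is a definite-length round-cylinder collar inside $Z$" --- to be the technical heart, and it should follow by a contradiction/compactness argument against the $\kappa$-solution compactness theorem (Theorem~\ref{Thm_kappa_compactness_theory}) of exactly the type used repeatedly in Section~\ref{sec_rounding_process}.
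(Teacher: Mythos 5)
Your plan is recognizably in the same family as the paper's proof: show the boundary collar is $\kappa$-solution-cylindrical after rescaling by a compactness argument, then conformally straighten. But the "main obstacle" you flag at the end --- forcing the collar model to be a round cylinder rather than, say, a Bryant soliton slice --- is not merely bookkeeping; resolving it is the content of the paper's key reduction step, which you are missing. The paper does not straighten $\partial Z$ directly. Instead, it first peels off the small-scale buffer: choosing $\lambda\in(\sqrt c/2,\sqrt c)$ generic, it sets $Z_0:=\{\rho\geq\lambda r\}\cap Z$ and $Z_1:=Z_0\cup\{\text{components of }Z\setminus Z_0\text{ disjoint from }\partial Z\}$, so that $\overline{Z\setminus Z_1}$ is a union of spherical fibers (disposed of by Lemma~\ref{Lem_PSC_conformal_enlarge}) and $\rho\equiv\lambda r$ on $\partial Z_1$. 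Only then does it run the compactness argument at $\partial Z_1$. This buys two things simultaneously: after rescaling so $\lambda r=1$, there are points of $Z_1$ at scale $r^i=\lambda_i^{-1}\to\infty$ \emph{and} points of $\partial Z^i$ at scale $c^i/\lambda_i\to 0$; a compact or one-ended limit (e.g.\ Bryant) would force $\Sigma^i$ to bound a compact domain $D^i$ of bounded scale, which cannot coexist with these two extremes, so the limit must be the round cylinder.

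Working directly at $\partial Z$, where $\rho\leq cr$, you lose exactly this leverage. Rescaling so $\rho(\Sigma)=1$, you still get points of $Z$ at scale $\to\infty$, but you have no second sequence of points at scale $\to 0$ pinning down the model, and the limit can genuinely be a Bryant slice near the tip (on the side of $\Sigma$ away from $Z$). Your attempted argument to rule this out --- that $\partial Z$ being a \emph{regular} fiber at scale $\leq cr$ "rules out the immediate vicinity of a cap point" --- does not work: "regular" only means the $\SS$-orbit through each point of $\Sigma$ is a $2$-sphere, which says nothing about how close $\Sigma$ is to a singular fiber. You also write that the model has "long cylindrical regions on both sides of $\Sigma$ within $Z$," but $\Sigma\subset\partial Z$ so there is only one side inside $Z$, and on the other side (outside $Z$) a cap may sit arbitrarily nearby. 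You acknowledge the Bryant possibility and say the straightening "would still work," but the paper's conformal factor construction (Claim in the proof) explicitly uses that the limit is a round cylinder; making it go through on a Bryant collar, where the boundary is no longer umbilic in the limit, is an extra argument that you do not supply. The fix is precisely the $Z_1$-reduction: push the boundary inward to the intermediate scale $\lambda r\approx\sqrt c\,r$, where the Bryant tip sits at relative distance $\lambda/c\to\infty$, so the model there is guaranteed cylindrical; the remaining small-scale annulus $\overline{Z\setminus Z_1}$ is a union of spherical fibers and is absorbed by Lemma~\ref{Lem_PSC_conformal_enlarge}.
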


\begin{proof}
The constants $\eps$ and $c$ will be determined in the course of the proof.
We may assume without loss of generality that $Z$ is connected and $\partial Z \neq \emptyset$.
Moreover, we may assume that $Z$ is not a union of spherical fibers, in which case the PSC-conformality is trivial due to Lemma~\ref{Lem_PSC_conformal_enlarge}.
Therefore $Z$ must contain a point with $\rho \geq r$.
Choose $\lambda \in (\sqrt{c}/2,\sqrt{c})$ such that $\rho \neq \lambda r$ on any singular spherical fiber in $Z$ and consider the subset $Z_0 := \{ \rho \leq \lambda \} \cap Z \subset Z$.
Then $\ov{Z \setminus Z_0}$ is a union of spherical fibers.
Let $Z_1 \subset Z$ be the union of $Z_0$ with all components of $Z \setminus Z_0$ that are disjoint from $\partial Z$.
Then $\ov{Z \setminus Z_1}$ is a union of spherical fibers and $\rho \equiv \lambda r$ on $\partial Z_1$.
Due to Lemma~\ref{Lem_PSC_conformal_enlarge} it suffices to to show that $(Z_1, g)$ is PSC-conformal.

To see this it suffices to show the following claim.

\begin{Claim}
If $\eps \leq \ov\eps$, $c \leq \ov{c}$, then for every  component $\Sigma \subset \partial Z_1$ there is a collar neighborhood $U_\Sigma \subset Z_1$ consisting of regular spherical fibers and a function $u_\Sigma \subset C^\infty (U_\Sigma)$ such that:
\begin{enumerate}[label=(\alph*)]
\item \label{ass_cl_U_Sigma_tot_geod_a} $u_\Sigma > 0$.
\item \label{ass_cl_U_Sigma_tot_geod_b} $u_\Sigma \equiv 1$ outside of a compact subset of $U_\Sigma$.
\item \label{ass_cl_U_Sigma_tot_geod_c} $8 \triangle u_\Sigma - R u_\Sigma < 0$; therefore $u_\Sigma^4 g$ has positive scalar curvature.
\item \label{ass_cl_U_Sigma_tot_geod_d} $\Sigma$ is totally geodesic in $(Z_1,  u_\Sigma^4 g |_\Sigma )$ and is isometric to the round 2-sphere of scale $\lambda r$.
\end{enumerate}
\end{Claim}

Note that by our assumption $Z_1 \not\subset \domain (\SS)$, which implies that the neighborhoods $U_\Sigma$ are pairwise disjoint.

\begin{proof}
By rescaling we may assume without loss of generality that $\lambda r = 1$.
Fix sequences $\eps^i, c^i \to 0$ and consider a sequence of counterexamples $M^i, Z^i, Z^i_1, \Sigma^i, g^i$, $r^i$ to the claim.
Choose points $x^i \in \Sigma^i$.
After passing to a subsequence, we may assume that $(M^i, g^i, x^i)$ converge to the final time-slice of a pointed $\kappa$-solution $(\ov{M}, \ov{g}, \ov{x})$ with $\rho (\ov{x}) = 1$; note that $(M^i, g^i)$ cannot be isometric to the round sphere for large $i$ by our assumption that $M^i$ contains a point with $\rho \geq r^i$.

We claim that $(\ov{M}, \ov{g})$ is homothetic to the round cylinder. 
Otherwise $\ov{M}$ would be either compact or one-ended (see Theorem~\ref{Thm_kappa_sol_classification}).
So $\Sigma^i$ bounds a compact domain $D^i \subset M^i$ for large $i$ on which $C_*^{-1} < \rho < C_*$ for some constant $C_* < \infty$ that is independent of $i$.
Since $Z^i_1$ contains a point of scale $\rho \geq r^i \geq \lambda_i^{-1} \to \infty$ this implies that the interiors of $D^i$ and $Z^i_1$ are disjoint.
Since $\rho \leq c^i r^i = c^i / \lambda_i \to 0$ on $\partial Z^i$ we have $D^i \subset Z^i_1$, which contradicts our construction of $Z^i_1$ for large $i$.

Since $(\ov{M}, \ov{g})$ is homothetic to the round cylinder, we can choose $U_{\Sigma^i} \subset Z_1^i$ to be larger and larger tubular neighborhoods of $\Sigma^i$.
Assertions~\ref{ass_cl_U_Sigma_tot_geod_a}--\ref{ass_cl_U_Sigma_tot_geod_d} can therefore be achieved easily for large $i$.
\end{proof}
This finishes the proof of the lemma.
\end{proof}

Lastly, we discuss further properties of the PSC-conformal condition, which will be central to the proof of Proposition~\ref{prop_move_remove_disk}.
We begin by showing that the conformal factor $w$ from Definition~\ref{Def_PSC_conformal} or  Lemma~\ref{Lem_PSC_conformal_analytic} can be chosen to be of a standard form near a given point.

\begin{lemma} \label{Lem_w_std_form}
Let $(M,g)$ be PSC-conformal and $p \in \Int M$.
Then there is a constant $a \in \IR$ such and a smooth function $w \in C^\infty (M)$ satisfying Properties~\ref{prop_PSC_conformal_analytic_1}--\ref{prop_PSC_conformal_analytic_3} of Lemma~\ref{Lem_PSC_conformal_analytic} and such that near $p$ we have
\begin{equation} \label{eq_w_std_form}
 w = w (p) - a \cdot d^2 (p,\cdot) . 
\end{equation}
\end{lemma}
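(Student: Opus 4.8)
The plan is to start from an arbitrary conformal factor $w_0 \in C^\infty(M)$ satisfying Properties~\ref{prop_PSC_conformal_analytic_1}--\ref{prop_PSC_conformal_analytic_3} of Lemma~\ref{Lem_PSC_conformal_analytic} (which exists since $(M,g)$ is PSC-conformal), and then to modify $w_0$ only in a small ball around $p$ so that it acquires the prescribed form $(\ref{eq_w_std_form})$, while keeping the strict inequality $8\triangle w - Rw < 0$ intact and leaving $w$ unchanged near $\partial M$ (so Properties~\ref{prop_PSC_conformal_analytic_2} and \ref{prop_PSC_conformal_analytic_3} persist for free). The key point is that the operator $w \mapsto 8\triangle w - Rw$ depends continuously on $w$ in $C^2$, and the defining condition is an open (strict) inequality; so any sufficiently $C^2$-small perturbation of $w_0$ supported near $p$ is still admissible.

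First I would fix normal coordinates centered at $p$ on a ball $B(p,2r_0) \subset \Int M$, and set $w(p) := w_0(p) > 0$. Next, choose a cutoff $\chi$ equal to $1$ on $B(p,r_0)$ and supported in $B(p,2r_0)$, and consider the candidate
\[
w := w_0 + \chi \cdot \big( w(p) - a\, d^2(p,\cdot) - w_0 \big),
\]
so that $w = w(p) - a\, d^2(p,\cdot)$ on $B(p,r_0)$ and $w = w_0$ outside $B(p,2r_0)$. The obstacle is that the correction term $w(p) - a\,d^2(p,\cdot) - w_0$ vanishes only to first order at $p$ (since $\nabla w_0(p)$ need not vanish and $d^2(p,\cdot)$ has vanishing gradient at $p$), so this correction is not small in $C^2$ on $B(p,2r_0)$ no matter how small $a$ is. I would fix this by a two-step scaling argument: first replace the fixed ball by a ball $B(p,\rho)$ with $\rho \to 0$, rescale coordinates by $\rho^{-1}$, and observe that after rescaling the correction term and its derivatives up to order $2$ are $O(\rho)$ on the unit ball \emph{provided} $a$ is chosen of order $\rho$; more precisely, pick $a$ so that $|a|\rho^2$ and $|a|\rho$ are small, and note $w_0 = w_0(p) + O(\rho)$ in $C^2$ on $B(p,\rho)$ after rescaling. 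Then the perturbation $\chi_\rho \cdot (w(p) - a\,d^2 - w_0)$, measured in the rescaled metric, is $C^2$-small; translating back, the unrescaled quantity $8\triangle w - Rw$ differs from $8\triangle w_0 - R w_0$ by a term that is small relative to the (strictly negative) value of $8\triangle w_0 - Rw_0$ near $p$, so the strict inequality is preserved on all of $M$. This also keeps $w > 0$ for $\rho$ small, since $w \to w_0$ uniformly.

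The main obstacle, as just indicated, is arranging that the modification is genuinely $C^2$-small after accounting for the fact that matching only the value (not the gradient) of $w_0$ at $p$ forces a linear-in-$d$ discrepancy; the resolution is the scaling/blow-up argument, choosing $a = a(\rho)$ of the right order so that both the $d^2$ term and the first-order Taylor remainder of $w_0$ are controlled on the shrinking ball. Once $\rho$ is fixed small enough, the function $w$ defined above satisfies $(\ref{eq_w_std_form})$ on $B(p,r_0)$ with the chosen constant $a$, equals $w_0$ near $\partial M$, is positive, and satisfies $8\triangle w - Rw < 0$ on $M$; hence by Lemma~\ref{Lem_PSC_conformal_analytic} it realizes the PSC-conformal property in the desired normal form. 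One can moreover prescribe $a$ freely (e.g.\ any $a \in \IR$) by first shrinking $\rho$ depending on $|a|$, which is all that is needed in the application.
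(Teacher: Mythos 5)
The central claim --- that choosing $\rho$ small enough makes the modification harmless because the correction is $C^2$-small after rescaling --- is false, and this gap is exactly where the difficulty of the lemma lies. Rescaling $y = x/\rho$ does make the rescaled correction $\widetilde{\chi}\widetilde{P}$ (with $P = w(p) - a\, d^2(p,\cdot) - w_0$) have $C^2_y$-norm $O(\rho)$. But the unrescaled Laplacian picks up a factor $\rho^{-2}$: one has $\triangle_x(\chi_\rho P) = \rho^{-2}\,\triangle_y(\widetilde{\chi}\widetilde{P})(x/\rho)$, which is therefore $O(\rho^{-1})$ and blows up as $\rho \to 0$. Computing directly with the product rule, the cross-term $2\nabla\chi_\rho\cdot\nabla P$ is of size $\rho^{-1}\cdot|\nabla w_0(p)|$ on the transition annulus, since $\nabla P(p) = -\nabla w_0(p)$ need not vanish: matching only the \emph{value} and not the \emph{gradient} of $w_0$ at $p$ forces $|\nabla P| = O(1)$ near $p$, while any cutoff at scale $\rho$ has $|\nabla\chi_\rho| = O(\rho^{-1})$, and the product is $O(\rho^{-1})$ regardless of how $a$ or $\rho$ are chosen (the term $(\triangle\chi_\rho)P$ is of the same size). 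So $8\triangle w - Rw$ is not close to $8\triangle w_0 - Rw_0$, and the strict inequality is not preserved by this construction.

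The paper's proof is built to overcome precisely this obstruction, in two separate steps. It first arranges $\nabla w(p) = 0$. This cannot be achieved by a small smooth perturbation either; instead the paper adds a bump $\alpha_i^3\varphi_i(d(q_i,\cdot)/\alpha_i)$, centered at a nearby point $q_i$ along the gradient direction, whose radial profile $\varphi_i$ is a smoothed truncation of $\delta\, r^{-1/2}$. The profile $r^{-1/2}$ is singled out because it solves $\varphi'' + \tfrac{3}{2r}\varphi' = 0$: the bump then has \emph{bounded} Laplacian (this is the estimate $\varphi_i'' + 1.5\, r^{-1}\varphi_i' \leq 1$ of the Claim), while the derivative $\varphi_i'(t_i)$ near the singular point can be made as negative as desired --- just enough to cancel $\nabla w(p)$. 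Second, once $\nabla w(p) = 0$, the discrepancy $P = u - w$ vanishes to second order, so $|P| = O(r^2)$ and $|\nabla P| = O(r)$; but even then a fixed-scale cutoff only yields $O(1)$ errors, not small ones. The paper therefore uses the \emph{logarithmic} cutoff $\nu_i(r) = f(\eps_i\log r)$, for which $|\nu_i'| \leq C\eps_i/r$ and $|\nu_i''| \leq C\eps_i/r^2$; these scale-invariant bounds, paired against the quadratic vanishing of $P$, make every cross-term $O(\eps_i)$. Both steps, and their order, are necessary; a single fixed-scale interpolation cannot work.
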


\begin{proof}
We first show that we can arrange $w$ such that $\nabla w( p) = 0$.
For this purpose, fix some $w \in C^\infty (M)$ satisfying Properties~\ref{prop_PSC_conformal_analytic_1}--\ref{prop_PSC_conformal_analytic_3} of Lemma~\ref{Lem_PSC_conformal_analytic} and assume that $\nabla w (p) \neq 0$.
Choose a small geodesic $\gamma : (-\eps, \eps) \to M$ with $\gamma(0) = p$ and $\gamma' (p) =\nabla w (p) /  |\nabla w (p)|$.
Choose a sequence $\alpha_i \to 0$ with $\alpha_i > 0$.

\begin{Claim}
There exists a sequence of even functions $\varphi_i \in C^\infty (\IR)$ and numbers $t_i > 0$ such that for large $i$:
\begin{enumerate}[label=(\arabic*)]
\item \label{ass_cl_varphi_even_1} $0 \leq \varphi_i \leq 1$.
\item \label{ass_cl_varphi_even_2} $\varphi_i \equiv 0$ on $[1, \infty)$.
\item \label{ass_cl_varphi_even_3} $\varphi'_i \leq 0$ on $[0, \infty)$.
\item \label{ass_cl_varphi_even_4} $\alpha_i^2 \varphi'_i (t_i) = - |\nabla w(p)|$.
\item \label{ass_cl_varphi_even_5} $\varphi''_i (r) + 1.5 r^{-1} \varphi'_i (r) \leq 1$  on $[0, \infty)$.
\end{enumerate}
\end{Claim}

\begin{proof}
Let $\nu : \IR \to [0,1]$ be an even cutoff function with $\nu \equiv 1$ on $[-\frac12, \frac12]$ and $\nu \equiv 0$ on $(-\infty, -1] \cup [1, \infty)$ and $\nu' \leq 0$ on $[0, \infty)$.
Let $\delta > 0$ and consider the function
\[ \psi_\delta (r) := \delta \cdot  \nu (r) \cdot |r|^{-1/2}. \]
For small $\delta$, this function satisfies Properties \ref{ass_cl_varphi_even_2}, \ref{ass_cl_varphi_even_3} and \ref{ass_cl_varphi_even_5} wherever defined.
Our goal will be to choose positive constants $\delta_i, t_i \to 0$ and let $\varphi_i$ be a smoothing of $\max \{  \psi_{\delta_i}, .9 \}$.
In order to ensure that Properties~\ref{ass_cl_varphi_even_1}--\ref{ass_cl_varphi_even_5} hold, we require that $0 \leq \psi_{\delta_i} \leq \frac12$ on $[t_i / 2, \infty)$ and $\psi'_{\delta_i} (t_i) = - \alpha_i^{-2}  |\nabla w(p)|$.
These conditions are equivalent to
\[ 0 < t_i < \tfrac12, \qquad \delta_i (t_i/2)^{-1/2} \leq \tfrac12, \qquad
\delta_i t_i^{-1.5} =  2 \alpha_i^{-2}  |\nabla w(p)|, \]
which can be met for large $i$ such that $\delta_i, t_i \to 0$.
\end{proof}

Set now $q_i := \gamma(-t_i)$ and
\[ \td{w}_i := w + \alpha_i^3  \varphi_i \bigg( \frac{d (q_i, \cdot )}{\alpha_i} \bigg). \]
Then $\td{w}_i \to w$ in $C^0$ and for large $i$
\[ \triangle \td{w}_i - \triangle w \leq  \alpha_i \bigg( \varphi'' + \triangle d(q_i, \cdot) \cdot \varphi' \bigg)
\leq  \alpha_i \bigg( \varphi'' + \frac{1.5}{d(q_i,\cdot)} \varphi' \bigg) \leq \alpha_i \to 0. \]
Therefore, for large $i$ the function $\td{w}_i$ satisfies Properties~\ref{prop_PSC_conformal_analytic_1}--\ref{prop_PSC_conformal_analytic_3} of Lemma~\ref{Lem_PSC_conformal_analytic}.
Moreover $\nabla \td{w}_i =( |\nabla w (p)| + \alpha_i^2 \varphi'_i (t_i) ) \gamma'(p) = 0$ for large $i$.

This shows that we can find a function $w \in C^\infty (M)$ satisfying Properties~\ref{prop_PSC_conformal_analytic_1}--\ref{prop_PSC_conformal_analytic_3} of Lemma~\ref{Lem_PSC_conformal_analytic} and $\nabla w (p) = 0$.
Fix $w$ for the remainder of the proof.
Choose some $a \in \IR$ such that for $i := w(p) - a \cdot d^2 (p, \cdot)$ we have $8 \triangle u - R u < 0$ near $p$.
Our goal will be to interpolate between $w$ and $u$.
Let $f : \IR \to [0,1]$ be a smooth cutoff function with $f \equiv 1$ on $(-\infty, -2]$ and $f \equiv 0$ on $[-1, \infty)$.
Choose a sequence of positive numbers $\eps_i \to 0$ and set
\[ \nu_i (r) := f (\eps_i \log r ), \qquad \td{w}_i := w + \nu_i (d(p, \cdot))  (u-w). \]
Then for large $i$ the function $\td{w}_i$ satisfies (\ref{eq_w_std_form}) and we have $\td{w}_i = w$ near $\partial M$.
It remains to show that for we have $8 \triangle \td{w}_i - R \td{w}_i < 0$ for large $i$.
To see this, observe first that $\td{w}_i \to w$ in $C^0$.
Next we compute with $r := d(p, \cdot)$
\begin{multline*}
 \triangle \td{w}_i - \triangle w = \triangle \nu_i (r) (u-w) + 2 \nabla \nu_i \nabla (u-w) + \nu_i \triangle (u-w) \\
 \leq \nu_i'' (u-w) + \nu_i' \cdot \triangle r \cdot (u-w) + 2 |\nu'_i| \cdot |\nabla (u-w)| + \nu_i \triangle (u-w) \\
 \leq C \eps_i r^{-2} \cdot C r^2 + C \eps_i r^{-1} \cdot C r^{-1} \cdot C r^2 + C \eps_i r^{-1} \cdot C r + \nu_i \triangle (u-w).
\end{multline*}
It follows that
\[ \triangle \td{w}_i \leq C \eps_i + \nu_i \triangle u + (1-\nu_i ) \triangle w. \]
This implies that
\[ 8 \triangle \td{w}_i - R \td{w}_i \leq C \eps_i + \nu_i (8 \triangle u - R u) + (1-\nu_i) (8 \triangle w - R w) . \]
The sum of the second and third term is strictly negative and independent of $i$; so since $C \eps_i \to 0$, the right-hand side is strictly negative for large $i$.
\end{proof}

\begin{lemma}
\label{lem_thick_annulus_psc_conformal}
Let $M$ be a compact 3-manifold with boundary and consider a continuous family of Riemannian metrics $(g_s)_{s \in X}$ on $M$, where $X$ is a compact topological space.
Suppose that $(M, g_s)$ is PSC-conformal for all $s \in X$.

Consider a continuous family of embeddings $(\mu_s : B^3 (1) \to M)_{s \in X}$ and suppose that for every $s \in X$ the pullback metric $\mu^*_s g_s$ is  compatible with the standard spherical structure on $B^3(1)$.
Then there is a constant $0 < r_0 < 1$ such that for all $0 < r \leq r_0$ and $s \in X$ the Riemannian manifold $(M \setminus \mu_s (B^3 (r)), g_s)$ is PSC-conformal.
\end{lemma}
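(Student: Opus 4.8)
The idea is to remove a small round ball from a PSC-conformal manifold and still have a PSC-conformal manifold, with uniformity over the compact parameter space $X$. First I would invoke Lemma~\ref{Lem_w_std_form}: for a fixed $s$ and the center $p_s := \mu_s(0)$, choose a conformal factor $w_s$ satisfying Properties~\ref{prop_PSC_conformal_analytic_1}--\ref{prop_PSC_conformal_analytic_3} of Lemma~\ref{Lem_PSC_conformal_analytic} which, near $p_s$, has the form $w_s = w_s(p_s) - a_s \, d^2(p_s, \cdot)$. Since $\mu_s^* g_s$ is compatible with the standard spherical structure on $B^3(1)$, near $p_s$ the metric $g_s$ is rotationally symmetric, so in geodesic polar coordinates $g_s = dr^2 + \phi_s^2(r) g_{S^2}$ with $\phi_s(r) = r + O(r^3)$; hence $w_s^4 g_s$ is itself rotationally symmetric near $p_s$, and on a small punctured ball it is conformally equivalent (via the standard cylindrical coordinate change $r \mapsto -\log r$ type substitution, or directly) to a metric that one can cap off to look cylindrical near the new boundary sphere $\mu_s(\partial B^3(r))$. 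The point is that a small metric sphere in a rotationally symmetric PSC metric, made totally geodesic and round by a further conformal change supported in a collar, is exactly the boundary condition required in Definition~\ref{Def_PSC_conformal}; this is the same kind of argument used in the proof of Lemma~\ref{lem_CNA_SS_implies_PSC_conformal} and in Lemma~\ref{Lem_PSC_conformal_enlarge}.

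Concretely, for fixed $s$: pick $r_0^{(s)} > 0$ small enough that on $\mu_s(B^3(r_0^{(s)}))$ the metric $g_s$ is rotationally symmetric and $w_s$ has the quadratic form above, and such that the conformal factor can be modified on the collar $\mu_s(A^3(r, 2r))$ (for $r \le r_0^{(s)}/2$) to a positive smooth function $w_s'$ with $w_s' = w_s$ outside the collar, $8\triangle w_s' - R w_s' < 0$ everywhere on $M \setminus \mu_s(B^3(r))$, and with $(w_s')^4 g_s$ making the boundary sphere $\mu_s(\partial B^3(r))$ totally geodesic and round. The existence of such a $w_s'$ is a one-dimensional ODE/interpolation construction on the warped-product collar, entirely analogous to the smoothing step in the proof of Lemma~\ref{Lem_PSC_conformal_enlarge} (smoothing $r \mapsto (\tilde w f)(r)$ there) and the collar construction in the Claim inside Lemma~\ref{lem_CNA_SS_implies_PSC_conformal}. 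This shows $(M \setminus \mu_s(B^3(r)), g_s)$ is PSC-conformal for all $0 < r \le r_0^{(s)}$, for each individual $s$.

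To get a single $r_0$ working for all $s \in X$, I would combine this with the openness statement Lemma~\ref{Lem_PSC_conformal_open} together with a compactness argument. Fix $s_0 \in X$ and $r \le r_0^{(s_0)}$. The manifolds $(M \setminus \mu_s(B^3(r)), g_s)$ are all diffeomorphic (via an isotopy of $M$ carrying $\mu_{s_0}(B^3(r))$ to $\mu_s(B^3(r))$, which exists for $s$ near $s_0$ since $(\mu_s)$ is a continuous family of embeddings); pulling back, we get a continuous family of metrics on the fixed manifold $M \setminus \mu_{s_0}(B^3(r))$ whose boundary components are umbilic and homothetic to round spheres (because $\mu_s^* g_s$ is compatible with the standard spherical structure, metric spheres about the center are umbilic round spheres). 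Since PSC-conformality holds at $s_0$, Lemma~\ref{Lem_PSC_conformal_open} gives a neighborhood $N_{s_0} \ni s_0$ on which it persists, with a fixed choice of $r = r(s_0)$. Covering the compact space $X$ by finitely many such neighborhoods $N_{s_1}, \dots, N_{s_k}$ and setting $r_0 := \min_j r(s_j)$, we conclude that for all $0 < r \le r_0$ and all $s \in X$ the manifold $(M \setminus \mu_s(B^3(r)), g_s)$ is PSC-conformal. (Shrinking $r$ below a value for which PSC-conformality holds only helps, since one can always enlarge $Z$ back using Lemma~\ref{Lem_PSC_conformal_enlarge} with the spherical structure on the annulus $\mu_s(A^3(r, r'))$, so monotonicity in $r$ is not an issue.)

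\textbf{Main obstacle.} The genuinely technical part is the explicit collar construction producing $w_s'$: one must verify that a conformal change supported on a thin annulus in a rotationally symmetric PSC metric can simultaneously (i) keep the scalar curvature positive, i.e. $8\triangle w_s' - R w_s' < 0$, and (ii) arrange the new boundary sphere to be totally geodesic and isometric to a round $S^2$ — this is Properties~\ref{prop_PSC_conformal_analytic_1}--\ref{prop_PSC_conformal_analytic_3} of Lemma~\ref{Lem_PSC_conformal_analytic}, reduced to a warped-product ODE. The saving grace is that near the center the metric is genuinely rotationally symmetric and $w_s$ is the explicit quadratic, so the operator becomes a one-variable ODE and the interpolation is essentially the same computation already carried out in Lemmas~\ref{Lem_PSC_conformal_enlarge} and~\ref{lem_CNA_SS_implies_PSC_conformal}; I would cite or mildly adapt those rather than redo it in full. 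The compactness/uniformity layer is then routine given Lemma~\ref{Lem_PSC_conformal_open}.
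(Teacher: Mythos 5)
The overall structure of your proposal matches the paper's: reduce to a single $s$ via Lemma~\ref{Lem_PSC_conformal_open} and compactness of $X$, reduce to a single small $r_0$ via Lemma~\ref{Lem_PSC_conformal_enlarge}, normalize so that $\mu^*g$ is exactly $O(3)$-invariant near the origin, and invoke Lemma~\ref{Lem_w_std_form} to put $w$ in the quadratic form $w(p)-a\,d^2(p,\cdot)$ near $p=\mu(0)$. Up to this point your argument is essentially the paper's.

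The gap is in the cap-off step, which you explicitly flag as the main obstacle and then defer to an analogy with Lemmas~\ref{Lem_PSC_conformal_enlarge} and~\ref{lem_CNA_SS_implies_PSC_conformal}. That analogy does not carry the weight you ask of it. In the proof of Lemma~\ref{Lem_PSC_conformal_enlarge} the boundary conditions $(\td{w}f)(\pm L)=1$, $(\td{w}f)'(\pm L)=0$ hold by hypothesis at the endpoints of the gluing annulus, so only a soft interpolation is needed. In the Claim inside Lemma~\ref{lem_CNA_SS_implies_PSC_conformal} the collar metric is, after a limit/contradiction argument, $\varepsilon$-close to the exact round cylinder, so the required conformal factor is a small perturbation of a constant. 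Neither situation applies here. After the coordinate change, the metric $\mu^*(w^4 g)$ on a punctured ball is isometric to $\bigl(S^2\times(0,\infty),\ f^4(g_{S^2}+dt^2)\bigr)$ where one only knows that $f(t)\to 0$, $f'(t)\to 0$ as $t\to\infty$ and $8f''-2f<0$. No endpoint boundary condition is available, and the metric is not a priori close to the model cylinder at any scale. The crux — absent from your sketch — is to prove the existence of a finite $t_1$ with $f(t_1)<1$ and $2f'(t_1)>-f(t_1)$, from which one can then run a $\cosh/\sinh$ extension reaching $\td{f}(t_0)=1$, $\td{f}'(t_0)\ge 0$ while preserving $8\td{f}''-2\td{f}<0$. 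The paper obtains this $t_1$ by observing that otherwise $4(f')^2-f^2>0$ on a tail where $f'<0$, whence $(4(f')^2-f^2)'=(8f''-2f)f'>0$, so $4(f')^2-f^2$ is increasing and bounded away from zero, contradicting $f,f'\to 0$. Without this (or an equivalent) argument, the claim that one can simultaneously keep PSC and enforce a totally geodesic round boundary is not justified; it is not a consequence of the two lemmas you cite.
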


\begin{proof}
Due to the openness of the PSC-conformal condition from Lemma~\ref{Lem_PSC_conformal_open} and the fact that $X$ is compact, it suffices to prove the lemma for a single $s \in X$.
So let us write in the following $g = g_s$ and $\mu = \mu_s$.
In addition, by Lemma~\ref{Lem_PSC_conformal_enlarge}, it suffices to show that there is an $r_0 > 0$ such that $(M \setminus \mu (B^3 (0,r_0)), g)$ is PSC-conformal.
Using the exponential map, we may moreover assume that $\mu^* g$ is even invariant under the standard $O(3)$-action.

By Lemma~\ref{Lem_w_std_form} we may choose $w \in C^\infty (M)$ satisfying Properties~\ref{prop_PSC_conformal_analytic_1}--\ref{prop_PSC_conformal_analytic_3} of Lemma~\ref{Lem_PSC_conformal_analytic} and (\ref{eq_w_std_form}) for $p = \mu (0)$.
Then $( B^3 (1) \setminus \{ 0 \}, \mu_1^* (w^4 g))$ is isometric to 
\begin{equation} \label{eq_f4_cyl}
 \big( S^2 \times (0,\infty), f^4 ( g_{S^2} + dt^2  )  \big),
\end{equation}
where $f : (0, \infty) \to \IR_+$ is smooth with
\begin{equation} \label{eq_f_conditions_to_0}
 \lim_{t \to \infty} f (t) = \lim_{t \to \infty} f' (t) = 0, \qquad 8 f'' - 2 f < 0. 
\end{equation}
The last condition is equivalent to the statement that the metric (\ref{eq_f4_cyl}) has positive scalar curvature.

It remains to show that there is a number $t_0 > 0$ and a smooth function $\td{f} : (0, t_0] \to \IR_+$ such that 
\begin{enumerate}
\item $\td{f} = f$ near $0$, 
\item $8\td{f}'' - 2 \td{f} < 0$, 
\item $\td{f} (t_0) = 1$ and
\item $\td{f}'(t_0) = 0$.
\end{enumerate}
Due to a standard smoothing argument it suffices to construct $\td{f}$ to be piecewise smooth and with the property that $\frac{d}{dt^-} f \geq \frac{d}{dt^+} f$ at every non-smooth point.
Moreover, since every function with $\td{f}'(t_0') \geq 0$ can be continued by a constant function for $t \geq t_0'$, we may replace Property~(4) by $\td{f}' (t_0) \geq 0$.

Let us now construct $\td{f}$.
The conditions (\ref{eq_f_conditions_to_0}) imply that $2f' (t_1) > - f(t_1)$ and $f(t_1) < 1$ for some $t_1 > 0$, because otherwise we have for large $t$
\[ 4(f')^2 - f^2 > 0, \qquad (4(f')^2 - f^2)' = (8f'' - 2 f) f' > 0,\]
which contradicts the first two limits of (\ref{eq_f_conditions_to_0}).
Fix $t_1$ and $\delta > 0$ and set
\[ \td{f} (t) := \begin{cases} f(t) & \text{if $t \leq t_1$} \\ f(t_1) \cosh (\frac{t-t_1}{2+\delta}) + (2+\delta) f'(t_1) \sinh( \frac{t-t_1}{2+\delta}) & \text{if $t > t_1$} \end{cases}. \]
For sufficiently small $\delta$ we have $(2+\delta) f'(t_1) + f(t_1) > 0$ and therefore $\lim_{t \to \infty} \td{f}(t) = \infty$ and $\td{f}(t) > 0$ for all $t \geq t_1$.
Thus we can choose $t_0 > t_1$ such that $\td{f} (t_0) = 1$ and $\td{f}' (t_0) \geq 0$.
\end{proof}

\subsection{Extending symmetric metrics}
The following proposition will be used in the proof of Proposition~\ref{prop_extending}.
Its purpose will be to extend the domain of a family of metrics by a subset that is equipped with a family of spherical structures, while preserving the PSC-conformal condition.

\begin{proposition} \label{prop_extending_symmetric}
Consider a 3-manifold with boundary $M$ and a compact connected 3-di\-men\-sion\-al submanifold with boundary $Y \subset M$.
Let $(\SS^s)_{s \in D^n}$, $n \geq 0$, be a transversely continuous family of spherical structures defined on open subsets $Y \subset U^s \subset M$ such that $\partial Y$ is a union of regular fibers of $\SS^s$ for all $s \in D^n$.
Consider the following continuous families of metrics:
\begin{itemize}
\item $(g^1_{s})_{s \in D^n}$ on $M$
\item $(g^2_{s,t})_{s \in D^n, t \in [0,1]}$ on $\ov{M \setminus Y}$
\item $(g^3_{s,t})_{s \in \partial D^n, t \in [0,1]}$ on $M$
\end{itemize}
Assume that $g^1_s$, $g^2_{s,t}$ and $g^3_{s,t}$ are compatible with $\SS^s$ for all $s \in D^n$ or $\partial D^n$ and $t \in [0,1]$ and assume that the following compatiblity conditions hold:
\begin{enumerate}[label=(\roman*)]
\item \label{prop_lem_extending_symmetric_i} $g^1_s = g^2_{s, 0}$ on $\ov{M \setminus Y}$ for all $s \in D^n$.
\item \label{prop_lem_extending_symmetric_ii} $g^1_s = g^3_{s,0}$ on $M$ for all $s \in \partial D^n$.
\item \label{prop_lem_extending_symmetric_iii} $g^2_{s,t} = g^3_{s,t}$ on $\ov{M \setminus Y}$ for all $s \in \partial D^n$, $t \in [0,1]$.
\end{enumerate}
Then there is a continuous family of metrics $(h_{s,t})_{s \in D^n, t \in [0,1]}$ on $M$ such that $h_{s,t}$ is compatible with $\SS^s$ for all $s \in D^n$, $t \in [0,1]$ and such that:
\begin{enumerate}[label=(\alph*)]
\item \label{ass_lem_extending_symmetric_a} $h_{s,0} = g^1_s$ on $M$ for all $s \in D^n$.
\item \label{ass_lem_extending_symmetric_b} $h_{s,t} = g^2_{s,t}$ on $\ov{M \setminus Y}$ for all $s \in D^n$, $t \in [0,1]$.
\item \label{ass_lem_extending_symmetric_c} $h_{s,t} = g^3_{s,t}$ on $M$ for all $s \in \partial D^n$, $t \in [0,1]$.
\end{enumerate}
\end{proposition}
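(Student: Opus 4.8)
The plan is to reduce the statement, via Lemma~\ref{Lem_cont_fam_sph_struct}, to the case of a \emph{single} spherical structure, and then to exploit that the space of metrics compatible with a fixed spherical structure is convex. First, apply Lemma~\ref{Lem_cont_fam_sph_struct} to the family $(\SS^s)_{s\in D^n}$, using $(g^1_s)_{s\in D^n}$ as the compatible family of metrics, to obtain an open set $Y\subset V\subset M$ and a transversely continuous family of embeddings $(\omega_s:V\to M)_{s\in D^n}$ with $\omega_s(Y)=Y$ and such that the pullbacks $\omega_s^*\SS^s=:\SS$ do not depend on $s$. Since assertion~\ref{ass_lem_extending_symmetric_b} forces $h_{s,t}=g^2_{s,t}$ on $\ov{M\setminus Y}$, it suffices to construct the restriction $h_{s,t}|_Y$ and to arrange that it is infinitely tangent to $g^2_{s,t}$ along $\partial Y$; the family $g^2$ on $\ov{M\setminus Y}$ and the new family on $Y$ then glue to a smooth, transversely continuous, $\SS^s$-compatible family on $M$, which by the compatibility conditions \ref{prop_lem_extending_symmetric_i}--\ref{prop_lem_extending_symmetric_iii} satisfies \ref{ass_lem_extending_symmetric_a}--\ref{ass_lem_extending_symmetric_c}. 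Pulling everything back by the diffeomorphism $\omega_s|_Y$ (which preserves $\partial Y$), and using that $g^2_{s,t}$, being a smooth metric on the manifold with boundary $\ov{M\setminus Y}$, extends smoothly across $\partial Y$, this reduces the proposition to the following: with $\SS$ a fixed spherical structure on $Y$ and $\partial Y$ a union of regular fibers, construct a transversely continuous family $(h'_{s,t})_{s\in D^n,t\in[0,1]}$ of $\SS$-compatible metrics on $Y$ with $h'_{s,0}=\omega_s^*g^1_s$, with $h'_{s,t}=\omega_s^*g^3_{s,t}$ for $s\in\partial D^n$, and with $h'_{s,t}$ infinitely tangent to $\omega_s^*g^2_{s,t}$ along $\partial Y$.

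The structural fact driving the rest of the argument is that, by Lemma~\ref{lem_compatible_metric_general_form} near regular fibers (and as one checks similarly near singular fibers), the set of $\SS$-compatible metrics on $Y$ is a convex subset of the space of symmetric $2$-tensors: if $g_1,\dots,g_k$ are $\SS$-compatible and $w_1,\dots,w_k$ are nonnegative, $\SS$-fiber-constant functions with $\sum_i w_i\equiv 1$, then $\sum_i w_i g_i$ is again an $\SS$-compatible metric, because in the coordinates of (\ref{eq_compatible_form_g}) it is again of the displayed form. Similarly, $\SS$-compatible metrics --- and transversely continuous families of them --- can be extended from a closed collar of $\partial Y$ into a neighborhood inside $Y$ with prescribed infinite jet along $\partial Y$, by extending the coefficient functions $a,b,c_i$ while keeping $a,b>0$. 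Using this, I first fix a collar $C\cong\partial Y\times[0,2]\subset Y$ of $\partial Y$ consisting of regular spherical fibers $\partial Y\times\{r\}$, and build a transversely continuous family $(\wh{g}^2_{s,t})_{s\in D^n,t\in[0,1]}$ of $\SS$-compatible metrics on $C$ that is infinitely tangent to $\omega_s^*g^2_{s,t}$ along $\partial Y=\partial Y\times\{0\}$ and, in addition, \emph{equals} $\omega_s^*g^1_s$ on $C$ when $t=0$ and $\omega_s^*g^3_{s,t}$ on $C$ when $s\in\partial D^n$. This is possible because conditions \ref{prop_lem_extending_symmetric_i}--\ref{prop_lem_extending_symmetric_iii} make exactly these prescriptions mutually consistent: they agree on $\partial D^n\times\{0\}$ by \ref{prop_lem_extending_symmetric_ii}, and they are automatically infinitely tangent to $g^2$ along $\partial Y$ since $g^1_s$ and $g^3_{s,t}$ are smooth on $M$ and coincide with $g^2$ on $\ov{M\setminus Y}$, hence share all jets along $\partial Y$. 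The remaining freedom --- the values of $\wh{g}^2_{s,t}$ over the interior of $D^n\times[0,1]$ --- is then filled in by a routine extension argument: for each $(s,t)$ the admissible metrics on $C$ form a nonempty convex set, these sets admit a transversely continuous global selection, and $A:=(D^n\times\{0\})\cup(\partial D^n\times[0,1])$ is a deformation retract of $D^n\times[0,1]$.

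With $\wh{g}^2$ at hand, fix a retraction $\varrho:D^n\times[0,1]\to A$, let $G_{s,t}:=\omega_s^*g^1_s$ when $t=0$ and $G_{s,t}:=\omega_s^*g^3_{s,t}$ when $s\in\partial D^n$ (a well-defined transversely continuous family of $\SS$-compatible metrics on $Y$, parametrized by $A$, by \ref{prop_lem_extending_symmetric_ii}), and choose an $\SS$-fiber-constant cutoff $\chi:Y\to[0,1]$ with $\chi\equiv0$ near $\partial Y$ and $\chi\equiv1$ outside a compact subset of $C$. Define
\[
h'_{s,t}:=(1-\chi)\,\wh{g}^2_{s,t}+\chi\,G_{\varrho(s,t)}.
\]
Then $h'_{s,t}$ is everywhere a convex combination, with $\SS$-fiber-constant weights, of two $\SS$-compatible metrics on (a part of) $Y$, hence an $\SS$-compatible metric on $Y$; it is transversely continuous in $(s,t)$; near $\partial Y$ it equals $\wh{g}^2_{s,t}$, hence is infinitely tangent to $\omega_s^*g^2_{s,t}$ along $\partial Y$; and for $(s,t)\in A$ one has $\varrho(s,t)=(s,t)$ and $\wh{g}^2_{s,t}=G_{s,t}$ on all of $C$, so $h'_{s,t}=G_{s,t}$, giving $h'_{s,0}=\omega_s^*g^1_s$ and $h'_{s,t}=\omega_s^*g^3_{s,t}$ for $s\in\partial D^n$. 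Pushing forward by $\omega_s^{-1}$ on $Y$ and setting $h_{s,t}:=g^2_{s,t}$ on $\ov{M\setminus Y}$ yields the desired family. The step requiring the most care is the construction of $\wh{g}^2$: one must meet three kinds of boundary data at once --- the values at $t=0$, the values over $\partial D^n$, and infinite tangency to $g^2$ along $\partial Y$ --- and the whole point of hypotheses \ref{prop_lem_extending_symmetric_i}--\ref{prop_lem_extending_symmetric_iii} is that they make precisely these data compatible along the corners $\partial D^n\times\{0\}$ and $\partial Y$, after which convexity of the space of $\SS$-compatible metrics supplies the interpolation.
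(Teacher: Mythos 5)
Your argument is correct and follows the same overall blueprint as the paper's proof: reduce via Lemma~\ref{Lem_cont_fam_sph_struct} to a single fixed spherical structure $\SS$, then solve a jet-matching extension problem on a collar $C$ of $\partial Y$. You organize the finishing step differently, though. The paper absorbs the $(g^3_{s,t})$ constraint at the outset by identifying the pair $(D^n\times[0,1],\,\partial D^n\times[0,1]\cup D^n\times\{0\})$ with $(D^n\times[0,1],\,D^n\times\{0\})$, reduces to a disjoint union of model cylinders $S^2\times[0,1]$, and then linearizes completely by passing to the scalar coefficient functions $(\log a,\log b,c_1,c_2,c_3)$ of Lemma~\ref{lem_compatible_metric_general_form}, reducing to a scalar Seeley-type extension (Lemma~\ref{lem_extending_symmetric_simplified_functions}); positivity of $a,b$ is then automatic upon exponentiating. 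You instead retain both boundary constraints, retract the parameter domain onto $A=(D^n\times\{0\})\cup(\partial D^n\times[0,1])$, and interpolate using convexity of the space of $\SS$-compatible metrics under $\SS$-fiber-constant weights. This is a genuinely equivalent alternative and conceptually clean. Two remarks. First, you never actually need convexity near singular fibers (which you only gesture at): in your final formula $h'_{s,t}=(1-\chi)\wh g^2_{s,t}+\chi G_{\varrho(s,t)}$ the interpolation region $\{0<\chi<1\}$ lies in the collar of regular fibers, where convexity follows directly from the normal form (\ref{eq_compatible_form_g}), so that parenthetical can be deleted. Second, the construction of $\wh g^2_{s,t}$ is the real technical content and you are terser there than is comfortable: it is precisely here that the paper's passage to $(\log a,\log b,c_i)$ earns its keep, since it turns the admissible sets into honest affine translates of a single fixed linear space (coefficient jets vanishing at $\partial Y$), so the ``nonempty convex sets plus global selection plus retraction'' step becomes linear algebra together with Seeley, and positive-definiteness is recovered for free by exponentiating; staying at the level of symmetric $2$-tensors one has to track positive-definiteness through the selection and interpolation, which you acknowledge (``keeping $a,b>0$'') but do not carry out. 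Once $\wh g^2_{s,t}$ is constructed via the coefficient functions exactly as in the paper's Lemma~\ref{lem_extending_symmetric_simplified_functions}, your argument is complete.
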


The proof of this proposition will occupy the remainder of this subsection.
Our strategy will be to successively simplify the statement in a sequence of lemmas.

\begin{proof}
By Lemma~\ref{Lem_cont_fam_sph_struct} there is a connected open neighborhood $Y \subset V \subset M$ and a continuous family of embeddings $(\omega_s : V \to M)_{s \in D^n}$ such that $\omega_s (Y) = Y$ and such that the following holds: if $\SS^{\prime,s}$ denotes the pullback of $\SS^s$ via $\omega_s$, then $V$ is a union of fibers of $\SS^{\prime,s}$ and $\SS^{\prime,s}$ restricted to $V$ is constant in $s$.
After replacing $M$ by $V$, $(\SS^s)_{s \in D^n}$ by $(\SS^{\prime,s})_{s \in D^n}$, $(g^1_s)_{s \in D^n}$ by $(\omega_s^* g^1_s)_{s \in D^n}$ etc., we may assume without loss of generality that $\domain (\SS^s) = M$ and that $\SS^s =: \SS$ is constant in $s$.

Since $(D^n \times [0,1], \partial D^n \times [0,1] \cup D^n \times \{ 0 \} )$ is homeomorphic $(D^n \times [0,1], D^n \times \{ 0 \})$, we can simplify the proposition further by removing the family $(g^3_{s,t})_{s \in \partial D^n, t \in [0,1]}$ as well as Assumptions~\ref{prop_lem_extending_symmetric_ii}, \ref{prop_lem_extending_symmetric_iii} and Assertion~\ref{ass_lem_extending_symmetric_c}.

If $\partial Y = \emptyset$, then $M \setminus Y = \emptyset$, so Assumption~\ref{prop_lem_extending_symmetric_i} and Assertion~\ref{ass_lem_extending_symmetric_b} are vacuous and we can simply set $h_{s,t} := g^1_s$.
If $\partial Y \neq \emptyset$, then $Y$ is diffeomorphic to one of the following manifolds (see Lemma~\ref{lem_spherical_struct_classification}):
\[ S^2 \times [0,1], \; (S^2 \times [-1,1])/ \IZ_2, \; D^3. \]
By removing collar neighborhoods of $\partial Y$ from $Y$, we can construct a compact 3-manifold with boundary $Z \subset \Int Y$ that is a union of fibers of $\SS$ such that $Y' := \ov{Y \setminus Z}$ is diffeomorphic to a disjoint union of copies of $S^2 \times I$.
Define $g^{\prime,2}_{s,t} := g^2_{s,t}$ on $\ov{M \setminus Y}$ and $g^{\prime,2}_{s,t} := g^1_{s,0}$ on $Z$.
Then $(g^{\prime,2}_{s,t})_{s \in D^n, t \in [0,1]}$ is defined on $\ov{M \setminus Y'}$.
By replacing $Y$ by $Y'$ and $(g^{2}_{s,t})_{s \in D^n, t \in [0,1]}$ by $(g^{\prime,2}_{s,t})_{s \in D^n, t \in [0,1]}$, we can reduce the proposition to the case in which $Y$ is diffeomorphic to a disjoint union of copies of $S^2 \times [0,1]$.
Moreover, due to Assertion~\ref{ass_lem_extending_symmetric_b}, we can handle each component of $Y$ separately.
Therefore the proposition can be reduced to Lemma~\ref{lem_extending_symmetric_simplified} below.
\end{proof}

\begin{lemma} \label{lem_extending_symmetric_simplified}
Let $n \geq 0$ and let  $(M, Y) := ( S^2 \times (-2, 2), S^2 \times [-1,1])$.
Let $\SS$ be the standard spherical structure on $M$ and consider continuous families of metrics:
\begin{itemize}
\item $(g^1_s)_{s \in D^n}$ on $M$,
\item $(g^2_{s,t})_{s \in  D^n, t \in [0,1]}$ on $\ov{M \setminus Y}$
\end{itemize}
that are all compatible with $\SS$ and that satisfy the compatibility condition $g^1_s = g^2_{s,0}$ on $\ov{M \setminus Y}$ for all $s \in D^n$.
Then there is a continuous family of metrics $(h_{s,t})_{s \in D^n, t \in [0,1]}$ on $M$ such that:
\begin{enumerate}[label=(\alph*)]
\item $h_{s,t}$ is compatible with $\SS$  for all $s \in D^n$ and $t \in [0,1]$.
\item $h_{s,0} = g^1_s$ on $M$ for all $s \in D^n$,
\item $h_{s,t} = g^2_{s,t}$ on $\ov{M \setminus Y}$ for all $s \in D^n$ and $t \in [0,1]$.
\end{enumerate}
\end{lemma}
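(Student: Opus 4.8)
\textbf{Proof plan for Lemma~\ref{lem_extending_symmetric_simplified}.}

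The plan is to reduce everything to the warped-product description of metrics compatible with the standard spherical structure on $S^2 \times (-2,2)$ given by Lemma~\ref{lem_compatible_metric_general_form}: every such metric has the form $a^2(r)\, g_{S^2} + b^2(r)\, dr^2 + \sum_{i=1}^3 c_i(r)(dr\,\xi_i + \xi_i\, dr)$ for functions $a,b > 0$ and $c_1,c_2,c_3$. Thus a compatible metric is equivalent to a smooth map from $(-2,2)$ into the (contractible, convex-in-the-first-two-slots) parameter space $P := \{(a,b,c_1,c_2,c_3) : a > 0,\ b > 0\} \subset \IR^5$. Under this identification the family $(g^1_s)_{s \in D^n}$ becomes a continuous family of maps $\gamma^1_s : (-2,2) \to P$, and $(g^2_{s,t})_{s \in D^n,\,t\in[0,1]}$ becomes a continuous family of maps $\gamma^2_{s,t}$ defined on $(-2,-1] \cup [1,2)$, with the matching condition $\gamma^1_s = \gamma^2_{s,0}$ on the overlap. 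The task is to produce a continuous family $\gamma_{s,t} : (-2,2) \to P$ with $\gamma_{s,0} = \gamma^1_s$ everywhere and $\gamma_{s,t} = \gamma^2_{s,t}$ on $(-2,-1]\cup[1,2)$, smoothly in $r$ and continuously (in $C^\infty_{\loc}$) in $(s,t)$.

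First I would fix a smooth cutoff function $\chi : (-2,2) \to [0,1]$ with $\chi \equiv 1$ on $(-2,-1]\cup[1,2)$ and $\chi \equiv 0$ on $[-1/2,1/2]$, say, chosen so that $\chi$ depends only on $r$ (hence automatically respects the spherical structure). The natural guess is the interpolation $\gamma_{s,t}(r) := \gamma^1_s(r) + \chi(r)\,\bigl(\gamma^2_{s,t}(r) - \gamma^2_{s,0}(r)\bigr)$ on the region where $\gamma^2$ is defined, and $\gamma_{s,t}(r) := \gamma^1_s(r)$ where it is not; since $\gamma^2_{s,0} = \gamma^1_s$ on the overlap, these agree on $\{\chi > 0\} \cap \{\chi < 1\}$ modulo the fact that $\chi < 1$ forces us into the region where $\gamma^2$ is defined — more carefully, one writes $\gamma_{s,t}(r) := \gamma^1_s(r) + \chi(r)\,(\gamma^2_{s,t}(r) - \gamma^1_s(r))$ on $(-2,-1]\cup[1,2)$ (interpreting $\gamma^2_{s,t} - \gamma^1_s$ as the difference, which vanishes at $t=0$), and $\gamma_{s,t}(r) := \gamma^1_s(r)$ on $[-1,1]$; these patch smoothly because $\chi \equiv 1$ near $r = \pm 1$ means... wait, that is backwards. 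The correct setup is $\chi \equiv 0$ on $[-1,1]$ and $\chi \equiv 1$ outside $(-3/2,3/2)$, say; then on $(-2,-1]\cup[1,2)$ set $\gamma_{s,t} := \gamma^1_s + \chi(\gamma^2_{s,t} - \gamma^1_s)$, which equals $\gamma^1_s$ near $r=\pm1$ (where $\chi = 0$) and equals $\gamma^2_{s,t}$ near $r = \pm 2$ (where $\chi=1$), and on $[-1,1]$ set $\gamma_{s,t} := \gamma^1_s$; the two definitions agree on the overlap annuli $[-1,-1+\epsilon]$ and $[1-\epsilon,1]$ since $\chi$ vanishes there.

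The only point requiring care is that $\gamma_{s,t}$ must land in $P$, i.e. the convex combination of the $a$- and $b$-components must stay positive; but $P$ is convex in those coordinates (it is a product of $(0,\infty)$ with $(0,\infty)$ with $\IR^3$), and the interpolation above is, for each fixed $r$, an affine combination of $\gamma^1_s(r)$ and $\gamma^2_{s,t}(r)$ with coefficients $1-\chi(r)$ and $\chi(r)$ in $[0,1]$ — so positivity of the warping functions is automatic. Translating back through Lemma~\ref{lem_compatible_metric_general_form}, the family $(h_{s,t})$ so obtained is compatible with $\SS$ for every $(s,t)$, satisfies $h_{s,0} = g^1_s$ (since $\gamma^2_{s,0} = \gamma^1_s$, so the correction term vanishes), and satisfies $h_{s,t} = g^2_{s,t}$ on a neighborhood of $\partial M$ contained in $\ov{M \setminus Y} = S^2 \times ((-2,-1]\cup[1,2))$ (since $\chi \equiv 1$ there near $r = \pm 2$) — one last adjustment of the support of $\chi$ so that $\{\chi = 1\} \supset (-2,-1]\cup[1,2)$ is exactly what is wanted, i.e. take $\chi \equiv 1$ on $(-2,-1]\cup[1,2)$ and $\chi \equiv 0$ on $[-1/2,1/2]$, and note that $\gamma^1_s$ and $\gamma^2_{s,t}$ disagree only on $\{\chi = 1\}$, where we use $\gamma^2$, so there is in fact no interpolation region at all and the gluing is even simpler: $h_{s,t} := g^2_{s,t}$ on $\ov{M \setminus Y}$, $h_{s,t} := g^1_s$ on $Y$, which is consistent on the overlap $\partial Y$ by the compatibility hypothesis at $t$... no — the hypothesis only gives agreement at $t = 0$. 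So the cutoff genuinely is needed, but it can be placed entirely inside $\ov{M\setminus Y}$: choose $\chi$ supported in $(-2,-1]\cup[1,2)$ with $\chi \equiv 1$ near $r = \pm 2$ and $\chi \equiv 0$ near $r = \pm 1$, set $h_{s,t} := \gamma^1_s + \chi\cdot(\gamma^2_{s,t} - \gamma^1_s)$ on $\ov{M\setminus Y}$ and $h_{s,t} := g^1_s$ on $Y$. I expect the main (and essentially only) obstacle to be bookkeeping: making sure the cutoff is positioned so that Assertion (c) ($h_{s,t} = g^2_{s,t}$ on all of $\ov{M\setminus Y}$) is met rather than merely near the boundary — this forces $\chi \equiv 1$ on all of $\ov{M\setminus Y}$, which reintroduces the $t$-dependence mismatch at $r = \pm 1$; the resolution is to first shrink $Y$ slightly (replace $[-1,1]$ by $[-1-\epsilon,1+\epsilon]$ as a collar buffer, exactly as in the reduction preceding this lemma) so that the region where $g^2_{s,t}$ must be matched is strictly interior to where $\chi \equiv 1$, and the cutoff lives in the thin collar $S^2\times(([-1-\epsilon,-1])\cup([1,1+\epsilon]))$. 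With that adjustment the construction goes through verbatim and all transverse-continuity claims follow since $\chi$ is fixed and the operations ($+$, scalar multiplication, restriction) are continuous in $C^\infty_{\loc}$.
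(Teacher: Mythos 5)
Your reduction to the warped-product description via Lemma~\ref{lem_compatible_metric_general_form} is the paper's reduction as well, and the idea of a cutoff-in-$r$ interpolation is in the same spirit. But there is a genuine gap at the step you flag as ``bookkeeping,'' and the fix you propose does not actually close it.

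Assertion~(c) requires $h_{s,t} = g^2_{s,t}$ on \emph{all} of $\ov{M\setminus Y} = S^2 \times ((-2,-1]\cup[1,2))$, including the boundary fibers $r = \pm 1$. In your interpolation $h_{s,t} = g^1_s + \chi\cdot(g^2_{s,t} - g^1_s)$ this forces $\chi \equiv 1$ on the entire set $(-2,-1]\cup[1,2)$, so the transition zone $\{0 < \chi < 1\}$ must lie strictly inside $(-1,1) = Y$ --- precisely where $g^2_{s,t}$ is not defined. The ``collar buffer'' you describe ($\chi \equiv 1$ on $(-2,-1-\epsilon]\cup[1+\epsilon,2)$, transition in $[-1-\epsilon,-1]\cup[1,1+\epsilon]$) has the inclusion backwards: the set $\{\chi\equiv1\}$ you construct is a \emph{proper subset} of $\ov{M\setminus Y}$, not a neighborhood of it, so $h_{s,t}$ agrees with $g^2_{s,t}$ only away from $r = \pm 1$ and assertion~(c) fails on the collar. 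Replacing $[-1,1]$ by $[-1-\epsilon,1+\epsilon]$ enlarges $Y$ rather than shrinking it and yields a strictly weaker conclusion than the lemma asserts.

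What is missing is an \emph{extension} step: before you can place the cutoff transition inside $Y$, you must smoothly extend the family $(g^2_{s,t})$ (equivalently the warping functions, or their differences from the $g^1_s$-data) from $(-2,-1]\cup[1,2)$ to a slightly larger open set such as $(-2,-.8)\cup(.8,2)$, continuously in $(s,t)$ and so that the extension still vanishes at $t=0$. This is not a convexity or positivity issue; it is a nontrivial smooth extension across a boundary with all derivatives matched. The paper handles it by first normalizing to $f^1_s \equiv 0$ (via $\log a, \log b, c_i$, which is slightly cleaner than relying on convexity of $(0,\infty)^2\times\IR^3$), then invoking Seeley's extension theorem \cite{Seeley1964} to push $f^2_{s,t}$ inward, and only then multiplying by a cutoff with $\eta\equiv 1$ on $(-2,-1]\cup[1,2)$ and $\eta\equiv 0$ on $[-.9,.9]$. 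Your proposal is the paper's argument minus this extension theorem, and without it the construction cannot simultaneously satisfy (b) and (c).
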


\begin{proof}
By Lemma~\ref{lem_compatible_metric_general_form} the metrics $g^1_s$ and $g^2_{s,t}$ are of the form
\[ a^2(r) g_{S^2} + b^2(r) dr^2 + \sum_{i=1}^3 c_i (r)  (dr \, \xi_i + \xi_i \, dr), \]
where $a,b, c_i$ are smooth functions and $a,b > 0$.
So by considering the functions $\log a, \log b, c_i$,  the lemma can be reduced to Lemma~\ref{lem_extending_symmetric_simplified_functions} below.
\end{proof}

\begin{lemma} \label{lem_extending_symmetric_simplified_functions}
Let $n \geq 0$ and consider continuous families of functions:
\begin{itemize}
\item $(f^1_s \in C^\infty ((-2,2)))_{s \in D^n}$,
\item $(f^2_{s,t} \in C^\infty ( (-2,-1] \cup [1,2))_{s \in  D^n, t \in [0,1]}$
\end{itemize}
that satisfy the compatibility condition $f^1_s = f^2_{s,0}$ on $(-2,-1] \cup [1,2)$ for all $s \in D^n$.
Then there is a continuous family of functions $(f^3_{s,t}\in C^\infty ((-2,2)))_{s \in D^n, t \in [0,1]}$ such that:
\begin{enumerate}[label=(\alph*)]
\item $f^3_{s,0} = f^1_s$ on $(-2,2)$ for all $s \in D^n$,
\item $f^3_{s,t} = f^2_{s,t}$ on $(-2,-1] \cup [1,2)$ for all $s \in D^n$ and $t \in [0,1]$.
\end{enumerate}
\end{lemma}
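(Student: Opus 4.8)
The plan is to reduce the statement to a standard smooth--extension result. First I would extend the family $(f^2_{s,t})$ off its domain. The restriction map $C^\infty((-2,2)) \to C^\infty\big((-2,-1]\cup[1,2)\big)$ admits a continuous linear section: this is a local assertion at the two interior boundary points $r=1$ and $r=-1$ (near the ends $r=\pm 2$ a function on $(-2,-1]\cup[1,2)$ is already smooth on an open set), so it follows from Seeley's extension theorem applied separately at $r=1$ and $r=-1$; multiplying each local extension by a cutoff supported near the corresponding boundary point and summing produces the section. Applying this section to each $f^2_{s,t}$ yields a continuous family $(\widehat f^2_{s,t})_{s\in D^n,\,t\in[0,1]}$ of functions in $C^\infty((-2,2))$ with $\widehat f^2_{s,t} = f^2_{s,t}$ on $(-2,-1]\cup[1,2)$; transverse continuity of the family is preserved because applying a fixed continuous linear operator takes continuous families in the $C^\infty_{\loc}$--topology to continuous families.

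Next I would correct $\widehat f^2$ by a term supported in the middle interval. Set
\[
 f^3_{s,t} := \widehat f^2_{s,t} + \big( f^1_s - \widehat f^2_{s,0} \big).
\]
The function $d_s := f^1_s - \widehat f^2_{s,0}$ lies in $C^\infty((-2,2))$, and it vanishes on $(-2,-1]\cup[1,2)$ because there $\widehat f^2_{s,0} = f^2_{s,0} = f^1_s$ by the compatibility hypothesis; hence $(s,t)\mapsto f^3_{s,t}$ is again a continuous family of smooth functions on $(-2,2)$ (it is a sum of two continuous families, the second viewed as independent of $t$). For $t=0$ we get $f^3_{s,0} = \widehat f^2_{s,0} + f^1_s - \widehat f^2_{s,0} = f^1_s$, which is (a); and on $(-2,-1]\cup[1,2)$ we have $d_s\equiv 0$, so $f^3_{s,t} = \widehat f^2_{s,t} = f^2_{s,t}$, which is (b).

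The only nontrivial input is the existence of the continuous linear extension operator in the first step; everything else is bookkeeping with the $C^\infty_{\loc}$--topology. I expect the \emph{main obstacle}, such as it is, to be purely expository: making precise that the extension can be localized near $r=\pm 1$ and glued to a single function on $(-2,2)$ while retaining transverse continuity in $(s,t)$ — this is entirely routine. Note that no cutoff in the parameter $t$ is needed, and no assumption on the behaviour of $f^2_{s,t}$ for $t$ near $1$ is used, which is why the construction is this simple.
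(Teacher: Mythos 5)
Your proof is correct, and it uses the same essential ingredient as the paper's own proof, namely Seeley's extension theorem to produce a continuous linear extension of the boundary data. The paper first reduces to the case $f^1_s\equiv 0$ (by subtracting $f^1_s$ from $f^2_{s,t}$), then extends $f^2$ via Seeley to the slightly larger domain $(-2,-0.8)\cup(0.8,2)$ --- noting that linearity of Seeley's operator forces the extension to vanish at $t=0$ --- and finally multiplies by a cutoff $\eta$ that equals $1$ on $(-2,-1]\cup[1,2)$ and vanishes on $[-0.9,0.9]$. You instead extend $f^2$ to all of $(-2,2)$ by gluing two local Seeley extensions with cutoffs, then add the correction $d_s=f^1_s-\widehat f^2_{s,0}$ which vanishes on the overlap. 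The arrangement of the pieces is different (correction term vs.\ normalization plus cutoff), but the intellectual content is the same and both are equally valid; the paper's version avoids having to argue about a global section of the restriction map, while yours avoids the reduction step and the observation that Seeley's operator preserves the normalization $f^2_{s,0}\equiv 0$.
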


\begin{proof}
It suffices to prove the lemma in the case in which $f^1_{s,t} \equiv 0$, because after applying the lemma to the functions $\td{f}^1_s :\equiv 0$ and $\td{f}^2_{s,t} := f^2_{s,t} - f^1_s$, resulting in a family of functions $\td{f}^3_{s,t}$, we may set $f^3_{s,t} := \td{f}^3_{s,t} + f^1_s$.
So assume in the following that $f^1_{s,t} \equiv 0$ and note that this implies $f^2_{s,0} \equiv 0$.
By Seeley's Theorem \cite{Seeley1964}, we may extend the family $(f^2_{s,t})$ to a continuous family $(\ov{f}^2_{s,t} \in C^\infty ( (-2,-.8) \cup (.8,2))_{s \in  D^n, t \in [0,1]}$ such that $\ov{f}^2_{s,0} \equiv 0$ for all $s \in D^n$.
Let $\eta \in C^\infty ((-2,2))$ be a cutoff function such that $\eta \equiv 1$ on $(-2,-1] \cup [1,2)$ and $\eta \equiv 0$ on $[-.9,.9]$.
Then $f^3_{s,t} := \eta \ov{f}^2_{s,t}$ has the desired properties.
\end{proof}

\subsection{Extending symmetric metrics on the round sphere}
The following proposition is similar in spirit to Proposition~\ref{prop_extending_symmetric} and will also be used in the proof of Proposition~\ref{prop_extending}.
It concerns deformations of metrics compatible with a family of spherical structures $(\SS^s)$ if the starting metric is the round sphere or an isometric quotient of it.
In this case the spherical structures $\SS^s$ are not uniquely determined by the metric.
Due to this ambiguity, the spherical structure $\SS^s$ may not be defined for certain parameters $s$; in this case we will require the associated metrics to be multiples of a fixed round metric.

\begin{proposition} \label{prop_exten_round}
Let $(M, g^*)$ be a compact 3-manifold of constant sectional curvature $1$ and let $\Delta^n$ be the standard $n$-simplex for $n \geq 0$.
Consider the following data:
\begin{itemize}
\item a continuous function $\lambda : \Delta^n \to \IR_+$,
\item a continuous family of metrics $(k_{s,t})_{s \in \partial \Delta^n, t \in [0,1]}$ on $M$
\item an open subset $A \subset \Delta^n$,
\item a closed subset $E \subset \partial \Delta^n$ such that $E \subset A$
\item a transversely continuous family of spherical structures $(\SS^s)_{s \in A}$ on $M$.
\end{itemize}
Assume that
\begin{enumerate}[label=(\roman*)]
\item $k_{s,0} = \lambda^2 (s) g^*$ for all $s \in \partial \Delta^n$
\item For all $s \in A$ the metric $g^*$ is compatible with $\SS^s$.
\item \label{prop_exten_round_iii} For all $s \in E$ and $t \in [0,1]$ the metric $k_{s,t}$ is compatible with $\SS^s$.
\item For all $s \in \partial \Delta^n \setminus E$ and $t \in [0,1]$ the metric $k_{s,t}$ is a multiple of $g^*$.
\end{enumerate}
Then there is a continuous family of metrics $(h_{s,t})_{s \in \Delta^n, t \in [0,1]}$ on $M$ and a closed subset $E' \subset \Delta^n$ such that:
\begin{enumerate}[label=(\alph*)]
\item  \label{ass_exten_round_aa} $h_{s,0} = \lambda^2 (s) g^*$ for all $s \in \Delta^n$.
\item \label{ass_exten_round_a} $h_{s,t} = k_{s,t}$ for all $s \in \partial \Delta^n$, $t \in [0,1]$.
\item \label{ass_exten_round_b} $E' \subset A$.
\item \label{ass_exten_round_c} For any $s \in A$ and $t \in [0,1]$ the metric $h_{s,t}$ is compatible with $\SS^s$.
\item \label{ass_exten_round_d} For all $s \in  \Delta^n \setminus E'$ and $t \in [0,1]$ the metric $h_{s,t}$ is a multiple of $g^*$.
\end{enumerate}
\end{proposition}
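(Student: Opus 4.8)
\emph{Overview.} The plan is to normalize the scale, then to carve out a closed neighborhood $E'$ of $E$ inside $A$ on which the metrics will stay compatible with the spherical structures, while making all metrics plain multiples of $g^*$ off $E'$; the two pieces are glued by a convex combination of metrics, which is legitimate because multiples of $g^*$ are compatible with every $\SS^s$, $s\in A$ (by (ii)), and, for a \emph{fixed} spherical structure, the set of compatible metrics is convex (the local warped--product description of Lemma~\ref{lem_compatible_metric_general_form} makes this manifest near regular fibers, and the same holds near singular fibers by passing to local covers). As a preliminary normalization, replacing $(k_{s,t})$ by $(\lambda^{-2}(s)k_{s,t})$ and rescaling the output at the end by $\lambda^2(s)$ --- an operation preserving both compatibility with a spherical structure and the property of being a multiple of $g^*$ --- we may assume $\lambda\equiv 1$, so $k_{s,0}=g^*$ on $\partial\Delta^n$.

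\emph{Choice of $E'$ and the conformal region.} By Urysohn's lemma pick a continuous $\chi:\Delta^n\to[0,1]$ with $\chi\equiv 0$ on $E$ and $\chi\equiv 1$ on $\Delta^n\setminus A$, and set $E':=\{\chi\le\tfrac12\}$; this is closed and $E'\subset A$, and it will be the subset claimed in the proposition. On $\{\chi\ge\tfrac14\}$ the function $\chi$ is positive, hence this set is disjoint from $E$, so by (iv) the datum $k_{s,t}$ on $\big(\{\chi\ge\tfrac14\}\cap\partial\Delta^n\big)\times[0,1]$ is a positive multiple $f(s,t)\,g^*$ of $g^*$; together with the value $f\equiv 1$ on $\{\chi\ge\tfrac14\}\times\{0\}$ this defines a continuous positive function on a closed subset of $\{\chi\ge\tfrac14\}\times[0,1]$ (the two prescriptions agree where they overlap, by (i)), which we extend by Tietze's theorem to a continuous $f:\{\chi\ge\tfrac14\}\times[0,1]\to\IR_+$, still $\equiv 1$ at $t=0$. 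Put $h^{(1)}_{s,t}:=f(s,t)\,g^*$, a transversely continuous family of multiples of $g^*$ on $\{\chi\ge\tfrac14\}\times[0,1]$ agreeing with the prescribed data on $\partial\Delta^n$ and at $t=0$.

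\emph{The symmetric region.} Over $E'\times[0,1]$ we must extend the data prescribed on $(E'\times\{0\})\cup\big((E'\cap\partial\Delta^n)\times[0,1]\big)$ --- which is everywhere compatible with $\SS^s$, by (iii) on $E$ and because it is a multiple of $g^*$ elsewhere --- to a family of $\SS^s$-compatible metrics. Cover the compact set $E'$ by finitely many closed $n$-disks $D_1,\dots,D_m\subset A$, chosen so that each $D_i\cap\partial\Delta^n$ is either empty or a tame $(n-1)$-disk. Over each $D_i$, Lemma~\ref{Lem_cont_fam_sph_struct} (applied with $Y=M$ and the compatible family $g^*$) provides a transversely continuous family of diffeomorphisms $(\omega^i_s:M\to M)_{s\in D_i}$ with $(\omega^i_s)^*\SS^s=:\SS^i_0$ independent of $s$. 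Since the pair $\big(D_i\times[0,1],\ (D_i\times\{0\})\cup((D_i\cap\partial\Delta^n)\times[0,1])\big)$ is homeomorphic to $(D_i\times[0,1],D_i\times\{0\})$ --- the elementary fact used in the proof of Proposition~\ref{prop_extending_symmetric} --- there is a continuous self-map $\Psi_i$ of $D_i\times[0,1]$ restricting to the identity on, and with image equal to, that subset. Pulling the prescribed data back by $\omega^i_s$, precomposing with $\Psi_i$, and pushing forward again yields a transversely continuous family $(h^{(2),i}_{s,t})$ on $D_i\times[0,1]$ that is compatible with the actual structure $\SS^s$ (because $(\omega^i_s)_*\SS^i_0=\SS^s$) and agrees with the prescribed data on that subset. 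Choosing a partition of unity $\{\gamma_i\}$ on $E'$ subordinate to $\{D_i\}$, the convex combination $h^{(2)}_{s,t}:=\sum_i\gamma_i(s)\,h^{(2),i}_{s,t}$ is positive definite, transversely continuous, compatible with $\SS^s$ (a convex combination of $\SS^s$-compatible metrics, for the \emph{same} $s$), and equal to the prescribed data on $(E'\times\{0\})\cup\big((E'\cap\partial\Delta^n)\times[0,1]\big)$.

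\emph{Gluing, verification, and the main difficulty.} Pick a continuous $\tau:\Delta^n\to[0,1]$ with $\tau\equiv 1$ on $\{\chi\le\tfrac14\}$ and $\tau\equiv 0$ on $\{\chi\ge\tfrac12\}$, and set $h_{s,t}:=h^{(2)}_{s,t}$ where $\chi(s)\le\tfrac14$, $h_{s,t}:=h^{(1)}_{s,t}$ where $\chi(s)\ge\tfrac12$, and $h_{s,t}:=\tau(s)\,h^{(2)}_{s,t}+(1-\tau(s))\,h^{(1)}_{s,t}$ where $\tfrac14\le\chi(s)\le\tfrac12$; these agree on the overlaps. Then $h_{s,0}=g^*=\lambda^2(s)g^*$ everywhere (both $h^{(1)}$ and $h^{(2)}$ equal $g^*$ at $t=0$); $h_{s,t}=k_{s,t}$ on $\partial\Delta^n\times[0,1]$ (both $h^{(1)}$ and $h^{(2)}$ do there); for $s\in A$ the metric $h_{s,t}$ is a convex combination of the $\SS^s$-compatible metrics $h^{(2)}_{s,t}$ and $h^{(1)}_{s,t}$ (the latter a multiple of $g^*$, hence $\SS^s$-compatible), so it is compatible with $\SS^s$; and for $s\notin E'=\{\chi\le\tfrac12\}$ one has $\tau(s)=0$, so $h_{s,t}=h^{(1)}_{s,t}$ is a multiple of $g^*$. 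Thus $(h_{s,t})_{s\in\Delta^n,t\in[0,1]}$ and $E'$ satisfy (a)--(e). I expect the genuine obstacle to be exactly what forces this two-region structure: the prescribed boundary metrics are adapted to the boundary spherical structures $\SS^s$, which vary and are only defined over $A$, so one cannot transport them radially into the simplex; this is circumvented by trivializing $\SS^s$ over the compact neighborhood $E'$ of $E$ in $A$ via Lemma~\ref{Lem_cont_fam_sph_struct}, handling the possibly nontrivial topology of $E'$ by a finite disk cover with convex averaging, and using multiples of $g^*$ everywhere else. The remaining bookkeeping --- keeping $E'$ strictly larger than $\supp\tau$, and checking that the Tietze extension of the conformal factor is continuous up to the interface $\{\chi=\tfrac14\}$ --- is routine.
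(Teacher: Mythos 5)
Your argument is correct, but it takes a genuinely different route from the paper's. The paper first performs a simplicial refinement of $\Delta^n$ fine enough that every subsimplex is either contained in $A$ or disjoint from $E$; it then does an induction over the skeleta of that refinement, so that each simplex falls into one of the two pure cases $E=\emptyset$ (extend the conformal factor only) or $A=\Delta^n$ (apply Proposition~\ref{prop_extending_symmetric} with $Y=M$, so that $\ov{M\setminus Y}=\emptyset$). You instead work directly over $\Delta^n$: a Urysohn function $\chi$ separating $E$ from $\Delta^n\setminus A$ defines $E':=\{\chi\le 1/2\}$, and you build the answer over the two overlapping regions $\{\chi\ge 1/4\}$ (pure multiples of $g^*$, via a Tietze-extended conformal factor) and $E'$ (via trivializing $\SS^s$ over a finite disk cover of $E'$ inside $A$ using Lemma~\ref{Lem_cont_fam_sph_struct}), glued by a convex combination in $s$. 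The ingredient your proof relies on and the paper does not state explicitly is the convexity of the set of metrics compatible with a \emph{fixed} spherical structure; this is correct: near a regular fiber it is immediate from Lemma~\ref{lem_compatible_metric_general_form}, since the admissible class (\ref{eq_compatible_form_g}) is parametrized by $(a^2,b^2,c_i)$ and is manifestly closed under convex combination; near a singular fiber $\approx \IR P^2$ it follows by passing to the local double cover; and near a point singular fiber it follows because a smooth metric on a ball that is compatible on the punctured ball is compatible across the puncture (a compatible isometric local $O(3)$-action on the punctured ball extends by continuity as isometries of the smooth completion). What the paper's route buys is that each case is simpler and the only place where $\SS^s$ is trivialized is inside the already-proved Proposition~\ref{prop_extending_symmetric}; what yours buys is a direct construction with an explicit $E'$, avoiding the simplicial subdivision and the skeleton induction. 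The only point requiring some care is your finite cover of $E'$ by disks whose traces on $\partial\Delta^n$ are tame --- e.g.\ take disks of the form $D^{n-1}\times[0,\epsilon]$ in a collar of $\partial\Delta^n$ and ordinary balls in the interior --- but this is the same topological observation the paper itself uses when it reduces Proposition~\ref{prop_extending_symmetric} by noting that $(D^n\times[0,1],\ \partial D^n\times[0,1]\cup D^n\times\{0\})$ is homeomorphic to $(D^n\times[0,1],\ D^n\times\{0\})$, and is routine.
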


Note that if $M \not\approx S^3, \IR P^3$, then $E= A = \emptyset$, in which case the proposition is trivial.

\begin{proof}
Let us first reduce the proposition to the case in which either $E = \emptyset$ or $A = \Delta^n$.
To see this, assume that the proposition is true in these two cases, in any dimension.
Choose a simplicial refinement of $\Delta^n$ that is fine enough such that every subsimplex $\sigma \subset \Delta^n$ is either fully contained in $A$ or disjoint from $E$.
Then we may successively construct $(h_{s,t})$ over the skeleta of this decomposition.
More specifically, let $0 \leq k \leq n$ and assume by induction that either $k = 0$ or that some family of metrics $(h^{k-1}_{s,t})$ has been constructed over the $(k-1)$-dimensional skeleton $X^{k-1}$ of $\Delta^n$ such that Assertions~\ref{ass_exten_round_aa}--\ref{ass_exten_round_d} hold for some closed subset $E'_{k-1} \subset X^{k-1}$, where in Assertion~\ref{ass_exten_round_d} we have to replace the difference $\Delta^n \setminus E'$ by $X^{k-1} \setminus E'_{k-1}$.
For every $k$-simplex $\sigma \subset \Delta^n$ consider the closed subset $E_\sigma := E'_{k-1} \cap \partial \sigma$ and apply the proposition to find an extension $(h^\sigma_{s,t})_{s \in \sigma, t \in [0,1]}$ of $(h^{k-1}_{s,t})$ to $\sigma$ and a closed subset $E'_\sigma \subset \sigma$ such that Assertions~\ref{ass_exten_round_aa}, \ref{ass_exten_round_b}--\ref{ass_exten_round_d} of the proposition hold.
Set $E'_k := \cup_{\sigma \subset \Delta^n, \dim \sigma = k} E'_\sigma$ and combine all families $(h^\sigma_{s,t})$ to a family $(h^k_{s,t})$ over $X^k$.
This finishes the induction.
Then $(h_{s,t}) := (h^n_{s,t})$ and $E' := E'_n$ are the desired data for this proposition.

So it remains to prove the proposition in the two cases $E = \emptyset$ and $A = \Delta^n$.

Consider first the case in which $E = \emptyset$.
Then $k_{s,t} = \mu^2 (s,t) g^*$ for some continuous function $\mu : \partial \Delta^n \times [0,1] \to \IR_+$ that satisfies $\mu(s,0) = \lambda (s)$ for all $s \in \partial \Delta^n$.
Let $\lambda_0 : \Delta^n \times [0,1] \to \IR_+$ be a continuous function such that $\lambda_0 (s,0) = \lambda (s)$ for all $s \in \Delta^n$ and $\lambda_0 (s,t)= \mu(s,t)$ for all $s \in \partial \Delta^n$, $t \in [0,1]$.
Then $(h_{s,t} := \lambda_0^2 (s,t) g^*)$ and $E' := \emptyset$ are the desired data for this proposition.

Lastly, consider the case in which $A = \Delta^n$.
So $\SS^s$ is defined for all $s \in \Delta^n$.
For all $s \in \partial\Delta^n - E$ and $t \in [0,1]$ the metric $k_{s,t}$ is a multiple of $g^*$ and therefore compatible with $\SS^s$.
For all $s \in  E$ and $t \in [0,1]$ the metric $k_{s,t}$ is compatible with $\SS^s$ by Assumption~\ref{prop_exten_round_iii}.
Therefore $k_{s,t}$ is compatible with $\SS^s$ for all $s \in \partial \Delta^n$ and $t \in [0,1]$.
Set $E' := \Delta^n$.
Then Assertion~\ref{ass_exten_round_d} becomes vacuous and the existence of $(h_{s,t})$ is a direct consequence of Proposition~\ref{prop_extending_symmetric}.
\end{proof}

\subsection{The conformal exponential map} \label{subsec_conf_exp}
In this subsection, we will introduce the \emph{conformal exponential map}, which will be used in the rounding construction of Subsection~\ref{subsec_Rounding}.
The conformal exponential map will produce a set of canonical local coordinates near a point $p \in M$ in a Riemannian manifold $(M,g)$ with the following properties:
\begin{itemize}
\item If $g$ is locally rotationally symmetric about $p$, then so is the metric in these local coordinates.
\item If the metric is  conformally flat near $p$, then the metric expressed in these local coordinates is conformally equivalent to the standard Euclidean metric.
\item The metric expressed in these local coordinates agrees with the Euclidean metric up to first order at the origin.
\end{itemize}

In the following let $(M, g)$ be a 3-dimensional Riemannian manifold.
Recall that the Schouten tensor is defined as follows:
\[ S = \Ric - \frac14 R g \]
If $\td{g} = e^{2\phi} g$, then the Schouten tensor $\td{S}$ of $\td{g}$ can be expressed as
\begin{equation} \label{eq_Schouten_conf_trafo}
 \td{S}_{ij} = S_{ij} - ( \nabla^2_{ij} \phi - \nabla_i \phi \nabla_j \phi ) - \frac12 |\nabla \phi |^2 g_{ij}. 
\end{equation}
If $\td{S} \equiv 0$, which implies that $g$ is  conformally flat, then we can recover the function $\phi$ by integrating an expression involving $S$ twice along curves.
We will now carry out this integration even in the case in which $g$ is not conformally flat.
In doing so, we will construct a locally defined function $\phi$, with the property that if $g$ is  conformally flat, then $e^{2\phi} g$ is flat.

Let $p \in M$ be a point with injectivity radius $\injrad(p)$.
By standard ODE-theory there is a maximal radius $r=r_{g,p} \in (0, \injrad(p)]$, depending continuously on the metric $g$ and the point $p$, such that we can solve the following ODE for 1-forms radially along arclength geodesics $\gamma : [0,r) \to M$ emanating at $\gamma(0)=p$:
\[ \alpha (p) = 0, \qquad \nabla_{\gamma'} \alpha = S(\gamma', \cdot) + \alpha( \gamma' ) \alpha - \frac12 |\alpha |^2   g (\gamma', \cdot ). \]
We claim that this defines a unique smooth 1-form $\alpha$ on $B(p,r)$.
To see that $\alpha$ is smooth consider the exponential map $v \mapsto \gamma_v (1) = \exp_p (v)$ and notice that the parallel transports $\alpha_v (s) := P^{\gamma_v}_{0,s} \alpha$ of $\alpha (\gamma_v (s))$ to $p$ satisfies the ODE
\[ \alpha_v (0) = 0, \qquad \alpha'_v (s) =   (P^{\gamma_v}_{0,s} S)(v, \cdot) + \alpha_v (s) (v) \alpha_v (s) - \frac12 |\alpha_v^2 (s)| g_p( v, \cdot ). \]
The value $\alpha_v (1) = P^{\gamma_v}_{0,1} \alpha$ depends smoothly on the vector $v \in T_p M$.
Next, we construct the following function $\phi \in C^\infty (B(p,r))$ by radial integration along geodesics $\gamma$ emanating from $p$:
\[ \phi (p) = 0, \qquad \nabla_{\gamma'} \phi = \alpha (\gamma'). \]
The smoothness of $\phi$ follows similarly as before.

Note that by uniqueness of solutions to ODEs we have:

\begin{lemma} \label{Lem_recover_phi}
Let $\gamma : [0, r) \to M$ be a geodesic emanating at $\gamma (0)=p$ and let $t \in [0,r)$.
Suppose that there is a neighborhood $\gamma(t) \in U' \subset B(p,r)$ and a function $\phi' \in C^\infty (U')$ such that $e^{2\phi'} g$ is flat.
If $\phi' = \phi$, $d\phi' = d\phi$ at $\gamma (t)$, then the same is true along the component of $\gamma ([0,r)) \cap U'$ containing $\gamma(t)$.
\end{lemma}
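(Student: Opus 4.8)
The statement is essentially a rigidity-from-uniqueness assertion for the ODE system defining $\phi$ and $\alpha$, so the plan is to set everything up along the geodesic $\gamma$ and invoke uniqueness of solutions of ODEs.  First I would recall the construction: the $1$-form $\alpha$ on $B(p,r)$ was produced by solving, along each radial geodesic emanating from $p$, the linear-looking (but quadratic) ODE
\[
\nabla_{\gamma'}\alpha = S(\gamma',\cdot) + \alpha(\gamma')\,\alpha - \tfrac12|\alpha|^2\,g(\gamma',\cdot),
\qquad \alpha(p)=0,
\]
and $\phi$ by integrating $\nabla_{\gamma'}\phi = \alpha(\gamma')$, $\phi(p)=0$.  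The key algebraic input is the conformal transformation law (\ref{eq_Schouten_conf_trafo}) for the Schouten tensor: if $\td g = e^{2\phi'}g$ is flat on $U'$, then $\td S\equiv 0$ there, which rearranges to
\[
\nabla^2_{ij}\phi' - \nabla_i\phi'\,\nabla_j\phi' + \tfrac12|\nabla\phi'|^2 g_{ij} = S_{ij}
\qquad\text{on }U'.
\]
Contracting this identity with $\gamma'$ in one slot shows that the $1$-form $\alpha' := d\phi'$ satisfies exactly the same radial ODE as $\alpha$ along any geodesic segment contained in $U'$ (using $\nabla^2\phi'(\gamma',\cdot) = \nabla_{\gamma'}(d\phi')$ and $|\nabla\phi'|^2$ versus $|\alpha'|^2$, which agree since $\alpha' = d\phi'$).

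Next I would run the uniqueness argument.  Let $\gamma:[0,r)\to M$ be the geodesic from $p$, fix $t\in[0,r)$ with $\gamma(t)\in U'$, and let $J\subset \gamma([0,r))\cap U'$ be the connected component (an interval of parameters) containing $t$.  Parametrize $J$ by arclength $s$; along $\gamma|_J$ both $\alpha$ and $\alpha' = d\phi'$ solve the same first-order ODE
\[
\nabla_{\gamma'}\beta = S(\gamma',\cdot) + \beta(\gamma')\,\beta - \tfrac12|\beta|^2\,g(\gamma',\cdot),
\]
which, after parallel-transporting the $1$-form back to $\gamma(t)$ (as in the smoothness discussion preceding the lemma), becomes a genuine ODE in a fixed finite-dimensional vector space with smooth (locally Lipschitz, indeed smooth) right-hand side.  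The hypothesis $d\phi' = d\phi$ at $\gamma(t)$ says the two solutions have the same value at $s = t$, so by the Picard–Lindelöf uniqueness theorem they coincide on all of $J$; hence $\alpha = d\phi'$ along $\gamma|_J$.  Then $\nabla_{\gamma'}\phi = \alpha(\gamma') = d\phi'(\gamma') = \nabla_{\gamma'}\phi'$ along $\gamma|_J$, and since $\phi=\phi'$ at $\gamma(t)$ by hypothesis, integrating this scalar identity along $\gamma|_J$ gives $\phi = \phi'$ on all of $J$.  This is exactly the assertion of the lemma.

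The only subtlety — and the step I would be most careful about — is the parallel-transport bookkeeping needed to turn "$\nabla_{\gamma'}\alpha = \cdots$ along $\gamma$" into an honest ODE on a fixed vector space so that the standard uniqueness theorem applies, together with checking that $d\phi'$, restricted to $\gamma$, really does satisfy the transported ODE (this uses that $\gamma$ is a geodesic, so $\nabla_{\gamma'}\gamma' = 0$, which makes $\nabla_{\gamma'}(d\phi'(\gamma')) = (\nabla_{\gamma'}d\phi')(\gamma') = \nabla^2\phi'(\gamma',\gamma')$ and more generally lets one contract the flatness identity cleanly in the $\gamma'$ direction).  Once that identification is made the result is immediate from ODE uniqueness; no estimates or limiting arguments are required.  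I would also remark that connectedness of $J$ is essential: on a different component of $\gamma([0,r))\cap U'$ the matching data at $\gamma(t)$ is not available, which is why the conclusion is stated only for the component containing $\gamma(t)$.
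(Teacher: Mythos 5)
Your argument is correct and is exactly the argument the paper has in mind — the paper's "proof" is the single sentence "by uniqueness of solutions to ODEs," and you supply the same reasoning in full: the vanishing of the Schouten tensor $\td S$ of $e^{2\phi'}g$ together with the transformation law~(\ref{eq_Schouten_conf_trafo}) shows $d\phi'$ satisfies the same radial ODE as $\alpha$, matching initial data at $\gamma(t)$ and ODE uniqueness give $\alpha = d\phi'$ on the component, and integrating $\nabla_{\gamma'}\phi = \alpha(\gamma')$ recovers $\phi = \phi'$.
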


We can now define the conformal exponential map.

\begin{definition}[Conformal exponential map]
After constructing $\phi \in C^\infty (B(p,r))$ as above, set
\[ \confexp_p := \exp_{e^{2 \phi} g, p} : U^{conf}_p \longrightarrow B(p,r), \]
where $U^{conf}_p := \exp_{e^{2 \phi} g, p}^{-1} (B(p,r)) \subset T_p M$.\end{definition}

The following proposition summarizes the properties of the conformal exponential map:

\begin{proposition} \label{Prop_properties_confexp}
Let $B(p,r') \subset B(p,r)$ be a possibly smaller ball and consider the pullback $g' := \confexp_p^* g$ onto $\confexp_p^{-1} (B(p,r')) \subset T_p M$.
Denote by $g_{\eucl,p}$ the standard Euclidean metric on $T_p M$. 
Then:
\begin{enumerate}[label=(\alph*)]
\item \label{ass_properties_confexp_a} If $(g_s)_{s \in X}$ is a continuous family of Riemannian metrics on $M$ (in the smooth topology), then the corresponding family of conformal exponential maps $(\confexp_{g_s,p})_{s \in X}$ also depends continuously on $s$ (in the smooth topology).
\item \label{ass_properties_confexp_b} If $g$ restricted to $B(p,r')$ is rotationally symmetric about $p$, i.e. if $g$ is compatible with a spherical structure on $B(p,r')$ with singular fiber $\{ p \}$, then $\confexp_p^{-1} (B(p,r'))$ and $g'$ invariant under the standard $O(3)$-action.
\item \label{ass_properties_confexp_c} If $g$ restricted to $B(p,r')$ is  conformally flat, then $g' = e^{2\phi'} g_{\eucl,p}$ for some $\phi' \in C^\infty (\confexp_p^{-1} (B(p,r')))$. 
\item \label{ass_properties_confexp_d} At $p$ we have $g' - g_{\eucl,p} = \partial ( g' - g_{\eucl,p} ) = 0$.
\end{enumerate}
\end{proposition}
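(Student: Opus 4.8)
The plan is to prove the four properties essentially by unwinding the definitions and using the ODE uniqueness lemma already established. First I would observe that $\confexp_p$ is simply the ordinary exponential map of the conformally rescaled metric $\bar g := e^{2\phi}g$, where $\phi$ is the smooth function built by radial integration of the $1$-form $\alpha$ (which is itself obtained by radial integration of the Schouten tensor). Thus $g' = \confexp_p^* g = e^{-2\phi\circ\confexp_p}\,\confexp_p^*\bar g$, and $\confexp_p^*\bar g$ is the pullback of a Riemannian metric under \emph{its own} exponential map, so it is smooth and equals $g_{\eucl,p}$ to first order at the origin by the standard properties of geodesic normal coordinates (Gauss lemma: the metric in normal coordinates agrees with the Euclidean one and has vanishing first derivatives at the center). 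Since $\phi(p)=0$ and $d\phi(p)=\alpha(p)=0$, the conformal factor $e^{-2\phi\circ\confexp_p}$ equals $1$ with vanishing derivative at the origin, so the product $g'$ also satisfies $g' - g_{\eucl,p} = \partial(g'-g_{\eucl,p}) = 0$ at $p$. This disposes of part~\ref{ass_properties_confexp_d}.

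For part~\ref{ass_properties_confexp_a}, the key point is that every construction step depends continuously on $g$ in the smooth topology: the Schouten tensor $S$ is a smooth universal expression in $g$ and its derivatives; the radius $r_{g,p}$ depends continuously on $g$ (as remarked in the text); the solution $\alpha$ of the radial ODE depends smoothly and continuously on $g$ by smooth dependence of ODE solutions on parameters and initial data (here parametrized over $v\in T_pM$ as in the displayed ODE for $\alpha_v$); hence $\phi$ and $\bar g = e^{2\phi}g$ depend continuously on $g$, and finally $\exp_{\bar g, p}$ and its inverse restricted to $B(p,r')$ depend continuously on $\bar g$. Composing, $\confexp_{g_s,p}$ and $g' = \confexp_{g_s,p}^* g_s$ vary continuously with $s$.

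For part~\ref{ass_properties_confexp_b}, I would argue by uniqueness of the ODEs under the $O(3)$-symmetry. If $g$ is rotationally symmetric about $p$ on $B(p,r')$, then $S$ is $O(3)$-invariant, so the radial ODE for $\alpha$ — being equivariant — produces an $O(3)$-invariant $1$-form (equivalently, $\alpha_v$ depends equivariantly on $v$), whence $\phi$ is a radial function and $\bar g = e^{2\phi}g$ is again rotationally symmetric about $p$. The exponential map of an $O(3)$-invariant metric about a fixed point of the action intertwines the linear $O(3)$-action on $T_pM$ with the given action on $M$, so $\confexp_p^{-1}(B(p,r'))$ and $g'$ are invariant under the standard linear $O(3)$-action. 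For part~\ref{ass_properties_confexp_c}, suppose $g$ is conformally flat on $B(p,r')$; then locally there is some $\phi'$ with $e^{2\phi'}g$ flat, and by the transformation rule~\eqref{eq_Schouten_conf_trafo} for the Schouten tensor, $\phi'$ satisfies exactly the radial ODEs defining $\phi$ up to the freedom in initial conditions. Along any geodesic $\gamma$ from $p$, near $p$ the metric is conformally flat, and since $\phi$ and $\phi'$ can be normalized to agree to first order at $p$ (after adding an affine function in the flat coordinates, which does not change flatness of $e^{2\phi'}g_{\eucl}$), Lemma~\ref{Lem_recover_phi} propagates the equality $d\phi' = d\phi$ along $\gamma$, hence $\phi' - \phi$ is constant $=0$ on $B(p,r')$ (using that $B(p,r')$ is a normal ball, hence geodesically star-shaped about $p$). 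Therefore $e^{2\phi}g$ is itself flat on $B(p,r')$, so $\confexp_p^*\bar g$ is flat and simply connected, hence isometric to a domain in Euclidean space; combined with part~\ref{ass_properties_confexp_d} it must be $g_{\eucl,p}$, and thus $g' = e^{2\phi'}g_{\eucl,p}$ for the function $\phi' := -\phi\circ\confexp_p$.

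The main obstacle is the gluing/uniqueness argument in part~\ref{ass_properties_confexp_c}: one must be careful that the locally defined $\phi'$ (witnessing conformal flatness) can be matched to the globally-on-$B(p,r')$ defined $\phi$ along \emph{every} radial geodesic simultaneously, and that Lemma~\ref{Lem_recover_phi} can be iterated from $p$ outward along each geodesic without the local flat charts causing a monodromy obstruction. This is handled by the star-shapedness of the normal ball about $p$ and the fact that the ambiguity in $\phi'$ along a connected piece is exactly an affine function of the flat coordinates, which is killed by matching value and differential at a single point; but it requires a short connectedness argument along each geodesic ray. Everything else is routine smooth dependence of ODEs and standard Riemannian normal coordinate facts.
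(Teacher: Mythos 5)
Your proof follows the same approach as the paper: assertions~\ref{ass_properties_confexp_a}, \ref{ass_properties_confexp_b}, \ref{ass_properties_confexp_d} follow from the construction of $\phi$ and standard normal-coordinate facts, and assertion~\ref{ass_properties_confexp_c} is reduced to showing $e^{2\phi}g$ is flat on $B(p,r')$ by comparing $\phi$ with a local conformal flattening factor and propagating equality along radial geodesics via Lemma~\ref{Lem_recover_phi} together with the uniqueness clause of Lemma~\ref{Lem_global_conf_flat_U_sc}; the ``short connectedness argument'' you anticipate is exactly the maximal-$t_0$ continuation the paper carries out. One correction: you twice describe the ambiguity in a local flattening function $\phi'$ as ``adding an affine function in the flat coordinates,'' but affine $L$ does not preserve flatness of $e^{2L}g_{\eucl}$ unless $L$ is constant --- from (\ref{eq_Schouten_conf_trafo}) one gets $\td S_{ij} = \nabla_iL\,\nabla_jL - \tfrac12|\nabla L|^2\delta_{ij} \neq 0$ for non-constant affine $L$. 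The actual four-parameter freedom is the one recorded in Lemma~\ref{Lem_global_conf_flat_U_sc} (prescribing $\phi'(q)$ and $d\phi'_q$, realized by constants and the inversion-type functions $-2\log|x-y|+\text{const}$). Since the only consequence you draw is that $\phi'$ is locally determined by its value and differential at one point --- which is exactly the uniqueness statement of that lemma --- the misstatement does not invalidate the argument, but the characterization should be corrected.
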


\begin{proof}
Assertion~\ref{ass_properties_confexp_a} follows by construction.
For Assertion~\ref{ass_properties_confexp_b} observe that if $g$ is rotationally symmetric on $B(p,r')$, then so is $\phi$.

For Assertion~\ref{ass_properties_confexp_c} it suffices to show that $e^{2\phi} g$ restricted to $B(p,r')$ is flat.
For this purpose choose an arclength geodesic $\gamma : [0,r') \to M$ emanating at $\gamma (0) = p$.
Let $t_0 \in [0, r')$ be maximal with the property that $e^{2\phi} g$ is flat in a neighborhood $U \subset B(p,r')$ of $\gamma ([0,t_0))$.
By Lemma~\ref{Lem_recover_phi} above and Lemma~\ref{Lem_global_conf_flat_U_sc} below we have $t_0 > 0$.
Assume now that $t_0 < r'$ and set $q := \gamma (t_0)$.
By Lemma~\ref{Lem_global_conf_flat_U_sc} we can find a function $\phi' \in C^\infty (V)$ defined in a neighborhood $q \in V \subset B(p,r')$ such that $e^{2\phi'} g$ is flat and $\phi' (q) = \phi (q)$, $d\phi'_q = d\phi_q$.
By Lemma~\ref{Lem_recover_phi}, we obtain that $\phi = \phi'$, $d\phi = d\phi'$ along $\gamma |_{(t_1,t_0]}$ for some $t_1 \in [0, t_0)$.
So by the uniqueness statement in Lemma~\ref{Lem_global_conf_flat_U_sc} we have $\phi' = \phi$ on the connected component of $U \cap V$ containing $\gamma([t_1,t_0))$.
So $e^{2\phi} g$ is flat in a neighborhood of $\gamma ([0,t_0])$, in contradiction to the maximal choice of $t_0$.

For Assertion~\ref{ass_properties_confexp_d} observe that $\phi (p) = d\phi_p = 0$ and $e^{2\phi \circ \confexp_p} g'  - g_{\eucl,p} = \exp^*_{e^{2\phi} g,p} (e^{2\phi} g) - g_{\eucl, p} = O(r^2)$.
\end{proof}

\begin{lemma} \label{Lem_global_conf_flat_U_sc}
If $(M, g)$ is  conformally flat near some point $q \in M$, then for any $a \in \IR$, $\alpha \in T_q^* M$ there is a neighborhood $q \in V \subset M$ and a $\phi \in C^\infty (V)$ such that $e^{2\phi} g$ is flat and $\phi (q) = a$, $d\phi_q = \alpha$.
Moreover $\phi$ is unique modulo restriction to a smaller neighborhood of $q$.\end{lemma}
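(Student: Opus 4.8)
\textbf{Plan for the proof of Lemma~\ref{Lem_global_conf_flat_U_sc}.}
The plan is to reduce the statement to the standard uniqueness theory for the flat metrics in a conformal class, which in dimension $3$ is governed by the Schouten tensor via equation (\ref{eq_Schouten_conf_trafo}). First I would fix a background flat metric $g_0 = e^{2\psi_0}g$ near $q$, which exists by the conformal flatness hypothesis; such $\psi_0$ is unique modulo restriction and modulo the moduli of flat metrics, which locally are parametrized by M\"obius transformations. Then the problem becomes: find all $\phi$ near $q$ with $e^{2(\phi-\psi_0)}g_0$ flat, i.e. all functions $u := \phi - \psi_0$ with $e^{2u}g_0$ flat, subject to $u(q) = a - \psi_0(q)$ and $du_q = \alpha - d(\psi_0)_q$. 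A conformal change of a flat metric is flat precisely when the right-hand side of (\ref{eq_Schouten_conf_trafo}) vanishes, i.e. $\nabla^2 u - du\otimes du - \tfrac12|\nabla u|^2 g_0 = 0$ (using $S_{g_0}\equiv 0$). Setting $v := e^{-u}$, this PDE linearizes to $\nabla^2 v = \tfrac{1}{3}(\triangle v)\, g_0$, i.e. the traceless Hessian of $v$ vanishes; such $v$ are exactly the restrictions of affine-plus-quadratic functions of the form $v = c_0 + \langle b, x\rangle + c_2 |x|^2$ in flat coordinates $x$ centered appropriately (the space of concircular functions on Euclidean space).

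The key steps, in order: (1) invoke conformal flatness to get flat coordinates $x: V_0 \to \mathbb{R}^3$ in which $g = e^{2\psi}g_{\eucl}$ for some smooth $\psi$ on a neighborhood of $q$; (2) observe that $e^{2\phi}g = e^{2(\phi+\psi)}g_{\eucl}$ is flat iff $w := e^{-(\phi+\psi)}$ has vanishing traceless Euclidean Hessian, so $w(x) = c_0 + \langle b, x-x_q\rangle + c_2|x-x_q|^2$ for constants $c_0 \in \mathbb{R}$, $b \in \mathbb{R}^3$, $c_2 \in \mathbb{R}$; (3) solve the $5$-parameter family for the prescribed $0$-jet: the conditions $\phi(q) = a$ and $d\phi_q = \alpha$ translate into $w(x_q) = e^{-(a+\psi(q))}$ and $dw_{x_q} = -e^{-(a+\psi(q))}(d\alpha_q + d\psi_q)$ after identifying $T_q^*M$ with $\mathbb{R}^3$ via the flat coordinates, which determines $c_0$ and $b$ uniquely; $c_2$ remains free, so pick $c_2 = 0$ to get existence; (4) for uniqueness, note that any two solutions $\phi_1, \phi_2$ with the same $0$-jet at $q$ give $w_1, w_2$ agreeing to first order at $x_q$, both of the form above, so $w_1 - w_2 = c_2'|x - x_q|^2$; a priori this is nonzero, so I need to be slightly more careful — uniqueness holds only if we also match the value of $\triangle\phi$ or equivalently restrict the allowed $c_2$. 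Re-examining: the lemma as used (see the proof of Proposition~\ref{Prop_properties_confexp}\ref{ass_properties_confexp_c} and Lemma~\ref{Lem_recover_phi}) only needs uniqueness along geodesics given matching of $\phi$ and $d\phi$, which is exactly the ODE uniqueness already packaged in Lemma~\ref{Lem_recover_phi}; so the "unique modulo restriction" clause should be interpreted as: the germ is determined by its $0$-jet \emph{together with} the flatness equation, and indeed once $w$ is quadratic-affine, $dw_{x_q}=0$ forces... hmm.

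Let me restate step (4) more honestly. The correct uniqueness statement is that $\phi$ is determined by $(\phi(q), d\phi_q)$ \emph{and} the full conformal-flatness condition only up to the one-parameter concircular ambiguity $c_2$; but this ambiguity is killed once we also demand, as we may by absorbing it, that $w$ has no purely quadratic part — or more intrinsically, the ambiguity corresponds to conformal transformations of flat space (inversions) fixing $q$ to first order, which is a genuine $1$-parameter group. So the phrase "unique modulo restriction to a smaller neighborhood" in the lemma is slightly optimistic; what is actually true and actually used is uniqueness along any fixed geodesic emanating from $q$, which follows from the ODE uniqueness of Lemma~\ref{Lem_recover_phi} directly. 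Accordingly, I would prove existence as above via $c_2 = 0$, and for the uniqueness clause either (a) strengthen the $0$-jet data implicitly to a $1$-jet (matching second derivatives in the normal direction, which removes $c_2$), or (b) simply cite that two such $\phi$'s agree along every radial geodesic by Lemma~\ref{Lem_recover_phi}, hence on the star-shaped neighborhood, which is all later applications require. The main obstacle is this bookkeeping around the concircular one-parameter family: pinning down exactly which normalization makes "unique" literally true, versus recognizing that the downstream uses only need radial/ODE uniqueness. Everything else is the routine linearization $v = e^{-u}$ turning the conformal-flatness PDE into the classical "vanishing traceless Hessian" equation whose solution space is explicit.
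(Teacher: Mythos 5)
Your step (2) contains the error that causes the rest of the argument to misfire. Setting $S_{g}=0$ in (\ref{eq_Schouten_conf_trafo}), the condition that $e^{2\phi}g_{\eucl}$ be flat is $\nabla^2_{ij}\phi=\nabla_i\phi\nabla_j\phi-\tfrac12|\nabla\phi|^2\delta_{ij}$, and substituting $w=e^{-\phi}$ this is $\nabla^2 w=\frac{|\nabla w|^2}{2w}\,g_{\eucl}$, \emph{not} merely ``$\nabla^2 w$ is pure trace.'' You only retained the five traceless equations and silently dropped the trace equation $\triangle w=\frac{3|\nabla w|^2}{2w}$. In the concircular parametrization $w=c_0+\langle b,x\rangle+c_2|x|^2$ (centered at $q=0$) the dropped equation reads $|b|^2=4c_2c_0$, so $c_2$ is not free --- it is uniquely determined by the $0$-jet of $w$, hence by $(\phi(q),d\phi_q)$. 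In particular your proposed choice $c_2=0$ is only admissible when $b=0$ (i.e.\ $\alpha=0$); for $\alpha\neq 0$ the affine $w=c_0+\langle b,x\rangle$ gives $\triangle w=0\neq\frac{3|b|^2}{2w}$, so $w^{-2}g_{\eucl}$ is not flat. The ``one-parameter concircular ambiguity'' you worry about in step (4) is therefore illusory, and the lemma's uniqueness claim is correct as stated.

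With the trace constraint restored, your route goes through and in fact matches the paper up to packaging: $w$ must be an affine-plus-quadratic function with $|b|^2=4c_2c_0$, which (for $b\neq 0$) is exactly $w=\const\cdot|x-y|^2$ for the point $y$ determined by $b/c_0$; this is the paper's explicit inversion formula $\phi(x)=-2\log|x-y|+2\log|y|+a$ with $2y/|y|^2=\alpha$. The paper simply writes down this $w$ directly rather than passing through the concircular ansatz, and for uniqueness cites the radial ODE argument in $\nabla\phi$ from the beginning of the subsection --- the same observation you fall back on via Lemma~\ref{Lem_recover_phi}, which here is an honest proof of full germ uniqueness rather than a retreat, since the constraint makes the $0$-jet determine everything.
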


\begin{proof}
By the local conformal flatness, we can find an open neighborhood $q \in V' \subset M$ and a $\phi' \in C^\infty (V')$ such that $g' = e^{2\phi'} g$ is flat on $V'$.
Since $e^{2\phi} g = e^{2 (\phi - \phi')} g'$ for any smooth function $\phi$ that is defined near $p$, we may replace $M$ by $V'$ and $g$ by $e^{2\phi} g$.
So by isometrically identifying $(M,g)$ with a subset of $\IR^n$ we may assume without loss of generality that $M \subset \IR^3$ and $g = g_{\eucl}$.

Recall that by (\ref{eq_Schouten_conf_trafo}) local conformal flatness is equivalent to the PDE
\begin{equation} \label{eq_conf_flat_psi}
 -\nabla^2_{ij} \phi + \nabla_i \phi \nabla_j \phi - \frac12 |\nabla \phi|^2 \delta_{ij} = 0 
\end{equation}
If $\alpha = 0$, then we can set $\phi \equiv a$, otherwise we can set $\phi (x) := -2\log |x-y| + 2 \log |y| + a$ for $y \in \IR^3$ with $2 \frac{y}{|y|^2} = \alpha$.
This proves existence.
Uniqueness follows by viewing (\ref{eq_conf_flat_psi}) restricted to lines as an ODE in $\nabla \phi$, as was done in the beginning of this subsection.
\end{proof}

\begin{remark}\label{rmk_Cotton_York}
We recall that a metric is  conformally flat if and only the Schouten tensor satisfies the Cotton-York condition (see \cite{Cotton-1899}):
\[ \nabla_i S_{jk} - \nabla_j S_{ik} = 0 \]
With some more effort Assertion~\ref{ass_properties_confexp_c} of Proposition~\ref{Prop_properties_confexp} can be deduced directly from the Cotton-York condition.
In the context of this paper the Cotton-York condition is, however, not essential, which is why details have been omitted.
\end{remark}

\subsection{Rounding metrics} \label{subsec_Rounding}
The goal of this subsection is to prove the following result, which states that a  metric on the standard $3$-ball can be deformed  to make it compatible with the standard spherical structure on a smaller ball, while preserving the properties of conformal flatness and compatibility with the spherical structure on (most) disks $D(r)$ for $r\in (0,1]$ and PSC-conformality.

The following is the main result of this subsection.

\begin{proposition} \label{Prop_rounding}
Let $X$ be a compact topological space, $X_{PSC} \subset X$ a closed subset and $0 < \ov{r}_1 < 1$.
Consider a continuous family of Riemannian metrics $(h_s)_{s \in X}$ on the unit ball $B^3 \subset \IR^3$ such there is a continuous family of positive functions $(w_s \in C^\infty(B^3))_{s \in X_{PSC}}$ with the property that $w_s^4 h_s$ has positive scalar curvature.

Then there is a continuous family of Riemannian metrics $(h'_{s,u})_{s \in X, u \in [0,1]}$ and a constant $0 < r_1 < \ov{r}_1$ such that for all $s \in X$ and $u \in [0,1]$:
\begin{enumerate}[label=(\alph*)]
\item \label{ass_Prop_rounding_a} $h'_{s,u} = h_s$ on $B^3 \setminus B^3 (\ov{r}_1)$.
\item \label{ass_Prop_rounding_b} $h'_{s,0} = h_s$.
\item \label{ass_Prop_rounding_c} $h'_{s,1} = h_s$ is compatible with the standard spherical structure on $D^3 (r_1)$.
\item \label{ass_Prop_rounding_d} If $h_s$ is compatible with the standard spherical structure on $D^3(r)$ for some $r \in [\ov{r}_1,1]$, then so is $h'_{s,u}$.
\item \label{ass_Prop_rounding_e} If $h_s$ is conformally flat, then so is $h'_{s,u}$.
\item \label{ass_Prop_rounding_f} If $s \in X_{PSC}$, then $w_{s,u}^{\prime,4} h'_{s,u}$ has positive scalar curvature, for some $w'_{s,u} \in C^\infty(B^3)$ that agrees with $w_s$ on $B^3 \setminus B^3 (\ov{r}_1)$.
\end{enumerate}
\end{proposition}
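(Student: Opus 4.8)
The plan is to use the conformal exponential map from Subsection~\ref{subsec_conf_exp} to produce canonical coordinates at the center $p = 0$ of $B^3$, and then to interpolate, in these coordinates, between the given metric and its ``rotationally symmetrized'' version on a small ball. Concretely, for each $s \in X$ I would form $g'_s := \confexp_{h_s, p}^* h_s$ on a neighborhood of the origin in $T_p \IR^3$; by Proposition~\ref{Prop_properties_confexp}\ref{ass_properties_confexp_a} this varies continuously with $s$, by \ref{ass_properties_confexp_d} it agrees with $g_{\eucl}$ to first order at the origin, by \ref{ass_properties_confexp_b} it is $O(3)$-invariant whenever $h_s$ was locally rotationally symmetric about $p$, and by \ref{ass_properties_confexp_c} it is conformally Euclidean whenever $h_s$ was conformally flat. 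Since $X$ is compact, there is a uniform radius $r_* > 0$ on which all these maps are defined and are diffeomorphisms onto their images, and all the geodesic balls $B(p,r)$ for $r \le r_*$ are pulled back (as metric balls are not preserved, but one can instead work with the round coordinate balls) — more precisely I would fix a radius small enough that the preimages of the relevant disks behave well.

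The key step is then the symmetrization/interpolation. Working in the conformal-exponential coordinates, one averages $g'_s$ over the standard $O(3)$-action (or, in the conformally flat case, averages the conformal factor $\phi'_s$ and keeps $g_{\eucl}$) to obtain a metric $\bar g_s$ that is compatible with the standard spherical structure; then one defines $h'_{s,u}$, in a nested family of radii, to equal $h_s$ outside $B^3(\bar r_1)$, to equal (the pushforward of) $\bar g_s$ on a small ball $D^3(r_1)$, and to interpolate between the two in the annular region using a fixed cutoff function in the radial variable together with the cutoff parameter $u$. One must choose the interpolation so that: at $u=0$ it is the identity (giving \ref{ass_Prop_rounding_b}); it is untouched outside $B^3(\bar r_1)$ (giving \ref{ass_Prop_rounding_a}); at $u=1$ on $D^3(r_1)$ it equals $\bar g_s$, which is spherically symmetric (giving \ref{ass_Prop_rounding_c}); and — crucially — if $h_s$ was \emph{already} compatible with the standard spherical structure on some $D^3(r)$, $r \in [\bar r_1, 1]$, then $g'_s$ is already $O(3)$-invariant there, so averaging does nothing and the interpolation stays within spherically symmetric metrics (giving \ref{ass_Prop_rounding_d}); similarly if $h_s$ was conformally flat then $g'_s = e^{2\phi'_s} g_{\eucl}$ and the whole construction stays in the conformal class of $g_{\eucl}$ (giving \ref{ass_Prop_rounding_e}).

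For \ref{ass_Prop_rounding_f} — the preservation of PSC-conformality — I would transport the conformal factor $w_s$ through the same coordinate changes and interpolation. The metric $w_s^4 h_s$ has positive scalar curvature, which is an open condition in $C^2$; since the interpolation is supported in the fixed annulus $B^3(\bar r_1)$, and since all the metrics $h'_{s,u}$ converge in $C^\infty$ to $h_s$ as $r_1 \to 0$ (the symmetrization and interpolation are $C^\infty$-small perturbations on a shrinking ball, using that $g'_s$ is $C^1$-close to $g_{\eucl}$ at the origin and hence the averaged metric is $C^k$-close to $g'_s$ on small balls), one can choose $r_1$ small enough — uniformly in $s \in X_{PSC}$ by compactness — that $w_s^4 h'_{s,u}$ still has positive scalar curvature; here $w'_{s,u}$ may be taken equal to $w_s$ itself, which automatically agrees with $w_s$ outside $B^3(\bar r_1)$. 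The main obstacle is the bookkeeping needed to make all six conditions hold \emph{simultaneously} for a single choice of $r_1$: the radius must be small enough for the conformal exponential map to be defined and a diffeomorphism, small enough that the symmetrized metric is a $C^{[\delta^{-1}]}$-small perturbation so that PSC is preserved, and yet the cutoff construction must be arranged so that the ``already symmetric'' and ``already conformally flat'' cases are handled without any perturbation at all. Threading these together, and verifying the transverse/parametrized continuity in $s$ throughout, is where the real work lies; the geometric input is entirely contained in Proposition~\ref{Prop_properties_confexp} and the openness of the PSC condition.
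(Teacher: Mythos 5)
Your framework — pulling back via the conformal exponential map to get coordinates in which $h_s$ is first-order Euclidean (this is essentially the paper's Lemma~\ref{Lem_existence_phi_s_u}), then interpolating to a rotationally symmetric, conformally Euclidean metric on a small ball — matches the overall structure of the paper's proof (Lemmas~\ref{Lem_existence_phi_s_u} and~\ref{Lem_rounding_intermediate}).  The one real divergence is cosmetic: you propose to take the $s$-dependent $O(3)$-average of $g'_s$ as the target, while the paper interpolates to a single fixed model metric $\bar g$ (chosen $O(3)$-invariant, conformally Euclidean, with $(\bar g)_0 = (\partial\bar g)_0 = 0$, and with $w_s^4\bar g$ of positive scalar curvature at the origin for all $s\in X_{PSC}$ by compactness).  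Either choice is viable; both $\bar g_s - g'_s$ and $\bar g - g'_s$ are $O(|x|^2)$ near the origin, so the interpolation analysis is the same.

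The genuine gap is in your argument for assertion~\ref{ass_Prop_rounding_f}.  You assert that the metrics $h'_{s,u}$ converge to $h_s$ in $C^\infty$ as $r_1\to 0$, because ``the symmetrization and interpolation are $C^\infty$-small perturbations on a shrinking ball.''  This is false already at the $C^2$ level, which is exactly the level at which scalar curvature lives.  Both $g'_s$ and its symmetrization agree with $g_{\eucl}$ only to first order at the origin; their second-order Taylor coefficients differ in general, so $|\partial^2(g'_s - \bar g_s)|$ does \emph{not} go to zero on shrinking balls.  If you interpolate with a standard cutoff $\nu$ supported on an annulus of radial scale $r$, you get $|\partial\nu|\sim 1/r$, $|\partial^2\nu|\sim 1/r^2$, and the cross terms in $\partial^2 h'_{s,u}$ satisfy
\[
|\partial^2\nu|\cdot|g'_s - \bar g_s| + |\partial\nu|\cdot|\partial(g'_s - \bar g_s)| \;\sim\; \tfrac{1}{r^2}\cdot r^2 + \tfrac{1}{r}\cdot r \;=\; O(1),
\]
uniformly as $r\to 0$.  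So the scalar curvature of $h'_{s,u}$ may differ from a convex combination of $R_{h_s}$ and $R_{\bar g_s}$ by a bounded, non-shrinking amount, and openness of the PSC condition does not rescue you.  The missing idea is the paper's logarithmic cutoff $\nu(r) = f(\eps\log r)$, which spreads the transition over a range of scales $[e^{-2/\eps}, e^{-1/\eps}]$ and thereby gives $|\partial\nu|\le C\eps/r$, $|\partial^2\nu|\le C\eps/r^2$.  With that cutoff the error terms above become $O(\eps)$, and one can then conclude positivity by taking $\eps$ small and using compactness of $X_{PSC}$, which is precisely what the paper does in the proof of Lemma~\ref{Lem_rounding_intermediate}.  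Without this device your argument does not close.
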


We will reduce Proposition~\ref{Prop_rounding} to Lemma~\ref{Lem_rounding_intermediate} below.

\begin{lemma} \label{Lem_rounding_intermediate}
Assume that we are in the same situation as Proposition~\ref{Prop_rounding} and assume additionally that for all $s \in X$
\begin{enumerate}[label=(\roman*)]
\item \label{hyp_Lem_rounding_intermediate_i} $(h_s)_0 = (\partial h_s)_0 = 0$.
\item \label{hyp_Lem_rounding_intermediate_ii} If $h_s$ is  conformally flat, then on $B^3(\ov{r}_1)$ it is even conformally equivalent to the standard Euclidean metric $g_{\eucl}$.
\end{enumerate}
Then there is a continuous family of Riemannian metrics $(h'_{s,u})_{s \in X, u \in [0,1]}$ and a constant $0 < r_1 < \ov{r}_1$ such that Assertions~\ref{ass_Prop_rounding_a}--\ref{ass_Prop_rounding_f} of Proposition~\ref{Prop_rounding}  hold for all $s \in X$ and $u \in [0,1]$.
\end{lemma}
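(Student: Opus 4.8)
The plan is to obtain $(h'_{s,u})$ by a single modification move near the origin: deform $h_s$, inside a small concentric ball, towards its fiberwise $O(3)$-average, and then check that this move respects all of the structures in question — the only delicate point being the PSC-conformal condition in \ref{ass_Prop_rounding_f}.

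First I would fix a constant $\de\in(0,\ov r_1)$, to be chosen small in terms of $\ov r_1$ and of uniform $C^2$-bounds on the families $(h_s)$, $(w_s)$ and a uniform lower bound for $R_{w_s^4 h_s}$ on $\ov{B^3(\ov r_1)}$ (all of which exist since $X$, $X_{PSC}$ are compact), pick a radial cutoff $\nu^\circ\in C^\infty(B^3)$ with $\nu^\circ\equiv 1$ on $B^3(\de/4)$ and $\nu^\circ\equiv 0$ outside $B^3(\de/2)$, let $A(h_s)$ be the average of $h_s$ over the standard linear $O(3)$-action on $\IR^3$ (smooth on $B^3(\ov r_1)$, equal to $g_{\eucl}$ at the origin by \ref{hyp_Lem_rounding_intermediate_i}, and invariant under this action), and set
\[
h'_{s,u}\;:=\;\begin{cases} h_s & \text{on } B^3\setminus B^3(\de/2),\\[2pt] (1-u\,\nu^\circ)\,h_s\,+\,u\,\nu^\circ\,A(h_s) & \text{on } B^3(\de/2).\end{cases}
\]
Because by \ref{hyp_Lem_rounding_intermediate_i} both $h_s$ and $A(h_s)$ differ from $g_{\eucl}$ by $O(\de^2)$ on $B^3(\de)$, these convex combinations are positive definite once $\de$ is small; the family is manifestly continuous in $(s,u)$, $h'_{s,0}=h_s$, and $h'_{s,1}$ equals the standard-$O(3)$-invariant metric $A(h_s)$ on $B^3(\de/4)$, which is compatible with the standard spherical structure there. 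Thus \ref{ass_Prop_rounding_a}--\ref{ass_Prop_rounding_c} hold with $r_1:=\de/8$. Assertion \ref{ass_Prop_rounding_d} is immediate, since $h_s$ compatible with the standard spherical structure on some $D^3(r)$ with $r\ge\ov r_1$ forces $A(h_s)=h_s$ on $B^3(\de)$, so the move is trivial. For \ref{ass_Prop_rounding_e}, hypothesis \ref{hyp_Lem_rounding_intermediate_ii} gives $h_s=e^{2\phi_s}g_{\eucl}$ on $B^3(\ov r_1)$; then $A(h_s)=\big(\int_{O(3)}e^{2\phi_s(Bx)}\,dB\big)g_{\eucl}$ is again conformally Euclidean (the linear action fixes $g_{\eucl}$), and every $h'_{s,u}$, being a positive combination of conformally Euclidean metrics on $B^3(\de)$, is conformally Euclidean there and hence conformally flat on all of $B^3$.

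The hard part is \ref{ass_Prop_rounding_f}, since convex interpolation of metrics destroys positivity of scalar curvature in general. Here I would use that $h'_{s,u}$ coincides with $h_s$ outside the tiny ball $B^3(\de/2)$, that $w_s^4 h_s$ is PSC on $B^3$ with $R\ge c'>0$ on $\ov{B^3(\ov r_1)}$ uniformly in $s$, and that $|{\Rm}|_{w_s^4 h'_{s,u}}\le C_1$ on $B^3(\de/2)$ uniformly in $s,u$ — the last point because $\|\nu^\circ(A(h_s)-h_s)\|_{C^2}=O(1)$, the $O(\de^{-2})$ cutoff derivatives being offset by the $O(\de^2)$ size of $A(h_s)-h_s$ on $B^3(\de/2)$ coming from \ref{hyp_Lem_rounding_intermediate_i}. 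It then suffices to produce, for each $s\in X_{PSC}$ and $u$, a radial conformal factor $\xi$ on $B^3$ with $\xi\equiv 1$ outside $B^3(\ov r_1/2)$ and $\xi^4\big(w_s^4 h'_{s,u}\big)$ of positive scalar curvature, and to set $w'_{s,u}:=\xi\,w_s$. Such a $\xi$ is built in two pieces (compare Lemma~\ref{lem_thick_annulus_psc_conformal}): on $B^3(\de/2)$ take $\xi=1+A_0\kappa$ with $\kappa$ a fixed decreasing radial profile behaving like $(\de/2)^2-|x|^2$ near the origin and with $A_0=A_0(C_1)$ large enough that $\De_{w_s^4 h'_{s,u}}\xi\le -C_1$ there (possible since $w_s^4 h'_{s,u}$ is uniformly elliptic on $B^3(\de/2)$), which makes $-8\De_{w_s^4 h'_{s,u}}\xi+R_{w_s^4 h'_{s,u}}\xi\ge 8C_1-C_1(1+O(\de^2))>0$; and on the \emph{fixed-size} annulus $\{\de/2\le|x|\le\ov r_1/2\}$, where $w_s^4 h'_{s,u}=w_s^4 h_s$ has $R\ge c'$ and bounded geometry, taper $\kappa$ smoothly to $0$ so that $|\kappa|$, $\ov r_1|\kappa'|$, $\ov r_1^{\,2}|\kappa''|$ are all $O(\de^2)$ and hence $|\De_{w_s^4 h_s}\xi|=O(\de^2)$, giving $-8\De\xi+R\xi\ge -O(\de^2)+c'(1-O(\de^2))>0$ for $\de$ small. (When $h_s$ is conformally flat one may instead simply retain the ambient conformal representative: with $v_s:=w_s e^{\phi_s/2}$, which satisfies $\De_{g_{\eucl}}v_s<0$ on $B^3(\ov r_1)$ by \ref{hyp_Lem_rounding_intermediate_ii}, and $\phi'_{s,u}$ defined by $h'_{s,u}=e^{2\phi'_{s,u}}g_{\eucl}$ on $B^3(\ov r_1)$ with $\phi'_{s,u}=\phi_s$ near $\partial B^3(\ov r_1)$, the choice $w'_{s,u}:=v_s e^{-\phi'_{s,u}/2}$ on $B^3(\ov r_1)$ and $w'_{s,u}:=w_s$ elsewhere makes $w'^{\,4}_{s,u}h'_{s,u}=v_s^4 g_{\eucl}$ on $B^3(\ov r_1)$.) The constants are chosen in the order $\ov r_1$, then $\de$, then $A_0$.
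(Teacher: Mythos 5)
Your route is genuinely different from the paper's: the paper interpolates from $h_s$ to a \emph{fixed} $O(3)$-invariant model metric $\ov{g}$ using a logarithmic cutoff $\nu(d_{h_s}(0,\cdot))=f(\eps\log d_{h_s}(0,\cdot))$ supported on a ball of radius $e^{-1/\eps}$, so that all cutoff errors in the scalar curvature are $O(\eps)$ and Assertion~\ref{ass_Prop_rounding_f} holds with $w'_{s,u}=w_s$ unchanged; you interpolate to the $s$-dependent average $A(h_s)$ on a ball of fixed radius $\de/2$, accept an $O(1)$ curvature error, and compensate by the auxiliary conformal factor $\xi$. The strategy is reasonable, but two steps fail as written. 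The justification of Assertion~\ref{ass_Prop_rounding_d} rests on the false claim that compatibility of $h_s$ with the standard spherical structure on $D^3(r)$ forces $A(h_s)=h_s$ near the origin. Compatibility only requires \emph{some} local $O(3)$-action isometric for $h_s$, not invariance under the standard linear action: by Lemma~\ref{lem_compatible_metric_general_form} compatible metrics may carry mixed terms $\sum_i c_i(r)(dr\,\xi_i+\xi_i\,dr)$, and whenever $c\not\equiv 0$ on $B^3(\de/2)$ the $O(3)$-average annihilates them, so $A(h_s)\neq h_s$ and your move is not trivial (this is exactly the twisting phenomenon the paper warns about after Definition~\ref{Def_compatible_metric}). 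The conclusion of \ref{ass_Prop_rounding_d} can still be rescued — your weight $\nu^\circ$ is constant on the fibers, $A(h_s)$ is itself compatible, and the interpolation again has the canonical form of Lemma~\ref{lem_compatible_metric_general_form} away from the singular fiber, which is how the paper handles its own version of this assertion — but the argument you gave is incorrect.

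The construction of $\xi=1+A_0\kappa$ for Assertion~\ref{ass_Prop_rounding_f} is also internally inconsistent. The inner profile $(\de/2)^2-|x|^2$ has $\kappa'(\de/2)=-\de$ and $\kappa''(\de/2)=-2$, so no $C^1$ taper on the annulus can satisfy $\ov{r}_1|\kappa'|,\ \ov{r}_1^{\,2}|\kappa''|=O(\de^2)$ there; and in the inequality $8C_1-C_1(1+O(\de^2))>0$ the $O(\de^2)$ term is really $A_0\cdot O(\de^2)$, which is uncontrolled under your stated order of constant choices ($\ov{r}_1$, then $\de$, then $A_0$). Both points are repairable: $C_1$ is indeed independent of $\de$ (the $\de^{-1},\de^{-2}$ cutoff derivatives are offset by the $O(\de^2),O(\de)$ decay of $A(h_s)-h_s$ supplied by hypothesis~\ref{hyp_Lem_rounding_intermediate_i}), so $A_0=A_0(C_1)$ should be fixed first and $\de$ taken small afterwards, and the taper only needs $|\kappa'|\lesssim\de$ and $|\kappa''|\lesssim 1$ near $|x|=\de/2$, where $\triangle\kappa<0$ has the favorable sign, with the convex turnaround spread over the fixed annulus so that its contribution is $O(A_0\de)$ against $R_{w_s^4h_s}\geq c'>0$. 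As written, however, the estimates do not close, so the proof of \ref{ass_Prop_rounding_f} (and of \ref{ass_Prop_rounding_d}) needs to be redone along these lines — or one can avoid the compensating factor altogether, as the paper does, by shrinking the interpolation region with a logarithmic cutoff so that no correction of $w_s$ is needed.
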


In order to show that Lemma~\ref{Lem_rounding_intermediate} implies Proposition~\ref{Prop_rounding} we need the following lemma:

\begin{lemma} \label{Lem_existence_phi_s_u}
Let $(h_s)_{s \in X}$ be a continuous family of Riemannian metrics on $B^3$ and let $0 < \ov{r}_1 < 1$.
Then there is a constant $0 < \ov{r}'_1 < \ov{r}_1$ and a continuous family of diffeomorphisms $(\phi_{s,u} : B^3 \to B^3)_{s \in X, u \in [0,1]}$ such that for all $s \in X$ and $u \in [0,1]$:
\begin{enumerate}[label=(\alph*)]
\item \label{ass_Lem_existence_phi_s_u_a} $\phi_{s,u} = \id$ on $B^3 \setminus B^3 (\ov{r}_1)$.
\item \label{ass_Lem_existence_phi_s_u_b} $\phi_{s,0} = \id$
\item \label{ass_Lem_existence_phi_s_u_c} The pullback $\phi^*_{s,1} h_s$ agrees with the Euclidean metric $g_{\eucl}$ at the origin up to first order.
\item \label{ass_Lem_existence_phi_s_u_d} If $h_s$ is  conformally flat, then $\phi^*_{s,1} h_s$ restricted to $B^3(\ov{r}'_1)$ is conformally equivalent to $g_{\eucl}$.
\item \label{ass_Lem_existence_phi_s_u_e} If $h_s$ is compatible with the standard spherical structure on $D^3(r)$ for some $r \in [\ov{r}_1,1]$, then so is $\phi^*_{s,u} h_s$.
\end{enumerate}
\end{lemma}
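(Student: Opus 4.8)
The plan is to construct the family $(\phi_{s,u})$ using the conformal exponential map from Subsection~\ref{subsec_conf_exp}, which is precisely engineered to produce these properties. First I would choose $\ov{r}'_1 < \ov{r}_1$ small enough that for all $s \in X$ the conformal exponential map $\confexp_{h_s, 0}$ (based at the origin of $B^3$) is defined on a ball of radius larger than $\ov{r}_1$ in $T_0 B^3 \cong \IR^3$; this is possible by the continuity of $r_{h_s, 0}$ in the metric, Proposition~\ref{Prop_properties_confexp}\ref{ass_properties_confexp_a}, and compactness of $X$. Identifying $T_0 B^3$ with $\IR^3$ via the standard coordinates, let $\psi_s := \confexp_{h_s, 0}^{-1}$, a diffeomorphism from a neighborhood of $D^3(\ov{r}_1)$ in $B^3$ onto its image, with $\psi_s(0) = 0$. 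By Proposition~\ref{Prop_properties_confexp}\ref{ass_properties_confexp_d} the pullback $(\psi_s^{-1})^* h_s = \confexp_{h_s,0}^* h_s$ agrees with $g_{\eucl}$ at the origin up to first order; by \ref{ass_properties_confexp_c} if $h_s$ is conformally flat then this pullback is conformally equivalent to $g_{\eucl}$ on $\confexp_{h_s,0}^{-1}(B^3(\ov{r}_1))$ (which contains $D^3(\ov{r}'_1)$ for $\ov{r}'_1$ small); and by \ref{ass_properties_confexp_b} if $h_s$ is compatible with the standard spherical structure on $D^3(r)$ then the pullback is invariant under the standard $O(3)$-action, hence compatible with the standard spherical structure. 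These are exactly Assertions \ref{ass_Lem_existence_phi_s_u_c}--\ref{ass_Lem_existence_phi_s_u_e} for the diffeomorphism $\psi_s$, except that $\psi_s$ is only defined near $D^3(\ov{r}_1)$, not on all of $B^3$, and does not equal the identity outside $B^3(\ov{r}_1)$.

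The second step is to remedy these two defects by cutting off $\psi_s$ to the identity outside $B^3(\ov{r}_1)$ and simultaneously producing the homotopy in $u$. The key point is that $\psi_s$ is $C^\infty_{\loc}$-close to the identity near $0$ when $h_s$ is close to Euclidean; but in general $\psi_s$ is just some diffeomorphism fixing $0$, so one cannot naively linearly interpolate $u \mapsto (1-u)\,\id + u\,\psi_s$ and stay among diffeomorphisms. Instead I would compose with a family of scalings. Fix a smooth cutoff $\chi : [0,\infty) \to [0,1]$ with $\chi \equiv 1$ on $[0, \tfrac14 \ov{r}_1]$ and $\chi \equiv 0$ on $[\tfrac12 \ov{r}_1, \infty)$, and for $\lambda \in (0,1]$ let $\sigma_\lambda(x) := \lambda x$ be the scaling of $\IR^3$. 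The rescaled diffeomorphism $\sigma_\lambda^{-1} \circ \psi_s \circ \sigma_\lambda$, restricted to a fixed ball, converges in $C^\infty_{\loc}$ as $\lambda \to 0$ to the linear map $d(\psi_s)_0$, uniformly in $s$ by compactness; composing further with a fixed path in $GL(3,\IR)$ from $d(\psi_s)_0$ to $\id$ (chosen continuously in $s$ — possible since one can Gram--Schmidt or polar-decompose, though one must be slightly careful that $GL^+$ is used and reflections handled, but $d(\psi_s)_0$ for $s$ in a connected component is isotopic to $\id$ or one absorbs the conformal factor) reduces to the case where $\psi_s$ is $C^1$-close to $\id$ on a definite ball. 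In that regime the standard isotopy extension / interpolation argument works: one writes $\psi_s = \exp(\xi_s)$ for a small vector field $\xi_s$ (or interpolates $\psi_s^u$ via $(1-u)\id + u\psi_s$, which is a diffeomorphism when the $C^1$-distance is $< 1$), multiplies $\xi_s$ by $\chi(|x|)$ to cut it off, and obtains $\phi_{s,u}$ with $\phi_{s,0} = \id$, $\phi_{s,1} = $ the cut-off modification of $\psi_s$ which equals $\id$ outside $B^3(\ov{r}_1)$ and equals $\psi_s$ (after the $\lambda$-scaling, hence on a ball of radius $\sim \lambda \cdot \tfrac14\ov{r}_1$, which becomes the new $\ov{r}'_1$) near $0$.

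The final bookkeeping step is to verify that each property survives the cutoff and scaling. Properties \ref{ass_Lem_existence_phi_s_u_a} and \ref{ass_Lem_existence_phi_s_u_b} hold by construction. For \ref{ass_Lem_existence_phi_s_u_c}: near $0$, $\phi_{s,1}$ agrees with a scaling composed with $\psi_s$, and a scaling is conformal so it preserves "agrees with $g_{\eucl}$ to first order at $0$" (it scales $g_{\eucl}$ to a constant multiple, which one absorbs; more simply, arrange the final path in $GL(3)$ so that $\phi_{s,1}$ near $0$ is exactly the linear map identity composed with $\psi_s$, giving the first-order agreement directly from Proposition~\ref{Prop_properties_confexp}\ref{ass_properties_confexp_d}). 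For \ref{ass_Lem_existence_phi_s_u_d} and \ref{ass_Lem_existence_phi_s_u_e}: these are properties of $\phi_{s,u}^* h_s$ on the small ball $B^3(\ov{r}'_1)$ where $\phi_{s,u}$ is a composition of $\psi_s$ with scalings and linear maps; scalings preserve both conformal flatness (conformal equivalence to $g_{\eucl}$) and $O(3)$-invariance, and one arranges the linear-map path to pass through $O(3)$ when $d(\psi_s)_0 \in O(3)\cdot\IR_+$, which holds in the rotationally symmetric and conformally flat cases — or one simply notes that in those cases $\psi_s$ itself is $O(3)$-equivariant resp. conformal, so any $O(3)$-equivariant resp. conformal interpolation works. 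I expect the main obstacle to be precisely this last point: making all the interpolations (the $GL(3)$-path and the cutoff isotopy) simultaneously continuous in $s \in X$, equivariant in the rotationally symmetric case, and conformality-preserving in the conformally flat case, so that a single construction handles all cases at once; the cleanest route is to do the cutoff isotopy purely by rescaling (i.e. $\phi_{s,u} := \sigma_{\lambda(u)}^{-1}\circ \psi_s \circ \sigma_{\lambda(u)}$ suitably reinterpreted as a diffeomorphism of all of $B^3$ via a cutoff that is itself conformal-to-Euclidean-compatible), since radial rescalings manifestly commute with $O(3)$ and preserve conformal classes, thereby sidestepping the equivariance issues entirely.
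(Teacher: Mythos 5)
Your starting point — the conformal exponential map from Subsection~\ref{subsec_conf_exp} — is the same as the paper's, but the execution has two genuine gaps, one of which would make assertions \ref{ass_Lem_existence_phi_s_u_c} and \ref{ass_Lem_existence_phi_s_u_d} fail.

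\textbf{Missing normalization by $L_s$.} Proposition~\ref{Prop_properties_confexp}\ref{ass_properties_confexp_d} states that $\confexp_p^* g$ agrees to first order with $g_{\eucl,p}$, which by inspection of its proof is the constant flat metric on $T_pM$ determined by $g_p$ — \emph{not} the standard Euclidean metric from identifying $T_0 B^3 \cong \IR^3$. These coincide only if $(h_s)_0 = g_{\eucl}$. Without a linear normalization, the pullback agrees at the origin with $(h_s)_0$, not with $g_{\eucl}$, and for Assertion \ref{ass_Lem_existence_phi_s_u_d} one would get conformal equivalence to $(h_s)_0$, which is generically \emph{not} conformal to $g_{\eucl}$ (a constant symmetric positive-definite matrix is conformal to the identity only if it is a scalar multiple). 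The paper fixes this by precomposing with the unique $L_s \in S^+_3$ with $(h_s)_0(L_s\cdot, L_s\cdot) = \langle\cdot,\cdot\rangle$, so that $(\confexp_{0,h_s}\circ L_s)^* h_s$ agrees with $g_{\eucl}$ to first order and $L_s^* g_{\eucl,0} = g_{\eucl}$. You gesture at this with ``Gram--Schmidt or polar-decompose,'' but you attach the correction to the derivative $d(\psi_s)_0$, which for $\psi_s = \confexp^{-1}$ is already the identity; the point is to correct the zeroth-order value of the pullback metric, and this requires precomposing the chart $\confexp$ with $L_s$, not composing $\psi_s$ with a path to $\id$ afterwards.

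\textbf{The scaling conjugation does not achieve what you need.} If you set $\phi_{s,1}$ near $0$ equal to $F_\lambda := \sigma_\lambda^{-1}\circ\psi_s\circ\sigma_\lambda$, then $F_\lambda^* h_s$ is a genuinely different metric from $\psi_s^* h_s$: writing $\psi_s(y) = L_s y + Q(y,y)/2 + O(|y|^3)$, a direct computation shows the first radial derivative of $F_\lambda^* h_s$ at $0$ equals $(\lambda - 1)$ times Hessian terms involving $Q$, which does not vanish unless $\lambda = 1$. So after scaling, Assertion \ref{ass_Lem_existence_phi_s_u_c} is lost. If instead you scale \emph{back} ($\phi_{s,1} := \sigma_\lambda\circ\phi_s'\circ\sigma_\lambda^{-1}$, where $\phi_s'$ is a cutoff of $\sigma_\lambda^{-1}\circ\psi_s\circ\sigma_\lambda$), you do recover $\phi_{s,1} = \psi_s$ near $0$ — but now the cutoff of $\sigma_\lambda^{-1}\circ\psi_s\circ\sigma_\lambda$ to $\id$ is only possible if $d(\psi_s)_0 = \id$, which forces you to \emph{not} do the $L_s$ normalization, sending you back to the first gap. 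With the correctly normalized $\psi_s = \confexp\circ L_s$, one has $d(\psi_s)_0 = L_s$, and the scaled map converges to the linear map $L_s \ne \id$, so a cutoff isotopy to the identity is not available; some compactly-supported diffeomorphism with prescribed derivative $L_s$ at $0$ has to be constructed. This is precisely the role of the family $(\zeta_A)_{A\in S^+_3}$ in the paper's Claim, which you do not construct. The paper's route — linear interpolation $u\,\confexp\circ L_s + (1-u)\id$ on a small ball $B^3(r_3)$ (which works because $u L_s + (1-u)I$ stays in $S^+_3$), glued to $\zeta_{uL_s + (1-u)I}$ outside via a tiny cutoff $\eta_{r_4}$ — sidesteps the scaling issue entirely and gives, by construction, control of the derivative at the origin so the gluing is a diffeomorphism for small $r_4$.

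A minor third point: you define $\psi_s = \confexp^{-1}$ but then build $\phi_{s,1}$ to equal $\psi_s$ near $0$; the assertions concern $\phi_{s,1}^* h_s$, which should then equal $(\confexp^{-1})^* h_s$, while Proposition~\ref{Prop_properties_confexp} controls $\confexp^* h_s$. The two are not the same metric. You should take $\phi_{s,1} = \confexp\circ L_s$ (the paper's $\psi_{s,1}$) near $0$.
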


\begin{proof}
Choose $r_2 > 0$ small enough such that $\confexp_{0, h_s} |_{B^3(r_2)} : B^3 (r_2) \to B^3$ is defined for all $s \in X$.
Let $S^+_3$ the set of symmetric positive definite $3 \times 3$ matrices and denote by $I \in S^+_3$ the identity matrix.
For any $s \in X$ choose the matrix $L_s \in S^+_3$ with the property $(h_s)_0 (L_s v, L_s w) = v \cdot w$ for any $v, w \in \IR^3$, where the latter denotes the standard Euclidean inner product.
Then $L_s$ is continuous in $s$ and we can find an $r_3 > 0$ such that the maps
\[ \psi_{s,u} := u \confexp_{0, h_s} \circ L_s |_{B^3(r_3)} + (1-u) \id_{B^3(r_3)} : B^3 (r_3) \to B^3 \]
are defined and continuous for all $s \in X$, $u \in [0,1]$.
Note that for all $s \in X$, $u \in [0,1]$ we have
\[ \psi_{s,u} (0) = 0, \qquad (d\psi_{s,u})_0 = u L_s + (1-u) I \]
and $\psi_{s,1}^* h_s$ agrees with $g_{\eucl}$ at the origin up to first order.

\begin{Claim}
There is a continuous family of diffeomorphisms $(\zeta_A : B^3 \to B^3)_{A \in S^+_3}$, parameterized by the set of symmetric positive definite $3 \times 3$ matrices, such that for all $A \in S^+_3$:
\begin{enumerate}[label=(\alph*)]
\item \label{ass_cl_zeta_A_a} $\zeta_A= \id$ on $B^3 \setminus B^3 (\ov{r}_1)$
\item $\zeta_A (0) = 0$.
\item $(d\zeta_A)_0 = A$.
\item $\zeta_{I}  = \id_{B^3}$.
\end{enumerate}
\end{Claim}

\begin{proof}
Let $S_3$ be the set of symmetric $3 \times 3$ matrices and recall that $\exp : S_3 \to S_3^+$ is a diffeomorphism.
For any compactly supported vector field $V$ on $B^3$ let $\zeta'_V : B^3 \to B^3$ be the flow of $V$ at time $1$.
If $V_0 = 0$, then $(d \zeta_V)_0 = \exp (dV_0)$.
By taking linear combinations, we can find a continuous (or linear) family of vector fields $(V_A)_{A \in S_3}$ on $B^3$ whose support lies inside $B^3(\ov r_1)$ such that $(V_A)_0 = 0$ and $(dV_A)_0 = A$.
We can then set $\zeta_{\exp (A)} := \zeta'_{V_A}$.
\end{proof}

Let $\eta : \IR^3 \to [0,1]$ be a smooth, rotationally symmetric cutoff function with $\eta \equiv 1$ on $B^3(1)$ and $\eta \equiv 0$ on $\IR^3 \setminus B^3(2)$.
Let $r_4 > 0$ and set $\eta_{r_4} := \eta (v/r_4)$ and 
\[ \phi_{s, u} := \eta_{r_4} \psi_{s, u} + (1-\eta_{r_4} ) \zeta_{u L_s + (1-u) I}. \]
A standard limit argument shows that the family $(\phi_{s})_{s \in X}$ consists of diffeomorphisms for sufficiently small $r_4$.

Assertion~\ref{ass_Lem_existence_phi_s_u_a} holds due to Assertion~\ref{ass_cl_zeta_A_a} of the Claim if $r_4$ is chosen sufficiently small.
Assertion~\ref{ass_Lem_existence_phi_s_u_b} holds since $\psi_{s,0} = \eta_{r_4} \id + (1-\eta_{r_4}) \zeta_I = \id$.
Assertions~\ref{ass_Lem_existence_phi_s_u_c} and \ref{ass_Lem_existence_phi_s_u_d} hold for small $\ov{r}'_1$ since $\phi_{s,1} = \psi_{s,1}$ near the origin and due to the discussion preceding the Claim.
For Assertion~\ref{ass_Lem_existence_phi_s_u_e} assume that $h_s$ is compatible with the standard spherical structure on $D^3(r)$ for some $r \in [\ov{r}_1, 1]$.
Then $h_s$ agrees with $g_{\eucl}$ at the origin up to first order, so $L_s = I$.
Moreover $\confexp_{0,h_s}$ preserves the standard spherical structure on $\confexp^{-1}_{0,h_s} (D^3 (r))$.
Therefore $\phi_{s,u}$ preserves the standard spherical structure on $\phi^{-1}_{s,u} (D^3 (r)) = D^3(r)$, since $r \geq \ov{r}_1$.
This implies Assertion~\ref{ass_Lem_existence_phi_s_u_e}. 
\end{proof}

\begin{proof}[Proof that Lemma~\ref{Lem_rounding_intermediate} implies Proposition~\ref{Prop_rounding}]
Consider the constant $0 < \ov{r}'_1 < \ov{r}_1$ and the family of diffeomorphisms $(\phi_{s,u} : B^3 \to B^3)_{s \in X, u \in [0,1]}$ from Lemma~\ref{Lem_existence_phi_s_u}.
Apply Lemma~\ref{Lem_rounding_intermediate} with $\ov{r}_1$ replaced by $\ov{r}'_1$ to $(\phi^*_{s,1} h_s)_{s \in X}$, consider the constant $0 < r_1 < \ov{r}_1$ and call the resulting family of metrics $(\td{h}'_{s,u})_{s \in X, u \in [0,1]}$.
Set
\[ h'_{s,u} := \begin{cases} \phi^*_{s, 2u} h_s & \text{if $u \in [0, \frac12]$} \\
\td{h}'_{s,2u-1} & \text{if $u \in (\frac12, 1]$} \end{cases} \]
Note that this family is continuous due to  Lemma~\ref{Lem_rounding_intermediate}\ref{ass_Prop_rounding_b}.

We claim that $(h'_{s,u})_{s \in X, u \in [0,1]}$ satisfies Assertions~\ref{ass_Prop_rounding_a}--\ref{ass_Prop_rounding_f} of this proposition.
For Assertion~\ref{ass_Prop_rounding_a} observe that on $B^3 \setminus B^3 (\ov{r}_1)$ we have $h'_{s,u} = \phi^*_{s, 2u} h_s = h_s$ if $u \in [0,\frac12]$, due to Lemma~\ref{Lem_existence_phi_s_u}\ref{ass_Lem_existence_phi_s_u_a} and $h'_{s,u} = h_s$ on $B^3 \setminus B^3 (\ov{r}_1) \subset B^3 \setminus B^3 (\ov{r}'_1)$ if $u \in [\frac12,1]$, due to Lemma~\ref{Lem_rounding_intermediate}\ref{ass_Prop_rounding_a}.
Assertion~\ref{ass_Prop_rounding_b} holds since $\phi_{s,0} = \id$; compare with Lemma~\ref{Lem_existence_phi_s_u}\ref{ass_Lem_existence_phi_s_u_b}.
Assertions~\ref{ass_Prop_rounding_c}, \ref{ass_Prop_rounding_e} and \ref{ass_Prop_rounding_f} hold due to the same assertions of Lemma~~\ref{Lem_rounding_intermediate}, because $h'_{s,1} = \td{h}'_{s,1}$ and due to Lemma~\ref{Lem_existence_phi_s_u}\ref{ass_Lem_existence_phi_s_u_a}.
Consider now Assertion~\ref{ass_Prop_rounding_d} and assume that $h_s$ is compatible with the standard spherical structure on $D^3 (r)$ for some $r \in [\ov{r}_1,1]$.
By Lemma~\ref{Lem_existence_phi_s_u}\ref{ass_Lem_existence_phi_s_u_e} the same is true for the pullbacks $\phi^*_{s,u} h_s$ and therefore by Lemma~\ref{Lem_rounding_intermediate}\ref{ass_Prop_rounding_d}, the same is true for $\td{h}'_{s,u}$.
\end{proof}

\begin{proof}[Proof of Lemma~\ref{Lem_rounding_intermediate}.]
Let $f : \IR \to [0,1]$ be a cutoff function with $f \equiv 1$ on $(-\infty, -2]$ and $f\equiv 0$ on $[-1, \infty)$.
Let $\eps > 0$ be a constant that we will determine later and set
\[ \nu (s) := f(\eps \log s ). \]
Fix a metric $\ov g$ on $B^3$ with the following properties:
\begin{enumerate}
\item $\ov g$ is $O(3)$-invariant.
\item $(\ov g_{ij})_0 = (\partial \ov g_{ij})_0 = 0$.
\item $\ov g$ is conformally equivalent to the Euclidean metric $g_{\eucl}$.
\item $w_s^4 \ov{g}$ has positive scalar curvature at the origin for all $s \in X_{PSC}$.
\end{enumerate}
Set
\[ h'_{s,u} := \big(1- u \cdot \nu (d_{h_s} (0, \cdot)) \big) h_s + u \cdot \nu (d_{h_s} (0, \cdot)) \ov{g}, \]
where $d_{h_s} (0, \cdot)$ denotes the radial distance function with respect to the metric $h_s$.

It remains to check that Assertions~\ref{ass_Prop_rounding_a}--\ref{ass_Prop_rounding_f} of Proposition~\ref{Prop_rounding} hold for sufficiently small $\eps$ and $r_1$.
Assertions~\ref{ass_Prop_rounding_a}--\ref{ass_Prop_rounding_c} trivially hold for sufficiently small $\eps$ and $r_1$.

For Assertion~\ref{ass_Prop_rounding_d} observe that if $h_s$ is compatible with the standard spherical structure on $D^3(r)$, then $d_{h_s} (0, \cdot)$ restricted to $D^3(r)$ is $O(3)$-invariant.
Moreover, $\ov{g}$ is compatible with the standard spherical structure on $D^3(r)$ as well.
It follows that $h'_{s,u}$ is compatible with the standard spherical structure on $D^3(r)$.

For Assertion~\ref{ass_Prop_rounding_e} assume that $h_s$ is  conformally flat.
Then by Assumption~\ref{hyp_Lem_rounding_intermediate_ii} $h_s$ is conformally equivalent to $g_{\eucl}$ on $B^3(\ov{r}_1)$.
Since $\ov{g}$ is conformally equivalent to $g_{\eucl}$ as well, this implies that $h'_{s,u}$ is conformally equivalent to $g_{\eucl}$ on $B^3(\ov{r}_1)$.
The conformal flatness on $B^3 \setminus B^3(\ov{r}_1)$ follows from Assertion~\ref{ass_Prop_rounding_a}

Lastly, we claim that Assertion~\ref{ass_Prop_rounding_f} holds for $w'_{s,u} = w_s$ if $\eps$ is chosen small enough.
To see this note first that there is a constant $C < \infty$ that is independent of $\eps, s, u$ such that
\[ \big| (h'_{s,u})_{ij} - \ov{g}_{ij} \big| \leq C r^2, \]
\[ \big| \partial_i (h'_{s,u})_{jk} \big| \leq C |\nu' (r)| \cdot |h_s-\ov{g}| + C r \leq C \eps r + C r, \]
\begin{multline*}
 \big| \partial^2 ( h'_{s,u})_{ij} - (1-u \nu) \cdot \partial^2 (h_s)_{ij} - u \nu \cdot \partial^2 \ov{g}_{ij} \big|  \\
 \leq C (|\nu''| + r^{-1} |\nu'| )  |h_{s} - \ov{g}|  + C  |\nu'| \cdot | \partial (h'_{s,u} - \ov{g})| 
 \leq C \eps^2 + C \eps .
\end{multline*}
Since $h'_{s,u} = h_s$ if $d_{h_s} (0, \cdot) > e^{-1/\eps}$ and since the $w_s$ is uniformly bounded in the $C^2$-norm for all $s \in X_{PSC}$, we obtain that
\[ | R_{w_s^4 h'_{s,u}} - (1-u\nu) R_{w_s^4 h'_{s,u}}  - u\nu  R_{w_s^4 \ov{g}}| \leq c \]
if $\eps \leq \ov\eps (c)$.
By compactness of $X_{PSC}$ we obtain that $R_{w_s^4 h'_{s,u}} > 0$ for sufficiently small $\eps$.
\end{proof}

\section{Partial homotopies} \label{sec_partial_homotopy}
In this section we introduce partial homotopies and certain modification moves, which will be used in Section~\ref{sec_deforming_families_metrics}.

\subsection{General setup} \label{subsec_gen_setup}
For the following discussion we fix a pair of finite simplicial complexes $(\mathcal{K}, \mathcal{L})$, where $\mathcal{L} \subset \mathcal{K}$ is a subcomplex.
We will denote by $L \subset K$ the geometric realizations of $\mathcal{L} \subset \mathcal{K}$.
When there is no chance of confusion, we will refer to the pair $(K,L)$ instead of $(\mathcal{K}, \mathcal{L})$.

Consider a fiber bundle $E \to K$ over $K$ whose fibers are smooth compact Riemannian 3-manifolds.
We will view this bundle as a continuous family of Riemannian manifolds $(M^s, g^s)_{s \in K}$ (see Remark~\ref{rmk_fiber_bundle_construction}).
Note that a particularly interesting case is the case in which $(M^s)_{s \in K}$ is given by a trivial family of the form $(M, g^s)_{s \in K}$, where $(g^s)_{s \in K}$ is a continuous family of Riemannian metrics on a fixed compact 3-manifold $M$.

\begin{definition}
A metric $g$ on a compact 3-manifold $M$ is called a {\bf CC-metric} if $(M,g)$ is homothetic to a quotient of the round sphere or round cylinder.
\end{definition}

If $L \neq \emptyset$, then we assume that the metrics $g^s$, $s \in L$, are CC-metrics.
If none of the $M^s$ are diffeomorphic to a spherical space form or a quotient of a cylinder, then no such metrics exist on any $M^s$ and therefore we must have $L = \emptyset$.

We will also fix a closed subset $K_{PSC} \subset K$ with the property that $(M^s, g^s)$ has positive scalar curvature for all $s \in K_{PSC}$.

Our ultimate goal in this and the following section (see Theorem~\ref{Thm_main_deform_to_CF}) will be to construct a transversely continuous family of metrics $(h^s_t)_{s \in K, t \in [0,1]}$ on $(M^s)_{s \in K}$ with the property that:
\begin{enumerate}[label=(\arabic*)]
\item \label{prop_family_1} $h^s_0 = g^s$ for all $s \in K$,
\item \label{prop_family_2} $h^s_1$ is conformally flat and PSC-conformal for all $s \in K$,
\item \label{prop_family_3} $h^s_t$ are CC-metrics for all $s \in L$, $t \in [0,1]$,
\item \label{prop_family_4} $(M^s, h^s_t)$ is PSC-conformal for any $s \in K_{PSC}$, $t \in [0,1]$; see Definition~\ref{Def_PSC_conformal}.
\end{enumerate}
Here $(h^s_t)_{s \in K, t \in [0,1]}$ is said to be transversely continuous if it is transversely continuous in every family chart of $(M^s)_{s \in K}$, or equivalently, if $(h^s_t)_{(s,t) \in  K \times [0,1]}$ is transversely continuous in the sense of Definition~\ref{def_continuity_smooth_objects} on the continuous family $(M^s \times \{ t \})_{(s,t) \in  K \times [0,1]}$.

In order to achieve this, we will apply Theorem~\ref{thm_existence_family_k} to find a continuous family of singular Ricci flows $(\M^s )_{s \in K}$ whose family of time-0-slices $(\M^s_0, g^s_0)_{s \in K}$ is isomorphic to $(M^s, g^s)_{s \in K}$; in the following we will identify both objects.
By Theorem~\ref{Thm_sing_RF_uniqueness} for all $s \in L$ all time-slices of $\M^s$ are CC-metrics.
By Theorem~\ref{Thm_PSC_preservation} the flow $\M^s$ has positive scalar curvature for all $s \in K_{PSC}$.

In the following we will write $\M^s = (\M^s, \t^s, g^s, \partial^s_\t)$ and we fix a transversely continuous family of $\RR$-structures
\[ \mathcal{R}^s = ( g^{\prime, s}, \lb \partial^{\prime, s}_\t,  \lb U^s_{S2}, \lb U^s_{S3}, \lb \mathcal{S}^s)  \]
for each $\M^s$.
Such a family exists due to Theorem~\ref{Thm_rounding} and we can ensure that
\begin{equation} \label{eq_RR_trivial_over_L}
g^{\prime,s} = g^s, \quad \partial^{\prime,s}_\t = \partial^s_\t \qquad \text{for all} \quad s \in L.
\end{equation}
We will discuss further geometric and analytic properties of this structure in Section~\ref{sec_deforming_families_metrics}; for the purpose of this section it suffices to assume that $(\RR^s)_{s \in K}$ satisfies the properties of Definitions~\ref{Def_R_structure} and \ref{Def_RR_structure_transverse_cont}.

Let $T \geq 0$.
The goal of this section will be to introduce a new type of partially defined homotopy starting from the the family of metrics $( g^{\prime,s}_{T})_{s \in K}$ on the family of time-$T$-slices $(\M^s_T)_{s \in K}$.
We will see that if $T = 0$, then under a certain conditions this partial homotopy implies the existence of the desired family $(h^s_t)_{s \in K, t \in [0,1]}$ satisfying Properties~\ref{prop_family_1}--\ref{prop_family_4}.
We will moreover discuss ``moves'' that will allow us to ``improve'' a given partial homotopy and enable us to flow it backwards in time, i.e. decrease the time parameter $T$.

\subsection{Definition of a partial homotopy}
Let $X$ be a topological space.

\begin{definition}[Metric deformation] \label{Def_metric_deformation}
A {\bf metric deformation over $X$} is a pair $(Z,   (g_{s,t})_{s \in X, t \in [0,1]})$ with the following properties:
\begin{enumerate}[label=(\arabic*)]
\item $Z$ is a compact 3-manifold with boundary whose boundary components are spheres.
\item $(g_{s,t})_{s \in X, t \in [0,1]}$ is a continuous family of Riemannian metrics.
\item For all $s \in X$, the Riemannian manifold $(Z, g_{s,1})$ is conformally flat and PSC-conformal.
\end{enumerate}
\end{definition}

We can now define a partial homotopy.
For this purpose, fix some $T \geq 0$ and consider the simplicial pair $(K,L)$, the continuous family of singular Ricci flows $(\M^s)_{s \in K}$ over $K$, as well as the family of $\RR$-structures $(\RR^s)_{s \in K}$ from Subsection~\ref{subsec_gen_setup}.

\begin{definition}[Partial homotopy] \label{Def_partial_homotopy}
For every simplex $\sigma \subset K$ consider a metric deformation $( Z^\sigma, \lb (g^\sigma_{s,t})_{s \in \sigma, t \in [0,1]})$ and a transversely continuous family of embeddings $(\psi^\sigma_s : Z^\sigma \to \M^s_T )_{s \in \sigma}$.
We call $\{ ( Z^\sigma, \lb (g^\sigma_{s,t})_{s \in \sigma, t \in [0,1]}, \lb (\psi^\sigma_s  )_{s \in \sigma}) \}_{\sigma \subset K}$ a {\bf partial homotopy (at time $T$ relative to $L$ for the the family of $\RR$-structures $(\RR^{s})_{s \in K}$)} if the following holds:
\begin{enumerate}[label=(\arabic*)]
\item \label{prop_def_partial_homotopy_1} For all $s \in \sigma \subset K$ we have $(\psi^{ \sigma}_s)^* g^{\prime,s}_{T} = g^{\sigma}_{s, 0}$.
\item \label{prop_def_partial_homotopy_2} For all $s \in \tau \subsetneq \sigma \subset K$ we have $ \psi^{ \sigma}_s ( Z^{ \sigma} ) \subset \psi^{ \tau}_s ( Z^{ \tau} )$.
\item \label{prop_def_partial_homotopy_3} For all $s \in \tau \subsetneq \sigma \subset K$ and $t \in [0,1]$ we have $((\psi^{ \tau}_s )^{-1} \circ \psi^{ \sigma}_s )^* g^\tau_{s,t} = g^\sigma_{s,t}$.
\item \label{prop_def_partial_homotopy_4} For each $s \in \tau \subsetneq \sigma \subset K$ and for the closure $\C$ of each component of $Z^{ \tau}  \setminus((\psi^{ \tau}_s )^{-1} \circ \psi^{ \sigma}_s ) ( Z^{ \sigma} )$ one (or both) of the following is true:
\begin{enumerate}[label=(\roman*)]
\item \label{prop_def_partial_homotopy_4i} $\psi^\tau_s (\C) \subset U_{S2}^s$ and $\psi^\tau_s (\C)$ is a union of  spherical fibers.
Moreover, for every $t \in [0,1]$ the metric $(\psi^\tau_s)_* g^\tau_{s,t}$ restricted to $\psi^\tau_s (\C)$ is compatible with the restricted spherical structure.
\item \label{prop_def_partial_homotopy_4ii} $\partial \C = \emptyset$, $\psi^\tau_s (\C) \subset U^s_{S3}$ and for every $s' \in \tau$ near $s$ the metric $g^\tau_{s',t}$ restricted to $\C$ is a multiple of $g^\tau_{s',0}$ for all $t \in [0,1]$.
\end{enumerate}
\item \label{prop_def_partial_homotopy_5} For every $\sigma \subset K$ and every component $\Sigma \subset \partial Z^\sigma$ the image $\psi^\sigma_s (\Sigma)$ is a regular fiber of $\SS^s$ for all $s \in \sigma$.
Moreover, there is an $\eps > 0$ that is independent of $s$ such that for all $t \in [0,1]$ the metric $(\psi^\sigma_s)_*  g^{\sigma}_{s,t}$ is compatible with $\SS^s$ on an $\eps$-collar neighborhood of $\psi^\sigma_s (\Sigma)$ inside $\psi^\sigma_s (Z^\sigma)$.
\item \label{prop_def_partial_homotopy_6} For every $s \in \sigma \subset L$ we have $\psi^\sigma_s (Z^\sigma) = \emptyset$ or $\M^s_T$ and the metrics $g^\sigma_{s,t}$, $t \in [0,1]$ are either multiples of the same constant curvature metric or they are isometric to quotients of the round cylinder and admit the same local isometric $O(3)$-actions.
\end{enumerate}
We say that the partial homotopy is {\bf PSC-conformal over $s \in K$} if for any simplex $\sigma \subset K$ with $s \in \sigma$ and any $t \in [0,1]$ the Riemannian manifold $(Z^\sigma, g^\sigma_{s,t})$ is PSC-conformal.

If $Z^\sigma = \emptyset$ for all $\sigma \subset K$, then the partial homotopy is called {\bf trivial}.
\end{definition}

In other words, a partial homotopy is given by metric deformations $(\psi^\sigma_s)_* g^\sigma_{s,t}$ on the $s$-dependent domains $\psi^\sigma_s (Z^\sigma) \subset \M^s_T$ starting from $(\psi^\sigma_s)_* g^\sigma_{s,0} = g^{\prime,s}_T$ (see Property~\ref{prop_def_partial_homotopy_1}).
For a fixed $s \in K$ there may be several such domains and deformations, for different simplices $\sigma \subset K$ containing $s$.
Property~\ref{prop_def_partial_homotopy_2} states that these domains are nested and decrease in size as the dimension of $\sigma$ increases.
Property~\ref{prop_def_partial_homotopy_3} states that any two deformations $(\psi^\sigma_s)_* g^\sigma_{s,t}$, $(\psi^\tau_s)_* g^\tau_{s,t}$ ($\tau \subset \sigma$) for the same parameter $s$ agree on the smaller domain $\psi^\sigma_s (Z^\sigma)$ and Property~\ref{prop_def_partial_homotopy_4} imposes a symmetry condition of the larger deformation $(\psi^\tau_s)_* g^\tau_{s,t}$ on the difference $\psi^\tau_s (Z^\tau) \setminus \psi^\tau_s (Z^\sigma)$.
The use of the parameter $s'$ in Property~\ref{prop_def_partial_homotopy_4}\ref{prop_def_partial_homotopy_4ii} ensures that $\sigma$ admits an \emph{open} over $\sigma = U^\sigma_{(i)} \cup U^\sigma_{(ii)}$ such that Property~\ref{prop_def_partial_homotopy_4}\ref{prop_def_partial_homotopy_4i} holds for all $s \in U^\sigma_{(i)}$ and Property~\ref{prop_def_partial_homotopy_4}\ref{prop_def_partial_homotopy_4ii} holds for all $s \in U^\sigma_{(ii)}$; this fact will be important in the proof of Proposition~\ref{prop_extending} (see also Proposition~\ref{prop_exten_round}).
Property~\ref{prop_def_partial_homotopy_5} is a technical property, which will allow us to extend a deformation $(\psi^\sigma_s)_* g^\sigma_{s,t}$ to a larger domain using metrics that are $\SS^s$ compatible.
It could potentially be replaced by a condition requiring the boundary components of $\psi^\sigma_s (Z^\sigma)$ to be umbilic round spheres with respect to all metrics $(\psi^\sigma_s)_* g^\sigma_{s,t}$, plus some conditions on the higher derivatives.
In the cylindrical case Property~\ref{prop_def_partial_homotopy_6} implies that for fixed $s \in \sigma$ the metrics $g^\sigma_{s,t}$, $t \in [0,1]$ can locally be expressed as $a_t^2 g_{S^2} + b_t^2 dr^2$ for some continuously varying $a_t, b_t > 0$.

On a more philosophical level, Definition~\ref{Def_partial_homotopy} formalizes the idea of a continuous family of metric deformations on subsets of $\M^s_T$, which are defined up to some ambiguity.
This ambiguity is ``supported'' on the differences $\psi^\tau_s (Z^\tau) \setminus \psi^\tau_s (Z^\sigma)$, which we will later choose to be subsets of small scale $\rho$.
The deformations  $(\psi^\sigma_s)_* g^\sigma_{s,t}$ restricted to these differences are required to be of a very controlled symmetric form.
This will allow us to argue that the ambiguity expressed in Definition~\ref{Def_partial_homotopy} is ``contractible'' in a certain sense.

Lastly, let us comment on the use of the PSC-conformal condition in Definition~\ref{Def_partial_homotopy}.
The reason that we are using this condition instead of the standard positive scalar curvature condition is rather subtle, but it will be central in the proof of Proposition~\ref{prop_extending} below.
In short, it has to do with poor contractibility properties of certain spaces of warped product metrics with positive scalar curvature.
The PSC-conformal condition, on the other hand, is much more forgiving; for instance, a metric is automatically PSC conformal if it is compatible with a spherical structure whose domain is the entire manifold.
 
\subsection{Constructing the desired family of metrics from a partial homotopy}
Our strategy in Section~\ref{sec_deforming_families_metrics} will be to inductively construct partial homotopies at a sequence of decreasing times $T_i$ with $T_i = 0$ for some large $i$.
The following proposition will allow us to convert a partial homotopy at time $0$ to a conventional homotopy.
It essentially states that we can construct a family $(h^s_t)_{t \in [0,1],s \in K}$ satisfying Properties~\ref{prop_family_1}--\ref{prop_family_4} from Subsection~\ref{subsec_gen_setup} from a partial homotopy at time $0$ if the maps $\psi^\sigma_s$ are all surjective.

\begin{proposition} \label{prop_partial_homotopy_standard_homotopy}
Suppose there is a partial homotopy $\{ ( Z^\sigma, \lb (g^\sigma_{s,t})_{s \in \sigma, t \in [0,1]}, \lb (\psi^\sigma_s  )_{s \in \sigma}) \}_{\sigma \subset K}$ at time $0$ relative to $L$ with the property that $\psi^\sigma_s (Z^\sigma) = \M^s_0$ for all $s \in \sigma \subset K$.
Then there is a family of family of metrics $(h^s_t)_{t \in [0,1],s \in K}$ satisfying Properties~\ref{prop_family_1}--\ref{prop_family_3} from  Subsection~\ref{subsec_gen_setup}.

Moreover for any $s \in K$, $t \in [0,1]$ the following holds.
If the partial homotopy is PSC-conformal at $s$, then $(M^s, g^s_t)$ is PSC-conformal (compare with Property~\ref{prop_family_4} above).
\end{proposition}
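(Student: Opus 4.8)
The plan is to reconstruct a single homotopy $(h^s_t)_{s\in K,t\in[0,1]}$ by ``pasting together'' the family deformations $(\psi^\sigma_s)_*g^\sigma_{s,t}$ using a partition of unity subordinate to an open cover of $K$ indexed by the simplices of $\mathcal K$. First I would fix, for each simplex $\sigma\subset K$, an open neighborhood $U_\sigma$ of the open star of the barycenter of $\sigma$ in $K$, chosen small enough that $U_\sigma$ retracts onto $\sigma$ and so that $U_\sigma\cap U_\tau\neq\emptyset$ only when $\tau\subset\sigma$ or $\sigma\subset\tau$; pick a subordinate partition of unity $\{\chi_\sigma\}$. Because $\psi^\sigma_s(Z^\sigma)=\M^s_0$ for every $s\in\sigma$, each deformation is a genuine family of metrics on all of $\M^s_0=(M^s,g^s)$ pulled back along the diffeomorphism $\psi^\sigma_s$. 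The compatibility Properties~\ref{prop_def_partial_homotopy_2}, \ref{prop_def_partial_homotopy_3} of Definition~\ref{Def_partial_homotopy} show that when $\tau\subsetneq\sigma$ the two deformations $(\psi^\tau_s)_*g^\tau_{s,t}$ and $(\psi^\sigma_s)_*g^\sigma_{s,t}$ \emph{agree}, since surjectivity forces $\psi^\tau_s(Z^\tau)=\M^s_0=\psi^\sigma_s(Z^\sigma)$ and hence the difference set in Property~\ref{prop_def_partial_homotopy_4} is empty. Therefore there is no actual ambiguity, and for each $s\in K$ the collection $\{(\psi^\sigma_s)_*g^\sigma_{s,t}:\sigma\ni s\}$ consists of pairwise equal deformations of $(M^s,g^s)$; I would simply \emph{define} $h^s_t$ to be this common metric (equivalently $h^s_t:=\sum_\sigma\chi_\sigma(s)\,(\psi^\sigma_s)_*g^\sigma_{s,t}$, which collapses to the same thing since all terms in the sum coincide).

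Next I would verify Properties~\ref{prop_family_1}--\ref{prop_family_3} and the PSC-conformal statement. Property~\ref{prop_family_1}: at $t=0$, Property~\ref{prop_def_partial_homotopy_1} of the partial homotopy gives $(\psi^\sigma_s)^*g^{\prime,s}_0=g^\sigma_{s,0}$, i.e. $h^s_0=g^{\prime,s}_0$; and by \eqref{eq_RR_trivial_over_L} together with the normalization $T=0$ and Theorem~\ref{Thm_rounding}\ref{ass_thm_rounding_b} (the $\RR$-structure agrees with the original data near the initial time-slice), $g^{\prime,s}_0=g^s_0=g^s$, so $h^s_0=g^s$. Property~\ref{prop_family_2}: for each $s$ pick any $\sigma\ni s$; by Definition~\ref{Def_metric_deformation}(3) the metric $g^\sigma_{s,1}$ is conformally flat and PSC-conformal on $Z^\sigma$, and $\psi^\sigma_s$ is a diffeomorphism $Z^\sigma\to\M^s_0=M^s$, so $h^s_1=(\psi^\sigma_s)_*g^\sigma_{s,1}$ is conformally flat and PSC-conformal on $M^s$. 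Property~\ref{prop_family_3}: for $s\in L$, Property~\ref{prop_def_partial_homotopy_6} forces $g^\sigma_{s,t}$ to be CC-metrics, hence so is $h^s_t$. The PSC-conformal clause: if the partial homotopy is PSC-conformal at $s$, then $(Z^\sigma,g^\sigma_{s,t})$ is PSC-conformal for each $\sigma\ni s$ and $t$, and pushing forward by the diffeomorphism $\psi^\sigma_s$ gives that $(M^s,h^s_t)$ is PSC-conformal.

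The last point to check is transverse continuity of $(h^s_t)_{s\in K,t\in[0,1]}$. Here I would argue locally: near a point $s_0\in K$, let $\sigma$ be the unique lowest-dimensional simplex with $s_0$ in its relative interior; then for $s$ in a small neighborhood of $s_0$ every simplex containing $s$ contains $\sigma$, and by the equality of deformations established above we have $h^s_t=(\psi^\sigma_s)_*g^\sigma_{s,t}$ on that neighborhood. Since $(\psi^\sigma_s)_{s\in\sigma}$ is a transversely continuous family of embeddings and $(g^\sigma_{s,t})_{s\in\sigma,t\in[0,1]}$ is a continuous family of metrics (Definition~\ref{Def_metric_deformation}(2)), Lemma~\ref{lem_generalizations_to_families}(a) applied to the inverse diffeomorphisms shows the push-forwards vary transversely continuously in $s$ and continuously in $t$; this gives transverse continuity of $(h^s_t)$ in the sense required in Subsection~\ref{subsec_gen_setup}. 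One must also confirm that these local descriptions agree on overlaps, which is immediate since they all equal the common metric $h^s_t$.

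\textbf{Main obstacle.} The only genuinely delicate point is the bookkeeping around transverse continuity across the simplicial strata — making sure that the ``common metric'' defined stratum-by-stratum really does assemble into a transversely continuous family over all of $K$, rather than just on each open star separately. The surjectivity hypothesis $\psi^\sigma_s(Z^\sigma)=\M^s_0$ is what makes the rest collapse to triviality (no partition-of-unity interpolation is actually needed, since the candidate metrics literally coincide); without it one would have to interpolate and the symmetry conditions in Property~\ref{prop_def_partial_homotopy_4} would be essential, but that is precisely the content of the more substantive propositions later in the paper rather than of this one.
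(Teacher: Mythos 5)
Your proposal is essentially the paper's argument: surjectivity of the $\psi^\sigma_s$ together with Property~\ref{prop_def_partial_homotopy_3} of Definition~\ref{Def_partial_homotopy} shows that $(\psi^\sigma_s)_*g^\sigma_{s,t}$ is independent of the choice of simplex $\sigma\ni s$, so $h^s_t$ is well-defined, and the remaining verifications (including that the partition of unity is a vacuous device here) follow directly from the definitions — which is exactly the paper's two-line proof fleshed out. One small imprecision in your transverse continuity paragraph: you write $h^s_t=(\psi^\sigma_s)_*g^\sigma_{s,t}$ for $s$ in a neighborhood of $s_0$ in the open star of $\sigma$, but $\psi^\sigma_s$ is defined only for $s\in\sigma$; the cleanest route is to note that $(h^s_t)_{s\in\sigma}$ is transversely continuous on each closed simplex $\sigma$ (immediate from the continuity of $(\psi^\sigma_s)_{s\in\sigma}$ and $(g^\sigma_{s,t})$) and that the simplices form a finite closed cover of $K$, so the pieces paste to a transversely continuous family.
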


\begin{proof}
By Definition~\ref{Def_partial_homotopy}\ref{prop_def_partial_homotopy_3} we have $(\psi^\tau_s)_* g^\tau_{s,t} = (\psi^\sigma_s)_* g^\sigma_{s,t}$ for all $s \in \tau \subsetneq \sigma \subset K$, $t \in [0,1]$.
So we can define $h^s_t := (\psi^\sigma_s)_* g^\sigma_{s,t}$ for any $s \in \sigma \subset K$.
The asserted properties of the family $(M^s = \M^s_0, (h^s_t)_{t \in [0,1]})_{s \in K}$ are direct consequences of Definition~\ref{Def_partial_homotopy}. 
\end{proof}

\subsection{Moving a partial homotopy backwards in time}
The following proposition allows us to construct a partial homotopy $\{ ( Z^\sigma, \lb (\ov g^\sigma_{s,t})_{s \in \sigma, t \in [0,1]}, \lb (\ov\psi^\sigma_s  )_{s \in \sigma}) \}_{\sigma \subset K}$ at an earlier time $T' \leq T$ from a partial homotopy $\{ ( Z^\sigma, \lb (g^\sigma_{s,t})_{s \in \sigma, t \in [0,1]}, \lb (\psi^\sigma_s  )_{s \in \sigma}) \}_{\sigma \subset K}$ at time $t$ such that the domains $\ov\psi^\sigma_s (Z^\sigma)$ arise by flowing $\psi^\sigma_s (Z^\sigma)$ backwards by the flow of $\partial^{\prime,s}$ by $T - T'$.
In order to achieve this we require that the differences $\psi^\tau_s (Z^\tau) \setminus \psi^\tau_s (Z^\sigma)$ remain in the support of $\RR^s$.

In the following we denote by $x^{\partial^{\prime,s}_\t} (t')$ and $X^{\partial^{\prime,s}_\t}(t')$ the image of $x \in \M^s_t$ and $X \subset \M^s_t$) under the time $(t'-t)$-flow of the vector field $\partial^{\prime,s}_\t$; this is the same notion as in Definition~\ref{def_points_in_RF_spacetimes} with $\partial_\t$ replaced by $\partial^{\prime,s}_\t$.

\begin{proposition} \label{prop_move_part_hom_bckwrds}
Consider a partial homotopy $\{ ( Z^\sigma, \lb ( g^\sigma_{s,t})_{s \in \sigma, t \in [0,1]}, \lb (\psi^\sigma_s  )_{s \in \sigma}) \}_{\sigma \subset K}$ at time $T$ relative to $L$ and let $T' \in [0, T]$.
Assume that:
\begin{enumerate}[label=(\roman*)]
\item \label{hyp_lem_move_part_hom_bckwrds_i} For all $s \in \sigma \subset K$ all points of $\psi^\sigma_s(Z^\sigma)$ survive until time $T'$ with respect to the flow of $\partial^{\prime,s}_\t$.
\item \label{hyp_lem_move_part_hom_bckwrds_ii} For all $s \in \tau \subsetneq \sigma \subset K$ and $t' \in [T', T]$ we have
 \[
\big( \psi^\tau(Z^\tau) \setminus \psi^\sigma_s(Z^\sigma) \big)^{\partial^{\prime,s}_\t} (t') \subset U^s_{S2} \cup U^s_{S3}. \]
\end{enumerate}
Then there is a partial homotopy of the form $\{ ( Z^\sigma, \lb (\ov g^\sigma_{s,t})_{s \in \sigma, t \in [0,1]}, \lb (\ov\psi^\sigma_s  )_{s \in \sigma}) \}_{\sigma \subset K}$ at time $T'$ relative to $L$ such that for all $s \in \sigma \subset K$:
\begin{enumerate}[label=(\alph*)]
\item \label{ass_lem_move_part_hom_bckwrds_a} $\ov\psi^\sigma_s (Z^\sigma) = (\psi^\sigma_s (Z^\sigma))^{\partial^{\prime,s}_\t}(T')$.
\item \label{ass_lem_move_part_hom_bckwrds_b} If $\{ ( Z^\sigma, \lb (\ov g^\sigma_{s,t})_{s \in \sigma, t \in [0,1]}, \lb (\ov\psi^\sigma_s  )_{s \in \sigma}) \}_{\sigma \subset K}$ is PSC-conformal over $s$ and if $g^{\prime,s}_{t'}$ restricted to $(\psi^\sigma_s(Z^\sigma))^{\partial^{\prime,s}_\t} (t')$ is PSC-conformal for all $t' \in [T',T]$, then $\{ ( Z^\sigma, \lb (g^\sigma_{s,t})_{s \in \sigma, t \in [0,1]}, \lb (\psi^\sigma_s  )_{s \in \sigma}) \}_{\sigma \subset K}$ is also PSC-conformal over $s$.
\end{enumerate}
\end{proposition}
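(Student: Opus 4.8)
The plan is to define the new partial homotopy by pulling back the old one along the backward flow of $\partial^{\prime,s}_\t$, so that $\ov\psi^\sigma_s$ is obtained from $\psi^\sigma_s$ by post-composing with the time-$(T'-T)$-flow map, and then checking that all seven properties of Definition~\ref{Def_partial_homotopy} persist. Concretely, for each simplex $\sigma\subset K$ let $\Phi^{\prime,s}_{t'}$ denote the diffeomorphism $\M^s_T\supset\psi^\sigma_s(Z^\sigma)\to\M^s_{t'}$ given by following $\partial^{\prime,s}_\t$ backwards; hypothesis~\ref{hyp_lem_move_part_hom_bckwrds_i} guarantees $\Phi^{\prime,s}_{T'}$ is defined on all of $\psi^\sigma_s(Z^\sigma)$, and by continuity of the flow it is defined on a neighborhood, varying transversely continuously in $s$. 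Set $\ov\psi^\sigma_s:=\Phi^{\prime,s}_{T'}\circ\psi^\sigma_s:Z^\sigma\to\M^s_{T'}$, which gives Assertion~\ref{ass_lem_move_part_hom_bckwrds_a} by construction. The only nontrivial point in defining the new metric deformations is Property~\ref{prop_def_partial_homotopy_1}: we need $(\ov\psi^\sigma_s)^*g^{\prime,s}_{T'}=\ov g^\sigma_{s,0}$, so we cannot simply take $\ov g^\sigma_{s,t}:=g^\sigma_{s,t}$ unless $g^{\prime,s}$ happens to be $\partial^{\prime,s}_\t$-invariant. Instead I would \emph{interpolate}: reparametrize the time interval so that on $[0,\tfrac12]$ the family $\ov g^\sigma_{s,t}$ runs a homotopy from $(\ov\psi^\sigma_s)^*g^{\prime,s}_{T'}$ to $(\psi^\sigma_s)^*g^{\prime,s}_{T}=g^\sigma_{s,0}$, and on $[\tfrac12,1]$ it runs (a rescaled copy of) the original family $g^\sigma_{s,t}$. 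The homotopy on $[0,\tfrac12]$ is the pullback under $\psi^\sigma_s$ of the family of metrics $(\Phi^{\prime,s}_{T+2u(T'-T)})^*g^{\prime,s}_{T+2u(T'-T)}$, i.e.\ the $\RR$-metric carried along the flow from time $T$ back to time $T'$; this is manifestly continuous in $(s,u)$.

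The key steps, in order, are: (1) construct $\ov\psi^\sigma_s$ from the flow and verify transverse continuity, surjectivity onto $(\psi^\sigma_s(Z^\sigma))^{\partial^{\prime,s}_\t}(T')$, and compatibility of the nesting (Property~\ref{prop_def_partial_homotopy_2}) — the last follows because the flow maps are defined consistently on nested domains and commute with the inclusions $\psi^\sigma_s(Z^\sigma)\subset\psi^\tau_s(Z^\tau)$; (2) define $\ov g^\sigma_{s,t}$ by the two-stage reparametrization above and check Properties~\ref{prop_def_partial_homotopy_1} and \ref{prop_def_partial_homotopy_3}, where the latter holds because both the flow-carried $\RR$-metrics and the original families satisfy the matching condition $((\psi^\tau_s)^{-1}\circ\psi^\sigma_s)^*\ov g^\tau_{s,t}=\ov g^\sigma_{s,t}$; (3) verify Property~\ref{prop_def_partial_homotopy_4} on the difference regions using hypothesis~\ref{hyp_lem_move_part_hom_bckwrds_ii}: on $[\tfrac12,1]$ this is inherited from the original partial homotopy, while on $[0,\tfrac12]$ one uses that $g^{\prime,s}$ is compatible with $\SS^s$ on $U^s_{S2}$ (Definition~\ref{Def_R_structure}\ref{prop_def_RR_4}), that $\SS^s$ is preserved by $\partial^{\prime,s}_\t$ (Definition~\ref{Def_R_structure}\ref{prop_def_RR_3}), so the flow-carried metrics stay $\SS^s$-compatible on the $U^s_{S2}$-part, and that on $U^s_{S3}$ the flow of $\partial^{\prime,s}_\t$ acts by homotheties (Definition~\ref{Def_R_structure}\ref{prop_def_RR_7}), so the flow-carried metrics are multiples of a fixed constant-curvature metric; (4) verify Properties~\ref{prop_def_partial_homotopy_5} and \ref{prop_def_partial_homotopy_6} similarly — Property~\ref{prop_def_partial_homotopy_5} needs that the boundary fibers remain regular $\SS^s$-fibers and the collar compatibility persists, again using preservation of $\SS^s$ under the flow and the fact that $\psi^\sigma_s(\partial Z^\sigma)$ and its collar lie in $\domain(\SS^s)$; Property~\ref{prop_def_partial_homotopy_6} over $L$ uses \eqref{eq_RR_trivial_over_L}, so that $\partial^{\prime,s}_\t=\partial^s_\t$ and the flow is the Ricci flow, which preserves the CC-structure (Theorem~\ref{Thm_sing_RF_uniqueness}); (5) prove Assertion~\ref{ass_lem_move_part_hom_bckwrds_b} about PSC-conformality: on $[\tfrac12,1]$ the metrics are pullbacks of the original PSC-conformal ones, and on $[0,\tfrac12]$ they are pullbacks of $g^{\prime,s}_{t'}$ restricted to $(\psi^\sigma_s(Z^\sigma))^{\partial^{\prime,s}_\t}(t')$, which is PSC-conformal by hypothesis, and PSC-conformality is a diffeomorphism-invariant property (being intrinsic, cf.\ Definition~\ref{Def_PSC_conformal}).

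The main obstacle I anticipate is Property~\ref{prop_def_partial_homotopy_4}, specifically handling the difference regions $\psi^\tau_s(Z^\tau)\setminus\psi^\sigma_s(Z^\sigma)$ uniformly in $t\in[0,1]$ under the new two-stage deformation. On the second half $[\tfrac12,1]$ the symmetry/multiple structure is inherited, but one must check it is \emph{compatible} at the gluing time $t=\tfrac12$ with the first half; this forces the first-half deformation on these regions to be exactly the flow-carried $\RR$-metric, which by Definition~\ref{Def_R_structure}\ref{prop_def_RR_3} and \ref{prop_def_RR_7} is indeed $\SS^s$-compatible (case \ref{prop_def_partial_homotopy_4i}) or a multiple of $g^\sigma_{s,0}$ (case \ref{prop_def_partial_homotopy_4ii}). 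The subtlety in case~\ref{prop_def_partial_homotopy_4ii} is the requirement that it hold for $s'$ \emph{near} $s$ (not just at $s$): this needs that hypothesis~\ref{hyp_lem_move_part_hom_bckwrds_ii} is an open condition, i.e.\ that whether the flow-carried difference region lies in $U^s_{S2}\cup U^s_{S3}$ is locally constant enough — which follows from openness of $U_{S2}$ and $U_{S3}$ in the total space $\cup_{s}\M^s$ (Definition~\ref{Def_RR_structure_transverse_cont}) together with transverse continuity of the maps $\psi^\sigma_s$ and the flows. Once this openness is pinned down, the rest of the verification is bookkeeping across the finitely many simplices of $K$, organized by increasing dimension, exactly as in the statement's hypotheses.
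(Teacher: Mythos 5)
Your construction is the paper's: $\ov\psi^\sigma_s$ is $\psi^\sigma_s$ flowed backwards along $\partial^{\prime,s}_\t$, and $\ov g^\sigma_{s,t}$ is the two-stage deformation (the flow-pulled-back $g'$ on $[0,\tfrac12]$, the reparametrized original deformation on $[\tfrac12,1]$); the verification of Properties~\ref{prop_def_partial_homotopy_1}--\ref{prop_def_partial_homotopy_3}, \ref{prop_def_partial_homotopy_5}, \ref{prop_def_partial_homotopy_6} of Definition~\ref{Def_partial_homotopy} and of Assertion~\ref{ass_lem_move_part_hom_bckwrds_b} proceeds along the same lines as in the paper.

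The one place your sketch is not quite adequate is the case analysis for Property~\ref{prop_def_partial_homotopy_4}. You speak of checking compatibility ``on the $U^s_{S2}$-part'' versus ``on $U^s_{S3}$'' as if each difference component $\C$ lies in one or the other throughout the flow, so that either case~\ref{prop_def_partial_homotopy_4i} or~\ref{prop_def_partial_homotopy_4ii} persists unchanged from time $T$ to time $T'$. But a component $\C$ satisfying case~\ref{prop_def_partial_homotopy_4ii} at time $T$ (i.e.\ $\psi^\tau_s(\C)\subset U^s_{S3}$) may, after flowing back, have $\C^{\partial^{\prime,s}_\t}(T')\subset U^s_{S2}\setminus U^s_{S3}$; then the new partial homotopy must satisfy case~\ref{prop_def_partial_homotopy_4i} there, which requires converting ``multiple of $g^\tau_{s,0}|_\C$'' into $\SS^s$-compatibility at $\C(T')$. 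The paper handles this with a dedicated sub-case: Definition~\ref{Def_R_structure}\ref{prop_def_RR_6} (forward-invariance of $U_{S3}$) yields a transition time $T^*\in[T',T)$, so that on $[T^*,T]$ property~\ref{prop_def_RR_7} (homothety) applies and on $[T',T^*]$ properties~\ref{prop_def_RR_2}--\ref{prop_def_RR_4} apply, which together show that both the flow-pulled-back $g'$ and the flow-pushed-forward multiples of $g^\tau_{s,0}|_\C$ are compatible with $\SS^s$ on $\C(T')$. You cite the right ingredients, but without making this transition explicit your argument would not cover the only genuinely nontrivial case in the verification.
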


\begin{proof}
We can define maps $( \psi^{\sigma}_s  )_{t'} : Z^\sigma \to \M^s_{t'}$, $t' \in [T',T]$, as follows:
\[ \big( \psi^{\sigma}_s  \big)_{t'} (z) := \big( \psi^{\sigma}_s (z) \big)^{\partial^{\prime,s}_\t} (t'). \]
Set
\begin{equation*} \label{eq_def_tdpsi}
 \ov\psi^{\sigma}_s := \big( \psi^{\sigma}_s  \big)_{T'} 
\end{equation*}
and
\[ \ov{g}^\sigma_{s,t} := \begin{cases} (\psi^\sigma_s)^*_{T' + 2 t  (T - T')} g_{T' + 2 t  (T - T')} & \text{if $t \in [0, \tfrac12]$} \\ g^\sigma_{s, 2 t - 1} & \text{if $t \in [\tfrac12, 1]$}  \end{cases} \]
Then Assertions~\ref{ass_lem_move_part_hom_bckwrds_a} and \ref{ass_lem_move_part_hom_bckwrds_b} hold automatically.
Let us now verify all properties of Definition~\ref{Def_partial_homotopy} for $\{ ( Z^\sigma, \lb (\ov g^\sigma_{s,t})_{s \in \sigma, t \in [0,1]}, \lb (\ov\psi^\sigma_s  )_{s \in \sigma}) \}_{\sigma \subset K}$.

By construction $(Z^\sigma, (\ov{g}^\sigma_{s,t})_{s \in \sigma, t \in [0,1]})$ is a metric deformation for all $\sigma \subset K$.
Properties~\ref{prop_def_partial_homotopy_1}--\ref{prop_def_partial_homotopy_3} of Definition~\ref{Def_partial_homotopy} hold by construction as well.

For Property~\ref{prop_def_partial_homotopy_4} consider the closure $\C$ of a component of
\[ Z^{ \tau}  \setminus((\ov\psi^{ \tau}_s )^{-1} \circ \ov\psi^{ \sigma}_s ) ( Z^{ \sigma} ) = Z^{ \tau}  \setminus((\psi^{ \tau}_s )^{-1} \circ \psi^{ \sigma}_s ) ( Z^{ \sigma} ). \]
By Assumption~\ref{hyp_lem_move_part_hom_bckwrds_ii} we have 
\begin{equation} \label{eq_CCinUS2US3}
 \C^{\partial^{\prime,s}_\t} (t') \subset U^s_{S2} \cup U^s_{S3} \qquad \text{for all}  \quad t' \in [T', T].
\end{equation}
We can apply Definition~\ref{prop_def_partial_homotopy_4} to $\C$ and the original partial homotopy, which implies that there are two cases.

\medskip
\textit{Case 1: Property~\ref{prop_def_partial_homotopy_4}\ref{prop_def_partial_homotopy_4i} holds for the original partial homotopy. \quad}
So $\psi^\tau_s (\C) \subset U^s_{S2}$, $\psi^\tau_s (\C)$ is a union of fibers of $\SS^s$ and $(\psi_s^\tau)_* g^\tau_{s,t}$ restricted to $\psi^\tau_s (\C)$ is compatible with $\SS^s$.
By (\ref{eq_CCinUS2US3}) and Definition~\ref{Def_R_structure}\ref{prop_def_RR_1} we know that $\C(t') \subset U^s_{S2}$ for all $t' \in [T', T]$ and thus by Definition~\ref{Def_R_structure}\ref{prop_def_RR_2}--\ref{prop_def_RR_4} the set $\ov\psi^\tau_s (\C) = \C^{\partial^{\prime,s}_\t} (T')$ is a union of fibers of $\SS^s$ and $(\ov\psi^\tau_s)_* \ov{g}^\tau_{s,t}$ restricted to $\ov\psi^\tau_s (\C)$ is compatible with $\SS^s$.

\medskip
\textit{Case 2: Property~\ref{prop_def_partial_homotopy_4}\ref{prop_def_partial_homotopy_4ii} holds for the original partial homotopy. \quad}
So $\partial\C = \emptyset$, $\psi^\tau_s (\C) \subset U^s_{S3}$ and for every $s' \in \tau$ near $s$ the metric $g^\tau_{s', t}$, $t \in [0,1]$, is a multiple of $g^\tau_{s', 0}$.
By (\ref{eq_CCinUS2US3}) and Definition~\ref{Def_R_structure}\ref{prop_def_RR_5} we have $\C^{\partial^{\prime,s}_\t}  (T') \subset U^s_{S2}$ or $\C^{\partial^{\prime,s}_\t}  (T') \subset U^s_{S3}$.

\medskip
\textit{Case 2a: $\C (T') \subset U^s_{S2}$. \quad}
By Definition~\ref{Def_R_structure}\ref{prop_def_RR_6} there is a $T^* \in [T', T)$ such that $\C^{\partial^{\prime,s}_\t}  (t') \subset U^s_{S3}$ for all $t' \in (T^*, T]$ and $\C^{\partial^{\prime,s}_\t}  (t') \subset U^s_{S2} \setminus U^s_{S3}$ for all $t' \in [T', T^*]$.
Using Definition~\ref{Def_R_structure}\ref{prop_def_RR_2}--\ref{prop_def_RR_4}, \ref{prop_def_RR_7} we can argue as in Case~1 that $(\ov\psi^\tau_s)_* \ov{g}^\tau_{s,t}$ restricted to $\ov\psi^\tau_s (\C)$ is compatible with $\SS^s$.

\medskip
\textit{Case 2b: $\C (T') \subset U^s_{S3}$. \quad}
By Definition~\ref{Def_R_structure}\ref{prop_def_RR_6} we have $\C^{\partial^{\prime,s}_\t}  (t') \subset U^s_{S3}$ for all $t' \in [T', T]$.
By openness of $\cup_{s' \in K} U^{s'}_{S3} \subset \cup_{s' \in K} \M^{s'}$ the same is true for $s' \in \tau$ near $s$.
So by Definition~\ref{Def_R_structure}\ref{prop_def_RR_7} for $s' \in \tau$ near $s$ we obtain that $\ov{g}^\tau_{s,t}$ is a multiple of $\ov{g}^\tau_{s,0}$.

\medskip
Next, consider Property~\ref{prop_def_partial_homotopy_5} of Definition~\ref{Def_partial_homotopy}.
Fix some $\Sigma \subset \partial Z^\sigma$.
Then $\psi^\sigma_s (\Sigma) \subset U^s_{S2}$.
We can again argue as before that $(\psi^\sigma_s (\Sigma))^{\partial^{\prime,s}_\t} (t') \subset U^s_{S2}$ for all $t' \in [T', T]$.
Therefore, in a neighborhood of $(\psi^\sigma_s (\Sigma))^{\partial^{\prime,s}_\t} (t')\subset U^s_{S2}$ the metric $g^{\prime,s}_{t'}$ is compatible with $\SS^s$.
Property~\ref{prop_def_partial_homotopy_5} now follows from Definition~\ref{Def_R_structure}\ref{prop_def_RR_2}--\ref{prop_def_RR_4}.

Property~\ref{prop_def_partial_homotopy_6} is a consequence of (\ref{eq_RR_trivial_over_L}).
\end{proof}

\subsection{Passing to a simplicial refinement}
Consider the simplicial pair $(\mathcal{K}, \mathcal{L})$ with geometric realization $(K,L)$, as discussed in Subsection~\ref{subsec_gen_setup}.
Let $\mathcal{K}'$ be a simplicial refinement of $\mathcal{K}$, let $\mathcal{L}'$ be the corresponding simplicial refinement of $\mathcal{L}$, and identify the geometric realizations of $(\mathcal{K}',  \mathcal{L}')$ with $(K,L)$.
The next proposition states that given a partial homotopy respecting the simplicial structure $\mathcal{K}$, we may construct a canonical partial homotopy respecting the simplicial structure $\mathcal{K}'$.

\begin{proposition} \label{prop_simp_refinement}
Let $\mathcal{K}'$ be a simplicial refinement of $\mathcal{K}$.
If $\{ ( Z^\sigma, \lb ( g^\sigma_{s,t})_{s \in \sigma, t \in [0,1]}, \lb (\psi^\sigma_s  )_{s \in \sigma}) \}_{\sigma \subset K}$ is a partial homotopy at time $T$ relative to $L$ that respects the simplicial structure $\mathcal{K}$, then there is a partial homotopy $\{ ( \ov{Z}^\sigma, \lb (\ov g^\sigma_{s,t})_{s \in \sigma, t \in [0,1]}, \lb (\ov\psi^\sigma_s  )_{s \in \sigma}) \}_{\sigma \subset K}$ at time $T$ relative to $L$ that respects the simplicial structure $\mathcal{K}'$ such that:
\begin{enumerate}[label=(\alph*)]
\item For any $\sigma' \in \mathcal{K}'$ the following holds:
If $\sigma \in \mathcal{K}$ is the simplex with the smallest dimension such that $\sigma \supset \sigma'$, then $\ov\psi^{\sigma'}_s (\ov{Z}^{\sigma'}) = \psi^{\sigma}_s (Z^{\sigma})$ and $(\ov\psi^{\sigma'}_s)_* \ov{g}^{\sigma'}_{s,t} = (\psi^{\sigma}_s)_* g^\sigma_{s,t}$ for all $s \in \sigma'$, $t \in [0,1]$.
\item If $\{ ( Z^\sigma, \lb ( g^\sigma_{s,t})_{s \in \sigma, t \in [0,1]}, \lb (\psi^\sigma_s  )_{s \in \sigma}) \}_{\sigma \subset K}$ is PSC-conformal over some $s \in K$, then so is $\{ ( \ov{Z}^\sigma, \lb (\ov g^\sigma_{s,t})_{s \in \sigma, t \in [0,1]}, \lb (\ov\psi^\sigma_s  )_{s \in \sigma}) \}_{\sigma \subset K}$.
\end{enumerate}
\end{proposition}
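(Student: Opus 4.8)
The plan is to reduce the statement to a purely local question about a single simplex $\sigma \in \mathcal{K}$, carried out by induction on the dimension of the simplices of $\mathcal{K}'$, and to handle that local question by a standard "triangulated prism" construction together with the simplicial approximation of the identity map. First I would fix, for each simplex $\sigma \in \mathcal{K}$, the metric deformation $(Z^\sigma, (g^\sigma_{s,t})_{s\in\sigma,t\in[0,1]})$ and the family $(\psi^\sigma_s)_{s\in\sigma}$ given by hypothesis. For a simplex $\sigma' \in \mathcal{K}'$, let $\sigma = \sigma(\sigma') \in \mathcal{K}$ be the unique smallest simplex of $\mathcal{K}$ containing $\sigma'$ (this is well defined since $\mathcal{K}'$ refines $\mathcal{K}$: every point of $K$ lies in the relative interior of a unique simplex of $\mathcal{K}$, so every simplex of $\mathcal{K}'$ has a unique carrier). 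The natural guess is to set
\[
\ov Z^{\sigma'} := Z^{\sigma(\sigma')}, \qquad \ov\psi^{\sigma'}_s := \psi^{\sigma(\sigma')}_s, \qquad \ov g^{\sigma'}_{s,t} := g^{\sigma(\sigma')}_{s,t}
\]
for all $s \in \sigma'$, $t\in[0,1]$. This immediately gives Assertion (a), and Assertion (b) is then automatic since PSC-conformality of $(Z^{\sigma(\sigma')}, g^{\sigma(\sigma')}_{s,t})$ is inherited verbatim. So the entire content is verifying that this assignment satisfies Properties~\ref{prop_def_partial_homotopy_1}--\ref{prop_def_partial_homotopy_6} of Definition~\ref{Def_partial_homotopy} with respect to $\mathcal{K}'$.

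The key observation making this work is the compatibility of the carrier map with face relations: if $\tau' \subsetneq \sigma'$ in $\mathcal{K}'$, then $\sigma(\tau') \subset \sigma(\sigma')$ in $\mathcal{K}$, and moreover $\sigma(\tau') \subseteq \sigma(\sigma')$ is either equal or a proper face. Thus Properties~\ref{prop_def_partial_homotopy_1} (which involves only a single simplex), \ref{prop_def_partial_homotopy_5}, and \ref{prop_def_partial_homotopy_6} transfer directly from the corresponding properties for $\sigma(\sigma')$ in the original partial homotopy. For Properties~\ref{prop_def_partial_homotopy_2}--\ref{prop_def_partial_homotopy_4}, applied to a pair $\tau' \subsetneq \sigma'$: if $\sigma(\tau') = \sigma(\sigma') =: \sigma$, then $\ov Z^{\tau'} = \ov Z^{\sigma'} = Z^\sigma$ and the maps and metrics agree, so $(\ov\psi^{\tau'}_s)^{-1}\circ\ov\psi^{\sigma'}_s = \id$ and all three properties hold trivially (the component-difference in Property~\ref{prop_def_partial_homotopy_4} is empty). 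If instead $\sigma(\tau') \subsetneq \sigma(\sigma')$, then Properties~\ref{prop_def_partial_homotopy_2}--\ref{prop_def_partial_homotopy_4} for the pair $(\sigma(\tau'),\sigma(\sigma'))$ in the original partial homotopy give exactly what is needed — one just checks that the set $\sigma'$ of parameters is contained in $\sigma(\tau')$, which holds since $\sigma' \subset \sigma(\sigma')$ and $\tau' \subset \sigma'$ forces $\tau'$ to meet the carrier relation correctly; here I would note that for $s \in \sigma'$ we need $s \in \sigma(\tau')$, which follows because $\sigma' \subseteq \sigma(\sigma')$ and... actually this requires care: $s\in\sigma'$ need not lie in $\sigma(\tau')$. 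The correct fix is to observe that Property~\ref{prop_def_partial_homotopy_3} and \ref{prop_def_partial_homotopy_4} are statements "for all $s$ in the smaller simplex $\tau'$"; so one only needs $s \in \tau' \subset \sigma'$, and for such $s$, since $\tau'\subset \sigma(\tau')$ we do have $s\in\sigma(\tau')$, and also $s \in \sigma' \subset \sigma(\sigma')$, so both $\psi^{\sigma(\tau')}_s$ and $\psi^{\sigma(\sigma')}_s$ are defined and the original axioms apply.

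The main obstacle I anticipate is bookkeeping the carrier relation cleanly — in particular confirming that $\sigma(\sigma')$ is genuinely the \emph{smallest} simplex of $\mathcal{K}$ containing $\sigma'$ (uniqueness of carriers in a simplicial refinement) and that this assignment is monotone under taking faces in $\mathcal{K}'$, so that the nestedness in Property~\ref{prop_def_partial_homotopy_2} and the component decomposition in Property~\ref{prop_def_partial_homotopy_4} are consistent. I would also double-check the transverse-continuity requirement: the families $(\ov\psi^{\sigma'}_s)_{s\in\sigma'}$ and $(\ov g^{\sigma'}_{s,t})$ are restrictions of the transversely continuous families $(\psi^{\sigma(\sigma')}_s)_{s\in\sigma(\sigma')}$ and $(g^{\sigma(\sigma')}_{s,t})$ to the subcomplex $\sigma' \subset \sigma(\sigma')$, hence remain transversely continuous. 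Finally, the relative-to-$L$ condition: each simplex of $\mathcal{L}'$ has carrier in $\mathcal{L}$, so Property~\ref{prop_def_partial_homotopy_6} is inherited, and since $(\ref{eq_RR_trivial_over_L})$ still holds, nothing further is needed. Once these routine verifications are assembled the proposition follows.
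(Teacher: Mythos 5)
Your construction is exactly the paper's: pull back the data from the carrier simplex $\sigma(\sigma')\in\mathcal{K}$ (the minimal simplex of $\mathcal{K}$ containing $\sigma'$), and the verifications you sketch — including the observation that Properties~\ref{prop_def_partial_homotopy_2}--\ref{prop_def_partial_homotopy_4} only require $s$ to lie in the smaller simplex $\tau'$, hence in $\sigma(\tau')$ — are correct. The paper simply states "it is easy to see" where you have spelled out the details.
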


\begin{proof} 
For any $\sigma' \in \mathcal{K}'$, then there is a unique simplex $\sigma_{\sigma'} \in \mathcal{K}$ that has minimal dimension and satisfies $\sigma_{\sigma'}  \supset \sigma'$.
Set
\[ ( \ov{Z}^{\sigma'
}, (\ov g^{\sigma'}_{s,t})_{s \in \sigma', t \in [0, 1]},  ( \psi^{\sigma'}_s )_{s \in \sigma'}) := ( Z^{\sigma_{\sigma'}}, (g^{\sigma_{\sigma'}}_{s,t})_{s \in \sigma_{\sigma'}, t \in [0, 1]}, ( \psi^{\sigma_{\sigma'}}_s )_{s \in \sigma_{\sigma'}}). \]
It is easy to see that this new data still defines a partial homotopy that satisfies the assertions of this proposition.
\end{proof}

\subsection{Enlarging a partial homotopy}
In the following we prove that we can enlarge the images $\psi^\sigma_s (Z^\sigma)$ by certain subsets contained in $U^s_{S2} \cup U^s_{S3}$ that are either unions of spherical fibers or round components.

\begin{proposition} \label{prop_extending}
Consider a partial homotopy $\{ ( Z^\sigma, \lb (g^\sigma_{s,t})_{s \in \sigma, t \in [0,1]}, \lb (\psi^\sigma_s  )_{s \in \sigma}) \}_{\sigma \subset K}$ at time $T$ relative to $L$.
Fix some simplex $\sigma \subset K$.
Let $\wh{Z}^\sigma$ be a compact 3-manifold with boundary, $\iota^\sigma : Z^\sigma \to \wh{Z}^\sigma$ an embedding and $(\wh\psi^\sigma_s : \wh{Z}^\sigma \to \M^s_T)_{s \in \sigma}$ a continuous family of embeddings.
Assume that for all $s \in \sigma$:
\begin{enumerate}[label=(\roman*)]
\item \label{ass_ph_extend_i} $\psi^\sigma_s = \wh\psi^\sigma_s \circ \iota^\sigma$.
\item \label{ass_ph_extend_ii} If $s \in \tau \subset \partial \sigma$, then $\wh\psi^\sigma_s (\wh{Z}^\sigma) \subset \psi^\tau_s (Z^\tau)$.
\item \label{ass_ph_extend_iii} For the closure $Y$ of every component of $\wh{Z}^\sigma \setminus \iota^\sigma (Z^\sigma)$ one of the following is true uniformly for all $s \in \sigma$:
\begin{enumerate}[label=(iii-\arabic*)]
\item \label{ass_ph_extend_iii-1} $\wh\psi^\sigma_s (Y)$ is a union of fibers of $\SS^s$. 
\item \label{ass_ph_extend_iii-2} $\partial Y = \emptyset$, $\wh\psi^\sigma_s (Y) \subset U^s_{S3}$ and $(\wh\psi^\sigma_s)^* g^{\prime,s}_{T}$ a multiple of the same constant curvature metric for all $s \in \sigma$.
\end{enumerate}
\item \label{ass_ph_extend_iv} If $\sigma \subset L$, then $\wh\psi^\sigma_s (\wh{Z}^\sigma) \setminus \psi^\sigma_s ( Z^\sigma) = \emptyset$ or $\M^s_T$ and $g^{\prime,s}_T$ is a CC-metric.
\end{enumerate}
Then there is a family of metrics $(\wh{g}^\sigma_{s \in \sigma, t \in [0,1]})$ on $\wh{Z}^\sigma$ such that 
\begin{equation} \label{eq_enlarged_partial_h}
 \{ ( Z^{\sigma'}, \lb (g^{\sigma'}_{s,t})_{s \in \sigma', t \in [0,1]}, \lb (\psi^{\sigma'}_s  )_{s \in \sigma'}) \}_{\sigma' \subset K, \sigma' \neq \sigma} \cup \{ ( \wh{Z}^\sigma, \lb (\wh{g}^\sigma_{s,t})_{s \in \sigma, t \in [0,1]}, \lb (\wh\psi^\sigma_s  )_{s \in \sigma}) ) \} 
\end{equation}
is a partial homotopy at time $T$ relative to $L$.
Moreover, if the original partial homotopy was PSC-conformal over some $s \in K$, then the new partial homotopy is PSC-conformal over $s$ as well.
\end{proposition}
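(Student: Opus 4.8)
\textbf{Proof plan for Proposition~\ref{prop_extending}.}
The plan is to construct the family $(\wh{g}^\sigma_{s,t})_{s\in\sigma,t\in[0,1]}$ on $\wh{Z}^\sigma$ by extending the given deformation $(g^\sigma_{s,t})$ across each added piece $Y$, in such a way that all seven conditions of Definition~\ref{Def_partial_homotopy} are restored for the enlarged complex \eqref{eq_enlarged_partial_h}. The overall structure is: first reduce to handling one component $Y$ of $\wh{Z}^\sigma\setminus\iota^\sigma(Z^\sigma)$ at a time (the $Y$'s are disjoint, and extending over each affects only the properties local to that piece); then split into the two cases \ref{ass_ph_extend_iii-1} and \ref{ass_ph_extend_iii-2} of hypothesis~\ref{ass_ph_extend_iii}. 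In Case~\ref{ass_ph_extend_iii-1}, $\wh\psi^\sigma_s(Y)$ is a union of spherical fibers of $\SS^s$, so the family of spherical structures $((\wh\psi^\sigma_s)^*\SS^s)_{s\in\sigma}$ on $Y$ is transversely continuous, and $\partial Y\cap\iota^\sigma(\partial Z^\sigma)$ is a union of regular fibers (by Definition~\ref{Def_partial_homotopy}\ref{prop_def_partial_homotopy_5} applied to the original homotopy). I would then invoke Proposition~\ref{prop_extending_symmetric}, with $M$ a neighborhood of $Y$, the submanifold there playing the role of ``$Y$'' being the collar where $g^\sigma_{s,t}$ is already defined and $\SS^s$-compatible (Definition~\ref{Def_partial_homotopy}\ref{prop_def_partial_homotopy_5}), $g^1_s$ the time-$0$ metric, $g^2_{s,t}$ the restriction of the already-constructed deformation, and $g^3_{s,t}$ the data over $\partial\sigma$ coming from hypothesis~\ref{ass_ph_extend_ii} and Definition~\ref{Def_partial_homotopy}\ref{prop_def_partial_homotopy_3}; this produces a family on all of $Y$ that is $\SS^s$-compatible, matches at $t=0$, and agrees with the prescribed data on $\ov{M\setminus Y}$ and over $\partial\sigma$.

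In Case~\ref{ass_ph_extend_iii-2}, $Y$ is a closed component with $\wh\psi^\sigma_s(Y)\subset U^s_{S3}$ and $(\wh\psi^\sigma_s)^*g^{\prime,s}_T$ a fixed multiple of a constant curvature metric $g^*$ for all $s\in\sigma$; here I would apply Proposition~\ref{prop_exten_round} with $\Delta^n=\sigma$, $\lambda$ recording the conformal factor, $A$ the (open) set of $s$ for which the relevant spherical structure $\SS^s$ is defined on $Y$, $E=\partial\sigma\cap A$, and $k_{s,t}$ on $\partial\sigma$ coming from the already-defined data by restriction. Proposition~\ref{prop_exten_round} then yields a family $(\wh{g}^\sigma_{s,t})$ on $Y$ and a closed set $E'\subset A$ so that over $\sigma\setminus E'$ the metric is a multiple of $g^*$ while over $A$ it is $\SS^s$-compatible; this is exactly what is needed to verify Definition~\ref{Def_partial_homotopy}\ref{prop_def_partial_homotopy_4}\ref{prop_def_partial_homotopy_4i}--\ref{prop_def_partial_homotopy_4ii} (with the splitting of $\sigma$ into an open cover realized by the open set $A$ and the interior of the complement of $E'$) and property~\ref{prop_def_partial_homotopy_7} — wait, there is no property~7; I mean Definition~\ref{Def_partial_homotopy}\ref{prop_def_partial_homotopy_6} in the $L$ case, which is handled using hypothesis~\ref{ass_ph_extend_iv} and \eqref{eq_RR_trivial_over_L}. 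After extending over every $Y$, I would glue the resulting pieces to $(g^\sigma_{s,t})$ along the collars to obtain $(\wh{g}^\sigma_{s,t})$ on all of $\wh{Z}^\sigma$; continuity in $(s,t)$ is automatic from the two propositions invoked.

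It then remains to check the defining properties of a partial homotopy for the enlarged collection: property~\ref{prop_def_partial_homotopy_1} holds since $(\wh\psi^\sigma_s)^*g^{\prime,s}_T=\wh{g}^\sigma_{s,0}$ by construction on each piece; property~\ref{prop_def_partial_homotopy_2} follows from \ref{ass_ph_extend_ii} together with the nesting already present; property~\ref{prop_def_partial_homotopy_3} holds on $\iota^\sigma(Z^\sigma)$ by \ref{ass_ph_extend_i} and on the new pieces because the Propositions were fed the restricted data; property~\ref{prop_def_partial_homotopy_4} for pairs $\tau\subsetneq\sigma$ is exactly the content of the case split above, and for pairs $\tau\subsetneq\sigma'$, $\sigma'\neq\sigma$, it is unchanged; properties~\ref{prop_def_partial_homotopy_5} and \ref{prop_def_partial_homotopy_6} follow from $\SS^s$-compatibility near boundary fibers and from \ref{ass_ph_extend_iv}. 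Also $(\wh{Z}^\sigma,(\wh g^\sigma_{s,t}))$ is a metric deformation: at $t=1$ conformal flatness and PSC-conformality are inherited, since outside the $Y$'s this is the old deformation at $t=1$, and on each $Y$ (a union of spherical fibers, resp. a quotient of a round piece) Lemma~\ref{Lem_PSC_conformal_enlarge} upgrades PSC-conformality of $(Z^\sigma,g^\sigma_{s,1})$ to PSC-conformality of $(\wh{Z}^\sigma,\wh{g}^\sigma_{s,1})$ and conformal flatness is preserved because compatibility with a spherical structure gives a local conformally flat model. The same Lemma~\ref{Lem_PSC_conformal_enlarge}, applied with $Z=\psi$-image of $Z^\sigma$, gives the final PSC-conformality-over-$s$ assertion from that of the original homotopy.

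The main obstacle is not any single step but the bookkeeping needed to feed Propositions~\ref{prop_extending_symmetric} and \ref{prop_exten_round} correctly: one must carefully match up the boundary/collar data (Definition~\ref{Def_partial_homotopy}\ref{prop_def_partial_homotopy_5}), the $\partial\sigma$-data coming from the faces via \ref{ass_ph_extend_ii} and compatibility \ref{prop_def_partial_homotopy_3}, and the time-$0$ normalization, and verify that the hypotheses (e.g.\ that $\partial Y$ consists of regular fibers, that the starting metric on a closed $Y$ is literally a multiple of a fixed $g^*$, that the relevant subsets of $\sigma$ are open resp.\ closed) are genuinely satisfied by the $\RR$-structure axioms. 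The delicate point where the PSC-conformal (rather than PSC) formulation of Definition~\ref{Def_partial_homotopy} is essential is Case~\ref{ass_ph_extend_iii-1}: the extended metric on $Y$ need not have positive scalar curvature, but it is compatible with a spherical structure whose domain is all of $Y$, so Lemma~\ref{Lem_PSC_conformal_enlarge} still applies — this is exactly why one works with PSC-conformality throughout.
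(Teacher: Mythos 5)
Your overall route — reduce to one component $Y$ at a time, split on hypothesis~\ref{ass_ph_extend_iii}, feed Case~\ref{ass_ph_extend_iii-1} into Proposition~\ref{prop_extending_symmetric} and Case~\ref{ass_ph_extend_iii-2} into Proposition~\ref{prop_exten_round}, then invoke Lemma~\ref{Lem_PSC_conformal_enlarge} for both metric-deformation-at-$t=1$ and the PSC-conformality-over-$s$ assertions — is the same skeleton the paper uses, including the boundary data $k_{s,t}$ pulled back from the faces via \ref{ass_ph_extend_ii} and Definition~\ref{Def_partial_homotopy}\ref{prop_def_partial_homotopy_3}. Two small slips worth noting: when you invoke Proposition~\ref{prop_extending_symmetric} the roles are reversed — the ``$Y$'' there must be the \emph{new} piece (where no deformation exists yet), while $\ov{M\setminus Y}$ is the collar inside $Z^\sigma$ carrying the already-defined, $\SS^s$-compatible deformation $g^2_{s,t}=g^\sigma_{s,t}$; and the verification of Definition~\ref{Def_partial_homotopy}\ref{prop_def_partial_homotopy_4} must be done for \emph{both} inclusion directions $\tau\subsetneq\sigma$ and $\sigma\subsetneq\tau$ (the latter is affected too, since enlarging $\psi^\sigma_s(Z^\sigma)$ changes the components $\C$ in $Z^\sigma\setminus((\psi^\sigma_s)^{-1}\circ\psi^\tau_s)(Z^\tau)$), which the paper handles in two separate sub-arguments using Definition~\ref{Def_R_structure}\ref{prop_def_RR_1}.

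The genuine gap is the case $\sigma\subset L$. You dismiss it in a parenthetical (``handled using hypothesis~\ref{ass_ph_extend_iv} and \eqref{eq_RR_trivial_over_L}''), but neither Proposition~\ref{prop_extending_symmetric} nor Proposition~\ref{prop_exten_round} produces a deformation that stays inside the class of CC-metrics, which is what Definition~\ref{Def_partial_homotopy}\ref{prop_def_partial_homotopy_6} demands for $s\in L$. In particular, if $\wh\psi^\sigma_s(\wh{Z}^\sigma)=\M^s_T$ is a cylinder quotient, then the new piece falls under Case~\ref{ass_ph_extend_iii-1} and Proposition~\ref{prop_extending_symmetric} only yields $\SS^s$-compatibility, not the required form $a_t^2 g_{S^2}+b_t^2 dr^2$ with a fixed local $O(3)$-action; and if it is a spherical space form one still needs the deformation to be a varying multiple of the same constant-curvature metric, which Proposition~\ref{prop_exten_round} guarantees only away from a closed set $E'$. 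When $\sigma\subset L$, Assumption~\ref{ass_ph_extend_iv} forces $Z^\sigma=\emptyset$ and $\wh{Z}^\sigma=Y$, and one must construct the extension directly — extending the scalar functions $\lambda_{s,t}$ (and $\mu_{s,t}$ in the cylindrical case) from $\partial\sigma$ to $\sigma$ and setting $\wh{g}^\sigma_{s,t}=\td\lambda^2_{s,t}(\wh\psi^\sigma_s)^*g^{\prime,s}_T$ (resp.\ $\td\lambda^2_{s,t}u_s+\td\mu^2_{s,t}v_s$) — as the paper does in its Case~2. Without this separate construction, Property~\ref{prop_def_partial_homotopy_6} fails for the enlarged partial homotopy.
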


\begin{proof}
After replacing $Z^\sigma$ with $\iota^\sigma (Z^\sigma)$, $\psi^\sigma_s$ with $\wh\psi^\sigma_s |_{\iota^\sigma (Z^\sigma)}$ and $g^\sigma_{s,t}$ with $\iota^\sigma_* g^\sigma_{s,t}$, we may assume without loss of generality  that $Z^\sigma \subset \wh{Z}^\sigma$ is an embedded submanifold, $\iota^\sigma = \id_{Z^\sigma}$ and $\psi^\sigma_s = \wh\psi^\sigma_s |_{Z^\sigma}$.
Next, note that $\wh{Z}^\sigma \setminus Z^\sigma$ consists of finitely many connected components.
In the following we will assume without loss of generality that this difference only contains one connected component.
The proposition in its full generality will then follow by successively adding connected components to $Z^\sigma$.
Let $Y$ be the closure of $\wh{Z}^\sigma \setminus Z^\sigma$.

By Definition~\ref{Def_partial_homotopy}\ref{prop_def_partial_homotopy_3}, for every two simplicies $\tau_1 \subset \tau_2 \subset \partial \sigma$ and $s \in \tau_1$, $t \in [0,1]$ we have on $\wh{Z}^\sigma$:
\[ \big( (\psi^{\tau_2}_s)^{-1} \circ \wh\psi^\sigma_s \big)^* g^{\tau_2}_{s,t}
= \big( (\psi^{\tau_2}_s)^{-1} \circ \wh\psi^\sigma_s \big)^* \big( (\psi^{\tau_1}_s)^{-1} \circ \psi^{\tau_2}_s \big)^* g^{\tau_1}_{s,t}
= \big( (\psi^{\tau_1}_s)^{-1} \circ \wh\psi^\sigma_s \big)^* g^{\tau_1}_{s,t}. \]
Therefore, there is a continuous family of metrics $(k_{s,t})_{s \in \partial\sigma, t \in [0,1]}$ on $\wh{Z}^\sigma$ that satisfies
\begin{equation} \label{eq_k_gtau}
 k_{s,t} = \big( (\psi^{\tau}_s)^{-1} \circ \wh\psi^\sigma_s \big)^* g^{\tau}_{s,t} \qquad \text{for all} \quad s \in \tau \subset \partial\sigma. 
\end{equation}

\begin{Claim}
We have
\begin{alignat}{3}
k_{s,0} &= \big( \wh\psi^\sigma_s \big)^*  g^{\prime,s}_{T} & \qquad &\text{on} \quad \wh{Z}^\sigma &\qquad &\text{for}\quad s \in \partial\sigma, \label{eq_k_psi_g} \\
k_{s,t} &= g^{\sigma}_{s,t}  & \qquad &\text{on} \quad Z^\sigma &\qquad &\text{for}\quad s \in \partial\sigma, \; t \in [0,1] \label{eq_k_iotag} \\
g^{\sigma}_{s,0} &= \big( \wh\psi^\sigma_s \big)^*  g^{\prime,s}_{T}  & \qquad &\text{on} \quad Z^\sigma &\qquad &\text{for}\quad s \in \sigma, \label{eq_psi_g_iotag}
\end{alignat}
\end{Claim}

\begin{proof}
For (\ref{eq_k_psi_g}), observe that for all $s \in \tau \subset \partial\sigma$
\[ k_{s,0} = \big( (\psi^{\tau}_s)^{-1} \circ \wh\psi^\sigma_s \big)^* (\psi^\tau_s)^* g^{\prime,s}_{T} =  \big( \wh\psi^\sigma_s \big)^*  g^{\prime,s}_{T}. \]
(\ref{eq_k_iotag}) and (\ref{eq_psi_g_iotag}) follow from $\psi^\sigma_s = \wh\psi^\sigma_s |_{Z^\sigma}$ and
\[  k_{s,t} \big|_{Z^\sigma} = \big( (\psi^{\tau}_s)^{-1} \circ \psi^\sigma_s \big)^* g^{\tau}_{s,t} 
=  g^{\sigma}_{s,t}. \qedhere\]
\end{proof} \medskip 

Our goal will be to construct a metric deformation $(\wh{Z}^\sigma, (\wh{g}^\sigma_{s,t})_{s \in \sigma, t \in [0,1]})$ such that, among other things,
\begin{alignat}{3}
\wh{g}^\sigma_{s,t} &= k_{s,t} &\qquad &\text{on} \quad \wh{Z}^\sigma &\qquad &\text{for} \quad s \in \partial\sigma, \; t \in [0,1] \label{eq_h_k} \\
\wh{g}^\sigma_{s,0} &= \big(\wh\psi^\sigma_{s} \big)^* g^{\prime,s}_{T}  &\qquad &\text{on} \quad \wh{Z}^\sigma &\qquad & \text{for} \quad s \in \sigma \label{eq_h_psig} \\
\wh{g}^\sigma_{s,t} &=  g^\sigma_{s,t} &\qquad &\text{on} \quad Z^\sigma &\qquad & \text{for} \quad s \in \sigma, t \in [0,1] \label{eq_h_iotag}
\end{alignat}

\begin{Claim}
If (\ref{eq_h_k})--(\ref{eq_h_iotag}) hold, then (\ref{eq_enlarged_partial_h}) satisfies Properties \ref{prop_def_partial_homotopy_1}--\ref{prop_def_partial_homotopy_3} of Definition~\ref{Def_partial_homotopy}.
\end{Claim}

\begin{proof}
Property~\ref{prop_def_partial_homotopy_1} of Definition~\ref{Def_partial_homotopy} holds due to (\ref{eq_h_psig}).
Property~\ref{prop_def_partial_homotopy_2} holds by Assumptions~\ref{ass_ph_extend_i} and \ref{ass_ph_extend_ii}.
For Property~\ref{prop_def_partial_homotopy_3} observe that by (\ref{eq_k_gtau}) and (\ref{eq_h_k}) for any $s \in \tau \subset \partial\sigma$ we have
\[ \big((\psi^{\tau}_s)^{-1} \circ \wh\psi^\sigma_s \big)^* g^{\tau}_{s,t} = k_{s,t}  = \wh{g}^\sigma_{s,t} \qquad \text{on} \quad \wh{Z}^\sigma. \]
On the other hand, for any $s \in \sigma \subset \partial \tau$ we have by Assumption~\ref{ass_ph_extend_i}
\[ \psi^\tau_s (Z^\tau) \subset \psi^\sigma_s (Z^\sigma) = \wh\psi^\sigma_s (Z^\sigma). \]
Thus $(\wh\psi^{\sigma}_s )^{-1} \circ \psi^\tau_s  =  (\psi^{\sigma}_s )^{-1} \circ \psi^\tau_s : Z^\tau \to Z^\sigma$ and by (\ref{eq_h_iotag}) we have on $Z^\tau$
\[  \big((\wh\psi^{\sigma}_s)^{-1} \circ \psi^\tau_s \big)^* \wh{g}^\sigma_{s,t}=  \big((\psi^{\sigma}_s)^{-1} \circ \psi^\tau_s \big)^*g^\sigma_{s,t} = g^\tau_{s,t}. \qedhere \]
\end{proof}

We will now construct $(\wh{g}^\sigma_{s,t})_{s \in \sigma, t \in [0,1]}$ such that (\ref{eq_h_k})--(\ref{eq_h_iotag}) hold.

\bigskip
\textit{Case 1: $\sigma\not\subset L$.}\quad
We distinguish the two cases from Assumption~\ref{ass_ph_extend_iii}.
\medskip

\textit{Case 1a: Assumption~\ref{ass_ph_extend_iii-1} holds for all $s \in \sigma$. \quad}
For every $s \in \sigma$ let $\SS^{\prime,s}$ be the pull back of $\SS^s$ along $\wh\psi^\sigma_s$.
Then $(\SS^{\prime,s})_{s \in \sigma}$ is a transversely continuous family of spherical structures defined on neighborhoods of $Y$ in $\wh{Z}^\sigma$, $Y$ is a union of fibers of  $\SS^{\prime,s}$ and $(\wh\psi^\sigma_s)^* g^{\prime,s}_T$ is compatible with $\SS^{\prime,s}$ for all $s \in \sigma$.
Definition~\ref{Def_partial_homotopy}\ref{prop_def_partial_homotopy_5} implies that there is an $\eps > 0$ such that for every $t \in [0,1]$ the metric $g^\sigma_{s,t}$ restricted to an $\eps$-collar neighborhood of $\partial Z^\sigma$ inside $Z^\sigma$ is compatible with $\SS^{\prime,s}$.
So after restricting $\SS^{\prime, s}$ to a smaller domain, we may assume in the following that for all $s \in \sigma$ we have $Y \subset \domain \SS^{\prime, s}$ and that $g^\sigma_{s,t}$ is compatible with $\SS^{\prime, s}$ on $Z^\sigma \cap \domain ( \SS^{\prime,s} )$ for all $t \in [0,1]$.

\begin{Claim}
$k_{s,t}$ is compatible with $\SS^{\prime,s}$ for all $s \in \partial\sigma$ and $t \in [0,1]$
\end{Claim}

\begin{proof}
Note first that due to (\ref{eq_k_iotag}) the metric $k_{s,t}$ restricted to $Z^\sigma$ is compatible with $\SS^{\prime,s}$.
So it remains to check the compatibility only on $Y$.

Choose $\tau \subset \partial \sigma$ such that $s \in \tau$.
The set $((\psi^\tau_s)^{-1} \circ \wh\psi^\sigma_s)(Y)$ is contained in the closure $\C$ of a component of $Z^\tau \setminus ((\psi^\tau_s)^{-1} \circ \psi^\sigma_s)(Z^\sigma)$.
By Property~\ref{prop_def_partial_homotopy_4} of Definition~\ref{Def_partial_homotopy} there are two cases.

In the first case (Case~\ref{prop_def_partial_homotopy_4}\ref{prop_def_partial_homotopy_4i}) the image $\psi^\tau_s (\C)$ is a union of fibers of $\SS^s$ and $((\wh\psi^\sigma_s)_* k_{s,t} )|_{\wh\psi^\sigma_s(Y)}  = ((\psi^\tau_s)_* g^\tau_{s,t})|_{\wh\psi^\sigma_s(Y)}$ is compatible with $\SS^s$ for all $t \in [0,1]$.
It follows that $k_{s,t} |_Y$ is compatible with $\SS^{\prime,s}$ for all $t \in [0,1]$.

In the second case (Case~\ref{prop_def_partial_homotopy_4}\ref{prop_def_partial_homotopy_4i}) $\partial\C = \emptyset$, $\psi^\tau_s (\C) \subset U^s_{S3}$ and $g^\tau_{s,t}$ restricted to $\C$ is a multiple of $g^\tau_{s,0}$ for all $t \in [0,1]$.
It follows that $((\wh\psi^\sigma_s)_* k_{s,t} )|_{\wh\psi^\sigma_s(Y)}= ((\psi^\tau_s)_* g^\tau_{s,t})|_{\wh\psi^\sigma_s(Y)}$ is a multiple of $((\wh\psi^\sigma_s)_* k_{s,0} )|_{\wh\psi^\sigma_s(Y)}= ((\psi^\tau_s)_* g^\tau_{s,0})|_{\wh\psi^\sigma_s(Y)}$ for all $t \in [0,1]$.
So $k_{s,t}|_Y$ is a multiple of $k_{s,0}|_Y = ((\wh\psi^\sigma_s)^* g^{\prime,s}_T)|_Y$ for all $t \in [0,1]$, which is compatible with $\SS^{\prime,s}$.
\end{proof}

We can now construct $(\wh{g}^\sigma_{s,t})_{s \in \sigma, t \in [0,1]}$ using Proposition~\ref{prop_extending_symmetric}.
\medskip

\textit{Case 1b: Assumption~\ref{ass_ph_extend_iii-2} holds for all $s \in \sigma$. \quad}
Recall that in this case $\wh{Z}^\sigma$ is a disjoint union of $Z^\sigma$ and $Y$, $\wh\psi^\sigma_s (Y) \subset U^s_{S3}$ and there is a constant curvature metric $g^*$ on $Y$ and a continuous function $\lambda : \sigma \to \IR_+$ that $((\wh\psi^\sigma_s)^* g^{\prime,s}_{T} )|_Y = \lambda^2(s) g^*$ for all $s \in \sigma$.
Set $(\wh{g}^\sigma_{s,t}) := (g^\sigma_{s,t})$ on $Z^\sigma$.
Then (\ref{eq_h_iotag}) is satisfied and it remains to specify $(\wh{g}^\sigma_{s,t})$ on $Y$.

Let 
\[ A := \{ s \in \sigma \;\; : \;\; \wh\psi^\sigma (Y) \subset U^s_{S2} \} \] and for every $s \in A$ let $\SS^{\prime,s}$ be the pull back of $\SS^s$ to $Y$ along $\wh\psi^\sigma|_Y$.
Note that $A$ is open in $\sigma$, $(\SS^{\prime,s})_{s \in A}$ is transversely continuous and $(\wh\psi^\sigma_s)^* g^{\prime,s}_T$ is compatible with $\SS^{\prime,s}$ for all $s \in A$.
Moreover, if $A \neq \emptyset$, then $(Y, g^*)$ is isometric to the standard $S^3$ or $\IR P^3$.

Next let us analyze the family $(k_{s,t})_{s \in \partial\sigma, t \in [0,1]}$.
For any $s \in \tau \subset \partial\sigma$ the set $\C_{\tau,s} := ((\psi^\tau_s)^{-1} \circ \wh\psi^\sigma_s) (Y)$ is a component of $Z^\tau$.
Since $\tau$ is connected we have $\C_{\tau,s} = \C_\tau$ for all $s \in \tau$.
By Definition~\ref{Def_partial_homotopy}\ref{prop_def_partial_homotopy_4}, (\ref{eq_k_gtau}) and (\ref{eq_k_psi_g}) there is a closed subset $E_\tau \subset A \cap \tau$ such that:
\begin{enumerate}
\item For all $s \in E_\tau$ and $t \in [0,1]$ the metric $k_{s,t}|_Y$ is compatible with $\SS^{\prime,s}$.
\item For all $s \in \tau \setminus E_\tau$ and $t \in [0,1]$ the metric $k_{s,t} |_Y$ is a multiple of $k_{s,0} |_Y = (\wh\psi^\sigma_s g^{\prime,s}_T) |_Y = \lambda^2 (s) g^*$.
\end{enumerate}
Set $E := \cup_{\tau \subset \partial\sigma} E_\tau \subset A$ and notice that $E$ is closed.
By Proposition~\ref{prop_exten_round} we can construct $(\wh{g}^\sigma_{s,t} |_Y)_{s \in \sigma, t \in [0,1]}$ such that (\ref{eq_h_k}) and (\ref{eq_h_psig}) hold and such that $\wh{g}^\sigma_{s,t}|_Y$ is compatible with $\SS^{\prime,s}$ for all $s \in A$ and $t \in [0,1]$.
Moreover, $\wh{g}^\sigma_{s,t}|_Y$ is a multiple of $g^*$ for all $s \in A \setminus E'$ for some closed subset $E' \subset A \subset \sigma$.

\bigskip
\textit{Case 2: $\sigma\subset L$ \quad} 
We may assume that $Y \neq \emptyset$. 
By Assumption~\ref{ass_ph_extend_iv} we have $\partial Y = \emptyset$, $\wh{Z}^\sigma = Y$ and $Z^\sigma = \emptyset$.

\medskip
\textit{Case 2a: $Y$ is a spherical space form. \quad}
Due to Definition~\ref{Def_partial_homotopy}\ref{prop_def_partial_homotopy_6} and (\ref{eq_k_psi_g}) we have $k_{s,t} = \lambda^2_{s,t} ( \wh\psi^\sigma_s )^* g^{\prime, s}_T$ for all $s \in \partial \sigma$, $t \in [0,1]$, where $(s,t) \mapsto \lambda_{s,t} > 0$ is continuous and $\lambda_{s,0} = 1$.
Extend this function to a continuous function $(s,t) \mapsto \td\lambda_{s,t} > 0$ on $\sigma \times [0,1]$ and set $h_{s,t} := \td\lambda^2_{s,t} ( \wh\psi^\sigma_s )^* g^{\prime, s}_T$.

\medskip
\textit{Case 2b: $Y$ is a quotient of a cylinder. \quad}
As in Case 1a let $\SS^{\prime,s}$ be the pull back of $\SS^s$ along $\wh\psi^\sigma_s$.
For any $s \in \sigma$ we can split $( \wh\psi^\sigma_s )^* g^{\prime, s}_T = u_s + v_s$ into its components tangential and orthogonal to the fibers of $\SS^{\prime,s}$.
For the same reasons as in Case~2a we have $k_{s,t} = \lambda^2_{s,t} u_s + \mu^2_{s,t} v_s$, where $(s,t) \mapsto \lambda_{s,t} > 0$, $(s,t) \mapsto \mu_{s,t} > 0$ are continuous functions with $\lambda_{s,0} = \mu_{s,0} = 1$.
Let $\td\lambda_{s,t}, \td\mu_{s,t} : \sigma \times [0,1] \to \IR_+$ be continuous extensions of these functions and set $h_{s,t} := \td\lambda^2_{s,t}  u_s + \td\mu^2_{s,t}  v_s$.

\bigskip
We now verify that $(\wh{g}^\sigma_{s,t})_{s \in \sigma, t \in [0,1]}$ satisfies the required properties in all cases.
For this purpose, recall that by (\ref{eq_h_psig}) for all $s \in \sigma$ the metric $\wh g^\sigma_{s,1}$ restricted to $Z^\sigma$ is conformally flat and PSC-conformal and $\wh g^\sigma_{s,1}$ restricted to a neighborhood of $\wh Z^\sigma \setminus \Int Z^\sigma$ is compatible with a spherical structure and therefore also conformally flat.
So by Lemma~\ref{Lem_PSC_conformal_enlarge} we obtain that $(\wh{Z}, \wh g^\sigma_{s,1})$ is conformally flat and PSC-conformal for all $s \in \sigma$, which implies that $(\wh{Z}^\sigma, (\wh{g}^\sigma_{s,t})_{s \in \sigma, t \in [0,1]})$ is a metric deformation.

Next, consider Definition~\ref{Def_partial_homotopy}.
Properties~\ref{prop_def_partial_homotopy_5}, \ref{prop_def_partial_homotopy_6} of Definition~\ref{Def_partial_homotopy} hold by construction.

Let us now verify Property~\ref{prop_def_partial_homotopy_4} of Definition~\ref{Def_partial_homotopy}.
First assume that $s \in \tau \subset \partial\sigma$ and consider the embedding
\[ F := (\psi^\tau_s)^{-1} \circ \wh\psi^\sigma_s : \wh{Z}^\sigma \longrightarrow Z^\tau. \]
Consider the closure $\C$ of a component of $Z^\tau \setminus F(\wh{Z}^\sigma) = Z^\tau \setminus ( F(Z^\sigma) \cup F(Y))$.
Let $\C'$ be the closure of a component of $Z^\tau \setminus F(Z^\sigma)$ such that $\C' \supset \C$.
If $\C' = \C$, then there is nothing to show.
So assume that $\C' \supsetneq \C$.
Then $\C' \supset F(Y)$ and $\C$ is the closure of a component of $\C' \setminus F(Y)$.
This implies that $\partial Y \neq \emptyset$ and therefore the construction of $(\wh{g}^\sigma_{s,t})_{s \in \sigma, t \in [0,1]}$ was covered in Case~1a.
Let us now consider the two possibilities of Property~\ref{prop_def_partial_homotopy_4} of Definition~\ref{Def_partial_homotopy} for $\C'$:
\begin{enumerate}[label=(\roman*)]
\item $\psi^\tau_s(\C')$ is a union of spherical fibers and $(\psi^\tau_s)_* g^\tau_{s,t}$ restricted to $\psi^\tau_s (\C')$ is compatible with the restricted spherical structure for all $t \in [0,1]$.
Since $\psi^\tau_s (F(Y)) = \wh\psi^\sigma_s (Y)$ is a union of spherical fibers, the same is true if we replace $\C'$ by $\C$.
\item $\C'$ is closed, $\psi^\tau_s (\C') \subset U^s_{S3}$ and for every $s' \in \tau$ near $s$ the metric $g^\tau_{s',t}$ restricted to $\C$ is a multiple of $g^\tau_{s', 0}$ for all $t \in [0,1]$.
Again, since $\psi^\tau_s (F(Y)) = \wh\psi^\sigma_s (Y)$ is a union of spherical fibers, we obtain using Definition~\ref{Def_R_structure}\ref{prop_def_RR_1} that $\psi^\tau_s (\C') \subset U^s_{S2}$.
This implies that $(\psi^\tau_s)_* g^\tau_{s,t}$ restricted to $\psi^\tau_s (\C)$, being a multiple of $(\psi^\tau_s)_* g^\tau_{s,0} = g^{\prime,s}_T$ restricted to $\psi^\tau_s (\C)$, is compatible with the restricted spherical structure.
Hence this case implies Case (i).
\end{enumerate}

Next assume that $s \in \sigma \subset \partial\tau$ and consider the embedding
\[ F := \big(\wh\psi^\sigma_s \big)^{-1} \circ \psi^\tau_s : Z^\tau \longrightarrow \wh{Z}^\sigma. \]
Consider the closure $\C$ of a component of $\wh{Z}^\sigma \setminus F(Z^\tau)$.
If $\C$ is also the closure of a component of $Z^\sigma \setminus F(Z^\tau)$, then we are done, so assume that this is not the case.
It follows that $\C \supset Y$. 
Moreover, if $\C'_1, \ldots, \C'_m$ denote the closures of all components of $Z^\sigma \setminus F(Z^\tau)$ and $I \subset \{ 1, \ldots, m \}$ denotes the set of indices with the property that $\C'_i \cap Y \neq \emptyset$, then $\C= Y \cup_{i \in I} \C'_i$.
If $(\wh{g}^\sigma_{s,t})_{s \in \sigma, t \in [0,1]}$ was constructed according to Cases 1b or Case 2, then $I = \emptyset$, $\C = Y$ and the conditions in Property~\ref{prop_def_partial_homotopy_4} of Definition~\ref{Def_partial_homotopy} hold by the discussions in these cases.
So assume that $(\wh{g}^\sigma_{s,t})_{s \in \sigma, t \in [0,1]}$ was constructed according to Case 1a.
Since $\partial\C'_i \neq \emptyset$ for all $i \in I$, Property~\ref{prop_def_partial_homotopy_4} of Definition~\ref{Def_partial_homotopy} implies that for all $i \in I$ the set $\wh\psi^\sigma_s (\C'_i)$ is a union of spherical fibers and for every $t \in [0,1]$ the metric $(\wh\psi^\sigma_s)_* \wh{g}^\sigma_{s,t}$ restricted to $\wh\psi^\sigma_s (\C'_i)$ is compatible with the restricted spherical structure.
By construction, the same is true if we replace $\C'_i$ by $Y$.
Therefore $\wh\psi^\sigma_s (\C)$ is a union of spherical fibers and $(\wh\psi^\sigma_s)_* \wh{g}^\sigma_{s,t}$ restricted to $\wh\psi^\sigma_s (\C)$ is compatible with the restricted spherical structure
It follows that $\C$ satisfies Property~\ref{prop_def_partial_homotopy_4}\ref{prop_def_partial_homotopy_4i} of Definition~\ref{Def_partial_homotopy} (with $\sigma$ and $\tau$ switched).

Lastly, assume that the original partial homotopy was PSC-conformal over some $s \in K$.
We will argue that (\ref{eq_enlarged_partial_h}) is PSC-conformal over $s$ as well.
For this purpose it remains to consider the case $s \in \sigma$, and we only need to show that $(\wh{Z}^\sigma, \wh{g}^\sigma_{s,t})$ is PSC-conformal for all $t \in [0,1]$.
Recall that $(Z^\sigma, g^\sigma_{s,t})$ is PSC-conformal.
In Cases~1b, 2a, 2b, $Y$ is a component of $\wh{Z}^\sigma$ and we can use Lemma~\ref{Lem_PSC_conformal_enlarge} to show that $(Y,  \wh{g}^\sigma_{s,t})$ is PSC-conformal.
In Case~1a the metric $\wh{g}^\sigma_{s,t}$ is compatible with a spherical structure on $\wh{Z}^\sigma$ with the property that $Y$ is a union of spherical fibers.
Therefore, we obtain again using Lemma~\ref{Lem_PSC_conformal_enlarge} that $(\wh{Z}^\sigma, \wh{g}^\sigma_{s,t})$ is PSC-conformal.
\end{proof}

\subsection{Removing a disk from a partial homotopy}
In this subsection we will show that one can modify a partial homotopy by the removal of  a disk of controlled size from some $Z^\sigma$, without disturbing the partial homotopy property.  
To get an idea of (one of the situations in which) this operation will eventually be applied, consider a single singular Ricci flow $\M$ that undergoes a degenerate neck pinch at some time $T_0>0$.  Right after the singular time, one observes a cap region modelled on Bryant soliton, whose scale decreases to zero as $t\searrow T_0$.  For certain times $T>T_0$ close to $T_0$, we will find that there are two equally admissible truncations of $\M_T$, which agree with one another modulo the removal of an approximate Bryant $3$-disk region.  The following proposition will enable one to adjust a partial homotopy to accommodate the removal of such a disk region.

\begin{proposition}[Disk removal] \label{prop_move_remove_disk}
Consider a partial homotopy $\{ ( Z^\sigma, \lb (g^\sigma_{s,t})_{s \in \sigma, t \in [0,1]}, \lb (\psi^\sigma_s  )_{s \in \sigma}) \}_{\sigma \subset K}$ at time $T$ relative to $L$ and a closed subset $K_{PSC} \subset K$.  
Fix some simplex $\sigma_0 \subset K$ with $\si_0\cap L=\emptyset$, and assume that there is a collection of continuous families of embeddings $\{(\nu_{s,j} : D^3(1)=:D^3 \to \M^s_T )_{s \in \sigma_0}\}_{1\leq j\leq m}$ such that the following holds:
\begin{enumerate}[label=(\roman*)]
\item \label{lem_move_remove_disk_i} For all $s \in \sigma_0$ the embeddings $\{\nu_{s,j}\}_{1\leq j\leq m}$ have pairwise disjoint images and for every  $j \in \{ 1,\ldots,m \}$ we have $\nu_{s,j} (D^3) \subset \psi^{\sigma_0}_s (\Int Z^{\sigma_0}) \cap  U^s_{S2}$.
\item \label{lem_move_remove_disk_ii} For every $s \in \sigma_0$, $j \in \{1,\ldots,m \}$, the embedding $\nu_{s,j}$ carries the standard spherical structure on $D^3$ to $\SS^s$ restricted to $\nu_{s,j}(D^3)$.
\item \label{lem_move_remove_disk_iii} $\nu_{s,j} (D^3) \cap \psi^\tau_s (Z^\tau) = \emptyset$ whenever $s\in{\sigma_0} \subsetneq \tau \subset K$, $j\in \{1,\ldots,m\}$.
\item \label{lem_move_remove_disk_iv} If $\si_0$ is a maximal simplex, i.e. $\sigma_0$ is not properly contained in any other simplex $\sigma_1 \subset K$, then we assume additionally that for any $\tau \subset K$ with the property that $\tau \cap {\sigma_0} \neq \emptyset$ the following holds for all $s \in {\sigma_0} \cap \tau$, $j \in \{ 1,\ldots,m \}$:
The image $\nu_{s,j} (D^3)$ does not contain an entire component of $\psi^{\tau}_s (Z^{\tau})$.
\item \label{lem_move_remove_disk_v} $\{ ( Z^\sigma, \lb (g^\sigma_{s,t})_{s \in \sigma, t \in [0,1]}, \lb (\psi^\sigma_s  )_{s \in \sigma}) \}_{\sigma \subset K}$ is PSC-conformal over every $s\in K_{PSC}$.
\item \label{lem_move_remove_disk_vi} For all $s \in \sigma_0 \cap K_{PSC}$ the Riemannian manifold 
 \[ \big( \psi^{\sigma_0}_s (Z^{\sigma_0})  \setminus \cup_{j=1}^m \nu_{s,j} (\Int D^3), g^{\prime,s}_T \big) \]
 is PSC-conformal.
\end{enumerate}

Then we can find a partial homotopy $\{ (\td Z^\sigma, \lb (\td g^\sigma_{s,t})_{s \in \sigma, t \in [0,1]}, \lb (\td\psi^\sigma_s  )_{s \in \sigma}) \}_{\sigma \subset K}$ at time $T$ relative to $L$ such that the following holds:
\begin{enumerate}[label=(\alph*)]
\item \label{lem_move_remove_disk_a}  $\td\psi^\tau_s (\td{Z}^\tau) = \psi^\tau_s (Z^\tau)$ for all $s\in \tau$, if $\tau \neq {\sigma_0}$.
\item \label{lem_move_remove_disk_b}  $\td\psi^{\sigma_0}_s (\td{Z}^{\sigma_0}) = \psi^{\sigma_0}_s (Z^{\sigma_0}) \setminus \cup_{j=1}^m \nu_{s,j} (\Int D^3)$ for all $s\in {\sigma_0}$.
\item \label{lem_move_remove_disk_c}  $\{ ( \td Z^\sigma, \lb (\td g^\sigma_{s,t})_{s \in \sigma, t \in [0,1]}, \lb (\td\psi^\sigma_s  )_{s \in \sigma}) \}_{\sigma \subset K}$ is PSC-conformal over every $s\in K_{PSC}$.
\end{enumerate}
\end{proposition}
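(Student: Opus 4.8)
The plan is to construct the new partial homotopy by removing the interiors of the disks $\nu_{s,j}(D^3)$ from $Z^{\sigma_0}$, but this requires some care because the disks vary with $s$ while the manifold $Z^{\sigma_0}$ is fixed. First I would set $\td Z^\tau := Z^\tau$, $\td g^\tau_{s,t} := g^\tau_{s,t}$ and $\td\psi^\tau_s := \psi^\tau_s$ for all $\tau\neq\sigma_0$, so that only the data over $\sigma_0$ changes. For $\sigma_0$, I would use Lemma~\ref{Lem_cont_fam_sph_struct} (or an elementary isotopy argument) applied to the family of embedded disks $(\nu_{s,j})_{s\in\sigma_0}$ to produce a transversely continuous family of diffeomorphisms $(\Theta_s : Z^{\sigma_0}\to Z^{\sigma_0})_{s\in\sigma_0}$, with $\Theta_{s_0}=\id$ for a base point $s_0\in\sigma_0$, such that $\Theta_s$ carries a fixed collection of standard disks $D_j\subset Z^{\sigma_0}$ (the preimages $(\psi^{\sigma_0}_{s_0})^{-1}(\nu_{s_0,j}(D^3))$) onto $(\psi^{\sigma_0}_s)^{-1}(\nu_{s,j}(D^3))$. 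Setting $\td Z^{\sigma_0} := Z^{\sigma_0}\setminus\bigcup_j \Int D_j$, $\td\psi^{\sigma_0}_s := \psi^{\sigma_0}_s\circ\Theta_s|_{\td Z^{\sigma_0}}$, and $\td g^{\sigma_0}_{s,t} := (\psi^{\sigma_0}_s\circ\Theta_s)^* (\psi^{\sigma_0}_s)_* g^{\sigma_0}_{s,t}|_{\td Z^{\sigma_0}}$ (equivalently just the restriction of $g^{\sigma_0}_{s,t}$ transported appropriately), Assertions~\ref{lem_move_remove_disk_a} and \ref{lem_move_remove_disk_b} hold by construction.

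Next I would verify that this assignment is a partial homotopy in the sense of Definition~\ref{Def_partial_homotopy}. The key point is that $(\td Z^{\sigma_0}, \td g^{\sigma_0}_{s,1})$ must still be conformally flat and PSC-conformal: conformal flatness is preserved under restriction to a submanifold with boundary, and PSC-conformality is exactly what Lemma~\ref{lem_thick_annulus_psc_conformal} delivers --- since $\nu^*_{s,j}g^{\sigma_0}_{s,1}$ is compatible with the standard spherical structure on $B^3$ by hypotheses~\ref{lem_move_remove_disk_i}, \ref{lem_move_remove_disk_ii} (using that $g^{\sigma_0}_{s,1}$ is compatible with $\SS^s$ near the disk, which follows from Definition~\ref{Def_R_structure}\ref{prop_def_RR_4} and Property~\ref{prop_def_partial_homotopy_5} of the original homotopy). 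That lemma gives a radius $0<r_0<1$ so that removing $\nu_{s,j}(B^3(r_0))$ keeps the manifold PSC-conformal for all $s$ simultaneously (compactness of $\sigma_0$); I would then rescale the disks so that removing the full unit disks is covered. Properties~\ref{prop_def_partial_homotopy_1}--\ref{prop_def_partial_homotopy_3} and \ref{prop_def_partial_homotopy_6} are immediate from the original homotopy, and Property~\ref{prop_def_partial_homotopy_5} holds since we have not touched $\partial Z^{\sigma_0}$ (the new boundary spheres $\partial\nu_{s,j}(D^3)$ are regular fibers of $\SS^s$ by \ref{lem_move_remove_disk_ii}, and a collar neighborhood inside $\td Z^{\sigma_0}$ is compatible with $\SS^s$ because the disks carry the standard spherical structure). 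For Property~\ref{prop_def_partial_homotopy_4}, when $\tau\subsetneq\sigma_0$ the nesting $\td\psi^{\sigma_0}_s(\td Z^{\sigma_0})\subset\psi^{\sigma_0}_s(Z^{\sigma_0})\subset\psi^\tau_s(Z^\tau)$ shows the disk-complement pieces we created are among the components $\C$ to be checked: each removed disk contributes a region that is a union of spherical fibers with spherically-compatible metric, i.e. case \ref{prop_def_partial_homotopy_4i}, using \ref{lem_move_remove_disk_i}, \ref{lem_move_remove_disk_ii} again. When $\sigma_0\subsetneq\tau$, hypothesis~\ref{lem_move_remove_disk_iii} guarantees $\nu_{s,j}(D^3)\cap\psi^\tau_s(Z^\tau)=\emptyset$, so $\psi^\tau_s(Z^\tau)$ is unchanged relative to $\td\psi^{\sigma_0}_s(\td Z^{\sigma_0})$ and the nesting Property~\ref{prop_def_partial_homotopy_2} and case analysis go through unchanged (here is where \ref{lem_move_remove_disk_iv} is needed to ensure we are not deleting whole components of $\psi^\tau_s(Z^\tau)$ in the maximal-simplex situation, which would break Property~\ref{prop_def_partial_homotopy_2}).

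Finally, Assertion~\ref{lem_move_remove_disk_c} --- PSC-conformality over $s\in K_{PSC}$ --- follows for $s\notin\sigma_0$ trivially and for $s\in\sigma_0\cap K_{PSC}$ from combining hypothesis~\ref{lem_move_remove_disk_v} (which says $(Z^{\sigma_0},g^{\sigma_0}_{s,t})$ is PSC-conformal for all $t$) with the argument above: for $t=1$ conformal flatness plus Lemma~\ref{Lem_PSC_conformal_enlarge}/Lemma~\ref{lem_thick_annulus_psc_conformal}, for $t=0$ hypothesis~\ref{lem_move_remove_disk_vi} gives it directly, and for general $t$ one uses that each $(\nu^*_{s,j}g^{\sigma_0}_{s,t})$ is compatible with the standard spherical structure together with Lemma~\ref{lem_thick_annulus_psc_conformal} applied to the family $(g^{\sigma_0}_{s,t})_{(s,t)\in(\sigma_0\cap K_{PSC})\times[0,1]}$ over the compact parameter space $(\sigma_0\cap K_{PSC})\times[0,1]$, yielding a uniform $r_0$. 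I expect the main obstacle to be the bookkeeping around transverse continuity: producing the family $(\Theta_s)$ and checking that all the transported metrics and embeddings are transversely continuous in the smooth topology, and arranging a single radius $r_0$ that works uniformly over all the relevant compact parameter spaces simultaneously (for the metric-deformation condition, for the $K_{PSC}$ condition, and across all $t\in[0,1]$). Lemma~\ref{lem_thick_annulus_psc_conformal} is designed precisely for this, so the argument is more a matter of assembling the right families to feed into it than of new ideas.
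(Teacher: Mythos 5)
The core gap is that you treat the non-maximal and maximal cases of $\sigma_0$ uniformly, whereas they require fundamentally different arguments; your proposal only handles the non-maximal case.

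For the new components of $Z^\tau \setminus ((\psi^\tau_s)^{-1}\circ\td\psi^{\sigma_0}_s)(\td Z^{\sigma_0})$ created by the disk removal (with $\tau\subsetneq\sigma_0$), Definition~\ref{Def_partial_homotopy}\ref{prop_def_partial_homotopy_4}\ref{prop_def_partial_homotopy_4i} requires $(\psi^{\sigma_0}_s)_* g^{\sigma_0}_{s,t}$ restricted to $\nu_{s,j}(D^3)$ to be compatible with $\SS^s$ for \emph{all} $t\in[0,1]$, not just $t=0$. Your justification for this --- citing Definition~\ref{Def_R_structure}\ref{prop_def_RR_4} and Property~\ref{prop_def_partial_homotopy_5} --- is wrong: Definition~\ref{Def_R_structure}\ref{prop_def_RR_4} concerns the $\RR$-structure metric $g^{\prime,s}$, which equals $(\psi^{\sigma_0}_s)_*g^{\sigma_0}_{s,0}$ (the $t=0$ slice only), and Property~\ref{prop_def_partial_homotopy_5} only gives $\SS^s$-compatibility in a thin collar of $\partial Z^{\sigma_0}$, not on an interior disk. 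When $\sigma_0\subsetneq\sigma_1$, hypothesis~\ref{lem_move_remove_disk_iii} together with Property~\ref{prop_def_partial_homotopy_4} of the \emph{original} homotopy applied to the pair $\sigma_0\subsetneq\sigma_1$ does give the needed $\SS^s$-compatibility of the full deformation on the disk, and then your simple removal goes through exactly as in the paper's Case~1. But when $\sigma_0$ is maximal there is no $\sigma_1$ to invoke, and $(\psi^{\sigma_0}_s)_*g^{\sigma_0}_{s,t}$ will typically \emph{not} be $\SS^s$-compatible on $\nu_{s,j}(D^3)$ for $t>0$. The paper handles this (Lemma~\ref{lem_make_compatible_on_disk}) by actually \emph{modifying} the metric deformations $(g^\sigma_{s,t})$: it rounds the metrics near the disk center via Proposition~\ref{Prop_rounding} and then radially inflates the rounded region so it covers the whole disk, cutting off the modification in a neighborhood of $\sigma_0$ in parameter space so as not to disturb distant simplices. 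None of this machinery appears in your proposal, and without it the partial homotopy you produce fails Property~\ref{prop_def_partial_homotopy_4}.

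A secondary misreading: you claim hypothesis~\ref{lem_move_remove_disk_iv} is there to prevent deleting whole components of $\psi^\tau_s(Z^\tau)$ and thus to preserve Property~\ref{prop_def_partial_homotopy_2}. In fact, the removed disks are always inside $\psi^{\sigma_0}_s(\Int Z^{\sigma_0})$ (hypothesis~\ref{lem_move_remove_disk_i}), so the nesting in Property~\ref{prop_def_partial_homotopy_2} is not what is at stake. The actual role of~\ref{lem_move_remove_disk_iv} in the paper's Claim~\ref{cl_2_mu} is to guarantee that, for nearby parameters $s$ in lower-dimensional faces $\tau$, the intersection $\nu_s(D^3(2))\cap\psi^\tau_s(Z^\tau)$ is either all of $\nu_s(D^3(2))$ or an annulus whose inner radius is bounded away from $0$; this is precisely what makes the rounding-and-inflation in the maximal case compatible with the pre-existing boundary structure.
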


Before proceeding with the proof, we give an indication of some of the main points.
For simplicity we will restrict to the case when only one disk is removed, and  drop the index $j$.  
The main objective in the removal procedure is to ensure that the removed disk satisfies  Definition~\ref{Def_partial_homotopy}\ref{prop_def_partial_homotopy_4} --- the $\SS$-compatibility of the metric deformation $(g^{\sigma_0}_{s,t})$ --- on the disk.  

If $\si_0$ is not maximal, we choose some simplex $\tau\supsetneq \si_0$.
By applying Assumption~\ref{lem_move_remove_disk_iii} and Definition~\ref{Def_partial_homotopy}\ref{prop_def_partial_homotopy_4} to the pair $\si_0\subsetneq\tau$, we find that $(\psi^{\si_0}_s)_*g^{\si_0}_{s,t}$ is already $\SS^s$-compatible on the entire disk $\nu_s(D^3)$.  Hence we may simply remove $\nu_s(D^3)$ from $\psi^{\si_0}_s(Z^{\si_0})$ and the verification of Definition~\ref{Def_partial_homotopy}  amounts to unwinding of definitions.  

When $\si_0$ is maximal, the metric deformation $(\psi^{\si_0}_s)_*g^{\si_0}_{s,t}$ is typically not $\SS^s$-compatible on the $3$-disk $\nu_s(D^3)$, so we must modify it to respect the compatibility conditions on the system of metric deformations in a partial homotopy.
This necessitates adjustments to metric deformations $(g^\tau_{s,t})$ for $\tau$ near $\si_0$ as well.  To first approximation, the modification process involves two steps: we first apply a rounding procedure to  $(\psi^{\si_0}_s)_*g^{\si_0}_{s,t}$ on a small ball centered at the singular fiber $\nu_s(0)$ of $\SS^s$ to produce a metric deformation that is $\SS^s$-compatible near $\nu_s(0)$; then we push forward the resulting metric deformation by an $\SS^s$-compatible diffeomorphism to inflate the region of $\SS^s$-compatibility so that it covers $\nu_s(D^3)$.  The actual process is more involved than this for several reasons:  One issue is that we need to cut off the modification procedure in a neighborhood of $\si_0$.  To address this, we upgrade the two steps mentioned above --- the rounding procedure and the push forward by an inflationary diffeomorphism --- into continuous families depending on a  parameter lying in the interval $[0,1]$, and implement the cutoff by arranging for this control parameter to be supported close to $\si_0\subset K$.  Another major constraint on the modification process is the $\SS$-compatibility required by Definition~\ref{Def_partial_homotopy}\ref{prop_def_partial_homotopy_4} for various pairs $\tau\subsetneq \si$.   The assumptions of Proposition~\ref{prop_move_remove_disk} 
imply that for pairs $\tau\subsetneq\si$ and $s\in \tau$ near $\si_0$, the portion of the $3$-disk $\nu_s(D^3)$ where Definition~\ref{Def_partial_homotopy}\ref{prop_def_partial_homotopy_4} imposes $\SS$-compatibility is either a disk  $\nu_s(D^3(r))$ or an annulus $\nu_s(A^3(r,r'))$, where $r$ is bounded away from $0$.  This enables us to guarantee that the two step modification process --- rounding and inflation --- leaves the $\SS$-compatibility undisturbed.

\begin{proof}
By induction we may reduce the proposition to the case $m=1$ (Alternatively, the construction given below may be localized near each disk, so that the case when $m>1$ follows by applying the same argument simultaneously).
Note here that due to Lemma~\ref{Lem_PSC_conformal_enlarge} Assumptions~\ref{lem_move_remove_disk_ii} and \ref{lem_move_remove_disk_vi} imply that for any subset $I \subset \{ 1, \ldots, m \}$ the Riemannian manifold
 \[ \big( \psi^{\sigma_0}_s (Z^{\sigma_0}) \setminus \cup_{j \in I} \nu_{s,j} (\Int D^3), g^{\prime,s}_T \big) \]
 is PSC-conformal.
So we may assume in the following that $m=1$ and drop the index $j$ from now on.

Using the exponential map based at $\nu_s (0)$ and the fact that $\cup_{s \in K} U^s_{S2}$ is open, we can construct a neighborhood $\sigma_0 \subset U \subset K$ and a continuous family of embeddings $(\ov\nu_s : D^3 (2) \to \M^s)_{s \in U}$ such that $\nu_s = \ov\nu_s |_{D^3 (1)}$ if $s \in \sigma_0$ and such that:
\begin{enumerate}[label=(\arabic*)]
\item \label{prop_proof_disk_removal_1} For all $s \in \sigma_0$ we have $\ov\nu_s (D^3(2)) \subset \psi^{\sigma_0}_s (\Int Z^{\sigma_0})$.
\item \label{prop_proof_disk_removal_2} For all $s \in U$ we have $\ov\nu_s (D^3(2)) \subset U^s_{S2}$ and $\ov\nu_s$ carries the standard spherical structure on $D^3(2)$ to the restriction of $\SS^s$ to $\ov\nu_s(D^3(2))$.  
\item \label{prop_proof_disk_removal_3} $\ov\nu_s (D^3(2)) \cap \psi^\tau_s (Z^\tau) = \emptyset$ whenever ${\sigma_0} \subsetneq \tau \subset K$.
\item \label{prop_proof_disk_removal_4} If $\sigma_0$ is maximal, then for any $\tau \subset K$ and $s \in \tau \cap U$ the image $\ov\nu_s (D^3(2))$ does not contain an entire component of $\psi^{\tau}_s (Z^{\tau})$.
\end{enumerate}
In the following we will write $\nu_s$ instead of $\ov\nu_s$ for simplicity.

Next, we exploit the invariance of Definition~\ref{Def_partial_homotopy} under precomposition by diffeomorphisms to argue that we may assume in addition, without loss of generality: 
\begin{enumerate}[label=(\arabic*), start=5]
\item \label{prop_proof_disk_removal_5} There is an embedding $\mu : D^3 (2) \to Z^{\sigma_0}$ with the property that $\nu_s = \psi^{\sigma_0}_s \circ \mu$ for all $s \in \sigma_0$.
\end{enumerate}
More specifically, let $(\chi_s : Z^{\sigma_0} \to Z^{\sigma_0})_{s \in {\sigma_0}}$ be a continuous family of diffeomorphisms that are equal to the identity near $\partial Z^{\sigma_0}$ and such that $\chi_s^{-1} \circ (\psi^{\sigma_0}_s)^{-1} \circ \nu_s$ is constant in $s$.
Set
\[ \big( \ov\psi^{\sigma_0}_s := \psi^{\sigma_0}_s \circ \chi_s : Z^{\sigma_0} \to Z^{\sigma_0} \big)_{s \in {\sigma_0}}, \qquad \ov{g}^{\sigma_0}_{s,t} :=  \chi_s^* g_{s,t}^{\sigma_0}.  \]
If we replace $(g^{\sigma_0}_{s,t})_{s \in \sigma_0, t \in [0,1]}, \lb (\psi^{\sigma_0}_s  )_{s \in \sigma_0}$ by $(\ov{g}^{\sigma_0}_{s,t})_{s \in \sigma_0, t \in [0,1]}, \lb (\ov\psi^{\sigma_0}_s  )_{s \in \sigma_0}$, then the conditions for a partial homotopy are preserved, and $\mu := (\psi^{\sigma_0}_s)^{-1} \circ \nu_s$ is constant in $s$.

Next, we claim that the following is true:
\begin{enumerate}[label=(\arabic*), start=6]
\item \label{prop_proof_disk_removal_6} If $\tau \subsetneq \sigma \subset K$, $s \in \tau \cap U$ and $\C$ is the closure of a component of $Z^{ \tau}  \setminus((\psi^{ \tau}_s )^{-1} \circ \psi^{ \sigma}_s ) ( Z^{ \sigma} )$ with $\psi^\tau_s(\C) \cap \nu_s (D^3(2)) \neq \emptyset$, then Case~\ref{prop_def_partial_homotopy_4}\ref{prop_def_partial_homotopy_4i} of Definition~\ref{Def_partial_homotopy}\ref{prop_def_partial_homotopy_4} holds for $\C$.
\end{enumerate}
In fact, if $\C$ satisfies Definition~\ref{Def_partial_homotopy}\ref{prop_def_partial_homotopy_4}\ref{prop_def_partial_homotopy_4i}, then we are done; so assume that it satisfies Definition~\ref{Def_partial_homotopy}\ref{prop_def_partial_homotopy_4}\ref{prop_def_partial_homotopy_4ii}.
Then $\partial\C = \emptyset$, $\psi^{\tau}_s (\C) \subset U^s_{S3}$ and $g^{\tau}_{s,t}$ restricted to $\C$ is a multiple of $g^{\tau}_{s,0}$.
Since $\nu_s(D^3(2))\subset \psi^\tau_s(\C) \subset U_{S2}$, we conclude using Definition~\ref{Def_R_structure}\ref{prop_def_RR_1} that $\psi^{\tau}_s(\C) \subset U^s_{S2}$.  
So since $\partial \C = \emptyset$, we obtain that $\psi^{\tau}_s(\C)$ is a union of spherical fibers.
By Definition~\ref{Def_R_structure}\ref{prop_def_RR_4} we obtain that $(\psi^{\tau}_s)_* g^{\tau}_{s,0} = g^{\prime,s}_T$, and thus also $(\psi^{\tau}_s)_* g^{\tau}_{s,t}$, restricted to $\psi^{\tau}_s(\C)$ is compatible with $\SS^s$.

\medskip
\textit{Case 1: $\sigma_0 \subsetneq  \sigma_1$ for some simplex $\sigma_1 \subset K$. \quad}
Let us first apply Property~\ref{prop_proof_disk_removal_6} above for $\tau = \sigma_0$ and $\sigma = \sigma_1$.
By Property~\ref{prop_proof_disk_removal_3} above, for any $s \in \sigma_0$, there is a component $\C \subset Z^{ \sigma_0}  \setminus((\psi^{ \sigma_0}_s )^{-1} \circ \psi^{ \sigma_1}_s ) ( Z^{ \sigma_1} )$ with $\psi^{\sigma_0}_s (\C) \supset \nu_s (D^3(2))$.
So by Properties~\ref{prop_proof_disk_removal_2}, \ref{prop_proof_disk_removal_5}, \ref{prop_proof_disk_removal_6} above we obtain:
\begin{enumerate}[label=(\arabic*), start=7]
\item \label{prop_proof_disk_removal_7} For all $s \in \sigma_0$, $t \in [0,1]$ the pullback $\mu^* g^{\sigma_0}_{s,t}$ is compatible with the standard spherical structure on $D^2(2)$.
\end{enumerate}

Now set
\[ \td{Z}^{\sigma_0} := Z^{\sigma_0} \setminus \mu^{-1} (\Int D^3(1)), \qquad
\td\psi^{\sigma_0}_s := \psi^{\sigma_0}_s \big|_{\td{Z}^{\sigma_0}}, \qquad
\td{g}^{\sigma_0}_{s,t} := g^{\sigma_0}_{s,t} \big|_{\td{Z}^{\sigma_0}} \]
and 
\[ (\td Z^\sigma, \lb (\td g^\sigma_{s,t})_{s \in \sigma, t \in [0,1]}, \lb (\td\psi^\sigma_s  )_{s \in \sigma}) := ( Z^\sigma, \lb ( g^\sigma_{s,t})_{s \in \sigma, t \in [0,1]}, \lb (\psi^\sigma_s  )_{s \in \sigma}) \]
for all $\sigma \neq \sigma_0$.
Then Assertions~\ref{lem_move_remove_disk_a}, \ref{lem_move_remove_disk_b} of this proposition hold automatically.

Let us now verify that $\{ (\td Z^\sigma, \lb (\td g^\sigma_{s,t})_{s \in \sigma, t \in [0,1]}, \lb (\td\psi^\sigma_s  )_{s \in \sigma}) \}_{\sigma \subset K}$ is a partial homotopy.

First, we argue that $(\td Z^{\sigma_0}, \lb (\td g^{\sigma_0}_{s,t})_{s \in \sigma_0, t \in [0,1]})$ is a metric deformation (see Definition~\ref{Def_metric_deformation}).
Since $(Z^{\sigma_0}, \lb ( g^{\sigma_0}_{s,t})_{s \in \sigma_0, t \in [0,1]})$ is a metric deformation, the only non-trivial property is the PSC-conformality of $(\td Z^{\sigma_0}, \lb \td g^{\sigma_0}_{s,1})$, which follows from Lemma~\ref{Lem_PSC_conformal_enlarge} with $M=\td Z^{\si_0}$, $Z=(\psi^{\si_0}_s)^{-1}(\psi^{\si_1}_s(Z^{\si_1}))\subset \td Z^{\si_0}$ and $g=\td g^{\si_0}_{s,1}$.

Properties~\ref{prop_def_partial_homotopy_1}--\ref{prop_def_partial_homotopy_3}, \ref{prop_def_partial_homotopy_6} of Definition~\ref{Def_partial_homotopy} clearly hold for $\{ (\td Z^\sigma, \lb (\td g^\sigma_{s,t})_{s \in \sigma, t \in [0,1]}, \lb (\td\psi^\sigma_s  )_{s \in \sigma}) \}_{\sigma \subset K}$.  
Property~\ref{prop_def_partial_homotopy_5} is unaffected by the modification, except for the boundary component $\mu(\D D^3(1)) \subset \td Z^{\sigma_0}$, for which Property~\ref{prop_def_partial_homotopy_5} follows from  Property~\ref{prop_proof_disk_removal_7} above.

We now verify Property~\ref{prop_def_partial_homotopy_4}.   
Note that it holds for pairs of simplices $ \tau\subsetneq \sigma$ when $\sigma_0\not\in\{\tau,\sigma\}$, because $\{ ( Z^\sigma, \lb (g^\sigma_{s,t})_{s \in \sigma, t \in [0,1]}, \lb (\psi^\sigma_s  )_{s \in \sigma}) \}_{\sigma \subset K}$ is a partial homotopy.  

Suppose that $s\in \sigma_0\subsetneq\si $ for some simplex $\si \subset K$.  
Then the collection of components of $\td Z^{\sigma_0}\setminus ((\td\psi^{\sigma_0}_s)^{-1}\circ\td\psi^{\si}_s)(\td Z^{\si})$ is the same as the collection of components of  $Z^{\si_0}\setminus ((\psi^{\sigma_0}_s)^{-1}\circ\psi^{\si}_s)(Z^{\si})$, except for the one containing $\mu (D^3(1))$; let $\td\C$  denote its  closure and $\C$ denote the closure of the corresponding component of $Z^{\sigma_0}\setminus ((\psi^{\sigma_0}_s)^{-1}\circ\psi^{\si}_s)(Z^{\si})$.  
So $\td{\C} = \C \setminus \mu (\Int D^3(1))$.
By Properties~\ref{prop_proof_disk_removal_2}, \ref{prop_proof_disk_removal_6} above, we obtain that $\td\C$ satisfies Property~\ref{prop_def_partial_homotopy_4}\ref{prop_def_partial_homotopy_4i} of Definition~\ref{Def_partial_homotopy}.

Next suppose that $s\in \tau \subsetneq  \sigma_0$.  
Then the closures of the component $\td Z^{\tau}\setminus ((\td\psi^{\tau}_s)^{-1}\circ\td\psi^{\si_0}_s)(\td Z^{\si_0})$ are the same as those of $ Z^{\tau}\setminus ((\psi^{\tau}_s)^{-1}\circ\psi^{\si_0}_s)(Z^{\si_0})$ plus the component $\C:=(\psi^{\tau}_s)^{-1}(\nu_s(D^3(1)))$.  
It follows from Property~\ref{prop_proof_disk_removal_7} that $\C$ satisfies Definition~\ref{Def_partial_homotopy}\ref{prop_def_partial_homotopy_4}\ref{prop_def_partial_homotopy_4i}.

We now verify Assertion~\ref{lem_move_remove_disk_c} of this proposition.  Suppose that $\si\subset K$ is a simplex and $s\in K_{PSC}\cap\si$.  
 If $\si\neq\si_0$, then $\td Z^\si=Z^\si$ and $\td g^\si_{s,t}=g^\si_{s,t}$ for all $t\in [0,1]$, so $(\td Z^\sigma, \td g^{\si}_{s,t})$ is PSC-conformal by assumption.   If $\si=\si_0$, then for every $t\in[0,1]$ we may apply Lemma~\ref{Lem_PSC_conformal_enlarge} with $M=\td Z^{\si_0}$, $Z=(\psi^{\si_0}_s)^{-1}(\psi^{\si_1}_s(Z^{\si_1}))\subset \td Z^{\si_0}$ and $g=\td g^{\si_0}_{s,t}$ to conclude that $\td g^{\si_0}_{s,t}$ is PSC-conformal. 

\medskip
\textit{Case 2: $\sigma_0$ is a maximal simplex of $K$. \quad}
In this case, by Properties \ref{prop_proof_disk_removal_1}--\ref{prop_proof_disk_removal_6} above imply that the assumptions of Lemma~\ref{lem_make_compatible_on_disk} below hold.  So the proposition follows from Lemma~\ref{lem_make_compatible_on_disk} below.
\end{proof}

\begin{lemma} \label{lem_make_compatible_on_disk}
Consider a partial homotopy $\{ ( Z^\sigma, \lb (g^\sigma_{s,t})_{s \in \sigma, t \in [0,1]}, \lb (\psi^\sigma_s  )_{s \in \sigma}) \}_{\sigma \subset K}$ at time $T$ relative to $L$ and a closed subset $K_{PSC} \subset K$.
Fix some simplex $\sigma_0 \subset K$, $\sigma_0\cap L=\emptyset$ and a neighborhood $\sigma_0 \subset U \subset K$ and assume that there is a continuous family of embeddings $(\nu_s : D^3(2) \to \M^s )_{s \in U}$ such that the following holds:
\begin{enumerate}[label=(\roman*)]
\item \label{lem_make_compatible_on_disk_i} $\si_0$ is a maximal simplex. 
\item \label{lem_make_compatible_on_disk_ii} There is an embedding $\mu : D^3 (2) \to \Int Z^{\sigma_0}$ such that $\nu_s = \psi^{\sigma_0}_s \circ \mu$ for all $s \in \sigma_0$.
\item \label{lem_make_compatible_on_disk_iii} For all $s \in U$ we have $\nu_s (D^3(2)) \subset  U^s_{S2}$ and the embedding $\nu_s$ carries the standard spherical structure on $D^3$ to $\SS^s$ restricted to $\nu_{s}(D^3(2))$.
\item \label{lem_make_compatible_on_disk_iv} For any $\tau \subset K$ and $s \in \tau \cap U$ the image $\nu_s (D^3(2))$ does not contain an entire component of $\psi^{\tau}_s (Z^{\tau})$.
\item \label{lem_make_compatible_on_disk_vii} If $\tau \subsetneq \sigma \subset K$, $s \in \tau \cap U$ and $\C$ is the closure of a component of $Z^{ \tau}  \setminus((\psi^{ \tau}_s )^{-1} \circ \psi^{ \sigma}_s ) ( Z^{ \sigma} )$ with $\psi^\tau_s (\C) \cap \nu_s (D^3(2)) \neq \emptyset$, then Case~\ref{prop_def_partial_homotopy_4}\ref{prop_def_partial_homotopy_4i} of Definition~\ref{Def_partial_homotopy}\ref{prop_def_partial_homotopy_4} holds for $\C$.
\item \label{lem_make_compatible_on_disk_v} $\{ ( Z^\sigma, \lb (g^\sigma_{s,t})_{s \in \sigma, t \in [0,1]}, \lb (\psi^\sigma_s  )_{s \in \sigma}) \}_{\sigma \subset K}$ is PSC-conformal over every $s \in K_{PSC}$.
\item \label{lem_make_compatible_on_disk_vi} For all $s \in \sigma_0 \cap K_{PSC}$ the Riemannian manifold $(\psi^{\sigma_0}_s (Z^{\sigma_0} )\setminus \nu_s (\Int D^3(1)), \lb g^{\prime,s}_T)$ is PSC-conformal.
\end{enumerate}

Then letting 
\begin{equation}
\label{eqn_tilde_z_sigma_definition}
\td Z^\si:=\begin{cases}
Z^\si & \text{if $\si\neq\si_0$,}\\
 Z^{\si_0}\setminus \mu (\Int D^3(1)) &  \text{if $\si=\si_0$}\,
\end{cases}, \qquad \td\psi^\sigma_s := \psi^\sigma_s \big|_{\td Z^\sigma},
\end{equation} 
we can find continuous families $(\td{g}^\sigma_{s,t})_{s \in \sigma, t \in [0,1]}$ of metrics on $\td{Z}^\sigma$ such that:
\begin{enumerate}[label=(\alph*)]
\item $\{ ( \td Z^\sigma, \lb (\td g^\sigma_{s,t})_{s \in \sigma, t \in [0,1]}, \lb (\td\psi^\sigma_s  )_{s \in \sigma}) \}_{\sigma \subset K}$ is a partial homotopy at time $T$ relative to $L$.
\item $\{ ( \td Z^\sigma, \lb (\td g^\sigma_{s,t})_{s \in \sigma, t \in [0,1]}, \lb (\td\psi^\sigma_s  )_{s \in \sigma}) \}_{\sigma \subset K}$ is PSC-conformal  over every $s \in K_{PSC}$.
\end{enumerate}
\end{lemma}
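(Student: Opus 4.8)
The plan is to construct the modified metric deformations $(\td g^\sigma_{s,t})$ by leaving $(g^\sigma_{s,t})$ essentially untouched away from a shrinking neighborhood of $\sigma_0$ in $K$, and performing a two-step surgery (rounding followed by inflation by a compatible diffeomorphism) on a neighborhood of the disk $\nu_s(D^3)$ that is ``cut off'' by a control parameter supported near $\sigma_0$. First I would set up the control parameter: choose a continuous function $\theta : K \to [0,1]$ with $\theta \equiv 1$ on $\sigma_0$ and $\supp \theta \subset U$, so that all modifications will be performed with ``strength'' $\theta(s)$, and near $\partial(\text{a neighborhood of }\sigma_0)$ the new data agrees with the old. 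Because $\sigma_0$ is maximal, for $s \notin \sigma_0$ lying in some $\tau \supsetneq$ (a face of $\sigma_0$) the disk $\nu_s(D^3(2))$ interacts only with $Z^\tau$'s for $\tau$ meeting $\sigma_0$, and by Assumption~\ref{lem_make_compatible_on_disk_vii} (together with \ref{lem_make_compatible_on_disk_iv}) the portion of $\nu_s(D^3(2))$ on which Definition~\ref{Def_partial_homotopy}\ref{prop_def_partial_homotopy_4}\ref{prop_def_partial_homotopy_4i} requires $\SS^s$-compatibility of $(g^\tau_{s,t})$, when transported into $D^3(2)$ via $\psi^\tau_s$, is a union of standard sub-balls $D^3(r)$ and standard annuli $A^3(r,r')$ with all radii bounded away from $0$ by some uniform $r_* > 0$.

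Next I would carry out the rounding step. Applying Proposition~\ref{Prop_rounding} (the ``Rounding metrics'' result) to the family of metrics $\mu^* g^{\sigma_0}_{s,t}$ on $B^3 \subset \IR^3$ — using the PSC-conformal factors from Assumptions~\ref{lem_make_compatible_on_disk_v}, \ref{lem_make_compatible_on_disk_vi} on the appropriate closed parameter subset as $X_{PSC}$, and with $\ov r_1$ chosen $< r_*$ — yields a continuous family $(h'_{s,t,u})_{u \in [0,1]}$ with $h'_{s,t,0} = \mu^* g^{\sigma_0}_{s,t}$, which is untouched outside $B^3(\ov r_1)$, which is compatible with the standard spherical structure on $D^3(r_1)$ at $u=1$, which preserves conformal flatness and preserves $\SS$-compatibility on any $D^3(r)$ with $r \in [\ov r_1, 1]$, and which preserves PSC-conformality. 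I would then interpolate in $u$ using the control parameter, i.e.\ replace $\mu^* g^{\sigma_0}_{s,t}$ by $h'_{s,t,\theta(s)}$ (and modify the corresponding $g^\tau_{s,t}$ for $\tau$ meeting $\sigma_0$ by the analogous restricted/transported families), pushing the region of $\SS^s$-compatibility out to $D^3(r_1)$ on the full-strength locus $\sigma_0$. Because $r_1 < r_*$, the rounding does not disturb the $\SS$-compatibility conditions imposed by Definition~\ref{Def_partial_homotopy}\ref{prop_def_partial_homotopy_4} on the annuli and larger sub-balls. The second step is inflation: choose a continuous family of $\SS^s$-compatible (rotationally symmetric) diffeomorphisms $(\Xi_{s,u} : D^3(2) \to D^3(2))_{u \in [0,1]}$ with $\Xi_{s,0} = \id$, $\Xi_{s,u}$ the identity near $\partial D^3(2)$, and $\Xi_{s,1}$ mapping $D^3(r_1)$ onto a neighborhood of $D^3(1)$; push forward the rounded metric deformation by $\Xi_{s,\theta(s)}$. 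This inflates the symmetric region so that, at full strength, $(\td g^{\sigma_0}_{s,t})$ is $\SS^s$-compatible on all of $\nu_s(D^3(1))$; the diffeomorphism being $\SS^s$-compatible and identity near the boundary ensures all compatibility conditions in Definition~\ref{Def_partial_homotopy} (for every pair $\tau \subsetneq \sigma$ meeting $\sigma_0$) are preserved, using Assumption~\ref{lem_make_compatible_on_disk_vii} again for the annular/ball regions and Assumption~\ref{lem_make_compatible_on_disk_iv} to see no whole component of any $Z^\tau$ is swallowed.

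Having produced the new families, I would define $(\td Z^\sigma, \td\psi^\sigma_s)$ by \eqref{eqn_tilde_z_sigma_definition} and $(\td g^\sigma_{s,t})$ by restriction of the modified metrics, then verify the partial homotopy axioms one by one. Properties~\ref{prop_def_partial_homotopy_1}--\ref{prop_def_partial_homotopy_3} and \ref{prop_def_partial_homotopy_6} are immediate from the construction (the modification is by pullback/pushforward along diffeomorphisms that respect the various identification maps, and $\sigma_0 \cap L = \emptyset$). Property~\ref{prop_def_partial_homotopy_5} needs checking only for the new boundary sphere $\mu(\partial D^3(1)) \subset \td Z^{\sigma_0}$, where it holds because at full strength the metric is $\SS^s$-compatible on a collar (from the inflation step), and at lower strength the unrounded metric is already $\SS^s$-compatible there by Assumption~\ref{lem_make_compatible_on_disk_vii}. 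Property~\ref{prop_def_partial_homotopy_4} is the crux and must be examined for each relevant pair: for $\tau \subsetneq \sigma_0$ the new component $\C = (\psi^\tau_s)^{-1}(\nu_s(D^3(1)))$ is a union of $\SS^s$-fibers with compatible metric deformation by construction; for $\sigma_0 \subsetneq \sigma$ (vacuous, since $\sigma_0$ is maximal) there is nothing to do; for pairs with $\sigma_0 \notin \{\tau,\sigma\}$ nothing changed; and for the component of $\td Z^{\sigma_0}$ adjacent to the removed disk one uses the rounding/inflation compatibility together with \ref{lem_make_compatible_on_disk_vii}. Finally, metric-deformation-hood of $(\td Z^{\sigma_0}, \td g^{\sigma_0}_{s,t})$ and the PSC-conformality conclusion both follow from Lemma~\ref{Lem_PSC_conformal_enlarge} applied to $M = \td Z^{\sigma_0}$, $Z$ a suitable PSC-conformal submanifold (given by Assumptions~\ref{lem_make_compatible_on_disk_v}, \ref{lem_make_compatible_on_disk_vi}), and the $\SS$-compatible collar around $\mu(\partial D^3(1))$, using that Proposition~\ref{Prop_rounding}\ref{ass_Prop_rounding_f} preserves PSC-conformality at every stage $u$. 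I expect the main obstacle to be bookkeeping the $\SS$-compatibility conditions of Definition~\ref{Def_partial_homotopy}\ref{prop_def_partial_homotopy_4} simultaneously for all faces $\tau$ of $\sigma_0$ and all cofaces $\sigma$ — in particular arranging the uniform lower radius bound $r_*$ and ensuring the rounding scale $r_1$ and the inflation diffeomorphism are chosen small/tame enough to leave every annular and sub-ball region where compatibility is already required completely undisturbed, while still inflating the symmetric core past $D^3(1)$.
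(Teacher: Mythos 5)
Your proposal follows the paper's strategy quite closely — rounding via Proposition~\ref{Prop_rounding} followed by inflation by an $\SS$-compatible radial diffeomorphism, with the modification cut off near $\sigma_0$ — and you correctly anticipate that the key headache is tracking $\SS$-compatibility on the annular and sub-ball regions forced by Definition~\ref{Def_partial_homotopy}\ref{prop_def_partial_homotopy_4}.

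There is, however, a genuine gap in how you cut off the modification. You control both the rounding parameter and the inflation diffeomorphism by a purely spatial function $\theta(s)$ with $\theta\equiv 1$ on $\sigma_0$. This means that for $s\in\sigma_0$ the replacement $\mu^*g^{\sigma_0}_{s,t}\rightsquigarrow h'_{s,t,\theta(s)}$ is in full force already at $t=0$, so in general $\td g^{\sigma_0}_{s,0}\neq g^{\sigma_0}_{s,0}\big|_{\td Z^{\sigma_0}}=(\td\psi^{\sigma_0}_s)^*g^{\prime,s}_T$. (Even when $\mu^*g^{\sigma_0}_{s,0}=\nu_s^*g^{\prime,s}_T$ is already rotationally symmetric, the construction in Proposition~\ref{Prop_rounding} blends it with a fixed round metric $\ov g$ and precomposes with a nontrivial diffeomorphism, and the subsequent pullback by $\Xi_{s,1}$ certainly moves the metric.) So Property~\ref{prop_def_partial_homotopy_1} of Definition~\ref{Def_partial_homotopy} is violated. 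The paper handles this by first reparametrizing so that $g^\sigma_{s,t}$ is constant in $t$ on $[0,\frac12]$, and then using the two-parameter cutoff $u=\delta_1(s)\delta_2(t)$ with $\delta_2(0)=0$ and $\delta_2\equiv 1$ on $[\frac12,1]$ in \emph{both} the rounding step and the inflation step. The $\delta_2$ factor makes the modification vanish at $t=0$, restoring Property~\ref{prop_def_partial_homotopy_1}; the reparametrization in $t$ then ensures that on $[0,\frac12]$ the unmodified metric $\nu_s^*g^{\prime,s}_T$ is itself $\SS$-compatible on the whole disk (by your Assumption~\ref{lem_make_compatible_on_disk_iii}), so the partially-applied modification stays $\SS$-compatible where needed even for small $t$. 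Without this time cutoff and reparametrization, both Property~\ref{prop_def_partial_homotopy_1} and the $\SS$-compatibility verification in Claim~\ref{cl_5_hprimeprime}\ref{cl_5_hprimeprime_a} break down.

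A secondary point: before rounding you need a single continuous family of metrics on the full disk $D^3(1.99)$ defined for \emph{all} $s\in U$ (not just $s\in\sigma_0$), because for $s\in U\setminus\sigma_0$ the pullback $\nu_s^*(\psi^\tau_s)_*g^\tau_{s,t}$ may only be defined on an annulus $\ov{A^3(r,2)}$. You gesture at ``modify the corresponding $g^\tau_{s,t}$ by the analogous restricted/transported families,'' but you do not actually construct the common extension $(h_{s,t})_{s\in U}$ on the full disk compatible with all the faces $\tau$ simultaneously (the paper's Claim~\ref{cl_3_h}, built inductively over the simplices of $K$ using Proposition~\ref{prop_extending_symmetric}-type extension of $\SS$-compatible metrics). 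This step is not automatic — it is what ensures the rounding and inflation procedures, which act on a metric defined on the whole disk, even have something to act on, and that the result restricts correctly to every $\psi^\tau_s(Z^\tau)\cap\nu_s(D^3(2))$.
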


\begin{proof}
In the following, we will construct the families of metrics $(\td{g}^\sigma_{s,t})$.
The construction will be performed locally on $\nu_s (D^3(2))$ and on $U$, meaning that the metrics $(\td\psi^\sigma_s)_* \td{g}^\sigma_{s,t}$ and $(\psi^\sigma_s)_* g^\sigma_{s,t}$ will (if at all) only differ on $\nu_s (D^3(2))$ if $s \in U$ and we will choose $\td{g}^\sigma_{s,t} = g^\sigma_{s,t}$ if $s \not\in U$.

Before proceeding, we observe that without loss of generality we may assume that each $g^\sigma_{s,t}$ is constant in $t$ for all $t \in [0,\frac12]$.
In fact, in our partial homotopy we may replace each $g^\sigma_{s,t}$ by
\[ \td{g}^{\sigma}_{s,t} := \begin{cases} g^\sigma_{s,0} & \text{if $t \in [0, \tfrac12]$} \\
 g^\sigma_{s, 2t - 1} & \text{if $t \in [\tfrac12,1]$} \end{cases} \]
and Definition~\ref{Def_partial_homotopy} will still be satisfied.
So assume that this is the case from now on.

In the first claim we will analyze the intersections of $\nu_s (D^3(2))$ with the images $\psi^\sigma_s (Z^\sigma)$.
We will find that $\nu_s (D^3(2))$ is either fully contained in this image or the intersection equals an annulus whose inner circle has radius bounded away from $0$.

\begin{Claim} \label{cl_2_mu}
After possibly shrinking the neighborhood $U$ of $\sigma_0$ we can find a small $r_0 \in (0,1)$ such that:
\begin{enumerate}[label=(\alph*)]
\item \label{cl_2_mu_a} $U$ is contained in the union of all simplices $\tau \subset K$ that intersect ${\sigma_0}$.
\item \label{cl_2_mu_b} For any $\tau \subset K$ there are two cases: 
\begin{enumerate}[label=(b\arabic*)]
\item \label{cl_2_mu_b1} $\nu_s (D^3(2)) \subset \psi^\tau_s (Z^\tau)$  for all $s \in \tau \cap U$ or 
\item \label{cl_2_mu_b2} $\nu_s (D^3(r_0)) \cap \psi^\tau_s (Z^\tau) = \emptyset$ for all $s \in \tau \cap U$.
In view of Assumption~\ref{lem_make_compatible_on_disk_iv} this implies that the complement 
$D^3(2) \setminus  \nu_s^{-1} ( \psi^\tau_s  ( \Int(Z^\tau)))$
is either empty or of the form $D^3(r)$ for some $r \in (r_0,2]$.
\end{enumerate}
\end{enumerate}
\end{Claim}

\begin{proof}
This follows by an openness argument and Assumption~\ref{lem_make_compatible_on_disk_iv} of the lemma.
Let $U = U_1 \supset U_2 \supset \ldots \supset \sigma_0$ a sequence of open subsets with $\cap_{i=1}^\infty U_i = \sigma_0$.
Then Assertion~\ref{cl_2_mu_a} holds if we replace $U$ by $U_i$ for large $i$.
We will now argue that Assertion~\ref{cl_2_mu_b} holds as well for large $i$.

Consider some simplex $\tau \subset K$.
If $\tau = \sigma_0$, then Property~\ref{cl_2_mu_b1} holds by Assumption~\ref{lem_make_compatible_on_disk_ii} of the lemma.
If $\tau \subsetneq  \sigma_0$, then $\psi^{\sigma_0}_s (Z^{\sigma_0}) \subset \psi^\tau_s (Z^\tau)$ and thus Property~\ref{cl_2_mu_b1} holds as well.
Assume now that $\tau \not\subset \sigma_0$, but $\tau \cap \sigma_0 \neq \emptyset$.
By Assumption~\ref{lem_make_compatible_on_disk_iv} of the lemma, for any $s \in \tau \cap U$ we have either $\nu_s (0) \not\in \psi^\tau_s (Z^\tau)$ or $\nu_s (D^3(2)) \subset \psi^\tau_s (Z^\tau)$.
Consider the set $S \subset \tau \cap U$ of parameters for which $\nu_s (D^3(2)) \subset \psi^\tau_s (Z^\tau)$.
Then $S$ is closed in $\tau \cap U$ by definition, but by Assumption~\ref{lem_make_compatible_on_disk_iv} of the lemma it is also open in $\tau \cap U$.
So there is a subset $\tau \cap \sigma_0 \subset U_\tau \subset \tau \cap U$ that is open in $\tau$ such that either $\nu_s (0) \not\in \psi^\tau_s (Z^\tau)$ or $\nu_s (D^3(2)) \subset \psi^\tau_s (Z^\tau)$ uniformly for all $s \in U_\tau$.
Thus Assertion~\ref{cl_2_mu_b} holds for $\tau$ if $i$ is large enough such that $U_i \cap \tau \subset U_\tau \cap \tau$ and $r_0$ is chosen small enough.
Since $K$ is finite, this implies Assertion~\ref{cl_2_mu_b} for large $i$.
\end{proof}

As mentioned before, our goal will be to modify the metrics $(\psi^\sigma_s)_* g^\sigma_{s,t}$ only on $\nu_s (D^3(2))$.
It will therefore be useful to consider the pullbacks $\nu_s^* (\psi^\sigma_s)_* g^\sigma_{s,t}$.
Since these pullbacks may only be defined on annular regions, it will be suitable later to construct a family of extensions to an entire disk $D^3(1.99) \subset D^3(2)$, which for technical reasons will have to be slightly smaller than $D^3(2)$.

\begin{Claim} \label{cl_3_h}
After possibly shrinking the neighborhood $U$ of $\sigma_0$, we can find a continuous family of metrics $(h_{s,t})_{s \in U, t \in [0,1]}$ on $D^3(1.99)$ such that the following holds for any $ \tau \subset K$, $s \in \tau \cap U$, $t \in [0,1]$:
\begin{enumerate}[label=(\alph*)]
\item \label{cl_3_h_a} $h_{s,t} = \nu^*_s ( \psi^\tau_s)_* g^\tau_{s,t}$ on $D^3(1.99) \cap \nu^{-1}_s ( \psi^\tau_s (Z^\tau))$.
\item \label{cl_3_h_b} The metric $h_{s,t}$ restricted to $$D^3(1.99) \setminus  \nu_s^{-1} \big( \psi^\tau_s  ( \Int Z^\tau) \big)$$ is compatible with the standard spherical structure on $D^3(1.99)$. 
\end{enumerate}
\end{Claim}

\begin{proof}
Let $\tau_1, \ldots, \tau_m \subset K$ be a list of all simplices of $K$, indexed in such a way that $\dim \tau_j$ is non-decreasing in $j$.
We will proceed by induction and construct a sequence $U = U_0 \supset U_1 \supset \ldots$ of open neighborhoods of $\sigma_0$ and a sequence of families $(h^j_{s,t})_{s \in U_j \cap \cup_{i=1}^j \tau_i, t \in [0,1]}$ such that Assertions~\ref{cl_3_h_a} and \ref{cl_3_h_b} hold if $s \in U_j \cap \cup_{i=1}^j \tau_i$.
Let $j \in \{ 1,\ldots, m \}$ and assume by induction that we have already constructed $U_{j-1}$ and $(h^{j-1}_{s,t})$.
Note that $(h^{j-1}_{s,t})$ is defined on $\partial \tau_j \cap U_{j-1}$.

Our goal will now be to possibly shrink $U_{j-1}$ and extend $(h^{j-1}_{s,t})$ over the interior of $\tau_j \cap U_{j}$ such that Assertions~\ref{cl_3_h_a} and \ref{cl_3_h_b} continue to hold.
We first claim that this is the case if for all $s \in \tau_j \cap U_j$ and $t \in [0,1]$ we have:
\begin{enumerate}[label=(\arabic*)]
\item \label{prop_h_construction_1} $h^j_{s,t} = \nu^*_s ( \psi^{\tau_j}_s)_* g^{\tau_j}_{s,t}$ on $D^3(1.99) \cap \nu^{-1}_s ( \psi^{\tau_j}_s (Z^{\tau_j}))$.
\item \label{prop_h_construction_2} $h^j_{s,t}$ restricted to $D^3(1.99) \setminus  \nu_s^{-1} ( \psi^{\tau_j}_s  ( \Int Z^{\tau_j}) )$ is compatible with the standard spherical structure on $D^3(1.99)$.
\item \label{prop_h_construction_3} $h^j_{s,t} = h^{j-1}_{s,t}$ if $s \in \partial \tau_j \cap U_j$.
\end{enumerate}
In fact, if $\tau \subset K$ with $\tau_j \not\subset \tau$, then $\tau$ is disjoint from the interior of $\tau_j$ and Assertions~\ref{cl_3_h_a} and \ref{cl_3_h_b} hold by induction.
If $\tau = \tau_j$, then Assertions~\ref{cl_3_h_a} and \ref{cl_3_h_b} trivially follow from Properties~\ref{prop_h_construction_1} and \ref{prop_h_construction_2} above.
If $\tau_j \subsetneq \tau$, then by the definition of a partial homotopy we have $\psi^\tau_s (Z^\tau) \subset \psi^{\tau_j}_s (Z^{\tau_j})$ and $(\psi^\tau_s)_* g^\tau_{s,t} = (\psi^{\tau_j}_s)_* g^{\tau_j}_{s,t}$ on $\psi^\tau_s (Z^\tau)$.
So Property~\ref{prop_h_construction_1} implies Assertion~\ref{cl_3_h_a}.
Consider now Assertion~\ref{cl_3_h_b}.
Due to Assumption~\ref{lem_make_compatible_on_disk_iii} of the lemma and Property~\ref{prop_h_construction_2} above it suffices to show that on $\nu_s (D^3(1.99)) \cap (\psi^{\tau_j}_s (Z^{\tau_j}) \setminus \psi^\tau_s (Z^\tau))$ the metrics $(\nu_s)_* h^j_{s,t} = (\psi^{\tau_j}_s)_* g^{\tau_j}_{s,t}$, $t \in [0,1]$, are compatible with $\SS^s$.
For this purpose let $\C$ be the closure of a component of $Z^{\tau_j} \setminus ((\psi^{\tau_j}_s)^{-1} \circ \psi^\tau_s )( Z^\tau)$ such that $\nu_s (D^3(1.99))\cap \psi^{\tau_j}_s (\C) \neq \emptyset$.
By Assumption~\ref{lem_make_compatible_on_disk_vii} the image  $\psi^{\tau_j}_s (\C)$ is a union of spherical fibers and $ (\psi^{\tau_j}_s)_* g^{\tau_j}_{s,t}$ restricted to $\psi^{\tau_j}_s (\C)$ is compatible with the spherical structure.
This finishes the proof of Assertion~\ref{cl_3_h_b}.

It remains to choose $U_j$ and $(h^j_{s,t})$ satisfying Properties~\ref{prop_h_construction_1}--\ref{prop_h_construction_3} above.
If $\tau_j$ satisfies Case~\ref{cl_2_mu_b1} of Claim~\ref{cl_2_mu}, then we can simply set 
\[  h^j_{s,t} := \nu_s^* (\psi^{\tau_j}_s)_* g^{\tau_j}_{s,t}. \]
So assume that $\tau_j$ satisfies Case~\ref{cl_2_mu_b2}.
By the remark in Claim~\ref{cl_2_mu}\ref{cl_2_mu_b2}, there is a continuous function $r_j : U_{j-1} \cap \tau_j \to (r_0, 2]$ such that for all $s \in U_{j-1} \cap \tau_j$
\[ B^3 (2) \cap  \nu_s^{-1} (\psi^{\tau_j}_s (Z^{\tau_j}))  = B^3 (2) \cap \ov{A^3 (r_j (s), 2)} . \]
Note that in the case $r_j(s) = 2$ this set is empty.
Next observe that it suffices to construct $h^j_{s,t}$ for $s$ in a neighborhood $V_{s_0} \subset \tau_j$ of any parameter $s_0 \in \tau_j \cap \sigma_0$.
The desired family can then be constructed using a partition of unity.
So fix some $s_0 \in \tau_j \cap \sigma_0$.
If $r_j (s_0) > 1.99$, then Property~\ref{prop_h_construction_1} is vacuous in a neighborhood of $s_0$ and Properties~\ref{prop_h_construction_2}, \ref{prop_h_construction_3} can be satisfied by extending $(h^{j-1}_{s,t})$ by an arbitrary family of metrics on $D^3(1.99)$ that are compatible with the standard spherical structure.
If $r_j (s_0) \leq 1.99$, then $r_j < 2$ in a neighborhood of $s_0$ in $\tau_j$ and Properties~\ref{prop_h_construction_1}--\ref{prop_h_construction_3} can be satisfied by extending $\nu_s^* (\psi^{\tau_j}_s)_* g^{\tau_j}_{s,t}$ onto $D^3(1.99)$ as in the proof of Proposition~\ref{prop_extending_symmetric}.
This finishes the proof of the claim.
\end{proof}

We will now mainly work with the family $(h_{s,t})$.
In the next step we apply a rounding procedure at the origin.
The resulting family $(h'_{s,t})$ will be compatible with the standard spherical structure on a small disk $D^3(r_1) \subset D^3 (1.99)$.

\begin{Claim} \label{cl4_hprime}
Let $r_0$ be the constant from Claim~\ref{cl_2_mu}.
We can find a smaller open neighborhood $U'  \Subset U$ of ${\sigma_0}$ and a continuous family of metrics $(h'_{s,t})_{s \in U, t \in [0,1]}$ on $D^3(1.99)$, such that the following holds for some $r_1 \in (0,r_0/2)$:
\begin{enumerate}[label=(\alph*)]
\item \label{cl4_hprime_a} For all $s \in U'$ and $t \in [\frac12,1]$ the metric $h'_{s,t}$ is compatible with the standard spherical structure on $D^3 (r_1)$.
\item \label{cl4_hprime_b} $h'_{s,0} = h_{s,0}$ for all $s \in U$.
\item \label{cl4_hprime_c} $h'_{s,t} = h_{s,t}$ for all $s \in U \setminus U'$ and $t \in [0,1]$.
\item \label{cl4_hprime_d} $h'_{s,t} = h_{s,t}$ on $A^3 (r_0, 1.99)$ for all $s \in U$ and $t \in [0,1]$.
\item \label{cl4_hprime_e} $h'_{s,1}$ is conformally flat for all $s \in U$.
\item \label{cl4_hprime_f} If for some $s \in U$, $t \in [0,1]$ and $r\in [r_0,1.99]$ the metric $h_{s,t}$ is compatible with the standard spherical structure on $D^3(r)$, then so is $h'_{s,t}$.
\item \label{cl4_hprime_g} For any $s \in U$, $s \in \sigma \subset K$ and $t \in [0,1]$ the following holds: 
Consider the metric
\begin{equation} \label{eq_k_definition_cases}
 k^\sigma_{s,t} := \begin{cases} (\nu_s)_* h'_{s,t} & \text{on $\nu_s (D^3(1.99))$} \\ (\psi_{s}^\sigma)_* g^\sigma_{s,t} & \text{on $\psi^\sigma_s (Z^\sigma) \setminus \nu_s (D^3(1.99))$} \end{cases} 
\end{equation}
If $s \in K_{PSC}$ or $t = 1$, then $(\psi^\sigma_s (Z^\sigma), k^\sigma_{s,t})$ is PSC-conformal.
\end{enumerate}
\end{Claim}

\begin{proof} 
Choose $\delta_1 \in C^0_c (U)$, $0 \leq \delta_1 \leq 1$ such that $\delta_1 \equiv 1$ on a neighborhood $U'$ of $\si_0$ and choose $\delta_2 \in C^0 ([0,1])$ such that $\delta_2 (0) = 0$ and $\delta_2 \equiv 1$ on $[\frac12, 1]$.
Let $(h'_{s,t})_{(s,t) \in \supp (\delta_1) \times [0,1]}$ be the result of applying Proposition~\ref{Prop_rounding} with $u = \delta_1 (s) \delta_2 (t)$, $\ov r_1 = r_0$ (from Claim~\ref{cl_2_mu}), $X = \supp (\delta_1) \times [0,1]$ and 
\begin{equation} \label{eq_XPSC_removing}
 X_{PSC} = \big( (\supp(\delta_1) \cap K_{PSC}) \times [0,1] \big) \cup \big( \supp(\delta_1) \times \{ 1 \} \big) 
\end{equation}
on $D^3 (1.99)$.
Let $r_1 \in (0, r_0)$ be the constant produced by Proposition~\ref{Prop_rounding}.
By Proposition~\ref{Prop_rounding}\ref{ass_Prop_rounding_b}, we can extend $(h'_{s,t})_{(s,t) \in \supp (\delta_1) \times [0,1]}$ to a continuous family over $U \times [0,1]$ by setting $h'_{s,t} := h_{s,t}$ whenever $s \not\in \supp (\delta_1)$.

Proposition~\ref{Prop_rounding}\ref{ass_Prop_rounding_c} implies Assertion~\ref{cl4_hprime_a}, because $\delta_1 (s) \delta_2 (t) = 1$ for $s \in U'$ and $t \in [\frac12, 1]$.
Assertions~\ref{cl4_hprime_b}--\ref{cl4_hprime_d} follow immediately from Proposition~\ref{Prop_rounding}\ref{ass_Prop_rounding_a}, \ref{ass_Prop_rounding_c}.
For Assertion~\ref{cl4_hprime_e} notice that by Claim~\ref{cl_3_h}, the metric $h_{s,1}$ is locally either conformally flat or compatible with the standard spherical structure, and therefore conformally flat.
By Proposition~\ref{Prop_rounding}\ref{ass_Prop_rounding_e} the rounding procedure retains this property.
Assertion~\ref{cl4_hprime_f} is a restatement of Proposition~\ref{Prop_rounding}\ref{ass_Prop_rounding_d}.

Assertion ~\ref{cl4_hprime_g} holds by assumption if $\psi^\sigma_s(Z^\sigma)\cap \nu_s(D^3(r_0))=\emptyset$ since in this case $k^\sigma_{s,t}=(\psi^\sigma_{s})_*g^\sigma_{s,t}$ (see Assertion~\ref{cl4_hprime_d}).
Hence by Claim~\ref{cl_2_mu} we may assume $\psi^\sigma_{s}(Z^\sigma)\supset \nu_s(D^3(2))$ for all $s \in \sigma$, and in this case PSC-conformality follows from Proposition~\ref{Prop_rounding}\ref{ass_Prop_rounding_f}.  
To see this, note that by Assumption~\ref{lem_make_compatible_on_disk_v}, for every $(s,t) \in X_{PSC}$ from (\ref{eq_XPSC_removing}) we know that $(Z^\sigma, g^\sigma_{s,t})$ is PSC-conformal, so there exists a function $\wh w_s\in C^\infty(Z^\sigma)$ satisfying the conditions in Lemma~\ref{Lem_PSC_conformal_analytic}.  
Hence by Lemma~\ref{Lem_PSC_conformal_open} and a partition of unity argument we may assume that $\{\wh w_s\}_{s\in X_{PSC}}$ is a continuous family of smooth functions.  
Setting $w_s:=\wh w_s\circ\mu$, we apply Proposition~\ref{Prop_rounding}\ref{ass_Prop_rounding_f}, and let $w'_{s,t}$ be the resulting functions.  
Now letting 
\[ \wh w'_{s,t} := \begin{cases} w'_{s,t}\circ \nu_s^{-1} & \text{on $\nu_s (B^3(1.99))$} \\ \wh w_{s,t} \circ (\psi^\sigma_s)^{-1} & \text{on $\psi^\sigma_s (Z^\sigma) \setminus \nu_s (B^3(1.99))$} \end{cases} \]
we see that $\wh w_{s,t}'$ satisfies the conditions in Lemma~\ref{Lem_PSC_conformal_analytic} for $(\psi^\sigma_s(Z^\sigma),k^\sigma_{s,t})$.
\end{proof}

Next, we stretch the metrics $h'_{s,t}$ radially.

\begin{Claim} \label{cl_5_hprimeprime}
We can find a continuous family of metrics $(h''_{s,t})_{s \in U, t \in [0,1]}$ on $D^3(1.99)$, such that the following holds:
\begin{enumerate}[label=(\alph*)]
\item \label{cl_5_hprimeprime_a} For all $s \in {\sigma_0}$ the metric $h''_{s,t}$ is compatible with the standard spherical structure on $D^3 (1.1)$.
\item \label{cl_5_hprimeprime_b} $h''_{s,0} = h_{s,0}$ for all $s \in U$.
\item \label{cl_5_hprimeprime_c} $h''_{s,t} = h_{s,t}$ for all $s \in U \setminus U'$ and $t \in [0,1]$.
\item \label{cl_5_hprimeprime_d} $h''_{s,t} = h_{s,t}$ on $A^3 (1.98, 1.99)$ for all $s \in U$ and $t \in [0,1]$.
\item \label{cl_5_hprimeprime_e} $h''_{s,1}$ is  conformally flat for all $s \in U$.
\item \label{cl_5_hprimeprime_f}   If some $s \in U$, $t \in [0,1]$ and $r \in [r_0, 1.99]$ the metric $h_{s,t}$ is compatible with the standard spherical structure on $D^3(r)$, then so is $h''_{s,t}$.
\item \label{cl_5_hprimeprime_g} For any $s \in  U$, $s \in \sigma \subset K$ and $t \in [0,1]$ the following holds: 
Consider the metric
\[ \td{k}^\sigma_{s,t} := \begin{cases} (\nu_s)_* h''_{s,t} & \text{on $\nu_s (D^3(1.99))$} \\ (\psi_{s,t}^\sigma)_* g^\sigma_{s,t} & \text{on $\psi^\sigma_s (Z^\sigma) \setminus \nu_s (D^3(1.99))$} \end{cases} \]
restricted to $\psi^\sigma_s(\td Z^\sigma)$, where $\td Z^\sigma$ is as in (\ref{eqn_tilde_z_sigma_definition}).  
If $s \in K_{PSC}$ or $t = 1$, then $(\td\psi^\sigma_s (\td Z^\sigma), \td k^\sigma_{s,t})$ is PSC-conformal.
\end{enumerate}
\end{Claim}

\begin{proof} 
Fix a continuous family of diffeomorphisms 
\[ (\Phi_u : D^3(1.99) \to D^3(1.99))_{u \in (0,1]} \] 
with the following properties: 
\begin{enumerate}
[label=(\Alph*)]
\item \label{item_phi_properties_1} $\Phi_u (x) = f(x,u) x$ for some scalar function $f : D^3(1.99) \times (0,1] \to (0,1]$.
\item \label{item_phi_properties_2} $\Phi_u = \id$ on $A^3 (1.98, 1.99)$.
\item $\Phi_1 = \id$.
\item For any $x \in D^3 (1.97)$ we have $|\Phi_u (x)| < u$.
\end{enumerate}
Fix a continuous function $\delta_1 \in C_c^0 (U')$ with $0 \leq \delta_1 \leq 1$ and support in $U'$ such that $\delta_1 \equiv 1$ on ${\sigma_0}$ and a continuous function $\delta_2 \in C^0 ([0,1])$ with $0 \leq \delta_2 \leq 1$ such that $\delta_2 (0) = 0$ and $\delta_2 \equiv 1$ on $[\frac12, 1]$.
Let $r_2\in(0,r_1/2]$ be a constant whose value we will determine later and set
\[ h''_{s,t} := \Phi^*_{1- (1-r_2) \delta_1(s) \delta_2(t)} h'_{s,t}. \]

Let us first prove Assertion~\ref{cl_5_hprimeprime_a}.
Let $s \in \sigma_0$ and $t \in [0,1]$.
If $t \in [\frac12, 1]$, then we have $h''_{s,t} = \Phi^*_{r_2} h'_{s,t}$ and $\Phi_{r_2} ( D^3(1.1) ) \subset D^3(r_1)$.
So by Claim~\ref{cl4_hprime}\ref{cl4_hprime_a}, the metric $h''_{s,t}$ is compatible with the standard spherical structure on $D^3(1.1)$.
On the other hand, if $t \in [0, \frac12]$, then by Claim~\ref{cl_3_h}\ref{cl_3_h_a} and the fact that $g^{\sigma_0}_{s,t}=g^{\sigma_0}_{s,0}$ for all $t\in [0,\frac12]$ we have
\begin{equation*} \label{eq_h_p_h_nu_psi_g}
  h_{s,t} = \nu^*_s (\psi^{\sigma_0}_s)_* g^{\sigma_0}_{s,t} = \nu^*_s (\psi^{\sigma_0}_s)_* g^{\sigma_0}_{s,0} = \nu^*_s g^{\prime,s}_T. 
\end{equation*}
So by Assumption~\ref{lem_make_compatible_on_disk_iii} the metric $h_{s,t}$ is compatible with the standard spherical structure on $D^3 (1.99)$.
By Claim~\ref{cl4_hprime}\ref{cl4_hprime_f} the same is true for $h'_{s,t}$.

For Assertion~\ref{cl_5_hprimeprime_b}, observe that by Claim~\ref{cl4_hprime}\ref{cl4_hprime_b} for all $s \in U$
\[ h''_{s,0} = \Phi^*_1 h'_{s,0} = h'_{s,0} = h_{s,0}. \]
Similarly, for Assertion~\ref{cl_5_hprimeprime_c}, we have by Claim~\ref{cl4_hprime}\ref{cl4_hprime_c} for all $s \in U \setminus U'$, $t \in [0,1]$
$$ h''_{s,t} = \Phi^*_1 h'_{s,t} = h'_{s,t} = h_{s,t}. $$
Assertion~\ref{cl_5_hprimeprime_d} follows from  Property \ref{item_phi_properties_2} along with Claim~\ref{cl4_hprime}\ref{cl4_hprime_d}.
Assertion~\ref{cl_5_hprimeprime_e} follows from Claim~\ref{cl4_hprime}\ref{cl4_hprime_e}.

Assertion~\ref{cl_5_hprimeprime_f} follows from Claim~\ref{cl4_hprime}\ref{cl4_hprime_f} and Property \ref{item_phi_properties_1} above.
More specifically, if $h_{s,t}$ is compatible with the standard spherical structure on $D^3(r)$, then $h'_{s,t}$ restricted to $D^3(r)$ is as well and therefore, $h''_{s,t}$ is compatible with the standard spherical structure on $\Phi^{-1}_{1- (1-r_2) \delta_1(s) \delta_2(t)} ( D^3(r)) \supset D^3(r)$.

Lastly, consider Assertion~\ref{cl_5_hprimeprime_g}.

First suppose that $\sigma\neq\si_0$ or $t \in [0,\frac12]$.
We claim that in this case
\begin{equation} \label{eq_td_Z_sigma_PSC_conf}
(\td{Z}^\sigma,  g^\sigma_{s,t} |_{\td{Z}^\sigma}) \quad \text{is PSC-conformal if $s \in \sigma \cap K_{PSC}$ or $t =1$.}
\end{equation}
In fact, if $\sigma \neq \sigma_0$, then (\ref{eq_td_Z_sigma_PSC_conf}) follows from Assumption~\ref{lem_make_compatible_on_disk_v}, Definition~\ref{Def_metric_deformation} and the fact that $\td{Z}^\sigma = Z^\sigma$.
On the other hand, if $\sigma = \sigma_0$, $s \in \sigma_0 \cap K_{PSC}$ and $t \in [0,\frac12]$, then $g^{\sigma_0}_{s,t} = g^{\sigma_0}_{s,0} = (\psi^{\sigma_0}_s)^* g^{\prime,s}_T$.
So $(\td{Z}^{\sigma_0},  g^{\sigma_0}_{s,t} |_{\td{Z}^\sigma})$ is isometric to $(\td\psi^{\sigma_0} (\td{Z}^{\sigma_0}), g^{\prime,s}_T)$, which is PSC-conformal by Assumption~\ref{lem_make_compatible_on_disk_vi}.

Let us now continue with the proof of Assertion~\ref{cl_5_hprimeprime_g} if $\sigma\neq\si_0$ or $t \in [0,\frac12]$.
If $\nu_s (D^3 (1.99)) \subset \psi^\sigma_s (\td Z^\sigma)$ (which precludes $\sigma = \sigma_0$), then we are done by Claim~\ref{cl4_hprime}\ref{cl4_hprime_g}, because the metrics $h'_{s,t}$ and $h''_{s,t}$ are isometric to one another, which implies that the extensions of $(\nu_s)_* h'_{s,t}$ and $(\nu_s)_* h''_{s,t}$ by $(\psi_{s}^\sigma)_* g^\sigma_{s,t}$ onto $\psi^\sigma_s (\td Z^\sigma)$ are isometric.
The same is true if $\nu_s (D^3 (1.99))$, $\psi^\sigma_s (\td Z^\sigma)$ are disjoint.
If $\nu_s (D^3 (1.99)) \not\subset \psi^\sigma_s (\td Z^\sigma)$ and both subsets are not disjoint, then by Claim~\ref{cl_2_mu}\ref{cl_2_mu_b2} or the definition of $\td Z^{\sigma_0}$
\[ D^3 (1.99) \cap  \nu_s^{-1} (\psi^\sigma_s (\td Z^\sigma))  = \ov{A^3 (r,1.99 )} \]
for some $r \in (r_0, 1.99]$.
In this case, the metric $h''_{s,t}$ restricted to 
\[ \Phi^{-1}_{1- (1-r_2) \delta_1(s) \delta_2(t)} (  \ov{A^3 (r, 1.99)} ) \subset \ov{A^3 (r, 1.99)} \]
is isometric to $h'_{s,t}$ restricted to $ \ov{A^3 (r, 1.99)}$.
By Claim~\ref{cl4_hprime}\ref{cl4_hprime_d} and Claim~\ref{cl_3_h}\ref{cl_3_h_a} we have $h'_{s,t} = h_{s,t} = \nu^*_s (\psi^\sigma_s)_* g^{\sigma}_{s,t}$ on $ \ov{A^3 (r, 1.99)}$.
It follows that $\td{k}^\sigma_{s,t}$ restricted to
\[  \big( \psi^\sigma_s (\td Z^\sigma) \setminus \nu_s (D^3(1.99)) \big) \cup \nu_s \big( \Phi^{-1}_{1- (1-r_2) \delta_1(s) \delta_2(t)} (  \ov{A^3 (r, 1.99)} ) \big)   \]
is isometric to $(\td Z^\sigma, g^\sigma_{s,t})$, which is PSC-conformal if $s \in \sigma \cap K_{PSC}$ or $t=1$, due to (\ref{eq_td_Z_sigma_PSC_conf}).
On the other hand, $\td{k}^\sigma_{s,t}$ restricted to the closure of 
\begin{equation} \label{eq_A_Phi_A}
 \nu_s \big( \ov{A^3 (r, 1.99)} \setminus \Phi^{-1}_{1- (1-r_2) \delta_1(s) \delta_2(t)} (  \ov{A^3 (r, 1.99)} ) \big) 
\end{equation}
is isometric to $h'_{s,t}$ restricted to 
\[ \Phi_{1- (1-r_2) \delta_1(s) \delta_2(t)} (\ov{A^3 (r, 1.99)}) \setminus   \ov{A^3 (r, 1.99)}  \subset D^3 (r). \]
By Claim~\ref{cl_3_h}\ref{cl_3_h_b}, we know that $h_{s,t}$ is compatible with the standard spherical structure on $D^3(r)$ and thus by Claim~\ref{cl4_hprime}\ref{cl4_hprime_f} the same is true for $h'_{s,t}$ (recall that $r \geq r_0$).
It follows that $\td{k}^\sigma_{s,t}$ restricted to the closure of (\ref{eq_A_Phi_A}) is compatible with $\SS^s$.
Thus by Lemma~\ref{Lem_PSC_conformal_enlarge} we conclude that $(\psi^\sigma_s (\td Z^\sigma), \td{k}^\sigma_{s,t})$ is PSC-conformal if $s \in \sigma \cap K_{PSC}$ or $t=1$.

Now suppose $\sigma=\si_0$ and $t \in [\frac12,1]$.
Consider the family of metrics $( k^{\si_0}_{s,t})$ on $\psi^{\si_0}_s(Z^{\si_0})$ from (\ref{eq_k_definition_cases}).
By Claim~\ref{cl4_hprime}\ref{cl4_hprime_g} we know that $(Z^{\si_0},(\psi^{\si_0}_s)^* k^{\si_0}_{s,t})$ is PSC-conformal for all $(s,t) \in (\sigma_0 \cap K_{PSC}) \times [\frac12,1] \cup \sigma_0 \times \{ 1 \}$.
By Lemma~\ref{lem_thick_annulus_psc_conformal}, we may choose $r_2\in (0,r_1)$ such that $(\psi^{\si_0}_s)^* k^{\si_0}_{s,t}$ is also PSC-conformal on $Z^{\si_0} \setminus\mu (\Int D^3(r_2))$ for the same $(s,t)$. 
Assertion~\ref{cl_5_hprimeprime_g} now follows from the fact that $( \psi^{\si_0}_s(\td Z^{\si_0}) = \psi^{\si_0}_s(Z^{\si_0})\setminus\nu_s(\Int D^3(1)), \td{k}^{\sigma_0}_{s,t})$ is isometric to $k^{\sigma_0}_{s,t}$ restricted to $\psi^{\sigma_0}_s (Z^{\si_0} \setminus\mu (\Int D^3(r_2)))$.
\end{proof}

\bigskip

For every $s \in \sigma \subset K$ and $t \in [0,1]$, we can now define
\[ \td{g}^\sigma_{s,t} := \begin{cases} g^\sigma_{s,t} & \text{on $Z^\sigma \setminus (\psi^\sigma_s)^{-1} (\nu_s (D^3 (1.99)))$ if $s \in U$ or on $Z^\sigma$ if $s \not\in U$} \\ 
(\psi^\sigma_s)^* (\nu_s)_* h''_{s,t} & \text{on $(\psi^\sigma_s)^{-1} (\nu_s (D^3 (1.99)))$ if $s \in U$} \end{cases} \]

To complete the proof of Lemma~\ref{lem_make_compatible_on_disk}, we now verify that $\{ (\td Z^\si, \lb (\td{g}^\sigma_{s,t})_{s \in \sigma, t \in [0,1]}, \lb (\td\psi^\sigma_s)_{s \in \sigma}) \}_{\sigma \subset K}$ is a partial homotopy at time $T$ relative to $L$ that is PSC-conformal over all $s \in K_{PSC}$.

By Claim~\ref{cl_3_h}\ref{cl_3_h_a} and Claim~\ref{cl_5_hprimeprime}\ref{cl_5_hprimeprime_c}, \ref{cl_5_hprimeprime_d}, the metrics $\td{g}^\sigma_{s,t}$ are smooth and depend continuously on $s, t$.
By Claim~\ref{cl_5_hprimeprime}\ref{cl_5_hprimeprime_e}, \ref{cl_5_hprimeprime_g}, $(\td{Z}^\sigma, \td{g}^\sigma_{s,1})$ is conformally flat and PSC-conformal.
So $(\td Z^\sigma, (\td{g}^\sigma_{s,t}))$ are metric deformations.

By Claim~\ref{cl_5_hprimeprime}\ref{cl_5_hprimeprime_g} the Riemannian manifold $(\td Z^\sigma, \td g^\sigma_{s,t})$ is PSC-conformal for all $t \in [0,1]$ and $s \in \sigma \subset K$ if $s \in K_{PSC} \cap U$.
If $s \in K_{PSC} \setminus U$, then $(\td Z^\sigma, \td g^\sigma_{s,t})=( Z^\sigma,  g^\sigma_{s,t})$ is PSC-conformal by Assumption~\ref{lem_make_compatible_on_disk_v}.

Let us now verify the properties of Definition~\ref{Def_partial_homotopy}.

By Claim~\ref{cl_5_hprimeprime}\ref{cl_5_hprimeprime_b} and Claim~\ref{cl_3_h}\ref{cl_3_h_a} we have $(\psi^\sigma_s)^* g^{\prime,s}_T = g^\sigma_{s,0} = \td{g}^\sigma_{s,0}$.
This verifies Property~\ref{prop_def_partial_homotopy_1}.
Property~\ref{prop_def_partial_homotopy_2} does not concern $\td{g}^\sigma_{s,t}$, so it remains true.
Property~\ref{prop_def_partial_homotopy_3} is immediate from the definition of $\td{g}^\sigma_{s,t}$.
Property~\ref{prop_def_partial_homotopy_6} remains unchanged.
It remains to verify Properties~\ref{prop_def_partial_homotopy_4}, \ref{prop_def_partial_homotopy_5} in Definition~\ref{Def_partial_homotopy}.

For Property~\ref{prop_def_partial_homotopy_4} consider the closure $\C$ of a component of $Z^{ \tau}  \setminus((\psi^{ \tau}_s )^{-1} \circ \psi^{ \sigma}_s ) ( Z^{ \sigma} )$, for some $\tau\subsetneq \si$, $s\in \tau\cap U$.
We may assume that $\psi^\tau_s (\C) \cap \nu_s (D^3 (1.99)) \neq \emptyset$, because otherwise the property holds trivially.  
By Claim~\ref{cl_2_mu}\ref{cl_2_mu_b} we have $\psi^\tau_s (\C) \cap \nu_s (D^3 (1.99))  = \nu_s ( \ov {A^3(r_\tau,r_\sigma)})$ for some   $r_\sigma \in (r_0, 2]$, $r_\tau\in \{0\}\cup (r_0,r_\sigma)$.
Thus by Assumption~\ref{lem_make_compatible_on_disk_vii}, Claim \ref{cl_3_h} and Claim~\ref{cl_5_hprimeprime}\ref{cl_5_hprimeprime_f} we have that $(\psi^\tau_s)_* \td{g}^\tau_{s,t}$ is compatible with $\SS^s$ on $\psi^\tau_s (\C)$.

Lastly, consider Property~\ref{prop_def_partial_homotopy_5}.
Let $\sigma \subset K$, $s \in \sigma \cap U$ and consider a boundary component $\Sigma \subset \partial Z^\sigma$ with $\psi^\sigma_s (\Sigma) \subset \nu_s (D^3 (1.99))$.
By Claim~\ref{cl_3_h} we have that $h_{s,t}$ restricted to a neighborhood of the disk bounded by $\nu_s^{-1} (\Sigma)$ is compatible with the standard spherical structure.
Thus by Claim~\ref{cl_5_hprimeprime}\ref{cl_5_hprimeprime_f}, so is $h''_{s,t}$, which proves Property~\ref{prop_def_partial_homotopy_5}.
\end{proof}

\section{Deforming families of metrics towards families of conformally flat metrics}\label{sec_deforming_families_metrics}
\subsection{Statement of the main result and setup} \label{subsec_deforming_main_results}
Similarly as in Subsection~\ref{subsec_gen_setup} we fix a pair $(K,L)$ of topological spaces that is homeomorphic to the the geometric realization of a pair of finite simplicial complexes $(\mathcal{K}, \mathcal{L})$ where $\mathcal{L} \subset \mathcal{K}$ is a subcomplex.
We will mostly refer to the pair $(K,L)$ instead of $(\mathcal{K}, \mathcal{L})$ if there is no chance of confusion.

In this section we will show the following theorem.

\begin{theorem} \label{Thm_main_deform_to_CF}
Consider a continuous family $(M^s, g^s)_{s \in K}$ of Riemannian manifolds.
Suppose that $M^s$ is diffeomorphic to a connected sum of spherical space forms and copies of $S^2 \times S^1$ for all $s \in K$ and that $(M^s, g^s)$ is a CC-metric for all $s \in L$.
Let $K_{PSC} \subset K$ be a closed subset with the property that $(M^s,g^s)$ has positive scalar curvature for all $s \in K_{PSC}$.
Then there is a continuous family of Riemannian metrics $(h^s_t)_{s \in K, t \in [0,1]}$ on $(M^s)_{s \in K}$ such that for all $s \in K$:
\begin{enumerate}[label=(\alph*)]
\item $h^s_0 = g^s$.
\item $h^s_1$ is conformally flat and PSC-conformal.
\item If $s \in L$, then $h^s_t$ is a CC-metric for all $t \in [0,1]$.
\item If $s \in K_{PSC}$, then $(M^s, h^s_t)$ is PSC-conformal for all $t \in [0,1]$.
\end{enumerate}
\end{theorem}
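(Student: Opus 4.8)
The plan is to combine all the machinery developed in the previous sections: the continuous family of singular Ricci flows from Section~\ref{sec_families_srfs}, the family of $\RR$-structures from Section~\ref{sec_rounding_process}, and the theory of partial homotopies from Sections~\ref{sec_partial_homotopy}. First I would fix a triangulation $(\mathcal{K}, \mathcal{L})$ realizing $(K,L)$ and view the bundle $(M^s, g^s)_{s \in K}$ as a continuous family of Riemannian manifolds (Remark~\ref{rmk_fiber_bundle_construction}). Applying Theorem~\ref{thm_existence_family_k} produces a continuous family of singular Ricci flows $(\M^s)_{s \in K}$ with $(\M^s_0, g^s_0) \cong (M^s, g^s)$; by Theorems~\ref{Thm_sing_RF_uniqueness} and \ref{Thm_PSC_preservation} the flows over $L$ have all time-slices CC, and the flows over $K_{PSC}$ have $R > 0$ everywhere. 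Then I would invoke Theorem~\ref{Thm_rounding} to get a transversely continuous family of $\RR$-structures $(\RR^s)_{s\in K}$, arranged to be trivial over $L$ as in~\eqref{eq_RR_trivial_over_L}. By Theorem~\ref{Thm_extinction_time}, applied to the compact set $\{g^s : s\in K\}\subset\Met$, there is a uniform extinction time $T_{\mathrm{ext}}$, so $\M^s_T = \emptyset$ for all $s$ when $T \geq T_{\mathrm{ext}}$.

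The core of the argument is a backward induction on time: I would construct, for a finite decreasing sequence of times $T_N > T_{N-1} > \cdots > T_1 = 0$, a partial homotopy at time $T_i$ relative to $L$ for the family $(\RR^s)$, which is PSC-conformal over every $s \in K_{PSC}$, and such that at $T_1 = 0$ the maps $\psi^\sigma_s$ are all surjective onto $\M^s_0$. The base case $T_N \geq T_{\mathrm{ext}}$ is the trivial partial homotopy (all $Z^\sigma = \emptyset$), which is vacuously PSC-conformal. The inductive step from $T_{i+1}$ to $T_i$ is where all the ``moves'' from Section~\ref{sec_partial_homotopy} come into play: I would first use Proposition~\ref{prop_move_part_hom_bckwrds} to flow the partial homotopy backwards from $T_{i+1}$ to some slightly earlier time, checking hypotheses~\ref{hyp_lem_move_part_hom_bckwrds_i} and~\ref{hyp_lem_move_part_hom_bckwrds_ii} using properness (Theorem~\ref{Thm_properness_fam_sing_RF}, Theorem~\ref{Thm_rho_proper_sing_RF}) and the structure of $\RR^s$ near small-scale regions. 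Then, as one decreases $T$ past a time where new high-curvature regions or new components appear/disappear in some $\M^s_T$, I would enlarge the domains $\psi^\sigma_s(Z^\sigma)$ using Proposition~\ref{prop_extending} — absorbing spherical-fiber regions and round components detected by the $\RR$-structure, with PSC-conformality preserved via Lemmas~\ref{Lem_PSC_conformal_enlarge} and~\ref{lem_CNA_SS_implies_PSC_conformal} — and handle degenerate neck pinches, where a Bryant-like cap shrinks to zero, by removing approximate Bryant disks via Proposition~\ref{prop_move_remove_disk}, whose hypotheses~\ref{lem_move_remove_disk_i}--\ref{lem_move_remove_disk_vi} are arranged using Lemma~\ref{Lem_PSC_conformal_enlarge} and the compatibility of $\nu_{s,j}$ with $\SS^s$. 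Refining the triangulation as needed (Proposition~\ref{prop_simp_refinement}) lets one localize these moves so that they respect the simplicial structure. Finitely many times $T_i$ suffice because the family $(\M^s)_{s\in K}$ is, loosely speaking, compact and the relevant ``events'' (appearance of high-curvature caps, extinction of components, topology changes of time-slices) occur at a discrete set of time-parameter pairs that can be covered by finitely many boxes.

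Once the partial homotopy at time $T_1 = 0$ with surjective $\psi^\sigma_s$ is in hand, Proposition~\ref{prop_partial_homotopy_standard_homotopy} converts it into a genuine continuous family of metrics $(h^s_t)_{s\in K, t\in[0,1]}$ with $h^s_0 = g^{\prime,s}_0$, $h^s_1$ conformally flat and PSC-conformal, $h^s_t$ a CC-metric for $s\in L$ (using Property~\ref{prop_def_partial_homotopy_6} and~\eqref{eq_RR_trivial_over_L}), and $(M^s, h^s_t)$ PSC-conformal for $s\in K_{PSC}$ whenever the partial homotopy was PSC-conformal over $s$ — which the induction maintains. Finally, since $g^{\prime,s}_0 = g^s$ by Theorem~\ref{Thm_rounding}\ref{ass_thm_rounding_b} (the initial time-slice has $\rho$ bounded below, so the rounding does nothing there), we get $h^s_0 = g^s$ and all four assertions of the theorem follow.

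The main obstacle — and the genuinely delicate part of the whole construction — is showing that the inductive step can always be carried out and that the sequence of times $T_i$ is finite. Concretely, one must verify that at every time $T$ the geometry of $\M^s_T$ decomposes into a ``good'' part (where the metric $g^{\prime,s}_T$ is already conformally flat and PSC-conformal, to be captured by $\psi^\sigma_s(Z^\sigma)$) and a ``bad'' part contained in $U^s_{S2}\cup U^s_{S3}$ (of small scale, carrying the spherical structure), and that as $s$ varies this decomposition can be tracked with enough continuity that the moves of Section~\ref{sec_partial_homotopy} apply with all their hypotheses satisfied simultaneously across a simplex. This requires a careful transversality/genericity argument for the scale function $\rho$ and the cutoff functions along the time parameter, plus the delicate bookkeeping of which of the two cases in Definition~\ref{Def_partial_homotopy}\ref{prop_def_partial_homotopy_4} holds on each component — exactly the subtle point that motivated the use of the PSC-conformal condition (rather than plain positive scalar curvature) and the parameter $s'$ in Definition~\ref{Def_partial_homotopy}\ref{prop_def_partial_homotopy_4}\ref{prop_def_partial_homotopy_4ii} throughout.
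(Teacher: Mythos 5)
Your proposal identifies the right high-level architecture — backward induction on time, maintenance of a partial homotopy using the three moves of Section~\ref{sec_partial_homotopy}, conversion to a genuine homotopy at $T=0$ via Proposition~\ref{prop_partial_homotopy_standard_homotopy} — and correctly traces the chain of set-up results (Theorems~\ref{thm_existence_family_k}, \ref{Thm_PSC_preservation}, \ref{Thm_extinction_time}, \ref{Thm_rounding}). It also correctly explains why surjectivity at time $0$ follows and why $h^s_0 = g^s$. But three aspects of the inductive step are materially different from what the paper does, and I think you should be aware that as written they do not quite close the argument.

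First, the crucial invariant of the induction is a concrete system of scale conditions, not only PSC-conformality. The paper fixes constants $\Lambda$ and $r_0 < \cdots < r_n$ with $\Lambda^4 r_k = r_{k+1}$ and maintains the \emph{a priori} assumptions APA1--APA3: over every $k$-simplex $\sigma$, $\{ \rho > 1 \} \cap \M^s_T \subset \psi^\sigma_s(Z^\sigma) \subset \{ \rho > r_k \}$, every component of $\psi^\sigma_s(Z^\sigma)$ meets $\{\rho > \Lambda^2 r_k\}$, and $\rho > \Lambda r_k$ on $\psi^\sigma_s(\partial Z^\sigma)$. Without these, the hypotheses of Propositions~\ref{prop_extending} and~\ref{prop_move_remove_disk} cannot be verified uniformly across a simplex, and the surjectivity at $T=0$ (via $\rho > 1$ on $\M^s_0$ after the uniform rescaling~\eqref{eq_rs_bigger_C0}) does not come for free. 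Your description of the ``good'' part as the region where $g^{\prime,s}_T$ is already conformally flat is also off: the image $\psi^\sigma_s(Z^\sigma)$ is where the metric \emph{deformation} is defined; $g^{\prime,s}_T$ is only conformally flat in the small-scale, $\SS^s$-compatible region.

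Second, the order of moves is reversed relative to the paper. You propose to flow backward by Proposition~\ref{prop_move_part_hom_bckwrds} first, then enlarge and remove disks at the earlier time. The paper does the opposite: it first refines the triangulation (Lemma~\ref{lem_simplicial_refinement}), then enlarges and removes disks \emph{at time $T$} (Lemma~\ref{lem_apply_all_moves}), and only then applies Proposition~\ref{prop_move_part_hom_bckwrds} to descend to $T - \Delta T$. The reason this order matters is that the modified domain $X^\sigma_s$ at time $T$ is carefully chosen (Lemma~\ref{lem_newAPA}) so that after flowing back by $\Delta T$ the APA conditions hold at $T - \Delta T$; in particular, the Bryant disks must be removed at time $T$, where they are visible as caps with non-increasing scale, precisely so that the points whose scale drops below $r_k$ under backward flow are already excluded. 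If you flow back first, APA1 will generically fail at $T - \Delta T$ and you then face the harder problem of excising a region that no longer looks like a clean disk in $U^s_{S2}$.

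Third, the finiteness of the induction is not established by an ``events are discrete'' argument. In a continuous family of singular Ricci flows over a high-dimensional $K$, singular behaviour can be dense in $(s,t)$, and the paper does not appeal to any genericity or transversality. The finiteness mechanism is elementary and quantitative: the step size $\Delta T$ is fixed at $\min\{T, \theta(r_0)\}$, where $\theta(r_0)$ from Lemma~\ref{Lem_further_properties_gpdtp}\ref{ass_further_properties_gpdtp_d} is a uniform lower bound making $\rho$ change by at most a fixed factor along the backward flow. Since the uniform extinction time $T_{\ext}$ is finite, at most $T_{\ext}/\theta(r_0)+1$ steps reach $T=0$. This uniform step size is precisely what makes the APA inequalities with the nested scales $r_k < \Lambda r_k < \Lambda^2 r_k < \Lambda^3 r_k < r_{k+1}$ propagate through the backward-flow step.
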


We will now reduce Theorem~\ref{Thm_main_deform_to_CF} to Lemma~\ref{Lem_main_existence_partial_homotopy} below, which concerns the existence of certain partial homotopies.
By Theorem~\ref{thm_existence_family_k} there is a continuous family of singular Ricci flows $(\M^s)_{s \in K}$ with initial condition $(M^s, g^s)_{s \in K}$.
We may identify $(\M^s_0, g^s_0) = (M^s, g^s)$ for all $s \in K$.
By uniqueness, for all $s \in L$  all time-slices of $\M^s$ are CC-metrics. Moreover, by Theorem~\ref{Thm_PSC_preservation} the flow $\M^s$ has positive scalar curvature for all $s \in K_{PSC}$.
By Theorem~\ref{Thm_extinction_time} there is a uniform time $T_0 < \infty$ at which these flows become extinct, i.e. $\M^s_t = \emptyset$ for all $t \geq T_0$ and $s \in K$.

Next, we invoke Theorem~\ref{Thm_rounding} for some $\delta > 0$, which we will choose later.
We obtain a transversely continuous family of $\RR$-structures 
\[ \RR^s = ( g^{\prime, s}, \partial^{\prime, s}_\t, U^s_{S2}, U^s_{S3}, \SS^s ) \]
on $(\M^s)_{s \in K}$.
Recall from Theorem~\ref{Thm_rounding} that
\[ U_{S2}^s \cup U_{S3}^s = \big\{ x \in \M^s \;\; : \;\; \rho_{g^{\prime, s}} (x) < r_{\rot, \delta} (r_{\initial} (M^s, g^s) , \t(x)) \big\}. \]
Due to the uniform extinction time $T_0$ and Assertion~\ref{ass_thm_rounding_e} of Theorem~\ref{Thm_rounding}, we can multiply the metrics $g^s$ with a large constant and assume without loss of generality that
\begin{equation} \label{eq_rs_bigger_C0}
 r_{\initial} (M^s, g^s), \; r_{\can, \delta} (r_{\initial} (M^s, g^s) , t), \; r_{\rot, \delta} (r_{\initial} (M^s, g^s) , t) > C_0 
\end{equation}
 for all $s \in K$ and $t \geq 0$ for which $\M^s_t \neq \emptyset$, where $10 < C_0 < \infty$ is a constant that we will choose later.
Therefore, we have 
\begin{equation} \label{eq_US2US310}
 \{ \rho_{g^{\prime, s}} < C_0 \}  \subset U^s_{S2} \cup U^s_{S3} \qquad \text{for all} \quad s \in K. 
\end{equation}
In this section we will exclusively work with the objects $g^{\prime, s}, \partial^{\prime,s}_\t$ instead of $g^s, \partial^s_\t$ and we will often omit the index in expressions of the form ``$\rho_{g^{\prime, s}}$''.
The following lemma summarizes all further properties of $g^{\prime,s}$ and $\partial^{\prime,s}_t$ that we will use in this section.

Fix an arbitrary constant $\Lambda > 100$ for the remainder of this section.

\begin{lemma} \label{Lem_further_properties_gpdtp}
If $C_0 \geq \underline{C}_0 (\Lambda)$ and $\delta \leq \ov\delta (\Lambda)$, then:
\begin{enumerate}[label=(\alph*)]
\item \label{ass_further_properties_gpdtp_a} $g^{\prime,s}_{0}  = g^s$ for all $s \in K$.
\item \label{ass_further_properties_gpdtp_aa} $\rho > 1$ on $\M^s_0$ for all $s \in K$.
\item \label{ass_further_properties_gpdtp_b} There is some $T_{\ext} < \infty$ such that $\M^s_{t} = \emptyset$ for all $t \geq T_{\ext}$ and $s \in K$.
\item \label{ass_further_properties_gpdtp_c} For any $r > 0$ the restriction of $\pi : \cup_{s \in K} \M^s \to K$ to $\{ \rho \geq r \}$ is proper.
\item \label{ass_further_properties_gpdtp_d} There is a constant $\theta = \theta (r) \in (0, r^2]$ such that for any $s \in K$, $t_1, t_2 \geq 0$ with $|t_1 - t_2| \leq \theta$ the following is true.
If $x \in \M^s_{t_2}$ with $\rho (x) > r / 10$, then the point $x( t_1 )$ is defined and we have:
\[ |\rho(x) - \rho(x(t_1))| < 10^{-3} \rho(x) \]
\item \label{ass_further_properties_gpdtp_e} If in Assertion~\ref{ass_further_properties_gpdtp_c} we have $t_1 \leq t_2$ and $\rho (x (t_1) ) \leq \rho (x) \leq 10$, then there are embedded disks $D' \subset D \subset U^s_{S2} \cap \M^s_{t_2}$ with $x \in D'$ that are the union of spherical fibers of $\SS^s$ and such that $\rho > .9 \rho (x)$ on $D$, $\rho > 2 \Lambda^3 \rho (x)$ on $\partial D$, $\rho < 2 \rho (x)$ on $D'$ and such that $D'$ contains a singular spherical fiber of the form $\{ x' \} \subset D'$.
\item \label{ass_further_properties_gpdtp_f} For any $s \in L$ the following is true:
\begin{enumerate}[label=(g\arabic*)]
\item If $(M, g^s)$ is homothetic to a quotient of the round sphere, then the flow of $\partial'_\t$ induces a homothety between $(\M^s_0, g^{\prime,s}_{0})$ and $(\M^s_t, g^{\prime,s}_{t})$ for all $t$ for which it is defined.
\item If $(M,g^s)$ is homothetic to a quotient of the round cylinder, then for all $t \geq 0$ for which $\M^s_t \neq \emptyset$ the Riemannian manifold $(\M^s_t, g^{\prime,s}_{t})$ is homothetic to a (possibly different) quotient of the round cylinder and the flow of $\partial'_\t$ preserves the local isometric $O(3)$-actions on each time-slice $\M^s_t$.
\end{enumerate}
\item \label{ass_further_properties_gpdtp_g} If $(M^s, g^s )$ has positive scalar curvature, then $g^{\prime s}$ has positive scalar curvature on every time-slice.
Moreover if $t \geq 0$ and if $Y \subset \M^s_t$ is a compact 3-dimensional submanifold whose boundary components are regular spherical fibers of $\SS^s$ and $\rho \leq 1$ on $\partial Y$, then $(Y, g^{\prime,s}_t)$ is PSC-conformal.
\end{enumerate}
\end{lemma}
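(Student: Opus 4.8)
The plan is to fix, at the outset, the constant $C_0$ large and $\delta$ small (both depending on $\Lambda$) so as to accommodate all nine assertions simultaneously, and then to deduce each one from the construction of the family $(\RR^s)_{s\in K}$ in Theorem~\ref{Thm_rounding}, the normalization~(\ref{eq_rs_bigger_C0}), and the results of Section~\ref{sec_preliminaries}. Throughout I would use Theorem~\ref{Thm_rounding}\ref{ass_thm_rounding_c} (together with the remarks made during its proof) to arrange $\tfrac12\rho_{g^s}<\rho_{g^{\prime,s}}<2\rho_{g^s}$ and to pass freely between $g^s$ and $g^{\prime,s}$ when estimating scales and curvatures. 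Items \ref{ass_further_properties_gpdtp_a}--\ref{ass_further_properties_gpdtp_c} are then essentially immediate: \ref{ass_further_properties_gpdtp_a} holds because $\M^s_0$ lies in the region $\{\rho_{g^{\prime,s}}>C\,r_{\rot,\delta}\}$, on which $g^{\prime,s}=g^s$ by Theorem~\ref{Thm_rounding}\ref{ass_thm_rounding_b}; \ref{ass_further_properties_gpdtp_aa} follows from $|{\Rm}_{g^s}|\le r_{\initial}^{-2}<C_0^{-2}$ on $\M^s_0$ together with Lemma~\ref{lem_rho_Rm_R}, taking $C_0$ large; \ref{ass_further_properties_gpdtp_b} is Theorem~\ref{Thm_extinction_time} applied on each of the finitely many components of $K$ (on each of which $(M^s)$ is a fixed manifold and $\{g^s\}$ is a compact subset of $\Met$), noting that the global rescaling in~(\ref{eq_rs_bigger_C0}) only rescales the extinction time; and \ref{ass_further_properties_gpdtp_c} follows from the family properness of Theorem~\ref{Thm_properness_fam_sing_RF}, combined with \ref{ass_further_properties_gpdtp_b} and the comparability of $\rho_{g^{\prime,s}}$ with $\rho_{g^s}$.

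For \ref{ass_further_properties_gpdtp_d} I would first observe that, by Theorem~\ref{Thm_properness_fam_sing_RF} and \ref{ass_further_properties_gpdtp_b}, the set $\bigcup_{s\in K}\M^s\cap\{\rho\ge r/40,\ \t\le T_{\ext}\}$ is compact; since $(g^{\prime,s},\partial^{\prime,s}_\t)$ vary transversely continuously in the smooth topology, the function $\partial^{\prime,s}_\t\rho$ (understood in the Lipschitz sense) is bounded there by some $C(r)<\infty$, uniformly in $s$. A standard continuity argument then concludes: if $\theta(r)\in(0,r^2]$ is chosen small relative to $r/C(r)$, a trajectory of $\pm\partial^{\prime,s}_\t$ starting from a point with $\rho>r/10$ cannot leave $\{\rho\ge r/20\}$ within time $\theta(r)$, hence survives (by completeness, Definition~\ref{def_completeness}), and the same bound yields $|\rho(x)-\rho(x(t_1))|\le C(r)\theta(r)<10^{-3}\rho(x)$.

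Assertion \ref{ass_further_properties_gpdtp_e} is the main point, and the only place where the dependences $C_0\ge\underline C_0(\Lambda)$, $\delta\le\ov\delta(\Lambda)$ are genuinely needed. After shrinking $\theta$ further so that $\theta(r)\le\rho(x)^2$, I would apply Lemma~\ref{lem_bryant_increasing_scale} — transplanted from $g^s$ to $g^{\prime,s}$, which is legitimate by the $\delta$-closeness: since $\rho(x)\le 10<C_0<r_{\can}$ and $\rho(x(t_1))\le\rho(x)$ with $t_1\le t_2$, the pointed manifold $(\M^s_{t_2},g^{\prime,s}_{t_2},x)$ is, once $\delta$ is small relative to $\Lambda$, arbitrarily close to the pointed Bryant soliton $(M_{\Bry},g_{\Bry},x_{\Bry})$ at scale $\rho(x)$ on an arbitrarily large region (the size of the region being controlled by Lemma~\ref{lem_rcan_control}\ref{ass_rcan_b}). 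Because the rounding construction made $g^{\prime,s}$ rotationally symmetric near critical points of $R$ and $\nabla^2R$ is nondegenerate at $x_{\Bry}$ (Lemmas~\ref{Lem_Bry_R_Hessian_positive} and~\ref{lem_Bryant_rounding}), there is a point $x'$ near $x$ with $\nabla R(x')=0$ whose fiber $\{x'\}$ is a singular fiber of $\SS^s$, about which $g^{\prime,s}_{t_2}$ is $O(3)$-symmetric and $\rho$ radially nondecreasing. The connected components of $\{\rho<2\rho(x)\}$ and of $\{\rho<2.1\Lambda^3\rho(x)\}$ containing $x'$ then serve as $D'\subset D$: they are round balls, unions of spherical fibers, with the asserted scale bounds, and since $2.1\Lambda^3\rho(x)\le 21\Lambda^3<C_0\le r_{\rot,\delta}$ they lie in $\{\rho<C_0\}\subset U^s_{S2}$ by~(\ref{eq_US2US310}). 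I expect the routine-looking but delicate verification that the Bryant geometry really persists out to scale $\sim\Lambda^3\rho(x)$, and that these sublevel sets are the claimed disks, to be the hardest part of the lemma.

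Finally, \ref{ass_further_properties_gpdtp_f} follows from uniqueness (Theorem~\ref{Thm_sing_RF_uniqueness}): for $s\in L$ the flow $\M^s$ is the explicit Ricci flow on a round, resp.\ cylindrical, quotient, whose time-slices are all of the same type and whose time vector field generates the homothety, resp.\ preserves the local $O(3)$-actions, while $g^{\prime,s}=g^s$, $\partial^{\prime,s}_\t=\partial^s_\t$ by~(\ref{eq_RR_trivial_over_L}) (i.e.\ Theorem~\ref{Thm_rounding}\ref{ass_thm_rounding_d}). For \ref{ass_further_properties_gpdtp_g}: if $(M^s,g^s)$ has positive scalar curvature then $R_{g^s}>0$ on all of $\M^s$ by Theorem~\ref{Thm_PSC_preservation}; above scale $\sim r_{\rot,\delta}$ one has $g^{\prime,s}=g^s$, whereas below scale $r_{\can}$ one has $R_{g^s}=\rho^{-2}$ by Lemma~\ref{lem_rho_Rm_R}, so $R_{g^{\prime,s}}\ge(1-C\delta)\rho^{-2}>0$ for $\delta$ small; hence $g^{\prime,s}$ has positive scalar curvature on every time-slice. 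For the PSC-conformality of $(Y,g^{\prime,s}_t)$ I would peel off those components of $Y$ contained in $U^s_{S3}\setminus U^s_{S2}$, which by Definition~\ref{Def_R_structure}\ref{prop_def_RR_1},\ref{prop_def_RR_5} are closed components with constant positive curvature, hence PSC-conformal; on the remaining components $\{\rho<C_0\}\cap Y\subset\domain(\SS^s)$, so Lemma~\ref{lem_CNA_SS_implies_PSC_conformal} applies with $r:=C_0$ (the hypothesis $\rho\le cr$ on $\partial Y$ being satisfied since $\rho\le1$ there and $C_0$ is large, and the $\eps$-canonical neighborhood hypothesis following for $g^{\prime,s}$ from that for $g^s$ by the $\delta$-closeness). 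Assembling the conformal factors componentwise (or invoking Lemma~\ref{Lem_PSC_conformal_enlarge}) then yields PSC-conformality of all of $(Y,g^{\prime,s}_t)$.
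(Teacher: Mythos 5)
Your proposal is correct and follows essentially the same route as the paper's proof: assertion by assertion, the same theorems and lemmas (Theorem~\ref{Thm_rounding}(b) and (d), Theorems~\ref{Thm_extinction_time} and \ref{Thm_properness_fam_sing_RF}, Lemma~\ref{lem_bryant_increasing_scale}, Theorem~\ref{Thm_sing_RF_uniqueness}, Theorem~\ref{Thm_PSC_preservation}, Lemma~\ref{lem_CNA_SS_implies_PSC_conformal}) are invoked at the same points. The only cosmetic difference is in assertion (e), where the paper takes $D'$ to be the union of spherical fibers meeting the minimizing geodesic from $x$ to $x'$ (extended similarly to produce $D$), while you take connected components of sublevel sets of $\rho$ about $x'$; these coincide up to inessential details in the Bryant-close geometry because $\rho_{g^{\prime,s}}$ is locally $O(3)$-invariant and nearly radial there.
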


\begin{proof}
Assertion~\ref{ass_further_properties_gpdtp_a} is a consequence of Theorem~\ref{Thm_rounding}\ref{ass_thm_rounding_b}.
Assertion~\ref{ass_further_properties_gpdtp_aa} holds for $C_0 \geq \underline{C}_0$.
Assertion~\ref{ass_further_properties_gpdtp_b} follows from Theorem~\ref{Thm_extinction_time}.
 Assertion~\ref{ass_further_properties_gpdtp_c} follows from Assertion~\ref{ass_further_properties_gpdtp_b} and Theorem~\ref{Thm_properness_fam_sing_RF}.
Assertion~\ref{ass_further_properties_gpdtp_d} is a consequence of Assertion~\ref{ass_further_properties_gpdtp_c}.

For Assertion~\ref{ass_further_properties_gpdtp_e} let $\delta' > 0$ be a constant whose value we will determine later.
By Lemma~\ref{lem_bryant_increasing_scale}, assuming $C_0 \geq \underline{C}_0 (\delta')$, $\delta \leq \ov\delta (\delta')$ (see (\ref{eq_rs_bigger_C0})), the pointed Riemannian manifold $(\M_{t_2}^s, g^s_{t_2}, x)$ is $\delta'$-close to the pointed Bryant soliton $(M_{\Bry}, \lb g_{\Bry}, \lb x_{\Bry})$ at scale $\rho(x)$.
If $\delta' \leq \ov\delta'$, then $.99 \rho_{g^s} \leq \rho_{g^{\prime,s}} \leq 1.01 \rho_{g^s}$ on $\{ \rho < C_0 \}$.
Moreover if $\delta' \leq \ov\delta'$, then the scalar curvature of $g^{\prime,s}$ attains a unique maximum at some $x' \in B(x, .01\rho(x))$.
Therefore $x,x' \in U^s_{S2}$ and $\{ x' \}$ is a singular fiber.
Let $D'$ be the union of spherical fibers intersecting the minimizing geodesic between $x, x'$.
Then the asserted properties of $D'$ hold for $\delta' \leq \ov\delta'$ and the existence of $D$ holds if $C_0 \geq \underline{C}_0 (\Lambda)$ and $\delta' \leq \ov\delta' (\Lambda)$.

Assertion~\ref{ass_further_properties_gpdtp_f} follows by uniqueness of singular Ricci flows, Theorem~\ref{Thm_sing_RF_uniqueness}, and Theorem~\ref{Thm_rounding}\ref{ass_thm_rounding_d}.

The first part of Assertion~\ref{ass_further_properties_gpdtp_g} holds if $\delta \leq \ov{\delta}$.
The second part follows using Lemma~\ref{lem_CNA_SS_implies_PSC_conformal} if $C_0 \geq \underline{C}_0$ and $\delta \leq \ov{\delta}$.
Observe that we may assume without loss of generality that $Y$ is disjoint from $U_{S3}^s$, because all components of $U_{S3}^s \cap \M^s_t$ are PSC-conformal.
\end{proof}

From now on let us fix the constants $C_0, \delta$ from Lemma~\ref{Lem_further_properties_gpdtp}, as well as the family of $\RR$-structures $(\RR^s)_{s \in K}$.

Let $n := \dim K$ and set $r_k := \Lambda^{-4n + 4k-4}$.
So 
\[ 0 < r_0 < \ldots < r_{n-1} < r_n = \Lambda^{-4}, \qquad \Lambda^4 r_k = r_{k+1}. \]

By Proposition~\ref{prop_partial_homotopy_standard_homotopy} and Lemma~\ref{Lem_further_properties_gpdtp}\ref{ass_further_properties_gpdtp_aa}, Theorem~\ref{Thm_main_deform_to_CF} can be reduced to the following lemma.

\begin{lemma} \label{Lem_main_existence_partial_homotopy}
For any $T \geq 0$ there is a simplicial refinement of $\mathcal{K}$ and a partial homotopy $\{ ( Z^\sigma, \lb (g^\sigma_{s,t})_{s \in \sigma, t \in [0,1]}, \lb (\psi^\sigma_s  )_{s \in \sigma}) \}_{\sigma \subset K}$ at time $T$ relative to $L$ for $(\RR^s)_{s \in K}$ that satisfies the following a priori assumptions for every $k$-simplex $\si\subset K$ and all $s \in \sigma$:
\begin{enumerate}[label=(APA \arabic*), leftmargin=*]
\item \label{APA1} $\{ \rho > 1 \} \cap \M^s_T \subset \psi^\sigma_s (Z^\sigma) \subset \{ \rho > r_k \}$
\item \label{APA2} Every component of $\psi^\sigma_s (Z^\sigma)$ contains a point with $\rho > \Lambda^2 r_k$.
\item \label{APA3} $\rho > \Lambda r_k$ on $\psi^\sigma_s (\partial Z^\sigma)$.
\item \label{APA4} $\{ ( Z^\sigma, \lb (g^\sigma_{s,t})_{s \in \sigma, t \in [0,1]}, \lb (\psi^\sigma_s  )_{s \in \sigma}) \}_{\sigma \subset K}$ is PSC-conformal  over every $s \in K_{PSC}$.
\end{enumerate}
\end{lemma}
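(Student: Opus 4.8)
The proof is by downward induction on $T$, starting from the extinction time $T_{\ext}$ from Lemma~\ref{Lem_further_properties_gpdtp}\ref{ass_further_properties_gpdtp_b}. For $T \geq T_{\ext}$, all time-slices $\M^s_T$ are empty, so the trivial partial homotopy ($Z^\sigma = \emptyset$ for all $\sigma$) vacuously satisfies \ref{APA1}--\ref{APA4}. Then I would show: given a partial homotopy at time $T$ satisfying the a priori assumptions, there is a $\theta = \theta(r_0) > 0$ (the constant from Lemma~\ref{Lem_further_properties_gpdtp}\ref{ass_further_properties_gpdtp_d}) such that one can construct a partial homotopy at time $T - \theta$ (or at time $0$ if $T - \theta < 0$, by using $[0,T]$ instead of a full step) again satisfying \ref{APA1}--\ref{APA4}. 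Iterating this finitely many times reaches $T = 0$. The content is entirely in the inductive step.

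\textbf{The inductive step.} Fix a partial homotopy at time $T$ satisfying \ref{APA1}--\ref{APA4}. The step proceeds through the moves developed in Section~\ref{sec_partial_homotopy}, carried out in the following order. First, I would apply Proposition~\ref{prop_move_part_hom_bckwrds} to flow the partial homotopy backward from time $T$ to time $T' := \max\{T - \theta, 0\}$ along the vector fields $\partial^{\prime,s}_\t$; here hypotheses \ref{hyp_lem_move_part_hom_bckwrds_i}, \ref{hyp_lem_move_part_hom_bckwrds_ii} hold because the differences $\psi^\tau_s(Z^\tau)\setminus\psi^\sigma_s(Z^\sigma)$ lie in $\{\rho < \Lambda r_n\} \subset \{\rho < C_0\} \subset U^s_{S2}\cup U^s_{S3}$ by \ref{APA3} and (\ref{eq_US2US310}), and Lemma~\ref{Lem_further_properties_gpdtp}\ref{ass_further_properties_gpdtp_d} guarantees the backward flow is defined and the curvature scale does not change by more than a factor $1 \pm 10^{-3}$. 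By Lemma~\ref{Lem_further_properties_gpdtp}\ref{ass_further_properties_gpdtp_g} and Proposition~\ref{prop_move_part_hom_bckwrds}\ref{ass_lem_move_part_hom_bckwrds_b}, the PSC-conformal property over $K_{PSC}$ is preserved. After this move, the images $\bar\psi^\sigma_s(Z^\sigma)$ are the backward-flowed regions; because of the $10^{-3}$ scale distortion, the a priori bounds \ref{APA1}--\ref{APA3} are now satisfied with slightly degraded constants (e.g. $\{\rho > 1.001\} \subset \bar\psi^\sigma_s(Z^\sigma) \subset \{\rho > 0.999\, r_n\}$, etc.). Next, I would pass to a sufficiently fine simplicial refinement of $\mathcal{K}$ via Proposition~\ref{prop_simp_refinement}, so that over each simplex the relevant high-curvature regions vary in a controlled way. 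Then comes the core of the argument: over each simplex $\sigma$ (in order of increasing dimension), one must \emph{enlarge} the image back up so that \ref{APA1} holds with the region containing $\{\rho > 1\}\cap\M^s_{T'}$ again, and \ref{APA2}, \ref{APA3} hold with the scale $r_{n}$; for maximal simplices one must also \emph{remove} Bryant-type cap disks that appeared when a neck pinched between times $T'$ and $T$, so that \ref{APA2} (each component contains a point of scale $> \Lambda^2 r_n$) is not violated by tiny leftover cap components. The enlargement is done with Proposition~\ref{prop_extending}: the new pieces $Y$ to be added are unions of spherical fibers of $\SS^s$ in $U^s_{S2}$ (for the ``neck'' parts, where $0.999\,r_n < \rho$) or are round components in $U^s_{S3}$ (for spherical-space-form parts), and Lemma~\ref{Lem_further_properties_gpdtp}\ref{ass_further_properties_gpdtp_e} supplies, near any point $x$ with $\rho(x(T'))\le\rho(x)$, embedded disks $D' \subset D \subset U^s_{S2}$ that are unions of spherical fibers with scales controlled between $0.9\rho(x)$ and $2\Lambda^3\rho(x)$, used to choose the $Y$'s with boundary of the correct scale. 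The disk removal is done with Proposition~\ref{prop_move_remove_disk}, whose hypotheses \ref{lem_move_remove_disk_i}--\ref{lem_move_remove_disk_vi} are verified using again Lemma~\ref{Lem_further_properties_gpdtp}\ref{ass_further_properties_gpdtp_e} and \ref{ass_further_properties_gpdtp_g} (for the PSC-conformality of the complement of the removed disks). For the simplices $\sigma \subset L$, Lemma~\ref{Lem_further_properties_gpdtp}\ref{ass_further_properties_gpdtp_f} ensures the metrics stay CC-metrics and the hypotheses of Proposition~\ref{prop_extending} relating to $L$ are met.

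\textbf{Bookkeeping of the scales.} The role of the geometrically decreasing scales $r_0 < \dots < r_n = \Lambda^{-4}$ with $\Lambda^4 r_k = r_{k+1}$ is exactly to absorb the loss of control incurred at each dimension level during the enlargement/removal over the skeleton. Concretely, when processing a $k$-simplex one only knows a priori bounds with scale $r_{k-1}$ on its boundary (from the already-constructed lower-dimensional data), and the moves above degrade this to scale $\sim r_{k-1}/\Lambda^4 \cdot \Lambda$-type factors — choosing $\Lambda$ large (it is fixed $>100$, but the $r_k$ are defined in terms of $n=\dim K$ and $\Lambda$) ensures that after processing all of $\sigma$ the output satisfies the a priori assumptions with scale $r_k$, so that the induction on simplex dimension closes and, at the top, one recovers \ref{APA1}--\ref{APA3} with scale $r_n$. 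After the full skeletal induction one re-labels $r_n \rightsquigarrow 1$ for the scales appearing in \ref{APA1} by noting that $\{\rho > 1\}\cap\M^s_{T'}$ is contained in the region after enlargement; this is where Lemma~\ref{Lem_further_properties_gpdtp}\ref{ass_further_properties_gpdtp_aa} (so that at $T=0$ the bound $\{\rho>1\}\cap\M^s_0 = \M^s_0$, giving surjectivity for Proposition~\ref{prop_partial_homotopy_standard_homotopy}) is used.

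\textbf{Main obstacle.} The hardest part is the enlargement-and-removal over the skeleton: one must simultaneously (i) re-grow the images to cover $\{\rho>1\}$ and restore \ref{APA1}--\ref{APA3} with the correct scale $r_k$, (ii) excise exactly the right Bryant cap disks so that no small component violates \ref{APA2}, (iii) maintain all of the compatibility conditions in Definition~\ref{Def_partial_homotopy} (especially \ref{prop_def_partial_homotopy_4}, the $\SS$-compatibility on the differences between nested images) as simplices are processed in increasing-dimension order, and (iv) preserve PSC-conformality over $K_{PSC}$ throughout. Each of these is handled by one of the Propositions of Section~\ref{sec_partial_homotopy}, but verifying their hypotheses requires a careful, case-by-case analysis of the geometry of the regions $\{r_{k} < \rho < \Lambda^{a} r_k\}$ in a continuous family of singular Ricci flows near a (possibly degenerate) neck pinch, using the canonical neighborhood structure encoded in the $\RR$-structure and Lemma~\ref{Lem_further_properties_gpdtp}\ref{ass_further_properties_gpdtp_d}--\ref{ass_further_properties_gpdtp_e}. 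The scale bookkeeping with the $r_k$ must be arranged so that the constants never collide across the $\le n+1$ levels of the induction.
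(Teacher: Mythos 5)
Your high-level plan is correct and matches the paper's: downward induction on $T$ with time step $\theta(r_0)$, starting from the trivial partial homotopy at the uniform extinction time, and using the moves of Section~\ref{sec_partial_homotopy} (Propositions~\ref{prop_move_part_hom_bckwrds}, \ref{prop_simp_refinement}, \ref{prop_extending}, \ref{prop_move_remove_disk}) together with the geometrically spaced scales $r_0 < \cdots < r_n$ to absorb losses across skeleton dimensions.

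However, the \emph{order} of the moves is reversed relative to the paper's. The paper does everything at time $T$ first --- refine the simplicial structure (Lemma~\ref{lem_simplicial_refinement}), then enlarge and remove disks over the skeleton (Lemma~\ref{lem_apply_all_moves}) --- and only at the very end applies Proposition~\ref{prop_move_part_hom_bckwrds} to flow the resulting partial homotopy from $T$ to $T - \Delta T$. You flow backward first, then refine, then enlarge and remove at time $T'$. This is not a cosmetic difference. The extension data $\hat\psi^\sigma_s,\nu^\sigma_{s,j}$ in Lemma~\ref{lem_simplicial_refinement} are constructed at time $T$, where the complement of $\psi^\sigma_s(Z^\sigma)$ is cleanly contained in $\{\rho \le 1\} \subset U^s_{S2}\cup U^s_{S3}$ by \ref{APA1}, and the decisive verification that \ref{APA1}--\ref{APA3} are restored at $T - \Delta T$ is Lemma~\ref{lem_newAPA}, which compares the modified time-$T$ image $X^\sigma_s$ with its backward flow $X^\sigma_s(T-\Delta T)$. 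That comparison hinges on Lemma~\ref{Lem_further_properties_gpdtp}\ref{ass_further_properties_gpdtp_e}: a point $x\in \M^s_T$ with $\rho(x(T-\Delta T))\le\rho(x)\le 10$ sits in a Bryant-like region, and one gets nested disks $D'\subset D$ of spherical fibers at controlled scales \emph{at time $T$}. This two-time comparison is precisely what tells you where to enlarge and where to remove a cap disk; it does not translate directly into a construction done entirely at time $T'$. You gesture at using this lemma, but in your ordering the data being modified live at $T'$ and the lemma produces disks at $T$, so one would have to flow those disks back and re-verify everything at $T'$, which is genuinely different work that you do not carry out.

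Two more concrete issues. First, your justification for hypothesis~\ref{hyp_lem_move_part_hom_bckwrds_ii} of Proposition~\ref{prop_move_part_hom_bckwrds} is wrong: you cite \ref{APA3}, which controls $\rho$ only on $\psi^\sigma_s(\partial Z^\sigma)$, not on the full difference $\psi^\tau_s(Z^\tau)\setminus\psi^\sigma_s(Z^\sigma)$. The correct reason is \ref{APA1} applied to $\sigma$: since $\psi^\sigma_s(Z^\sigma) \supset \{\rho > 1\}$, the difference lies in $\{\rho \le 1\}\subset\{\rho<C_0\}\subset U^s_{S2}\cup U^s_{S3}$, and this persists under the flow because $\Delta T\le\theta(r_0)$. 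Second, the scale bookkeeping you describe in one sentence (``choosing $\Lambda$ large ensures the induction closes'') is essentially the entire content of Lemmas~\ref{lem_newAPA} and \ref{lem_containment_psi}: the nesting $\hat\psi^\sigma_s(\hat Z^\sigma)\subset\psi^\tau_s(Z^\tau)\setminus\cup_j\nu^\tau_{s,j}(B^3)$ for $\dim\tau<\dim\sigma$, the lower bound on $\rho$ for each component of $X^\sigma_s$, and the boundary bound $\rho>\Lambda r_k$ after flowing back. These are nontrivial inequalities using the precise spacing $\Lambda^4 r_k = r_{k+1}$ and the $10^{-3}$ distortion from Lemma~\ref{Lem_further_properties_gpdtp}\ref{ass_further_properties_gpdtp_d}, and they must be verified, not assumed.
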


The remainder of this section is devoted to the proof of Lemma~\ref{Lem_main_existence_partial_homotopy}, which proceeds by induction.
If $T \geq T_{\ext}$, then the assertion of the lemma is true, as we can choose the trivial partial homotopy.
So it remains to show that if the lemma is true for some time $T$, then it also holds at time $T - \Delta T$, where $0 < \Delta T \leq \min \{ T, \theta (r_0) \}$, where $\theta$ is the constant from Lemma~\ref{Lem_further_properties_gpdtp}\ref{ass_further_properties_gpdtp_d}.
For this purpose, fix $T$, $\Delta T$ for the remainder of the section and consider some simplicial refinement $\mathcal{K}'$ of $\mathcal{K}$ and a partial homotopy $\{ ( Z^\sigma, \lb (g^\sigma_{s,t})_{s \in \sigma, t \in [0,1]}, \lb (\psi^\sigma_s  )_{s \in \sigma}) \}_{\sigma \subset K}$ at time $T$ satisfying the a priori assumptions \ref{APA1}--\ref{APA4}.
Our goal in the next subsections will be to construct a partial homotopy at time $T - \Delta T$ that satisfies a priori assumptions \ref{APA1}--\ref{APA4}, after passing to a simplicial refinement of $\mathcal{K}'$.

\subsection{Passing to a simplicial refinement}
Our strategy will be to modify (or improve) the partial homotopy $\{ ( Z^\sigma, \lb (g^\sigma_{s,t})_{s \in \sigma, t \in [0,1]}, \lb (\psi^\sigma_s  )_{s \in \sigma}) \}_{\sigma \subset K}$ at time $T$ so that by evolving it backwards in time by $\Delta T$ (using Proposition~\ref{prop_move_part_hom_bckwrds}) we obtain another partial homotopy that satisfies a priori assumptions \ref{APA1}--\ref{APA4}.
The modification will be carried out by successive application of the modification moves described in Proposition~\ref{prop_extending} (Enlarging a partial homotopy) and \ref{prop_move_remove_disk} (Removing a disk from a partial homotopy).
As a preparation, we will first find a simplicial refinement of $K$ (using Proposition~\ref{prop_simp_refinement}) such that over each simplex we can choose certain continuous data, which will later serve as a blueprint for these modification moves.
The main result of this subsection is the following lemma:

\begin{lemma}[Passing to a simplicial refinement] \label{lem_simplicial_refinement}
After passing to a simplicial refinement of $\mathcal{K}'$ and modifying the partial homotopy $\{ ( Z^\sigma, \lb (g^\sigma_{s,t})_{s \in \sigma, t \in [0,1]}, \lb (\psi^\sigma_s  )_{s \in \sigma}) \}_{\sigma \subset K}$ to respect this refined structure, we may assume in addition to a priori assumptions \ref{APA1}--\ref{APA4} that for any simplex $\sigma \subset K$ there are the following data:
\begin{itemize}
\item a compact manifold with boundary $\widehat{Z}^\sigma$, 
\item an embedding $\iota^\sigma: Z^\sigma \to \widehat{Z}^\sigma$, 
\item a continuous family of embeddings $(\widehat{\psi}^\sigma_s : \widehat{Z}^\sigma \to \M^s_{T})_{s \in \sigma}$ and
\item continuous families of embeddings $(\nu^\sigma_{s,j} : D^3 \to \M^s)_{s \in \sigma, j = 1, \ldots, m_\sigma}$,
\end{itemize}
such that for all $s \in \sigma$, $j = 1, \ldots, m_\sigma$ and $k = \dim \sigma$:
\begin{enumerate}[label=(\alph*)]
\item  \label{ass_refinement_a} 
$\psi_{s}^{\sigma}  = \wh\psi^{\sigma}_{s}  \circ \iota^{\sigma}$.
\item  \label{ass_refinement_b} 
For the closure $\C$ of every component of $\wh{Z}^{\sigma} \setminus \iota^{\sigma}( Z^\sigma)$ one (or both) of the following is true uniformly for all $s \in \sigma$: $\wh\psi^\sigma_s (\C)$ is a union of fibers of $\SS^{s}$ or $\partial\C = \emptyset$ and  $\wh\psi^\sigma_{s} (\C) \subset U_{S3}^{s}$.
In the second case the metrics $(\wh\psi^\sigma_s)^* g'_{s,T}$, $s \in \sigma$, are multiples of each another.
\item  \label{ass_refinement_c} 
 If $\sigma \subset L$, then $\wh\psi^\sigma_s (\wh{Z}^\sigma )$ is empty or equal to $\M^s_T$.
\item \label{ass_refinement_d} 
$\widehat\psi^\sigma_s (\widehat{Z}^\sigma) \setminus \psi^\sigma_s (Z^\sigma) \subset \{ \rho >  2r_{k} \}$.
\item \label{ass_refinement_e} 
$\{ \rho > \frac12 \Lambda^3 r_{k} \} \cap \M^s_T \subset \widehat\psi^\sigma_s (\widehat{Z}^\sigma)$.
\item \label{ass_refinement_f} 
Every component of $\wh\psi^\sigma_s (\wh{Z}^\sigma)$ that does not contain a component of $\psi^\sigma_s (Z^\sigma)$ contains a point of scale $\rho > 2 \Lambda^2 r_k$.
\item \label{ass_refinement_g} 
In every component of $\M^{s}_{T} \setminus \wh\psi^\sigma_{s}(\Int \wh{Z}^\sigma)$ that intersects $\wh\psi^\sigma_{s}(\wh{Z}^\sigma)$ there is a point with $\rho < 4 r_k$.
\item  \label{ass_refinement_h} 
$\rho > 2\Lambda r_k$ on $ \wh\psi^{\sigma}_{s}(\partial \wh{Z}^\sigma )\setminus \psi^\sigma_{s} ( \partial Z^\sigma )$.
\item  \label{ass_refinement_h_old} 
 If $\sigma \cap L\neq\emptyset$, then $m_\sigma = 0$.
\item \label{ass_refinement_disjoint} The images $\nu^\sigma_{s,1} (D^3), \ldots, \nu^\sigma_{s,m_\sigma} (D^3)$ are pairwise disjoint.
 \item  \label{ass_refinement_ii} 
 $\nu^\sigma_{s,j} (D^3) \subset \psi^\sigma_s (\Int Z^\sigma)$.
\item  \label{ass_refinement_i} 
$\nu^\sigma_{s,j} (D^3) \subset U^s_{S2}$ and the embedding $\nu^\sigma_{s,j}$ carries the standard spherical structure on $D^3$ to $\SS^s$ restricted to $\nu^\sigma_{s,j}(D^3)$.
\item  \label{ass_refinement_j} 
$\nu^\sigma_{s,1} (D^3) \cup \ldots \cup \nu^\sigma_{s,m_\sigma} (D^3)$ contains all singular spherical fibers of $\psi^\sigma_s (Z^\sigma) \cap U^s_{S2}$ that are points and on which $\rho < 4 r_k$.
\item  \label{ass_refinement_k} 
$\rho < 4 \Lambda r_k$ on $\nu^\sigma_{s,j} (D^3)$.
\item  \label{ass_refinement_l} 
 $\rho > 2\Lambda r_k$ on $\nu^\sigma_{s,j} (\partial D^3)$
\end{enumerate}
\end{lemma}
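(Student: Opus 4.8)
The plan is to obtain all of the listed data by a finite sequence of refinements of $\mathcal{K}'$, at each stage invoking Proposition~\ref{prop_simp_refinement} to propagate the existing partial homotopy to the refined structure, and then using compactness of $K$ together with the properness statement Lemma~\ref{Lem_further_properties_gpdtp}\ref{ass_further_properties_gpdtp_c} and the Bryant-neck geometry of Lemma~\ref{Lem_further_properties_gpdtp}\ref{ass_further_properties_gpdtp_e} to produce, \emph{locally in $s$}, the manifolds $\wh Z^\sigma$, embeddings $\wh\psi^\sigma_s$, $\nu^\sigma_{s,j}$ with the desired scale bounds; a partition-of-unity and a nerve-type argument then converts the local data into data defined over each closed simplex after passing to a sufficiently fine subdivision. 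The point of the refinement is that continuous families of embeddings and of disks can always be found over a \emph{small enough} simplex, because over a single parameter $s_0$ one can write down the required objects explicitly and then perturb.

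\textbf{Construction over a single parameter.} First I would fix $s_0 \in K$ and produce the data on $\M^{s_0}_T$. By the a priori assumptions \ref{APA1}--\ref{APA3}, $\psi^{\si}_{s_0}(Z^\si)$ already sits between $\{\rho>1\}$ and $\{\rho>r_k\}$ with $\rho>\Lambda r_k$ on the boundary and every component meeting $\{\rho>\Lambda^2 r_k\}$. To build $\wh Z^\si$ one enlarges $\psi^\si_{s_0}(Z^\si)$ by adding back the pieces of $\M^{s_0}_T$ that lie in $\{\rho> \tfrac12\Lambda^3 r_k\}$ (for \ref{ass_refinement_e}) while keeping the enlargement inside $\{\rho>2r_k\}$ (for \ref{ass_refinement_d}); concretely one chooses a regular value $\lambda\in(2r_k,\Lambda^3 r_k)$ of $\rho$, avoiding singular spherical fibers, and lets $\wh\psi^\si_{s_0}(\wh Z^\si)$ be the union of $\psi^\si_{s_0}(Z^\si)$ with all components of $\{\rho>\lambda\}\cap\M^{s_0}_T$ that meet $\psi^\si_{s_0}(Z^\si)$ or $\{\rho>\tfrac12\Lambda^3 r_k\}$; the dichotomy in \ref{ass_refinement_b} (union of spherical fibers vs.\ closed round component in $U_{S3}$) follows from \eqref{eq_US2US310} and Definition~\ref{Def_R_structure} since near level sets of $\rho$ at scale below $C_0$ we are inside $U_{S2}\cup U_{S3}$. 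Assertions \ref{ass_refinement_f}, \ref{ass_refinement_g}, \ref{ass_refinement_h} are then read off from the choice of $\lambda$ and \ref{APA2}--\ref{APA3}. For the disks, one lists the finitely many singular spherical fibers that are points in $\psi^\si_{s_0}(Z^\si)\cap U^{s_0}_{S2}$ with $\rho<4r_k$ (finite by Lemma~\ref{Lem_further_properties_gpdtp}\ref{ass_further_properties_gpdtp_c}), and around each such point $x$ uses Lemma~\ref{Lem_further_properties_gpdtp}\ref{ass_further_properties_gpdtp_e} (applied with $t_1=t_2=T$, which is permissible) to extract the nested disks $D'\subset D\subset U^{s_0}_{S2}$; $\nu^\si_{s_0,j}$ is a parametrization of a disk interpolating between $D'$ and $D$ chosen so that $\rho<4\Lambda r_k$ on its image and $\rho>2\Lambda r_k$ on its boundary, giving \ref{ass_refinement_j}--\ref{ass_refinement_l}, while \ref{ass_refinement_disjoint}--\ref{ass_refinement_i} are built in by the construction. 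When $\si\cap L\neq\emptyset$ one has $m_\si=0$ by fiat (\ref{ass_refinement_h_old}), and \ref{ass_refinement_c} is forced by Lemma~\ref{Lem_further_properties_gpdtp}\ref{ass_further_properties_gpdtp_f} since all time-slices of $\M^s$, $s\in L$, are CC-metrics.

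\textbf{From local to simplexwise data, and passing to the refinement.} Having the data over each $s_0$, standard arguments (openness of the defining conditions, the implicit function theorem for families, and a partition of unity subordinate to a cover of $K$ by neighborhoods on which a single choice of $\lambda$ and of the combinatorial type of $\wh Z^\si$ and of the disk collection work) show that every point of $K$ has a neighborhood over which continuous families $\wh\psi_s$, $\nu_{s,j}$ with properties \ref{ass_refinement_a}--\ref{ass_refinement_l} exist; the enlargement $\wh Z^\si$ and the number $m_\si$ are then locally constant. Choosing a simplicial refinement $\mathcal{K}''$ of $\mathcal{K}'$ fine enough that each closed simplex lies in one such neighborhood, and applying Proposition~\ref{prop_simp_refinement} to carry the partial homotopy (and a priori assumptions \ref{APA1}--\ref{APA4}, which are preserved verbatim, PSC-conformality being exactly the content of Proposition~\ref{prop_simp_refinement}(b)) to $\mathcal{K}''$, we obtain, for each simplex $\si$ of $\mathcal{K}''$, data of the required form by restriction. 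Finally one checks the compatibility of the families $\wh\psi^\si_s$ and $\nu^\si_{s,j}$ with face inclusions $\tau\subset\si$; this is arranged by constructing the data by downward induction on dimension, extending the data already fixed on $\partial\si$ into $\si$ using the neighborhood over which $\si$ lies, exactly as in the proof of Proposition~\ref{prop_extending_symmetric}.

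\textbf{Main obstacle.} The delicate point is \emph{consistency of the disk collections under face inclusion together with a priori assumption \ref{APA1}}: on a face $\tau\subsetneq\si$ the domain $\psi^\tau_s(Z^\tau)$ is \emph{larger} than $\psi^\si_s(Z^\si)$ and lives at a coarser scale $r_{\dim\tau}<r_{\dim\si}$, so the set of point-type singular fibers with $\rho<4r_{\dim\si}$ relevant to $\si$ is a subset of those relevant to $\tau$; one must choose the disks $\nu^\tau$ so that the $\nu^\si$ are obtained from them by discarding some and shrinking none, and simultaneously keep every disk inside $\psi^\si_s(\Int Z^\si)$, which by \ref{ass_refinement_ii} and Definition~\ref{Def_partial_homotopy}\ref{prop_def_partial_homotopy_2} is automatic once the disk is inside the smaller domain. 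Making these choices cohere across all faces of all simplices of the refinement, while also respecting the scale windows $[2r_k,4\Lambda r_k]$ which change with $k=\dim\si$, is where the bulk of the bookkeeping lies; the separation $\Lambda^4 r_k=r_{k+1}$ is precisely what creates enough room between consecutive scale windows for this to be possible.
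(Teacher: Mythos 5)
Your overall plan — construct the data over a neighborhood of each parameter $s_0$, take a refinement of $\mathcal{K}'$ fine enough that each closed simplex lies in one such neighborhood, then apply Proposition~\ref{prop_simp_refinement} — is the right skeleton and matches the paper. The single-parameter constructions are also broadly in the right spirit (though the paper builds the disks $\nu^\sigma_{s,j}$ directly out of maximal $\SS^s$-invariant disk neighborhoods of the point singular fibers, rather than routing through Lemma~\ref{Lem_further_properties_gpdtp}\ref{ass_further_properties_gpdtp_e} with $t_1=t_2$; your alternative seems workable but is unnecessarily indirect).

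The serious problem is the final paragraph, where you add a step that is not required and whose underlying picture is wrong. Lemma~\ref{lem_simplicial_refinement} does not ask for the families $\wh\psi^\sigma_s$, $\nu^\sigma_{s,j}$ to be compatible with face inclusions, and the paper's proof never arranges any such coherence: the data are chosen independently for each simplex of the refinement from the local cover. The compatibility you are worried about is not enforced by construction — it falls out of the scale separation and is recorded afterwards as Lemma~\ref{lem_containment_psi}. Moreover, the specific picture you describe is backwards: if $\tau\subsetneq\sigma$ then $r_{\dim\tau}<r_{\dim\sigma}$, the disks $\nu^\tau_{s,j}(D^3)$ lie where $\rho<4\Lambda r_{\dim\tau}$, while $\psi^\sigma_s(Z^\sigma)\subset\{\rho>r_{\dim\sigma}\}$ with $r_{\dim\sigma}\geq\Lambda^4 r_{\dim\tau}\gg 4\Lambda r_{\dim\tau}$, so the $\tau$-disks are \emph{disjoint} from $\psi^\sigma_s(Z^\sigma)$ (this is exactly Lemma~\ref{lem_containment_psi}\ref{ass_containment_psi_b}). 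There is no nesting, no "discarding some and shrinking none," and nothing to make cohere across faces. The role of $\Lambda^4 r_k=r_{k+1}$ is precisely to make the scale windows of different-dimensional simplices disjoint so that no such bookkeeping is needed, not to "create room" for a coherent nested choice. Proposing a downward induction to extend data from $\partial\sigma$ into $\sigma$ is therefore both unnecessary and ill-posed, since the objects being "extended" are at scales differing by a factor of $\Lambda^4$ or more.

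A smaller gap: you assert that \ref{APA1}--\ref{APA4} "are preserved verbatim" under the refinement, but \ref{APA1}--\ref{APA3} involve $r_k$ with $k=\dim\sigma$, which changes when $\mathcal{K}'$ is subdivided. One must observe that if $\sigma''$ is a simplex of the refinement and $\sigma'$ is the minimal simplex of $\mathcal{K}'$ containing it, then $\psi^{\sigma''}_s(Z^{\sigma''})=\psi^{\sigma'}_s(Z^{\sigma'})$ by Proposition~\ref{prop_simp_refinement} while $\dim\sigma''\leq\dim\sigma'$, hence $r_{\dim\sigma''}\leq r_{\dim\sigma'}$, which is the monotonicity that makes \ref{APA1}--\ref{APA3} survive. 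This should be said explicitly; without it the claim that the a priori assumptions pass through the refinement is unjustified.
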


The idea of the proof is the following.
For every $s_0 \in \sigma \subset K$ we first construct continuous data $Z^{s_0, \sigma}, \iota^{s_0, \sigma}, \wh\psi^{s_0, \sigma}_s, \nu^{s_0, \sigma}_{s,j}$ that satisfy the assertions of Lemma~\ref{lem_simplicial_refinement} for parameters $s$ that are close enough to $s_0$.
We therefore obtain an open covering of $K$ consisting of subsets over which this data is defined.
Our simplicial refinement of $\mathcal{K}'$ will later be taken to be subordinate to this open cover.

Let us first construct $Z^{s_0, \sigma}, \iota^{s_0, \sigma}, \wh\psi^{s_0, \sigma}_s$ for $s$ near some $s_0 \in \sigma \subset K$.

\begin{lemma}
For every $s_0 \in \sigma \subset K$ there is a neighborhood $U_{s_0, \sigma} \subset K$ of $s_0$, a compact manifold with boundary $\widehat{Z}^{s_0,\sigma}$, an embedding $\iota^{s_0, \sigma}: Z^\sigma \to \widehat{Z}^{s_0,\sigma}$ and a continuous family of embeddings $(\widehat{\psi}^{s_0,\sigma}_{s} : \widehat{Z}^{s_0,\sigma} \to \M^{s}_{T})_{s \in U_{s_0, \sigma} \cap \sigma}$ such that  Assertions \ref{ass_refinement_a}--\ref{ass_refinement_h} of Lemma~\ref{lem_simplicial_refinement} hold for all $s \in U_{s_0, \sigma} \cap \sigma$ and $k = \dim \sigma$ if we replace $(Z^{ \sigma}, \iota^{ \sigma}, \wh\psi^{ \sigma}_s)$ with $(Z^{s_0, \sigma}, \iota^{s_0, \sigma}, \wh\psi^{s_0, \sigma}_s)$.
\end{lemma}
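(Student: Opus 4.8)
Fix $s_0 \in \sigma \subset K$ and set $k := \dim\sigma$. We work entirely in the time-$T$-slice $\M^{s_0}_T$, and then propagate to nearby parameters using a family chart furnished by Lemma~\ref{lem_chart_near_compact_subset}. The rough idea is to enlarge $\psi^\sigma_{s_0}(Z^\sigma)$ by attaching pieces of $\M^{s_0}_T$ that live in the region where $\rho$ is small, so that the new domain $\wh{Z}^{s_0,\sigma}$ picks up everything with $\rho > \frac12\Lambda^3 r_k$ (Assertion~\ref{ass_refinement_e}) while the newly added material lies in $\{\rho > 2r_k\}$ (Assertion~\ref{ass_refinement_d}) and sits either inside $U^{s_0}_{S2}$ as a union of spherical fibers or inside $U^{s_0}_{S3}$ as a union of closed round components (Assertion~\ref{ass_refinement_b}). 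This is possible because of (\ref{eq_US2US310}): on $\{\rho < C_0\}$, and in particular on the range of scales $[2r_k, \Lambda^3 r_k]$ relevant here, the spacetime is covered by $U^{s_0}_{S2} \cup U^{s_0}_{S3}$, so the portion of $\M^{s_0}_T$ we need to add is governed by the spherical structure $\SS^{s_0}$ and the constant-curvature components of $U^{s_0}_{S3}$.

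\textbf{Key steps.} First I would choose a regular value $\lambda \in (\Lambda r_k, 2\Lambda r_k)$ of $\rho$ restricted to $\M^{s_0}_T$ (using Sard, after noting $\rho$ is smooth away from its singular locus and the relevant sublevel regions are relatively compact by Lemma~\ref{Lem_further_properties_gpdtp}\ref{ass_further_properties_gpdtp_c}), avoiding the finitely many singular spherical fibers; then $\{\rho \geq \lambda\} \cap \M^{s_0}_T$ is a compact 3-manifold with boundary, and by a.priori assumption \ref{APA3} (with $\rho > \Lambda r_k$ on $\psi^\sigma_{s_0}(\partial Z^\sigma)$ and $\psi^\sigma_{s_0}(Z^\sigma) \subset \{\rho > r_k\}$) we can arrange $\lambda$ so that $\psi^\sigma_{s_0}(\partial Z^\sigma) \subset \{\rho > \lambda\}$. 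Second, I would take $\wh{Z}^{s_0,\sigma}$ to be (diffeomorphic to) the union of $\psi^\sigma_{s_0}(Z^\sigma)$ with all components of $\{\rho \geq \lambda\}\cap\M^{s_0}_T$ that meet it, together with all components of $U^{s_0}_{S3}\cap\M^{s_0}_T$ that meet this union; $\iota^{s_0,\sigma}$ is the obvious inclusion of $Z^\sigma$, and $\wh\psi^{s_0,\sigma}_{s_0}$ the obvious inclusion into $\M^{s_0}_T$. The closures of the components of $\wh{Z}^{s_0,\sigma}\setminus\iota^{s_0,\sigma}(Z^\sigma)$ are then either unions of $\SS^{s_0}$-fibers (the parts coming from $\{\rho\geq\lambda\}$, which lie in $\{\rho < C_0\}$ hence in $U^{s_0}_{S2}\cup U^{s_0}_{S3}$, and one cuts along regular fibers using Lemma~\ref{lem_spherical_struct_classification}) or closed round components in $U^{s_0}_{S3}$, giving Assertion~\ref{ass_refinement_b}; Assertion~\ref{ass_refinement_a} holds by construction. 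Assertions~\ref{ass_refinement_d}, \ref{ass_refinement_e} hold because $\lambda \in (\Lambda r_k, 2\Lambda r_k)$ and everything added has $\rho \geq \lambda > 2r_k$ while $\lambda < 2\Lambda r_k < \frac12\Lambda^3 r_k$ (using $\Lambda > 100$). Assertions~\ref{ass_refinement_f}, \ref{ass_refinement_g}, \ref{ass_refinement_h} are checked by combining a priori assumption \ref{APA2} with the choice of $\lambda$ and the connectivity structure of the $\SS^{s_0}$-fibers via Lemma~\ref{lem_spherical_struct_classification}: a boundary component of $\wh Z^{s_0,\sigma}$ not coming from $\partial Z^\sigma$ is a regular fiber at scale $\lambda > 2\Lambda r_k$, every new component of $\wh\psi^{s_0,\sigma}_{s_0}(\wh Z^{s_0,\sigma})$ either contained an old component or came from $\{\rho\geq\lambda\}$ and hence contains a point with $\rho > 2\Lambda^2 r_k$ (otherwise it would be a closed fibered piece of uniformly bounded scale, which is impossible since $\M^{s_0}_T$ is a connected sum of space forms and $S^2\times S^1$'s and such a piece would be a $\delta$-neck region forming a compact component, contradicting \ref{APA1} or the canonical neighborhood structure). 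Third, for $s_0 \in L$ one uses Lemma~\ref{Lem_further_properties_gpdtp}\ref{ass_further_properties_gpdtp_f}: then $\M^{s_0}_T$ is a CC-metric, so $\wh Z^{s_0,\sigma}$ is forced to be either empty or all of $\M^{s_0}_T$, giving Assertion~\ref{ass_refinement_c}. Finally, having built the data at $s_0$, extend it to a neighborhood $U_{s_0,\sigma}$: pick a family chart $(U,\phi,V)$ containing the compact set $\wh\psi^{s_0,\sigma}_{s_0}(\wh Z^{s_0,\sigma})$ (Lemma~\ref{lem_chart_near_compact_subset}), define $\wh\psi^{s_0,\sigma}_s := \phi^{-1}(\phi\circ\wh\psi^{s_0,\sigma}_{s_0}(\cdot), s)$ for $s$ near $s_0$; all the open conditions \ref{ass_refinement_d}--\ref{ass_refinement_h} (which are strict inequalities on $\rho$, a transversely continuous function) persist for $s$ in a smaller neighborhood, and the structural conditions \ref{ass_refinement_b}, \ref{ass_refinement_c} persist because $\cup_s U^s_{S2}$, $\cup_s U^s_{S3}$ are open and the spherical structure varies transversely continuously. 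Shrinking to $U_{s_0,\sigma}$ finishes the proof.

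\textbf{Main obstacle.} The delicate point is Assertion~\ref{ass_refinement_g} together with the requirement in Assertion~\ref{ass_refinement_b} that the \emph{closures} of the components of $\wh Z^{s_0,\sigma}\setminus\iota^{s_0,\sigma}(Z^\sigma)$ be \emph{exactly} unions of spherical fibers or closed $U_{S3}$-components — one must be careful that cutting $\M^{s_0}_T$ along a level set $\{\rho = \lambda\}$ does not slice through a singular fiber or a round component, and that after adjoining the round components of $U^{s_0}_{S3}$ that abut the cut (as forced by Definition~\ref{Def_R_structure}\ref{prop_def_RR_1}) one still has the right scale bounds; this is why $\lambda$ must be chosen as a regular value avoiding the singular fibers, and why the interplay of the constants $r_k, \Lambda, C_0$ (with $\Lambda^4 r_k = r_{k+1}$ and (\ref{eq_rs_bigger_C0})) has to be tracked. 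Everything else is a routine transverse-continuity argument on top of Lemma~\ref{lem_chart_near_compact_subset}, Lemma~\ref{lem_spherical_struct_classification}, and the openness of $U_{S2}, U_{S3}$.
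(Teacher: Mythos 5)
Your overall plan — build $\wh Z^{s_0,\sigma}$ at $s_0$ by attaching fibered / round material at small scale, then propagate via a family chart — is sound, and the opening reduction to the time-$T$-slice at $s_0$ plus the openness argument at the end are both correct. But the construction of $\wh Z^{s_0,\sigma}$ itself has a genuine gap: you only attach components of $\{\rho\geq\lambda\}$ (and $U^{s_0}_{S3}$-components) that \emph{meet} $\psi^\sigma_{s_0}(Z^\sigma)$. This misses entire pieces of $\M^{s_0}_T$ that are disjoint from $\psi^\sigma_{s_0}(Z^\sigma)$ yet contain points at scale $>\tfrac12\Lambda^3 r_k$, and it also leaves "holes" behind $\lambda$-level necks where $\rho$ never drops below $4r_k$. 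Both Assertion~\ref{ass_refinement_e} and Assertion~\ref{ass_refinement_g} then fail. Concretely: since $\tfrac12\Lambda^3 r_k\le\tfrac12\Lambda^{-1}<1$ and since APA1 only guarantees $\{\rho>1\}\subset\psi^\sigma_{s_0}(Z^\sigma)$, one can have a neck where $\rho$ dips down to, say, $5r_k<\lambda$ and then climbs back up to a bump with $\rho$ close to $1$, disconnecting a component of $\{\rho\geq\lambda\}$ containing that bump from $\psi^\sigma_{s_0}(Z^\sigma)$. Your $\wh Z^{s_0,\sigma}$ omits the bump; the excluded region has $\rho\in[5r_k,1]$, so it contains a point with $\rho>\tfrac12\Lambda^3 r_k$ (breaking \ref{ass_refinement_e}) and contains no point with $\rho<4r_k$ (breaking \ref{ass_refinement_g}). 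The verification paragraph you give for \ref{ass_refinement_f}--\ref{ass_refinement_h} implicitly assumes that anything disconnected from $\psi^\sigma_{s_0}(Z^\sigma)$ is either at uniformly small scale or impossible; neither is the case.

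The paper avoids this by making the decision of which pieces to include \emph{scale-based rather than connectivity-based}. It sets $Y:=\{\rho>2r_k\}\cap(\M^{s_0}_T\setminus\psi^\sigma_{s_0}(\Int Z^\sigma))$ and keeps a component of $Y$ whenever it \emph{either} contains a point with $\rho>2\Lambda^2 r_k$ \emph{or} meets $\psi^\sigma_{s_0}(\partial Z^\sigma)$, then trims the non-compact kept components down to compact "cores" whose complementary ends have $\rho<4\Lambda r_k$. This is what makes \ref{ass_refinement_e}--\ref{ass_refinement_g} provable: components disjoint from $\psi^\sigma_{s_0}(Z^\sigma)$ with a large-scale point are swept up automatically, and the trimming-to-cores guarantees the complement of $\wh Z^{s_0,\sigma}$ reaches down to scale $<4r_k$. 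To repair your argument you would need to add such disconnected components by hand — but then \ref{ass_refinement_f} stops being vacuous and you must supply the $\rho>2\Lambda^2 r_k$ bound for the new components, which is exactly the criterion the paper's construction builds in from the start. Your idea of cutting along a regular level set $\{\rho=\lambda\}$ of $\rho$ (via Sard) is also unnecessary and somewhat fragile: the paper never needs a regular value, because it cuts along spherical fibers of $\SS^{s_0}$, which are available throughout $\{\rho<C_0\}$ by (\ref{eq_US2US310}) and Lemma~\ref{lem_spherical_struct_classification}.
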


\begin{proof}
By a priori assumption \ref{APA1} and (\ref{eq_US2US310}) we have
\[ \M^{s_0}_{T} \setminus \psi^{\sigma}_{s_0} ( \Int Z^\sigma ) \subset U^{s_0}_{S2} \cup U^{s_0}_{S3}. \]
It follows from Definition~\ref{Def_R_structure}\ref{prop_def_RR_1} and \ref{Def_partial_homotopy} that each component of this difference is contained in $U^{s_0}_{S2}$ and is a union of fibers of $\SS^{s_0}$ or in $U^{s_0}_{S3}$ and is homothetic to a quotient of a standard sphere.
Let $Y \subset \M^{s_0}_{T} \setminus \psi^\sigma_{s_0} ( \Int Z^\sigma )$ be the set of points on which $\rho > 2 r_k$.
Define $Y' \subset Y$ to be the union of all connected components of $Y$ that contain a point of scale $\rho > 2 \Lambda^2 r_k$ or that intersect  $\psi^\sigma_{s_0} ( \partial Z^\sigma )$.
Note that any boundary component of $Y'$ that is contained in $Y'$ is also contained in $\psi^\sigma_{s_0} (\partial Z^\sigma)$.
Furthermore, any connected component of $Y'$ is contained in $U^{s_0}_{S3}$ and is compact or in $U^{s_0}_{S2}$ and is a union of fibers of $\SS^{s_0}$.
Therefore, every non-compact component of $Y'$ is a union of spherical fibers and must be diffeomorphic to one of the following manifolds (see Lemma~\ref{lem_spherical_struct_classification}):
\[  S^2 \times [0, 1), \quad S^2 \times (0,1), \quad (S^2 \times (-1,1))/\IZ_2, \quad B^3 \]
Consider such a component $\C \subset Y'$.
Call $\C$ \emph{good} if it contains a point with $\rho > 2 \Lambda^2 r_k$.
By construction, bad components must intersect $\psi^\sigma_{s_0} ( \partial Z^\sigma )$ and must therefore be either compact or diffeomorphic to $S^2 \times [0, 1)$.
Suppose for a moment that $\C$ is good.
Since $\rho \to 2 r_k$ near the ends of $\C$, we can find a minimal compact domain $\C' \subset \C$ such that 
\begin{enumerate}
\item $\C'$ is a union of spherical fibers.
\item $\C \setminus \C'$ is a union of neighborhoods of the ends of $\C$; so each component is diffeomorphic to $S^2 \times (0,1)$.
\item $\rho < 4 \Lambda r_k$ on $\C \setminus \C'$.
\end{enumerate}
Call $\C'$ the \emph{core} of $\C$.
We now define $Y''$ to be the union of compact components of $Y'$ and the cores of non-compact good components of $Y'$.
Set $\widehat{Z}^\sigma := \psi^\sigma_{s_0} ( Z^\sigma ) \cup Y''$.

By Lemma~\ref{lem_chart_near_compact_subset} and isotopy extension, we can define $\wh\psi^{s_0,\sigma}_{s} : \wh Z^\sigma \to \M^s_T$ for $s \in \sigma$ close to $s_0$ such that $\wh\psi^{s_0,\sigma}_{s} ( \partial \widehat{Z}^\sigma  )$ consists of spherical fibers and such that Assertion \ref{ass_refinement_a} of Lemma~\ref{lem_simplicial_refinement} holds. 
We can moreover construct $\wh\psi^{s_0,\sigma}_{s}$ in such a way that for every component $\C \subset Y$ with the property that $\partial \C = \emptyset$ and $Y \subset U^{s_0}_{S3}$ the metric $(\wh\psi^{s_0,\sigma}_{s})^* g_{s,T}$ restricted to $\C$ is a multiple of the same constant curvature metric for all $s$ close to $s_0$.
Then Assertion \ref{ass_refinement_b} of Lemma~\ref{lem_simplicial_refinement} holds for all $s$ close to $s_0$.
By construction, Assertions \ref{ass_refinement_c}--\ref{ass_refinement_f}, \ref{ass_refinement_h} of Lemma~\ref{lem_simplicial_refinement} hold for $s = s_0$.
Next, we argue that the same is true for Assertion~\ref{ass_refinement_g}.
Assume by contradiction that $\rho \geq 4r_k$ on some component $\C^* \subset \M^s_T \setminus \wh\psi^\sigma_s (\Int \wh Z^\sigma)$ that intersects $\wh\psi^\sigma_s ( \wh Z^\sigma)$.
Then the component $\C^{**} \subset \M^s_T \setminus \psi^\sigma_s (\Int  Z^\sigma)$ containing $\C^*$ is a union of $\C^*$ with components of $Y$.
Since $\rho \to 2 r_k$ near the open ends of $Y$, we find that $\C^{**} \subset Y$.
However this implies that $\C^{**}$ is a compact component of $Y$ and therefore $\C^{**} \subset Y''$.
Since Assertions \ref{ass_refinement_c}--\ref{ass_refinement_h} are open, they also hold for $s$ sufficiently close to $s_0$.
\end{proof}

\begin{lemma}
\label{lem_nu_s_js}
For every $s_0 \in \sigma \subset K$ there is a neighborhood $V_{s_0, \sigma} \subset K$ of $s_0$ and continuous families of embeddings $(\nu^{s_0,\sigma}_{s,j} : D^3 \to \M^s)_{s \in V_{s_0, \sigma} \cap \sigma, j = 1, \ldots, m_{s_0, \sigma}}$ such that Assertions~\ref{ass_refinement_disjoint}--\ref{ass_refinement_l} of Lemma~\ref{lem_simplicial_refinement} hold for all $s \in V_{s_0, \sigma} \cap \sigma$, $j = 1, \ldots, m_{s_0, \sigma}$ and $k = \dim \sigma$ if we replace $(\nu^\sigma_{s,j} : D^3 \to \M^s)_{s \in \sigma, j = 1, \ldots, m_\sigma}$ with $(\nu^{s_0,\sigma}_{s,j} : D^3 \to \M^s)_{s \in V_{s_0, \sigma} \cap \sigma, j = 1, \ldots, m_{s_0, \sigma}}$.
Moreover, instead of Assertion~\ref{ass_refinement_h_old} we have:
\begin{enumerate}[label=(\alph*$\,'$), start=9]
\item \label{ass_find_disk_i_prime} If $V_{s_0, \sigma} \cap L \neq \emptyset$, then $m_{s_0,\sigma} = 0$.
\end{enumerate}
\end{lemma}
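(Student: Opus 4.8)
The plan is to prove Lemma~\ref{lem_nu_s_js} by first analyzing the geometry near $s_0$, then using the family-chart machinery to propagate the relevant disks and embeddings to nearby parameters. First I would handle the trivial case: if $s_0 \in L$, then by Lemma~\ref{Lem_further_properties_gpdtp}\ref{ass_further_properties_gpdtp_f} the time-slice $(\M^{s_0}_T, g^{\prime,s_0}_T)$ is a quotient of the round sphere or cylinder, so $\psi^\sigma_{s_0}(Z^\sigma) \cap U^{s_0}_{S2}$ contains no singular spherical fibers that are points (there is no Bryant-type cap). By openness of $L$ inside $K$ and continuity, we can choose $V_{s_0,\sigma}$ small enough that $V_{s_0,\sigma} \cap L \neq \emptyset$ forces the same conclusion for all $s \in V_{s_0,\sigma} \cap \sigma$; then we simply set $m_{s_0,\sigma} = 0$ and Assertions~\ref{ass_refinement_disjoint}--\ref{ass_refinement_l} are vacuous while \ref{ass_find_disk_i_prime} holds. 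Actually more care is needed: even if $s_0 \notin L$ but $V_{s_0,\sigma} \cap L \neq \emptyset$, we must still take $m_{s_0,\sigma}=0$, so the construction below must only be invoked when $V_{s_0,\sigma}$ can be chosen disjoint from $L$; since $L$ is closed, for $s_0 \notin L$ we may shrink $V_{s_0,\sigma}$ to avoid $L$, and for $s_0 \in L$ we use the trivial case. This reconciles \ref{ass_refinement_h_old} with \ref{ass_find_disk_i_prime}.

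Next, assuming $s_0 \notin L$ and $V_{s_0,\sigma}$ chosen disjoint from $L$, I would enumerate the relevant singular fibers at $s_0$. Consider the set $S$ of singular spherical fibers of $\SS^{s_0}$ that are points, that lie in $\psi^\sigma_{s_0}(Z^\sigma) \cap U^{s_0}_{S2}$, and on which $\rho < 4r_k$. The scale function $\rho$ is proper on $\{\rho \geq r_k\} \cap \M^{s_0}_T$ by Lemma~\ref{Lem_further_properties_gpdtp}\ref{ass_further_properties_gpdtp_c}, and by a priori assumption \ref{APA1} we have $\psi^\sigma_{s_0}(Z^\sigma) \subset \{\rho > r_k\}$; combined with the fact that singular fibers that are points are isolated (the singular set of a spherical structure is discrete away from $\IR P^2$-fibers by Lemma~\ref{lem_local_spherical_struct}), $S$ is a finite set, say $S = \{x_1, \ldots, x_{m_{s_0,\sigma}}\}$. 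For each $x_j$, apply Lemma~\ref{Lem_further_properties_gpdtp}\ref{ass_further_properties_gpdtp_e} — or, since we are working at a fixed time, directly Lemma~\ref{lem_bryant_increasing_scale}'s consequence that a neighborhood of $x_j$ is close to the Bryant soliton tip — to produce an embedded disk $D_j \subset U^{s_0}_{S2} \cap \M^{s_0}_T$ centered at $x_j$, consisting of spherical fibers, with $\rho < 4\Lambda r_k$ on $D_j$ and $\rho > 2\Lambda r_k$ on $\partial D_j$. Using the standard $O(3)$-action compatible with $\SS^{s_0}$ near $x_j$ (which exists since $x_j$ is a singular fiber of type $B^3$ by Lemma~\ref{lem_local_spherical_struct}), we obtain an embedding $\nu^{s_0,\sigma}_{s_0,j}: D^3 \to \M^{s_0}_T$ carrying the standard spherical structure to $\SS^{s_0}|_{D_j}$. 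Shrinking the $D_j$ if necessary, arrange them to be pairwise disjoint and contained in $\psi^\sigma_{s_0}(\Int Z^\sigma)$, which is possible since they are small neighborhoods of isolated points in the interior.

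Then I would propagate to nearby parameters using transverse continuity. By Lemma~\ref{Def_transverse_O3_action} and the transverse continuity of $(\SS^s)_{s \in K}$, the local $O(3)$-actions near each $x_j$ extend to transversely continuous families of $O(3)$-actions over a neighborhood; together with transverse continuity of $(g^{\prime,s})_{s \in K}$ (Theorem~\ref{Thm_rounding}) and the family-chart construction of Lemma~\ref{lem_chart_near_compact_subset}, we can extend each $\nu^{s_0,\sigma}_{s_0,j}$ to a continuous family $(\nu^{s_0,\sigma}_{s,j})_{s \in V_{s_0,\sigma} \cap \sigma}$ for a possibly smaller neighborhood $V_{s_0,\sigma}$, such that each $\nu^{s_0,\sigma}_{s,j}$ still carries the standard spherical structure on $D^3$ to $\SS^s$; this gives Assertions~\ref{ass_refinement_i}, \ref{ass_refinement_ii}, \ref{ass_refinement_disjoint}. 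Assertions~\ref{ass_refinement_k} and \ref{ass_refinement_l} (the scale bounds $\rho < 4\Lambda r_k$ on $\nu^{s_0,\sigma}_{s,j}(D^3)$ and $\rho > 2\Lambda r_k$ on the boundary) are open conditions satisfied at $s_0$, hence hold after shrinking $V_{s_0,\sigma}$. For Assertion~\ref{ass_refinement_j} — that the $\nu^{s_0,\sigma}_{s,j}(D^3)$ catch \emph{all} point-type singular fibers of $\psi^\sigma_s(Z^\sigma) \cap U^s_{S2}$ with $\rho < 4r_k$ — I expect this to be the main obstacle, since it requires ruling out the appearance of new such singular fibers as $s$ varies. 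The argument should be a compactness/continuity argument: if there were a sequence $s_i \to s_0$ and point-type singular fibers $y_i \in \psi^\sigma_{s_i}(Z^\sigma) \cap U^{s_i}_{S2}$ with $\rho(y_i) < 4r_k$ but $y_i \notin \bigcup_j \nu^{s_0,\sigma}_{s_i,j}(D^3)$, then by properness of $\rho$ (Lemma~\ref{Lem_further_properties_gpdtp}\ref{ass_further_properties_gpdtp_c}) the $y_i$ subconverge to some $y_\infty \in \M^{s_0}_T$ with $\rho(y_\infty) \leq 4r_k$; since the singular set of $\SS^{s_0}$ is closed and $y_\infty$ would be a limit of singular points, $y_\infty$ is singular, and one must show it is a point-type singular fiber with $\rho < 4r_k$ (here one uses that on the Bryant cap the scalar curvature has a nondegenerate maximum, Lemma~\ref{Lem_Bry_R_Hessian_positive}, so point-type singular fibers do not collide with $\IR P^2$-fibers or with the $\rho = 4r_k$ level in the limit) — hence $y_\infty \in S$, i.e. $y_\infty = x_j$ for some $j$, contradicting $y_i \notin \nu^{s_0,\sigma}_{s_i,j}(D^3)$ for large $i$. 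Finally, replace $V_{s_0,\sigma}$ by $V_{s_0,\sigma} \cap U_{s_0,\sigma}$ so that all assertions hold simultaneously; the open cover $\{V_{s_0,\sigma}\}$ of each $\sigma$, refined over all simplices, then yields by the standard subdivision argument (via Proposition~\ref{prop_simp_refinement}) a simplicial refinement of $\mathcal{K}'$ over which the global data $\wh Z^\sigma, \iota^\sigma, \wh\psi^\sigma_s, \nu^\sigma_{s,j}$ of Lemma~\ref{lem_simplicial_refinement} can be chosen, completing the proof of both lemmas.
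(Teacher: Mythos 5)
Your overall strategy matches the paper's: enumerate the point-type singular fibers $x_j$ of $\SS^{s_0}$ in $\psi^{\sigma}_{s_0}(Z^\sigma)\cap U^{s_0}_{S2}$ with $\rho < 4r_k$, build a spherical disk around each, propagate to a neighborhood $V_{s_0,\sigma}$, and handle the $L$ case separately. Your treatment of Assertion~\ref{ass_find_disk_i_prime}, and your compactness/contradiction argument for Assertion~\ref{ass_refinement_j} (which fleshes out the paper's terse ``by openness'' remark), are correct.

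The gap is in the disk construction step. You cite Lemma~\ref{Lem_further_properties_gpdtp}\ref{ass_further_properties_gpdtp_e} or Lemma~\ref{lem_bryant_increasing_scale}, but both carry a hypothesis of the form $\rho(x(t_1)) \leq \rho(x)$ --- a scale comparison between two times --- which you do not and cannot verify here: we are at a single time $T$, and $x_j$ could be a pole of a round-sphere component rather than the tip of a Bryant-like cap, in which case neither lemma applies. Moreover, even when Lemma~\ref{Lem_further_properties_gpdtp}\ref{ass_further_properties_gpdtp_e} is applicable, the disk $D$ it produces satisfies $\rho > .9\,\rho(x)$ on $D$ but no upper bound like Assertion~\ref{ass_refinement_k}; the sub-disk $D'$ has the upper bound $\rho < 2\rho(x)$ but no boundary lower bound like Assertion~\ref{ass_refinement_l}. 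Finally, your step ``shrinking the $D_j$ if necessary'' to arrange disjointness and containment in $\psi^\sigma_{s_0}(\Int Z^\sigma)$ risks pushing $\rho$ on $\partial D_j$ below $2\Lambda r_k$, killing Assertion~\ref{ass_refinement_l}.

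The paper sidesteps all of this at once with a maximal-disk construction that needs no curvature input: set $Y_j$ to be the union of all open disks $X$ with $x_j\in X$, $X\setminus\{x_j\}$ consisting of regular fibers, and $\rho \leq 3\Lambda r_k$ on $X$. Since the set of regular fibers is open and $\rho$ is continuous, maximality forces $\rho = 3\Lambda r_k$ on $\partial\ov{Y}_j$, so Assertions~\ref{ass_refinement_k} and \ref{ass_refinement_l} hold with margin. A priori assumption \ref{APA2} (combined with $3\Lambda r_k < \Lambda^2 r_k$) rules out $\ov{Y}_j$ being a closed component; together with the nestedness of spherical fibers and \ref{APA3} this gives both $\ov{Y}_j \subset \psi^\sigma_{s_0}(\Int Z^\sigma)$ and pairwise disjointness, with no shrinking. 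Your propagation step (exponential map at continuously varying centers $x'_{s,j}$) and the compactness argument for Assertion~\ref{ass_refinement_j} then complete the proof as you outlined.
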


\begin{proof}
Let $E \subset \cup_{s \in K} U^s_{S2}$ be the union of all spherical fibers that are points and on which $\rho \leq 4 r_k$.
Then $E$ is closed in $\cup_{s \in K} U^s_{S2}$ and $E \cap \psi^\sigma_{s_0} (Z^\sigma) =: \{ x_1, \ldots, x_{m_{s_0, \sigma}} \}$ consists of finitely many points.

For every $j = 1, \ldots, m_{s_0, \sigma}$ let $Y_j \subset \M^{s_0}_T$ be the union of all open disks $X \subset \M^{s_0}_T$ with the property that $x_j \in X$, $X \setminus \{ x_j \}$ consists of regular spherical fibers and $\rho \leq 3 \Lambda r_k$ on $X$.
Then $Y_j$ is also an open disk.
By a priori assumption \ref{APA2}, no component of $\psi^\sigma_{s_0} (Z^\sigma)$ is fully contained in the closure $\ov{Y}_j$ of $Y_j$.
So, in particular, $\partial \ov{Y}_j \neq \emptyset$, which implies that $\partial \ov{Y}_j$ is a regular fiber and therefore $\ov{Y}_j$ is a closed disk.
As the boundaries of both subsets $Y_j$ and $\psi^\sigma_{s_0} (Z^\sigma)$ consist of spherical fibers and $x \in Y_j \cap \psi^\sigma_{s_0} (Z^\sigma) \neq\emptyset$, we also obtain that $\ov{Y}_j \subset \psi^\sigma_{s_0} (\Int Z^\sigma)$.

For any two $j,j' = 1, \ldots, m_{s_0, \sigma}$, $j \neq j'$ the disks $\ov{Y}_j$, $\ov{Y}_{j'}$ are pairwise disjoint, because otherwise their union would be a connected component of $\M^{s_0}_{T}$, which would again contradict a priori assumption \ref{APA2}.

For every $j = 1, \ldots, m_{s_0, \sigma}$ choose a continuous family of points $(x'_{s,j} \in \M^s_T)$ for $s$ close to $s_0$ such that $x'_{s_0,j} = x_j$.
Using the exponential map based at $x'_{s,j}$, we can construct continuous families of embeddings $(\nu^{s_0,\sigma}_{s,j} : D^3 \to \M^s)$ for $s$ close to $s_0$ such that $x'_{s,j} \in \nu^{s_0,\sigma}_{s,j} ( 0)$ and $\nu^{s_0,\sigma}_{s_0,j} (D^3) = \ov{Y}_j$.
For $s$ close to $s_0$, these disks are pairwise disjoint and satisfy Assertion~\ref{ass_refinement_i} of Lemma~\ref{lem_simplicial_refinement}.
Assertions~\ref{ass_refinement_ii}, \ref{ass_refinement_j}--\ref{ass_refinement_l} of Lemma~\ref{lem_simplicial_refinement} hold for $s = s_0$ by construction and therefore by openness, they also hold for $s \in V_{s_0, \sigma}$, where $V_{s_0, \sigma}$ is a small enough neighborhood of $s_0$. 
For Assertion~\ref{ass_find_disk_i_prime} we can distinguish two cases: If $s_0 \not\in L$, then we can choose $V_{s_0, \sigma} \cap L \neq\emptyset$.
If $s_0 \in L$, then $\rho$ is constant on $\M^{s_0}_T$ and therefore by construction $m_{s_0, \sigma} = 0$.
\end{proof}

\begin{proof}[Proof of Lemma~\ref{lem_simplicial_refinement}.]
For every $s_0 \in K$ let
\[ W_{s_0} := \bigcap_{s_0 \in \sigma \subset K} (U_{s_0, \sigma} \cap V_{s_0, \sigma} ). \]
Then $W_{s_0}$ is still an open neighborhood of $s_0$ and $K = \cup_{s_0 \in K} W_{s_0}$.
Let now $\mathcal{K}''$ be a refinement of $\mathcal{K}'$ that is subordinate to this cover and for every simplex $\sigma \in \mathcal{K}''$ of this refinement let 
\begin{multline*}
 \big( \wh{Z}^\sigma, \iota^\sigma,( \wh\psi^\sigma_s)_{s \in \sigma}, (\nu^{\sigma}_{s,j} )_{s \in \sigma, j = 1, \ldots, m_\sigma} \big) \\ 
 := \big(\wh{Z}^{s_\sigma, \sigma}, \iota^{s_\sigma,\sigma}, ( \wh\psi^{s_\sigma, \sigma}_s)_{s \in \sigma},  (\nu^{s_\sigma,\sigma}_{s,j} )_{s \in \sigma, j = 1, \ldots, m_{s_\sigma, \sigma}} \big), 
\end{multline*}
 where $s_\sigma \in \sigma$ is chosen such that $\sigma \subset W_{s_\sigma}$.
 
 We can now modify the partial homotopy $\{ ( Z^\sigma, \lb (g^\sigma_{s,t})_{s \in \sigma, t \in [0,1]}, \lb (\psi^\sigma_s  )_{s \in \sigma}) \}_{\sigma \subset K}$ according to Proposition~\ref{prop_simp_refinement} to respect the refinement $\mathcal{K}''$.
 To see that a priori assumptions \ref{APA1}--\ref{APA4} continue to hold, let $s \in \sigma''$ where $\sigma'' \in \mathcal{K}''$, $k'' = \dim \sigma''$, and choose $\sigma' \in \mathcal{K}'$, $k' = \dim \sigma'$ of minimal dimension such that $\sigma' \supset \sigma''$.
 Then $k' \geq k''$ and $\psi^{\sigma''}_s (Z^{\sigma''}) = \psi^{\sigma'}_s (Z^{\sigma'})$.
 Therefore we have
 \[ \{ \rho > 1 \} \cap \M^s_T \subset \psi^{\sigma''}_s (Z^{\sigma''}) \subset \{ \rho > r_{k'} \} \subset \{ \rho > r_{k''} \}, \]
 every component of $\psi^{\sigma''}_s (Z^{\sigma''})$ contains a point with $\rho > \Lambda^2 r_{k'} \geq \Lambda^2 r_{k''}$ and $\rho > \Lambda r_{k'} \geq \Lambda r_{k''}$ on $\psi^{\sigma''}_s (\partial Z^{\sigma''})$.
\end{proof}

\subsection{Improving the partial homotopy}
Our next goal will be to construct a new partial homotopy by extending the domain of the partial homotopy $\{ ( Z^\sigma, \lb (g^\sigma_{s,t})_{s \in \sigma, t \in [0,1]}, \lb (\psi^\sigma_s  )_{s \in \sigma}) \}_{\sigma \subset K}$ according to the maps $(\wh\psi^\sigma_s)_{s \in \sigma}$ (using Proposition~\ref{prop_extending}) and by removing the disks $(\nu^\sigma_{s,j} (D^3))_{s \in \sigma, j = 1, \ldots, m_\sigma}$ (using Proposition~\ref{prop_move_remove_disk}).
In the next subsection we will move the new partial homotopy backwards in time by the time step $\Delta T$.
The following lemma will be used to verify that the resulting partial homotopy satisfies a priori assumptions \ref{APA1}--\ref{APA4}.

\begin{lemma} \label{lem_newAPA}
With the choices of $(\widehat{\psi}^\sigma_s )_{s \in \sigma}$ and $(\nu^\sigma_{s,j})_{s \in \sigma, j = 1, \ldots, m_\sigma}$ from Lemma~\ref{lem_simplicial_refinement} the following holds for all $s \in \sigma \subset K$.
All points of
\begin{equation} \label{def_X_psi}
 X^\sigma_s :=  \wh\psi^{\sigma}_s (\wh Z^{\sigma})  \setminus (\nu^\sigma_{s,1}(B^3) \cup \ldots \cup \nu^\sigma_{s,m_\sigma} (B^3))
\end{equation}
survive until time $T - \Delta T$ and we have:
\begin{enumerate}[label=(\alph*)]
\item \label{ass_newAPA_a} $\{ \rho >  \Lambda^3 r_{k} \} \cap \M^s_T \subset X^\sigma_s (T - \Delta T) \subset \{ \rho >  r_{k} \}$
\item \label{ass_newAPA_b} Every component of of $X^\sigma_s$ contains a point $x$ with $\rho (x(T-\Delta T)) > \Lambda^2 r_k$.
\item \label{ass_newAPA_c} $\rho > \Lambda r_k$ on $\partial X^\sigma_s (T - \Delta T)$.
\end{enumerate}
\end{lemma}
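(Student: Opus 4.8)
The plan is to verify the three assertions about the set $X^\sigma_s$ directly from the a priori assumptions \ref{APA1}--\ref{APA3}, the structural properties \ref{ass_refinement_a}--\ref{ass_refinement_l} of Lemma~\ref{lem_simplicial_refinement}, and the scale-control estimate in Lemma~\ref{Lem_further_properties_gpdtp}\ref{ass_further_properties_gpdtp_d}. First I would check the survival claim: since $X^\sigma_s \subset \wh\psi^\sigma_s(\wh Z^\sigma) \subset \{\rho > r_k\}$ by \ref{ass_refinement_e} (or rather by combining \ref{APA1}, \ref{ass_refinement_d}, and the fact that $\rho > r_k$ on $\psi^\sigma_s(Z^\sigma)$), and since we chose $\Delta T \leq \theta(r_0) \leq \theta(r_k)$ (the function $\theta$ being monotone, or by shrinking $\Delta T$), Lemma~\ref{Lem_further_properties_gpdtp}\ref{ass_further_properties_gpdtp_d} applies: every point of $X^\sigma_s$ has scale $> r_k/10$ wait --- more precisely $\rho > r_k$, so certainly $> r_k/10$, hence the backward trajectory to time $T - \Delta T$ is defined and along it $|\rho(x) - \rho(x(T-\Delta T))| < 10^{-3}\rho(x)$. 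This gives the two-sided comparison $(1 - 10^{-3})\rho(x) < \rho(x(T-\Delta T)) < (1 + 10^{-3})\rho(x)$ for all relevant $x$, which is the workhorse for everything else.

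Next I would prove \ref{ass_newAPA_a}. For the right-hand inclusion, any point $y \in X^\sigma_s(T-\Delta T)$ is $x(T-\Delta T)$ for some $x \in X^\sigma_s$ with $\rho(x) > r_k$, so $\rho(y) > (1-10^{-3}) r_k > \ldots$ hmm, this only gives $\rho(y) > (1-10^{-3})r_k$, slightly less than $r_k$. I would fix this by noting $X^\sigma_s \subset \{\rho > r_k\}$ is not quite tight enough; instead use that away from the removed disks $\rho > 2\Lambda r_k$ on $\nu^\sigma_{s,j}(\partial D^3)$ (Assertion \ref{ass_refinement_l}) and $\rho < 4\Lambda r_k$ inside (Assertion \ref{ass_refinement_k}), together with \ref{APA1} giving $\psi^\sigma_s(Z^\sigma) \subset \{\rho > r_k\}$ and \ref{ass_refinement_d} giving $\wh\psi^\sigma_s(\wh Z^\sigma) \setminus \psi^\sigma_s(Z^\sigma) \subset \{\rho > 2r_k\}$. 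Actually the cleanest route: since $\Lambda > 100$, the factor $(1\pm 10^{-3})$ is absorbed into the gap between $r_k$ and $\Lambda r_k$, so $\rho(x) > r_k \Rightarrow \rho(x(T-\Delta T)) > (1-10^{-3})r_k$, and one should simply observe that the statement as written requires $> r_k$, so I would instead track the stronger lower bound valid on $X^\sigma_s$: on $\psi^\sigma_s(Z^\sigma) \setminus \bigcup_j \nu^\sigma_{s,j}(B^3)$ one has $\rho > r_k$ but near the removed disks one has $\rho > 2\Lambda r_k$ on the boundary spheres $\nu^\sigma_{s,j}(\partial D^3)$; the only points of $X^\sigma_s$ with $\rho$ close to $r_k$ are in $\psi^\sigma_s(Z^\sigma)$ away from the disks, and for those points a sharper version of \ref{ass_further_properties_gpdtp_d} (or simply re-choosing $\Delta T$ small enough that $\rho$ does not dip below $r_k$) finishes the inclusion. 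For the left-hand inclusion, if $\rho(y) > \Lambda^3 r_k$ with $y \in \M^s_T$, then $y \in \{\rho > \frac12\Lambda^3 r_k\} \cap \M^s_T \subset \wh\psi^\sigma_s(\wh Z^\sigma)$ by \ref{ass_refinement_e}, and $y \notin \nu^\sigma_{s,j}(B^3)$ for any $j$ because $\rho < 4\Lambda r_k < \Lambda^3 r_k$ on those disks by \ref{ass_refinement_k} and $\Lambda > 100$; hence $y \in X^\sigma_s$, and then $y = (y')(T-\Delta T)$... no, $y \in X^\sigma_s \subset \M^s_T$, so $y(T-\Delta T) \in X^\sigma_s(T-\Delta T)$ --- but I want $y$ itself in the time-$(T-\Delta T)$ set. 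I would instead argue: the point $y(T-\Delta T)$ has scale comparable to $\rho(y) > \Lambda^3 r_k$, and running the flow the other way, the points of $\M^s_{T-\Delta T}$ with $\rho > \Lambda^3 r_k$ flow forward to points of $X^\sigma_s$, using \ref{ass_further_properties_gpdtp_d} again to control the scale change, giving the inclusion.

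Then \ref{ass_newAPA_b}: a component $\C$ of $X^\sigma_s$ is either a component of $\wh\psi^\sigma_s(\wh Z^\sigma)$ with some disks removed, or a component of $\wh\psi^\sigma_s(\wh Z^\sigma)$ itself. By \ref{ass_refinement_f} every component of $\wh\psi^\sigma_s(\wh Z^\sigma)$ not containing a component of $\psi^\sigma_s(Z^\sigma)$ contains a point with $\rho > 2\Lambda^2 r_k$; a component that does contain a component of $\psi^\sigma_s(Z^\sigma)$ contains a point with $\rho > \Lambda^2 r_k$ by \ref{APA2}. Removing disks $\nu^\sigma_{s,j}(B^3)$ where $\rho < 4\Lambda r_k < \Lambda^2 r_k$ (again $\Lambda > 100$) cannot remove such a high-scale point, and cannot disconnect $\C$ so that a piece loses all its high-scale points --- here one uses \ref{ass_refinement_ii} (the disks sit in the interior of $\psi^\sigma_s(Z^\sigma)$) and that removing an embedded $3$-ball from a connected $3$-manifold whose boundary sphere has $\rho > 2\Lambda r_k$ leaves the complement connected or at worst splits off pieces each still meeting the region $\rho \geq 2\Lambda r_k$; in any case each component of $X^\sigma_s$ inherits a point $x$ with $\rho(x) > \Lambda^2 r_k$, so $\rho(x(T-\Delta T)) > (1-10^{-3})\Lambda^2 r_k$, which I would arrange to be $\geq \Lambda^2 r_k$ --- this last rounding is the kind of place where one either absorbs the constant by a harmless adjustment or re-runs the argument starting from the slightly stronger bound $2\Lambda^2 r_k$ available from \ref{ass_refinement_f} / \ref{APA2}, noting $(1-10^{-3})\cdot 2\Lambda^2 r_k > \Lambda^2 r_k$. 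Finally \ref{ass_newAPA_c}: the boundary $\partial X^\sigma_s$ consists of (i) pieces of $\wh\psi^\sigma_s(\partial\wh Z^\sigma)$ and (ii) the removed boundary spheres $\nu^\sigma_{s,j}(\partial D^3)$. On (ii) we have $\rho > 2\Lambda r_k$ by \ref{ass_refinement_l}. On (i), either the boundary piece comes from $\psi^\sigma_s(\partial Z^\sigma)$, where $\rho > \Lambda r_k$ by \ref{APA3}, or from $\wh\psi^\sigma_s(\partial\wh Z^\sigma) \setminus \psi^\sigma_s(\partial Z^\sigma)$, where $\rho > 2\Lambda r_k$ by \ref{ass_refinement_h}. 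In all cases $\rho > \Lambda r_k$ on $\partial X^\sigma_s$, so $\rho > (1-10^{-3})\Lambda r_k$ on $\partial X^\sigma_s(T-\Delta T)$; the main boundary pieces have the stronger bound $2\Lambda r_k$ and the one piece with the bare $\Lambda r_k$ bound is $\psi^\sigma_s(\partial Z^\sigma)$, for which I would again either use a sharper scale estimate or shrink $\Delta T$ to conclude $\rho > \Lambda r_k$ exactly.

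The main obstacle is bookkeeping the $(1 \pm 10^{-3})$ factors from Lemma~\ref{Lem_further_properties_gpdtp}\ref{ass_further_properties_gpdtp_d} against the sharp thresholds $r_k, \Lambda r_k, \Lambda^2 r_k, \Lambda^3 r_k$ that appear in the a priori assumptions and in Lemma~\ref{lem_simplicial_refinement}; the point is that wherever a threshold must be met exactly after the time shift, there is actually a stronger bound available before the shift (a factor of $2$ or a higher power of $\Lambda$), so the $10^{-3}$ error is absorbed since $\Lambda > 100$ --- and for the handful of places where only the bare threshold is available, one invokes the freedom to take $\Delta T$ even smaller, or a more careful application of \ref{ass_further_properties_gpdtp_d} with a better constant, to close the gap. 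A secondary point requiring care is the topological claim in \ref{ass_newAPA_b}: that excising the interior disks $\nu^\sigma_{s,j}(\Int D^3)$ from $\wh\psi^\sigma_s(\wh Z^\sigma)$ does not produce a new component devoid of high-scale points. This follows because each such disk lies in the interior of a component of $\psi^\sigma_s(Z^\sigma)$ (Assertion \ref{ass_refinement_ii}), its boundary sphere separates a small low-scale region ($\rho < 4\Lambda r_k$) from the rest, and the high-scale point guaranteed by \ref{APA2} or \ref{ass_refinement_f} lies outside all disks; a short connectedness argument — a connected $3$-manifold minus an open ball is connected — completes the verification.
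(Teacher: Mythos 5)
Your approach has a genuine gap at exactly the points you flag as needing "a sharper scale estimate or shrinking $\Delta T$." The scale-control estimate in Lemma~\ref{Lem_further_properties_gpdtp}\ref{ass_further_properties_gpdtp_d} gives only the two-sided bound $|\rho(x)-\rho(x(T-\Delta T))|<10^{-3}\rho(x)$, and this bound does \emph{not} improve as $\Delta T\to 0$ (the constant $10^{-3}$ is built into that lemma). Meanwhile a priori assumption \ref{APA1} only guarantees $\rho > r_k$ on $\psi^\sigma_s(Z^\sigma)$, with no margin: points at scale $r_k(1+\eps)$ for $\eps$ arbitrarily small can occur. So flowing back by the fixed $\Delta T$ may push such points below scale $r_k$, and no amount of constant-chasing or shrinking $\Delta T$ can prevent this. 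The same problem recurs at the bare threshold $\Lambda r_k$ on $\psi^\sigma_s(\partial Z^\sigma)$ (from \ref{APA3}) for assertion \ref{ass_newAPA_c}, and at the bare $\Lambda^2 r_k$ threshold (from \ref{APA2}) for assertion \ref{ass_newAPA_b}.

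The missing idea is Lemma~\ref{Lem_further_properties_gpdtp}\ref{ass_further_properties_gpdtp_e}: if $\rho(x(T-\Delta T))\le\rho(x)\le 10$ — that is, if the scale at a point \emph{drops} when flowing backwards in time, which is precisely the bad case that your estimation argument cannot exclude — then the geometry near $x$ is modeled on the tip of a Bryant soliton, and there are nested disks $D'\subset D$ through $x$, consisting of spherical fibers, with $\rho>2\Lambda^3 r_k$ on $\partial D$, $\rho<2\rho(x)$ on $D'$, and a singular fiber point $x'\in D'$. The paper encapsulates the consequences of this in a Claim with four assertions. Applying it: for the right inclusion in \ref{ass_newAPA_a}, if some $x\in X^\sigma_s$ had $\rho(x(T-\Delta T))< r_k$, then $x$ would be forced into one of the removed disks $\nu^\sigma_{s,j}(B^3)$ (by chasing the singular fiber $x'$ through Lemma~\ref{lem_simplicial_refinement}\ref{ass_refinement_j},\ref{ass_refinement_l}) — contradicting $x\in X^\sigma_s$. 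For \ref{ass_newAPA_c}, if some $\Sigma\subset\psi^\sigma_s(\partial Z^\sigma)$ dropped below $\Lambda r_k$, the Claim would produce a disk $D\subset\wh\psi^\sigma_s(\Int\wh Z^\sigma)$ whose interior meets $\Sigma$, contradicting $\Sigma\subset\wh\psi^\sigma_s(\partial\wh Z^\sigma)$. For \ref{ass_newAPA_b}, the argument is by contradiction: if all of $\C(T-\Delta T)$ had scale $\le\Lambda^2 r_k$, the high-scale point $x$ furnished by \ref{APA2} (or Lemma~\ref{lem_simplicial_refinement}\ref{ass_refinement_f}) would have $\rho(x(T-\Delta T))<\rho(x)$, whence the Claim gives a disk $D\subset\wh\psi^\sigma_s(\C')$ with $\rho>2\Lambda^3 r_k$ on $\partial D$, contradicting the blanket upper bound on $\wh\psi^\sigma_s(\C')$. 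None of this is a matter of absorbing error factors — it is a qualitative geometric mechanism, and your proposal does not contain it.
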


\begin{proof}
Fix $s \in \sigma \subset K$.
By a priori assumption \ref{APA1} and Lemma~\ref{lem_simplicial_refinement}\ref{ass_refinement_d} we have $\rho > r_k$ on $X^\sigma_s$.
So by our choice of $\Delta T$ all points of $X^\sigma_s$ survive until time $T - \Delta T$.
We first show:

\begin{Claim}
If there is some $x \in \wh\psi^\sigma_s (\wh{Z}^\sigma)$ with $\rho (x(T - \Delta T)) \leq \rho (x) \leq 10$, then there is an embedded disk $D \subset \M^s_T$ such that:
\begin{enumerate}[label=(\alph*)]
\item \label{ass_disk_a} $x \in \Int D$.
\item \label{ass_disk_b} $\rho > 2 \Lambda^3 r_k$ on $\partial D$.
\item \label{ass_disk_c} If $\rho (x) > 5 r_k$, then $D \subset \wh\psi^\sigma_s ( \Int \wh{Z}^\sigma)$.
\item \label{ass_disk_d} If $\rho (x (T - \Delta T)) <  r_k$, then $x \in \nu^\sigma_{s,1} (B^3) \cup \ldots \cup \nu^\sigma_{s,m_\sigma} (B^3)$.
\end{enumerate}
\end{Claim}

\begin{proof}
Let $x, x' \in D' \subset D$ be the data from  Lemma~\ref{Lem_further_properties_gpdtp}\ref{ass_further_properties_gpdtp_e}.
Assertions~\ref{ass_disk_a} and \ref{ass_disk_b} follow immediately using a priori assumption \ref{APA1}.

By a priori assumption \ref{APA2} and Lemma~\ref{lem_simplicial_refinement}\ref{ass_refinement_e} we have $\partial D \subset  \wh\psi^\sigma_s (\wh{Z}^\sigma)$.
So if Assertion~\ref{ass_disk_c} was false, then $D$ must contain a component of $\M^s_T \setminus \wh\psi^\sigma_s ( \Int \wh{Z}^\sigma)$ that intersects $\wh\psi^\sigma_s (\wh{Z}^\sigma)$ and on which $\rho > .9 \rho (x) >4 r_k$, in contradiction to Lemma~\ref{lem_simplicial_refinement}\ref{ass_refinement_g}.

Lastly, we verify Assertion~\ref{ass_disk_d}.
Assume that $\rho (x(T- \Delta T))< r_k$.
Then by our choice of $\Delta T$ we have $\rho (x) < 2 r_k$, see Lemma~\ref{Lem_further_properties_gpdtp}\ref{ass_further_properties_gpdtp_c}.
So by Lemma~\ref{lem_simplicial_refinement}\ref{ass_refinement_d} we have $x \in \psi^\sigma_s (Z^\sigma)$.
By a priori assumption \ref{APA3} and the fact that $\rho < 2 \rho (x) < 4 r_k$ on $D'$, we obtain that $x' \in D' \subset \psi^\sigma_s (Z^\sigma)$.
So by Lemma~\ref{lem_simplicial_refinement}\ref{ass_refinement_j} we have $x' \in \nu^\sigma_{s,j} (D^3)$ for some $j \in \{ 1, \ldots, m_\sigma \}$.
By Lemma~\ref{lem_simplicial_refinement}\ref{ass_refinement_l} we have $x \in D' \subset \nu^\sigma_{s,j} (B^3)$.
This finishes the proof of Assertion~\ref{ass_disk_d}.
\end{proof}

We can now verify the assertions of this lemma.
The first inclusion of Assertion~\ref{ass_newAPA_a} follows from  Lemma~\ref{lem_simplicial_refinement}\ref{ass_refinement_e}, \ref{ass_refinement_k} and our choice of $\Delta T$, see Lemma~\ref{Lem_further_properties_gpdtp}\ref{ass_further_properties_gpdtp_c}.
The second inclusion is a consequence of a priori assumption \ref{APA1},  Lemma \ref{lem_simplicial_refinement}\ref{ass_refinement_d}, Assertion~\ref{ass_disk_d} of the Claim and our choice of $\Delta T$.

For Assertion~\ref{ass_newAPA_b} consider a component $\C$ of $X^\sigma_s$ and let $\C' \subset \wh{Z}^\sigma$ be the component with $\C \subset \wh\psi^\sigma_s (\C')$.
Assume by contradiction that $\rho \leq \Lambda^2 r_k$ on $\C (T-\Delta T)$.
So by our choice of $\Delta T$ and Lemma~\ref{lem_simplicial_refinement}\ref{ass_refinement_k} we have 
\begin{equation} \label{eq_rho_on_CCprime}
 \rho \leq \Lambda^2 r_k \quad \text{on} \quad \big(\wh\psi^\sigma_s (\C') \big)(T - \Delta T) \quad \Longrightarrow \quad \rho < 2 \Lambda^2 r_k \quad \text{on} \quad \wh\psi^\sigma_s (\C'). 
\end{equation}
By Lemma~\ref{lem_simplicial_refinement}\ref{ass_refinement_f} there is a component $\C'' \subset Z^\sigma$ such that $\psi^\sigma_s (\C'') \subset \wh\psi^\sigma_s (\C')$.
Due to a priori assumption \ref{APA2} we can find a point $x \in \psi^\sigma_s (\C'')$ such that $\rho (x) > \Lambda^2 r_k$.
Since $\rho (x(T- \Delta T) \leq \Lambda^2 r_k$, Assertion~\ref{ass_disk_c} of the Claim implies that there is an embedded disk $D \subset \wh\psi^\sigma_s (\C')$ with $\rho > 2 \Lambda^3 r_k$ on $\partial D$, in contradiction to (\ref{eq_rho_on_CCprime}).

Lastly, we verify Assertion~\ref{ass_newAPA_c}.
Consider a component $\Sigma \subset \partial X^\sigma_s$.
If $\Sigma = \nu^\sigma_{s,j} (\partial D^3)$ for some $j = 1, \ldots, m_\sigma$, then we are done by Lemma~\ref{lem_simplicial_refinement}\ref{ass_refinement_l} and our choice of $\Delta T$.
Assume now that $\Sigma \subset \wh\psi^\sigma_s (\partial \wh{Z}^\sigma)$.
If $\Sigma \not\subset \psi^\sigma_s (\partial Z^\sigma)$, then we are done by  Lemma~\ref{lem_simplicial_refinement}\ref{ass_refinement_h} and our choice of $\Delta T$.
Assume now that $\Sigma \subset \psi^\sigma_s (\partial Z^\sigma)$.
By a priori assumption \ref{APA3} we have $\rho > \Lambda r_k$ on $\Sigma$.
If $\rho (\Sigma (T - \Delta T)) \leq \Lambda r_k$, then we can apply Assertion~\ref{ass_disk_c} of the Claim to find an embedded disk $D \subset \wh\psi^\sigma_s (\Int \wh{Z}^\sigma )$ whose interior intersects $\Sigma$, contradicting the fact that $\Sigma \subset \wh\psi^\sigma_s (\partial \wh{Z}^\sigma)$.
\end{proof}

The next lemma ensures that all necessary containment relationships hold if we successively modify the partial homotopy $\{ ( Z^\sigma, \lb (g^\sigma_{s,t})_{s \in \sigma, t \in [0,1]}, \lb (\psi^\sigma_s  )_{s \in \sigma}) \}_{\sigma \subset K}$ according to the data $(\wh\psi^\sigma_s)_{s \in \sigma}$ and $(\nu^\sigma_{s,j} (D^3))_{s \in \sigma, j = 1, \ldots, m_\sigma}$ over all simplices of $K$.

\begin{lemma} \label{lem_containment_psi}
For any two simplices $\sigma, \tau \subset K$ and $s \in \tau \cap \sigma$ we have for all $j =1, \ldots, m_\sigma$:
\begin{enumerate}[label=(\alph*)]
\item  \label{ass_containment_psi_a} If $\dim \tau < \dim \sigma$, then $\wh\psi^\sigma_s (\wh{Z}^\sigma) \subset \wh\psi^\tau_s (\wh{Z}^\tau) \setminus  (\nu^\tau_{s,1} (B^3) \cup \ldots \cup \nu^\tau_{s,m_\tau}(B^3)) $.
\item \label{ass_containment_psi_b} If $\dim \tau < \dim \sigma$, then $\nu^\tau_{s,j}(D^3) \cap \psi^\sigma_s (Z^\sigma) = \nu^\tau_{s,j}(D^3) \cap \wh\psi^\sigma_s (\wh Z^\sigma) = \emptyset$.
\item \label{ass_containment_psi_c} If $\dim \tau \leq \dim \sigma$, then the image $\nu^\tau_{s,j} (D^3)$ does not contain an entire component of $\psi^{\sigma}_s (Z^{\sigma})$.
\item \label{ass_containment_psi_d} If $\dim \tau \geq \dim \sigma$, then the image $\nu^\tau_{s,j} (D^3)$ does not contain an entire component of $\wh\psi^\sigma_s (\wh{Z}^\sigma) \setminus  (\nu^\sigma_{s,1} (B^3) \cup \ldots \cup \nu^\sigma_{s,m_\sigma}(B^3))$.
\end{enumerate}
\end{lemma}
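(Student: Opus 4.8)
The plan is to verify each of the four containment statements by tracing through the size estimates on $\rho$ provided by Lemma~\ref{lem_simplicial_refinement} together with a priori assumptions \ref{APA1}--\ref{APA3}. The underlying combinatorial fact is that $r_k = \Lambda^{-4n+4k-4}$ is strictly increasing in $k$, so $\Lambda^4 r_k = r_{k+1}$, and since $\Lambda > 100$ the various scale thresholds ($r_k$, $2r_k$, $\frac12\Lambda^3 r_k$, $\Lambda r_k$, $2\Lambda r_k$, $4\Lambda r_k$, $\Lambda^2 r_k$, $\Lambda^3 r_k$, etc.) are well-separated for neighboring dimensions. The proof amounts to repeatedly combining ``$\rho$ large on one set, $\rho$ small on another'' with the observation that the relevant sets are unions of spherical fibers (or have spherical-fiber boundary), so they cannot ``cross'' each other except at such fibers.

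For Assertion~\ref{ass_containment_psi_a}: if $\dim\tau < \dim\sigma$, then by a priori assumption \ref{APA1} applied to $\tau$ we have $\psi^\tau_s(Z^\tau) \subset \{\rho > r_{\dim\tau}\}$, but that inclusion goes the wrong way, so instead I would argue as follows. By Lemma~\ref{lem_simplicial_refinement}\ref{ass_refinement_e} applied to $\tau$, $\{\rho > \frac12\Lambda^3 r_{\dim\tau}\}\cap\M^s_T \subset \wh\psi^\tau_s(\wh Z^\tau)$; by \ref{ass_refinement_d} and a priori assumption \ref{APA1} applied to $\sigma$, $\wh\psi^\sigma_s(\wh Z^\sigma) \subset \{\rho > r_{\dim\sigma}\} \cup (\text{stuff in }\psi^\sigma_s(Z^\sigma))$, and since $\psi^\sigma_s(Z^\sigma)\subset\psi^\tau_s(Z^\tau)$ by the partial homotopy property it suffices to control the part of $\wh\psi^\sigma_s(\wh Z^\sigma)$ where $r_{\dim\sigma} < \rho \leq \frac12\Lambda^3 r_{\dim\tau}$; here one uses that the boundary components of $\wh\psi^\sigma_s(\wh Z^\sigma)$ where $\rho$ is that small must lie in $\psi^\sigma_s(Z^\sigma)$ (by \ref{ass_refinement_h} the ``new'' boundary has $\rho > 2\Lambda r_{\dim\sigma}$, which is still smaller than $\frac12\Lambda^3 r_{\dim\tau}$ so this needs care — the right comparison is $r_{\dim\sigma}\leq r_{\dim\tau - 1}$ so $2\Lambda r_{\dim\sigma}\leq 2\Lambda r_{\dim\tau-1} = 2\Lambda^{-3}r_{\dim\tau} < \frac12\Lambda^3 r_{\dim\tau}$). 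Thus the low-$\rho$ part of $\wh\psi^\sigma_s(\wh Z^\sigma)$ is contained in $\psi^\sigma_s(Z^\sigma)\subset\psi^\tau_s(Z^\tau)\subset\wh\psi^\tau_s(\wh Z^\tau)$. For the disjointness from $\nu^\tau_{s,i}(B^3)$, combine Lemma~\ref{lem_simplicial_refinement}\ref{ass_refinement_k} ($\rho < 4\Lambda r_{\dim\tau}$ on $\nu^\tau_{s,i}(D^3)$) with the fact that these disks lie in $\psi^\tau_s(\Int Z^\tau)$ and contain the low-scale singular fibers, while $\wh\psi^\sigma_s(\wh Z^\sigma)$ meets such a region only inside $\psi^\sigma_s(Z^\sigma)$, and then use Lemma~\ref{lem_simplicial_refinement}\ref{ass_refinement_j} / Assertion~\ref{ass_containment_psi_b}.

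For Assertion~\ref{ass_containment_psi_b}: again $\dim\tau<\dim\sigma$, so $\psi^\sigma_s(Z^\sigma)\subset\psi^\tau_s(Z^\tau)$. By Lemma~\ref{lem_simplicial_refinement}\ref{ass_refinement_k}, $\rho < 4\Lambda r_{\dim\tau}$ on $\nu^\tau_{s,j}(D^3)$; on the other hand, by a priori assumption \ref{APA1} applied to $\sigma$, $\psi^\sigma_s(Z^\sigma)\subset\{\rho > r_{\dim\sigma}\}$, and $r_{\dim\sigma}\geq r_{\dim\tau+1} = \Lambda^4 r_{\dim\tau} > 4\Lambda r_{\dim\tau}$, giving disjointness of $\nu^\tau_{s,j}(D^3)$ and $\psi^\sigma_s(Z^\sigma)$. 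Then the part of $\wh\psi^\sigma_s(\wh Z^\sigma)$ outside $\psi^\sigma_s(Z^\sigma)$ has $\rho > 2r_{\dim\sigma} > 4\Lambda r_{\dim\tau}$ by Lemma~\ref{lem_simplicial_refinement}\ref{ass_refinement_d}, so it too is disjoint from $\nu^\tau_{s,j}(D^3)$, yielding disjointness from all of $\wh\psi^\sigma_s(\wh Z^\sigma)$. Assertion~\ref{ass_containment_psi_c} follows from a priori assumption \ref{APA2} (each component of $\psi^\sigma_s(Z^\sigma)$ contains a point with $\rho > \Lambda^2 r_{\dim\sigma}$) together with Lemma~\ref{lem_simplicial_refinement}\ref{ass_refinement_k} ($\rho < 4\Lambda r_{\dim\tau}$ on $\nu^\tau_{s,j}(D^3)$): if $\dim\tau\leq\dim\sigma$ then $\Lambda^2 r_{\dim\sigma}\geq\Lambda^2 r_{\dim\tau} > 4\Lambda r_{\dim\tau}$, so a component of $\psi^\sigma_s(Z^\sigma)$ cannot fit inside $\nu^\tau_{s,j}(D^3)$. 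Finally, Assertion~\ref{ass_containment_psi_d}: for $\dim\tau\geq\dim\sigma$, use Lemma~\ref{lem_simplicial_refinement}\ref{ass_refinement_f} — every component of $\wh\psi^\sigma_s(\wh Z^\sigma)$ that does not contain a component of $\psi^\sigma_s(Z^\sigma)$ contains a point with $\rho > 2\Lambda^2 r_{\dim\sigma}$ — and by a priori assumption \ref{APA2} the ones that do contain such a component have a point with $\rho > \Lambda^2 r_{\dim\sigma}$; either way there is a point with $\rho > \Lambda^2 r_{\dim\sigma} \geq \Lambda^2 r_{\dim\tau} > 4\Lambda r_{\dim\tau}$ in each such component, while removing the balls $\nu^\sigma_{s,i}(B^3)$ only removes regions where $\rho < 4\Lambda r_{\dim\sigma}$ (Lemma~\ref{lem_simplicial_refinement}\ref{ass_refinement_k}) and those are contained in the components of $\psi^\sigma_s(Z^\sigma)$, so the high-scale point survives in $X^\sigma_s$; since $\nu^\tau_{s,j}(D^3)$ has $\rho < 4\Lambda r_{\dim\tau}$ throughout, it cannot contain a whole component of $X^\sigma_s$.

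The main obstacle I anticipate is bookkeeping rather than any conceptual difficulty: one must be scrupulous about which index ($\dim\tau$ versus $\dim\sigma$) controls each scale threshold and about the direction of the inequality $r_{\dim\tau} \lessgtr r_{\dim\sigma}$, and one must repeatedly invoke the ``unions of spherical fibers'' structure to justify that a set with small $\rho$ on its boundary and a point of large $\rho$ must actually contain an entire disk bounded by a regular fiber (so that two such sets cannot partially overlap without one containing a whole component of the other). I would organize the argument by first recording the chain of inequalities among the relevant thresholds (all of which are powers of $\Lambda$ times $r_{\dim\tau}$ or $r_{\dim\sigma}$, with $\Lambda > 100$ making all comparisons strict with large margin), and then dispatching the four assertions in the order \ref{ass_containment_psi_b}, \ref{ass_containment_psi_c}, \ref{ass_containment_psi_d}, \ref{ass_containment_psi_a}, since \ref{ass_containment_psi_a} depends on \ref{ass_containment_psi_b} via the last clause.
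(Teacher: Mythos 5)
Your treatments of Assertions~\ref{ass_containment_psi_b}, \ref{ass_containment_psi_c}, \ref{ass_containment_psi_d} are essentially the paper's argument and are correct: each rests on the observations that $\rho < 4\Lambda r_{\dim\tau}$ on the disks $\nu^\tau_{s,j}(D^3)$ (Lemma~\ref{lem_simplicial_refinement}\ref{ass_refinement_k}), that $\rho > r_{\dim\sigma}$ on $\wh\psi^\sigma_s(\wh Z^\sigma)$ (a priori assumption \ref{APA1} together with Lemma~\ref{lem_simplicial_refinement}\ref{ass_refinement_d}), and that every component of $\psi^\sigma_s(Z^\sigma)$ resp.\ of $\wh\psi^\sigma_s(\wh Z^\sigma)\setminus\cup_i\nu^\sigma_{s,i}(B^3)$ contains a point of scale $> \Lambda^2 r_{\dim\sigma}$ (\ref{APA2} and Lemma~\ref{lem_simplicial_refinement}\ref{ass_refinement_f}). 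The paper proves (a) first and derives (b) from it, and for $\dim\tau\neq\dim\sigma$ derives (c), (d) from (b), (a) as well, then handles $\dim\tau=\dim\sigma$ directly; your proposed order (b), (c), (d), (a) is equally valid.

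However, your argument for Assertion~\ref{ass_containment_psi_a} has a genuine error: you reverse the direction of the scale comparison. With $r_k = \Lambda^{-4n+4k-4}$ increasing in $k$ and $\dim\tau < \dim\sigma$, you have $r_{\dim\sigma}\geq r_{\dim\tau+1} = \Lambda^4 r_{\dim\tau}$, not $r_{\dim\sigma}\leq r_{\dim\tau-1}$ as you wrote, and consequently $2\Lambda r_{\dim\sigma}\geq 2\Lambda^5 r_{\dim\tau}$ is \emph{much larger} than $\tfrac12\Lambda^3 r_{\dim\tau}$, not smaller. The ``intermediate region'' $\{r_{\dim\sigma} < \rho \leq \tfrac12\Lambda^3 r_{\dim\tau}\}$ you set out to control is therefore empty, and the whole case analysis about boundary components and spherical fibers is unnecessary (and the specific inequalities you assert in it are false). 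The correct argument is a one-step comparison, which is exactly how the paper proceeds: by \ref{APA1} and Lemma~\ref{lem_simplicial_refinement}\ref{ass_refinement_d}, $\rho > r_{\dim\sigma}$ on all of $\wh\psi^\sigma_s(\wh Z^\sigma)$; since $r_{\dim\sigma}\geq\Lambda^4 r_{\dim\tau} > \tfrac12\Lambda^3 r_{\dim\tau}$, this set lies in $\{\rho > \tfrac12\Lambda^3 r_{\dim\tau}\}\cap\M^s_T$, which by Lemma~\ref{lem_simplicial_refinement}\ref{ass_refinement_e} is contained in $\wh\psi^\tau_s(\wh Z^\tau)$ and by Lemma~\ref{lem_simplicial_refinement}\ref{ass_refinement_k} (where $\rho < 4\Lambda r_{\dim\tau} < \tfrac12\Lambda^3 r_{\dim\tau}$) is disjoint from every $\nu^\tau_{s,j}(D^3)$, hence from every $\nu^\tau_{s,j}(B^3)$. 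In particular, the argument you anticipate needing — that two sets bounded by spherical fibers cannot partially overlap — is not used anywhere in this lemma; it is pure scale counting.
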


\begin{proof}
By a priori assumption \ref{APA1} and Lemma~\ref{lem_simplicial_refinement}\ref{ass_refinement_d}  we have $\rho > r_{\dim \sigma}$ on $\wh\psi^\sigma_s (\wh{Z}^\sigma)$.
On the other hand, by Lemma~\ref{lem_simplicial_refinement}\ref{ass_refinement_e}, \ref{ass_refinement_k} the set $\wh\psi^\tau_s (\wh{Z}^\tau) \setminus  (\nu^\tau_{s,1} (B^3) \cup \ldots \cup \nu^\tau_{s,m_\tau}(B^3))$ contains all points in $\M^s_T$ of scale $\rho > \frac12 \Lambda^3 r_{\dim \tau}$ and $r_{\dim \sigma} > \frac12 \Lambda^3 r_{\dim \tau}$.
This implies Assertion~\ref{ass_containment_psi_a}.
Assertion~\ref{ass_containment_psi_b} is a direct consequence of Assertion~\ref{ass_containment_psi_a}.
If $\dim \tau \neq \dim \sigma$, then Assertion~\ref{ass_containment_psi_c} follows from Assertion~\ref{ass_containment_psi_b} and Assertion~\ref{ass_containment_psi_d} follows from Assertion~\ref{ass_containment_psi_a}, because $\nu^\tau_{s,j}(D^3) \subset \psi^\tau_s (Z^\tau) \subset \wh \psi^\tau_s (\wh Z^\tau)$.
So assume that $\dim \tau = \dim \sigma =: k$.
By a priori assumption \ref{APA2} and Lemma~\ref{lem_simplicial_refinement}\ref{ass_refinement_f} every component of $\psi^{\sigma}_s (Z^{\sigma})$ or $\wh\psi^\sigma_s (\wh{Z}^\sigma) \setminus  (\nu^\sigma_{s,1} (B^3) \cup \ldots \cup \nu^\sigma_{s,m_\sigma}(B^3))$ contains a point with $\rho > \Lambda^2 r_{k}$.
On the other hand, by Lemma~\ref{lem_simplicial_refinement}\ref{ass_refinement_k}, we have $\rho < 4 \Lambda r_k < \Lambda^2 r_k$ on $\nu^\tau_{s,j} (D^3)$.
So $\nu^\tau_{s,j} (D^3)$ cannot fully contain any such component.
\end{proof}

We now modify the partial homotopy $\{ ( Z^\sigma, \lb (g^\sigma_{s,t})_{s \in \sigma, t \in [0,1]}, \lb (\psi^\sigma_s  )_{s \in \sigma}) \}_{\sigma \subset K}$ using Proposition~\ref{prop_extending} and \ref{prop_move_remove_disk}.
We proceed inductively over the dimension of the skeleton.
More specifically, we claim that for every $k \geq 0$ we can construct a partial homotopy at time $T$ relative to $L$ that has the form
\begin{equation} \label{eq_whtd_partial_homotopy}
 \big\{ (\td{Z}^{\sigma}, (\td{g}^{\sigma}_{s,t} )_{s \in \sigma, t \in [0,1]}, (\td\psi^{\sigma}_s)_{s \in \sigma}  ) \big\}_{\substack{\sigma \subset K, \\ \dim \sigma < k}} \cup
\{ (Z^{\sigma}, (g^{\sigma}_{s,t})_{s \in \sigma, t \in [0,1]}, (\psi^{\sigma}_s)_{s \in \sigma} ) \}_{\substack{\sigma \subset K, \\ \dim \sigma \geq k}} 
\end{equation}
and for which
\[ \td\psi^\sigma_s (\td{Z}^\sigma) = X^\sigma_s \]
for all $s \in \sigma \subset K$ with $\dim \sigma < k$, where $X^\sigma_s$ is defined in (\ref{def_X_psi}).
Note that if $k = 0$, then (\ref{eq_whtd_partial_homotopy}) can be taken to be the original partial homotopy $\{ (Z^\sigma, \lb (g^\sigma_{s,t})_{s \in \sigma, t \in [0,1]}, \lb (\psi^\sigma_s)_{s \in \sigma}) \}_{\sigma \subset K}$.
If (\ref{eq_whtd_partial_homotopy}) has been constructed for some $k$, then we can construct another partial homotopy of form (\ref{eq_whtd_partial_homotopy}) for $k+1$ by first applying Proposition~\ref{prop_extending}, using the data $\wh{Z}^\sigma, \iota^\sigma, (\wh\psi^\sigma_s)_{s \in \sigma}$,
and then Proposition~\ref{prop_move_remove_disk}, using the data $( \nu^\sigma_{s, j})_{s \in \sigma, j = 1, \ldots, m_\sigma}$ over all simplices $\sigma \subset K$ of dimension $k+1$.
Lemma~\ref{Lem_further_properties_gpdtp}\ref{ass_further_properties_gpdtp_g}, 
Lemma~\ref{lem_simplicial_refinement}\ref{ass_refinement_a}--\ref{ass_refinement_c}, \ref{ass_refinement_disjoint}--\ref{ass_refinement_i} and Lemma~\ref{lem_containment_psi} ensure that the assumptions of Proposition~\ref{prop_extending} and \ref{prop_move_remove_disk} hold.
So by induction we obtain:

\begin{lemma} \label{lem_apply_all_moves}
There is a simplicial refinement of $\mathcal{K}$ and a partial homotopy $\{ ( \td{Z}^\sigma, \lb (\td{g}^\sigma_{s,t})_{s \in \sigma, t \in [0,1]}, \lb (\td\psi^\sigma_s  )_{s \in \sigma}) \}_{\sigma \subset K}$ at time $T$ relative to $L$ for $(\RR^s)_{s \in K}$ such that for all $s \in \sigma \subset K$ the set $X^\sigma_s := \td\psi^\sigma_s (\td{Z}^\sigma)$ satisfies Assertions~\ref{ass_newAPA_a}--\ref{ass_newAPA_c} of Lemma~\ref{lem_newAPA}.
Moreover, $\{ ( \td{Z}^\sigma, \lb (\td{g}^\sigma_{s,t})_{s \in \sigma, t \in [0,1]}, \lb (\td\psi^\sigma_s  )_{s \in \sigma}) \}_{\sigma \subset K}$ is still PSC-conformal over every $s \in K_{PSC}$.
\end{lemma}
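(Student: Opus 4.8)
The plan is to prove Lemma~\ref{lem_apply_all_moves} by induction on the skeleton of the refined complex produced in Lemma~\ref{lem_simplicial_refinement}, processing all simplices of a given dimension simultaneously. The inductive statement at level $k\ge 0$ is that there is a partial homotopy at time $T$ relative to $L$ of the form \eqref{eq_whtd_partial_homotopy} --- obtained from the original one by replacing, on every simplex $\sigma$ with $\dim\sigma<k$, the data $(Z^\sigma,(g^\sigma_{s,t})_{s\in\sigma,t\in[0,1]},(\psi^\sigma_s)_{s\in\sigma})$ by modified data $(\td Z^\sigma,(\td g^\sigma_{s,t})_{s\in\sigma,t\in[0,1]},(\td\psi^\sigma_s)_{s\in\sigma})$ with $\td\psi^\sigma_s(\td Z^\sigma)=X^\sigma_s$, where $X^\sigma_s$ is the set in \eqref{def_X_psi} --- and which is PSC-conformal over every $s\in K_{PSC}$. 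For $k=0$ the original partial homotopy works, PSC-conformality over $K_{PSC}$ being a priori assumption~\ref{APA4}.

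For the step from $k$ to $k+1$, I would process each $(k+1)$-simplex $\sigma$ by first applying Proposition~\ref{prop_extending} with the data $\wh Z^\sigma,\iota^\sigma,(\wh\psi^\sigma_s)_{s\in\sigma}$, enlarging the domain on $\sigma$ from $\psi^\sigma_s(Z^\sigma)$ to $\wh\psi^\sigma_s(\wh Z^\sigma)$, and then applying Proposition~\ref{prop_move_remove_disk} with the disks $(\nu^\sigma_{s,j})_{1\le j\le m_\sigma}$ to remove $\cup_j\nu^\sigma_{s,j}(B^3)$ when $\sigma\cap L=\emptyset$ (otherwise $m_\sigma=0$ by Lemma~\ref{lem_simplicial_refinement}\ref{ass_refinement_h_old} and $X^\sigma_s=\wh\psi^\sigma_s(\wh Z^\sigma)$ already); the new domain on $\sigma$ is then $X^\sigma_s$. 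Distinct $(k+1)$-simplices meet only in lower-dimensional faces, so these modifications are independent and the result is again of the form \eqref{eq_whtd_partial_homotopy} for $k+1$. The real content is verifying the hypotheses of the two propositions. For Proposition~\ref{prop_extending}: \ref{ass_ph_extend_i}, \ref{ass_ph_extend_iii}, \ref{ass_ph_extend_iv} are Lemma~\ref{lem_simplicial_refinement}\ref{ass_refinement_a}--\ref{ass_refinement_c}, while \ref{ass_ph_extend_ii} --- that $\wh\psi^\sigma_s(\wh Z^\sigma)\subset\td\psi^\tau_s(\td Z^\tau)=X^\tau_s$ for $s\in\tau\subset\partial\sigma$, $\tau$ already processed --- is Lemma~\ref{lem_containment_psi}\ref{ass_containment_psi_a}. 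For Proposition~\ref{prop_move_remove_disk}: \ref{lem_move_remove_disk_i}, \ref{lem_move_remove_disk_ii} are Lemma~\ref{lem_simplicial_refinement}\ref{ass_refinement_disjoint}, \ref{ass_refinement_ii}, \ref{ass_refinement_i}; \ref{lem_move_remove_disk_iii} (disjointness of $\nu^\sigma_{s,j}(D^3)$ from $\psi^\tau_s(Z^\tau)$ for $\sigma\subsetneq\tau$, $\tau$ not yet processed) is Lemma~\ref{lem_containment_psi}\ref{ass_containment_psi_b}; the maximal-simplex hypothesis \ref{lem_move_remove_disk_iv} follows from Lemma~\ref{lem_containment_psi}\ref{ass_containment_psi_c}--\ref{ass_containment_psi_d} according to whether the incident simplex still carries $\psi^\tau_s(Z^\tau)$ or already $X^\tau_s$, together with the scale bound of Lemma~\ref{lem_simplicial_refinement}\ref{ass_refinement_k}; \ref{lem_move_remove_disk_v} is the inductive PSC-conformality; and \ref{lem_move_remove_disk_vi} is where Lemma~\ref{Lem_further_properties_gpdtp}\ref{ass_further_properties_gpdtp_g} enters: $X^\sigma_s=\wh\psi^\sigma_s(\wh Z^\sigma)\setminus\cup_j\nu^\sigma_{s,j}(B^3)$ is a compact submanifold whose boundary is a union of regular spherical fibers on which $\rho\le 1$ --- the new boundary pieces having $\rho$ of order $r_k\ll1$ by Lemma~\ref{lem_simplicial_refinement}\ref{ass_refinement_d}, \ref{ass_refinement_h}, \ref{ass_refinement_l}, and $\psi^\sigma_s(\partial Z^\sigma)\subset\{\rho\le1\}$ since $\{\rho>1\}$ is open and contained in $\psi^\sigma_s(\Int Z^\sigma)$ by a priori assumption~\ref{APA1} --- hence $(X^\sigma_s,g^{\prime,s}_T)$ is PSC-conformal whenever $(M^s,g^s)$ has positive scalar curvature. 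Both propositions also preserve PSC-conformality over $K_{PSC}$, which closes the induction.

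Running the induction up to $k=n+1$, with $n=\dim K$, produces a partial homotopy $\{(\td Z^\sigma,(\td g^\sigma_{s,t})_{s\in\sigma,t\in[0,1]},(\td\psi^\sigma_s)_{s\in\sigma})\}_{\sigma\subset K}$ with $\td\psi^\sigma_s(\td Z^\sigma)=X^\sigma_s$ for all $s\in\sigma\subset K$, still PSC-conformal over every $s\in K_{PSC}$. Applying Lemma~\ref{lem_newAPA}, whose hypotheses are exactly the data supplied by Lemma~\ref{lem_simplicial_refinement}, then gives Assertions~\ref{ass_newAPA_a}--\ref{ass_newAPA_c} for each $X^\sigma_s$, which is the statement of the lemma. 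I expect the main obstacle to be the bookkeeping in the induction step: keeping straight, for each simplex incident to the one being processed, whether its domain is still $\psi^\tau_s(Z^\tau)$ or has already been replaced by $X^\tau_s$, so that the correct non-containment statement of Lemma~\ref{lem_containment_psi} is invoked, and --- separately --- checking that $\partial X^\sigma_s\subset\{\rho\le1\}$ so that Lemma~\ref{Lem_further_properties_gpdtp}\ref{ass_further_properties_gpdtp_g} can be applied for hypothesis~\ref{lem_move_remove_disk_vi} of Proposition~\ref{prop_move_remove_disk}.
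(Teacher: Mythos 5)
Your proof follows the same inductive strategy as the paper (induction on skeleton dimension, processing each simplex of the current dimension by applying Proposition~\ref{prop_extending} followed by Proposition~\ref{prop_move_remove_disk}, and invoking Lemma~\ref{Lem_further_properties_gpdtp}\ref{ass_further_properties_gpdtp_g}, Lemma~\ref{lem_simplicial_refinement}, and Lemma~\ref{lem_containment_psi} to verify the hypotheses). The paper compresses the hypothesis-checking into a single sentence, whereas you spell out which lemma serves which hypothesis --- including the correct observation that $\partial X^\sigma_s\subset\{\rho\leq 1\}$ is needed for \ref{lem_move_remove_disk_vi}, and that Lemma~\ref{lem_containment_psi}\ref{ass_containment_psi_c} versus \ref{ass_containment_psi_d} is chosen depending on whether the incident simplex has already been processed --- so your argument is a correct and more detailed rendering of the same proof.
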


\subsection{Construction of a partial homotopy at time $T - \Delta T$}
Apply Proposition~\ref{prop_move_part_hom_bckwrds} with $T' = T - \Delta T$ to the partial homotopy $\{ ( \td{Z}^\sigma, \lb (\td{g}^\sigma_{s,t})_{s \in \sigma, t \in [0,1]}, \lb (\td\psi^\sigma_s  )_{s \in \sigma}) \}_{\sigma \subset K}$.
The assumptions of this lemma are met due to Lemma~\ref{lem_newAPA} and (\ref{eq_US2US310}).
We obtain a partial homotopy $\{ ( \td Z^\sigma, \lb (\ov g^\sigma_{s,t})_{s \in \sigma, t \in [0,1]}, \lb (\ov\psi^\sigma_s  )_{s \in \sigma}) \}_{\sigma \subset K}$ at time $T- \Delta T$ with $\ov\psi^\sigma_s ( \td Z^\sigma) = X^\sigma_s (T - \Delta T)$.
So by Lemma~\ref{lem_newAPA}, a priori assumptions \ref{APA1}--\ref{APA3} hold for this new partial homotopy.
A priori assumption \ref{APA4} holds due to Proposition~\ref{prop_move_part_hom_bckwrds}\ref{ass_lem_move_part_hom_bckwrds_b} and Lemmas~\ref{Lem_further_properties_gpdtp}\ref{ass_further_properties_gpdtp_g}, \ref{lem_apply_all_moves}.

This concludes our induction argument and proves Lemma~\ref{lem_newAPA}, which, as discussed in Subsection~\ref{subsec_deforming_main_results} implies Theorem~\ref{Thm_main_deform_to_CF}.

\section{Proofs of the main theorems}
\label{sec_proofs_main_theorems}
Theorem~\ref{Thm_main_general_case} from Section~\ref{sec_introduction} is a direct consequence of the following theorem.

\begin{theorem} \label{Thm_main_conf_flat}
Let $(K,L)$, $L \subset K$, be a pair of topological spaces that is homeomorphic to the geometric realization of a pair of finite simplicial complexes.
Let $M$ be a connected sum of spherical space forms and copies of $S^2 \times S^1$.
Consider a fiber bundle $\pi : E \to K$ whose fibers are homeomorphic to $M$ and whose structure group is $\Diff (M)$.
Let $(g^s)_{s \in K}$ be a continuous family of fiberwise Riemannian metrics such that $( \pi^{-1} (s) , g^s)$ is isometric to a compact quotient of the standard round sphere or standard round cylinder for all $s \in L$.

Then there is a continuous family of Riemannian metrics $(h^s_t)_{s \in K, t \in [0,1]}$ such that for all $s \in K$ and $t \in [0,1]$
\begin{enumerate}[label=(\alph*)]
\item \label{Thm_main_conf_flat_a} $h^s_0 = g^s$.
\item \label{Thm_main_conf_flat_b} $h^s_1$ is conformally flat and has positive scalar curvature.
\item \label{Thm_main_conf_flat_c} If $s \in L$, then $h^s_t = g^s$.
\item \label{Thm_main_conf_flat_d} If $(M^s, g^s)$ has positive scalar curvature, then so does $(M^s, h^s_t)$.
\end{enumerate}
\end{theorem}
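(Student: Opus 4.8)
The plan is to deduce Theorem~\ref{Thm_main_conf_flat} from Theorem~\ref{Thm_main_deform_to_CF}, the gap between the two statements being only that Theorem~\ref{Thm_main_deform_to_CF} produces metrics that are \emph{PSC-conformal} rather than of positive scalar curvature, and metrics that are merely \emph{CC-metrics} over $L$ rather than equal to $g^s$. First I would view $\pi:E\to K$ as a continuous family of Riemannian manifolds $(M^s,g^s)_{s\in K}$ (Remark~\ref{rmk_fiber_bundle_construction}); since $(\pi^{-1}(s),g^s)$ is a CC-metric for $s\in L$, Theorem~\ref{Thm_main_deform_to_CF} applies with the given pair $(K,L)$ and with $K_{PSC}$ the positive-scalar-curvature locus $U=\{s\in K:R_{g^s}>0\}$, giving a continuous family $(\bar h^s_t)_{s\in K,t\in[0,1]}$ with $\bar h^s_0=g^s$, with $\bar h^s_1$ conformally flat and PSC-conformal for all $s$, with $\bar h^s_t$ a CC-metric for $s\in L$, and with $\bar h^s_t$ PSC-conformal for all $t$ whenever $s\in U$. (Strictly, Theorem~\ref{Thm_main_deform_to_CF} asks for $K_{PSC}$ closed whereas $U$ is only open; this is the one fussy point, dealt with either by exhausting $U$ by compact subcomplexes of successive barycentric subdivisions, or by observing that the modification moves of Sections~\ref{sec_partial_homotopy}--\ref{sec_deforming_families_metrics} --- Proposition~\ref{prop_extending}, Proposition~\ref{prop_move_remove_disk}, Lemma~\ref{Lem_further_properties_gpdtp}\ref{ass_further_properties_gpdtp_g} --- in fact preserve the PSC-conformal property at each individual $s$ with $R_{g^s}>0$, the closedness of $K_{PSC}$ being used only for uniformity of the constants.)

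Next I would untwist over $L$. By Definition~\ref{Def_partial_homotopy}\ref{prop_def_partial_homotopy_6}, transported through Proposition~\ref{prop_partial_homotopy_standard_homotopy}, for $s\in L$ the path $t\mapsto\bar h^s_t$ is, in the spherical case, a scalar rescaling $\bar h^s_t=\lambda(s,t)^2 g^s$ with $\lambda(\cdot,0)\equiv 1$, and in the cylindrical case an explicit reshaping $a(s,t)^2 g_{S^2}+b(s,t)^2\,dr^2$ in a frame adapted to $g^s$, again with trivial data at $t=0$. Extending $\lambda$ (resp.\ $a(\cdot,0)/a$) to a continuous positive function on $K\times[0,1]$ that is $\equiv1$ on $K\times\{0\}$ by Tietze, replacing $\bar h^s_t$ by the corresponding scalar multiple, and in the cylindrical case also precomposing with a continuous family of diffeomorphisms that is trivial at $t=0$ (extended over $K$ by isotopy extension), one may assume in addition that $\bar h^s_t=g^s$ for all $s\in L$, $t\in[0,1]$. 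The endpoint conditions survive because conformal flatness, PSC-conformality, and positivity of the scalar curvature are all invariant under scalar rescaling and diffeomorphism.

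Finally I would upgrade PSC-conformality to positive scalar curvature. For a metric $h$ on a closed $3$-manifold, let $\phi_h>0$ be the first eigenfunction of the conformal Laplacian $L_h=-8\triangle_h+R_h$, normalized by $\int\phi_h^6\,d\mu_h=\vol(M,h)$; this eigenfunction is simple, depends smoothly on $h$, equals $1$ whenever $R_h$ is constant, and $\lambda_1(L_h)>0$ exactly when $(M,h)$ has positive Yamabe constant, i.e.\ (for closed $M$) is PSC-conformal (cf.\ Lemma~\ref{Lem_PSC_conformal_analytic}), in which case $\phi_h^4 h$ is conformal to $h$ and has scalar curvature $\lambda_1(L_h)\,\phi_h^{-4}>0$. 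Define the homotopy of Theorem~\ref{Thm_main_conf_flat} in two time-phases,
\[
h^s_t:=\big((1-2t)+2t\,\phi_{g^s}\big)^4 g^s\ \ (t\in[0,\tfrac12]),\qquad h^s_t:=\big(\phi_{\bar h^s_{2t-1}}\big)^4\bar h^s_{2t-1}\ \ (t\in[\tfrac12,1]),
\]
which agree at $t=\tfrac12$ (both equal $\phi_{g^s}^4 g^s$, using $\bar h^s_0=g^s$). Then \ref{Thm_main_conf_flat_a} holds since $h^s_0=g^s$; \ref{Thm_main_conf_flat_b} holds since $h^s_1=\phi_{\bar h^s_1}^4\bar h^s_1$ is conformal to the conformally flat $\bar h^s_1$ and has positive scalar curvature; \ref{Thm_main_conf_flat_c} holds since for $s\in L$ we have $\phi_{g^s}\equiv1$ and $\bar h^s_u\equiv g^s$ has constant scalar curvature so $\phi_{\bar h^s_u}\equiv1$, hence $h^s_t\equiv g^s$; and \ref{Thm_main_conf_flat_d} holds because, if $R_{g^s}>0$, then $\lambda_1(L_{g^s})\geq\min R_{g^s}>0$ and $L_{g^s}\big((1-2t)+2t\phi_{g^s}\big)=(1-2t)R_{g^s}+2t\,\lambda_1(L_{g^s})\phi_{g^s}>0$, so $h^s_t$ has positive scalar curvature on $[0,\tfrac12]$, while on $[\tfrac12,1]$ the metric $\bar h^s_{2t-1}$ is PSC-conformal (as $s\in U$), so $R_{h^s_t}=\lambda_1(L_{\bar h^s_{2t-1}})\phi_{\bar h^s_{2t-1}}^{-4}>0$.

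I expect the main obstacle to be exactly the simultaneous bookkeeping just carried out: the conformal rescaling must fix $g^s$ at $t=0$, must leave the homotopy constant over $L$, and must deliver positive scalar curvature everywhere on $U$, which is what forces the two-phase structure together with the normalization $\phi_h\equiv 1$ on constant-scalar-curvature metrics; the subordinate point is the passage from a closed $K_{PSC}$ to the open locus $U$ flagged in the first paragraph.
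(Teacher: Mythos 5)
Your overall strategy (apply Theorem~\ref{Thm_main_deform_to_CF}, then upgrade PSC-conformal to PSC and untwist over $L$) is the right one, and your upgrade step via the first eigenfunction $\phi_h$ of the conformal Laplacian $L_h=-8\triangle_h+R_h$ is a clean alternative to the paper's partition-of-unity construction of conformal factors $(w^s_t)$; both work, and yours is arguably more elegant for closed manifolds. But two of the three steps have real gaps.

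First, the open/closed issue for $K_{PSC}$ is not merely fussy --- it is essential, and neither of your proposed workarounds resolves it. Exhausting $U=\{R_{g^s}>0\}$ by compact subcomplexes produces different homotopies on different subcomplexes with no way to glue them continuously into one family over all of $U$; and reopening the proofs of Propositions~\ref{prop_extending}, \ref{prop_move_remove_disk}, etc.\ to argue pointwise preservation of PSC-conformality is not ``applying Theorem~\ref{Thm_main_deform_to_CF}'' at all, it is reproving it. The paper's trick is cleaner and different: take $K_{PSC}:=\{s: R_{g^s}\geq 0\}$, which is closed, evolve all the $g^s$ by Ricci flow for a small uniform time $\tau$ so that the metrics over $K_{PSC}$ become strictly PSC, apply the theorem to $(g^s_\tau)$, and concatenate with the Ricci-flow segment (which preserves PSC). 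You should have this move available; it bypasses the open/closed tension entirely.

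Second, your untwisting over $L$ is under-argued and probably cannot be completed as stated in the cylindrical case. For $s\in L$ with $M\approx(S^2\times\IR)/\Gamma$, the path $\bar h^s_t$ has the form $a(s,t)^2 g_{S^2}+b(s,t)^2 dr^2$ with two independent scalings, so no constant conformal rescaling returns it to $g^s$; you therefore need a diffeomorphism that stretches only the $r$-direction, and such a stretch is not a self-diffeomorphism of the circle factor (it changes the circle length, so it is not well-defined on $S^2\times S^1$ or $(S^2\times S^1)/\IZ_2$ as a coordinate map). The ``extended over $K$ by isotopy extension'' claim also conceals the fact that the spherical structure used to define the two scaling factors is only controlled near $L$, not globally. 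The paper sidesteps all of this with a different mechanism: choose a transversely continuous bundle retraction $(F,f)$ with $f(U)\subset L$ near $L$ and a cutoff $\eta$ vanishing on $L$, then set $h^{\prime\prime,s}_t:=\ov F^* h^{f(s)}_{\eta(s)t}$. Over $L$ this is identically $g^s$ because $\eta\equiv 0$ and $F=\mathrm{id}$; the homotopy is imported from parameters in $L$ where it is a CC-metric, and the cutoff interpolates to the genuine homotopy away from $L$. This is the move you are missing.
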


\begin{proof}
Due to Remark~\ref{rmk_fiber_bundle_construction} we can view the fiber bundle $\pi : E \to K$ as a continuous family of Riemannian manifolds $(M^s := \pi^{-1} (s), g^s)_{s \in K}$ with $M^s \approx M$.
Let $K_{PSC} \subset K$ be a closed subset with the property that $(M^s, g^s)$ has positive scalar curvature for all $s \in K_{PSC}$.
We first show the theorem with the following weaker assertions; we will later explain how to strengthen these assertions:
\begin{enumerate}[label=(\alph*$'$), start=3]
\item \label{Thm_main_conf_flat_cp} If $s \in L$, then one of the following is true for all $t \in [0,1]$:
\begin{enumerate}[label=(c$'$\arabic*)]
\item $M \approx S^3 / \Gamma$ and $h^s_t$ is a constant multiple of $g^s$.
\item $M \approx (S^2 \times \IR) / \Gamma$ and $(M^s, h^s_t)$ is a quotient of the round cylinder and the local isometric $O(3)$-actions of $(M^s, g^s)$ and $(M^s, h^s_t)$ are the same.
\end{enumerate}
\item \label{Thm_main_conf_flat_dp} $(M^s, h^s_t)$ has positive scalar curvature for all $s \in K_{PSC}$ and $t \in [0,1]$.
\end{enumerate}
Consider the family of metrics $(h^s_t)_{s \in K, t \in [0,1]}$ produced by Theorem~\ref{Thm_main_deform_to_CF}, based on the family $(M^s, g^s)_{s \in K}$.
We claim that there is a continuous family of positive functions $(w^s_t \in C^\infty (M^s))_{s \in K, t \in [0,1]}$ such that for all $s \in K$ and $t \in [0,1]$:
\begin{enumerate}[label=(\arabic*)]
\item \label{prop_w_s_t_1} $w_0^s = 1$.
\item \label{prop_w_s_t_2} $w_t^s = 1$ if $s \in L$.
\item \label{prop_w_s_t_3} $(w^s_t)^4 h^s_t$ has positive scalar curvature, or equivalently $8 \triangle w^s_t - R_{h^s_t} w^s_t < 0$, if $(s,t) \in K \times \{ 1 \} \cup L \times [0,1]$.
\end{enumerate}
In fact, for any $(s_0, t_0) \in K \times [0,1]$ there is a neighborhood $U_{s_0, t_0} \subset K \times [0,1]$ and a continuous family of positive functions $(w^{s_0,s}_{t_0,t})_{(s,t) \in U_{s_0, t_0}}$ satisfying Properties~\ref{prop_w_s_t_1}--\ref{prop_w_s_t_3}.
Moreover, Properties~\ref{prop_w_s_t_1}--\ref{prop_w_s_t_3} remain valid under convex combination.
Therefore $(w^s_t)$ can be constructed from the families $(w^{s_0,s}_{t_0,t})$ using a partition of unity.
The family $(\td{h}^s_t := (w^s_t)^4 h^s_t)_{s \in K, t \in [0,1]}$ satisfies Assertions~\ref{Thm_main_conf_flat_a}, \ref{Thm_main_conf_flat_b}, \ref{Thm_main_conf_flat_cp}, \ref{Thm_main_conf_flat_dp}.

We will now argue that the theorem also holds with Assertion~\ref{Thm_main_conf_flat_c} replaced by \ref{Thm_main_conf_flat_cp} only.
For this purpose, let $K_{PSC} \subset K$ be the set of parameters $s \in K$ for which $(M^s, g^s)$ has non-negative scalar curvature.
Then $K_{PSC}$ is closed.
After evolving the metrics $g^s$ by the Ricci flow for some small uniform time $\tau > 0$, we can produce families of metrics $(g^s_t)_{s \in X, t \in [0,\tau]}$ such that $g^s_0 = g^s$ for all $s \in K$ and such that $g^s_t$ even has positive scalar curvature for all $s \in K_{PSC}$ and $t \in (0, \tau]$.
Apply our previous discussion to the family to the family $(M^s, g^s_\tau)_{s \in K}$, resulting in a family $(h^{\prime, s}_t)$ satisfying Assertions~\ref{Thm_main_conf_flat_a}, \ref{Thm_main_conf_flat_b}, \ref{Thm_main_conf_flat_cp}, \ref{Thm_main_conf_flat_dp}.
Then we can obtain the desired family $(h^s_t)$ by concatenating the resulting family $(h^{\prime, s}_t)$ with $(g^s_t)$.

So assume in the following that $(h^s_t)_{s \in K, t \in [0,1]}$ satisfies Assertions~\ref{Thm_main_conf_flat_a}, \ref{Thm_main_conf_flat_b}, \ref{Thm_main_conf_flat_c}, \ref{Thm_main_conf_flat_dp}.
It remains to construct a family $(h^{\prime\prime,s}_t)_{s \in K, t \in [0,1]}$, that in addition satisfies Assertion~\ref{Thm_main_conf_flat_c}.
By gluing together local trivializations of the vector bundle $\pi : E \to K$, we can find neighborhoods $L \subset U \Subset V \subset K$ and a transversely continuous bundle map $(F, f) : (E,K) \to (E, K)$; meaning that $\pi \circ F = f \circ \pi$ and  $F |_{\pi^{-1} (s)} : \pi^{-1} (s) \to \pi^{-1} (f(s))$ are a smooth diffeomorphisms that depend continuously on $s$ in the smooth topology such that:
\begin{enumerate}[start=4]
\item $(M^s,g^s)$ has positive scalar curvature for all $s \in V$.
\item $f = \id$ and $F = \id$ over $(K \setminus V) \cup L$ and $\pi^{-1} ((K \setminus V) \cup L)$.
\item $f(U) \subset L$.
\end{enumerate}
Let $\eta : K \to [0,1]$ be a continuous map such that $1- \eta$ is supported in $U$ and $\eta \equiv 0$ on $L$.
Then
\[ h^{\prime\prime,s}_t := \ov{F}^* h^{f(s)}_{ \eta(s) t} \] 
satisfies Assertions~\ref{Thm_main_conf_flat_a}--\ref{Thm_main_conf_flat_d}, which finishes the proof.
\end{proof}

We can now prove Theorem~\ref{thm_psc_contractible} from Section~\ref{sec_introduction}.

\begin{proof}[Proof of Theorem~\ref{thm_psc_contractible}]
Suppose $\met_{PSC}(M)$ is nonempty.
So  $M$ is a connected sum of spherical space forms and copies of $S^2\times S^1$ by Perelman \cite{Perelman2}.   

Now consider a continuous map $\al:S^k\ra \met_{PSC}(M)$ for some $k\geq 0$.  Since $\met(M)$ is contractible, we may extend $\al$ to a continuous map  $\wh\al:D^{k+1}\ra\met(M)$.   Letting $K:=D^{k+1}$ and $\pi:E:=K\times M\ra K$ be the trivial bundle, the map $\wh\al$ defines a family of fiberwise Riemannian metrics as in Theorem~\ref{Thm_main_general_case}.  Applying the theorem, we may reinterpret the resulting family $(h^s_t)_{s\in K, t\in [0,1]}$ as defining a homotopy of pairs $(\wh\al_t:(D^{k+1},S^k)\ra (\met(M),\met_{PSC}(M)))_{t\in [0,1]}$ where $\wh\al_0=\wh\al$ and $\wh\al_1$ takes values in $\met_{PSC}(M)$.  Restricting this homotopy to $S^n$, we obtain a homotopy $(\al_t:S^n\ra \met_{PSC}(M))_{t\in [0,1]}$ where $\al_1$ is null-homotopic via $\wh\al_1:D^{k+1}\ra \met_{PSC}(M)$.  Thus $\al$ is null-homotopic.  Since $\al$ was arbitrary and $\met_{PSC}(M)$ has the homotopy type of a CW-complex \cite[Sec. 2.1]{rubinstein_et_al}, it follows that $\met_{PSC}(M)$ is contractible.
\end{proof}

\bigskip

The remaining theorems from Section~\ref{sec_introduction} will follow from Theorem~\ref{Thm_main_conf_flat}  using the following lemma.

\begin{lemma}  \label{lem_conf_flat_round}
Let $(M,g)$ be a Riemannian 3-manifold.
\begin{enumerate}[label=(\alph*)]
\item \label{lem_conf_flat_round_a} If $M \approx S^3/\Gamma$ and $g$ is a conformally flat metric, then there is a unique $\phi \in C^\infty (M)$ such that $(M, e^{2\phi} g)$ is isometric to the standard round sphere and such that $\int e^{-\phi} d\mu_g$ is minimal.
Moreover $\phi$ depends continuously on $g$ (in the smooth topology).
\item \label{lem_conf_flat_round_b} If $M$ is diffeomorphic to a quotient of the round cylinder and $g$ is a  conformally flat Riemannian metric on $M$, then there is a unique $\phi \in C^\infty (M)$ such that $e^{2\phi} g$ is isometric to a quotient of the standard round cylinder.
Moreover, $\phi$ depends continuously on $g$ (in the smooth topology).
\end{enumerate}

\end{lemma}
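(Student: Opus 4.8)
The plan is to pass to the universal cover, use the rigidity of conformally flat structures on $S^3$ and on $S^2\times\IR$, and then isolate $\phi$ by analyzing the space of round metrics inside a fixed conformal class. For (a), let $\pi:(S^3,\tilde g)\to(M,g)$ be the universal cover, $\tilde g:=\pi^*g$, with deck group $\Gamma$ acting on $(S^3,\tilde g)$ by isometries. Since $\tilde g$ is conformally flat and $S^3$ is compact, Kuiper's theorem gives a conformal diffeomorphism $(S^3,\tilde g)\to(S^3,g_{S^3})$; hence the conformal class of $\tilde g$ is the standard one and $\Gamma$ acts on it through $\conf(S^3,g_{S^3})=\Isom(\IH^4)$. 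Realize $S^3$ as the projectivized future light cone in $\IR^{3,1}$ via $y\mapsto\hat y=(1,y)$ and $\IH^4=\{a:\langle a,a\rangle=-1,\ a_0>0\}$; every round metric conformal to $g_{S^3}$ is $(-\langle a,\hat y\rangle)^{-2}g_{S^3}$ for a unique $a\in\IH^4$. Writing $\tilde g=e^{2\tilde f}g_{S^3}$, if $e^{2\tilde\phi}\tilde g$ is the round metric indexed by $a$, then using $d\mu_{\tilde g}=e^{3\tilde f}d\mu_{g_{S^3}}$ one computes
\[
\int_{S^3}e^{-\tilde\phi}\,d\mu_{\tilde g}=\int_{S^3}\big({-\langle a,\hat y\rangle}\big)\,e^{4\tilde f(y)}\,d\mu_{g_{S^3}}(y)=-\langle a,v\rangle,\qquad v:=\int_{S^3}\hat y\,e^{4\tilde f(y)}\,d\mu_{g_{S^3}}(y).
\]

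Because the positive weight $e^{4\tilde f}$ has full support and $|y|=1$ on $S^3$, the vector $v$ is future-pointing timelike, so $a\mapsto-\langle a,v\rangle=\sqrt{-\langle v,v\rangle}\,\cosh\dist_{\IH^4}(a,a_*)$, with $a_*:=v/\sqrt{-\langle v,v\rangle}$, is a proper, strictly convex function on $\IH^4$ with unique minimizer $a_*$. Since $\Gamma$ acts by isometries of $\tilde g$, the functional $e^{2\tilde\phi}\tilde g\mapsto\int_{S^3}e^{-\tilde\phi}d\mu_{\tilde g}$ is $\Gamma$-invariant, hence $v$, and therefore $a_*$, is fixed by $\Gamma\subset O(3,1)$; thus the round metric indexed by $a_*$ descends to a metric $e^{2\phi}g$ on $M$ of constant curvature $1$. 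For uniqueness, any $\Gamma$-invariant round metric is indexed by some $a$ in the $\Gamma$-fixed totally geodesic subspace $F\subset\IH^4$, and $\int_Me^{-\phi}d\mu_g=|\Gamma|^{-1}(-\langle a,v\rangle)$, which since $a_*\in F$ is minimized precisely at $a=a_*$. For continuity in $g$, normalize the developing map so that the associated $v$ points along $e_0$ (equivalently $a_*=e_0$, i.e. $e^{2\phi}g$ is the pullback of $g_{S^3}$): this fixes the developing map up to $\Isom(g_{S^3})$, the normalization condition varies continuously with $g$, and by the properness and strict convexity it has a unique solution, so $e^{2\phi}g$ depends continuously on $g$.

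For (b), lift to the universal cover $(S^2\times\IR,\tilde g)$ with deck group $\Gamma$. By the structure theory of compact conformally flat $3$-manifolds — see \cite{schoen_yau_conformally_flat}, which applies directly in the case, relevant to our applications, where $g$ has positive scalar curvature — $M$ is a quotient of a domain $\Omega\subset S^3$ with two-point complement, so the conformal class of $\tilde g$ is the standard product one under $S^2\times\IR\cong S^3\setminus\{p,q\}\cong\IR^3\setminus\{0\}$, and $\Gamma\hookrightarrow\conf(S^2\times\IR)$. A direct analysis of $\conf(S^2\times\IR)$, the subgroup of $\conf(S^3)$ preserving $\{p,q\}$, shows it coincides with $\Isom(S^2\times\IR,\,g_{S^2}+dt^2)$; hence the round-cylinder metrics conformal to $\tilde g$ form the single one-parameter family $\{c^2(g_{S^2}+dt^2):c>0\}$, and demanding that $e^{2\tilde\phi}\tilde g$ be isometric to the standard round cylinder pins $c$, hence $\tilde\phi$, uniquely. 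This characterization of $\tilde\phi$ is intrinsic, so $\tilde\phi$ is $\Gamma$-invariant and descends to the required $\phi$ on $M$; uniqueness on $M$ follows from uniqueness of $\tilde\phi$, and continuity in $g$ follows, as in (a), from an implicit-function argument once the conformal normalization is fixed.

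The substantive input in both cases is the rigidity assertion that the conformal class on the universal cover is the standard one — Kuiper's theorem for (a), which is clean since the cover is compact, and the Schoen--Yau-type classification for (b). Granting this, (a) reduces to the elementary strict convexity of $\cosh$ of the hyperbolic distance together with the identification of $\int e^{-\phi}d\mu_g$ with a linear functional on $\IR^{3,1}$, and (b) to the equally elementary determination of $\conf(S^2\times\IR)$ and the observation that it acts by isometries of the round product. The points requiring the most care are therefore invoking the correct rigidity statement and arranging the developing maps and conformal normalizations to vary continuously with $g$, which is what underlies the continuity assertions; in particular the main obstacle is verifying that the conformally flat structure on the universal cover is forced to be the standard model rather than exhibiting exotic global behavior.
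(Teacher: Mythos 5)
Your part (a) is correct and follows essentially the same strategy as the paper (Kuiper to place $g$ in the standard conformal class, then strict convexity of the functional on the space of round metrics), but in a cleaner coordinate system: the paper parametrizes the round metrics by $\vec c\in\IR^4$ and checks convexity of $\td F(\vec c)=\int\bigl(\sqrt{1+|\vec c|^2}-\vec c\cdot\vec x\bigr)e^{-4\phi'}d\mu_{S^3}$ by hand, whereas you identify the space of round metrics with $\IH^4$ in the hyperboloid model and observe that the functional is $-\langle a,v\rangle$ for a fixed future-pointing timelike $v$, hence $\sqrt{-\langle v,v\rangle}\cosh\dist_{\IH^4}(a,a_*)$, whose properness and strict convexity are immediate. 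Your picture also makes the descent to $M$ transparent: $v$ is manifestly $\Gamma$-invariant, so the minimizer $a_*$ is $\Gamma$-fixed. This is a nice improvement in exposition, and the computation $\int e^{-\tilde\phi}d\mu_{\tilde g}=-\langle a,v\rangle$ is correct.

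Part (b) has a genuine gap. You invoke the Schoen--Yau structure theory of conformally flat manifolds with positive scalar curvature (equivalently, positive Yamabe constant) to conclude that the developing map is injective and $\Omega=S^3\setminus\{p,q\}$. But the lemma as stated imposes no scalar curvature or Yamabe-type hypothesis on $g$: it asserts the conclusion for \emph{every} conformally flat metric on a manifold diffeomorphic to a cylinder quotient. You flag this restriction yourself (``which applies directly in the case, relevant to our applications, where $g$ has positive scalar curvature''), and it is true that the applications in the paper only use the positive-Yamabe case, but what you have written does not prove the lemma in the stated generality. The paper's own proof avoids Schoen--Yau entirely: it classifies the holonomy representation $\rho:\pi_1(M)\to\Isom(\IH^4)$ into elliptic, parabolic, and hyperbolic cases, derives contradictions in the first two cases by pulling back a complete model metric via the developing map (round in the elliptic case, flat in the parabolic case) and comparing with the topology of $\td M$, and in the hyperbolic case uses the splitting theorem applied to the pullback of the complete cylindrical metric on $S^3\setminus\{p_1,p_2\}$ to show $dev$ is a global isometry. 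To repair your argument while keeping the lemma's generality, you would either need to reproduce a holonomy/splitting-theorem argument of this kind in place of Schoen--Yau, or else weaken the statement of the lemma to add a positive-Yamabe hypothesis and verify that this suffices for the downstream applications (which it does here, but that is an extra claim). Your observation that $\conf(S^2\times\IR)=\Isom(S^2\times\IR,g_{S^2}+dt^2)$, and hence that the round-cylinder metrics conformal to the product metric form a single ray of constant multiples, is correct and is also used (implicitly) by the paper.
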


\begin{proof}
Assertion~\ref{lem_conf_flat_round_a}.
It suffices to consider the case $M \approx S^3$, since we may pass to the universal cover, and the uniqueness of the minimizer guarantees that it will descend to a minimizer on $M$.
Let $V_g \subset C^\infty (M)$ be the space of functions $\phi$ with the property that $(M, e^{2\phi} g)$ is isometric to the standard round sphere. 
By \cite{Kuiper-1949} this space is non-empty.
Pick some arbitrary $\phi' \in V_g$ and identify $(M,e^{2\phi'} g)$ with $(S^3, g_{S^3})$.
Then $g = e^{-2\phi'} g_{S^3}$.
So we need to show that the functional
\[ F_{\phi'} (\phi) := \int_{S^3} e^{-\phi} e^{-3\phi'} d\mu_{S^3} \]
restricted to $V_\phi$ has a unique minimum.
To see this note that $\phi \in V_g$ if and only if $(S^3, e^{2(\phi- \phi')} g_{S^3})$ is isometric to the standard round sphere, which holds if and only if  for some $\vec c \in \IR^4$
\[ \phi - \phi' = - \log \big(\sqrt{1+ |\vec{c}|^2} - \vec{c} \cdot \vec{x} \big). \]
In this case we obtain that
\[ F_{\phi'} (\phi) = \int_{S^3} \big(\sqrt{1+ |\vec{c}|^2} - \vec{c} \cdot \vec{x} \big) e^{-4\phi'}  d\mu_{S^3} =: \td{F}_{\phi'} (\vec{c}). \]
It can be verified easily that $\td{F}_{\phi'} : \IR^4 \to \IR$ is strictly convex.
Moreover along every ray $t \mapsto t \vec{c}$ we have
\[ \lim_{t \to \infty} \td{F}_{\phi'} (t \vec{c}) = \lim_{t \to \infty}  \int_{S^3} \big(\sqrt{1+ t^2 |\vec{c}|^2} - t \vec{c} \cdot \vec{x} \big)e^{-4\phi'}d\mu_{S^3} = \infty. \]
So $\td{F}_{\phi'}$ and therefore $F_{\phi'} |_{V_g}$ attains a unique minimum.  

For the continuous dependence claim, note that for any continuous family $(g_s)_{s \in X}$ of conformally flat metrics on $M$ we can find a continuous family of smooth maps $(\psi_s : M \to S^3)_{s \in X}$ such that $\psi^*_s g_{S^3} = e^{2\phi'_{s}} g_s$ for some continuous family $(\phi_s)_{s \in X}$ of smooth functions; such a family can be constructed via the developing map for example, compare with the methods of \cite{Kuiper-1949}.
So it suffices to show that the minimizer of the functional $F_{\phi'_{s}}$ depends continuously on $s$, which is clear since the Hessians of $\td{F}_{\phi'_{s}}$ are positive definite.

Assertion~\ref{lem_conf_flat_round_b}.  We first observe that $M$ is diffeomorphic to either $S^2\times S^1$ or to $\IR P^3\# \IR P^3$.  Let $dev:\td M\ra S^3$ be a developing map of the conformally flat structure and $\pi_1(M)\stackrel{\rho}{\acts}S^3$ be the holonomy action, so that $dev$ is $\rho$-equivariant for the deck group action $\pi_1(M)\acts\td M$.  

We identify the conformal group $\conf(S^3)$ with $\isom(\IH^4)$.  Since $\pi_1(M)$ has a cyclic subgroup of index at most $2$, we have the following three cases.

{\em Case 1: The action $\pi_1(M)\stackrel{\rho}{\acts} \IH^4$ is elliptic, i.e. fixes a point in $\IH^4$. \quad}  Then there exists a $\rho$-invariant metric $g_0$ with sectional curvature $\equiv 1$ in the conformal class of $S^3$.  The pullback $dev^*g_0$ is $\pi_1(M)$-invariant and complete, and hence must be isometric to $S^3$, contradicting the fact that $\td M$ is noncompact.

{\em Case 2: The action $\pi_1(M)\stackrel{\rho}{\acts} \IH^4$ is parabolic, i.e. it fixes precisely one point $p$ in the ideal boundary $\D \IH^4=S^3$, and has no fixed points in $\IH^4$. \quad}  Letting $S=dev^{-1}(p)$, then $S$ is a closed, discrete subset of $\td M$ because $dev$ is an immersion.   There is a $\rho$-invariant complete flat metric $\wh g$ on $S^3\setminus\{p\}$.  Letting $\td M':=\td M\setminus S$, the pullback $dev\big|_{\td M'}^*\wh g$ is a complete flat metric on the simply connected manifold $\td M'$, and must therefore be isometric to $\R^3$; this contradicts the fact that $\td M$ is diffeomorphic to $(S^2\times \IR)\setminus S$.

{\em Case 3: The action $\pi_1(M)\stackrel{\rho}{\acts} \IH^4$ is hyperbolic, i.e. it preserves an axial geodesic $\gamma\subset \IH^4$, and fixes precisely two points $p_1,p_2\in S^3=\D \IH^4$. \quad } Letting $S=dev^{-1}(p)$, then $S$ is a closed, discrete subset of $\td M$.   There is a $\rho$-invariant complete cylindrical metric $\wh g$ on $S^3\setminus\{p_1,p_2\}$. Letting $\td M':=\td M\setminus S$, the pullback $dev\big|_{\td M'}^*\wh g$ is a complete metric on the simply connected manifold $\td M'$ which is locally isometric to $S^2\times \R$.  By the splitting theorem, $\td M'$ can have at most two ends, and hence $S=\emptyset$, and $dev\big|_{\td M}:\td M\ra S^3\setminus\{p_1,p_2\}$, being a local isometry between simply connected complete manifolds,  must be an isometry.  (Alternatively, use the developing map for the cylindrical structure.) Now suppose $\wh g'$ is another cylindrical metric on $S^3\setminus\{p_1,p_2\}$ that is invariant under the action of $\pi_1(M)\stackrel{\rho}{\acts}S^3$ and conformal to $\wh g$.  Then there is an isometry $\phi:(S^3\setminus\{p_1,p_2\},\wh g)\ra (S^3\setminus\{p_1,p_2\},\lambda\wh g')$ for some $\lambda>0$; but then $\phi$ is a conformal diffeomorphism of $S^3\setminus\{p_1,p_2\}$.  Hence $\phi\in \isom(\td M,\wh g)$, and so $\wh g=\lambda \wh g'$.
\end{proof}

\begin{proof}[Proof of Theorem~\ref{thm_cc_contractible}.]
We can argue similarly as in the proof of Theorem~\ref{thm_psc_contractible}.
Suppose that $\Met_{CC} (M)$ is non-empty and thus $M$ is diffeomorphic to an isometric quotient of the round sphere or round cylinder.

Consider a continuous map $\alpha : S^k \to \Met_{CC} (M)$ for some $k \geq 0$ and let $\wh\al:D^{k+1}\ra\met(M)$ be an extension of $\alpha$.   
As in the proof of Theorem~\ref{thm_psc_contractible} we can apply Theorem~\ref{Thm_main_general_case} to the associated family of metrics on the trivial vector bundle over $K = D^{k+1}$, this time with $L = S^k = \partial D^{k+1}$, and obtain a homotopy of pairs $(\wh\al_t:(D^{k+1},S^k)\ra (\met(M),\met_{CC}(M)))_{t\in [0,1]}$, where $\wh\al_0=\wh\al$ and $\wh\al_1$ takes values in the space $\met_{CF}(M)$ of conformally flat metrics on $M$.

Let $(g_s)_{s \in D^{k+1}}$ be the family of conformally flat metrics on $M$ corresponding to the map $\wh\alpha_1$.
By Lemma~\ref{lem_conf_flat_round} there is a certain continuous family $(\phi_s \in C^\infty (M))_{s \in D^{k+1}}$ such that $g'_s := e^{2\phi_s} g_s \in \Met_{CC} (M)$.
By the uniqueness statements in Lemma~\ref{lem_conf_flat_round} we have $\phi_s \equiv 0$ for all $s \in \partial D^{k+1}$.
So the family of metrics $(g'_s)_{s \in D^{k+1}}$ defines a null-homotopy of $\wh\alpha_1$ in $\Met_{CC}(M)$.
So $\alpha$, which is freely homotopic to $\wh\alpha_1$, is null-homotopic.

Since $\al$ was arbitrary and $\met_{CC}(M)$ has the homotopy type of a CW-complex \cite[Sec. 2.1]{rubinstein_et_al}, it follows that $\met_{CC}(M)$ is contractible.
\end{proof}

\begin{proof}[Proof of Theorem~\ref{thm_gen_smal}.]
Theorem~\ref{thm_gen_smal} follows from Theorem~\ref{thm_cc_contractible} via a standard topological argument, see for example \cite[Lemma 2.2]{gsc}.
\end{proof}

\begin{proof}[Proof of Theorem~\ref{thm_RP3RP3}.]
Let $M = \IR P^3 \# \IR P^3$ and consider a metric $g \in \Met_{CC} (M)$.
It can be seen easily that $(M, g)$ is isometric to $(S^2 \times S^1(l_g) )/ \IZ_2$, for some $l_g > 0$, where $S^2$ is equipped with the standard round metric, $S^1(l_g)$ has total length $l_g$ and $\IZ_2$ acts by the antipodal map on $S^2$ and by a reflection with two fixed points on $S^1(l_g)$.
There are unique numbers $a_g, b_g \in \IR$, depending continuously on $g$, such that $a_g g + b_g \Ric_g$ is isometric to $(S^2 \times S^1(2\pi ) )/ \IZ_2$.
Denote by $\Met_{CC1}(M) \subset \Met_{CC} (M)$ the space of metrics that are isometric to $(S^2 \times S^1(2\pi ) )/ \IZ_2$.
It follows that $\Met_{CC} (M)$ deformation retracts onto $\Met_{CC1}(M)$ and thus, by Theorem~\ref{thm_cc_contractible}, the space $\Met_{CC1}(M)$ is contractible as well.

We can now argue as in the proof of Theorem~\ref{thm_gen_smal} that 
\[ O(1) \times O(3) \cong \Isom ((S^2 \times S^1(2\pi ) )/ \IZ_2) \longrightarrow \Diff(M) \]
is a homotopy equivalence.
\end{proof}

\begin{proof}[Proof of Theorem~\ref{thm_S2S1_diff}.]
Let $M = S^2 \times S^1$ and denote by $S(M)$ the space of spherical structures on $M$.
We can view $S(M)$ as a subspace of the space of 2-dimensional distributions equipped with Riemannian metrics, which carries a natural smooth topology; equip $S(M)$ with the subspace topology.
Note that continuity with respect to this topology is equivalent to transverse continuity of spherical structures in the sense of Definition~\ref{Def_spherical_struct_transverse_cont}.
The space $S(M)$ has the homotopy type of a CW-complex, since it is a Fr\'echet manifold (see \cite[Sec. 2.1]{rubinstein_et_al}).
Let $\SS_{S^2 \times S^1} \in S(M)$ be the standard spherical structure on $S^2 \times S^1$.
It can be seen as in the proof of \cite[Lemma 2.2]{gsc} that the map
\[ \Diff (M) \longrightarrow S(M), \qquad \phi \longmapsto \phi^* \SS_{S^2 \times S^1} \]
is a fibration and that the inclusion $\Diff (M, \SS_{S^2 \times S^1} ) \to \Diff (M)$ is a homotopy equivalence if and only if $S(M)$ is contractible, where 
\[ \Diff (M, \SS_{S^2 \times S^1} ) \cong \Diff (S^1) \times O(3) \times \Omega O(3) \cong O(2) \times O(3) \times \Omega O(3)  \]
denotes the space of diffeomorphisms fixing $\SS_{S^2 \times S^1}$.

So it remains to show that $S(M)$ is contractible.
To see this, consider a continuous family of spherical structures $(\SS^s)_{s \in S^k}$, $k \geq 0$, on $M$.
For every $s \in S^k$ the space of metrics in $\Met_{CC} (M)$ that are compatible with $\SS^s$ is convex and non-empty (see Lemma~\ref{lem_spherical_struct_classification}).
Therefore, we can find a continuous family of Riemannian metrics $(g^s \in \Met_{CC} (M))_{s \in S^k}$ compatible with $(\SS^s)_{s \in S^k}$.
By Theorem~\ref{thm_cc_contractible} we can extend this family to a continuous family $(\wh g^s \in \Met_{CC} (M))_{s \in D^{k+1}}$.
The corresponding family of spherical structures $(\wh\SS^s)_{s \in D^{k+1}}$ constitute a null-homotopy of $(\SS^s)_{s \in S^k}$.
\end{proof}

\bibliography{library}
\bibliographystyle{amsalpha}

\end{document}